\documentclass[a4paper]{amsart}

\usepackage{amsmath, amssymb, amsthm}
\usepackage[margin=3cm]{geometry}
\usepackage{mathtools}
\usepackage{color}
\usepackage{comment}
\usepackage{amsfonts, latexsym}
\usepackage{extpfeil}
\usepackage{tikz-cd}
\usepackage{mathrsfs} 
\usepackage{array, booktabs, float}
\usepackage{enumitem}
\usepackage{hyperref}
\usepackage{adjustbox}

\numberwithin{equation}{section} 

\theoremstyle{plain} 
\newtheorem{theorem}{Theorem}[section]
\newtheorem{proposition}[theorem]{Proposition}
\newtheorem{lemma}[theorem]{Lemma}
\newtheorem{corollary}[theorem]{Corollary}

\theoremstyle{definition} 
\newtheorem{definition}[theorem]{Definintion}

\theoremstyle{remark} 
\newtheorem{remark}[theorem]{Remark}

\newcommand{\gMod}[1]{{#1}\textup{-gMod}}
\newcommand{\gmod}[1]{{#1}\textup{-gmod}}
\newcommand{\gproj}[1]{{#1}\textup{-gproj}}
\newcommand{\quantum}[1]{U_q^{#1}(\mathfrak{g})}

\newcommand{\fpd}[1]{{#1}\textup{-gMod}_{\mathrm{f.p.d.}}}
\newcommand{\Extform}[2]{\langle {#1}, {#2} \rangle_{\mathrm{Ext}}}
\newcommand{\Lusform}[2]{({#1}, {#2})}
\newcommand{\prroot}{\Phi_+^{\mathrm{re}}}
\newcommand{\minroot}{\Phi_+^{\mathrm{min}}}
\newcommand{\Mod}[1]{{#1}\textup{-Mod}}

\newcommand{\myDelta}{\mathbin{\scalebox{0.9}{$\Delta$}}}
\newcommand{\mynabla}{\mathbin{\scalebox{0.9}{$\nabla$}}}

\DeclareMathOperator{\Hom}{Hom}
\DeclareMathOperator{\wt}{wt}
\DeclareMathOperator{\Res}{Res}
\DeclareMathOperator{\height}{ht}
\DeclareMathOperator{\Ind}{Ind}
\DeclareMathOperator{\Coind}{Coind}
\DeclareMathOperator{\hd}{hd}
\DeclareMathOperator{\soc}{soc}

\DeclareMathOperator{\id}{id}
\DeclareMathOperator{\Id}{Id}
\DeclareMathOperator{\Ext}{Ext}
\DeclareMathOperator{\End}{End}
\DeclareMathOperator{\Cok}{Cok}
\DeclareMathOperator{\Ker}{Ker}
\DeclareMathOperator{\Image}{Im}

\DeclareMathOperator{\ext}{ext}
\DeclareMathOperator{\grend}{end}

\DeclareMathOperator{\spn}{span}

\DeclareMathOperator{\Ann}{Ann}
\DeclareMathOperator{\Infl}{Infl}
\DeclareMathOperator{\ch}{ch}
\DeclareMathOperator{\Map}{Map}

\usepackage[all]{xy}
\xyoption{line}
\xyoption{arrow}
\SelectTips{cm}{}

\newcommand{\cross}{
 \xybox{ 
        (-2,-2)*{};(2,2)*{} **\crv{(-2,-1) & (2,1)}?(0) ;
    (2,-2)*{};(-2,2)*{} **\crv{(2,-1) & (-2,1)}?(0);
     }}     
\newcommand{\sline}{\xybox{
  (0,2)*{}; (0,-2)*{} **\dir{-};
}}
\newcommand{\sdot}{\xybox{
  (0,2)*{}; (0,-2)*{} **\dir{-}?(.5)*{\scriptstyle\bullet};
}}
\newcommand{\stau}{\xybox{
(0,0)*{\cross};
(-4,4)*{\cross};
(4,4)*{\cross};
(0,8)*{\cross}; 
(-6,0)*{\sline};
(6,0)*{\sline};
(-6,8)*{\sline};
(6,8)*{\sline};
}}

\tikzset{cross/.style={preaction={-,draw=white,line width=6pt}}}

\title{Quantum imaginary schur-weyl duality}
\author{Haruto Murata}

\address{Graduate School of Mathematical Sciences, the University of Tokyo, 3-8-1 Komaba, Meguro-ku, Tokyo 153-8914, Japan.}
\email{muraharu@ms.u-tokyo.ac.jp}

\makeindex

\begin{document}
\begin{abstract}
We study quiver Hecke algebras of untwisted affine type $A$ with an arbitrary choice of parameters
and establish a duality with the Iwahori-Hecke algebra of the symmetric group.
The parameter $t$ of the Iwahori-Hecke algebra is explicitly determined by the parameters defining the quiver Hecke algebra.
This duality provides a deformation of the imaginary Schur-Weyl duality introduced by Kleshchev and Muth. 
Furthermore, we prove that the characters of simple modules in the imaginary strata are computed in terms of the dual canonical basis and Kazhdan-Lusztig polynomials,
and the characters of standard modules coincide with the PBW vectors of the corresponding quantum group under certain assumptions. 

In addition, we examine other untwisted affine types, 
where the quiver Hecke algebra is known to be independent of the choice of parameters and the imaginary Schur-Weyl duality with the symmetric group has been established. 
As in type $A$, we apply this duality to the computation of characters of simple and standard modules.
\end{abstract}

\maketitle

\tableofcontents

\section{Introduction}
\subsection{Overview}
In this paper, we investigate the quiver Hecke algebra $R$ of type $A_N^{(1)}$ over an algebraically closed field $\mathbf{k}$ with an arbitrary choice of parameters.
As shown in \cite{MR3384473}, the quiver Hecke algebra essentially depends on a single parameter $t \in \mathbf{k}^{\times}$ (see Corollary \ref{cor:Qchoice}), 
and it can be regarded as a one-parameter deformation of the affine Hecke algebra.
Specifically, Rouquier \cite{rouquier20082kacmoodyalgebras} proved that, 
after taking an appropriate completion, 
the quiver Hecke algebra at $t =1$ is isomorphic to the affine Hecke algebra of $\mathrm{GL}$ with quantum characteristic $N+1$. 
In addition, the cyclotomic quotients of the affine Hecke algebra, including Hecke algebras of finite type $A$ and $B$ are isomorphic to cyclotomic quotients of the quiver Hecke algebra at $t=1$ without the need for any completions \cite{MR2551762}.  
Since the quiver Hecke algebra has a grading, these isomorphisms can be used to incorporate non-trivial grading into representations of the affine Hecke algebra. 


In \cite{murata2025affinehighestweightstructures}, 
the author proved that the category of finitely generated graded modules, $\gMod{R}$, 
for an arbitrary parameter $t$ is stratified with respect to any convex order on the set $\Phi_+$ of positive roots of the affine Lie algebra of type $A_N^{(1)}$. 
This result generalizes \cite{MR3694676}, which established the stratification based on geometric results under the assumption that $t = 1$ and $\ch \mathbf{k} = 0$. 
Such a stratification allows us to reduce the study of $\gMod{R}$ to its strata labelled by the positive roots. 
It was further proved that any stratum corresponding to a real root is Morita equivalent to a polynomial ring, regardless of $t$ and $\ch \mathbf{k}$. 
Consequently, the central problem shifts to understanding the imaginary strata corresponding to the imaginary roots $n\delta \ (n \geq 1)$. 

When $t = 1$, the imaginary strata have been extensively studied \cite{MR3589160, MR3694676, MR3934461}. 
A key ingredient is the imaginary Schur-Weyl duality, 
which establishes that the endomorphism algebras of certain modules in the imaginary strata are isomorphic to the group algebras of symmetric groups. 
Based on this duality, 
Kleshchev-Muth \cite{MR3589160} proved that a certain full subcategory of the imaginary strata is Morita equivalent to the Schur algebra for a special convex order.  
This equivalence is referred to as the imaginary Howe-duality.
Furthermore, when $\ch \mathbf{k} =0$ (hence the Schur algebra is semisimple), 
McNamara \cite{MR3694676} showed that the characters of the indecomposable projective modules in the imaginary strata coincide with the imaginary root vectors in the corresponding quantum group \cite{MR2066942,MR3874704}.  
This implies that the characters of standard modules in $\gMod{R}$ are precisely the PBW vectors. 
Kleshchev-Muth \cite{MR3934461} further proved that the entire imaginary strata are Morita equivalent to the affine zigzag algebras, 
which was crucially used in their proof of Turner's conjecture \cite{MR3862945}. 

In this paper, we generalize some of these results to an arbitrary parameter $t$. 
The key new ingredient is quantum imaginary Schur-Weyl duality, 
which shows that the endomorphism algebras of certain modules in the imaginary strata are isomorphic to the Hecke algebras of symmetric groups with the same parameter $t$ as $R$. 
This duality leads to a quantum imaginary Howe duality between the imaginary strata and the $t$-Schur algebra. 
As a consequence, we obtain a natural parametrization of simple modules in $\gMod{R^e(n\delta)}$ by multipartitions $(\lambda^{(i)})_{i \in \mathring{I}}$ of $n$, 
where $\mathring{I} = \{1,\ldots, N\}$ denotes the set of nodes of the corresponding finite Dynkin diagram. 
We prove that the characters of the modules corresponding to Weyl modules of the $t$-Schur algebra belong to the dual canonical basis.
This implies that we can compute the characters of simple modules in the imaginary strata in terms of the dual canonical basis and Kazhdan-Lusztig polynomials if the characteristic of $\mathbf{k}$ is zero. 
Furthermore, when the quantum characteristic of $t$ is infinite (hence the $t$-Schur algebra is semisimple), 
we prove that the characters of indecomposable projective modules in the imaginary strata coincide with the imaginary root vectors. 
This again implies that the characters of standard modules in $\gMod{R}$ are the PBW vectors. 
In the rest of this introduction, we provide a precise account of our main results.


\subsection{Imaginary strata}
Let $\mathfrak{g}$ be the affine Lie algebra of type $A_N^{(1)}$ with $N \geq 1$, 
and let $I = \{0,1,\ldots, N\} \simeq \mathbb{Z}/(N+1)\mathbb{Z}$ be the set cyclically labelling the Dynkin nodes.  
Let $\delta$ be the null root. 
Let $\mathbf{k}$ be a field, 
and let $R(\beta) \ (\beta \in \mathsf{Q}_+$) be the quiver Hecke algebra of type $A_N^{(1)}$ over $\mathbf{k}$, 
where $\mathsf{Q}_+$ denotes the positive root lattice of $\mathfrak{g}$. 
The algebra $R(\beta)$ implicitly depends on a choice of homogeneous polynomials $Q_{i,j}(u,v)$ for $i,j \in I$.
As we will see in Corollary \ref{cor:Qchoice}, $R(\beta)$ is isomorphic to the quiver Hecke algebra associated with another choice of polynomials $Q'_{i,j}(u,v)$ satisfying
\[
Q'_{i,i+1}(u,v) = \begin{cases}
(u-v)(v-tu) & \text{if $N =1$}, \\
u-v & \text{if $N \geq 2$ and $i \neq N$}, \\
u-tv & \text {if $N \geq 2$ and $i = N$},
\end{cases}
\]
where $t \in \mathbf{k}^{\times}$ is determined by the coefficients of the original $Q_{i,j} \ (i,j\in I)$. 
Hence, we may assume that the polynomials $Q_{i,j}$ are of this form from the beginning. 

Let $w$ be an element of the finite Weyl group $\mathring{W} = \mathfrak{S}_N$. 
For $n \geq 0$, let $R^w(n\delta)$ be the quotient algebra of $R(n\delta)$ by the ideal generated by certain idempotents $e(\beta,\gamma) \ (\beta,\gamma \in \mathsf{Q}_+, \beta + \gamma = n\delta)$ satisfying the condition:
\[
\text{$\beta \not \in \spn_{\mathbb{Z}_{\geq 0}} (\Phi_+ \cap p^{-1}(w\mathring{\mathsf{Q}}_-))$ or $\gamma \not \in \spn_{\mathbb{Z}_{\geq 0}} (\Phi_+ \cap p^{-1}(w\mathring{\mathsf{Q}}_+))$}, 
\]
where $p \colon \mathsf{Q} \to \mathring{\mathsf{Q}}$ is the projection from the affine root lattice to the finite root lattice, 
and $\mathring{\mathsf{Q}}_+$ (resp.\ $\mathring{\mathsf{Q}}_-$) denotes the positive (resp.\ negative) part of $\mathring{\mathsf{Q}}$. 
The categories $\gMod{R^w(n\delta)}$ of finitely generated graded $R^w(n\delta)$-modules are the imaginary strata appearing in the stratification established in \cite{murata2025affinehighestweightstructures}; see Subsection \ref{sub:stratifications}. 
The element $w$ is referred to as the coarse type. 
We define the category $\gMod{R^w}$ as the direct sum $\bigoplus_{n \in \mathbb{Z}_{\geq 0}} \gMod{R^w(n\delta)}$, 
which is a monoidal category via the convolution product denoted by $\circ$.

We utilize the reflection functors constructed in \cite{murata2025diagrammaticapproachreflectionfunctors} to reduce the study of the imaginary strata to the case where the coarse type $w$ is the identity element $e$. 
In fact, we prove in Corollary \ref{cor:stdcoarsetype} that a suitable composition of reflection functors induces an equivalence of categories between $\gMod{R^w(n\delta)}$ and $\gMod{R^e(n\delta)}$, 
which commutes with the convolution product. 
Having established this reduction, we shall primarily focus on $\gMod{R^e}$. 

\subsection{Quantum imaginary Schur-Weyl duality}

For each $i \in \mathring{I} = \{1,\ldots,N\}$, there exists a unique (up to isomorphism) self-dual simple $R^e(\delta)$-module $L_i^e(\delta)$ such that 
\[
e(\delta-\alpha_i,\alpha_i) L_i^e(\delta) \neq 0. 
\]
Let $\Delta_i^e(\delta)$ be the projective cover of $L_i^e(\delta)$ in $\gMod{R^e(\delta)}$. 
For $\underline{n} = (n_i)_{i \in \mathring{I}} \in \mathbb{Z}_{\geq 0}^{\mathring{I}}$, 
we define 
\[
L^e(\underline{n}) = L_1^e(\delta)^{\circ n_1} \circ \cdots \circ L_N^e(\delta)^{\circ n_N}, \ \Delta^e(\underline{n}) = \Delta_1^e(\delta)^{\circ n_1} \circ \cdots \circ \Delta_N^e(\delta)^{\circ n_N}. 
\]
For $1 \leq i < j \leq N$, there is an isomorphism 
\[
L_j^e(\delta) \circ L_i^e(\delta) \to L_i^e(\delta) \circ L_j^e(\delta), 
\]
which is unique up to a scalar multiple.
This isomorphism uniquely lifts to a homogeneous isomorphism 
\[
\Delta_j^e(\delta) \circ \Delta_i^e(\delta) \to \Delta_i^e(\delta) \circ \Delta_j^e(\delta). 
\]
By composing these isomorphisms a minimal number of times to reorder the convolution factors,
we obtain isomorphisms:
\begin{equation*} 
L^e(\underline{m}) \circ L^e(\underline{n}) \simeq L^e(\underline{m}+\underline{n}), \ \Delta^e(\underline{m}) \circ \Delta^e(\underline{n}) \simeq \Delta^e(\underline{m}+\underline{n}) \ (\underline{m},\underline{n} \in \mathbb{Z}_{\geq 0}^{\mathring{I}}). 
\end{equation*}
These, in turn, induce the following homomorphisms among algebras of homogeneous endomorphisms: 
\begin{align} \label{eq:intro} 
&\grend_R(\Delta^e(\underline{m})) \otimes \grend_R(\Delta^e(\underline{n})) \xrightarrow{can} \grend_R(\Delta^e(\underline{m})\circ \Delta^e(\underline{n})) \simeq \grend_R(\Delta^e(\underline{m}+\underline{n})), \\ 
&\grend_R(L^e(\underline{m})) \otimes \grend_R(L^e(\underline{n})) \xrightarrow{can} \grend_R(L^e(\underline{m})\circ L^e(\underline{n})) \simeq \grend_R(L^e(\underline{m}+\underline{n})). \notag
\end{align}

For $n \in \mathbb{Z}_{\geq 0}$, 
let $H_n$ be the Hecke algebra of the symmetric group of degree $n$ over $\mathbf{k}$ with parameter $t \in \mathbf{k}^{\times}$. 
For $\underline{n} \in \mathbb{Z}_{\geq 0}^{\mathring{I}}$, 
we set
\[
H_{\underline{n}} = H_{n_1} \otimes \cdots \otimes H_{n_N}. 
\] 
Let $\lvert \underline{n} \rvert = \sum_{i \in \mathring{I}} n_i$. 
The main result of this paper is the following quantum imaginary Schur-Weyl duality:

\begin{theorem}[Theorem \ref{thm:heckeaction}] \label{thm:intro1}
  For each $\underline{n} \in \mathbb{Z}_{\geq 0}^{\mathring{I}}$, there exist isomorphisms of algebras
  \[
 \grend_{R(\lvert \underline{n} \rvert \delta)} (\Delta^e(\underline{n})) \simeq \grend_{R(\lvert \underline{n}\rvert\delta)} (L^e(\underline{n})) \simeq H_{\underline{n}},
  \]
  through which the homomorphisms in (\ref{eq:intro}) coincide with the canonical inclusion (see Subsection \ref{sub:heckeaction})
  \[
  H_{\underline{m}}\otimes H_{\underline{n}} \to H_{\underline{m}+\underline{n}}. 
  \]
\end{theorem}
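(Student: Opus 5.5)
First reduce to a single colour. Because the factors $\Delta_i^e(\delta)$ for distinct $i$ commute via the fixed isomorphisms, I will first show that $\grend_R(\Delta^e(\underline{n}))$ is the tensor product $\bigotimes_i \grend_R(\Delta_i^e(\delta)^{\circ n_i})$, and likewise for $L^e$. The tool is a Mackey-type filtration of $\mathrm{Res}_{n_1\delta,\ldots,n_N\delta}\,\Delta^e(\underline{n})$ in the stratum $\gMod{R^e}$. Only the identity double coset survives: the other terms involve restrictions $e(\delta-\alpha_i,\alpha_i)$ placed against the wrong colour, and these vanish in $R^e(\delta)$ by the defining property of $L_i^e(\delta)$ and the ideal defining $R^e$. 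This uses the adjunction $\Hom(\Ind M, X)\simeq \Hom(M,\Res X)$. So it suffices to treat $\underline{n}=n\varepsilon_i$, with $\Delta=\Delta_i^e(\delta)^{\circ n}$ and $L=L_i^e(\delta)^{\circ n}$.

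Next, construct the Hecke action for one colour. I will build an endomorphism $\tau$ of $\Delta_i^e(\delta)^{\circ 2}$ (and of $L_i^e(\delta)^{\circ 2}$) of degree $0$, coming from an intertwiner (R-matrix) $\Delta_i^e(\delta)\circ\Delta_i^e(\delta)\to\Delta_i^e(\delta)\circ\Delta_i^e(\delta)$. Concretely, take a KLR diagram that crosses the two $\delta$-blocks, applied to the generating vector $v\otimes v$, where $v$ generates $\Delta_i^e(\delta)$ as a cyclic module. Dividing the degree-positive part by the appropriate power of the spectral parameter gives a degree-$0$ map. Set $T=\tau - c$ for a suitable constant $c$. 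The key computation is the quadratic relation $(T-t)(T+1)=0$, up to normalization. To get it, compute $\tau^2$ on the generator: the double crossing reduces by the KLR relations to products of $Q_{j,j+1}$. With the normalized choice of $Q'$, the only non-symmetric factor is $u-tv$ at $i=N$ (or $(u-v)(v-tu)$ when $N=1$), and this produces the scalar $t$. The braid relation on $\Delta_i^e(\delta)^{\circ 3}$ follows from the braid relation for intertwiners, or alternatively from reducing via the KLR relations modulo the ideal of $R^e$. Passing to $L$ is by taking heads.

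Then prove the dimension bound and the isomorphism. The Mackey filtration gives $\dim\grend(\Delta)\le n!$. Indeed, $\Res_{\delta,\ldots,\delta}\Delta$ has a filtration indexed by $\mathfrak S_n$ with subquotients isomorphic to $\Delta_i^e(\delta)^{\otimes n}$ up to shift, and only degree-$0$ homs are counted. Here I use that $\Delta_i^e(\delta)$ has graded endomorphism ring $\mathbf{k}$ in degree $0$, with positive-degree part killed after taking $\grend$ in $\gMod{R^e}$, together with the Ext-vanishing between strata. Similarly $\dim\grend(L)\le n!$. The elements $T_w$, for $w\in\mathfrak S_n$, act on $L$, and they are linearly independent. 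To see this, look at $e(\delta-\alpha_i,\alpha_i,\ldots)$-weight spaces, where the $T_w$ act as distinct permutations of tensor factors plus lower terms. So $H_n\to\grend(L)$ is an isomorphism. Lifting along $\Delta\twoheadrightarrow L$ and using projectivity in the stratum gives $\grend(\Delta)\to\grend(L)$. Surjectivity holds because the $T_w$ lift, and injectivity follows from the dimension count. Compatibility with the maps in (\ref{eq:intro}) is immediate, since the generators $T_k$ act on adjacent factors.

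The main obstacle is the quadratic relation with exactly the parameter $t$. This requires an explicit description of $\Delta_i^e(\delta)$ and of the intertwiner $\tau$, and a careful diagrammatic computation in which the factor $u-tv$ must be tracked through the cycle $i,i+1,\ldots,N,0,\ldots$. I would first try the minimal-word idempotent, $v\in e(i+1,\ldots,N,0,1,\ldots,i)\Delta_i^e(\delta)$, and compute $\tau^2 v$ as a polynomial in dots evaluated at the degree-$0$ part.
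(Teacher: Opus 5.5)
The overall architecture matches the paper: the $n!$ dimension count, linear independence of the $T_w$ via leading terms in the Mackey filtration, and transfer between $\Delta$ and $L$. However, the step that carries the theorem is missing. You never produce a nonconstant $\mathsf{T}_i\in\grend_R(L_i^e(\delta)^{\circ 2})$ with $(\mathsf{T}_i+1)(\mathsf{T}_i-t)=0$ and the braid relation in type $A_N^{(1)}$, and the mechanisms you sketch would not work.

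Take the block crossing $\tau\colon u\boxtimes u'\mapsto\tau_{w[N+1,N+1]}(u'\boxtimes u)$.
\begin{itemize}
\item It already has degree $-(\delta,\delta)=0$, so no spectral-parameter renormalisation is involved. That it is a well-defined endomorphism must still be checked; the paper does this by hand only for $N=1$.
\item Its square is not a product of $Q_{j,j+1}$'s. Each colour occurs once in $\delta$, so straightening all double crossings produces a factor $Q_{j,j}=0$. The true relation $\tau^2=-(1+t)\tau$ comes entirely from correction terms.
\item So $\tau$ is singular, and you are forced to take $\mathsf{T}_i=\tau+t$. Then the braid relation cannot come from a Yang--Baxter identity for $\tau$. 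If $\tau$ satisfied the braid relation and $\tau^2=a\tau+b$, then $\tau-c$ would satisfy it only for $c\in\{0,a\}$, and both choices keep an eigenvalue $0$.
\item Indeed $\tau^{2,3}\tau^{1,2}\tau^{2,3}-\tau^{1,2}\tau^{2,3}\tau^{1,2}=t(\tau^{2,3}-\tau^{1,2})\neq 0$ (Lemma \ref{lem:taubraid}), which arises from the $\overline{Q}$ terms.
\end{itemize}
What remains is a direct diagrammatic computation on $2(N+1)$ and $3(N+1)$ strands for every $N$ and $i$, which you do not carry out. Your suggested starting vector in $e(i+1,\ldots,N,0,\ldots,i)\Delta_i^e(\delta)$ is even zero for $i<N$, since every word in the support of an $R^e(\delta)$-module begins with $0$.

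The missing idea is the paper's reduction to rank one. It does the explicit computation only in type $A_1^{(1)}$ (Subsection \ref{sub:A1case}). It then transports it through the monoidal functor $\mathcal{F}_i\colon\gMod{\underline{R}}\to\gMod{R}$, built from $\Delta^e(\delta-\alpha_i)$ and $R(\alpha_i)$, which sends $\underline{\Delta}_{\underline{1}}^e(\underline{\delta})$ to $\Delta_i^e(\delta)$. The crucial input is Proposition \ref{prop:computeQ}: the induced type $A_1^{(1)}$ polynomial is $(u-v)(v-tu)$, so it carries the same parameter $t$.

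Two further points.

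First, compatibility with the maps (\ref{eq:intro}) is not immediate for the part that reorders colours. The $\kappa_{j,i}$ are merely chosen isomorphisms, not a natural braiding. Transporting $\mathsf{T}_i\circ\mathrm{id}$ through $L_i^e(\delta)^{\circ 2}\circ L_j^e(\delta)\simeq L_j^e(\delta)\circ L_i^e(\delta)^{\circ 2}$ gives some nonscalar solution of the Hecke quadratic relation in the two-dimensional algebra $\grend_R(L_j^e(\delta)\circ L_i^e(\delta)^{\circ 2})$. A priori this need not be $\mathrm{id}\circ\mathsf{T}_i$; it could be, for example, $-(\mathrm{id}\circ\mathsf{T}_i)+t-1$. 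Ruling this out is item (d) in the paper. It uses a leading-term comparison when the characteristic of $\mathbf{k}$ is not $2$. In characteristic $2$ with $t\neq 1$ it uses an eigenspace decomposition together with the multiplicity-one result of Lemma \ref{lem:string}.

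Second, your reduction to one colour is justified incorrectly. The non-identity terms of the Mackey filtration of $\Res_{n_1\delta,\ldots,n_N\delta}\Delta^e(\underline{n})$ do not vanish (compare Proposition \ref{prop:imaginaryMackey}). The tensor factorisation of $\grend$ holds instead by the dimension count of Lemma \ref{lem:tensorspacehom}. That count rests on $\dim_q\Hom_R(\Delta_i^e(\delta),\Delta_j^e(\delta))\in\delta_{i,j}+q\mathbb{Q}[q]_{(q)}$ (Lemma \ref{lem:qdimimaginary}, via Proposition \ref{prop:almostorth}), not on Ext-vanishing between strata.
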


The main challenge in proving this theorem lies in finding appropriate endomorphisms that satisfy the defining relations of $H_{\underline{n}}$. 
In the case of type $A_1^{(1)}$, we explicitly construct these endomorphisms and verify the relations by direct calculation. 
To reduce the general case of type $A_N^{(1)}$ to type $A_1^{(1)}$, 
we employ certain monoidal functors
\[
\mathcal{F}_i \colon \gMod{\underline{R}} \to \gMod{R} \ (i \in \mathring{I}), 
\]
which relate the quiver Hecke algebra $R$ of type $A_N^{(1)}$ to the quiver Hecke algebra $\underline{R}$ of type $A_1^{(1)}$, following the approaches in \cite{MR3670026} and \cite{MR3790066}. 

\subsection{Quantum imaginary Howe duality}

For $\underline{n} \in \mathbb{Z}_{\geq 0}^{\mathring{I}}$ with $\lvert \underline{n} \rvert = \sum_i n_i = n$, 
we define a finite-dimensional graded algebra
\[
\mathscr{S}_{\underline{n}} = R^e(n\delta)/\Ann (L^e(\underline{n})). 
\]
For $n \geq 0$, let $\mathscr{S}_n$ be the direct product of all $\mathscr{S}_{\underline{n}}$ over $\underline{n} \in \mathbb{Z}_{\geq 0}^{\mathring{I}}$ such that $\lvert \underline{n} \rvert = n$. 
By inflation, we obtain a functor $\gMod{\mathscr{S}_n} \to \gMod{R^e(n\delta)}$, which is shown to be fully faithful (Corollary \ref{cor:ff}). 

Let $\underline{n} \in \mathbb{Z}_{\geq 0}^{\mathring{I}}$. 
By Theorem \ref{thm:intro1} and the definition of $\mathscr{S}_{\underline{n}}$, $L^e(\underline{n})$ is a left $\mathscr{S}_{\underline{n}} \otimes H_{\underline{n}}$-module. 
By using the antiautomorphism of $H_{\underline{n}}$ that fixes all the standard generators, 
we regard $L^e(\underline{n})$ as an $(\mathscr{S}_{\underline{n}},H_{\underline{n}})$-bimodule. 
For $\underline{h} \in \mathbb{Z}_{\geq 0}^{\mathring{I}}$, we define the set
\[
X(\underline{h},\underline{n}) = \prod_{i \in \mathring{I}} X(h_i,n_i), 
\]
where $X(h_i,n_i)$ denotes the set of all compositions $\lambda^{(i)} = (\lambda_1^{(i)},\ldots,\lambda_{h_i}^{(i)})$ of $n_i$. 
For $\underline{\lambda} = (\lambda^{(i)}) \in X(\underline{h},\underline{n})$, 
we define an $\mathscr{S}_{\underline{n}}$-submodule $Z^{\underline{\lambda}}$ of $L^e(\underline{n})$ by
\[
Z^{\underline{\lambda}} = \{ v \in L^e(\underline{n}) \mid v \cdot \iota_i (T_w) = (-1)^{\ell(w)}v \ (i \in \mathring{I}, w \in \mathfrak{S}_{\lambda^{(i)}}) \}, 
\]
where $\iota_i \colon H_{n_i} \to H_{\underline{n}}$ is the inclusion into the $i$-th factor and $T_w$ is the standard basis element of $H_{n_i}$. 
Let $Z^{\underline{h},\underline{n}}$ be the direct sum of all $Z^{\underline{\lambda}}$ over $\underline{\lambda} \in X(\underline{h},\underline{n})$. 

For $\underline{n}, \underline{h} \in \mathbb{Z}_{\geq 0}^{\mathring{I}}$, 
we define an algebra
\[
S_{\underline{h},\underline{n}} = S_{h_1,n_1} \otimes \cdots \otimes S_{h_N,n_N},
\]
where $S_{h_i,n_i}$ is the $t$-Schur algebra over $\mathbf{k}$ with the same parameter $t \in \mathbf{k}^{\times}$ as that of $R$. 
We refer the reader to Subsection \ref{sub:tSchur} for the precise definition and notation concerning the $t$-Schur algebra. 

Based on Theorem \ref{thm:intro1}, we obtain the following quantum imaginary Howe duality generalizing \cite{MR3589160}. 

\begin{theorem}[Theorem \ref{thm:howe}] \label{thm:intro2}
(1) The module $Z^{\underline{h},\underline{n}}$ is a projective $\mathscr{S}_{\underline{n}}$-module, 
and there exists an isomorphism of graded $\mathbf{k}$-algebras
\[
\End_{\mathscr{S}_{\underline{n}}} ( Z^{\underline{h},\underline{n}})^{\mathrm{op}} \simeq S_{\underline{h},\underline{n}}, 
\]
where $S_{\underline{h},\underline{n}}$ is regarded as a graded algebra concentrated in degree zero. 

(2) If $h_i \geq n_i$ for all $i \in \mathring{I}$, then $Z^{\underline{h},\underline{n}}$ is a projective generator of $\mathscr{S}_{\underline{n}}$. 
In this case, the functors
\begin{align*}
\alpha_{\underline{h},\underline{n}} \coloneqq \Hom_{\mathscr{S}_{\underline{n}}}(Z^{\underline{h},\underline{n}},?) \colon \gMod{\mathscr{S}_{\underline{n}}} &\to \gMod{S_{\underline{h},\underline{n}}}, \\
\beta_{\underline{h},\underline{n}} \coloneqq Z^{\underline{h},\underline{n}}\otimes_{S_{\underline{h},\underline{n}}} ? \colon \gMod{S_{\underline{h},\underline{n}}} &\to \gMod{\mathscr{S}_{\underline{n}}}, 
\end{align*}
are mutually quasi-inverse equivalences. 
Moreover, the following diagram commutes up to natural isomorphisms: 
\[
\begin{tikzcd}
\gMod{\mathscr{S}_{\underline{n}}} \arrow[rr,yshift=0.7ex,"\alpha_{\underline{h},\underline{n}}"]\arrow[rd,"\gamma_{\underline{n}}"'] && \gMod{S_{\underline{h},\underline{n}}} \arrow[ll,yshift=-0.7ex,"\beta_{\underline{h},\underline{n}}"]\arrow[ld,"f_{\underline{h},\underline{n}}"] \\
& \gMod{H_{\underline{n}}} & 
\end{tikzcd}
\]
where 
\[
\gamma_{\underline{n}} = \Hom_{\mathscr{S}_{\underline{n}}}(L^e(\underline{n}),?), 
\]
and $f_{\underline{h},\underline{n}}$ is the Schur functor (see Theorem \ref{thm:howe} for its precise definition). 
\end{theorem}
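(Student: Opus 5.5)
The plan is to adapt the classical Schur--Weyl/Howe argument of Kleshchev--Muth (in the form used for Hecke algebras, cf.\ Dipper--James) to the present setting, taking Theorem \ref{thm:intro1} as the input.

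\textbf{Step 1: identify $L^e(\underline{n})$ with a Hecke module.} First show that $L^e(\underline{n})$, as a right $H_{\underline{n}}$-module, is a direct sum of copies of the regular module $H_{\underline{n}}$, or at least that it is faithful and free of rank $\dim e L^e(\underline{n})$ for a suitable idempotent $e$ of $\mathscr{S}_{\underline{n}}$. The natural source for this is $\Delta^e(\underline{n})$. Since $\Delta^e(\underline{n})$ is projective in $\gMod{R^e(n\delta)}$ with $\grend(\Delta^e(\underline{n}))\simeq H_{\underline{n}}$, I would realize $L^e(\underline{n})$ as $\Delta^e(\underline{n})$ modulo its positively graded radical-type submodule. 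Theorem \ref{thm:intro1} states that both endomorphism algebras equal $H_{\underline{n}}$ in degree zero, which gives compatibility of the two actions. Then $\mathscr{S}_{\underline{n}}$ acts faithfully on $L^e(\underline{n})$ by definition, and $\End_{\mathscr{S}_{\underline{n}}}(L^e(\underline{n}))=H_{\underline{n}}$.

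\textbf{Step 2: construct and identify $Z^{\underline{\lambda}}$.} Set $x_\lambda=\sum_{w\in\mathfrak{S}_\lambda}(-t)^{-\ell(w)}T_w$, suitably normalized, taken as a product over $i\in\mathring{I}$, so that $Z^{\underline{\lambda}}=L^e(\underline{n})\,x_{\underline{\lambda}}$ up to the invertibility issues handled as in Dipper--James. Equivalently, $Z^{\underline{\lambda}}\cong \Hom_{H_{\underline{n}}}(\mathrm{sgn}\text{-}x_{\underline{\lambda}}H_{\underline{n}},L^e(\underline{n}))$. To obtain projectivity I would exhibit $Z^{\underline{\lambda}}$ as a direct summand of a module of the form $\Delta$-convolutions truncated by $\Ann L^e(\underline{n})$. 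Concretely, the sign-invariants of $L_i^e(\delta)^{\circ m}$ under $H_m$ should be the head of an inflated ``divided power'' projective. This step I would prove by reduction to $A_1^{(1)}$ through the functors $\mathcal{F}_i$, where it is a direct computation. Since $Z^{\underline{\lambda}}$ is a direct summand of $L^e(\underline{n})$ when $x_{\underline{\lambda}}$ is a scalar multiple of an idempotent, projectivity follows in the semisimple-parameter case. In general I would rather argue that $L^e(\underline{n})$ is itself a projective $\mathscr{S}_{\underline{n}}$-module, being the image of $\Delta^e(\underline{n})$, and that $Z^{\underline{\lambda}}$ is its image under a split idempotent of $\End$. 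This is the main obstacle: showing $Z^{\underline{\lambda}}$ is projective without semisimplicity of $H_{\underline{n}}$.

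\textbf{Step 3: compute the endomorphism algebra and the equivalence.} Granting the above, we get
\[
\End_{\mathscr{S}_{\underline{n}}}(Z^{\underline{h},\underline{n}})^{\mathrm{op}}\simeq \End_{H_{\underline{n}}}\Bigl(\bigoplus_{\underline{\lambda}} x_{\underline{\lambda}}H_{\underline{n}}\Bigr).
\]
This follows by the double-centralizer property $\End_{\mathscr{S}}(L)=H$ together with faithfulness of the Hom functor on summands. The right-hand side is by definition the $t$-Schur algebra $S_{\underline{h},\underline{n}}$, concentrated in degree zero because $H_{\underline{n}}$ acts in degree zero. For (2), when $h_i\ge n_i$ the composition $(1^{n_i})$ appears, so $L^e(\underline{n})$ itself is a summand of $Z^{\underline{h},\underline{n}}$. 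It remains to show every simple $\mathscr{S}_{\underline{n}}$-module is a quotient of $L^e(\underline{n})$. This holds because $\mathscr{S}_{\underline{n}}$ embeds into $\End_{H}(L)$ and $L$ is projective and faithful, so every simple appears in its head. Then Morita theory gives that $\alpha$ and $\beta$ are mutually quasi-inverse. Finally, $\gamma_{\underline{n}}\cong f_{\underline{h},\underline{n}}\circ\alpha_{\underline{h},\underline{n}}$ because $f$ is $\Hom$ from the summand of $S$ corresponding to $(1^{n_i})$, which $\alpha$ sends to $\Hom(L^e(\underline{n}),?)$.
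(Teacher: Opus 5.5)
\textbf{Gap: the plan only works when $H_{\underline n}$ is semisimple.} The theorem is about arbitrary $t$ and $\mathrm{char}\,\mathbf k$, and your key steps break down outside the semisimple case.

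\emph{Step 2.} You want $Z^{\underline\lambda}=L^e(\underline n)\,y_{\underline\lambda}$ to be the image of a split idempotent of $\End_{\mathscr S_{\underline n}}(L^e(\underline n))=H_{\underline n}$. But $y_\lambda^2=\bigl(\sum_{w\in\mathfrak S_\lambda}t^{\ell(w)}\bigr)y_\lambda$. This scalar vanishes as soon as some part of $\lambda$ is at least the quantum characteristic of $t$, so $y_\lambda$ is then not a multiple of an idempotent. Worse, $Z^{\underline\lambda}$ is then in general not a direct summand of any sum of shifts of $L^e(\underline n)$. To see this, look through the equivalence you are trying to prove:
\begin{itemize}
\item $L^e(\underline n)=Z^{(1^{n_i})_i}$ corresponds to $S_{\underline h,\underline n}\varphi((1^{n_i})_i)$, the analogue of $V^{\otimes n}$. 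Its head contains only the $L_{\underline h}(\underline\mu)$ with $f_{\underline h,\underline n}L_{\underline h}(\underline\mu)\neq 0$.
\item $Z^{\underline\lambda}$ corresponds to $S_{\underline h,\underline n}\varphi(\underline\lambda)$, which has $L_{\underline h}(\underline\lambda)$ in its head.
\item When $H_{\underline n}$ is not semisimple it has fewer simple modules than $|X_+(\underline n)|$, so $f_{\underline h,\underline n}L_{\underline h}(\underline\lambda)=0$ for some $\underline\lambda$. For such $\underline\lambda$, $Z^{\underline\lambda}$ cannot be a summand.
\end{itemize}

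\emph{Step 3.} The same observation refutes your generator argument. $L^e(\underline n)$ is faithful and projective over $\mathscr S_{\underline n}$, but it is not a generator in the non-semisimple case. So ``every simple appears in its head'' is false. Your identification of $\End_{\mathscr S_{\underline n}}(Z^{\underline h,\underline n})^{\mathrm{op}}$ via the double centralizer property also rests on the $Z^{\underline\lambda}$ being summands of $L^e(\underline n)$, so it falls too.

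\emph{Step 1.} Projectivity of $L^e(\underline n)$ is true, but ``being the image of $\Delta^e(\underline n)$'' is not a reason, since quotients of projectives need not be projective. One correct route: $\Res_{\delta^n}$ of any $\mathscr S_{\underline n}$-module embedded in a sum of shifts of $L^e(\underline n)$ is semisimple. This forces $\mathscr S_{\underline n}\otimes_R\Delta^e(\underline n)\simeq L^e(\underline n)$.

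\textbf{The missing idea.} The projectives $Z^{\underline\lambda}$ must come from idempotents of $R(n\delta)$, i.e.\ of $\mathscr S_{\underline n}$, not of $H_{\underline n}$; such idempotents exist for every $t$ and every characteristic. The paper does not rebuild this. It runs Kleshchev--Muth's proof, where this role is played by imaginary Gelfand--Graev idempotents attached to doubled words such as $\nu^{(2)}=(\nu_1,\nu_1,\ldots,\nu_{N+1},\nu_{N+1})$. It changes only the two $t$-dependent computations:
\begin{itemize}
\item The vector $\tau_{u_0}(v\boxtimes v)$ spans the one-dimensional top degree part of $e(\nu^{(2)})L_i^e(\delta)^{\circ 2}$. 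Hence $\tau_{u_0}\tau_{w[N+1,N+1]}(v\boxtimes v)=a\,\tau_{u_0}(v\boxtimes v)$ for some scalar $a$, and Lemma~\ref{lem:Tiform} together with $(T_1+1)(T_1-t)=0$ forces $a=-(t+1)$.
\item Consequently $\tau_{u_0}(v\boxtimes v)T_1=-\tau_{u_0}(v\boxtimes v)$, i.e.\ the Gelfand--Graev vector lies in the sign part. This is what lets the Gelfand--Graev projectives be matched with the $Z^{\underline\lambda}$.
\end{itemize}

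Your proposed detour through $\mathcal F_i$ would not supply this. $\mathcal F_i$ is only right exact, whereas $Z^{\underline\lambda}$ is defined by invariants, a left exact construction.
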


It is well known that $S_{\underline{h},\underline{n}}$ is quasi-hereditary with respect to the poset $X_+(\underline{h},\underline{n})$ of multipartitions, 
where the partial order is the dominance order. 
Specifically, for each $\underline{\lambda}$, 
there exists a unique simple module $L_{\underline{h}}(\underline{\lambda})$ of highest weight $\underline{\lambda}$. 
The standard module for the $t$-Schur algebra is called the Weyl module and is denoted by $W_{\underline{h}}(\underline{\lambda})$, 
to distinguish it from the standard modules of the quiver Hecke algebra.  
For a multipartition $\underline{\lambda}$ of $\underline{n}$, we define $\mathscr{S}_{\underline{n}}$-modules
\[
L^e(\underline{\lambda}) = \beta_{\underline{h},\underline{n}}(L_{\underline{h}}(\underline{\lambda})), \ W^e(\underline{\lambda}) = \beta_{\underline{h},\underline{n}}(W_{\underline{h}}(\underline{\lambda})), 
\]
which are independent of $\underline{h} \in \mathbb{Z}_{\geq 0}^{\mathring{I}}$ satisfying $h_i \geq n_i$. 
As a corollary of Theorem \ref{thm:intro2}, 
we prove in Corollary \ref{cor:imaginarysimple} that the set 
\[
\{L^e(\underline{\lambda}) \mid \text{$\underline{\lambda} = (\lambda^{(i)})_{i \in \mathring{I}}$ is a multipartition} \}
\]
is a complete set of representatives of isomorphism classes of (self-dual) simple modules in $\gMod{R^e}$. 
Furthermore, we show in Theorem \ref{thm:monoidal} that the equivalence established in Theorem \ref{thm:intro2} (2) is monoidal; 
it intertwines the convolution product for the quiver Hecke algebra and the parabolic induction for the $t$-Schur algebra. 

\subsection{Characters}

By \cite{MR2525917,MR2763732}, there exists an isomorphism between the Grothendieck group of $\gMod{R}$ and the negative part of the quantum group of type $A_N^{(1)}$: 
\[
\chi \colon K(\gMod{R}) \otimes_{\mathbb{Z}[q,q^{-1}]} \mathbb{Q}(q) \to U_q^-(\mathfrak{g}). 
\]
For a module $M \in \gMod{R}$, we interpret $\chi(M) \in U_q^-(\mathfrak{g})$ as the character of $M$.  

We have the following result on the character of $W^e(\underline{\lambda})$: 

\begin{theorem}[{Theorem \ref{thm:Weylcharacter}}] \label{thm:intro2.5}
For any multipartition $\underline{\lambda}$, the character $\chi(W^e(\underline{\lambda})) \in U_q^-(\mathfrak{g})$ belongs to the dual canonical basis. 
\end{theorem}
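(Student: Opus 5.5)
The plan is to identify $W^e(\underline{\lambda})$ with a simple module of a \emph{different} imaginary stratum, or with a determinantial-type module, whose character is known to lie in the dual canonical basis. Simple self-dual modules over $R$ correspond to the dual canonical basis when $\ch \mathbf{k}=0$ and $t=1$. For arbitrary $t$ and characteristic this is not automatic, so I would instead show that $W^e(\underline{\lambda})$ has the same character as a module that is simple in a situation where the correspondence holds.

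\textbf{Step 1: reduce to a single $\lambda^{(i)}$.} By the monoidality of the equivalence (Theorem \ref{thm:monoidal}), $W^e(\underline{\lambda}) \simeq W^e(\lambda^{(1)}) \circ \cdots \circ W^e(\lambda^{(N)})$, with each factor supported in the $i$-th component. The factors for distinct $i$ commute, via the lifts of the isomorphisms $L_j^e(\delta)\circ L_i^e(\delta)\simeq L_i^e(\delta)\circ L_j^e(\delta)$. I would then argue that the convolution of such mutually commuting dual canonical basis elements from different $i$ is again dual canonical, up to a power of $q$ that is fixed by self-duality. Concretely, I would check that the product is real and simple in the $t=1$, characteristic-zero model, so the character is determined there.

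\textbf{Step 2: independence of $t$ and of $\mathbf{k}$.} The character of a Weyl module of $S_{h,n}$ does not depend on $t$ or on the characteristic; it is given by Schur/Kostka data. Via $\beta_{\underline{h},\underline{n}}$, write $W^e(\lambda)=Z^{\underline{h},\underline{n}}\otimes_{S} W_{\underline{h}}(\lambda)$. Here $Z^{\underline{\lambda}}$ is a signed $H$-invariant part of $L^e(\underline{n})$, and its character should be computable independently of $t$ using Theorem \ref{thm:intro1}; I would show that its dimension is governed by $t$-independent combinatorics. Hence $\chi(W^e(\lambda))$ is independent of $(t,\mathbf{k})$. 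I would then specialize to $t=1$ and $\ch\mathbf{k}=0$, where the Schur algebra is semisimple, so that $W^e(\lambda)=L^e(\lambda)$ is a self-dual simple module. Its character is then a dual canonical basis element by the Varagnolo--Vasserot/Rouquier theorem in symmetric type.

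\textbf{Main obstacle.} Step 2 is the hard part: showing that $\chi(Z^{\underline{\lambda}})$, and hence $\chi(W^e(\lambda))$, is independent of $t$. I would first try to realize $Z^{\underline{\lambda}}$ directly as a convolution of $t$-independent modules. Explicitly, the $(-1)$-eigenspace of the $T_w$, $w\in\mathfrak{S}_{\lambda_k}$, on $L_i^e(\delta)^{\circ \lambda_k}$ should be the "imaginary exterior power" $\Lambda^{\lambda_k}_i$. This module can be described as a simple module of $\underline{R}$ of type $A_1^{(1)}$ transported by $\mathcal{F}_i$, with character computable by a $t$-free formula. Then $Z^{\underline{\lambda}}\simeq \Lambda^{\lambda_1}_i\circ\cdots\circ\Lambda^{\lambda_h}_i$. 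Its character is therefore a $t$-independent product, and $W^e(\lambda)$ is obtained from these by the standard Jacobi--Trudi-type alternating sum, which is also $t$-independent.
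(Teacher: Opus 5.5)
\textbf{Verdict: there is a genuine gap.} Everything rests on your Step 2, the independence of $\chi(W^e(\underline\lambda))$ from $(\mathbf{k},t)$, and the mechanism you give for it does not work.

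\emph{Kostka data do not control $R$-characters.} Kostka numbers describe the $S_{\underline h,\underline n}$-weight spaces of $W_{\underline h}(\underline\lambda)$, not the $R$-character of $\beta_{\underline h,\underline n}(W_{\underline h}(\underline\lambda))$. In fact $\chi(W^e(\underline\lambda))=\sum_{\underline\mu}[W_{\underline h}(\underline\lambda):L_{\underline h}(\underline\mu)]\,\chi(L^e(\underline\mu))$, and both the decomposition numbers and the $\chi(L^e(\underline\mu))$ depend on $(\mathbf{k},t)$.

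\emph{The reduction to one-row pieces only relocates the problem.} Passing to the $Z^{\underline\mu}$ is legitimate: they correspond to the projectives $S\varphi(\underline\mu)$, which have Weyl filtrations with Kostka multiplicities, and by exactness of $\circ$ they factor into one-row pieces. But you then need $\chi(\Lambda_i^m)$ to be independent of $(\mathbf{k},t)$, and you give no argument for this. It is not formal: $\Lambda_i^m=\bigcap_k\Ker(T_k+1)$ is cut out by operators whose matrix entries lie in $\mathbb{Z}[t^{\pm1}]$ (Lemma \ref{lem:Tiform}). Kernel dimensions can jump under specialization.

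\emph{Your description of $\Lambda_i^m$ is false.} Since $(m)$ is maximal in the dominance order, $\Lambda_i^m=Z^{(m)}=\beta(S\varphi((m)))$ is the module $W^e(\underline\lambda)$ with $\lambda^{(i)}=(m)$ and all other entries empty. This module is not simple once $t$ has finite quantum characteristic; for $t=-1$, $m=2$ it has $L_i^e((1,1))$ as a second composition factor. Transporting from $\underline R$ does not help. The same problem occurs verbatim in type $A_1^{(1)}$, and $\mathcal{F}_i$ is only right exact, so it need not send sign-invariant submodules to sign-invariant submodules.

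\emph{Step 1 also contains false claims.} A product of commuting dual canonical basis elements is not dual canonical up to a power of $q$ in general. Also, $L_i^e(\delta)$ is never real, since $\End_R(L_i^e(\delta)^{\circ 2})\simeq H_2$ is two-dimensional. Step 1 becomes superfluous anyway once full independence is known.

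\emph{The missing idea is an integral form.} The paper takes $\mathcal{O}=F[z^{\pm1}]$ with $t=z$. It uses the $\mathcal{O}$-forms of $L_i^e(\delta)$ and of the Hecke action from Subsection \ref{sub:lattices}, and runs the Kleshchev--Muth argument. This gives an $\mathscr{S}_{\underline n,\mathcal O}$-module $W^e_{\mathcal O}(\underline\lambda)$, free of finite rank over $\mathcal O$, that specializes to $W^e_{\mathbf k}(\underline\lambda)$ for every $(\mathbf{k},t)$. Freeness fixes $\dim_q e(\nu)W$, hence $\chi(W^e_{\mathbf k}(\underline\lambda))=\chi(W^e_{\mathcal K}(\underline\lambda))$ with $\mathcal K=F(z)$.

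This never leaves the characteristic of $\mathbf k$, so it lands at generic $t$, not at $t=1$ in characteristic $0$. That is why the paper does not invoke Varagnolo--Vasserot/Rouquier and argues in the semisimple case directly. There $W^e=L^e$ and $\langle\Delta^e(\boldsymbol c),L^e(\underline\lambda)\rangle_{\mathrm{Ext}}=\delta_{\boldsymbol c,(0,\underline\lambda,0)}$. Moreover $\chi(\Delta^e(\boldsymbol c))=f^{\preceq}_{\boldsymbol c}$, which is the substantial input, by Theorem \ref{thm:characterformula} and Corollary \ref{cor:characterformula}. So $\chi(L^e(\underline\lambda))$ is the vector dual to the PBW basis element indexed by $(0,\underline\lambda,0)$, and it is dual canonical by unitriangularity of the PBW-to-canonical transition.

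Your geometric endgame could replace this last step in type $A_N^{(1)}$. But that requires first constructing Weyl-module lattices over $\mathbb{Z}[t^{\pm1}]$, not merely for $L_i^e(\delta)$. It would also not cover the non-symmetric types treated in the body of the paper.
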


This implies that the characters of simple modules in the imaginary strata are computed in terms of the dual canonical basis and the decomposition matrix of the $t$-Schur algebra. 
In particular, if the characteristic of $\mathbf{k}$ is zero, 
we obtain an algorithm to  compute them using the canonical basis of the Fock space representation or certain parabolic Kazhdan-Lusztig polynomials; see Remark \ref{rem:character}. 

Now, assume that the quantum characteristic of $t \in \mathbf{k}$ is infinite; 
that is, there exists no $e \in \mathbb{Z}_{\geq 1}$ such that $1 + t + \cdots + t^{e-1} = 0$. 
Under this assumption, it is well known that the $t$-Schur algebra is semisimple and the Schur functor $f_{\underline{n},\underline{n}} \colon \gMod{S_{\underline{h},\underline{n}}} \to \gMod{H_{\underline{n}}}$ is an equivalence. 
For a multipartition $\underline{\lambda}$ of $\underline{n}$, 
the module $Sp(\underline{\lambda}) = f_{\underline{n},\underline{n}}(L_{\underline{n}}(\underline{\lambda}))$ is the Specht module of $H_{\underline{n}}$, 
and we have an isomorphism
\[
L^e(\underline{\lambda}) \simeq L^e(\underline{n}) \otimes_{H_{\underline{n}}} Sp(\underline{\lambda}). 
\]
We define 
\[
\Delta^e(\underline{\lambda}) = \Delta^e(\underline{n}) \otimes_{H_{\underline{n}}} Sp(\underline{\lambda}). 
\]
Its character is described as follows:

\begin{theorem}[Theorem \ref{thm:characterformula}] \label{thm:intro3}
Assume that the quantum characteristic of $t$ is infinite. 
For any multipartition $\underline{\lambda}$, 
the module $\Delta^e(\underline{\lambda})$ is the projective cover of the simple module $L^e(\underline{\lambda})$ in $\gMod{R^e}$, 
and its character satisfies 
\[
\chi(\Delta^e(\underline{\lambda})) = S_{\underline{\lambda}}^e, 
\]
where $S_{\underline{\lambda}}^e$ is the imaginary root vector introduced by Beck-Nakajima \cite{MR2066942}; see Subsection \ref{sub:affinePBW}. 
\end{theorem}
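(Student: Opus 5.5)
The plan has two parts: first show that $\Delta^e(\underline{\lambda})$ is the projective cover of $L^e(\underline{\lambda})$, then compute its character by comparing with McNamara's $t=1$ argument and the Beck--Nakajima definition of $S^e_{\underline{\lambda}}$.

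\textbf{Projective cover.} Since $\Delta_i^e(\delta)$ is projective in $\gMod{R^e(\delta)}$, and convolution of projectives in the stratum stays projective (this is part of the stratification in Subsection \ref{sub:stratifications}), the module $\Delta^e(\underline{n})$ is projective in $\gMod{R^e(n\delta)}$. By Theorem \ref{thm:intro1} we have $\grend(\Delta^e(\underline{n}))\simeq H_{\underline{n}}$. Under the infinite quantum characteristic assumption, $H_{\underline{n}}$ is semisimple, and its regular module decomposes as $\bigoplus_{\underline{\lambda}} Sp(\underline{\lambda})^{\oplus \dim Sp(\underline{\lambda})}$. Hence $\Delta^e(\underline{n})\otimes_{H_{\underline{n}}} Sp(\underline{\lambda})$ is the image of a primitive idempotent of $H_{\underline{n}}$ acting on $\Delta^e(\underline{n})$. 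It is therefore a direct summand of a projective, so projective, and it is indecomposable because $\grend(\Delta^e(\underline{n}))=H_{\underline{n}}$ is concentrated in degree $0$.

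The head of $\Delta^e(\underline{n})$ is $L^e(\underline{n})$; more precisely, $\Delta^e(\underline{n})/\mathrm{rad}$ maps onto $L^e(\underline{n})$ compatibly with the Hecke actions, since the isomorphisms of Theorem \ref{thm:intro1} are compatible. Applying $?\otimes_{H_{\underline{n}}} Sp(\underline{\lambda})$, which is exact, shows that $\Delta^e(\underline{\lambda})$ surjects onto $L^e(\underline{\lambda})\simeq L^e(\underline{n})\otimes_{H_{\underline{n}}}Sp(\underline{\lambda})$. Indecomposability then forces $\Delta^e(\underline{\lambda})$ to be the projective cover.

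\textbf{Character.} Here I would compare with the $t=1$, characteristic-zero case treated by McNamara \cite{MR3694676}. Step one is to express $\chi(\Delta_i^e(\delta))$: it is the $t$-independent imaginary root vector of degree $\delta$, since the module $\Delta_i^e(\delta)$ has a character computable from the real strata and reflection functors, or from Theorem \ref{thm:intro2.5} applied to $n=1$. Step two uses $\Delta^e(\underline{n})\simeq \bigoplus_{\underline{\lambda}}\Delta^e(\underline{\lambda})^{\oplus\dim Sp(\underline{\lambda})}$ to obtain
\[
\chi(\Delta^e(\underline{n}))=\prod_i \chi(\Delta_i^e(\delta))^{n_i}=\sum_{\underline{\lambda}} \dim Sp(\underline{\lambda})\,\chi(\Delta^e(\underline{\lambda})).
\]
More refined information comes from the Howe duality (Theorem \ref{thm:intro2}) and its monoidality (Theorem \ref{thm:monoidal}): the modules $Z^{\underline{\lambda}}\otimes$-type projectives $\Delta^e(\underline{n})\otimes_{H_{\underline{n}}}(\text{sign/trivial induced from }\mathfrak{S}_{\mu})$ have characters that are products of "elementary" imaginary vectors. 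These are the $q$-analogues of Schur-function expansions used by Beck--Nakajima to define $S^e_{\underline{\lambda}}$. Since $H_{\underline{n}}$ is semisimple with the same Kostka numbers as the symmetric group, inverting the Kostka matrix gives $\chi(\Delta^e(\underline{\lambda}))$ as the Schur polynomial in the generating imaginary vectors, which matches the definition of $S^e_{\underline{\lambda}}$ in Subsection \ref{sub:affinePBW}.

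\textbf{Main obstacle.} I expect the hardest step to be identifying the characters of the building blocks (the exterior or symmetric power analogues $\Delta^e$ of one-row/one-column shapes) with the Beck--Nakajima generators, including the correct $q$-powers. My first attempt would be to note that characters are independent of $t$: $\Delta^e(\underline{n})$ has a $t$-independent character, and for semisimple $H_{\underline{n}}$ the multiplicities of Specht summands are generic in $t$. This reduces the identification to McNamara's $t=1$ result. The remaining subtlety is comparing $t$ with $t=1$ over a fixed characteristic-zero field, which I would handle via a flat family over $\mathbf{k}[t^{\pm1}]$ localized away from roots of unity.
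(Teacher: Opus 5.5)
Your projective-cover argument is fine and essentially the paper's. Proposition \ref{prop:semisimpledelta} phrases it as $\dim_q\Hom_R(\Delta^e(\underline{\lambda}),L^e(\underline{\mu}))=\delta_{\underline{\lambda},\underline{\mu}}$, and projectivity of $\Delta^e(\underline{n})$ comes from Proposition \ref{prop:imaginaryind}(2). Your Kostka/Jacobi--Trudi reduction is also sound up to labelling: because the Hecke actions are compatible with convolution, it reduces everything to $\chi(\Delta_i^e((n)))=P^e_{i,n}$ for the one-row blocks. But that identification is the whole content of the theorem, and your route to it has genuine gaps.

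(i) The $t$-independence is not justified. Knowing that $\chi(\Delta^e(\underline{n}))$ and $\dim Sp(\underline{\lambda})$ do not depend on $t$ gives only the single relation $\chi(\Delta^e(\underline{n}))=\sum_{\underline{\lambda}}\dim Sp(\underline{\lambda})\,\chi(\Delta^e(\underline{\lambda}))$, which does not determine the individual summands. You would need an integral form of $\Delta^e(\underline{n})$ together with its Hecke action over a base where the primitive idempotents of $H_{\underline{n}}$ live, with free graded pieces compatible with base change. Note that $R$ itself depends on $t$. The paper constructs lattices only for $L^e$. The base must invert the quantum integers $[e]_t$, not all roots of unity, since the latter would exclude $t=1$.

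(ii) Comparing with McNamara requires the labellings to agree at $t=1$. Twisting the action by the sign character transposes every partition and would give $S^e_{\underline{\lambda}'}$. The paper needs Lemma \ref{lem:choice}, which uses $t\neq -1$, exactly to pin this down.

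(iii) Infinite quantum characteristic allows positive characteristic, e.g.\ $\mathbf{k}=\mathbb{F}_p(z)$ with $t=z$, and the proof of Theorem \ref{thm:Weylcharacter} genuinely uses this case. There you cannot specialize to $t=1$ inside characteristic $p$, and McNamara's geometric input is characteristic zero.

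Also, citing Theorem \ref{thm:intro2.5} for $n=1$ would be circular, since the paper deduces it from this theorem. The correct input is Proposition \ref{prop:imaginarystd}, which gives $\chi(\Delta_i^e(\delta))=\psi^e_{i,1}=P^e_{i,1}$.

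The missing idea is the paper's direct pairing computation, which works for any $t$ and any characteristic and replaces McNamara's geometric Lemma 7.5. One shows that $\eta\colon s_{\underline{\lambda}}\mapsto\chi(L^e(\underline{\lambda}))$ (via Corollary \ref{cor:LRrule}) and $\eta'\colon h_{i,n}\mapsto\overline{P^e_{i,n}}$ are bialgebra isomorphisms onto $B$ and $B'$. For $\eta'$ this uses Proposition \ref{prop:psimodule}, which puts $\psi^e_{i,n}$ into $B'$, and Beck--Nakajima's coproduct formula for $P_{i,n}$. Then $(\eta'(f),\eta(g))=\langle f,g\rangle$ reduces to the generator pairing $(\overline{P^e_{i,n}},\chi(L^e_j(1^n)))=\delta_{i,j}\delta_{n,1}$ (Lemma \ref{lem:innerprod}). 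That pairing is computed from $\psi^e_{i,n}$ by restricting $L^e_j(1^n)$ to $(n\delta-\alpha_i,\alpha_i)$ and applying the $R$-matrix results of Lemmas \ref{lem:width1}, \ref{lem:innerprodSES}, \ref{lem:inductionSES} and \ref{lem:innerprod0}. Finally, $\Extform{\Delta^e(\underline{\lambda})}{L^e(\underline{\mu})}=\delta_{\underline{\lambda},\underline{\mu}}$ forces $\eta'^{-1}(\overline{\chi(\Delta^e(\underline{\lambda}))})=s_{\underline{\lambda}}$, i.e.\ $\chi(\Delta^e(\underline{\lambda}))=S^e_{\underline{\lambda}}$, with no comparison to $t=1$ at all.
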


This theorem generalizes \cite{MR3694676}. 
While our proof largely follows \cite{MR3694676},
additional care is required because the original argument relies on a geometric result \cite[Lemma 7.5]{MR3694676}, 
which is not available in our general setting. 
To overcome this difficulty, we utilize several consequences from the theory of $R$-matrices developed by Kang, Kashiwara, Kim, Oh, and Park \cite{MR3314831,MR3758148,MR3790066} (see Subsection \ref{sub:Rmatrix}). 

\subsection{Other types}

In the main body of this paper, 
we study quiver Hecke algebras of all untwisted affine types simultaneously in a uniform manner. 
Let $R$ be a quiver Hecke algebra of an untwisted affine type other than type $A$.

It is known that the isomorphism class of the quiver Hecke algebra is independent of the choice of polynomials $Q_{i,j}(u,v)$ (see Corollary \ref{cor:Qchoice}); 
thus we may adopt the specific polynomials presented there. 
The category $\gMod{R}$ for these types is also stratified, 
and it remains to study the imaginary strata $\gMod{R^w}$ for $w \in \mathring{W}$. 
Using reflection functors from \cite{murata2025diagrammaticapproachreflectionfunctors}, we prove that $\gMod{R^w}$ is equivalent to $\gMod{R^e}$. 

For $\gMod{R^e}$ associated with the specific choice of polynomials mentioned above, 
imaginary Schur-Weyl duality and imaginary Howe duality have already been established in \cite{MR3589160}. 
Hence, analogues of Theorem \ref{thm:intro1} and Theorem \ref{thm:intro2} hold with $t=1$; 
in this setting, the Hecke algebra always reduces to the group algebra of the symmetric group, and the $t$-Schur algebra becomes the classical Schur algebra. 

Furthermore, we prove analogues of Theorem \ref{thm:intro2.5} and \ref{thm:intro3} for these types. 
These character formulas are new results for positive characteristic or nonsymmetric types. 

\subsection*{Acknowledgement}
I am deeply grateful to my supervisor, Noriyuki Abe, for his continuous support and invaluable feedback throughout the course of  this research. 
This work was supported by JSPS KAKENHI Grant Number 25KJ1132. 

\section{Preliminaries} 

\subsection{Notations and Conventions} \label{sub:notation}

Let $\mathbf{k}$ be a unital commutative ring.
The tensor product over $\mathbf{k}$ is usually denoted by $\otimes$. 
Let $A$ be a $\mathbb{Z}$-graded $\mathbf{k}$-algebra.
Unless otherwise specified, all $A$-modules are left modules. 
Let $\gMod{A}$ (resp.\ $\gproj{A}, \gmod{A}, \fpd{A}$) denote the category of finitely generated graded $A$-modules (resp.\ finitely generated projective graded $A$-modules, graded $A$-modules that are finitely generated over $\mathbf{k}$, finitely generated graded $A$-modules with finite projective dimension) whose morphisms are degree-preserving A-module homomorphisms. 
For a graded $A$-module $X$, we define its grading shift $qX$ by $(qX)_d = X_{d-1}$. 
Then $\gMod{A}, \gproj{A}$, $\gmod{A}$ and $\fpd{A}$ are graded categories with the automorphism $q$. 
It makes the Grothendieck group $K(\gMod{A})$ a $\mathbb{Z}[q,q^{-1}]$-module. 
Let $K(\gMod{A})_{\mathbb{Q}(q)}$ denote its scalar extension to $\mathbb{Q}(q)$. 
For $X, Y \in \gMod{A}$, let $\hom_A(X,Y)$ denote the space of morphisms from $X$ to $Y$ in the category $\gMod{A}$. 
For $k \in \mathbb{Z}$, let $\ext_A^k(-,Y)$ denote the $k$-th right derived functor of $\hom_A(-,Y)$. 
When $A$ is Noetherian, the ordinary $\Hom_A(X,Y)$ and $\Ext_A^k(X,Y)$ are $\mathbb{Z}$-graded by setting
\[
\Hom_A(X,Y)_d = \hom_A(q^dX,Y), \ \Ext_A^k(X,Y)_d = \ext_A^k(q^dX,Y), 
\]
since $X$ is finitely generated. 

Now, assume that $\mathbf{k}$ is a field. 
The dimension of a $\mathbf{k}$-vector space is usually denoted by $\dim$. 
A graded $\mathbf{k}$-vector space $V = \bigoplus_{d \in \mathbb{Z}}V_d$ is Laurentian if  
every homogeneous component $V_d$ is finite dimensional and $V_d = 0$ for sufficiently small $d$.
In this case, we define a Laurent series
\[
\dim_q V = \sum_{d \in \mathbb{Z}}(\dim V_d) \ q^d \in \mathbb{Z}((q)). 
\]

Assume that $A$ is Laurentian. 
For $X \in \fpd{A}$ and $Y \in \gMod{A}$, we define 
\[
\Extform{X}{Y} = \sum_{k \geq 0} (-1)^k\dim_q \Ext_A^k(X,Y). 
\]
Note that this is well-defined since the right hand side is a finite sum. 
Let $L \in \gMod{A}$ be a simple module, and let $P \in \gproj{A}$ be a projective cover of $L$. 
For $X \in \gMod{A}$, we define 
\[
[X:L]_q = \dim_q \Hom_A (P,X). 
\]
The simple module $L$ is said to be a composition factor of $X$ if $\hom_A(P,X) \neq 0$, 
or equivalently, if $[X:L]_q$ has a nonzero constant term. 

For $n \in \mathbb{Z}_{\geq 0}$, let $\mathfrak{S}_n$ denote the symmetric group of degree $n$, acting on $\{1,2, \ldots, n\}$.
Let $w_n$ be the longest element of $\mathfrak{S}_n$, namely, $w_n(k) = n+1-k \ (1 \leq k \leq n)$. 
For $n', n'' \in \mathbb{Z}_{\geq 0}$ with $n' + n'' = n$, 
let $w[n',n'']$ be the element of $\mathfrak{S}_n$ defined by 
\[
w[n',n''] (k) = \begin{cases}
k+ n'' & \text{if $1 \leq k \leq n'$}, \\
k-n' & \text{if $n'< k \leq n$}. 
\end{cases}
\]
Let $\ell \colon \mathfrak{S}_n \to \mathbb{Z}_{\geq 0}$ be the length function. 

For a composition $\lambda = (\lambda_1,\ldots,\lambda_h)$ of $n$, 
we define a parabolic subgroup
\[
\mathfrak{S}_{\lambda} = \mathfrak{S}_{\lambda_1} \times \cdots \times \mathfrak{S}_{\lambda_h} \subset \mathfrak{S}_n. 
\]
Let $\mathfrak{S}_n^{\lambda}$ (resp.\ ${}^{\lambda}\mathfrak{S}_n$) be the set of minimal length coset representatives for $\mathfrak{S}_n/\mathfrak{S}_{\lambda}$ (resp.\ $\mathfrak{S}_{\lambda}\backslash \mathfrak{S}_n$).
For another composition $\mu$ of $n$, we define ${}^{\lambda}\mathfrak{S}_n^{\mu} = {}^{\lambda}\mathfrak{S}_n \cap \mathfrak{S}_n^{\mu}$. 

\subsection{Quantum groups} \label{sub:quantumgroups}
Let $\mathsf{A} = (a_{i,j})_{i,j \in I}$ be the Cartan matrix of untwisted affine type $X_N^{(1)}$ as in \cite[Chapter 4, Table Aff 1]{MR1104219}, 
and let $\mathfrak{g} = \mathfrak{g}(\mathsf{A})$ be the affine Lie algebra. 
We label $I = \{ 0,1, \ldots,N\}$, where $0 \in I$ denotes the left-most node in the Dynkin diagram of \cite[Chapter 4, Table Aff 1]{MR1104219}. 
In type $A_N^{(1)} \ (N \geq 2)$, we choose the labelling so that $a_{k-1,k} = -1 \ (1 \leq k \leq N)$ and $a_{N,0} = -1$. 
We fix a root datum $(\mathsf{A}, \mathsf{P}, \Pi, \Pi^{\lor}, (\cdot, \cdot))$ as follows. 
Let 
\[
\mathsf{P} = \bigoplus_{i \in I} \mathbb{Z} \varpi_i \oplus \mathbb{Z}\delta
\]
be the weight lattice. 
For each $i \in I$, we call $\varpi_i$ the fundamental weight. 
Let $\Pi = \{\alpha_i \mid i \in I \} \subset \mathsf{P}$ be the set of simple roots, where 
\[
\alpha_i = \sum_{j \in I} a_{j,i} \varpi_j + \delta_{i,0}\delta. 
\]
Put $\mathsf{P}^{\lor} = \Hom_{\mathbb{Z}}(\mathsf{P},\mathbb{Z})$ and let $\Pi^{\lor} = \{ h_i \mid i \in I \}$ be the set of simple coroots, where $h_i$ is determined by
\[
\langle h_i, \varpi_j \rangle = \delta_{i,j},\ \langle h_i, \delta \rangle = 0 \ (i, j \in I). 
\]
Let $(\cdot,\cdot)$ be the bilinear form on $\mathsf{P} \otimes \mathbb{Q}$ satisfying 
\[
\langle h_i, \lambda \rangle = 2\frac{(\alpha_i,\lambda)}{(\alpha_i,\alpha_i)} \ (i \in I, \lambda \in \mathsf{P}),\ \min \{(\alpha_i,\alpha_i) \mid i \in I \} = 2. 
\]
Let $W$ be the Weyl group, which is generated by the simple reflections $s_i \ (i \in I)$. 
We define 
\[
\mathsf{Q}=\sum_{i \in I} \mathbb{Z} \alpha_i,\ \mathsf{Q}_+ = \sum_{i \in I }\mathbb{Z}_{\geq 0}\alpha_i, \ \mathsf{Q}_- = - \mathsf{Q}_+ \subset \mathsf{P}. 
\]
Let $\height \colon \mathsf{Q}_+ \to \mathbb{Z}_{\geq 0}$ be the morphism of monoids given by $\height (\alpha_i) = 1 \ (i \in I)$. 
Let $\Phi$ (resp.\ $\Phi_+, \Phi_+^{\mathrm{re}}$) be the set of roots (resp.\ positive roots, positive real roots) of $\mathfrak{g}$. 

Let $\mathring{I} = \{1,\ldots,N\}$. 
Then, the Cartan matrix $(a_{i,j})_{i,j \in \mathring{I}}$ is of finite type. 
Let $\mathring{\mathfrak{g}}$ be the corresponding simple Lie algebra and let $\mathring{W}$ be its Weyl group. 
We define 
\[
\mathring{\mathsf{Q}}=\sum_{i \in \mathring{I}} \mathbb{Z} \alpha_i ,\ \mathring{\mathsf{Q}}_+ = \sum_{i \in \mathring{I}}\mathbb{Z}_{\geq 0}\alpha_i, \ \mathring{\mathsf{Q}}_- = - \mathring{\mathsf{Q}}_+ \subset \mathsf{P}. 
\]
Let $\mathring{\Phi}$ (resp.\ $\mathring{\Phi}_+$) be the set of roots (resp.\ positive roots) of $\mathring{\mathfrak{g}}$. 
Then, we have 
\[
\Phi_+^{\mathrm{re}} = \{ \alpha + n \delta \mid \alpha \in \mathring{\Phi}_+, n \in \mathbb{Z}_{\geq 0} \} \cup \{-\alpha + n\delta \mid \alpha \in \mathring{\Phi}_+, n \in \mathbb{Z}_{>0} \}. 
\]

Let $U_q(\mathfrak{g})$ be the quantum group associated with the root datum above, 
which is a $\mathbb{Q}(q)$-algebra generated by $e_i, f_i \ (i \in I)$ and $q^h \ (h \in \mathsf{P}^{\lor})$.
For $n \in \mathbb{Z}_{\geq 0}$, we define
\[
[n] = (q^n-q^{-n})/(q-q^{-1}), \ [n]! = [n] [n-1] \cdots [1]. 
\]
For $\alpha \in \Phi_+^{\mathrm{re}}$, we define
\[
q_{\alpha} = q^{(\alpha,\alpha)/2}, \ [n]_{\alpha} = (q_{\alpha}^n - q_{\alpha}^{-n})/(q_{\alpha}-q_{\alpha}^{-1}),\ [n]_{\alpha}! = [n]_{\alpha} [n-1]_{\alpha} \cdots [1]_{\alpha}.
\]
For $i \in I$, we write 
\[
t_i = q^{\frac{(\alpha_i,\alpha_i)}{2}h_i}, \ q_i = q_{\alpha_i}, \ [n]_i = [n]_{\alpha_i}, \ [n]_i! = [n]_{\alpha_i}!. 
\]
By setting $\wt e_i = \alpha_i, \wt f_i = -\alpha_i$ and $\wt q^h = 0$, the quantum group $U_q(\mathfrak{g})$ decomposes into a direct sum of weight spaces with weights in $\mathsf{Q}$. 

Let $\overline{(\cdot)}$ be the $\mathbb{Q}$-algebra involution of $U_q(\mathfrak{g})$ defined by 
\[
\overline{q^h} = q^{-h}, \ \overline{f_i} = f_i, \ \overline{e_i} = e_i, \ \overline{q} = q^{-1}.  
\]
The canonical basis, or the global basis, is a basis of $U_q^-(\mathfrak{g})$ fixed by this involution. 

We introduce a homomorphism $r \colon U_q^-(\mathfrak{g}) \to U_q^-(\mathfrak{g}) \otimes_{\mathbb{Q}(q)} U_q^-(\mathfrak{g})$ following \cite[Chapter 1]{MR2759715}. 
The tensor product $U_q^-(\mathfrak{g}) \otimes_{\mathbb{Q}(q)} U_q^-(\mathfrak{g})$ is regarded as a $\mathbb{Q}(q)$-algebra with multiplication 
\begin{equation} \label{eq:twistedmultiplication}
(u_1 \otimes u_2)(u'_1 \otimes u'_2) = q^{-(\wt u_2, \wt u'_1)} u_1u'_1 \otimes u_2u'_2
\end{equation}
for weight vectors $u_1, u_2, u'_1, u'_2 \in U_q^-(\mathfrak{g})$.
The homomorphism $r$ is defined as the homomorphism of $\mathbb{Q}(q)$-algebras satisfying $r(f_i) = f_i \otimes 1 + 1 \otimes f_i \ (i \in I)$. 
For $\alpha, \beta \in \mathsf{Q}_+$, 
let 
\[
r_{\alpha,\beta} \colon U_q^-(\mathfrak{g})_{-(\alpha+\beta)} \to U_q^-(\mathfrak{g})_{-\alpha} \otimes_{\mathbb{Q}(q)} U_q^-(\mathfrak{g})_{-\beta}
\]
be the entry of $r$ for the weight space decomposition. 
By \cite[Chapter 1]{MR2759715}, there exists a nondegenerate symmetric $\mathbb{Q}(q)$-bilinear form $(\cdot,\cdot)$ on $U_q^-(\mathfrak{g})$ determined by 
\[
(1,1) = 1, \ (f_i,f_j) = \delta_{i,j}/(1-q_i^2), \ (xx',y) = (x\otimes x', r(y)),
\]
for $i,j \in I$ and $x,x',y \in U_q^-(\mathfrak{g})$. 
The dual basis of the canonical basis with respect to this bilinear form is called the dual canonical basis. 

For $i \in I$, let $r_i$ and ${}_ir$ be $\mathbb{Q}$-linear endomorphisms of $U_q^-(\mathfrak{g})$ determined by 
\[
r_{\alpha_i,\beta-\alpha_i} (u) = f_i \otimes {}_ir (u), \ r_{\beta-\alpha_i,\alpha_i} (u) = r_i(u) \otimes f_i, 
\]
for $\beta \in \mathsf{Q}_+$ and $u \in U_q^-(\mathfrak{g})_{-\beta}$. 
We have 
\[
(x,f_iy) = \frac{1}{1-q_i^2}({}_ir(x),y), \ (x,yf_i) = \frac{1}{1-q_i^2} (r_i(x),y) \ (x,y \in U_q^-(\mathfrak{g})). 
\]
By \cite[Proposition 3.1.6]{MR2759715}, we have 
\begin{equation} \label{eq:bracket}
e_iu - ue_i = \frac{r_i(u)t_i - t_i^{-1}{}_ir(u)}{q_i-q_i^{-1}} \ (u \in U_q^-(\mathfrak{g})). 
\end{equation}

Let $c$ be the $\mathbb{Q}$-linear automorphism of $U_q^-(\mathfrak{g})$ defined by 
\[
(c(x),y) = \overline{(x,\overline{y})} \ (x, y \in U_q^-(\mathfrak{g})).  
\]
For $x \in U_q^-(\mathfrak{g})_{-\beta}$ with $\beta = \sum_{i \in I} k_i \alpha_i \ (k_i \geq 0)$, 
our $c(x)$ is $\prod_{i \in I}(-q_i^2)^{k_i}$-multiple of $\sigma(x)$ in \cite[3.1]{MR2914878}. 
By \cite[Proposition 3.6]{MR2914878}, we have 
\[
c(xy) = q^{(\beta,\gamma)}c(y)c(x) \ (x \in U_q^-(\mathfrak{g})_{-\beta}, y \in U_q^-(\mathfrak{g})_{-\gamma}). 
\]
By definition, $c$ fixes every element of the dual canonical basis. 

\subsection{Quiver Hecke algebras} \label{sub:quiverHecke}

Let $\mathbf{k}$ be a unital commutative ring. 
We fix a family of polynomials $Q = (Q_{i,j}(u,v))_{i,j \in I} \in \mathbf{k}[u,v]^{I \times I}$ of the form
\[
Q_{i,j}(u,v) = \begin{cases}
t_{i,j}u^{-a_{i,j}} + t_{j,i}v^{-a_{j,i}} + \sum_{a,b >0, a(\alpha_i,\alpha_i)+ b(\alpha_j,\alpha_j)= - 2(\alpha_i,\alpha_j)} t_{i,j;a,b} u^av^b & \text{if $i \neq j$}, \\
0 & \text {if $i = j$}
\end{cases}
\]
for some $t_{i,j} \in \mathbf{k}^{\times}$ and $t_{i,j;a,b} \in \mathbf{k}$ satisfying $Q_{i,j}(u,v) = Q_{j,i}(v,u)$. 

Note that, if $X_N^{(1)} = A_1^{(1)}$, the polynomial $Q_{0,1}$ is of the form
\[
Q_{0,1}(u,v) = t_{0,1}u^2 + suv + t_{1,0} v^2 
\]
for some $t_{0,1}, t_{1,0} \in \mathbf{k}^{\times}$ and $s = t_{0,1;1,1} = t_{1,0;1,1} \in \mathbf{k}$. 
If $X_N^{(1)} \neq A_1^{(1)}$ and $a_{i,j} <0$, the polynomial $Q_{i,j}$ is of the form 
\[
Q_{i,j}(u,v) = t_{i,j}u^{-a_{i,j}} + t_{j,i} v^{-a_{j,i}}, 
\] 
since either $-a_{i,j}$ or $-a_{j,i}$ is $1$. 
If $a_{i,j} = 0$, we have $Q_{i,j}(u,v) = t_{i,j} = t_{j,i}$. 

\begin{definition}
Let $\beta \in \mathsf{Q}_+$. 
Put $n = \height \beta$ and $I^{\beta} = \{ \nu \in I^n \mid \alpha_{\nu_1} + \cdots + \alpha_{\nu_n} = \beta \}$. 
The quiver Hecke algebra $R_Q(\beta)$ is a graded $\mathbf{k}$-algebra defined by the following generators and relations:  
\begin{itemize}
\item The generators are 
\[
e(\nu) \ (\nu \in I^{\beta}), \ x_k \ (1 \leq k \leq n),\ \tau_k \ (1 \leq k \leq n-1). 
\]
\item The relations are 
\begin{align*}
  & e(\nu)e(\nu') = \delta_{\nu, \nu'} e(\nu), \  \sum_{\nu \in I^{\beta}} e(\nu) = 1, \\
  & x_k e(\nu) = e(\nu) x_k, \ x_k x_{l} = x_{l} x_k, \\
  & \tau_k e(\nu) = e(s_k(\nu)) \tau_k \ (1 \leq k \leq n-1), \ \tau_k \tau_{l} = \tau_{l} \tau_k \ (1 \leq k, l \leq n-1, \lvert k-l \rvert \geq 2), \\
  & (\tau_k x_{k+1} - x_k \tau_k)e(\nu) = (x_{k+1} \tau_k - \tau_k x_k)e(\nu) = \delta_{\nu_k, \nu_{k+1}} e(\nu) \; (1 \leq k \leq n-1), \\
  & \tau_k^2 e(\nu) = Q_{\nu_k, \nu_{k+1}} (x_k, x_{k+1}) e(\nu) \; (1 \leq k \leq n-1), \\
  & (\tau_{k+1} \tau_k \tau_{k+1} - \tau_k \tau_{k+1} \tau_k) e(\nu) = \overline{Q}_{\nu_k, \nu_{k+1}, \nu_{k+2}}(x_k, x_{k+1}, x_{k+2})e(\nu) \; (1 \leq k \leq n-2), 
\end{align*}  
where 
\[
\overline{Q}_{i,i',i''}(u,u',u'') = \begin{cases} 
\dfrac{Q_{i,i'}(u,u')-Q_{i,i'}(u'',u')}{u-u''} & \text{if $i= i'' \neq i'$}, \\
0 & \text{otherwise}. 
\end{cases}
\] 
\item The degree is given by 
\[
\deg e(\nu) = 0, \ \deg x_k e(\nu) = (\alpha_{\nu_k},\alpha_{\nu_k}), \ \deg \tau_k e(\nu) = -(\alpha_{\nu_k},\alpha_{\nu_{k+1}}). 
\]
\end{itemize}
\end{definition}

\begin{lemma}[{\cite[Lemma 2.2]{MR3384473}}] \label{lem:modifyQ}
Let $(c_{i,j})_{i,j \in I}$ be a symmetric matrix with entries in $\mathbf{k}^{\times}$. 
We define a family of polynomials $Q' = (Q'_{i,j}(u,v))$ by $Q'_{i,j}(u,v) = c_{i,j}^2Q_{i,j}(c_{i,i}u,c_{j,j}v)$. 
Then, we have an isomorphism of graded $\mathbf{k}$-algebras $R_{Q'}(\beta) \to R_{Q}(\beta)$ given by 
\[
e(\nu) \mapsto e(\nu), \ x_k e(\nu) \mapsto c_{\nu_k,\nu_k}^{-1}x_ke(\nu), \tau_l e(\nu) \mapsto c_{\nu_l, \nu_{l+1}}\tau_l e(\nu), 
\]
for $\nu \in I^{\beta}, 1 \leq k \leq \height \beta$ and $1 \leq l \leq \height \beta-1$. 
\end{lemma}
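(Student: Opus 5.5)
The plan is to check directly that the assignment $\phi$ given in Lemma \ref{lem:modifyQ} respects the grading and all defining relations of $R_{Q'}(\beta)$. This yields a graded algebra homomorphism $\phi\colon R_{Q'}(\beta)\to R_Q(\beta)$. For bijectivity, the same recipe with the inverse matrix $(c_{i,j}^{-1})$ produces a homomorphism $\psi\colon R_Q(\beta)\to R_{Q'}(\beta)$. Indeed,
\[
(c_{i,j}^{-1})^2\,Q'_{i,j}(c_{i,i}^{-1}u,c_{j,j}^{-1}v)=Q_{i,j}(u,v),
\]
and this matrix is again symmetric with entries in $\mathbf{k}^{\times}$. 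On generators, $\psi\circ\phi$ and $\phi\circ\psi$ are the identity: for example $x_ke(\nu)\mapsto c_{\nu_k,\nu_k}^{-1}x_ke(\nu)\mapsto x_ke(\nu)$. Hence $\phi$ and $\psi$ are mutually inverse. Degrees are clearly preserved, since each generator goes to a scalar multiple of a generator of the same degree. We must also note that $Q'$ has the required shape: it is homogeneous of the right degree, its leading coefficients remain units, and $Q'_{i,j}(u,v)=Q'_{j,i}(v,u)$ holds because $c$ is symmetric.

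Next come the relation checks, done idempotent by idempotent. The idempotent relations, the commutation of the $x$'s with each other, and $\tau_ke(\nu)=e(s_k\nu)\tau_k$ are immediate. The same holds for distant $\tau$'s commuting, since scalars factor out and the scalar attached to $\tau_l$ only depends on the pair $\{\nu_l,\nu_{l+1}\}$, which travels with the strands. For the mixed relation, only the case $\nu_k=\nu_{k+1}=i$ is nontrivial. There $\phi$ sends $(\tau_kx_{k+1}-x_k\tau_k)e(\nu)$ to $c_{i,i}\,c_{i,i}^{-1}(\tau_kx_{k+1}-x_k\tau_k)e(\nu)=e(\nu)$. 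The point is that the scalars attached to $x_k$ and $x_{k+1}$ coincide, so they commute past $\tau_k$. For the quadratic relation we get
\[
\phi(\tau_k^2e(\nu)) = c_{\nu_k,\nu_{k+1}}^2\,Q_{\nu_k,\nu_{k+1}}(x_k,x_{k+1})e(\nu),
\]
while
\[
\phi(Q'_{\nu_k,\nu_{k+1}}(x_k,x_{k+1})e(\nu)) = c_{\nu_k,\nu_{k+1}}^2\,Q_{\nu_k,\nu_{k+1}}(c_{\nu_k,\nu_k}c^{-1}_{\nu_k,\nu_k}x_k,\dots)e(\nu),
\]
so the two sides agree. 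Here we use that $\phi(x_k)$ on $e(\nu)$ is $c^{-1}_{\nu_k,\nu_k}x_k$. When $\nu_k=\nu_{k+1}$ both sides vanish.

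The braid relation is the step I expect to need the most care. For $\nu=(i,j,l)$, both $\tau_{k+1}\tau_k\tau_{k+1}e(\nu)$ and $\tau_k\tau_{k+1}\tau_ke(\nu)$ pick up the same scalar $c_{i,j}c_{i,l}c_{j,l}$, since each pair of strands crosses exactly once. So if $\overline{Q}'=0$ we are done. In the case $i=l\neq j$, the left side maps to $c_{i,j}^2c_{i,i}$ times the braid difference in $R_Q$, which equals $c_{i,j}^2c_{i,i}\,\overline{Q}_{i,j,i}(x_k,x_{k+1},x_{k+2})e(\nu)$. On the other side, $\phi$ applied to
\[
\overline{Q}'_{i,j,i}(x_k,x_{k+1},x_{k+2})=c_{i,j}^2\,\frac{Q_{i,j}(c_{i,i}x_k,c_{j,j}x_{k+1})-Q_{i,j}(c_{i,i}x_{k+2},c_{j,j}x_{k+1})}{x_k-x_{k+2}}
\]
substitutes $x_k\mapsto c_{i,i}^{-1}x_k$ and similarly for the other variables. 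The numerator becomes $Q_{i,j}(x_k,x_{k+1})-Q_{i,j}(x_{k+2},x_{k+1})$ and the denominator becomes $c_{i,i}^{-1}(x_k-x_{k+2})$, giving exactly $c_{i,j}^2c_{i,i}\,\overline{Q}_{i,j,i}$. This matches, so $\phi$ is well defined, which completes the proof.
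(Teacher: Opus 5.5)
Your proposal is correct and is the standard argument: the paper gives no proof of its own, since it quotes the lemma from the literature, where it is proved by exactly this direct check of the defining relations. Building the inverse from the entrywise-inverse matrix $(c_{i,j}^{-1})$ is the natural way to get bijectivity, and the one relation you do not mention, the usual far commutation $x_l\tau_k=\tau_kx_l$ for $l\notin\{k,k+1\}$ (which the paper's displayed list also omits), holds by the same reasoning because $(s_k\nu)_l=\nu_l$.
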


\begin{corollary} \label{cor:Qchoice}
Assume that $\mathbf{k}$ is an algebraically closed field. 
We define a family of polynomials $Q' = (Q'_{i,j}(u,v))$ based on $Q$ as follows:
\begin{itemize}
\item In type $A_1^{(1)}$, 
\[
Q'_{0,1}(u,v) = (u-v)(v-tu), 
\]
where $t\in \mathbf{k}^{\times}$ is a root of the quadratic equation 
\[
t_{0,1}t_{1,0}(1+t)^2 - t_{0,1;1,1}^2 t = 0. 
\]
\item In type $A_N^{(1)} \ (N \geq 2)$, 
\begin{align*}
Q'_{i,i+1}(u,v) &= u-v \ (0 \leq i \leq N-1), \ Q'_{N,0} (u,v) = u-tv, \\
Q'_{i,j}(u,v) &= 1 \ (i,j \in I, a_{i,j} = 0), 
\end{align*}
where $t \in \mathbf{k}^{\times}$ is given by 
\[
t = \prod_{i \in I} \left(-\frac{t_{i+1,i}}{t_{i,i+1}} \right).
\]
Here, we identify $I = \mathbb{Z}/(N+1)\mathbb{Z}$. 
\item Otherwise, 
\[
Q'_{i,j}(u,v) = \begin{cases}
u^{-a_{i,j}} - v^{-a_{j,i}} & \text{if $a_{i,j} <0, 0 \leq i < j \leq N$}, \\
1 & \text{if $a_{i,j}= 0$}. \\
\end{cases}
\]
\end{itemize}
Then, $R_Q(\beta)$ is isomorphic to $R_{Q'}(\beta)$ for any $\beta \in \mathsf{Q}_+$. 
\end{corollary}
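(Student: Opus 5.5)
The plan is to apply Lemma \ref{lem:modifyQ} once for each type, with a symmetric matrix $(c_{i,j})$ chosen so that $c_{i,j}^2Q_{i,j}(c_{i,i}u,c_{j,j}v)$ becomes a scalar multiple of the target $Q'_{i,j}$. We then absorb the remaining discrepancies into the free choices of $c_{i,j}$ and $c_{i,i}$. Since Lemma \ref{lem:modifyQ} gives $R_{Q'}\simeq R_Q$ for any such matrix, it suffices to solve the resulting equations in $\mathbf{k}^{\times}$. This is possible because $\mathbf{k}$ is algebraically closed, so square roots and roots of polynomials exist.

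\emph{Non-adjacent pairs and types other than $A$.} For $a_{i,j}=0$ we have $Q_{i,j}=t_{i,j}$, and we take $c_{i,j}=t_{i,j}^{-1/2}$ to get $Q'_{i,j}=1$.

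Now let $a_{i,j}<0$ with $i<j$, in a type other than $A$. Then $Q_{i,j}=t_{i,j}u^{a}+t_{j,i}v^{b}$ with $a=-a_{i,j}$ and $b=-a_{j,i}$, and
\[
Q'_{i,j}=c_{i,j}^2\bigl(t_{i,j}c_{i,i}^{a}u^a+t_{j,i}c_{j,j}^bv^b\bigr).
\]
We need $t_{i,j}c_{i,i}^a=-t_{j,i}c_{j,j}^b$, and then we set $c_{i,j}^2=(t_{i,j}c_{i,i}^a)^{-1}$. The Dynkin diagram is a tree in these types, so the constraints $t_{i,j}c_{i,i}^a=-t_{j,i}c_{j,j}^b$ can be solved successively. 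Root the tree at $0$ and go outward: each new vertex $j$ has exactly one already-fixed neighbour $i$, and we solve $c_{j,j}^b=-t_{i,j}c_{i,i}^a/t_{j,i}$ by extracting a $b$-th root (or, with the roles of $i$ and $j$ swapped, an $a$-th root).

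\emph{Type $A_1^{(1)}$.} We have $Q_{0,1}=t_{0,1}u^2+suv+t_{1,0}v^2$, and the target is $(u-v)(v-tu)=-tu^2+(1+t)uv-v^2$. Choosing $c_{0,0}=1$ and scaling $v$ by $c_{1,1}$ and the whole polynomial by $c_{0,1}^2$, we must match three coefficients:
\[
c_{0,1}^2t_{0,1}=-t,\quad c_{0,1}^2c_{1,1}s=1+t,\quad c_{0,1}^2c_{1,1}^2t_{1,0}=-1.
\]
Eliminating $c_{0,1}$ and $c_{1,1}$, consistency amounts to
\[
\frac{(1+t)^2}{s^2}=\frac{c_{0,1}^4c_{1,1}^2}{1}=\frac{t}{t_{0,1}t_{1,0}},
\]
which is exactly the given quadratic. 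There are three subcases to handle.
\begin{itemize}
\item If $s\neq0$, solve for $c_{1,1}=(1+t)/(c_{0,1}^2 s)$. This requires $t\neq -1$; note $t=-1$ solves the quadratic only when $s=0$.
\item If $s=0$, the roots are $t=-1$ (double), and $c_{1,1}$ comes from a square root.
\item We also need $t\neq0$. A root $t=0$ would require $t_{0,1}t_{1,0}=0$, which is excluded.
\end{itemize}

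\emph{Type $A_N^{(1)}$, $N\ge2$.} This is the main obstacle, since the diagram is a cycle. Set all $c_{i,i}=1$, so only the $c_{i,i+1}$ and rescalings of the variables are available. The orbit of $Q$ under the transformations of Lemma \ref{lem:modifyQ} preserves the invariant
\[
\prod_{i\in I}\Bigl(-\frac{\text{coefficient of }v\text{ in }Q_{i,i+1}}{\text{coefficient of }u\text{ in }Q_{i,i+1}}\Bigr),
\]
because the factors $c_{i,i}/c_{i+1,i+1}$ telescope around the cycle. This is why $t$ is the stated product.

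Concretely, choose $c_{0,0}=1$ and define $c_{k,k}$ for $k=1,\dots,N$ recursively so that $t_{k-1,k}c_{k-1,k-1}=-t_{k,k-1}c_{k,k}$. This makes $Q_{k-1,k}$ proportional to $u-v$ for $1\le k\le N$. Then take $c_{k-1,k}^2=(t_{k-1,k}c_{k-1,k-1})^{-1}$ to normalise. The last edge $(N,0)$ becomes proportional to $t_{N,0}c_{N,N}u+t_{0,N}v$, and after normalising it equals $u-t'v$ with $t'=-t_{0,N}/(t_{N,0}c_{N,N})$. Unwinding the recursion shows $t'=t$. Finally, the convention $Q_{i,j}(u,v)=Q_{j,i}(v,u)$ fixes the remaining entries.
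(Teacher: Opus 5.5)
Your proof is correct and follows the paper's argument: apply Lemma~\ref{lem:modifyQ} with an explicit symmetric matrix $(c_{i,j})$. In type $A_N^{(1)}$ ($N\ge 2$), your recursive $c_{k,k}=\prod_{j\le k}(-t_{j-1,j}/t_{j,j-1})$ and $c_{k-1,k}^2=(t_{k-1,k}c_{k-1,k-1})^{-1}$, including the closing edge $(N,0)$, are exactly the paper's choices, and for the tree types you spell out what the paper calls easy.

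Two small remarks. The sentence ``Set all $c_{i,i}=1$'' contradicts your later recursive definition of the $c_{k,k}$ and should be deleted. In type $A_1^{(1)}$, your normalization $c_{0,0}=1$ together with the separate case $t_{0,1;1,1}=0$ (which forces $t=-1$) is more careful than the paper, whose choice $c_{0,0}=\zeta t_{0,1;1,1}t/(1+t)$ is undefined in that case.
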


\begin{proof}
We apply Lemma \ref{lem:modifyQ} as follows. 

In type $A_1^{(1)}$, let $\zeta \in \mathbf{k}$ be a square root of $-1$, 
and put $c_{0,0} = \zeta t_{0,1;1,1} t/(1+t), \ c_{1,1} = -\zeta t_{0,1}$ and let $c_{0,1} = c_{1,0}$ be a square root of $t_{0,1}^{-2}t_{1,0}^{-1}$. 

In type $A_N^{(1)}$, we put
\begin{align*}
c_{i,i} = \prod_{1 \leq j \leq i} \left(-\frac{t_{j-1,j}}{t_{j,j-1}}\right) \ (0 \leq i \leq N), 
\end{align*}
and let $c_{i,i+1} = c_{i+1,i}$ be a square root of $c_{i,i}^{-1}t_{i,i+1}^{-1}$ for $0 \leq i \leq N$.
If $j \not \in \{i-1,i,i+1\}$, let $c_{i,j}$ be a square root of $t_{i,j}^{-1}$. 

Otherwise, it is easy to find appropriate $(c_{i,j})_{i,j \in I}$ since the Dynkin graph is a tree (with multiple edges allowed) and 
each $Q_{i,j}(u,v)$ is whether constant or of the form $t_{i,j}u^{-a_{i,j}} + t_{j,i}v^{-a_{j,i}}$. 
\end{proof}

Hereafter, we make the following choice for $\mathbf{k}$ and $t \in \mathbf{k}^{\times}$. 
For type $A_N^{(1)}$, the pair $(\mathbf{k},t)$ is either
\begin{itemize}
  \item $\mathbf{k}$ is a field and $t \in \mathbf{k}^{\times}$ is arbitrary, 
  \item $\mathbf{k}$ is a Laurent polynomial ring $F[z, z^{-1}]$ over a field $F$, and $t = z$. 
\end{itemize}
For other types, $\mathbf{k}$ is either a field or $\mathbb{Z}$, and $t = 1$.

We mainly consider polynomials $Q$ having the same form as $Q'$ in Corollary \ref{cor:Qchoice} for the parameter $t$ specified above, 
and write $R_Q(\beta) = R(\beta)$ for simplicity. 
When we wish to emphasize the coefficient ring $\mathbf{k}$, 
we write $R_{\mathbf{k}}(\beta)$. 

\subsection{Basic representation theory of quiver Hecke algebras}

In this paper, every $R(\beta)$-module is assumed to be a graded left module unless otherwise specified. 
Since $\mathbf{k}$ is Noetherian and $R(\beta)$ is a finitely generated module over the polynomials ring $\mathbf{k}[x_1, \ldots,x_{\height \beta}]$, the quiver Hecke algebra $R(\beta)$ is also Noetherian. 
We define $\gMod{R} = \bigoplus_{\beta \in \mathsf{Q}_+} \gMod{R(\beta)}$.
For  $X \in \gMod{R(\beta)}$, we set $\wt (X) = -\beta$.
The categories $\gproj{R}, \gmod{R}$ and $\fpd{R}$ are similarly defined. 
For $X = \bigoplus_{\beta \in \mathsf{Q}_+} X_{\beta}, \ Y = \bigoplus_{\beta \in \mathsf{Q}_+} Y_{\beta} \in \gMod{R}$, we define
\[
\hom_{R}(X,Y) = \bigoplus_{\beta \in \mathsf{Q}_+} \hom_{R(\beta)} (X_{\beta},Y_{\beta}). 
\]
Similarly, we use notations $\Hom_R(X,Y), \ext_R^k(X,Y)$ and $\Ext_R^k(X,Y)$. 
We will also use analogous notations for certain quotient algebras of $R(\beta) \ (\beta \in \mathsf{Q}_+)$. 

For each $w \in \mathfrak{S}_n$, fix a reduced expression $w = s_{i_1} \cdots s_{i_l}$ and define 
\[
\tau_w = \tau_{i_1} \cdots \tau_{i_l}. 
\]
While it depends on the choice of the reduced expression in general, 
any choice will suffice for our purposes. 
For $n', n'' \in \mathbb{Z}_{\geq 1}$ with $n' + n'' =n$, 
any reduced expression of the element $w[n',n''] \in \mathfrak{S}_n$ from Subsection \ref{sub:notation} is obtained from another by only using the relation $s_ks_l = s_ls_k$ for $1 \leq k < l \leq n-1$ satisfying $l-k \geq 2$. 
Hence, the element $\tau_{w[n',n'']}$ is independent of the choice of the reduced expression. 

Let $\beta,\gamma \in \mathsf{Q}_+$ and put $m = \height (\beta), n = \height (\gamma)$. 
We define the idempotent $e(\beta, \gamma)$ of $R(\beta + \gamma)$ by
\[
e(\beta, \gamma) = \sum_{\nu \in I^{\beta}, \nu' \in I^{\gamma}} e(\nu, \nu'). 
\]
For multiple $\beta_1, \ldots, \beta_r \in \mathsf{Q}_+$, the idempotent $e(\beta_1, \ldots,\beta_r)$ is defined in the same way. 
The $R(\beta+\gamma)$-module $R(\beta+ \gamma)e(\beta,\gamma)$ is also a right $(R(\beta) \otimes R(\gamma))$-module as follows:
\begin{align*}
 &ue(\beta,\gamma) (e(\nu)\otimes e(\nu')) = ue(\nu,\nu') \ (\nu \in I^{\beta}, \nu' \in I^{\gamma}), \\
&ue(\beta,\gamma) (x_k \otimes 1) = ue(\beta,\gamma)x_k \ (1 \leq k \leq m), \\
 &ue(\beta,\gamma) (1 \otimes x_k) = ue(\beta,\gamma) x_{k+m} \ (1 \leq k \leq n), \\
 &ue(\beta,\gamma) (\tau_k \otimes 1) = ue(\beta,\gamma)\tau_k \ (1 \leq k \leq m-1), \\ 
 &ue(\beta, \gamma) (1 \otimes \tau_k) = ue(\beta,\gamma) \tau_{k+m} \ (1\leq k \leq n-1). 
\end{align*}
It is both projective as a left $R(\beta + \gamma)$-module and free as a right $(R(\beta) \otimes R(\gamma))$-module. 
Similar property holds for $e(\beta, \gamma)R(\beta + \gamma)$. 
These bimodules yields three functors
\begin{align*}
  \Ind_{\beta, \gamma} &= R(\beta + \gamma)e(\beta, \gamma) \otimes_{R(\beta) \otimes R(\gamma)} ? \colon \gMod{(R(\beta) \otimes R(\gamma))} \to \gMod{R(\beta + \gamma)}, \\
  \Res_{\beta, \gamma} &= \Hom_{R(\beta + \gamma)}(R(\beta + \gamma)e(\beta, \gamma), ?)\colon \gMod{R(\beta + \gamma)} \to \gMod{(R(\beta) \otimes R(\gamma))}, \\
  \Coind_{\beta,\gamma} &= \Hom_{R(\beta)\otimes R(\gamma)} (e(\beta,\gamma)R(\beta+\gamma), ?) \colon \gMod{(R(\beta)\otimes R(\gamma))} \to \gMod{R(\beta+\gamma)}. 
\end{align*}
We have two adjoint pairs, $(\Ind_{\beta, \gamma}, \Res_{\beta, \gamma})$ and $(\Res_{\beta,\gamma}, \Coind_{\beta,\gamma})$. 
These three functors are exact and send projective modules to projective ones. 
Hence, the adjunctions also hold for $\Ext^k$ for any $k \geq 0$.  
For multiple $\beta_1, \ldots, \beta_r \in \mathsf{Q}_+$, the functors $\Ind_{\beta_1, \ldots, \beta_r}, \Res_{\beta_1, \ldots, \beta_r}$ and $\Coind_{\beta_1,\ldots,\beta_r}$ are defined in the same way and they have analogous properties. 

We usually write $M \circ N$ instead of $\Ind_{\beta, \gamma} (M \otimes N)$ and refer to it as the convolution product of $M$ and $N$.
It gives a monoidal structure on $\gMod{R}$ with the unit object $\mathbf{k} \in \gMod{R(0)}$.
When $M \otimes N$ is regarded as a subspace of $M \circ N$, it is denoted by $M \boxtimes N$. 
For $u \in M, v\in N$, the element $u \otimes v \in M \boxtimes N$ is denoted by $u \boxtimes v$. 
Similarly, when $R(\alpha) \otimes R(\beta)$ is regarded as a subspace of $R(\alpha+\beta)$, it is denoted by $R(\alpha) \boxtimes R(\beta)$.

There is a $\mathbf{k}$-algebra anti-involution $\varphi$ of $R(\beta)$ that fixes all the generators $e(\nu), x_k$ and $\tau_k$. 
Using it, we get a duality functor $D$ on $\gmod{R(\beta)}$ given by $D(M) = \Hom_{\mathbf{k}}(M, k)$, on which $R(\beta)$ acts by 
\[
\text{$(a f)(m) = f(\varphi(a)m)$ for $a \in R(\beta), f \in D(M), m \in M$. } 
\]
The $d$-th homogeneous component of $D(M)$ is $D(M)_d = \Hom_{\mathbf{k}}(M_{-d}, \mathbf{k})$. 
A module $M \in \gmod{R(\beta)}$ is said to be self-dual if $DM \simeq M$. 

In the rest of this subsection, assume that $\mathbf{k}$ is a field. 
It is known that every simple module in $\gMod{R}$ is finite-dimensional, absolutely-irreducible, and is a grading shift of some self-dual simple module. 
For $M \in \gMod{R(\beta)}, N \in \gMod{R(\gamma)}$, there is an isomorphism
\[
\Coind_{\beta,\gamma}(M \otimes N) \simeq q^{(\beta,\gamma)} \Ind_{\gamma,\beta} (N \otimes M) = q^{(\beta,\gamma)} N\circ M, 
\]
see \cite[Theorem 2.2]{MR2822211}. 
Since $\Res_{\beta,\gamma} \circ D \simeq (D \otimes D)\circ \Res_{\beta,\gamma}$, this implies
\[
D(M \circ N) \simeq q^{(\beta,\gamma)} DN \circ DM
\]
for $M \in \gmod{R(\beta)}$ and $N \in \gmod{R(\gamma)}$. 

We recall the categorification of $U_q^-(\mathfrak{g})$ proved in \cite{MR2525917,MR2763732}. 
We refer the reader to \cite[Theorem 2.5.2]{murata2025affinehighestweightstructures} for the following version. 

\begin{theorem}[{\cite{MR2525917,MR2763732}}] \label{thm:categorification}
Assume that $\mathbf{k}$ is a field. 
There is an isomorphism of $\mathbb{Q}(q)$-algebras
\[
\chi \colon K(\gMod{R})_{\mathbb{Q}(q)} \to U_q^-(\mathfrak{g}), 
\] 
such that $\chi(R(\alpha_i)) = f_i \ (i \in I)$,
where the multiplication on $K(\gMod{R})_{\mathbb{Q}(q)}$ is induced by the convolution product. 
Moreover, it satisfies the following properties: 
\begin{enumerate}
  \item for $\alpha, \beta \in \mathsf{Q}_+$ and $X \in \gMod{R(\alpha+\beta)}$, we have $(\chi \otimes \chi)(\Res_{\alpha,\beta}X) = r_{\alpha,\beta}(\chi(X))$;
  \item for a finite dimensional module $X \in \gmod{R}$, we have $\chi(DX) = c(\chi(X))$;
  \item for $X \in \fpd{R}$ and $Y \in \gMod{R}$, we have $\Extform{X}{Y} = (\overline{\chi(X)}, \chi(Y))$.   
\end{enumerate}
\end{theorem}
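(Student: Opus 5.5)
The statement is the Khovanov--Lauda--Rouquier categorification theorem, so the plan is to construct $\chi$ first on the Grothendieck group of projectives and then extend it to all of $\gMod{R}$. I would match the Hom pairing on projectives with Lusztig's form $(\cdot,\cdot)$ on $U_q^-(\mathfrak{g})$.

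\emph{Step 1 (projectives).} For $\nu\in I^\beta$ put $P(\nu)=R(\beta)e(\nu)\simeq R(\alpha_{\nu_1})\circ\cdots\circ R(\alpha_{\nu_n})$, and declare $\chi(P(\nu))=f_{\nu_1}\cdots f_{\nu_n}$. Then $\chi$ is multiplicative on these classes by construction. The basis theorem for $R(\beta)$ (a basis $\{\tau_w x^a e(\nu)\}$, valid over any $\mathbf{k}$ and for our $Q$) gives the graded dimension
\[
\dim_q e(\nu')R(\beta)e(\nu)=\sum_{w\colon w\nu=\nu'} q^{\deg(\tau_w e(\nu))}\prod_k \frac{1}{1-q_{\nu_k}^2}.
\]
Expanding $(f_{\nu'},f_\nu)$ via $r$ and the twisted multiplication (\ref{eq:twistedmultiplication}) yields exactly the same shuffle sum, up to the bar-convention. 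Hence $\dim_q\Hom(P(\nu'),P(\nu))=(\overline{f_{\nu'}},f_\nu)$. To see that $\chi$ is well defined on $K(\gproj{R})$, I would show the following: every indecomposable projective is a summand of some $P(\nu)$, and a relation among classes of projectives is detected by the pairing with all $P(\nu)$. Since the form is nondegenerate on $U_q^-(\mathfrak{g})$, the kernel of the surjection from the free module on $\{P(\nu)\}$ to $U_q^-(\mathfrak{g})$ is killed. Injectivity of $\chi$ on $K(\gproj{R})$ uses that the graded Hom pairing between indecomposable projectives and simples is perfect. Surjectivity is clear since the $f_i$ generate. Identifying divided powers with $R(n\alpha_i)$-summands also shows that the Serre relations hold.

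\emph{Step 2 (extension to $\gMod{R}$).} The key input is that $R(\beta)$ has finite global dimension in affine type, as established in the references behind \cite[Theorem 2.5.2]{murata2025affinehighestweightstructures}. I expect this to be the main obstacle. Given it, every $X\in\gMod{R}$ has a finite graded projective resolution, so $K(\gproj{R})\to K(\gMod{R})$ is an isomorphism after $\otimes\,\mathbb{Q}(q)$, and $\fpd{R}=\gMod{R}$. If that reference were unavailable, I would instead use the stratification and the real-root Morita equivalences with polynomial rings to resolve standard modules, and work inductively along the convex order.

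\emph{Step 3 (properties).} Property (3) follows from Step 1 for $X=P(\nu')$, $Y=P(\nu)$; both sides are additive in $X$ and $Y$, and we may use resolutions. For property (1), the Mackey filtration of $\Res_{\alpha,\beta}(P(\nu))$ has subquotients $P(\nu^{(1)})\otimes P(\nu^{(2)})$ shifted by $q^{-(\cdot,\cdot)}$. These match the expansion of $r(f_{\nu_1}\cdots f_{\nu_n})$ term by term, and exactness of $\Res$ extends the identity to all modules. For property (2), take the simple module $L$ dual to the indecomposable projective $P$ in the sense that $\dim_q\Hom(P,L)=1$. Self-duality of simples combined with (3) characterizes $\chi(DX)$ through the pairing $(\overline{\chi(P)},\chi(DX))=\overline{(\overline{\chi(P)},\overline{\chi(X)})}$ up to the twisting by $\varphi$, which is precisely the defining property of $c$.
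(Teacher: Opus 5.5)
The gap is in Step~2: its key input, finite global dimension of $R(\beta)$ in affine type, is false. Take type $A_1^{(1)}$ and $\beta=\delta$. The basis theorem shows that $R(\delta)e(0,1)$ and $R(\delta)e(1,0)$ are the indecomposable projectives, since their endomorphism rings are $\mathbf{k}[x_1,x_2]$. Their graded Cartan matrix is $(1-q^2)^{-2}\begin{pmatrix}1&q^2\\ q^2&1\end{pmatrix}$.

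Suppose the one-dimensional simple $L_1^e(\delta)$ of Lemma~\ref{lem:minusculestr} had a finite graded projective resolution. Then its character $(1,0)$ would be a $\mathbb{Z}[q,q^{-1}]$-combination of the two projective characters. But the coefficients are forced to be $\frac{1-q^2}{1+q^2}$ and $-\frac{q^2(1-q^2)}{1+q^2}$, which are not Laurent polynomials.

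This is exactly why (3) is stated only for $X\in\fpd{R}$, and why the paper proves separately that $\Delta^{\preceq}(\omega)$ has finite projective dimension. Your conclusion $\fpd{R}=\gMod{R}$ would make both vacuous.

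Your fallback does not help. The failure comes from the imaginary strata, for instance $\dim_q\End_R(\Delta_i^e(\delta))=(1+q_i^2)/(1-q_i^2)$ (cf.\ the proof of Proposition~\ref{prop:almostorth}). Moreover, the cuspidal decomposition and the stratification themselves rely on this theorem. So your passage from $K(\gproj{R})$ to $K(\gMod{R})_{\mathbb{Q}(q)}$, and the derivation of (1)--(3) for arbitrary modules ``by resolutions'', is unsupported.

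What works instead is to define $\chi$ on all of $\gMod{R}$ by graded characters. Let $\chi(M)$ be the unique element of $U_q^-(\mathfrak{g})_{-\beta}$ with $(f_{\nu_1}\cdots f_{\nu_n},\chi(M))=\dim_q e(\nu)M=\dim_q\Hom_R(P(\nu),M)$. It exists because these Hilbert series are rational and $\Hom_R(P(\nu),-)$ is exact. The categorified Serre relations (divided-power projectives) make this functional vanish on the Serre ideal, so it lies in the image of $\theta$.

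Bijectivity over $\mathbb{Q}(q)$ then needs two inputs your sketch lacks:
\begin{enumerate}
\item[(a)] $K(\gMod{R(\beta)})_{\mathbb{Q}(q)}$ is spanned by classes of finite-dimensional modules. This follows by d\'evissage along homogeneous central elements $z$ of positive degree $d$, using $(1-q^d)[M]=[M/zM]-q^d[\Ker_M z]$.
\item[(b)] The characters of the self-dual simple modules are linearly independent. This is Khovanov--Lauda's crystal argument with $\varepsilon_i$ and $\tilde e_i$ (cf.\ Theorem~\ref{thm:crystal}).
\end{enumerate}

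Input (b) is also what your Step~1 tacitly uses. The pairing between indecomposable projectives and simples is perfect by definition, but that alone does not show that the $[P(\nu)]$ span $K(\gproj{R})_{\mathbb{Q}(q)}$. Nor does it show that pairing with all $P(\nu)$ detects relations. Being a direct summand of some $P(\nu)$ does not place a class in their span.

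Once $\chi$ is defined by characters, the properties follow directly:
\begin{enumerate}
\item[(1)] Use $(e(\nu')\otimes e(\nu''))\Res_{\alpha,\beta}X=e(\nu',\nu'')X$ together with $(xx',y)=(x\otimes x',r(y))$.
\item[(2)] Use $\dim_q e(\nu)DX=\overline{\dim_q e(\nu)X}$ together with the definition of $c$.
\item[(3)] This needs a finite resolution of $X$ only, never of $Y$, plus (b) to express each projective class through the $[P(\nu)]$.
\end{enumerate}
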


Let $\beta \in \mathsf{Q}_+$, and put $n = \height \beta$. 
We define a $\mathbb{Q}(q)$-linear map $\theta \colon U_q^-(\mathfrak{g})_{-\beta} \to \Map(I^{\beta},\mathbb{Q}(q))$ by
\[
\theta(u)(\nu) = (f_{\nu_1}\cdots f_{\nu_n}, u) \ (u \in U_q^-(\mathfrak{g}), \nu \in I^{\beta}). 
\]
Since the vectors $f_{\nu_1} \cdots f_{\nu_n} \ (\nu \in I^{\beta})$ span $U_q^-(\mathfrak{g})$ and the bilinear form is nondegenerate, $\theta$ is injective. 
For $M \in \gMod{R(\beta)}$, we define $\ch_q(M) \in \Map (I^{\beta},\mathbb{Z}((q)))$ by 
\[
\ch_q(M) (\nu) = \dim_q e(\nu)M \ (\nu \in I^{\beta}). 
\]
It induces a $\mathbb{Z}[q,q^{-1}]$-linear map
\[
\ch_q \colon K(\gMod{R}) \to \Map(I^{\beta},\mathbb{Z}((q))). 
\]

\begin{corollary} \label{cor:formalcharacter}
Let $\beta \in \mathsf{Q}_+$. 
For any $M \in \gMod{R(\beta)}$, 
we have 
\[
\ch_q(M) = \theta(\chi(M)). 
\]
In particular, the map $\ch_q \colon K(\gMod{R}) \to \Map(I^{\beta},\mathbb{Z}[q,q^{-1}])$ is injective.  
\end{corollary}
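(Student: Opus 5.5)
The identity $\ch_q(M) = \theta(\chi(M))$ should follow from Theorem \ref{thm:categorification}(3), by computing both sides against the projective modules $R(\beta)e(\nu)$.

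First we identify the character of $R(\beta)e(\nu)$. For $\nu \in I^{\beta}$ there is an isomorphism $R(\beta)e(\nu) \simeq R(\alpha_{\nu_1}) \circ \cdots \circ R(\alpha_{\nu_n})$. This holds because $\Ind$ of the regular module is the projective module $R(\beta)e(\alpha_{\nu_1},\ldots,\alpha_{\nu_n})$, and here $e(\alpha_{\nu_1},\ldots,\alpha_{\nu_n}) = e(\nu)$ since each $I^{\alpha_{\nu_k}}$ is a singleton. Since $\chi$ is an algebra homomorphism with $\chi(R(\alpha_i)) = f_i$, we get $\chi(R(\beta)e(\nu)) = f_{\nu_1}\cdots f_{\nu_n}$. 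This element is bar-invariant, because $\overline{f_i} = f_i$ and the bar involution is multiplicative.

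Next we apply the pairing. The module $R(\beta)e(\nu)$ is finitely generated projective, so it lies in $\fpd{R}$ and its higher $\Ext$ groups vanish. Theorem \ref{thm:categorification}(3) then gives
\[
\dim_q \Hom_R(R(\beta)e(\nu), M) = (\overline{f_{\nu_1}\cdots f_{\nu_n}}, \chi(M)) = (f_{\nu_1}\cdots f_{\nu_n}, \chi(M)) = \theta(\chi(M))(\nu).
\]
On the other side, $\Hom_R(R(\beta)e(\nu),M) \simeq e(\nu)M$ via $f \mapsto f(e(\nu))$, and this isomorphism is compatible with the gradings since $e(\nu)$ has degree $0$. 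So the left-hand side equals $\dim_q e(\nu)M = \ch_q(M)(\nu)$. The only points needing care are the grading conventions, namely $\Hom(X,Y)_d = \hom(q^dX,Y)$ so that degrees match, and the fact that $e(\nu)M$ is Laurentian because $M$ is finitely generated over the Laurentian algebra $R(\beta)$.

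For injectivity, suppose $\ch_q([M]) = 0$ for a class in $K(\gMod{R})$. Then $\theta(\chi([M])) = 0$, and injectivity of $\theta$, established just before the statement, gives $\chi([M]) = 0$. Theorem \ref{thm:categorification} makes $\chi$ an isomorphism after $\otimes\mathbb{Q}(q)$. So it remains to check that $K(\gMod{R}) \to K(\gMod{R})_{\mathbb{Q}(q)}$ is injective, i.e.\ that $K(\gMod{R})$ is torsion-free over $\mathbb{Z}[q,q^{-1}]$.

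This torsion-freeness is the main obstacle. The plan is to use that $K(\gMod{R(\beta)})$ is free over $\mathbb{Z}[q,q^{-1}]$, for instance on the classes of the projective indecomposables, by the finite global dimension/Laurentian theory underlying the cited version \cite[Theorem 2.5.2]{murata2025affinehighestweightstructures}. If that freeness is not available, I would instead restrict to finite-dimensional modules, where $\ch_q$ lands in $\mathbb{Z}[q,q^{-1}]$, and use that the classes of self-dual simples form a basis, hence the group is free.
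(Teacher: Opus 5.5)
Your proof of the identity $\ch_q(M)=\theta(\chi(M))$ is the paper's argument. You identify $R(\beta)e(\nu)$ with $R(\alpha_{\nu_1})\circ\cdots\circ R(\alpha_{\nu_n})$, use bar-invariance of $f_{\nu_1}\cdots f_{\nu_n}$, and apply Theorem~\ref{thm:categorification}(3) to this projective module. The extra justifications you supply are all correct.

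The only incomplete part is the ``in particular''. The paper simply asserts it as a consequence of the identity, injectivity of $\theta$ and bijectivity of $\chi$. You are right that this tacitly requires classes not to die in $K(\gMod{R})_{\mathbb{Q}(q)}$, but neither of your routes settles it as written.

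\begin{itemize}
\item Route (1) rests on finite global dimension of $R(\beta)$, which this paper never proves. It deliberately states Theorem~\ref{thm:categorification}(3) for $\fpd{R}$ and only shows that standard modules have finite projective dimension.
\item Route (2) shows $K(\gmod{R})$ is free. But freeness does not by itself make $K(\gmod{R})\to K(\gMod{R})_{\mathbb{Q}(q)}$ injective, and that is what your chain through $\chi$ needs.
\end{itemize}

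The missing step is supplied by pairing with projective covers, and then no torsion-freeness is needed. Take finite-dimensional $M,N$ with $\ch_q(M)=\ch_q(N)$; this is the only case where the codomain $\Map(I^{\beta},\mathbb{Z}[q,q^{-1}])$ makes sense. Injectivity of $\theta$ gives $\chi(M)=\chi(N)$. So for every self-dual simple $L$ with projective cover $P_L\in\fpd{R}$,
\[
[M:L]_q-[N:L]_q=\Extform{P_L}{M}-\Extform{P_L}{N}=(\overline{\chi(P_L)},\chi(M)-\chi(N))=0,
\]
and Jordan--H\"older gives $[M]=[N]$ in $K(\gmod{R})$.

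For the $\mathbb{Z}((q))$-valued version on all of $K(\gMod{R})$, the same computation shows that $\ch_q$ detects every graded multiplicity. What remains is whether these multiplicities determine a class in $K(\gMod{R})$. That is essentially your torsion-freeness question, and the paper does not address it either.
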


\begin{proof}
Put $n = \height \beta$. 
For any $\nu \in I^{\beta}$, 
note that $R(\beta)e(\nu) \simeq R(\alpha_{\nu_1}) \circ \cdots \circ R(\alpha_{\nu_n})$ and 
\[
\overline{\chi(R(\beta)e(\nu))} =  \overline{f_{\nu_1} \cdots f_{\nu_n}} = f_{\nu_1} \cdots f_{\nu_n}. 
\]
Hence, we have 
\begin{align*}
\ch_q(M)(\nu) &= \dim_q e(\nu)M = \dim_q \Hom_R(R(\beta)e(\nu),M) = \dim_q \Hom_R(R(\alpha_{\nu_1})\circ \cdots \circ R(\alpha_{\nu_n}),M) \\
&= (f_{\nu_1} \cdots f_{\nu_n},\chi(M)) =  \theta(\chi(M))(\nu), 
\end{align*}
by Theorem \ref{thm:categorification}. 
\end{proof}

For $i \in I$ and $n \in \mathbb{Z}_{\geq 1}$, the algebra $R(n\alpha_i)$ is the nil-Hecke algebra. 
There is a unique self-dual simple $R(n\alpha_i)$-module $L(i^n)$ up to isomorphism. 
Let $P(i^n)$ denote the projective cover of $L(i^n)$.
It is known $L(i^n) \simeq q_i^{n(n-1)/2}L(i)^{\circ n}$. 

\section{Imaginary strata}
\subsection{Convex orders and cuspidal modules}

In this subsection, assume that $\mathbf{k}$ is a field. 

\begin{definition}[{\cite[Definition 1.8, Remark 1.9]{MR3542489}, \cite[Definition 1.16]{MR3771147}}] \label{def:convexpreorder}
Let $V$ be an $\mathbb{R}$-vector space, and $X$ be a subset of $V$ such that $\mathbb{R} x \cap X = \{x\}$ for any $x \in X$. 
A total order $\preceq$ on $X$ is said to be convex if, 
for any $A, B \subset X$ satisfying $\alpha \prec \beta$ for all $\alpha \in A$ and $\beta \in B$, 
we have $\spn_{\mathbb{Q}_{\geq 0}}A \cap \spn_{\mathbb{Q}_{\geq 0}}B = \{0\}$. 
\end{definition}

Let $\Phi_+^{\mathrm{min}} = \Phi_+^{\mathrm{re}} \cup \{\delta\}$. 
Let $p \colon \mathsf{Q} \to \mathring{\mathsf{Q}}$ denote the projection. 

\begin{lemma}[{\cite[Lemma 3.7]{MR3694676}}]\label{lem:coarsetype}
For a convex order $\preceq$ on $\Phi_+^{\mathrm{min}}$, 
there exists a unique element $w  \in \mathring{W}$ such that $p(\Phi_{+, \succ \delta}^{\mathrm{min}}) \subset w \mathring{\mathsf{Q}}_+, p(\Phi_{+, \prec \delta}^{\mathrm{min}}) \subset w \mathring{\mathsf{Q}}_-$. 
\end{lemma}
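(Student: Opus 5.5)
We need to prove: given a convex order on $\Phi_+^{\mathrm{min}}$, there is a unique $w\in\mathring W$ with $p(\Phi^{\min}_{+,\succ\delta})\subset w\mathring{\mathsf Q}_+$ and $p(\Phi^{\min}_{+,\prec\delta})\subset w\mathring{\mathsf Q}_-$.

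The plan is to reduce to finite root systems: the convex order splits the finite roots into a positive system, and positive systems of $\mathring\Phi$ are exactly the $w\mathring\Phi_+$.

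\textbf{Step 1: each finite root lands on exactly one side of $\delta$.} Fix $\alpha\in\mathring\Phi$. The real roots with $p$-image $\alpha$ are $\alpha+n\delta$ for $n$ large enough ($n\ge0$ if $\alpha>0$, $n\ge1$ if $\alpha<0$). Suppose $\alpha+m\delta\prec\delta$ and $\delta\prec -\alpha+n\delta$ for some admissible $m,n$. Convexity with $A=\{\alpha+m\delta\}$ and $B=\{\delta,-\alpha+n\delta\}$ then requires the cones they span to meet only in $0$. But $(\alpha+m\delta)+(-\alpha+n\delta)=(m+n)\delta$, so the point $\alpha+m\delta$ equals $(m+n)\delta-(-\alpha+n\delta)$. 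To get an honest nonnegative combination, use the cleaner choice $A=\{\alpha+m\delta,-\alpha+n\delta\}$ and $B=\{\delta\}$, with the inequalities arranged appropriately; then $(m+n)\delta$ lies in both cones, a contradiction.

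Similarly, if $\alpha+m\delta\prec\delta\prec\alpha+m'\delta$, take $A=\{\alpha+m\delta\}\cup\{-\alpha+n\delta \mid -\alpha+n\delta\prec\delta\}$ and argue in the same way. Combining such two-element arguments shows that for each $\alpha\in\mathring\Phi$, either all of the lifts $\alpha+n\delta$ lie on one side of $\delta$, or all lie on the other. Moreover, $\alpha$ and $-\alpha$ lie on opposite sides, since if both were, say, $\prec\delta$, their sum $(m+n)\delta$ would lie in $\spn_{\mathbb Q_{\ge0}}$ of elements $\prec\delta$, contradicting convexity with $B=\{\delta\}$.

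This defines a subset $S=\{\alpha\in\mathring\Phi \mid \text{lifts of }\alpha\succ\delta\}$ with $\mathring\Phi=S\sqcup(-S)$.

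\textbf{Step 2: $S$ is a positive system.} Show that $S$ is closed: if $\alpha,\beta\in S$ and $\alpha+\beta\in\mathring\Phi$, then a lift of $\alpha+\beta$ is a positive combination of lifts of $\alpha$ and $\beta$ minus a multiple of $\delta$. If $\alpha+\beta$ were in $-S$, the lift of $-(\alpha+\beta)$ would lie $\prec\delta$, and adding it to the lifts of $\alpha$ and $\beta$ gives a multiple of $\delta$ expressed through roots $\succ\delta$ and roots $\prec\delta$. Convexity (with $\delta$ placed in the appropriate set) gives a contradiction.

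A closed subset $S$ with $\mathring\Phi=S\sqcup -S$ is a positive system (Bourbaki). Hence $S=w\mathring\Phi_+$ for a unique $w\in\mathring W$, by simple transitivity of $\mathring W$ on positive systems.

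\textbf{Step 3: conclude.} Then $p(\Phi^{\min}_{+,\succ\delta})=S\subset w\mathring{\mathsf Q}_+$ and $p(\Phi^{\min}_{+,\prec\delta})=-S\subset w\mathring{\mathsf Q}_-$.

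Uniqueness: if $w'$ also works, then each $\alpha\in S$ satisfies $\alpha\in w'\mathring{\mathsf Q}_+\cap\mathring\Phi=w'\mathring\Phi_+$, so $S\subset w'\mathring\Phi_+$. Both sets have cardinality $|\mathring\Phi_+|$, so they are equal, and simple transitivity gives $w=w'$.

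\textbf{Main obstacle.} The main obstacle is the bookkeeping in the convexity arguments of Steps 1–2: each time one must choose the sets $A$ and $B$ so that the elements of $A$ genuinely precede those of $B$. The key tool is that any relation "a sum of real roots equals $k\delta$" forces those roots to straddle $\delta$.
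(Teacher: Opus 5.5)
The paper gives no proof of this lemma and simply cites McNamara. Your strategy is the standard one and it works: the finite parts of the real roots $\succ\delta$ form a set $S$ with $\mathring\Phi=S\sqcup(-S)$, $S$ is closed, so $S$ is a positive system $w\mathring\Phi_+$. However, the key sentence of Step 2 fails as written, and Step 1 needs rewording.

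\textbf{Step 2.} If $\alpha+\beta\in -S$, then $-(\alpha+\beta)\in S$, so its lifts lie $\succ\delta$, not $\prec\delta$. With your sign you obtain a multiple of $\delta$ written as a sum of roots on both sides of $\delta$. That is no contradiction; by your own ``key tool'', such relations are exactly the ones that must straddle $\delta$. For example, in type $A_2^{(1)}$ with coarse type $e$ one has $\alpha_1,\alpha_2\succ\delta\succ\alpha_0$ and $\alpha_0+\alpha_1+\alpha_2=\delta$.

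The correct argument runs as follows.
\begin{itemize}
\item Choose lifts $\alpha+a\delta$, $\beta+b\delta$, $-(\alpha+\beta)+c\delta$ with $a,b,c\ge 1$.
\item All three lie $\succ\delta$, and they sum to $(a+b+c)\delta$.
\item Taking $A=\{\delta\}$ and $B$ the three lifts violates convexity.
\end{itemize}
Equivalently, take a lift $\alpha+\beta+c\delta\prec\delta$ with $c\ge a+b$. It equals $(\alpha+a\delta)+(\beta+b\delta)+(c-a-b)\delta$, so $A=\{\alpha+\beta+c\delta\}$ and $B=\{\delta,\alpha+a\delta,\beta+b\delta\}$ violate convexity.

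\textbf{Step 1.} The configuration you first suppose, $\alpha+m\delta\prec\delta\prec-\alpha+n\delta$, is exactly the allowed one. The set $A$ in your ``Similarly'' sentence is not needed. Your argument with $A=\{\alpha+m\delta,-\alpha+n\delta\}$, $B=\{\delta\}$ (and its mirror with $A=\{\delta\}$) shows that no lift of $\alpha$ lies on the same side of $\delta$ as a lift of $-\alpha$. Since $-\alpha$ has at least one lift, all lifts of $\alpha$ lie on the side opposite it, which is what you need.

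With these two repairs, the Bourbaki characterization of positive systems as closed sets $P$ with $\mathring\Phi=P\sqcup(-P)$, simple transitivity of $\mathring W$, and your cardinality argument for uniqueness complete the proof.
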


\begin{definition} [{\cite[Definition 2.9]{MR3874704}}]
For a convex order $\preceq$ on $\Phi_+^{\mathrm{min}}$, we call the element $w$ in Lemma \ref{lem:coarsetype} the coarse type of $\preceq$. 
\end{definition}

It is known that there exists a convex order of coarse type $w$ for any $w \in \mathring{W}$; see \cite[Lemma 6.5]{murata2025affinehighestweightstructures} for instance. 

\begin{definition}
Let $\preceq$ be a convex order, and let $\alpha \in \Phi_+^{\mathrm{min}}, n \in \mathbb{Z}_{\geq 0}$. 
A simple $R(n\alpha)$-module $L$ is $\preceq$-cuspidal if, for any $\beta, \gamma \in \mathsf{Q}_+$ satisfying $\beta + \gamma = n\alpha$ and $\Res_{\beta,\gamma} L \neq 0$, 
we have 
\[
\beta \in \spn_{\mathbb{Z}_{\geq 0}} \Phi_{+,\preceq \alpha}^{\mathrm{min}}, \ \gamma \in \spn_{\mathbb{Z}_{\geq 0}} \Phi_{+,\succeq \alpha}^{\mathrm{min}}.  
\]
\end{definition}

Note that, when $\alpha = \delta$, the notion of $\preceq$-cuspidal $R(n\delta)$-module depends only on the coarse type $w$ of $\preceq$. 
Hence, we may call them $w$-cuspidal $R(n\delta)$-modules without ambiguity. 

For $w \in W$, let $\Xi_n^w$ denote the complete set of self-dual $w$-cuspidal simple $R(n\delta)$-modules up to isomorphism. 
Put $\Xi^w = \sqcup_{n \geq 0} \Xi_n^w$. 
For a convex order $\preceq$ of coarse type $w$ and $\beta \in \mathsf{Q}_+$, we define a set $\Omega^{\preceq}(\beta)$ as 
\[
\left\{ (\boldsymbol{c}_-, L, \boldsymbol{c}_+) \mathrel{}\middle|\mathrel{} \begin{gathered} 
  \boldsymbol{c}_- \colon \Phi_{+, \prec \delta}^{\text{min}} \to \mathbb{Z}_{\geq 0}, \ L \in \Xi^w, \ \boldsymbol{c}_+ \colon \Phi_{+, \succ \delta}^{\text{min}} \to \mathbb{Z}_{\geq 0}, \\
  \text{$\boldsymbol{c}_{-} (\alpha) = 0$ for all but finitely many $\alpha \in \Phi_{+, \prec \delta}^{\mathrm{min}}$}, \\
  \text{$\boldsymbol{c}_{+} (\alpha) = 0$ for all but finitely many $\alpha \in \Phi_{+, \succ \delta}^{\mathrm{min}}$}, \\
  \left(\sum_{\alpha \in \Phi_{+, \prec \delta}^{\text{min}}} \boldsymbol{c}_-(\alpha)\alpha \right) - \wt L + \left( \sum_{\alpha \in \Phi_{+, \succ \delta}^{\text{min}}} \boldsymbol{c}_+(\alpha)\alpha \right) = \beta.    
\end{gathered} \right\}.
\]
Note that when $L \in \Xi_n^w$, we have $\wt L = -n\delta$. 

For $n \in \mathbb{Z}_{\geq 0}$, a multipartition of $n$ is a collection of partitions $\lambda = (\lambda^{(i)})_{i \in \mathring{I}}$ indexed by $\mathring{I}$
such that 
\[
\lvert \lambda \rvert  \coloneqq \sum_{i \in \mathring{I}, k \geq 1} \lambda_k^{(i)} = n. 
\]

\begin{theorem}[{\cite[Corollary 2.17, Theorem 2.19]{MR3542489}}] \label{thm:cuspdecomp}
Let $\preceq$ be a convex order on $\Phi_+^{\mathrm{min}}$. 

(1) For each $\alpha \in \Phi_+^{\mathrm{re}}$ and $n \in \mathbb{Z}_{\geq 0}$, there exists a self-dual $\preceq$-cuspidal simple $R(n\alpha)$-module, uniquely up to isomorphism. 
Let $L^{\preceq}(n\alpha)$ denote this module. 
Moreover, we have 
\[
L^{\preceq}(n\alpha) \simeq q^{n(n-1)(\alpha,\alpha)/4}L^{\preceq}(\alpha)^{\circ n}. 
\]

(2) For $n \geq 0$, the cardinality of $\Xi_n^w$ is equal to the number of multipartitions of $n$. 

(3) For $\beta \in \mathsf{Q}_+$, we have a bijection
 \[
 \Omega^{\preceq}(\beta) \to \{ \text{self-dual simple $R(\beta)$-module} \} / {\simeq}, \  \omega \mapsto L^{\preceq}(\omega), 
 \]
where $L^{\preceq}(\omega)$ is defined as follows. 
Let $\omega = (\boldsymbol{c}_-, L, \boldsymbol{c}_+)$. 
We write 
\begin{align*}
\{\alpha \in \Phi_{+, \prec \delta}^{\mathrm{min}} \mid \boldsymbol{c}_-(\alpha) \neq 0 \} &= \{ \alpha_1^- \prec \alpha_2^- \prec \cdots \prec \alpha_{n_-}^-\}, \\
\{\alpha \in \Phi_{+, \succ \delta}^{\mathrm{min}} \mid \boldsymbol{c}_+(\alpha) \neq 0 \} &= \{ \alpha_1^+ \prec \alpha_{2}^+ \prec \cdots \prec \alpha_{n_+}^+\}. 
\end{align*}
We define $L^{\preceq}(\omega)$ to be the head of $\overline{\Delta}^{\preceq}(\omega)$, where
\[
\overline{\Delta}^{\preceq}(\omega) = L^{\preceq}(\boldsymbol{c}_+(\alpha_{n_+}^+)\alpha_{n_+}^+)\circ \cdots \circ L^{\preceq}(\boldsymbol{c}_+(\alpha_{1}^+)\alpha_{1}^+) \circ L \circ L^{\preceq}(\boldsymbol{c}_-(\alpha_{n_-}^-)\alpha_{n_-}^-) \circ \cdots \circ L^{\preceq}(\boldsymbol{c}_-(\alpha_{1}^-)\alpha_{1}^-). 
\] 
\end{theorem}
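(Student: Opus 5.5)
The plan is to follow the general cuspidal-decomposition strategy of Kleshchev and McNamara, adapted to convex orders on $\Phi_+^{\mathrm{min}}$. The only inputs are the Mackey filtration of $\Res\circ\Ind$, convexity (Definition \ref{def:convexpreorder}), and the dimension count coming from Theorem \ref{thm:categorification}. I would prove (3) first, in a form that does not yet know how many cuspidals there are, and then deduce (1) and (2) by counting.

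\textbf{Step 1: a word-level key lemma.} Take any simple $L\in\gMod{R(\beta)}$ and choose its "$\preceq$-root partition" as follows: among all $\gamma$ with $\Res_{\beta-\gamma,\gamma}L\neq 0$, take $\gamma$ extremal (a $\preceq$-largest, or lexicographically extremal, decomposition). Then show:
\begin{itemize}
\item the corresponding piece of $\Res L$ is a simple module of the form $L_1\boxtimes\cdots\boxtimes L_r$ with each $L_k$ cuspidal of weight a multiple of a single element of $\Phi_+^{\mathrm{min}}$;
\item $L$ is a quotient of $L_r\circ\cdots\circ L_1$ by adjunction.
\end{itemize}
Convexity guarantees that a sum of roots on one side of a cut never equals a sum of roots on the other side. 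This is what makes the extremal decomposition well defined and forces each factor to be cuspidal.

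\textbf{Step 2: simple head and triangularity.} For $\omega\in\Omega^{\preceq}(\beta)$, apply the Mackey filtration to $\Res\,\overline{\Delta}^{\preceq}(\omega)$. Convexity kills every subquotient except the trivial shuffle, so the $\omega$-component of $\Res\,\overline{\Delta}^{\preceq}(\omega)$ is exactly the simple module given by the outer tensor product of the cuspidal factors. Two consequences follow.
\begin{itemize}
\item $\overline{\Delta}^{\preceq}(\omega)$ has a simple head $L^{\preceq}(\omega)$, by the usual argument: any nonzero quotient receives this component under adjunction.
\item The composition factors $L^{\preceq}(\omega')$ other than the head satisfy $\omega'<\omega$ in a bilexicographic order.
\end{itemize}
Together with Step 1, this shows that the map $\omega\mapsto L^{\preceq}(\omega)$ is well defined, injective, and hits every simple, provided all cuspidals are known. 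This gives (3).

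For real $\alpha$, the same Mackey analysis applied to $L^{\preceq}(\alpha)^{\circ n}$ shows that its restriction to the cuspidal component is one-dimensional up to the nil-Hecke part. Hence $L^{\preceq}(\alpha)^{\circ n}$ is simple and equals $L^{\preceq}(n\alpha)$ up to the stated shift, which is computed from the self-duality formula $D(M\circ N)\simeq q^{(\beta,\gamma)}DN\circ DM$.

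\textbf{Step 3: counting, which gives existence and uniqueness in (1) and the count in (2).} By Theorem \ref{thm:categorification}, the number of self-dual simple $R(\beta)$-modules equals $\dim U_q^-(\mathfrak{g})_{-\beta}$. By the PBW theorem (Kostant partition function for affine type), this dimension is the number of ways to write $\beta$ as a sum of real roots plus $\delta$-parts, where $\delta$ carries multiplicity $|\mathring I|=N$. The generating function for the $\delta$-parts is $\prod_n(1-x^n)^{-N}$, which counts multipartitions.

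Now compare with (3). The number of simples equals $\sum$ over $\Omega^{\preceq}(\beta)$. Proceed by induction on $\height\beta$, starting with $\beta=\alpha$ real and $\beta=n\delta$.
\begin{itemize}
\item For $\beta=\alpha$ real: all simples other than cuspidals are accounted for by strictly smaller data, so the number of cuspidals equals the difference between the PBW count and the non-cuspidal count, which is $1$.
\item For $\beta=n\delta$: the same subtraction gives $|\Xi^w_n|$ equal to the number of multipartitions of $n$.
\end{itemize}

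\textbf{Main obstacle.} The delicate point is Step 1 and Step 2 for the imaginary direction. Since $\delta$ is not isolated (infinitely many real roots accumulate at $\delta$ on both sides), one must check that the extremal decomposition stays finite and that the Mackey vanishing still holds when an imaginary factor sits in the middle. I would handle this using Lemma \ref{lem:coarsetype}: all roots below $\delta$ project into $w\mathring{\mathsf Q}_-$ and all roots above $\delta$ project into $w\mathring{\mathsf Q}_+$, which reduces the finiteness and the vanishing to comparing the finite parts $p(\cdot)$.
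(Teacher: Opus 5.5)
\paragraph{Verdict.} There is a genuine gap: your argument does not prove the ``moreover'' part of (1), the simplicity of $L^{\preceq}(\alpha)^{\circ n}$. The rest of your plan is sound and follows the standard route of McNamara and Tingley--Webster, which is all the paper relies on here (it gives no proof and only cites \cite{MR3542489}).

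\paragraph{What works.} You prove the cuspidal decomposition with \emph{arbitrary} semicuspidal factors, using Mackey filtrations and convexity. You then count against the Kostant partition function, with $\delta$ of multiplicity $N$. Two details need care. In Step 1, take a simple submodule of the extremal restriction rather than asserting the restriction itself is simple. In Step 3, run the subtraction for every $n\alpha$, not just for $\alpha$. With these, you get (3), (2), and existence and uniqueness of $L^{\preceq}(n\alpha)$ in (1). The imaginary direction is not really an obstacle: convexity alone gives the Mackey vanishing, and only finitely many roots lie below a fixed $\beta$.

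\paragraph{The gap.} The problem is simplicity of $L^{\preceq}(\alpha)^{\circ n}$. The Mackey filtration of $\Res_{\alpha,\ldots,\alpha}(L^{\preceq}(\alpha)^{\circ n})$ has $n!$ layers, each a grading shift of $L^{\preceq}(\alpha)^{\otimes n}$. Nothing there is one-dimensional, and you have not constructed any nil-Hecke action on this restriction. Combined with the count, you only learn that every composition factor is a shift of $L^{\preceq}(n\alpha)$.

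Already for $n=2$ this is not enough. Write $L=L^{\preceq}(\alpha)$ and consider a uniserial $L\circ L$ with:
\begin{itemize}
\item head $L^{\preceq}(2\alpha)$,
\item socle $q^{-(\alpha,\alpha)}L^{\preceq}(2\alpha)$,
\item $\Res_{\alpha,\alpha}L^{\preceq}(2\alpha)\simeq L\otimes L$.
\end{itemize}
This is consistent with the Mackey filtration, with $D(L\circ L)\simeq q^{(\alpha,\alpha)}L\circ L$, and with the adjunction count $\dim_q\End_R(L\circ L)=\dim_q\Hom(L\otimes L,\Res_{\alpha,\alpha}(L\circ L))=1+q^{-(\alpha,\alpha)}$.

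What excludes it? Its nilpotent endomorphism of degree $-(\alpha,\alpha)$ would have to be a nonzero multiple of $u\boxtimes v\mapsto\tau_{w[h,h]}(v\boxtimes u)$, where $h=\height\alpha$. For $\alpha=\alpha_i$ this is impossible because of the relation $\tau_1x_2-x_1\tau_1=1$. For a general real $\alpha$ you need the analogous nondegenerate relation $\tau(\id\otimes z)-(z\otimes\id)\tau=c\,\id$ with $c\neq0$, for an affinization $z$ of $L^{\preceq}(\alpha)$. Equivalently, you need that $L^{\preceq}(\alpha)$ is real.

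\paragraph{What is needed.} This requires input beyond Mackey filtrations and counting. Possible sources are R-matrix or affinization theory (for instance via determinantial modules), or transporting $L(i)^{\circ n}$ along reflection functors. Whatever you use must not itself depend on part (1). In this paper, for example, Theorem \ref{thm:affreal} derives realness \emph{from} (1).
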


\begin{corollary}[{\cite[Corollary 3.34, Corollary 3.45]{murata2025affinehighestweightstructures}}] \label{cor:cuspidaltest}
Let $n \in \mathbb{Z}_{\geq 1}$ and let $L$ be a simple $R^w(n\delta)$-module. 
Then, the following three conditions are equivalent: 
\begin{enumerate}
\item the module $L$ is $w$-cuspidal; 
\item for any $\alpha \in \Phi_+^{\mathrm{re}}$ such that $p(\alpha) \in w\mathring{\Phi}_+$, we have $\Res_{\alpha,n\delta-\alpha} L = 0$;
\item for any $\alpha \in \Phi_+^{\mathrm{re}}$ such that $p(\alpha) \in w\mathring{\Phi}_-$, we have $\Res_{n\delta-\alpha,\alpha} L = 0$.  
\end{enumerate}
\end{corollary}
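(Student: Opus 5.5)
The plan is to prove the cycle of implications $(1) \Rightarrow (2)$, $(1) \Rightarrow (3)$, $(2) \Rightarrow (1)$, $(3) \Rightarrow (1)$.

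\emph{The implications $(1) \Rightarrow (2)$ and $(1) \Rightarrow (3)$.} Choose a convex order $\preceq$ of coarse type $w$; such an order exists, as recalled above. Suppose $L$ is $w$-cuspidal and $\Res_{\alpha,n\delta-\alpha}L \neq 0$ with $p(\alpha) \in w\mathring{\Phi}_+$. By Lemma \ref{lem:coarsetype}, $\alpha \succ \delta$, so $\alpha$ does not lie in $\spn_{\mathbb{Z}_{\geq 0}}\Phi^{\mathrm{min}}_{+,\preceq\delta}$. Indeed, the two cones spanned by $\Phi^{\mathrm{min}}_{+,\preceq\delta}$ and by $\{\alpha\}$ meet only in $0$ by convexity. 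This contradicts cuspidality, so $\Res_{\alpha,n\delta-\alpha}L = 0$. The implication $(1) \Rightarrow (3)$ is symmetric.

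\emph{The implication $(2) \Rightarrow (1)$.} Let $L$ be simple over $R^w(n\delta)$, and suppose $\Res_{\beta,\gamma}L \neq 0$ with $\beta + \gamma = n\delta$. The aim is to show $\beta \in \spn\Phi^{\mathrm{min}}_{+,\preceq\delta}$.

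Take a simple constituent $M \boxtimes N$ of $\Res_{\beta,\gamma}L$ and write $M$ via Theorem \ref{thm:cuspdecomp}(3) as the head of $\overline{\Delta}^{\preceq}(\omega)$. The leftmost factor of $\overline{\Delta}^{\preceq}(\omega)$ is the cuspidal module of the largest root appearing in $\omega$. Suppose $\omega$ involves some real root $\alpha \succ \delta$, and let $\alpha$ be the largest such root. Since the head of $\overline{\Delta}^{\preceq}(\omega)$ is a quotient, $\Res_{\alpha,\beta-\alpha}M \neq 0$. The Mackey-type argument here is standard: the head embeds via adjunction, and the leftmost cuspidal factor survives restriction.

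By transitivity of restriction, $\Res_{\alpha,n\delta-\alpha}L \neq 0$. Here $p(\alpha) \in w\mathring{\Phi}_+$, unless $p(\alpha) = 0$, which is impossible for a real root. This contradicts (2).

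Hence every root in $\omega$ is $\preceq \delta$, or $\omega$ has a nontrivial imaginary part. In either case $\beta \in \spn\Phi^{\mathrm{min}}_{+,\preceq\delta}$. Then $\gamma = n\delta - \beta$ lies in $\spn\Phi^{\mathrm{min}}_{+,\succeq\delta}$.

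To see this last step, use that $L$ is an $R^w(n\delta)$-module, so $e(\beta,\gamma)$ survives only when $\gamma \in \spn(\Phi_+ \cap p^{-1}(w\mathring{\mathsf{Q}}_+))$. Alternatively, argue directly: write $\beta$ as $\sum \alpha_i^-$ plus a multiple of $\delta$. Its projection lies in $w\mathring{\mathsf{Q}}_-$, so $p(\gamma) \in w\mathring{\mathsf{Q}}_+$. Positive roots with this projection decompose into roots $\succeq \delta$ after adding multiples of $\delta$.

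\emph{The implication $(3) \Rightarrow (1)$.} This is dual to $(2) \Rightarrow (1)$. Use the rightmost factor of $\overline{\Delta}^{\preceq}$ instead of the leftmost, or apply the duality $D$ together with the anti-involution reversing the order of factors.

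The main obstacle is the final membership step for $\gamma$, namely that $\gamma \in \spn\Phi^{\mathrm{min}}_{+,\succeq\delta}$. It requires the combinatorial fact that the cone $\spn_{\mathbb{Z}_{\geq 0}}(\Phi_+\cap p^{-1}(w\mathring{\mathsf{Q}}_+))$ equals $\spn\Phi^{\mathrm{min}}_{+,\succeq\delta}$ at heights below $n\delta$. I would try to derive this from Lemma \ref{lem:coarsetype} plus the explicit description of $\Phi^{\mathrm{re}}_+$.
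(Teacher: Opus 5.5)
Your implications $(1)\Rightarrow(2)$ and $(1)\Rightarrow(3)$ are fine. The first half of $(2)\Rightarrow(1)$ is also fine: decomposing a constituent $M\otimes N$ of $\Res_{\beta,\gamma}L$ and using the leftmost factor of $\overline{\Delta}^{\preceq}(\omega)$ does give $\beta\in\spn_{\mathbb{Z}_{\geq 0}}\Phi^{\mathrm{min}}_{+,\preceq\delta}$. The gap is the step you flag yourself, the membership of $\gamma$, and neither of your two routes closes it.

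\emph{The route ``$L$ is an $R^w(n\delta)$-module'' makes the statement a tautology.} By Lemma \ref{lem:coarsetype}, $\Phi^{\mathrm{min}}_+\cap p^{-1}(w\mathring{\mathsf{Q}}_-)=\Phi^{\mathrm{min}}_{+,\preceq\delta}$ and $\Phi^{\mathrm{min}}_+\cap p^{-1}(w\mathring{\mathsf{Q}}_+)=\Phi^{\mathrm{min}}_{+,\succeq\delta}$. So, as the paper notes right after defining $R^w(n\delta)$, a simple module factoring through $R^w(n\delta)$ is $w$-cuspidal by definition. Conditions (2) and (3) also hold for it trivially, because $e(\alpha,n\delta-\alpha)$ and $e(n\delta-\alpha,\alpha)$ are among the idempotents killed in $R^w(n\delta)$. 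The corollary only has content for a simple $R(n\delta)$-module. That is how it is used in Proposition \ref{prop:imaginaryind}(3), for a constituent $L_1$ not yet known to factor through $R^e(m\delta)$.

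\emph{The direct route rests on a false claim.} You need that $\gamma\in\mathsf{Q}_+$ with $p(\gamma)\in w\mathring{\mathsf{Q}}_+$ lies in $\spn_{\mathbb{Z}_{\geq 0}}\Phi^{\mathrm{min}}_{+,\succeq\delta}$. Take $A_3^{(1)}$ with $w=w_0$, $n=1$, $\beta=\alpha_1+\alpha_3$ and $\gamma=\alpha_0+\alpha_2$. Here $\alpha_1,\alpha_2,\alpha_3\prec\delta$, so $\beta\in\spn\Phi^{\mathrm{min}}_{+,\preceq\delta}$, and $p(\gamma)=-\alpha_1-\alpha_3\in w_0\mathring{\mathsf{Q}}_+$. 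But $\alpha_0+\alpha_2$ is not a root, so its only expression as a sum of positive roots is $\alpha_0+\alpha_2$, which uses $\alpha_2\prec\delta$.

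For $w=e$ your claim happens to hold, since $\mathsf{Q}_+\cap p^{-1}(\mathring{\mathsf{Q}}_+)=\mathbb{Z}_{\geq0}\delta+\mathring{\mathsf{Q}}_+$. However, the mirror claim needed in your $(3)\Rightarrow(1)$ already fails for $w=e$, with $\beta=\alpha_0+\alpha_2$ and $\gamma=\alpha_1+\alpha_3$. So the projection $p(\gamma)$ alone cannot certify the other half of the cuspidality condition.

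The missing idea is to apply Theorem \ref{thm:cuspdecomp}(3) to $L$ itself, not to a constituent of a restriction. This is the argument behind the cited result; compare the proof of Lemma \ref{lem:cuspidalcriterion}.
\begin{enumerate}
\item Write $L$, up to grading shift, as $L^{\preceq}(\omega)$ with $\omega=(\boldsymbol{c}_-,L',\boldsymbol{c}_+)$ and $L'\in\Xi^w_m$.
\item Your leftmost-factor argument, run for $L$ itself, shows that (2) forces $\boldsymbol{c}_+=0$.
\item Then $\sum_{\alpha\prec\delta}\boldsymbol{c}_-(\alpha)\alpha=(n-m)\delta$, so $\sum\boldsymbol{c}_-(\alpha)p(\alpha)=0$. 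Every such $p(\alpha)$ is a nonzero element of $w\mathring{\mathsf{Q}}_-$, hence $\boldsymbol{c}_-=0$.
\item Therefore $\overline{\Delta}^{\preceq}(\omega)=L'$ and $L\simeq L'$ is $w$-cuspidal. This gives the $\beta$- and $\gamma$-conditions at once.
\end{enumerate}
The implication $(3)\Rightarrow(1)$ is identical, using the rightmost factor.
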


\subsection{Stratifications} \label{sub:stratifications}

For $w \in \mathring{W}$ and $n \geq 0$, 
let $R^w(n\delta)$ be the quotient algebra of $R(n\delta)$ by the ideal generated by the elements $e(\beta,\gamma) \ (\beta,\gamma \in \mathsf{Q}_+, \beta + \gamma = n\delta)$ such that 
\[
\text{$\beta \not \in \spn_{\mathbb{Z}_{\geq 0}} (\Phi_+^{\mathrm{min}} \cap p^{-1}(w\mathring{\mathsf{Q}}_-))$ or $\gamma \not \in \spn_{\mathbb{Z}_{\geq 0}} (\Phi_+^{\mathrm{min}} \cap p^{-1}(w\mathring{\mathsf{Q}}_+))$}.  
\]

In the rest of this subsection, we assume that $\mathbf{k}$ is a field. 
By definition, a simple $R(n\delta)$-module $L$ is $w$-cuspidal if and only if it descends to an $R^w(n\delta)$-module. 
For $L \in \Xi_n^w$, let $\Delta^w(L)$ be the projective cover of $L$ in $\gMod{R^w(n\delta)}$. 

Let $\preceq$ be a convex order of coarse type $w$. 
For $\alpha \in \Phi_+^{\mathrm{min}}$ and $n \geq 0$, 
let $\Delta^{\preceq}(n\alpha)$ be the projective cover of $L^{\preceq}(n\alpha)$ in the full subcategory of $\gMod{R(n\alpha)}$ consisting of modules whose composition factors are grading shifts of $L^{\preceq}(n\alpha)$. 
Note that this projective cover exists because this full subcategory is equivalent to the category of finitely generated graded modules over the quotient algebra of $R(n\alpha)$ by the ideal generated by all the idempotents of $R(n\alpha)$ that annihilate $L^{\preceq}(n\alpha)$. 
For $\omega = (\boldsymbol{c}_-,L,\boldsymbol{c}_+) \in \Omega^{\preceq}(\beta)$, using the notation in Theorem \ref{thm:cuspdecomp} we define $\Delta^{\preceq}(\omega)$ to be
\[
\Delta^{\preceq}(\boldsymbol{c}_+(\alpha_{n_+}^+)\alpha_{n_+}^+)\circ \cdots \circ \Delta^{\preceq}(\boldsymbol{c}_+(\alpha_{1}^+)\alpha_1^+) \circ \Delta^w(L) \circ \Delta^{\preceq}(\boldsymbol{c}_-(\alpha_{n_-}^-)\alpha_{n_-}^-) \circ \cdots \circ \Delta^{\preceq}(\boldsymbol{c}_-(\alpha_{1}^-)\alpha_1^-). 
\] 

We define $\mathcal{P}(\beta)$ as the set of root partitions of $\beta$, that is, the set of all maps $f \colon \Phi_+^{\mathrm{min}} \to \mathbb{Z}_{\geq 0}$ with finite supports satisfying
\[
\sum_{\alpha \in \Phi_+^{\mathrm{min}}} f(\alpha) \alpha = \beta. 
\]
Let $\leq$ be the bilexicographic order on $\mathcal{P}(\beta)$ with respect to $\preceq$;
that is, for $f, g \in \mathcal{P}(\beta)$, we write $f < g$ if the following two conditions hold: 
\begin{itemize}
  \item there exists $\alpha \in \Phi_+^{\mathrm{min}}$ such that $f(\alpha) < g(\alpha)$ and $f(\alpha') = g(\alpha')$ for all $\alpha' \in \Phi_{+,\prec \alpha}^{\mathrm{min}}$;
  \item there exists $\alpha \in \Phi_+^{\mathrm{min}}$ such that $f(\alpha) < g(\alpha)$ and $f(\alpha') = g(\alpha')$ for all $\alpha' \in \Phi_{+,\succ \alpha}^{\mathrm{min}}$. 
\end{itemize}
We define a map $\rho \colon \Omega^{\preceq}(\beta) \to \mathcal{P}(\beta)$ by sending $\omega = (\boldsymbol{c}_-,L,\boldsymbol{c}_+)$ to $f$, where 
\[
f(\alpha) = \begin{cases}
\boldsymbol{c}_-(\alpha) & \text{if $\alpha \prec \delta$}, \\
\boldsymbol{c}_+(\alpha) & \text{if $\alpha \succ \delta$}, \\
n & \text{if $\alpha = \delta$ and $L \in \Xi_n^w$}. 
\end{cases}
\]

\begin{theorem}[{\cite{MR3694676}, \cite[Theorem 6.10]{murata2025affinehighestweightstructures}}] \label{thm:stratification}
Let $\preceq$ be a convex order on $\Phi_+^{\mathrm{min}}$. 
With respect to the map $\rho \colon \Omega^{\preceq}(\beta) \to \mathcal{P}(\beta)$ and the partial order $\leq$ on $\mathcal{P}(\beta)$, 
the category $\gMod{R(\beta)}$ is stratified in the sense of Kleshchev \cite{MR3335289}. 
For $\omega = (\boldsymbol{c}_-,L,\boldsymbol{c}_+) \in \Omega^{\preceq}(\beta)$, 
the standard module is $\Delta^{\preceq}(\omega)$, and its endomorphism algebra $\End_R(\Delta^{\preceq}(\omega))$ is isomorphic to 
\[
\bigotimes_{\alpha \succ \delta} \End_R (\Delta^{\preceq}(\boldsymbol{c}_+(\alpha)\alpha)) \otimes \End_R (\Delta^w(L)) \otimes \bigotimes_{\alpha \prec \delta} \End_R (\Delta^{\preceq}(\boldsymbol{c}_-(\alpha)\alpha)). 
\] 
For $\alpha \in \Phi_+^{\mathrm{re}}$ and $n \geq 1$, 
the algebra $\End_R(\Delta^{\preceq}(n\alpha))$ is isomorphic to the algebra of symmetric polynomials in $n$-variables $\mathbf{k}[z_1, \ldots, z_n]^{\mathfrak{S}_n}$, 
where $\deg z_k = (\alpha,\alpha) \ (1 \leq k \leq n)$. 
Furthermore, we have an isomorphism
\[
\Delta^{\preceq}(\alpha)^{\circ n} \simeq \Delta^{\preceq}(n\alpha)^{\oplus [n]_{\alpha}!}.  
\]
\end{theorem}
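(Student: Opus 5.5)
This theorem is cited from \cite{MR3694676} and \cite[Theorem 6.10]{murata2025affinehighestweightstructures}, so the plan is to deduce it from Kleshchev's axioms \cite{MR3335289} for a stratified category. Fix $\beta$ and work with the finite poset $\mathcal{P}(\beta)$ under the bilexicographic order. The argument proceeds in four steps.

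\textbf{Step 1: labelling of simples.} By Theorem \ref{thm:cuspdecomp}(3), the self-dual simple modules are labelled by $\Omega^{\preceq}(\beta)$. Each $L^{\preceq}(\omega)$ is the head of $\overline{\Delta}^{\preceq}(\omega)$, and every other composition factor $L^{\preceq}(\omega')$ of $\overline{\Delta}^{\preceq}(\omega)$ satisfies $\rho(\omega')<\rho(\omega)$. This is the standard unitriangularity of proper standard modules. It follows from the Mackey filtration of $\Res$ on convolution products, together with convexity: a restriction of cuspidal factors can only produce weights that are bilexicographically smaller.

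\textbf{Step 2: the standard objects.} Define $\Delta^{\preceq}(\omega)$ as the convolution product of the cuspidal projective covers, exactly as in the statement, and check two axioms. First, $\Delta^{\preceq}(\omega)$ is projective in the Serre subcategory $\gMod{R(\beta)}_{\le\rho(\omega)}$. Second, the kernel of $P(\omega)\to\Delta^{\preceq}(\omega)$ is filtered by standards $\Delta(\omega')$ with $\rho(\omega')>\rho(\omega)$.

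For the first axiom I would prove the vanishing
\[
\Ext^1\bigl(\Delta^{\preceq}(\omega),L^{\preceq}(\omega')\bigr)=0 \quad\text{for } \rho(\omega')\not>\rho(\omega).
\]
Using the adjunction $(\Ind,\Res)$, this reduces to computing $\Res_{\gamma_1,\dots,\gamma_r}L^{\preceq}(\omega')$ for the root partition $\rho(\omega)$. Convexity shows that either this restriction is zero, or it is the outer tensor product of cuspidal pieces, in which case $\rho(\omega')\ge\rho(\omega)$. The Ext-vanishing then reduces to the cuspidal factors, where it holds because each factor is projective in its own stratum category.

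For the second axiom, the filtration of projectives is produced by the same Mackey argument applied to $R(\beta)e(\nu)$, in the manner of Kleshchev's argument for finite type.

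\textbf{Step 3: endomorphism algebras.} The description of $\End_R(\Delta^{\preceq}(\omega))$ as a tensor product follows from adjunction, $\Hom(\Ind X,\Ind X)\simeq\Hom(X,\Res\Ind X)$. In the Mackey filtration of $\Res\Ind X$, only the identity shuffle survives on the cuspidal pieces. The other subquotients have Hom zero because their weights are incompatible with cuspidality.

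\textbf{Step 4: real strata.} For the real root statements, first show $\End(\Delta^{\preceq}(\alpha))\simeq\mathbf{k}[z]$, using that $\Delta^{\preceq}(\alpha)$ is a free rank-one module over the polynomial subalgebra generated by $x_1$. Then $\Delta^{\preceq}(\alpha)^{\circ n}$ has endomorphism algebra the nil-Hecke algebra $NH_n$ acting through $\mathbf{k}[z_1,\dots,z_n]$. Since $NH_n$ is a matrix algebra of size $n!$ over the symmetric polynomials, the graded version gives $\Delta^{\preceq}(\alpha)^{\circ n}\simeq\Delta^{\preceq}(n\alpha)^{\oplus[n]_\alpha!}$ and $\End(\Delta^{\preceq}(n\alpha))\simeq\mathbf{k}[z]^{\mathfrak{S}_n}$.

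The main obstacle is building the $NH_n$-action, because the relations $\tau^2=0$ and braid must hold on $\Delta^{\preceq}(\alpha)^{\circ n}$ for arbitrary $t$ and characteristic. I would obtain this through the $R$-matrix on $\Delta^{\preceq}(\alpha)\circ\Delta^{\preceq}(\alpha)$, normalized in degree $-(\alpha,\alpha)$, checking the relations by a degree count. The other delicate point, the first axiom of Step 2 in the imaginary stratum, is handled by using $\Delta^w(L)$ as projective covers in $\gMod{R^w(n\delta)}$ together with Corollary \ref{cor:cuspidaltest}.
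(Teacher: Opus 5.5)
The paper gives no proof of this theorem; it imports it from \cite{MR3694676} and \cite[Theorem 6.10]{murata2025affinehighestweightstructures}. Compared with what those proofs have to do, your Steps 1 and 3 and the first axiom in Step 2 are the routine part and are essentially fine. One imprecision in the convexity statement: $\Res_{\rho(\omega)}L^{\preceq}(\omega')\neq 0$ forces $\rho(\omega')\geq\rho(\omega)$, and only when $\rho(\omega')=\rho(\omega)$ is the restriction the outer tensor product of the cuspidal pieces. For $\rho(\omega')>\rho(\omega)$ it is neither zero nor of that form. This is harmless for the vanishing you need.

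\textbf{Main gap: the second axiom.} The $\Delta$-flag on $\Ker(P(\omega)\to\Delta^{\preceq}(\omega))$ is the real content of the theorem, and you dispose of it in one sentence. A Mackey filtration filters $\Res(M\circ N)$, not $M\circ N$ itself. Applied to $R(\beta)e(\nu)\simeq R(\alpha_{\nu_1})\circ\cdots\circ R(\alpha_{\nu_n})$, it says nothing about filtering this module by the $\Delta^{\preceq}(\omega')$, because the factors occur in an arbitrary, non-convex order.

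To build a $\Delta$-flag you must reorder adjacent factors. That requires exact sequences $0\to q^{m}\Delta(\beta)\circ\Delta(\gamma)\xrightarrow{\mathsf{R}}\Delta(\gamma)\circ\Delta(\beta)\to(\Delta\text{-filtered})\to 0$ with $\mathsf{R}$ injective and the cokernel controlled. For real roots these come from minimal pairs. For the imaginary stratum they are sequences such as Propositions \ref{prop:imaginarystd} and \ref{prop:psimodule}, whose injectivity and projective cokernels are themselves nontrivial results of \cite{murata2025affinehighestweightstructures}.

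If you instead use Kleshchev's homological criterion, you need $\Ext^1(\Delta^{\preceq}(\omega),\overline{\nabla}(\omega'))=0$ also for $\rho(\omega')>\rho(\omega)$. There your adjunction argument breaks down. $\Res_{\rho(\omega)}\overline{\nabla}(\omega')$ has composition factors outside the strata in which the factors of $\Delta^{\preceq}(\omega)$ are known to be projective. So you would need Ext-vanishing for cuspidal standard modules against arbitrary modules.

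This is where the real work in the sources lies. \cite{MR3694676} relied on geometric input, hence $t=1$ and $\ch\mathbf{k}=0$. \cite{murata2025affinehighestweightstructures} needed new algebraic arguments for arbitrary $t$. Your proposal offers no substitute.

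\textbf{Step 4 rests on a false premise.} $\Delta^{\preceq}(\alpha)$ is not free of rank one over $\mathbf{k}[x_1]$ in general; its rank over $\mathbf{k}[z]$ is $\dim L^{\preceq}(\alpha)$. For example, by Lemma \ref{lem:Deltazero}, in type $A_3^{(1)}$ with $i=2$ the module $\Delta^e(\delta-\alpha_2)$ has $\mathbf{k}[z]$-basis $v_{(0,1,3)},v_{(0,3,1)}$, and $x_1$ acts as $z$ on both.

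What you actually need is $[\Delta^{\preceq}(\alpha):L^{\preceq}(\alpha)]_q=(1-q_\alpha^2)^{-1}$, i.e.\ that $(\Delta^{\preceq}(\alpha),z)$ is an affinization. In the sources this comes from identifying $\Delta^{\preceq}(\alpha)$ with a determinantial module over $\mathbf{k}[z]$ (cf.\ Theorem \ref{thm:affreal}).

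Granting that, your nil-Hecke argument mostly works. Degree counting does give $\mathbf{r}^2=0$. For the braid relation, however, it only shows that both sides lie in the one-dimensional component of degree $-3(\alpha,\alpha)$. You still need a leading-term comparison in the Mackey filtration to conclude they are equal.
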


This theorem suggests that a detailed investigation into the algebras $R^w(n\delta)$ is essential. 

\begin{corollary}\label{cor:maximalext}
For $\alpha \in \Phi_+^{\mathrm{re}}$, we have
\[
\chi(\Delta^{\preceq}(\alpha)) = \frac{1}{1-q_{\alpha}^2} \chi(L^{\preceq}(\alpha)). 
\]
\end{corollary}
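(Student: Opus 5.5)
We deduce the character of $\Delta^{\preceq}(\alpha)$ from the structure of its endomorphism algebra, as recorded in Theorem \ref{thm:stratification} with $n=1$. The idea is that $\Delta^{\preceq}(\alpha)$ has a filtration whose subquotients are grading shifts of $L^{\preceq}(\alpha)$, with multiplicities governed by $\End_R(\Delta^{\preceq}(\alpha))\simeq \mathbf{k}[z]$, where $\deg z = (\alpha,\alpha)$.

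First, since $\Delta^{\preceq}(\alpha)$ is the projective cover of $L^{\preceq}(\alpha)$ in the Serre subcategory $\mathcal{C}_\alpha$ of modules all of whose composition factors are shifts of $L^{\preceq}(\alpha)$, we have the usual formula
\[
[\Delta^{\preceq}(\alpha):L^{\preceq}(\alpha)]_q=\dim_q \Hom_R(\Delta^{\preceq}(\alpha),\Delta^{\preceq}(\alpha)).
\]
It is computed within $\mathcal{C}_\alpha$, which is legitimate because $\mathcal{C}_\alpha$ is the module category of a quotient algebra and $\Delta^{\preceq}(\alpha)$ is projective there. This equality holds as Laurent series, since $\Delta^{\preceq}(\alpha)$ is finitely generated over a Laurentian algebra and hence has a well-defined, possibly infinite, graded composition multiplicity. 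By Theorem \ref{thm:stratification}, the right-hand side equals $\dim_q \mathbf{k}[z] = \sum_{k\ge0} q^{k(\alpha,\alpha)} = 1/(1-q_\alpha^2)$, because $q_\alpha^2=q^{(\alpha,\alpha)}$.

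Second, we pass to the Grothendieck group. Every module in $\mathcal{C}_\alpha$ has all composition factors equal to shifts of $L^{\preceq}(\alpha)$. So in $K(\gMod{R})$, or more precisely after the identification via $\ch_q$ of Corollary \ref{cor:formalcharacter}, we get $\ch_q \Delta^{\preceq}(\alpha) = [\Delta^{\preceq}(\alpha):L^{\preceq}(\alpha)]_q\,\ch_q L^{\preceq}(\alpha)$. This holds because $\dim_q e(\nu)$ is additive along a composition series, and the series converges since the grading is bounded below and each graded piece is finite-dimensional.

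Finally, $\theta$ is injective and $\theta\circ\chi = \ch_q$, which gives $\chi(\Delta^{\preceq}(\alpha)) = \frac{1}{1-q_\alpha^2}\chi(L^{\preceq}(\alpha))$. The main subtlety is justifying the infinite composition multiplicity and the additivity of $\ch_q$. An alternative that avoids infinite series is to use the short exact sequence $0\to q_\alpha^2\Delta^{\preceq}(\alpha)\xrightarrow{z}\Delta^{\preceq}(\alpha)\to L^{\preceq}(\alpha)\to 0$, which follows if $\Delta^{\preceq}(\alpha)$ is free over $\mathbf{k}[z]$ with $\Delta/z\Delta\simeq L$, a standard fact from the stratification in \cite{murata2025affinehighestweightstructures}. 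Taking $\chi$ of this sequence gives $(1-q_\alpha^2)\chi(\Delta^{\preceq}(\alpha))=\chi(L^{\preceq}(\alpha))$ directly in $K(\gMod{R})_{\mathbb{Q}(q)}$. We prefer this second route, and will verify that $z$ is injective and that the cokernel of $z$ is $L^{\preceq}(\alpha)$, since that verification is the real obstacle.
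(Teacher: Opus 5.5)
Your first route is exactly the paper's argument. The paper's proof is the one line $[\Delta^{\preceq}(\alpha):L^{\preceq}(\alpha)]_q=\dim_q\End_R(\Delta^{\preceq}(\alpha))=1/(1-q_\alpha^2)$, combined with the fact that all composition factors are shifts of $L^{\preceq}(\alpha)$. Your passage through $\ch_q$ and the injectivity of $\theta$ (Corollary \ref{cor:formalcharacter}) only makes explicit how a Laurent-series multiplicity becomes an identity in $U_q^-(\mathfrak{g})$, a step the paper leaves implicit.

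The second route, which you say you prefer, is also sound, and the step you flag as the real obstacle is not one. Injectivity of $z$ and $\Delta^{\preceq}(\alpha)/z\Delta^{\preceq}(\alpha)\simeq L^{\preceq}(\alpha)$ are precisely the affinization statement of Theorem \ref{thm:affreal}, which the paper proves later via determinantial modules. They also follow from plain graded Morita theory. Let $A$ be the quotient algebra whose graded module category is your $\mathcal{C}_\alpha$. Then $A$ has a single simple up to shift, so $A$ is a finite direct sum of shifts of $\Delta^{\preceq}(\alpha)$. Hence $\Delta^{\preceq}(\alpha)$ is a graded progenerator, and $\Hom_A(\Delta^{\preceq}(\alpha),-)$ identifies $\gMod{A}$ with graded $\mathbf{k}[z]$-modules. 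This equivalence sends $\Delta^{\preceq}(\alpha)\mapsto\mathbf{k}[z]$ and $L^{\preceq}(\alpha)\mapsto\mathbf{k}$, so the sequence $0\to q_\alpha^2\mathbf{k}[z]\xrightarrow{z}\mathbf{k}[z]\to\mathbf{k}\to 0$ transports back to the one you want.

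The trade-off between the two routes is as follows. The paper's route needs only the graded dimension of the endomorphism algebra, but works with infinite multiplicities and series expansions. The second route gives a finite relation directly in $K(\gMod{R})_{\mathbb{Q}(q)}$, at the cost of using the full $\mathbf{k}[z]$-module structure of $\Delta^{\preceq}(\alpha)$.
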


\begin{proof}
It follows from 
\[
\dim_q \End_R(\Delta^{\preceq}(\alpha)) = \frac{1}{1-q_{\alpha}^2}. 
\]
\end{proof}

\begin{corollary}
For $\omega \in \Omega^{\preceq}(\beta)$, the projective dimension of $\Delta^{\preceq}(\omega)$ is finite. 
Hence, $\Extform{\Delta^{\preceq}(\omega)}{M}$ is well-defined for any $M \in \gMod{R(\beta)}$. 
\end{corollary}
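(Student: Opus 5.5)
The plan is to prove the finiteness of projective dimension directly from the stratification of Theorem \ref{thm:stratification}, by descending induction on the finite poset $(\mathcal{P}(\beta),\leq)$. An alternative is to first reduce to the factors of the convolution product; this is noted at the end. Throughout, $\mathbf{k}$ is a field and $R(\beta)$ is Noetherian and Laurentian.

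The first step is a homological criterion. For $X \in \gMod{R(\beta)}$ and $d \geq 0$, we have $\operatorname{pd} X \leq d$ if and only if $\Ext^{d+1}_R(X,L)=0$ for every simple $L$ (up to grading shift). I would prove this by taking a graded minimal projective resolution of $X$. Because $R(\beta)$ is Noetherian and Laurentian, each term is finitely generated, and minimality makes the differentials vanish after applying $\Hom_R(-,L)$. So $\Ext^{k}_R(X,L)$ detects exactly which indecomposable projective covers occur in the $k$-th term.

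The second step is the induction. In Kleshchev's stratified setting, the projective cover $P(\omega)$ of $L^{\preceq}(\omega)$ surjects onto $\Delta^{\preceq}(\omega)$, and the kernel $K(\omega)$ has a (graded, possibly infinite) filtration. Its subquotients are grading shifts $q^{m}\Delta^{\preceq}(\omega')$ with $\rho(\omega')>\rho(\omega)$. Only finitely many shifts occur in degrees below any given bound, since all modules involved are Laurentian.

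For the base case, $\omega$ has $\rho(\omega)$ maximal; then $K(\omega)=0$ and $\Delta^{\preceq}(\omega)=P(\omega)$ is projective. For the inductive step, suppose $\operatorname{pd}\Delta^{\preceq}(\omega')\leq d$ for all $\omega'$ with $\rho(\omega')>\rho(\omega)$; there are finitely many such $\omega'$ up to shift, since $\mathcal{P}(\beta)$ is finite and the relevant sets of cuspidal data are finite. I then claim $\Ext^{d+1}_R(K(\omega),L)=0$ for every simple $L$. Since $L$ is finite dimensional and $K(\omega)$ has a finitely generated projective resolution, this Ext group only sees a degree-truncated part of $K(\omega)$. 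That part is controlled by a finite piece of the filtration, where the long exact sequence applies and gives the vanishing. The criterion from the first step then gives $\operatorname{pd} K(\omega)\leq d$, hence $\operatorname{pd}\Delta^{\preceq}(\omega)\leq d+1$.

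The main obstacle is this passage from the finite-length filtration argument to the infinite graded filtration. I would handle it with the truncation argument: each $\Ext^{k}_R(K(\omega),L)_j$ depends only on components of a minimal resolution in a bounded range of degrees. If this becomes messy, the fallback is Kleshchev's general theorem \cite{MR3335289}: in an affine stratified category whose standard endomorphism algebras have finite global dimension, standard modules have finite projective dimension. The endomorphism algebras here are tensor products of symmetric polynomial rings $\mathbf{k}[z_1,\ldots,z_n]^{\mathfrak{S}_n}$, which are polynomial rings, together with $\End_R(\Delta^w(L))$. The latter needs separate justification, and I expect the cited stratification result \cite[Theorem 6.10]{murata2025affinehighestweightstructures} to supply it.

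An alternative route reduces to the factors. If $\operatorname{pd}M\leq a$ and $\operatorname{pd}N\leq b$, tensoring finite projective resolutions over the field $\mathbf{k}$ gives a projective resolution of $M\otimes N$ over $R(\beta)\otimes R(\gamma)$ of length $a+b$. Since $\Ind$ is exact and preserves projectives, $\operatorname{pd}(M\circ N)\leq a+b$. By Theorem \ref{thm:stratification}, $\Delta^{\preceq}(n\alpha)$ is a summand of $\Delta^{\preceq}(\alpha)^{\circ n}$, so it would suffice to treat $\Delta^{\preceq}(\alpha)$ and $\Delta^w(L)$, again via the induction above.

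The final assertion follows at once. Finite projective dimension means $\Ext^k_R(\Delta^{\preceq}(\omega),M)=0$ for $k$ large, so $\Delta^{\preceq}(\omega)\in\fpd{R(\beta)}$ and $\Extform{\Delta^{\preceq}(\omega)}{M}$ is a finite alternating sum of elements of $\mathbb{Z}((q))$.
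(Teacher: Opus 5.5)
Your main argument is correct and is the standard consequence of the stratification in Theorem \ref{thm:stratification} that the paper leaves implicit: descending induction on the finite poset $(\mathcal{P}(\beta),\leq)$, using that the kernel of $P(\omega)\twoheadrightarrow\Delta^{\preceq}(\omega)$ is filtered by shifts of $\Delta^{\preceq}(\omega')$ with $\rho(\omega')>\rho(\omega)$. Drop the fallback, though: the induction needs no finite-global-dimension hypothesis on the endomorphism algebras, and that hypothesis actually fails here, since in type $A_1^{(1)}$ one computes $\End_R(\Delta_1^e(\delta))\cong\mathbf{k}[x_1,x_2]/((x_1-x_2)(x_2-tx_1))$, which has infinite global dimension.
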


\begin{corollary}
For $\beta \in \mathsf{Q}_+$, the set $\{\chi(\Delta^{\preceq}(\omega)) \mid \omega \in \Omega^{\preceq}(\beta) \}$ is a basis of $U_q^-(\mathfrak{g})_{-\beta}$. 
\end{corollary}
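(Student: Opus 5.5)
The statement is that the characters $\chi(\Delta^{\preceq}(\omega))$, for $\omega \in \Omega^{\preceq}(\beta)$, form a basis of $U_q^-(\mathfrak{g})_{-\beta}$. The plan is a unitriangularity argument against the simple modules. We use the bijection $\omega \mapsto L^{\preceq}(\omega)$ of Theorem \ref{thm:cuspdecomp} (3). Together with Theorem \ref{thm:categorification}, it shows that $\{\chi(L^{\preceq}(\omega)) \mid \omega \in \Omega^{\preceq}(\beta)\}$ is a basis of $U_q^-(\mathfrak{g})_{-\beta}$ over $\mathbb{Q}(q)$. Indeed, $K(\gMod{R(\beta)})$ has the classes of self-dual simples as a $\mathbb{Z}((q))$-topological basis, in the standard sense for Laurentian algebras. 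More concretely, $K(\gmod{R(\beta)})$ is free over $\mathbb{Z}[q,q^{-1}]$ on these classes, and $\chi$ identifies its $\mathbb{Q}(q)$-span with $U_q^-(\mathfrak{g})_{-\beta}$. The set $\Omega^{\preceq}(\beta)$ is finite, because $\mathcal{P}(\beta)$ is finite and each fiber of $\rho$ is finite by Theorem \ref{thm:cuspdecomp} (2).

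Next, we expand each standard module in terms of simples. By the stratification of Theorem \ref{thm:stratification}, in Kleshchev's sense, $\Delta^{\preceq}(\omega)$ has head $L^{\preceq}(\omega)$. Every other composition factor $L^{\preceq}(\omega')$ satisfies $\rho(\omega') < \rho(\omega)$. Moreover, the graded multiplicity of $L^{\preceq}(\omega)$ itself in $\Delta^{\preceq}(\omega)$ equals $\dim_q \End_R(\Delta^{\preceq}(\omega))$. This multiplicity is a power series $m_\omega(q) \in 1 + q\mathbb{Z}[[q]]$, and it is in fact rational. Using the tensor decomposition of the endomorphism algebra from Theorem \ref{thm:stratification}, the real-root factors contribute products of $1/(1-q_\alpha^{2k})$. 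The imaginary factor $\End_R(\Delta^w(L))$ is a finitely generated graded algebra that is finite over a polynomial subalgebra, so its Hilbert series is rational as well. Strata with the same value of $\rho$ but different $L$ do not mix in the cuspidal factor. Hence, in the ordering by $\rho$, refined arbitrarily within fibers, we get
\[
\chi(\Delta^{\preceq}(\omega)) = m_\omega(q)\,\chi(L^{\preceq}(\omega)) + \sum_{\rho(\omega') < \rho(\omega)} a_{\omega,\omega'}(q)\,\chi(L^{\preceq}(\omega')).
\]
Here the coefficients lie in $\mathbb{Q}(q)$.

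Rationality of the coefficients is the main obstacle. It is cleanest to avoid computing the $a_{\omega,\omega'}$ directly and instead pass through the finite projective dimension of $\Delta^{\preceq}(\omega)$ (the preceding corollary). Since $\Delta^{\preceq}(\omega)$ has finite projective dimension, it has a finite resolution by finitely generated graded projectives. Its class is therefore a $\mathbb{Z}[q,q^{-1}]$-combination of projective classes, and $\chi(\Delta^{\preceq}(\omega))$ lies in $U_q^-(\mathfrak{g})_{-\beta}$ with rational coefficients. The nonzero rational function $m_\omega(q)$ is invertible in $\mathbb{Q}(q)$. The transition matrix from $\{\chi(L^{\preceq}(\omega))\}$ to $\{\chi(\Delta^{\preceq}(\omega))\}$ is then triangular with invertible diagonal, so it is invertible. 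It follows that the $\chi(\Delta^{\preceq}(\omega))$ form a basis. As a fallback, one could instead use the Ext-orthogonality $\Ext^k(\Delta^{\preceq}(\omega), \overline{\nabla}(\omega'))=0$ from the stratification together with Theorem \ref{thm:categorification} (3). This pairs $\chi(\Delta)$ nondegenerately against the proper costandard characters, which again yields linear independence.
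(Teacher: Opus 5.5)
Your main line has a genuine gap. You claim that every composition factor of $\Delta^{\preceq}(\omega)$ other than shifts of $L^{\preceq}(\omega)$ has $\rho(\omega')<\rho(\omega)$, i.e.\ that ``strata with the same value of $\rho$ but different $L$ do not mix.'' This is false. A fiber of $\rho$ consists of all $(\boldsymbol{c}_-,L',\boldsymbol{c}_+)$ with $L'$ running over $\Xi^w_n$. Kleshchev's stratification only says that the composition factors of $\Delta^{\preceq}(\omega)$ in its own stratum are governed by the stratum algebra; it does not say they are shifts of the head. Concretely, take $i,j\in\mathring{I}$ with $a_{i,j}<0$. By Lemma \ref{lem:qdimimaginary} and the computation in Proposition \ref{prop:almostorth},
\[
[\Delta_j^e(\delta):L_i^e(\delta)]_q=\dim_q\Hom_R(\Delta_i^e(\delta),\Delta_j^e(\delta))=q/(1-q^2)\neq 0,
\]
although both simples have the same $\rho$. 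So your transition matrix is only block triangular, with diagonal blocks $([\Delta^{\preceq}(\omega):L^{\preceq}(\omega')]_q)$ over a fiber of $\rho$; in the imaginary factor these are essentially graded Cartan matrices of $R^w(n\delta)$. You give no reason why these blocks are invertible over $\mathbb{Q}(q)$, and it is not automatic. By Theorem \ref{thm:heckeaction}, the degree-zero part of $\End_R(\Delta^e(\underline{n}))$ is $H_{\underline{n}}$, which need not be semisimple. So the blocks need not even reduce to the identity modulo $q$; for the same reason your $m_\omega(q)$ need not have constant term $1$, though that part is harmless. The ``rationality'' issue you single out is not the real obstacle. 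Once the $\chi(L^{\preceq}(\omega'))$ form a $\mathbb{Q}(q)$-basis, injectivity of $\theta$ (Corollary \ref{cor:formalcharacter}) identifies the formal multiplicities with the rational expansion coefficients.

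The repair is to make your fallback the main argument; this is how the statement is meant to follow from Theorem \ref{thm:stratification} (compare the computation at the end of the proof of Theorem \ref{thm:Weylcharacter}). In a stratified category in Kleshchev's sense,
\[
\Ext^k_R(\Delta^{\preceq}(\omega),\overline{\nabla}^{\preceq}(\omega'))=\delta_{k,0}\delta_{\omega,\omega'}\mathbf{k},
\]
and $\overline{\nabla}^{\preceq}(\omega')$ is finite-dimensional. Since $\Delta^{\preceq}(\omega)\in\fpd{R}$, Theorem \ref{thm:categorification} (3) gives $(\overline{\chi(\Delta^{\preceq}(\omega))},\chi(\overline{\nabla}^{\preceq}(\omega')))=\delta_{\omega,\omega'}$. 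Hence the $\overline{\chi(\Delta^{\preceq}(\omega))}$ are linearly independent, and so are the $\chi(\Delta^{\preceq}(\omega))$, because the bar involution is $\mathbb{Q}(q)$-semilinear. Your first paragraph supplies $\lvert\Omega^{\preceq}(\beta)\rvert=\dim_{\mathbb{Q}(q)}U_q^-(\mathfrak{g})_{-\beta}$, which finishes the proof with no triangularity needed. Alternatively, the projectives $R(\beta)e(\nu)$ have finite $\Delta$-flags, so the $\chi(\Delta^{\preceq}(\omega))$ span, just as the monomials $f_{\nu_1}\cdots f_{\nu_n}$ do.
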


\subsection{Reduction to coarse type $e$}

In this subsection, assume that $\mathbf{k}$ is a field. 
We prove that, for any $w \in \mathring{W}$ and $n \geq 0$, the algebra $R^w(n\delta)$ is graded Morita-equivalent to $R^e(n\delta)$. 
To this end, we use reflection functors constructed in \cite{murata2025diagrammaticapproachreflectionfunctors}, which we recall now. 

\begin{definition}
Let $w,v\in W$, and $\beta \in \mathsf{Q}_+$. 
We define quotient algebras
\[
{}_wR(\beta) = R(\beta)/{}_wJ, \ R_v(\beta) = R(\beta)/J_v, \ {}_wR_v(\beta) = R(\beta)/({}_wJ + J_v), 
\]
where
\begin{align*}
{}_wJ &= \langle e(\nu) \mid \text{$\nu \in I^{\beta}, \alpha_{\nu_1}+ \cdots + \alpha_{\nu_k} \not \in w\mathsf{Q}_+$ for some $1 \leq k \leq \height \beta$} \rangle, \\
J_v &= \langle e(\nu) \mid \text{$\nu \in I^{\beta}, \alpha_{\nu_{\height \beta}}+ \cdots + \alpha_{\nu_k} \not \in v\mathsf{Q}_+$ for some $1 \leq k \leq \height \beta$} \rangle. \\
\end{align*}
\end{definition}

\begin{theorem}[{\cite[Theorem 3.2.10, 4.1.1, 4.3.2]{murata2025diagrammaticapproachreflectionfunctors}}] \label{thm:reflectionfunctor}
Let $i \in I$. 

(1) There exists an equivalence of graded monoidal categories 
\[
\mathcal{S}_i \colon \gMod{{}_{s_i}R} \to \gMod{R_{s_i}}.    
\]
Moreover, for $M \in \gMod{{}_{s_i}R}$, we have 
\[
\chi(\mathcal{S}_i(M)) = S_i(\chi(M)), 
\]
where $S_i$ is the $\mathbb{Q}(q)$-algebra automorphism of $U_q(\mathfrak{g})$ defined by 
\begin{align*}
  S_i(q^h) = q^{s_i h},\ S_i (e_i) = - t_i^{-1}f_i,\ S_i(f_i) = - e_i t_i, \\
  S_i(e_j) = \sum_{r+s = -a_{i,j}}(-1)^r q_i^{-r} e_i^{(r)}e_je_i^{(s)} \ (j \neq i), \\
  S_i(f_j) = \sum_{r+s = -a_{i,j}}(-1)^r q_i^{r}f_i^{(s)}f_jf_i^{(r)} \ (j \neq i). 
\end{align*}

(2) Let $w, v \in W$ and assume that $s_iw >w, s_iv < v$. 
Then, 
\[
\gMod{{}_{s_iw}R_{s_iv}} \subset \gMod{{}_{s_i}R}, \ \gMod{{}_wR_v} \subset \gMod{R_{s_i}}, 
\]
and the equivalence $\mathcal{S}_i \colon \gMod{{}_{s_i}R} \to \gMod{R_{s_i}}$ restricts to a graded monoidal equivalence 
\[
\gMod{{}_{s_iw}R_{s_iv}} \simeq \gMod{{}_wR_v}. 
\]
\end{theorem}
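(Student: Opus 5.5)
The plan is to realize $\mathcal{S}_i$ as a convolution-compatible bimodule functor, built from explicit images of the one-strand modules $R(\alpha_j)$ and of the generators $x_k,\tau_k$, following the diagrammatic approach of \cite{murata2025diagrammaticapproachreflectionfunctors}.

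First I would reinterpret the defining ideals. For a graded module $M$, $M\in\gMod{{}_{s_i}R}$ means that every prefix $\alpha_{\nu_1}+\cdots+\alpha_{\nu_k}$ of a weight $\nu$ with $e(\nu)M\neq 0$ lies in $s_i\mathsf{Q}_+$. Since $s_i\mathsf{Q}_+\cap\mathsf{Q}_+$ is the set of elements of $\mathsf{Q}_+$ whose $\alpha_i$-coefficient is not too large compared with the other coefficients, these are exactly the modules on which "$e_i$ acts locally trivially at the start" in the sense of $r_i$, ${}_ir$. The subcategory $\gMod{R_{s_i}}$ is the mirror image, with suffixes in place of prefixes. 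The quantum group counterpart is Lusztig's statement that $S_i$ maps $U_q^-\cap S_i^{-1}(U_q^-)$ onto $U_q^-\cap S_i(U_q^-)$, which is the subalgebra generated by the elements $S_i(f_j)$ for $j\neq i$ together with the appropriate root vectors.

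For each $j\neq i$, I would let $T_j$ be the $R((-a_{i,j})\alpha_i+\alpha_j)$-module categorifying $S_i(f_j)=\sum_{r+s=-a_{i,j}}(-1)^rq_i^rf_i^{(s)}f_jf_i^{(r)}$. Concretely, $T_j$ is the projective cover of the simple module $L(i^{-a_{i,j}}j)$ inside $\gMod{R_{s_i}}$, whose character is $(1-q_j^2)^{-1}S_i(f_j)$.

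Next I would assign to each generator $x_k$, $\tau_k$ of ${}_{s_i}R(\beta)$ an explicit endomorphism of the appropriate convolution of copies of the $T_j$ and of the module attached to $\alpha_i$. The latter is handled by the symmetric counterpart of the same construction, since $s_i\alpha_i=-\alpha_i$ forces $\alpha_i$-strands to be moved across. These assignments define a bimodule $B_\beta$ over $\bigl(R_{s_i}(s_i\beta),{}_{s_i}R(\beta)\bigr)$, and I would set $\mathcal{S}_i=B_\beta\otimes_{{}_{s_i}R(\beta)}?$. Monoidality is then built in, because $B_{\beta+\gamma}$ is by construction the convolution of $B_\beta$ and $B_\gamma$. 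Once the functor is defined, the character formula follows from monoidality. Indeed, $K(\gMod{{}_{s_i}R})$ is generated as an algebra by the classes of the images of the $R(\alpha_j)$, so it suffices to check $\chi(\mathcal{S}_i(M))=S_i(\chi(M))$ on these generators, where it holds by the choice of $T_j$.

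The main obstacle is verifying that these assignments satisfy the quiver Hecke relations: the quadratic relation $\tau_k^2=Q(x_k,x_{k+1})$ and especially the braid relation with its correction term $\overline{Q}$. These must hold modulo ${}_{s_i}J$ on the source side and modulo $J_{s_i}$ on the target side. I would first do the local computation in rank two (the subdiagram on $\{i,j,j'\}$) using nil-Hecke calculus on the $i$-strands, and then reduce the general case to rank two. Having the relations, I would construct the inverse functor $\mathcal{S}_i^{-1}$ by the mirror construction. To prove that the two compositions are isomorphic to the identity, I would compare graded dimensions via characters; the pairing of Theorem \ref{thm:categorification} together with the fact that $S_i$ is an isometry on the relevant subalgebras gives the needed dimension equalities.

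For (2), I would show that if $s_iw>w$ and $s_iv<v$, then a prefix condition with respect to $s_iw\mathsf{Q}_+$ on the source corresponds under $\beta\mapsto s_i\beta$ to the prefix condition with respect to $w\mathsf{Q}_+$. The key input is that $s_iw>w$ means $w^{-1}\alpha_i>0$, so that $s_iw\mathsf{Q}_+\subset s_i\mathsf{Q}_+$ and $s_i(s_iw\mathsf{Q}_+)=w\mathsf{Q}_+$. The analogous suffix statement holds for $v$. Because $\mathcal{S}_i$ is monoidal and intertwines $\Res$ by its construction, these conditions are detected by the vanishing of restrictions, so they are preserved and reflected. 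Hence $\mathcal{S}_i$ restricts to the asserted equivalence $\gMod{{}_{s_iw}R_{s_iv}}\simeq\gMod{{}_wR_v}$.
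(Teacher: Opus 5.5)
You should know that the paper does not prove this theorem; it imports it from \cite{murata2025diagrammaticapproachreflectionfunctors}. Judged on its own terms, your plan has a genuine gap at its central step.

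\textbf{The functor cannot be a convolution of one-strand pieces.} There is nothing to attach to an $i$-strand. Indeed ${}_{s_i}R(\alpha_i)=0$, since the prefix $\alpha_i$ is not in $s_i\mathsf{Q}_+$. Weight considerations would also force the piece for colour $i$ to have weight $s_i\alpha_i=-\alpha_i$, and no such $R$-module exists.

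Concretely, take $a_{i,j}=-1$. The one-dimensional module $L(ji)=\hd(L(j)\circ L(i))$ is supported only on the word $(j,i)$, so it lies in $\gMod{{}_{s_i}R(\alpha_i+\alpha_j)}$. Its image $\mathcal{S}_i(L(ji))$ must be a simple $R(s_i(\alpha_i+\alpha_j))=R(\alpha_j)$-module, i.e.\ a grading shift of $L(j)$. The $j$-strand produces $T_j$ of weight $\alpha_i+\alpha_j$, so the $i$-strand has to \emph{cancel} that $\alpha_i$.

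Your remark about a ``symmetric counterpart'' does not say how this cancellation happens, and it is the heart of the matter. The prefix condition defining ${}_{s_i}R$ says exactly that, in every prefix of a word, the number of letters $i$ is at most $\sum_{j\neq i}(-a_{i,j})c_j$, where $c_j$ counts the letters $j$ in that prefix. In other words, each $i$-strand of $M$ always has an $i$-strand coming from some $T_j$ to its left to annihilate. A proof must realize this cancellation by an honest construction, not a convolution of per-strand modules. It must then show the construction is well defined on ${}_{s_i}R$-modules, monoidal and invertible, and only after that check the quiver Hecke relations.

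\textbf{The character argument fails for the same reason.} The classes of modules in $\gMod{{}_{s_i}R}$ span $\{u\mid {}_ir(u)=0\}$. By Lusztig, this subalgebra is generated by elements $\sum_{r+s=m}(\cdots)f_i^{(r)}f_jf_i^{(s)}$ with $0\le m\le -a_{i,j}$, not by the $f_j=\chi(R(\alpha_j))$, $j\neq i$; for example, $\chi(L(ji))$ has nonzero $\alpha_i$-content. So checking $\chi(\mathcal{S}_i(R(\alpha_j)))=S_i(f_j)$ does not determine $\chi\circ\mathcal{S}_i$.

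Also, $\chi(T_j)$ should be $S_i(f_j)$. It is the simple quotient of $T_j$ whose character is $(1-q_j^2)S_i(f_j)$.

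\textbf{The equivalence step.} An equivalence needs natural maps $\mathcal{S}_i^{-1}\mathcal{S}_i\to\Id$ and $\mathcal{S}_i\mathcal{S}_i^{-1}\to\Id$, or full faithfulness plus essential surjectivity. Equality of graded dimensions coming from the isometry property of $S_i$ can only upgrade an already constructed injective or surjective map, and your plan never produces one.

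\textbf{Part (2).} The inclusion $s_iw\mathsf{Q}_+\subset s_i\mathsf{Q}_+$ is false as written. In type $A_2$ with $i=1$ and $w=s_2$, one has $s_1s_2\alpha_2=-\alpha_1-\alpha_2\notin s_1\mathsf{Q}_+$. Only $\mathsf{Q}_+\cap s_iw\mathsf{Q}_+\subset s_i\mathsf{Q}_+$ is usable.

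More importantly, $\mathcal{S}_i$ does not act on words by applying $s_i$ to prefixes. Nor can it ``intertwine $\Res$'' in the way you use, because restriction leaves its domain: $\Res_{\alpha_j,\alpha_i}L(ji)=L(j)\otimes L(i)$, and $L(i)\notin\gMod{{}_{s_i}R}$.

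A workable route is to note that membership in $\gMod{{}_wR_v}$ is detected by $\ch_q=\theta\circ\chi$ (Corollary~\ref{cor:formalcharacter}). Via $\chi\circ\mathcal{S}_i=S_i\circ\chi$, part (2) then reduces to a statement about how $S_i$ moves the subspaces of $U_q^-(\mathfrak{g})$ cut out by $r_{\gamma,\beta-\gamma}=0$ for $\gamma\in\mathsf{Q}_+\setminus w\mathsf{Q}_+$, together with the suffix analogue. That statement, however, also requires proof.
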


Let $i,j \in I$ and assume that $i\neq j$ and $a_{i,j}a_{j,i} < 4$.
Put $h(i,j) = 2, 3, 4$ or $6$ according to whether $a_{i,j}a_{j,i} = 0,1,2$ or $3$. 
By \cite[Theorem 39.4.3]{MR2759715}, $S_i$ and $S_j$ satisfy the braid relation, namely, we have 
\[
S_i S_j S_i \cdots = S_j S_i S_j \cdots, 
\]
where both products have $h(i,j)$ factors. 
Hence, the automorphisms $S_i \ (i \in I)$ give a braid group action on $U_q(\mathfrak{g})$ and we have a well-defined automorphism $S_w$ for any $w \in W$. 

We regard $\mathcal{S}_i$ as a categorification of $S_i$ and refer to it as a reflection functor. 
The braid relations are lifted to natural isomorphisms as follows.

\begin{theorem}[{\cite[Theorem 5.2.1]{murata2025diagrammaticapproachreflectionfunctors}}]
Let $i,j \in I$ and assume that $i\neq j$ and $a_{i,j}a_{j,i} < 4$.
Put $w = s_is_js_i \cdots = s_js_is_j \cdots \in W$, where both products have $h(i,j)$ factors. 
Then, we have a natural isomorphism of functors
\[
\mathcal{S}_i \mathcal{S}_j \mathcal{S}_i \cdots \simeq \mathcal{S}_j \mathcal{S}_i \mathcal{S}_j \cdots \colon \gMod{{}_wR} \to \gMod{R_w}, 
\]
where both functors are compositions of $h(i,j)$ reflection functors. 
\end{theorem}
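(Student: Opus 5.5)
The plan is to realize both composites as tensor functors given by bimodules and to compare these bimodules directly. By the construction in \cite{murata2025diagrammaticapproachreflectionfunctors}, each $\mathcal{S}_i$ is of the form $M \mapsto B_i \otimes_{{}_{s_i}R} M$ for a graded $(R_{s_i}, {}_{s_i}R)$-bimodule $B_i$. Since $w = s_is_js_i\cdots$ is reduced of length $h(i,j)$, Theorem \ref{thm:reflectionfunctor} (2) applies at each step of either composite. Hence both $\mathcal{S}_i\mathcal{S}_j\mathcal{S}_i\cdots$ and $\mathcal{S}_j\mathcal{S}_i\mathcal{S}_j\cdots$ are well-defined graded monoidal equivalences $\gMod{{}_wR} \to \gMod{R_w}$. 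Each is given by tensoring with an invertible bimodule, $B = B_i \otimes B_j \otimes \cdots$ and $B' = B_j \otimes B_i \otimes \cdots$ respectively. It therefore suffices to produce a bimodule isomorphism $B \simeq B'$, or equivalently to show that the autoequivalence $\mathcal{G} = (\mathcal{S}_j\mathcal{S}_i\cdots)^{-1}\circ(\mathcal{S}_i\mathcal{S}_j\cdots)$ of $\gMod{{}_wR}$ is isomorphic to the identity.

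First, I will check the decategorified statement. By Theorem \ref{thm:reflectionfunctor} (1) and the braid relation $S_iS_jS_i\cdots = S_jS_iS_j\cdots$ of \cite[Theorem 39.4.3]{MR2759715}, $\mathcal{G}$ induces the identity on $K(\gMod{{}_wR})$. Combined with Corollary \ref{cor:formalcharacter} (injectivity of $\ch_q$), this shows that $\mathcal{G}$ preserves the graded characters of all modules. Next, I will use that $\mathcal{G}$ is a graded monoidal autoequivalence and reduce to the generating projectives ${}_wR(\beta)e(\nu)$. For this I will show that $\mathcal{G}$ sends each indecomposable projective of ${}_wR(\beta)$ to itself without grading shift; this holds because an autoequivalence must permute the indecomposable projectives, and the character identity, together with the self-duality of simple heads, pins this permutation down. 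The composite of an equivalence with the identity on objects then reduces to an automorphism of the algebra ${}_wR(\beta)$ that fixes every idempotent $e(\nu)$, up to inner automorphisms.

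The key step, which I expect to be the main obstacle, is to show that this algebra automorphism can be normalized to the identity on the generators $x_k e(\nu)$ and $\tau_k e(\nu)$. I would do this by a rank-two computation: only $i$- and $j$-strands near the boundary are affected, so the question localizes to the parabolic quiver Hecke algebra for $\{i,j\}$. There, the images of $x_k$ and $\tau_k$ under each composite can be computed explicitly from the diagrammatic description of $B_i$ and $B_j$. I expect to compare the two computations case by case for $h(i,j) = 2, 3, 4, 6$, using the degree constraints to show that any discrepancy is a rescaling of the generators by units. Such a rescaling can then be absorbed by an isomorphism of the type in Lemma \ref{lem:modifyQ}, which is implemented by conjugation on the bimodule.

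The case $h(i,j) = 2$ is immediate, since $\mathcal{S}_i$ and $\mathcal{S}_j$ act on commuting strands. For $h(i,j) \geq 3$ I would first try to construct a single explicit bimodule map $B \to B'$, sending a canonical generator to the corresponding canonical generator. I would then verify that it is an isomorphism by comparing graded dimensions, which agree by the character identity above.
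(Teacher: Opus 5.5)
The paper does not prove this statement; it only quotes it from \cite{murata2025diagrammaticapproachreflectionfunctors}. Judged on its own terms, your proposal has a genuine gap: what you actually establish is too weak to distinguish $\mathcal{G}$ from the identity. Writing the composites as tensoring with invertible bimodules, using Theorem~\ref{thm:reflectionfunctor}~(1) and the braid relation for $S_i,S_j$ to get $\chi(\mathcal{G}(M))=\chi(M)$, and deducing that $\mathcal{G}$ fixes every simple and every indecomposable projective only tells you this: on each block, $\mathcal{G}$ is the twist by some graded automorphism $\sigma_\beta$ of ${}_wR(\beta)$ fixing the $e(\nu)$. Every such twist passes all of these tests, but such twists need not be isomorphic to the identity. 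For example, take $k\notin\{i,j\}$. Then $w^{-1}\alpha_k\in\mathsf{Q}_+$, so ${}_wR(2\alpha_k)=R(2\alpha_k)$ is a nil-Hecke algebra. The map $x_l\mapsto x_l+c(x_1+x_2)$, $\tau_1\mapsto\tau_1$ (with $1+2c\neq 0$) is a graded automorphism, because $x_1+x_2$ is central. It sends the central element $x_1x_2$ to $x_1x_2+(c+c^2)(x_1+x_2)^2$, which for $c\neq 0,-1$ does not lie in $x_1x_2R(2\alpha_k)$. However, a natural isomorphism between a twist ${}_\sigma(-)$ and the identity forces $\sigma(z)\in zR(2\alpha_k)$ for every homogeneous central $z$. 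So the whole content of the theorem sits in the step you postpone, namely showing that $\sigma_\beta$ is inner up to a rescaling of the grading. Examples like this one are excluded by compatibility with the convolution product: on the nil-Hecke tower generated by $R(\alpha_k)$, a monoidal autoequivalence can only rescale $x$ and $\tau$. Your argument, however, works block by block and never uses the monoidal structure beyond naming it.

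The mechanism you propose for the key step does not work either. First, neither composite is induced by a map on the generators of a single algebra. $\mathcal{S}_i$ sends ${}_{s_i}R(\beta)$-modules to $R_{s_i}(s_i\beta)$-modules, changing the number of strands, so there are no ``images of $x_k$ and $\tau_k$ under each composite'' to compute. Moreover, $\sigma_\beta$ is only defined up to an unspecified choice of isomorphism $\mathcal{G}({}_wR(\beta))\simeq{}_wR(\beta)$. Second, the localization to the rank-two parabolic is unfounded: $\mathcal{S}_i$ acts nontrivially on every color $k$ with $a_{i,k}<0$ (compare $S_i(f_k)$), and ${}_wR(\beta)$ involves all colors. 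For the same reason the case $h(i,j)=2$ is not immediate. Third, degree constraints do not force a graded automorphism fixing the idempotents to be a diagonal rescaling, as the example above shows, so an isomorphism of the type in Lemma~\ref{lem:modifyQ} cannot absorb the discrepancy in general. Finally, ``construct a bimodule map $B\to B'$ sending a canonical generator to the corresponding canonical generator'' is exactly the braid-relation verification itself. $B$ and $B'$ come with no distinguished generators, and proving that such an assignment is well defined and bilinear is the theorem; comparing graded dimensions only helps once such a map exists. A workable argument has to use that both composites are monoidal and compare them on monoidal generators of $\gMod{{}_wR}$: suitable root modules, together with the dot and crossing morphisms between their convolution products. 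There one expects homogeneous homomorphisms of the relevant degrees to be unique up to scalar, as for $\tau_{\underline{j},\underline{k}}$ in Subsection~\ref{sub:ANcase}. One then has to show that the resulting scalars can be normalized consistently with the quiver Hecke relations. That comparison is the missing idea, and the character computation cannot substitute for it.
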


Hence, for any $w \in W$, we have a well-defined equivalence
\[
\mathcal{S}_w \colon \gMod{{}_wR} \to \gMod{R_w}. 
\]

For $w \in \mathring{W}$, we write 
\[
\gMod{R^w} = \bigoplus_{n \in \mathbb{Z}_{\geq 0}} \gMod{R^w(n\delta)}. 
\]
This is a monoidal category under the convolution product.

\begin{proposition} \label{prop:reduction}
Let $i \in \mathring{I}, w \in \mathring{W}$, and assume $s_i w > w$. 
Then, the reflection functor $\mathcal{S}_i$ induces a monoidal equivalence 
\[
\gMod{R^w} \to \gMod{R^{s_iw}}, 
\]
sending $\gMod{R^w(n\delta)}$ to $\gMod{R^{s_iw}(n\delta)}$ for any $n \in \mathbb{Z}_{\geq 0}$. 
\end{proposition}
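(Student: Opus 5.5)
The plan is to realize $\gMod{R^w(n\delta)}$ and $\gMod{R^{s_iw}(n\delta)}$ as full subcategories of $\gMod{{}_{s_i}R}$ and $\gMod{R_{s_i}}$ respectively, and then use Theorem \ref{thm:reflectionfunctor}(2). Concretely, I would express the defining ideal of $R^w(n\delta)$ through the idempotents $e(\nu)$. A module $M$ descends to $R^w(n\delta)$ if and only if $e(\beta,\gamma)M=0$ for every decomposition $\beta+\gamma=n\delta$ with $\beta \notin \spn_{\mathbb{Z}_{\geq 0}}(\Phi_+^{\mathrm{min}}\cap p^{-1}(w\mathring{\mathsf{Q}}_-))$ or $\gamma\notin \spn_{\mathbb{Z}_{\geq0}}(\Phi_+^{\mathrm{min}}\cap p^{-1}(w\mathring{\mathsf{Q}}_+))$. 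The first step is to identify these cones with $\mathsf{Q}_+\cap x\mathsf{Q}_+$ for suitable affine Weyl group elements. I would then rewrite $R^w(n\delta)$ as ${}_{x}R_{y}(n\delta)$ for suitable $x,y \in W$, and check that replacing $w$ by $s_iw$ replaces $(x,y)$ by $(s_ix,s_iy)$, with $s_ix>x$ and $s_iy<y$. Once this is in place, Theorem \ref{thm:reflectionfunctor}(2) gives a graded monoidal equivalence $\gMod{{}_{s_i x}R_{s_i y}}\simeq \gMod{{}_xR_y}$. Since $\mathcal{S}_i$ is monoidal and compatible with $\chi$, and $S_i(\delta)=\delta$, it preserves the weight $n\delta$. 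Thus the equivalence sends $R^w(n\delta)$-modules to $R^{s_iw}(n\delta)$-modules, which is the statement of Proposition \ref{prop:reduction}.

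For the identification, I expect that $\beta\in\spn_{\mathbb{Z}_{\geq0}}(\Phi_+^{\mathrm{min}}\cap p^{-1}(w\mathring{\mathsf{Q}}_-))$ is not a finite-type condition like $\beta\in x\mathsf{Q}_+$ for a single finite $x$. The cone is generated by $\delta$ and by infinitely many real roots $-w\alpha+k\delta$. So rather than a single Weyl group element, I would use a limiting argument. I would pick $x=t_{\lambda}$-type translations, taking $x$ to be a long enough element of the extended affine Weyl group, for instance $w t_{-m\rho^\vee}$ with $m\gg n$. The aim is that the conditions $\alpha_{\nu_1}+\dots+\alpha_{\nu_k}\in x\mathsf{Q}_+$ for all $k$, restricted to weight $n\delta$, become exactly the cone conditions. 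This truncation works because only roots of height less than $(n\delta)$ matter. I would verify it via the inversion set $\Phi_+\cap x\Phi_-$, which for such $x$ contains precisely the positive real roots $\alpha$ with $p(\alpha)\in w\mathring{\Phi}_-$ up to height about $m$. Equivalently, Corollary \ref{cor:cuspidaltest} shows that only restrictions by real roots matter. The condition on initial segments is then checked using convexity: any $\beta$ in the cone decomposes into roots from one side.

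I also need the compatibility with $\mathcal{S}_i$. The inversion set of $s_ix$ is $s_i(\text{inversions of }x)\cup\{\alpha_i\}$, and $s_i$ acts on $p$-images by $s_i$ on $\mathring{\mathsf{Q}}$, sending $w\mathring{\Phi}_\pm$ into $s_iw\mathring{\Phi}_\pm$ except for $\pm\alpha_i$-type roots. The hypothesis $s_iw>w$ says exactly that $\alpha_i\in w\mathring{\Phi}_+$, which should give the length conditions $s_ix>x$ and $s_iy<y$, with the roles of left and right reversed. I expect the main obstacle to be this combinatorial bookkeeping. That means pinning down the precise $x,y$ (or a directed family of them) so that the quotient ${}_xR_y(n\delta)$ equals $R^w(n\delta)$ exactly, not merely on simple modules. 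If an exact equality of ideals is hard, I would fall back to a weaker comparison. I would show that both quotients have the same simple modules ($w$-cuspidal ones, via Corollary \ref{cor:cuspidaltest}) and that the defining idempotent ideals coincide because each relevant $e(\beta,\gamma)$ is a sum of the $e(\nu)$ killed on one side.
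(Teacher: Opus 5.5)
Your skeleton matches the paper's. For fixed $n$, you realize $R^w(n\delta)={}_xR_y(n\delta)$ and $R^{s_iw}(n\delta)={}_{s_ix}R_{s_iy}(n\delta)$ for long $x,y\in W$, then apply Theorem~\ref{thm:reflectionfunctor}(2). The paper takes $x=s_{i_0}s_{i_{-1}}\cdots s_{i_{-m}}$ and $y=s_{i_1}\cdots s_{i_m}$ from the reduced word of Lemma~\ref{lem:oneroworder} with $i_0=i$, reaches coarse type $s_iw$ by shifting the word, and quotes ${}_xR_y(n\delta)=R^{\preceq}(n\delta)$ from earlier work. Your translations $x=w\,t_{\lambda}$, $y=w\,t_{-\lambda}$ (take $\lambda$ a large multiple of $2\rho^\vee$ so that $x,y\in W$) make the compatibility $(x,y)\mapsto(s_ix,s_iy)$ automatic, which is a nice simplification.

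\textbf{First gap: your signs are reversed.} A positive root lies outside $x\mathsf{Q}_+$ exactly when it lies in $\Phi_+\cap x\Phi_-$, so ${}_xR$ forbids those roots as \emph{initial} segments. For $R^w$ the forbidden initial roots are those with $p(\alpha)\in w\mathring{\Phi}_+$. So $\Phi_+\cap x\Phi_-$ should consist of the low-height roots in $\prroot\cap p^{-1}(w\mathring{\Phi}_+)$, not $p^{-1}(w\mathring{\Phi}_-)$ as you wrote; dually for $y$. Since $s_iw>w$ means $\alpha_i\in w\mathring{\Phi}_+$, this gives $x^{-1}\alpha_i\in\Phi_-$ and $y^{-1}\alpha_i\in\Phi_+$, i.e.\ $s_ix<x$ and $s_iy>y$, the opposite of your claim. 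With your inequalities, Theorem~\ref{thm:reflectionfunctor}(2) applied to $(x,y)$ yields $\mathcal{S}_i\colon\gMod{{}_{s_ix}R_{s_iy}}\to\gMod{{}_xR_y}$. That runs from the $s_iw$ side to the $w$ side, the opposite of the statement. Note that $\mathcal{S}_i$ is defined on $\gMod{{}_{s_i}R}$ (no word begins with $i$), which contains $\gMod{R^w}$ precisely because $\alpha_i\in w\mathring{\Phi}_+$ is never an initial segment there. The correct application is to the pair $(s_ix,s_iy)$, giving $\mathcal{S}_i\colon\gMod{{}_xR_y}\xrightarrow{\sim}\gMod{{}_{s_ix}R_{s_iy}}$.

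\textbf{Second gap: for translations, the test $\beta\in x\mathsf{Q}_+$ does not reproduce the cone condition, even for $\beta\le n\delta$.} Hence the generating idempotents of ${}_xJ+J_y$ and of the ideal defining $R^w(n\delta)$ genuinely differ. In type $A_2^{(1)}$ with $w=e$, take $\beta=\alpha_0+2\alpha_1=\delta+\alpha_1-\alpha_2$. Its finite part is orthogonal to $\rho^\vee$, so $x^{-1}\beta=\beta\in\mathsf{Q}_+$. Yet $\beta\notin\spn_{\mathbb{Z}_{\geq 0}}(\minroot\cap p^{-1}(\mathring{\mathsf{Q}}_-))=\{a\alpha_0+b\alpha_1+c\alpha_2\mid a\geq b,\ a\geq c\}$. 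Concretely, $e(0,1,1,0,2,2)$ lies in the ideal defining $R^e(2\delta)$, although every initial and final segment of $(0,1,1,0,2,2)$ passes the $x$- and $y$-tests. So your primary plan fails, and the fallback must carry the proof.

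The fallback does work, provided you supply the step you leave vague. Both ideals are generated by idempotents $e(\nu)$. An idempotent $\bar e$ of a graded Noetherian quotient $A$ of $R(n\delta)$ that kills all graded simple $A$-modules is zero: otherwise $A\bar e$ has a simple quotient $L$ with $\bar eL\simeq\Hom_A(A\bar e,L)\neq0$. Hence two such quotients with the same simples coincide. The simples do agree:
\begin{itemize}
\item $x^{-1}$ (resp.\ $y^{-1}$) sends $\delta$ and every real root with $p(\alpha)\in w\mathring{\Phi}_-$ (resp.\ $w\mathring{\Phi}_+$) into $\mathsf{Q}_+$, so $w$-cuspidal simples are ${}_xR_y$-modules.
\item Conversely, for $m\gg n$ every real root $\alpha$ with $p(\alpha)\in w\mathring{\Phi}_+$ and $\height\alpha<\height(n\delta)$ lies in $\Phi_+\cap x\Phi_-$. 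So a simple ${}_xR_y(n\delta)$-module is $w$-cuspidal by Corollary~\ref{cor:cuspidaltest}(2).
\end{itemize}
With these two corrections, your translation-based route is a valid alternative to the paper's.
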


In order to prove this proposition, we use the following lemma. 

\begin{lemma} \label{lem:oneroworder}
Let $i \in \mathring{I}, w \in \mathring{W}$, and assume $s_i w > w$.  
Then, there exists an infinite word $i_k \ (k \in \mathbb{Z})$ in $I$ that satisfies the following conditions: 
\begin{enumerate}
\item $i_0 = i$; 
\item every finite successive subword is reduced; 
\item by setting \[
 \beta_k = \begin{cases}
  s_{i_1} s_{i_2} \cdots s_{i_{k-1}} \alpha_{i_k} & k \geq 1, \\
  s_{i_0} s_{i_{-1}} \cdots s_{i_{k+1}} \alpha_{i_k} & k \leq 0,
 \end{cases}
 \]
 for each $k \in \mathbb{Z}$, we have $\Phi_+^{\mathrm{re}} = \{\beta_k\}_{k \in \mathbb{Z}}$; 
\item the total order $\preceq$ on $\Phi_+^{\mathrm{min}}$ defined by 
\[
\beta_1 \prec \beta_2 \prec \cdots \prec \delta \prec \cdots \prec \beta_{-1} \prec \beta_0 
\]
is convex; 
\item $\preceq$ is of coarse type $w$.
\end{enumerate}
\end{lemma}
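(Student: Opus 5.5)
We construct the word from a translation element of the extended affine Weyl group. Write $\mathring{\mathsf{Q}}^{\vee}$ for the coroot lattice and choose a translation $t_{\lambda}$ with $\lambda$ strictly dominant for $w$; concretely, $\langle \lambda, w\alpha\rangle >0$ for all $\alpha \in \mathring{\Phi}_+$. Such translations act on real roots by $t_\lambda(\alpha + n\delta) = \alpha + (n - \langle\lambda,\alpha\rangle)\delta$.

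The standard construction (Beck, Ito) is as follows. Pick $\lambda$ regular, so that $\ell(t_\lambda)$ equals $\sum_{\alpha\in\mathring{\Phi}_+}\lvert\langle\lambda,\alpha\rangle\rvert$. Write $t_\lambda = \pi s_{j_1}\cdots s_{j_m}$, where $\pi$ is a length-zero element of the extended affine Weyl group. Then set $i_{k+m} = \pi(i_k)$ and extend periodically in both directions. Since $\lambda$ is regular, every power $t_\lambda^r$ is length-additive, so every finite subword is reduced; this gives (2).

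For (3), $\{\beta_k\}_{k\geq 1}$ is the union over $r$ of the inversion sets of $t_\lambda^{r}$ (suitably inverted). These are exactly the positive real roots $\alpha+n\delta$ whose sign of $\langle\lambda, p(\alpha)\rangle$ is fixed. The negative half of the word gives the complementary set.

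Condition (4) is the known convexity of orders coming from infinite reduced words: $\{\beta_1,\ldots,\beta_k\}$ is an inversion set and hence biconvex. Placing $\delta$ in the middle is convex because both tails accumulate toward $\delta$. To verify Definition \ref{def:convexpreorder}, I would reduce to initial and final segments, using that these are cut out by the linear functional $\lambda$ together with the inversion sets.

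For (5), $\beta_k$ with $k\geq1$ satisfy $p(\beta_k)\in w\mathring{\mathsf{Q}}_-$ when the signs are chosen correctly, by choosing $\lambda$ in $-w$ times the dominant chamber (or its negative, depending on conventions). Then Lemma \ref{lem:coarsetype} identifies the coarse type as $w$.

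The delicate condition is (1), $i_0=i$; it is where the hypothesis $s_iw>w$ enters. Here $\beta_0=\alpha_{i_0}=\alpha_i$ must be the largest root, lying in $\Phi^{\mathrm{min}}_{+,\succ\delta}$, so we need $\alpha_i \in w\mathring{\mathsf{Q}}_+$. Since $s_iw>w$ means $w^{-1}\alpha_i>0$, this holds. We must also be able to choose $\lambda$ so that the reduced word of $t_\lambda$ (read suitably from the end) terminates in $s_i$. I would choose $\lambda$ such that $\alpha_i$ is a simple root for the chamber $w C$ with $\lambda$ in it. Then $\alpha_i$ is a last inversion: $\alpha_i$ is the minimal positive root of its sign class, with $\langle\lambda,\alpha_i\rangle=1$ minimal after rescaling, so that $t_\lambda s_i$ (or $s_i t_\lambda$) is shorter. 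Hence a reduced expression ending with $i$ exists. Alternatively, one can reindex the periodic word by a shift so that the position with $\beta=\alpha_i$ becomes index $0$, which is possible since $\alpha_i$ must appear as some $\beta_k$ with $k\le0$.

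The main obstacle is getting the chosen $\lambda$ and conventions to make $\alpha_i$ appear as $\beta_0$ simultaneously with (5). I would handle this by the reindexing argument: take any word satisfying (2)–(5), locate $\alpha_i=\beta_{k_0}$ with $k_0\le 0$, and modify the tail by braid moves (the inversion set $\{\beta_{k_0},\ldots,\beta_0\}$ admits $\alpha_i$, a simple root, as an endpoint by biconvexity) so that $\alpha_i$ becomes the last one.
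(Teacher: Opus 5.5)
Your final argument is the paper's: start from a word satisfying (2)--(5) (the paper cites \cite[Lemma 6.5]{murata2025affinehighestweightstructures} for this instead of rebuilding it from translations). Then use $w^{-1}\alpha_i>0$ and coarse type $w$ to get $\alpha_i=\beta_k$ with $k\le 0$, and replace $(i_0,\dots,i_{k+1},i_k)$ by $(i,i_0,\dots,i_{k+1})$. This is a reduced word for the same element because $\alpha_i=x\alpha_{i_k}$ with $x=s_{i_0}\cdots s_{i_{k+1}}$ gives $s_ix=xs_{i_k}$; that identity, not biconvexity, is the clean justification. It also leaves the set $\{\beta_k,\dots,\beta_0\}$ and every other $\beta_j$ unchanged. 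Discard your alternative of reindexing by a shift: it makes $\beta_0=\alpha_{i_{k_0}}$ with $i_{k_0}$ generally $\neq i$, and it changes the coarse type (a shift by one turns type $w$ into $s_{i_0}w$, as in the proof of Proposition \ref{prop:reduction}).
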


\begin{proof}
By \cite[Lemma 6.5]{murata2025affinehighestweightstructures}, there exists an infinite word $i_k \ (k \in \mathbb{Z})$ that satisfies (2) - (5). 
Since $s_iw > w$, we have $\alpha_i \in w\mathring{\Phi}_+$. 
Since $\preceq$ is of coarse type $w$, there exists $k \leq 0$ such that 
\[
\alpha_i = \beta_k = s_{i_0} s_{i_{-1}} \cdots s_{i_{k+1}}\alpha_{i_k}. 
\] 
Hence $s_i s_{i_0} \cdots s_{i_{k+1}} = s_{i_0} \cdots s_{i_{k+1}} s_{i_k}$. 
Replacing $(i_0, \ldots, i_{k+1}, i_k)$ with $(i,i_0, \ldots, i_{k+1})$, we obtain an infinite word satisfying all the required conditions. 
\end{proof}

\begin{proof}[Proof of Proposition \ref{prop:reduction}]
Fix arbitrary $n \in \mathbb{Z}_{\geq 0}$. 
Let $i_k \ (k \in \mathbb{Z})$ be an infinite word as in Lemma \ref{lem:oneroworder}, and let $\preceq$ be the associated convex order. 
Since $\preceq$ is of coarse type $w$, we have $\gMod{R^w(n\delta)} = \gMod{R^{\preceq}(n\delta)}$.
By the same argument as in \cite[Proposition 6.11]{murata2025affinehighestweightstructures}, 
taking $m \geq 0$ large enough for $n$ we have $R^{\preceq}(n\delta) = {}_x R_y(n\delta)$, 
where $x = s_{i_0} s_{i_{-1}} \cdots s_{i_{-m}}, y = s_{i_1} \cdots s_{i_m}$. 

Let $j_k = i_{k-1} \ (k \in \mathbb{Z})$ and define a convex order $\preceq'$ for the infinite word $(j_k)$ as in Lemma \ref{lem:oneroworder}.
Then it is of coarse type $s_iw$, hence $R^{s_iw}(n\delta) = R^{\preceq'}(n\delta)$. 
By taking a larger $m$ if necessary, we have $R^{\preceq'}(n\delta) = {}_{s_ix}R_{s_iy}(n\delta)$. 
Since $i_0 = i$, we have $s_i x < x$ and $s_iy > y$. 
Hence, the assertion follows from Theorem \ref{thm:reflectionfunctor} (2). 

\end{proof}

\begin{corollary} \label{cor:stdcoarsetype}
Let $w \in \mathring{W}$. 
We have an equivalence of monoidal graded categories
\[
\gMod{R^w} \simeq \gMod{R^e}. 
\]
\end{corollary}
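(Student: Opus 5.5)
We argue by induction on the length $\ell(w)$ of $w \in \mathring{W}$, composing the monoidal equivalences supplied by Proposition \ref{prop:reduction}. If $\ell(w) = 0$ then $w = e$ and there is nothing to prove.

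For $\ell(w) \geq 1$, choose a reduced expression $w = s_{i_1} s_{i_2} \cdots s_{i_l}$ with $i_1, \ldots, i_l \in \mathring{I}$. Set $w_k = s_{i_{l-k+1}} \cdots s_{i_l}$ for $0 \leq k \leq l$, so that $w_0 = e$, $w_l = w$ and $w_k = s_{i_{l-k+1}} w_{k-1}$. Since the expression is reduced, each suffix is reduced as well, which gives
\[
\ell(s_{i_{l-k+1}} w_{k-1}) = \ell(w_{k-1}) + 1, \quad\text{i.e.}\quad s_{i_{l-k+1}} w_{k-1} > w_{k-1}.
\]
This is exactly the hypothesis of Proposition \ref{prop:reduction} with $i = i_{l-k+1}$ and $w_{k-1}$ in place of $w$. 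That proposition therefore provides a monoidal equivalence
\[
\mathcal{S}_{i_{l-k+1}} \colon \gMod{R^{w_{k-1}}} \to \gMod{R^{w_k}},
\]
which moreover preserves the $n\delta$-graded pieces.

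Composing these equivalences for $k = 1, \ldots, l$ yields a monoidal graded equivalence $\gMod{R^e} \to \gMod{R^w}$. A composition of graded monoidal equivalences is again one, because the monoidal structure isomorphisms compose and grading shifts are preserved at each step. Taking a quasi-inverse gives the stated equivalence $\gMod{R^w} \simeq \gMod{R^e}$.

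The only point requiring care is the bookkeeping of the Bruhat condition $s_i w > w$ at each step; since it is just the length-additivity of reduced words, there is no real obstacle. All the substantive work is already contained in Proposition \ref{prop:reduction}.
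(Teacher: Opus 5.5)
Your proposal is correct and is essentially the argument the paper intends: the corollary is stated without proof as an immediate consequence of Proposition \ref{prop:reduction}, obtained by composing the equivalences $\mathcal{S}_i$ along a reduced expression of $w$. Your check that $s_{i_{l-k+1}} w_{k-1} > w_{k-1}$ at each step, via length-additivity of suffixes of a reduced word, is exactly the bookkeeping that application requires.
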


Hence, the analysis of imaginary strata is reduced to the case where the convex order is of coarse type $e$. 

\subsection{Minuscule imaginary modules} \label{sub:minuscule}

In this subsection, assume that $\mathbf{k}$ is a field. 
We introduce two families of $R^e(\delta)$-modules. 

\begin{lemma} \label{lem:realrootmodule}
Let $i \in \mathring{I}$ and $n \in \mathbb{Z}_{\geq 1}$. 
Choose a convex order $\preceq$ of coarse type $e$. 
The modules $L^{\preceq}(n\delta -\alpha_i)$ and $\Delta^{\preceq}(n\delta-\alpha_i)$ do not depend on the choice of $\preceq$. 
\end{lemma}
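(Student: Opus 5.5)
The plan is to give a description of $L^{\preceq}(n\delta-\alpha_i)$ that refers only to the projection $p$ and not to $\preceq$. Once $L^{\preceq}(n\delta-\alpha_i)$ is shown to be independent of $\preceq$, the claim for $\Delta^{\preceq}(n\delta-\alpha_i)$ follows. Indeed, by definition it is the projective cover of $L^{\preceq}(n\delta-\alpha_i)$ in the full subcategory of $\gMod{R(n\delta-\alpha_i)}$ whose composition factors are grading shifts of $L^{\preceq}(n\delta-\alpha_i)$. That subcategory depends only on the simple module.

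Set $\beta = n\delta-\alpha_i$, so that $p(\beta) = -\alpha_i$. For a convex order of coarse type $e$, Lemma \ref{lem:coarsetype} gives $\beta \prec \delta$, and every root $\prec \delta$ has $p$-image in $\mathring{\mathsf{Q}}_- \setminus \{0\}$. I will show that $L^{\preceq}(\beta)$ is the unique self-dual simple $R(\beta)$-module $L$ satisfying the following two conditions:
\begin{enumerate}
\item[(a)] if $\Res_{\beta',\gamma'}L \neq 0$, then $p(\beta') \in \mathring{\mathsf{Q}}_-$;
\item[(b)] $\Res_{m\delta,(n-m)\delta-\alpha_i} L = 0$ for all $1 \leq m \leq n$.
\end{enumerate}
Neither condition involves $\preceq$.

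The first step is to check that $L^{\preceq}(\beta)$ satisfies (a) and (b). By cuspidality, $\Res_{\beta',\gamma'}L^{\preceq}(\beta)\neq 0$ forces $\beta' \in \spn_{\mathbb{Z}_{\geq 0}}\Phi^{\mathrm{min}}_{+,\preceq\beta}$. All these roots are $\prec \delta$, so $p(\beta') \in \mathring{\mathsf{Q}}_-$, which is (a). Moreover, $p(\beta')=0$ only if $\beta'=0$, since each such root has nonzero $p$-image in $-\mathring{\mathsf{Q}}_+$. This rules out $\beta' = m\delta$ with $m \geq 1$, which is (b).

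The second step, uniqueness, is the main work. By Theorem \ref{thm:cuspdecomp}(3), every self-dual simple module is $L^{\preceq}(\omega)$ for some $\omega=(\boldsymbol{c}_-,L',\boldsymbol{c}_+) \in \Omega^{\preceq}(\beta)$, and it is a quotient of $\overline{\Delta}^{\preceq}(\omega)$. For each factorization of $\overline{\Delta}^{\preceq}(\omega)$ as $A \circ B$, the adjunction $(\Ind,\Res)$ turns the nonzero surjection $A\circ B \to L^{\preceq}(\omega)$ into a nonzero map $A\otimes B \to \Res L^{\preceq}(\omega)$, so the corresponding restriction is nonzero.
\begin{itemize}
\item If $\boldsymbol{c}_+ \neq 0$, the leftmost factor is $L^{\preceq}(c\alpha^+)$ with $\alpha^+ \succ \delta$. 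Then $\Res_{c\alpha^+,\beta-c\alpha^+}L^{\preceq}(\omega) \neq 0$, while $p(c\alpha^+) \in \mathring{\mathsf{Q}}_+\setminus\{0\}$, contradicting (a). Hence $\boldsymbol{c}_+ = 0$.
\item Comparing $p$-images, the roots in the support of $\boldsymbol{c}_-$ have $p$-images in $-\mathring{\mathsf{Q}}_+\setminus\{0\}$ summing to $-\alpha_i$. So $\boldsymbol{c}_-$ is supported on a single root $k\delta-\alpha_i$ with multiplicity one, and $L' \in \Xi^e_{n-k}$.
\item If $k<n$, the factorization $L' \circ L^{\preceq}(k\delta-\alpha_i)$ gives $\Res_{(n-k)\delta,k\delta-\alpha_i}L^{\preceq}(\omega)\neq 0$, contradicting (b).
\end{itemize}
Thus $k=n$, $L'$ is trivial, and $L^{\preceq}(\omega) = L^{\preceq}(\beta)$. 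This proves uniqueness.

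The expected obstacle is choosing conditions that the cuspidal module provably satisfies while still excluding every other $\omega$. The tempting order-free condition on the right factor $\gamma'$ fails, because roots between $\beta$ and $\delta$ can give $\gamma'$ a negative $p$-part. Condition (b), which forbids pure-$\delta$ prefixes, is what I would use instead, since convexity excludes such prefixes directly.
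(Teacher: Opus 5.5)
Your proof is correct. For the simple module it takes a genuinely different route from the paper.

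The paper gives no argument here for the independence of $L^{\preceq}(n\delta-\alpha_i)$; it simply cites \cite[Proposition 6.12]{murata2025affinehighestweightstructures}. It then handles $\Delta^{\preceq}(n\delta-\alpha_i)$ exactly as you do: it is the projective cover of that simple module in a subcategory determined by the simple module alone.

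You instead prove the first half from scratch, using only Theorem \ref{thm:cuspdecomp}(3) and the $(\Ind,\Res)$ adjunction. The key is an order-free characterization. $L^{\preceq}(n\delta-\alpha_i)$ is the unique self-dual simple $R(n\delta-\alpha_i)$-module such that every nonzero restriction $\Res_{\beta',\gamma'}$ has $p(\beta')\in\mathring{\mathsf{Q}}_-$, and no nonzero restriction has left weight $m\delta$ with $m\geq 1$. Each step checks out:
\begin{enumerate}
\item Coarse type $e$ puts every real root $\preceq n\delta-\alpha_i$ into $p^{-1}(-\mathring{\Phi}_+)$, which gives (a) and (b) for the cuspidal module.
\item The leftmost factor of $\overline{\Delta}^{\preceq}(\omega)$ rules out $\boldsymbol{c}_+\neq 0$.
\item The height count on $p$-images forces $\boldsymbol{c}_-$ to be supported on a single root $k\delta-\alpha_i$ with multiplicity one.
\item The factorization $L'\circ L^{\preceq}(k\delta-\alpha_i)$ rules out $k<n$.
\end{enumerate}
One small point: condition (b) only makes sense for $1\leq m\leq n-1$, since $-\alpha_i\notin\mathsf{Q}_+$, and that range is all you use.

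Your route is self-contained and gives an intrinsic description of $L^e(n\delta-\alpha_i)$. That description adapts directly to coarse type $w$: replace $\mathring{\mathsf{Q}}_-$ by $w\mathring{\mathsf{Q}}_-$ and $\alpha_i$ by $\widetilde{w\alpha_i}$, and measure heights with respect to $w\{\alpha_j\}_{j\in\mathring{I}}$. The paper's citation buys only brevity.
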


\begin{proof}
For $L^{\preceq}(n\delta-\alpha_i)$, it is proved in \cite[Proposition 6.12]{murata2025affinehighestweightstructures}. 
By definition in Subsection \ref{sub:stratifications}, $\Delta^{\preceq}(n\delta-\alpha_i)$ is uniquely determined by $L^{\preceq}(n\delta-\alpha_i)$ and does not depend on $\preceq$.
\end{proof}

Hence, we may write $L^e(n\delta-\alpha_i)$ (resp.\ $\Delta^e(n\delta-\alpha_i)$) for $L^{\preceq}(n\delta-\alpha_i)$ (resp.\ $\Delta^{\preceq}(n\delta-\alpha_i)$). 
Note also that for any convex order $\preceq$, the module $L^{\preceq}(\alpha_i)$ is the unique self-dual simple $R(\alpha_i)$-module $L(i)$ and $\Delta^{\preceq}(\alpha_i) = R(\alpha_i)$. 

\begin{definition} \label{def:minuscule}
Let $i \in \mathring{I}$. 
We define $L_i^e(\delta) = \hd (L^e(\delta-\alpha_i) \circ L(i))$.
\end{definition}

When we wish to emphasize the coefficient field $\mathbf{k}$, we write $L_{i,\mathbf{k}}^e(\delta)$. 

\begin{proposition}[{\cite[Lemma 13.3]{MR3694676}, \cite[Proposition 6.16]{murata2025affinehighestweightstructures}}] \label{prop:minuscule}
For $i \in \mathring{I}$, the module $L_i^e(\delta)$ is simple and belongs to the subcategory $\gMod{R^e(\delta)}$. 
\end{proposition}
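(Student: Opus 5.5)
The plan is to prove the two assertions separately. First, the head of $L^e(\delta-\alpha_i)\circ L(i)$ is simple. Second, this simple head satisfies criterion (3) of Corollary \ref{cor:cuspidaltest}, and this gives both membership in $\gMod{R^e(\delta)}$ and $e$-cuspidality.

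\textbf{Step 1: simplicity.} Fix a convex order $\preceq$ of coarse type $e$; by Lemma \ref{lem:realrootmodule} the choice does not matter. Both factors are real simple modules.
\begin{itemize}
\item $L(i)$ is real because $L(i^2)\simeq q_i L(i)\circ L(i)$ is simple.
\item $L^e(\delta-\alpha_i)$ is real because, by Theorem \ref{thm:cuspdecomp}(1), $L^{\preceq}(2(\delta-\alpha_i))$ is a grading shift of $L^{\preceq}(\delta-\alpha_i)^{\circ 2}$, so the latter is simple.
\end{itemize}
I would then invoke the theory of $R$-matrices of Kang--Kashiwara--Kim--Oh--Park: if one of two simple modules is real, their convolution has a simple head. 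Thus $L_i^e(\delta)$ is simple. One could also avoid this theory: by Theorem \ref{thm:cuspdecomp}(3), $L(i)\circ L^e(\delta-\alpha_i)$ has simple head, since $\alpha_i\succ\delta\succ\delta-\alpha_i$ for coarse type $e$. Applying the duality $D$, our module $L^e(\delta-\alpha_i)\circ L(i)$ then has simple socle. Simplicity of the head would then need the KKKO input anyway, so I would use it directly.

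\textbf{Step 2: cuspidality.} Let $L=L_i^e(\delta)$. Take $\alpha\in\Phi_+^{\mathrm{re}}$ with $p(\alpha)\in\mathring{\Phi}_-$ and $\height\alpha<\height\delta$, so $\alpha=\delta-\gamma$ with $\gamma\in\mathring{\Phi}_+$. Since $L$ is a quotient of $L^e(\delta-\alpha_i)\circ L(i)$, it suffices to show $\Res_{\gamma,\delta-\gamma}\bigl(L^e(\delta-\alpha_i)\circ L(i)\bigr)=0$ for every $\gamma\in\mathring{\Phi}_+$ with $\gamma\neq\alpha_i$.

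The Mackey filtration expresses this restriction through pieces $\Res_{\beta',\beta''}L^e(\delta-\alpha_i)$ combined with $L(i)$, where $\alpha_i$ goes either to the left or to the right part. I would treat the two cases as follows.
\begin{itemize}
\item If $\alpha_i$ goes to the left, the tail of $L^e(\delta-\alpha_i)$ has weight $\delta-\gamma$, hence its head has weight $\gamma-\alpha_i$. The head must lie in the $\mathbb{Z}_{\geq0}$-span of roots $\preceq\delta-\alpha_i\prec\delta$, all of which have $p$ in $\mathring{\mathsf{Q}}_-$. But $p(\gamma-\alpha_i)=\gamma-\alpha_i$ is nonzero for $\gamma\neq\alpha_i$, and for it to lie in $\mathring{\mathsf{Q}}_-$ we would need $\gamma-\alpha_i\in-\mathring{\mathsf{Q}}_+$, impossible since $\gamma\in\mathring{\mathsf{Q}}_+$ is a positive root different from $\alpha_i$ and $\gamma-\alpha_i$ is then not in $\mathring{\mathsf{Q}}_-$ (or not in $\mathsf{Q}_+$ at all). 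So this piece vanishes.
\item If $\alpha_i$ goes to the right, the head of $L^e(\delta-\alpha_i)$ has weight $\gamma\in\mathring{\Phi}_+$, with $\gamma\neq 0,\delta-\alpha_i$. It again must lie in the span of roots with $p\in\mathring{\mathsf{Q}}_-$, and since $p(\gamma)=\gamma\in\mathring{\mathsf{Q}}_+\setminus\{0\}$, this is impossible. So this piece vanishes too.
\end{itemize}
The remaining case $\gamma=\alpha_i$, that is $\alpha=\delta-\alpha_i$, is the main obstacle. The Mackey filtration does produce a nonzero piece, $L^e(\delta-\alpha_i)$ followed by $L(i)$, so this restriction of the induced module is genuinely nonzero. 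Note that criterion (3) is phrased with $\Res_{n\delta-\alpha,\alpha}$, so the head/tail convention must be matched carefully. Correctly read, what has to be excluded for $L$ is a tail of weight $\alpha$ with $p(\alpha)\in\mathring{\Phi}_-$. Under the appropriate reading this is compatible with the surviving piece, and I expect the case to be harmless. If it is not, I would instead use criterion (2), $\Res_{\alpha,\delta-\alpha}L=0$ for $p(\alpha)\in\mathring{\Phi}_+$, and run the symmetric Mackey analysis, which concerns heads of weight in $\mathring{\Phi}_+$.

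\textbf{Fallback.} If the Mackey bookkeeping becomes murky, I would use Theorem \ref{thm:cuspdecomp}(3) directly. Write $L\simeq L^{\preceq}(\omega)$ and choose the order from Lemma \ref{lem:oneroworder} in which $\alpha_i=\beta_0$ is maximal. The only non-cuspidal $\omega$ compatible with the nonzero restriction $\Res_{\delta-\alpha_i,\alpha_i}L\supset L^e(\delta-\alpha_i)\otimes L(i)$ would be $c_+(\alpha_i)=c_-(\delta-\alpha_i)=1$. In that case $L$ would be the head of $L(i)\circ L^e(\delta-\alpha_i)$. I would rule this out by comparing with the simple socle of $L^e(\delta-\alpha_i)\circ L(i)$ from Step 1, showing that it cannot coincide with the head.
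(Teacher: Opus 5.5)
Your Step 1 is fine: Theorem \ref{thm:affreal} supplies the affinization that Theorem \ref{thm:rmatrix}(2) requires. Your Mackey computation also correctly kills $\Res_{\gamma,\delta-\gamma}\bigl(L^e(\delta-\alpha_i)\circ L(i)\bigr)$ for $\gamma\in\mathring{\Phi}_+\setminus\{\alpha_i\}$. The gap is the case $\gamma=\alpha_i$, which carries the entire content of the cuspidality claim, and your treatment of it does not work.

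The surviving Mackey piece of $\Res_{\alpha_i,\delta-\alpha_i}\bigl(L^e(\delta-\alpha_i)\circ L(i)\bigr)$ is not ``$L^e(\delta-\alpha_i)$ followed by $L(i)$''. It is $q^{(\alpha_i,\alpha_i)}L(i)\otimes L^e(\delta-\alpha_i)$, spanned by the vectors $\tau_1\cdots\tau_h(v\boxtimes u)$ with $h=\height(\delta-\alpha_i)$, in which the $i$-strand has been moved to the front. Its tail has weight $\delta-\alpha_i$ and $p(\delta-\alpha_i)=-\alpha_i\in\mathring{\Phi}_-$, which is exactly what criterion (3) forbids. No reading of the conventions makes it harmless.

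Switching to criterion (2) does not help either. For $n=1$, condition (2) with $\alpha=\gamma$ and condition (3) with $\alpha=\delta-\gamma$ are literally the same requirement: $\Res_{\gamma,\delta-\gamma}L=0$ for all $\gamma\in\mathring{\Phi}_+$. So you must show that these vectors die in the head, i.e.\ $e(\alpha_i,\delta-\alpha_i)L_i^e(\delta)=0$.

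Your fallback does not provide this. That $c_+(\alpha_i)=c_-(\delta-\alpha_i)=1$ is the only competing $\omega$ is merely asserted. The decisive step, that the head of $L^e(\delta-\alpha_i)\circ L(i)$ is not its socle, is exactly the missing statement and is left unproved.

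What is missing is an argument locating the offending vectors. Put $M=L^e(\delta-\alpha_i)$ and $N=L(i)$.

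\textbf{The vectors lie in the socle.} By the one-term Mackey filtration and adjunction, $u\boxtimes v\mapsto\tau_1\cdots\tau_h(v\boxtimes u)$ defines a nonzero homomorphism $N\circ M\to M\circ N$. By Theorem \ref{thm:rmatrix}(1), applied to the real module $N$ with affinization $R(\alpha_i)$, it is a multiple of $\mathbf{r}_{N,M}$. Hence $e(\alpha_i,\delta-\alpha_i)(M\circ N)\subset\Image\mathbf{r}_{N,M}=M\myDelta N$, the simple socle $S$ of $M\circ N$.

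\textbf{The socle lies in the radical.} If $S$ were not contained in the radical, it would map isomorphically onto the simple head. Then $M\circ N=S\oplus\mathrm{rad}$, and simplicity of the head forces $\mathrm{rad}=0$, so $M\circ N$ would be simple. That is impossible. Simplicity would make $\mathbf{r}_{M,N}$ an isomorphism, hence $\Lambda(M,N)+\Lambda(N,M)=0$. But Theorem \ref{thm:unmixing} gives $\Lambda(N,M)=\Lambda(M,N)=-(\delta-\alpha_i,\alpha_i)=(\alpha_i,\alpha_i)$. Here $(N,M)$ is unmixing by cuspidality of $M$. The pair $(M,N)$ is unmixing once $\Res_{\delta-2\alpha_i,\alpha_i}M=0$; this is automatic in type $A_N^{(1)}$, since $\alpha_i$ does not occur in $\delta-\alpha_i$.

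Hence $S$ lies in the radical and $e(\alpha_i,\delta-\alpha_i)L_i^e(\delta)=0$. Together with your other cases, criterion (3) then holds.
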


\begin{definition} \label{def:minusculeproj}
For $i \in \mathring{I}$, we define $\Delta_i^e(\delta)$ as the projective cover of $L_i^e(\delta)$ in $\gMod{R^e(\delta)}$. 
\end{definition}

\begin{proposition}[{\cite[Theorem 17.1]{MR3694676}, \cite[Theorem 6.22]{murata2025affinehighestweightstructures}}] \label{prop:imaginarystd}
For $i \in \mathring{I}$, there exists a short exact sequence
 \[
 0 \to q_i^2 R(\alpha_i) \circ \Delta^e(\delta-\alpha_i) \xrightarrow{\mathsf{R}} \Delta^e(\delta-\alpha_i) \circ R(\alpha_i) \to \Delta_i^e(\delta) \to 0, 
 \]
 where $\mathsf{R}$ is given by 
 \[
 u \boxtimes v \mapsto \tau_1 \tau_2 \cdots \tau_{\height (\delta-\alpha_i)}(v \boxtimes u) \ (u \in R(\alpha_i),v \in \Delta^e(\delta-\alpha_i)). 
 \]
\end{proposition}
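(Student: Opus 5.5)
The plan is to prove the proposition in four steps: (a) $\mathsf{R}$ is a well-defined graded homomorphism, (b) $\mathsf{R}$ is injective, (c) the cokernel $C$ lies in $\gMod{R^e(\delta)}$ and is projective there, and (d) the head of $C$ is $L_i^e(\delta)$. Since $\Delta_i^e(\delta)$ is by definition the projective cover of $L_i^e(\delta)$, (c) and (d) together identify $C\simeq \Delta_i^e(\delta)$. Throughout put $m=\height(\delta-\alpha_i)$ and $\beta=\delta-\alpha_i$.

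\emph{Step (a): $\mathsf{R}$ is a homomorphism.} The degree shift $q_i^2=q^{-(\alpha_i,\beta)}$ is exactly $\deg \tau_1\cdots\tau_m$ on the relevant idempotents, since $(\alpha_i,\delta)=0$. By adjunction, it is enough to check that $v\boxtimes u\mapsto \tau_1\cdots\tau_m(v\boxtimes u)$ intertwines the $R(\alpha_i)\otimes R(\beta)$-action with the permuted action. Commuting $x_k$ and $\tau_k$ past $\tau_1\cdots\tau_m$ produces correction terms only when a strand of colour $i$ inside $\beta$ meets the moving $i$-strand; in type $A$ there are no such strands. In the other types I expect each correction term to factor through $e(\beta-\alpha_i,\alpha_i)\Delta^e(\beta)$ or a similar idempotent truncation. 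These truncations vanish because $\Delta^e(\beta)$ has only cuspidal composition factors $L^e(\beta)$ and the weight $\beta-\alpha_i$ is on the wrong side of $\delta$ for coarse type $e$ (Corollary \ref{cor:cuspidaltest}).

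\emph{Step (b): injectivity.} Both sides are free of the same rank over $\mathbf{k}[x_{\text{pos}(i)}]\otimes \End(\Delta^e(\beta))$. The composite $\mathsf{R}'\circ\mathsf{R}$ with the opposite map $\mathsf{R}'$ (the map $v\boxtimes u\mapsto \tau_m\cdots\tau_1(u\boxtimes v)$) equals multiplication by $\prod Q_{i,\nu_k}(x,x_k)$ modulo lower terms. This element is a non-zero-divisor, so $\mathsf{R}$ is injective.

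\emph{Step (c): $C$ lies in $R^e(\delta)$ and is projective there.} For the first part, I will use the Mackey filtration of $\Res(\Delta^e(\beta)\circ R(\alpha_i))$. This filtration shows that all subquotients violating the coarse-type-$e$ idempotent conditions come from the layer where $\alpha_i$ is moved all the way to the left, and that layer is precisely the image of $\mathsf{R}$.

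For projectivity, let $M\in\gMod{R^e(\delta)}$. By adjunction,
\[
\Ext^k_R(\Delta^e(\beta)\circ R(\alpha_i),M)\simeq \Ext^k(\Delta^e(\beta)\otimes R(\alpha_i),\Res_{\beta,\alpha_i}M).
\]
Cuspidality forces every composition factor of $\Res_{\beta,\alpha_i}M$ to be of the form $L^e(\beta)\otimes L(i)$. Moreover $\Delta^e(\beta)$ is projective in the stratum of $L^e(\beta)$, by Theorem \ref{thm:stratification}. So this $\Ext^1$ vanishes, and likewise for $R(\alpha_i)\circ\Delta^e(\beta)$ in the relevant range. Applying $\Hom(-,M)$ to the short exact sequence from (a) and (b) then gives $\Ext^1_{R^e}(C,M)=0$.

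\emph{Step (d): the head of $C$.} For $j\in\mathring{I}$,
\[
\hom(C,L_j^e(\delta))\subset \hom(\Delta^e(\beta)\otimes R(\alpha_i),\Res_{\beta,\alpha_i}L_j^e(\delta)).
\]
The right-hand side is one-dimensional if $j=i$ and zero otherwise. For $j=i$ this uses that $L_i^e(\delta)$ is the head of $L^e(\beta)\circ L(i)$ (Definition \ref{def:minuscule} and Proposition \ref{prop:minuscule}). For $j\neq i$, I expect to detect $e(\delta-\alpha_i,\alpha_i)L_j^e(\delta)=0$ from the characters. The surjection onto $L_i^e(\delta)$ factors through $C$, because $\hom(R(\alpha_i)\circ\Delta^e(\beta),L_i^e(\delta))=0$ by Corollary \ref{cor:cuspidaltest}. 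Hence $C$ is indecomposable projective with head $L_i^e(\delta)$, which completes the argument.

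I expect the main obstacle to be step (c): checking that the Mackey layers of $\Delta^e(\beta)\circ R(\alpha_i)$ killed in $R^e(\delta)$ are exactly the image of $\mathsf{R}$. If the direct check is too messy, I would fall back on a character count: compute $\chi(\Delta^e(\beta)\circ R(\alpha_i))-q_i^2\chi(R(\alpha_i)\circ\Delta^e(\beta))$ via Theorem \ref{thm:categorification} and Corollary \ref{cor:maximalext}, and compare it with $\dim_q\Hom(\Delta_i^e(\delta),-)$ evaluated on the simples of $R^e(\delta)$.
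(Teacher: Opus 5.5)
Your four-step architecture is sound, and the projectivity half of step (c) is fine. There are genuine gaps, however. The first is that $C\in\gMod{R^e(\delta)}$, the step you flag yourself, is never proved, and neither route you offer works as stated.

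The Mackey claim is not justified. For a general bad idempotent $e(\gamma,\delta-\gamma)$, the filtration of $\Res_{\gamma,\delta-\gamma}(\Delta^e(\beta)\circ R(\alpha_i))$ has layers in which the $i$-strand is only partly moved left, and nothing places these in $\Image\mathsf{R}$. The fallback does not address the issue either: evaluating $\dim_q\Hom(\Delta_i^e(\delta),-)$ on simples of $R^e(\delta)$ says nothing about non-cuspidal composition factors of $C$. Moreover, $\chi(\Delta_i^e(\delta))=\psi_{i,1}^e$ is itself deduced in the paper from this very proposition.

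The missing idea is to test only real roots. Since $C$ is finitely generated with Laurentian grading, an idempotent kills $C$ iff it kills every composition factor of $C$. By Corollary \ref{cor:cuspidaltest}, it therefore suffices to show $\Res_{\alpha,\delta-\alpha}C=0$ for $\alpha\in\mathring{\Phi}_+$. Consider a Mackey layer of $\Res_{\alpha,\delta-\alpha}(\Delta^e(\beta)\circ R(\alpha_i))$ whose left part $\gamma_1\neq0$ comes from $\Delta^e(\beta)$. Cuspidality forces $\gamma_1\in\spn_{\mathbb{Z}_{\geq0}}\Phi^{\mathrm{min}}_{+,\preceq\beta}$, so $p(\gamma_1)\in\mathring{\mathsf{Q}}_-\setminus\{0\}$. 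Adding $0$ or $\alpha_i$ then cannot produce $p(\alpha)\in\mathring{\mathsf{Q}}_+\setminus\{0\}$. Hence the restriction vanishes unless $\alpha=\alpha_i$. In that case its unique layer is $\tau_1\cdots\tau_m(\Delta^e(\beta)\boxtimes R(\alpha_i))=\mathsf{R}(R(\alpha_i)\boxtimes\Delta^e(\beta))$, so $e(\alpha_i,\beta)C=0$.

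Step (d) contains a circular ingredient. The fact $e(\delta-\alpha_i,\alpha_i)L_j^e(\delta)=0$ for $j\neq i$ is Lemma \ref{lem:minusculeonecolor} in the paper. That lemma rests on Corollary \ref{cor:minusclesimple} and Lemma \ref{lem:qdimimaginary}, hence on the present proposition. You do not need it. Let $L$ be a simple quotient of $C$. A nonzero map $\Delta^e(\beta)\otimes R(\alpha_i)\to\Res_{\beta,\alpha_i}L$ has image containing a simple submodule. By your composition-factor claim, that submodule is $q^kL^e(\beta)\otimes L(i)$. Adjunction then makes $L$ a quotient of $q^kL^e(\beta)\circ L(i)$, so $L\simeq q^kL_i^e(\delta)$.

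For $j=i$, the fact that $L_i^e(\delta)$ is the head of $L^e(\beta)\circ L(i)$ only gives $\hom\neq0$. One-dimensionality needs $\Res_{\beta,\alpha_i}L_i^e(\delta)\simeq L^e(\beta)\otimes L(i)$. This holds because $\Res_{\beta-\alpha_i,\alpha_i}L^e(\beta)=0$, since $\delta-2\alpha_i$ is not a sum of real roots $\prec\delta$. Thus $\Res_{\delta-2\alpha_i,2\alpha_i}L_i^e(\delta)=0$, and the right-handed version of Theorem \ref{thm:crystal} applies with $n=1$.

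Step (b) is fine in type $A$, where $\mathsf{R}'\circ\mathsf{R}$ is exactly multiplication by a nonzero quadratic in $w,z$ (cf. Proposition \ref{prop:computeQ}). Outside type $A$, the word $\nu$ can contain $i$. Then $\prod_kQ_{i,\nu_k}$ contains the factor $Q_{i,i}=0$, and your leading-term argument collapses. The paper instead cites \cite[Proposition 3.20]{murata2025affinehighestweightstructures} for injectivity.

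Also, in (a) the correction terms for $\mathsf{R}$ factor through $e(\alpha_i,\beta-\alpha_i)\Delta^e(\beta)$, not the truncation you name. It vanishes because $\alpha_i\succ\delta\succ\beta$, not by Corollary \ref{cor:cuspidaltest}.
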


Note that the injectivity of $\mathsf{R}$ follows from \cite[Proposition 3.20]{murata2025affinehighestweightstructures}. 
The following proposition is proved in the same way. 

\begin{proposition}[{cf.\ \cite[Lemma 20.3]{MR3694676}}] \label{prop:psimodule}
For $i \in \mathring{I}$ and $n \in \mathbb{Z}_{\geq 1}$, there exists an injective homomorphism 
\[
q_i^2 R(\alpha_i) \circ \Delta^e(n\delta-\alpha_i) \to \Delta^e(n\delta-\alpha_i) \circ R(\alpha_i), \ u\boxtimes v \mapsto \tau_1 \tau_2 \cdots \tau_{\height (n\delta-\alpha_i)} (v\boxtimes u), 
\]
where $u \in R(\alpha_i), v \in \Delta^e(n\delta-\alpha_i)$. 
Moreover, its cokernel is a projective $R^e(n\delta)$-module. 
\end{proposition}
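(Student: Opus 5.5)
We follow the argument behind Proposition \ref{prop:imaginarystd} (that is, \cite[Theorem 6.22]{murata2025affinehighestweightstructures} and \cite[Lemma 20.3]{MR3694676}), replacing $\delta$ by $n\delta$ throughout. Write $\mathsf{R}_n$ for the map in the statement. It is a well-defined homomorphism of $R(n\delta)$-modules. By adjunction it corresponds to the map $R(\alpha_i)\boxtimes \Delta^e(n\delta-\alpha_i) \to \Res_{\alpha_i,n\delta-\alpha_i}(\Delta^e(n\delta-\alpha_i)\circ R(\alpha_i))$ sending $u\boxtimes v$ to $\tau_1\cdots\tau_{\height(n\delta-\alpha_i)}(v\boxtimes u)$. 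This map commutes with the right $R(\alpha_i)\otimes R(n\delta-\alpha_i)$-actions by the usual intertwiner computation. The grading shift $q_i^2$ is forced because the element $\tau_1\cdots\tau_{\height}$ has degree $-(\alpha_i,n\delta-\alpha_i)=(\alpha_i,\alpha_i)=2\cdot\frac{(\alpha_i,\alpha_i)}{2}$.

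\textbf{Injectivity.} We invoke \cite[Proposition 3.20]{murata2025affinehighestweightstructures}, exactly as the remark after Proposition \ref{prop:imaginarystd} does. That result gives injectivity of such $\tau$-intertwiners $X\circ R(\alpha_i)\leftarrow R(\alpha_i)\circ X$ whenever $X$ is a standard-type module that is free over its endomorphism algebra. Concretely, we use the Mackey filtration of $\Res_{\alpha_i,n\delta-\alpha_i}(\Delta^e(n\delta-\alpha_i)\circ R(\alpha_i))$. Its top piece is $R(\alpha_i)\boxtimes \Delta^e(n\delta-\alpha_i)$ twisted by the shift, and the image of $u\boxtimes v$ has leading term $u\boxtimes v$ in this piece. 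Here we use that $\Delta^e(n\delta-\alpha_i)$ is torsion-free over the polynomial generator $x_1$ of $\End(R(\alpha_i))$, as follows from Theorem \ref{thm:stratification}, which says $\End(\Delta^{\preceq}(n\delta-\alpha_i))$ is a polynomial ring.

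\textbf{The cokernel lies in $\gMod{R^e(n\delta)}$.} Write $C$ for the cokernel. Choose a convex order $\preceq$ of coarse type $e$ in which $n\delta-\alpha_i$ is a real root $\prec\delta$ and $\alpha_i\succ\delta$; this is possible since $p(\alpha_i)\in\mathring{\mathsf{Q}}_+$. In the stratification of Theorem \ref{thm:stratification}, the module $\Delta^e(n\delta-\alpha_i)\circ R(\alpha_i)$ is then filtered by standard modules, whereas $R(\alpha_i)\circ\Delta^e(n\delta-\alpha_i)=\Delta^{\preceq}(\omega_0)$ with $\rho(\omega_0)$ supported on $\{\alpha_i,n\delta-\alpha_i\}$. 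We check the condition of Corollary \ref{cor:cuspidaltest}(2) on $C$. For a real root $\alpha$ with $p(\alpha)\in\mathring{\Phi}_+$, we compute $\Res_{\alpha,n\delta-\alpha}$ of both terms by the Mackey filtration, using the cuspidality of $\Delta^e(n\delta-\alpha_i)$ and of $R(\alpha_i)$. The only surviving layer in $\Res_{\alpha,n\delta-\alpha}(\Delta^e(n\delta-\alpha_i)\circ R(\alpha_i))$ is the one passing $\alpha_i$ to the left, and $\mathsf{R}_n$ maps onto exactly this layer. Therefore $\Res_{\alpha,n\delta-\alpha}C=0$. The same Mackey argument, applied to general $e(\beta,\gamma)$, gives $e(\beta,\gamma)C=0$ for the idempotents defining $R^e(n\delta)$. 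So $C$ is an $R^e(n\delta)$-module.

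\textbf{Projectivity of $C$.} The module $\Delta^e(n\delta-\alpha_i)\circ R(\alpha_i)$ is projective relative to the stratum $\le$ the relevant root partition, and $R(\alpha_i)\circ\Delta^e(n\delta-\alpha_i)$ is a standard module attached to a strictly bigger (non-imaginary) root partition. Using the $\Ext$-vanishing properties of standard modules in the stratified category of Theorem \ref{thm:stratification}, we get
\[
\Ext^1_{R^e(n\delta)}(C,M)\hookrightarrow \Ext^1_R(\Delta^e(n\delta-\alpha_i)\circ R(\alpha_i),M)=0
\]
for every $M\in\gMod{R^e(n\delta)}$. For the equality we use $\Hom_R(q_i^2R(\alpha_i)\circ\Delta^e(n\delta-\alpha_i),M)=0$: indeed $\Res_{\alpha_i,n\delta-\alpha_i}M=0$ by Corollary \ref{cor:cuspidaltest}. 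We also use that $\Delta^e(n\delta-\alpha_i)\circ R(\alpha_i)$ has no $\Ext^1$ into modules killed by the defining ideal, via the adjunction $\Ext^1_R(X\circ R(\alpha_i),M)=\Ext^1(X\boxtimes R(\alpha_i),\Res M)$ together with the projectivity of $\Delta^e(n\delta-\alpha_i)$ in its real stratum.

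The main obstacle is this last $\Ext^1$-vanishing. It requires controlling $\Res_{n\delta-\alpha_i,\alpha_i}M$ for $M$ in the imaginary stratum, and showing that this restriction lies in a category in which $\Delta^e(n\delta-\alpha_i)$ is projective. We would prove this by a further Mackey/cuspidality argument, as in \cite[Theorem 6.22]{murata2025affinehighestweightstructures}.
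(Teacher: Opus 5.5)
Your reduction of projectivity to the vanishing $\Ext^1_R(\Delta^e(n\delta-\alpha_i)\circ R(\alpha_i),M)=0$ for all $M\in\gMod{R^e(n\delta)}$ is the right one. But the step you flag as the main obstacle cannot be supplied by any argument, because this vanishing is false already for $n=2$.

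Your proposed mechanism fails first. For $n\geq 2$, $\Res_{n\delta-\alpha_i,\alpha_i}M$ has composition factors $L\otimes L(i)$ with $L$ outside the real stratum of $n\delta-\alpha_i$. An example is $L_i^e(\delta)\mynabla L^e(\delta-\alpha_i)$, the head of the bottom Mackey layer of $\Res_{2\delta-\alpha_i,\alpha_i}L_i^e(\delta)^{\circ 2}$. $\Delta^e(n\delta-\alpha_i)$ can have nonzero $\Ext^1$ into such $L$.

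Here is a concrete counterexample in type $A_1^{(1)}$, with $n=2$ and any $t$.
\begin{itemize}
\item Let $M$ be the one-dimensional module on the word $(0,1,0,1)$ with all $x_k,\tau_k$ acting by $0$. The relations hold because $Q_{0,1}$ is homogeneous quadratic. $M$ lies in $\gMod{R^e(2\delta)}$ because every initial segment of the word has weight $a\alpha_0+b\alpha_1$ with $a\geq b$.
\item Then $\Res_{2\delta-\alpha_1,\alpha_1}M\simeq L(010)\otimes L(1)$, where $L(010)$ is the analogous one-dimensional module.
\item Next, $L(0)\circ L_1^e(\delta)$ is three-dimensional. It has the submodule $L(010)$, spanned by $\tau_2\tau_1(v_0\boxtimes v)$, with quotient $L^e(2\delta-\alpha_1)$ (the two-dimensional module on the word $(0,0,1)$), up to shifts. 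It is non-split, since the $(0,0,1)$-weight space is not $\tau_2$-stable.
\item All composition factors of $\Delta^e(2\delta-\alpha_1)$ are shifts of $L^e(2\delta-\alpha_1)$. Hence
$0\neq\Ext^1(L^e(2\delta-\alpha_1),L(010))\hookrightarrow\Ext^1(\Delta^e(2\delta-\alpha_1),L(010))\simeq\Ext^1_R(\Delta^e(2\delta-\alpha_1)\circ R(\alpha_1),M)$.
\end{itemize}

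Conversely, the vanishing is forced by the statement. The defining ideal of $R^e(n\delta)$ is generated by idempotents, so extensions of $R^e$-modules are again $R^e$-modules. Thus if the cokernel $C$ of the injective map were projective in $\gMod{R^e(n\delta)}$, we would have $\Ext^1_R(C,M)=0$. We also have $\Ext^1_R(R(\alpha_i)\circ\Delta^e(n\delta-\alpha_i),M)=0$, since $\Res_{\alpha_i,n\delta-\alpha_i}M=0$. The long exact sequence would then give the vanishing above. So the projectivity asserted in the statement fails for $n=2$.

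This is consistent with the paper itself. For $t$ of infinite quantum characteristic, Lemma \ref{lem:innerprod0} gives $\Extform{C}{L_i^e(1^2)}=\Extform{\Delta^e(2\delta-\alpha_i)\circ R(\alpha_i)}{L_i^e(1^2)}=-q_i^{-1}$. That value would have nonnegative coefficients if $C$ were a sum of shifts of projective covers $\Delta^e(L)$, because $\Extform{\Delta^e(L)}{L'}=\delta_{L,L'}$.

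``Proved in the same way'' as Proposition \ref{prop:imaginarystd} is therefore valid only for $n=1$. There your argument works: every composition factor of $\Res_{\delta-\alpha_i,\alpha_i}M$ is $L^e(\delta-\alpha_i)\otimes L(i)$, and $\Delta^e(\delta-\alpha_i)$ is projective in that Serre subcategory.

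For general $n$, what can be proved is injectivity plus the fact that the cokernel lies in $\gMod{R^e(n\delta)}$. This is all the paper uses later, namely to get $\psi^e_{i,n}\in B'$.
\begin{itemize}
\item Your Mackey computation gives the latter: only $\alpha=\alpha_i$ survives, and there the map restricts to the adjunction isomorphism. However, Corollary \ref{cor:cuspidaltest} concerns simple modules, so it must be applied to the composition factors of $C$, not to $C$ itself.
\item Your leading-term argument only shows injectivity on the generating subspace $R(\alpha_i)\boxtimes\Delta^e(n\delta-\alpha_i)$, not on the whole induced module. Injectivity genuinely rests on the cited result.
\end{itemize}
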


\subsection{Affine PBW bases} \label{sub:affinePBW}

We recall the construction of imaginary root vectors and affine PBW bases of $U_q(\mathfrak{g})$ \cite{MR2066942,MR3874704}.
For differences in conventions, see Appendix A. 

First, we introduce real root vectors using the quiver Hecke algebra. 
Let $\preceq$ be a convex order on $\minroot$ of coarse type $w$. 

\begin{definition}
  For $\alpha \in \prroot$, we define the real root vector
  \[
  f^{\preceq}_{\alpha} = \chi (\Delta^{\preceq}(\alpha)) \in U_q^-(\mathfrak{g}), 
\] 
and its divided power
\[
 (f_{\alpha}^{\preceq})^{(n)} = \frac{(f_{\alpha}^{\preceq})^n}{[n]_{\alpha}!} \ (n \geq 1). 
\]
\end{definition}

Note that $(f_{\alpha}^{\preceq})^{(n)} = \chi(\Delta^{\preceq}(n\alpha))$ by Theorem \ref{thm:stratification}. 
By \cite[Theorem 9.1]{MR3694676} and \cite[Proposition 6.8]{murata2025affinehighestweightstructures}, 
our $f_{\alpha}^{\preceq}$ coincides with $E_{\alpha}^{\preceq}$ of \cite[Definition 4.5]{MR3874704}. 
More precisely, if $\alpha \prec \delta$ (resp.\ $\alpha \succ \delta$), there exist $i \in \mathring{I}$ and $x \in W$ such that $x\alpha_i = \alpha$ (resp.\ $x^{-1}\alpha_i = \alpha$) and 
$f_{\alpha}^{\preceq} = S_x(f_i)$ (resp.\ $f_{\alpha}^{\preceq} = S_x^{-1}(f_i)$). 
Since $(1-q_i^2)f_i$ is an element of the dual canonical basis and dual canonical basis is compatible with the braid group action (see \cite[Theorem 4.23]{MR2914878} with a different normalization), 
we obtain the former assertion of the following lemma: 

\begin{lemma} \label{lem:realrootvector}
For any positive real root $\alpha$, 
the element $\chi(L^{\preceq}(\alpha)) = (1-q_{\alpha}^2) f_{\alpha}^{\preceq}$ (Corollary \ref{cor:maximalext}) belongs to the dual canonical basis. 
Moreover, we have 
\[
(f_{\alpha}^{\preceq}, f_{\alpha}^{\preceq}) = \frac{1}{1-q_{\alpha}^2}. 
\]
\end{lemma}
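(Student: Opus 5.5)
The first assertion has already been obtained in the paragraph before the lemma. It comes from the identification $f_\alpha^{\preceq}=S_x(f_i)$ (or $S_x^{-1}(f_i)$), the compatibility of the dual canonical basis with the braid group action, and Corollary \ref{cor:maximalext}. What remains is the norm computation
\[
(f_{\alpha}^{\preceq}, f_{\alpha}^{\preceq}) = \frac{1}{1-q_{\alpha}^2}.
\]
The plan is to compute this categorically, using Theorem \ref{thm:categorification}(3) together with the known endomorphism algebra of $\Delta^{\preceq}(\alpha)$ from Theorem \ref{thm:stratification}.

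First I would rewrite the form as an Euler characteristic. By Corollary \ref{cor:maximalext}, $(1-q_\alpha^2)f_\alpha^{\preceq}=\chi(L^{\preceq}(\alpha))$. Since $L^{\preceq}(\alpha)$ is self-dual, property (2) gives $c(\chi(L^{\preceq}(\alpha)))=\chi(L^{\preceq}(\alpha))$. Because the form is symmetric, $(f_\alpha^{\preceq}, f_\alpha^{\preceq})=\frac{1}{1-q_\alpha^2}\,(f_\alpha^{\preceq},\chi(L^{\preceq}(\alpha)))$. To apply property (3) I need to know $\overline{f_\alpha^{\preceq}}$. The bar involution commutes with $S_x$ only up to known twists, so I would sidestep this by working directly with the identity
\[
\Extform{\Delta^{\preceq}(\alpha)}{L^{\preceq}(\alpha)}=(\overline{f_\alpha^{\preceq}},\chi(L^{\preceq}(\alpha))).
\]
The left side I compute from the stratification: $\Delta^{\preceq}(\alpha)$ is the projective cover of $L^{\preceq}(\alpha)$ in the stratum, and the standard-module property gives $\Ext^k_R(\Delta^{\preceq}(\alpha),L^{\preceq}(\alpha))=0$ for $k>0$, since $L^{\preceq}(\alpha)$ lies in the same stratum and standard modules are Ext-projective relative to their stratum and lower ones in Kleshchev's framework. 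The $k=0$ term is one-dimensional in degree $0$, so the left side equals $1$.

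Next I need $\overline{f_\alpha^{\preceq}}$. From $\overline{\chi(L)}$ and $\chi(L)=(1-q_\alpha^2)f_\alpha^{\preceq}$ I get $\overline{f_\alpha^{\preceq}} = \frac{1-q_\alpha^2}{1-q_\alpha^{-2}}\,\overline{\chi(L)}/(1-q_\alpha^2)\cdots$, so it suffices to know $\overline{\chi(L^{\preceq}(\alpha))}$. I would use that dual canonical basis elements satisfy $\overline{b}=$ a power of $q$ times $b$ (a weight-dependent twist), or more simply compare with $\alpha=\alpha_i$. There, $f_i$ is bar-invariant and $(f_i,f_i)=1/(1-q_i^2)$ by definition, and $q_\alpha=q_i$.

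For general $\alpha$ I would instead transport the identity using $S_x$. Lusztig's form satisfies invariance under $S_x$ on the appropriate subalgebras $U^-\cap S_x(U^-)$ (Lusztig, Proposition 38.2.1), so $(S_x f_i,S_x f_i)=(f_i,f_i)=1/(1-q_i^2)=1/(1-q_\alpha^2)$, since $(\alpha,\alpha)=(\alpha_i,\alpha_i)$. The same argument with $S_x^{-1}$ handles $\alpha\succ\delta$. This is the route I would actually commit to, using the categorical computation above only as a cross-check.

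The main obstacle is matching normalizations. Lusztig's invariance statement is for his form $(\cdot,\cdot)$ on ${\bf f}$ with $T''_{i,e}$ conventions, while our $S_i$ and bilinear form may differ from his by signs or powers of $q$, as flagged in Appendix A. I would check this first on $x=s_j$ and $\alpha=s_j\alpha_i$ with $a_{j,i}=-1$, where $S_j(f_i)=f_if_j-q f_jf_i$ up to sign and the norm can be computed by hand from $r$.
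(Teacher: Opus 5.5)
Your committed route is the paper's own argument. The first assertion is taken from the discussion preceding the lemma, and the norm follows from the braid-group invariance of $(\cdot,\cdot)$ (Lusztig's Proposition 38.2.1) applied to $f_\alpha^{\preceq}=S_x(f_i)$ or $S_x^{-1}(f_i)$, using $(f_i,f_i)=1/(1-q_i^2)$ and $q_\alpha=q_i$.

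Your categorical cross-check, however, rests on a false claim. Dual canonical basis elements are $c$-invariant, not bar-invariant up to a power of $q$; for example, when $a_{1,2}=-1$ the bar involution sends the class of $L(12)$ to a multiple of the class of $L(21)$. The cross-check nevertheless closes without computing $\overline{f_\alpha^{\preceq}}$ at all. From $c(\chi(L^{\preceq}(\alpha)))=\chi(L^{\preceq}(\alpha))$, the definition of $c$, and the symmetry of the form, one gets
\[
(\chi(L^{\preceq}(\alpha)),f_\alpha^{\preceq})=\overline{(\overline{f_\alpha^{\preceq}},\chi(L^{\preceq}(\alpha)))}=\overline{\Extform{\Delta^{\preceq}(\alpha)}{L^{\preceq}(\alpha)}}=1 .
\]
This gives a self-contained proof of the norm formula that avoids matching conventions with Lusztig.
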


\begin{proof}
The former assertion is proved in the discussion above. 
The latter assertion follows from the fact that $(\cdot,\cdot)$ is invariant by the braid group action \cite[Proposition 38.2.1]{MR2759715}. 
\end{proof}



Next, we recall imaginary root vectors. 
Let $i \in \mathring{I}$. 
Put
\[
\widetilde{w\alpha_i} = \begin{cases}
w\alpha_i & \text{if $w\alpha_i \in \mathring{\Phi}_+$}, \\
w\alpha_i + \delta & \text{otherwise}. 
\end{cases}
\]
For each $k \geq 1$, \cite[Corollary 4.6]{MR3874704} shows that the real root vectors $f_{\widetilde{w\alpha_i}}^{\preceq}$ and $f_{k\delta-\widetilde{w\alpha_i}}^{\preceq}$ depend only on the coarse type $w$ (see also Lemma \ref{lem:realrootmodule}). 
Accordingly, we shall write $f_{\widetilde{w\alpha_i}}^w$ and $f_{k\delta-\widetilde{w\alpha_i}}^w$ instead. 
We define
\[
\psi_{i,k}^w = f_{k \delta - \widetilde{w\alpha_i}}^w f_{\widetilde{w\alpha_i}}^w - q_i^2 f_{\widetilde{w\alpha_i}}^w f_{k \delta - \widetilde{w\alpha_i}}^w \ (k \in \mathbb{Z}_{\geq 0}). 
\]
The vectors $\{\psi_{i,k}^w \mid i \in \mathring{I}, k \in \mathbb{Z}_{\geq 0} \}$ mutually commute \cite[Section 4.2]{MR3874704}. 
We introduce $P_{i,k}^w \ (k \geq 0)$ defined by the following recursive identities:  
\begin{align*}
  P_{i,0}^w &= 1, \\
  P_{i,k}^w &= \displaystyle \dfrac{1}{[k]_i} \sum_{s=1}^k q_i^{k-s} \psi_{i,s}^w P_{i,k-s}^w \ (k \geq 1). 
\end{align*}
For $k <0$, we put $P_{i,k}^w = 0$. 
For a partition $\lambda = (\lambda_1, \ldots, \lambda_l)$, we define
\[
S_{i,\lambda}^w = \det (P_{i,\lambda_a -a+b}^w)_{1 \leq a,b \leq l}.
\]
This is the imaginary root vector defined in \cite[Definition 4.8]{MR3874704}. 

Finally, we introduce the PBW basis. 

\begin{definition} 
Let $\preceq$ be a convex order, and let $\beta \in \mathsf{Q}_+$. 
We define $\Upsilon^{\preceq}(\beta)$ as the set \index{$\Upsilon^{\preceq}(\beta)$}
\[
  \left\{ \boldsymbol{c} = (\boldsymbol{c}_-, \boldsymbol{c}_{\delta}, \boldsymbol{c}_+) \mathrel{}\middle|\mathrel{} \begin{gathered} 
    \boldsymbol{c}_- \colon \Phi_{+, \prec \delta}^{\text{min}} \to \mathbb{Z}_{\geq 0},\ \boldsymbol{c}_+ \colon \Phi_{+, \succ \delta}^{\text{min}} \to \mathbb{Z}_{\geq 0}, \\
    \text{$\boldsymbol{c}_{\delta} = (\lambda^{(i)})_{i \in \mathring{I}}$ is a multipartition},  \\
    \text{$\boldsymbol{c}_- (\alpha) = 0$ for all but finitely many $\alpha \in \Phi_{+,\prec \delta}^{\mathrm{min}}$}, \\
    \text{$\boldsymbol{c}_+ (\alpha) = 0$ for all but finitely many $\alpha \in \Phi_{+,\succ \delta}^{\mathrm{min}}$}, \\
    \left(\sum_{\alpha \in \Phi_{+, \prec \delta}^{\text{min}}} \boldsymbol{c}_-(\alpha)\alpha \right) + \left( \sum_{\alpha \in \Phi_{+, \succ \delta}^{\text{min}}} \boldsymbol{c}_+(\alpha)\alpha \right) + \sum_{i \in \mathring{I}} \lvert \lambda^{(i)} \rvert \delta = \beta.    
  \end{gathered} \right\}. 
\]
An element $\boldsymbol{c}$ of $\Upsilon^{\preceq} (\beta)$ is called a Lusztig datum. 
\end{definition}

\begin{definition} 
  Let $\preceq$ be a convex order, $\beta \in \mathsf{Q}_+$ and $\boldsymbol{c} \in \Upsilon^{\preceq}(\beta)$ be a Lusztig datum. 
  Using the notation in Theorem \ref{thm:cuspdecomp},
  we define 
  \[
  f_{\boldsymbol{c}}^{\preceq} = (f_{\alpha_{n_+}^+}^{\preceq})^{(\boldsymbol{c}_+ (\alpha_{n_+}^+))} \cdots (f_{\alpha_{1}^+}^{\preceq})^{(\boldsymbol{c}_+(\alpha_{1}^+))} \left( \prod_{i \in \mathring{I}} S_{i, \lambda^{(i)}}^w \right) (f_{\alpha_{n_-}^-}^{\preceq})^{(\boldsymbol{c}_-(\alpha_{n_-}^-))} \cdots (f_{\alpha_{1}^-}^{\preceq})^{(\boldsymbol{c}_-(\alpha_{1}^-))}. 
  \]
\end{definition}

\begin{theorem} [{\cite[Theorem 3.13]{MR2066942}, \cite[Proposition 4.21]{MR3874704}}] \label{thm:affinePBW}
  Let $\preceq$ be a convex order and let $\beta \in \mathsf{Q}_+$.
  Then $\{ f_{\boldsymbol{c}}^{\preceq} \mid \boldsymbol{c} \in \Upsilon^{\preceq}(\beta) \}$ is a basis of $\quantum{-}_{-\beta}$. 
\end{theorem}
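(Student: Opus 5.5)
The statement is cited from Beck--Nakajima and Beck--Chari--Pressley/Muthiah--Tingley, so it does not need a new proof in this paper. If I were to verify it, I would use the categorification already recorded here together with a dimension count, rather than reprove it from scratch.

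\emph{Step 1: the products are nonzero.} Fix $\beta$ and a convex order $\preceq$ of coarse type $w$. For each real root $\alpha$, $(f_\alpha^{\preceq})^{(n)} = \chi(\Delta^{\preceq}(n\alpha))$ by Theorem \ref{thm:stratification}. The vectors $\psi^w_{i,k}$ commute and are built from real root vectors, and the $P^w_{i,k}$ and $S^w_{i,\lambda}$ are polynomials in the $\psi^w_{i,k}$ with coefficients in $\mathbb{Q}(q)$. So every $f^{\preceq}_{\boldsymbol{c}}$ is a well-defined element of $\quantum{-}_{-\beta}$.

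\emph{Step 2: the cardinality matches the dimension.} There is a bijection $\Upsilon^{\preceq}(\beta)\to\Omega^{\preceq}(\beta)$: keep $\boldsymbol{c}_\pm$, and replace the multipartition $\boldsymbol{c}_\delta$ by an element of $\Xi^w_n$. Such a bijection exists by Theorem \ref{thm:cuspdecomp}(2). By the corollary following Theorem \ref{thm:stratification}, $\{\chi(\Delta^{\preceq}(\omega))\}_{\omega\in\Omega^{\preceq}(\beta)}$ is a basis of $\quantum{-}_{-\beta}$. Hence $\lvert\Upsilon^{\preceq}(\beta)\rvert=\dim \quantum{-}_{-\beta}$, and it suffices to prove that the $f^{\preceq}_{\boldsymbol{c}}$ are linearly independent, or equivalently that they span.

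\emph{Step 3: spanning via a triangular factorization.} Lusztig's convex-order theory gives
\[
\quantum{-}_{-\beta} = \bigoplus U_{\succ\delta}\, U_{\delta}\, U_{\prec\delta},
\]
where $U_{\succ\delta}$ and $U_{\prec\delta}$ are spanned by ordered monomials in the real root vectors. This holds for these one-row orders by the braid group construction through $S_x$. The imaginary part $U_\delta$ is the commutative subalgebra generated by the $\psi^w_{i,k}$. Since the $\psi^w_{i,k}$ generate freely, $U_\delta$ is a polynomial ring in $\{\psi^w_{i,k}\}$, and the Schur determinants $S_{i,\lambda}^w$, via the Jacobi--Trudi identity applied to the power-sum-type generating functions $P_{i,k}^w$, form a basis of it.

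\emph{Main obstacle.} The hard step is showing that the $\psi^w_{i,k}$ are algebraically independent and fill out exactly the $\delta$-part, so that $\dim (U_\delta)_{-n\delta}$ equals the number of multipartitions of $n$. I would try to deduce it from the reduction to coarse type $e$, using the equivalence of Corollary \ref{cor:stdcoarsetype}, which is compatible with $S_w$. For $w=e$ I would then cite Beck's original argument: its Drinfeld-realization PBW basis gives independence, and Step 2 supplies the dimension count.
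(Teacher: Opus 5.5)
Your opening remark matches the paper, which gives no argument and simply cites Beck--Nakajima and Muthiah--Tingley. The verification sketch, however, has a genuine gap in Step~3, and the proposed reduction to coarse type $e$ does not work as stated.

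\emph{Step~3 assumes the conclusion.} In affine type, Lusztig's braid-group theory controls the real-root factors $U_{\succ\delta}$ and $U_{\prec\delta}$. It does not give the factorization $U_q^-(\mathfrak{g})_{-\beta}=\bigoplus U_{\succ\delta}U_\delta U_{\prec\delta}$ with $U_\delta$ \emph{defined} as the algebra generated by the $\psi^w_{i,k}$. Showing that the complementary ``middle'' part is exactly $\mathbb{Q}(q)[\psi^w_{i,k}]$ is the content of Beck's theorem. This includes showing that the $\psi^w_{i,k}$ lie in it and generate it freely. You also use ``generate freely'' in Step~3 before flagging it as the main obstacle.

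\emph{Step~3 covers too few orders.} It is stated only for one-row orders. The theorem concerns arbitrary convex orders on $\Phi_+^{\mathrm{min}}$, which need not arise from a bi-infinite reduced word.

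\emph{The reduction to coarse type $e$ fails on $U_q^-(\mathfrak{g})$.} $S_w$ does not preserve $U_q^-(\mathfrak{g})$; for example $S_i(f_i)=-e_it_i$. Corollary~\ref{cor:stdcoarsetype} only relates the imaginary strata, so it can transport statements about the $\delta$-part only.

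(Minor: the $P^w_{i,k}$ are the analogues of complete symmetric functions, and the $\psi^w_{i,k}$ are the analogues of power sums.)

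The fix is to let Step~2 do the work of Step~3. Since $\chi(\Delta^{\preceq}(n\alpha))=(f^{\preceq}_{\alpha})^{(n)}$, one has $\chi(\Delta^{\preceq}(\omega))=F_+\,\chi(\Delta^w(L))\,F_-$ and $f^{\preceq}_{\boldsymbol{c}}=F_+\,S^w_{\underline{\lambda}}\,F_-$ with the same real-root factors $F_\pm$. Here $S^w_{\underline{\lambda}}=\prod_{i}S^w_{i,\lambda^{(i)}}$. The $\chi(\Delta^{\preceq}(\omega))$ form a basis for every convex order, by the corollary after Theorem~\ref{thm:stratification}. So the theorem reduces to showing that $\{S^w_{\underline{\lambda}}\}_{\lvert\underline{\lambda}\rvert=n}$ is a basis of $V^w_n:=\spn_{\mathbb{Q}(q)}\{\chi(\Delta^w(L))\mid L\in\Xi^w_n\}$.

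For $w=e$, Proposition~\ref{prop:psimodule} gives $\psi^e_{i,k}\in V^e_k$. Proposition~\ref{prop:imaginaryind}~(2) gives closure of $\bigoplus_n V^e_n$ under products. Together these put every $S^e_{\underline{\lambda}}$ in $V^e_n$. Theorem~\ref{thm:cuspdecomp}~(2) shows that $\dim V^e_n$ equals the number of multipartitions of $n$. So only linear independence of the $S^e_{\underline{\lambda}}$ remains, which is equivalent to algebraic independence of the $\psi^e_{i,k}$.

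For general $w$, the composite of reflection functors from Proposition~\ref{prop:reduction}, together with Theorem~\ref{thm:reflectionfunctor}~(1), gives $V^w_n=S_w(V^e_n)$. Moreover $S^w_{\underline{\lambda}}=S_w(S^e_{\underline{\lambda}})$ by \cite[Theorem 4.13]{MR3874704}. Since $S_w$ is an automorphism, both spanning and independence transfer.

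This handles all convex orders at once. It isolates the algebraic independence of the $\psi^e_{i,k}$ as the single external input, which is where the citation to Beck is really needed.
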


In the rest of this subsection, we prove the following proposition. 

\begin{proposition} \label{prop:almostorth}
Let $i, j \in \mathring{I}$. 
Then, we have
\[
(\overline{P_{i,1}^w},P_{j,1}^w) \in \delta_{i,j} + q \mathbb{Q}[q]_{(q)}.
\]
\end{proposition}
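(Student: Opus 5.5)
The plan is to categorify both sides. We identify $P^w_{i,1}$ with the character of the minuscule projective $R^w(\delta)$-module and compute the pairing as an $\mathrm{Ext}$-form. By the recursion, $P^w_{i,1}=\psi^w_{i,1}$.

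\emph{Step 1: the case $w=e$.} Here $\widetilde{\alpha_i}=\alpha_i$ and $f^e_{\alpha_i}=f_i=\chi(R(\alpha_i))$. Also $f^e_{\delta-\alpha_i}=\chi(\Delta^e(\delta-\alpha_i))$ by Lemma \ref{lem:realrootmodule}. The short exact sequence of Proposition \ref{prop:imaginarystd} then gives
\[
\chi(\Delta_i^e(\delta)) = f^e_{\delta-\alpha_i}f_i - q_i^2 f_i f^e_{\delta-\alpha_i} = \psi^e_{i,1}=P^e_{i,1}.
\]

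\emph{Step 2: the case of general $w$.} I would transport Step 1 along the monoidal equivalence $\gMod{R^e}\simeq\gMod{R^w}$ of Corollary \ref{cor:stdcoarsetype}. This equivalence is a composite of reflection functors $\mathcal{S}_{i}$. Their effect on characters is the braid automorphism $S_x$ (Theorem \ref{thm:reflectionfunctor}), which sends $f^e_{\alpha_i}$ and $f^e_{\delta-\alpha_i}$ to $f^w_{\widetilde{w\alpha_i}}$ and $f^w_{\delta-\widetilde{w\alpha_i}}$. This should follow from the description of real root vectors as $S_x(f_i)$ recalled before Lemma \ref{lem:realrootvector}. Hence $P^w_{i,1}=\chi(\Delta^w_i)$, where $\Delta^w_i$ is the image of $\Delta^e_i(\delta)$: the projective cover in $\gMod{R^w(\delta)}$ of a simple $w$-cuspidal module $L^w_i$. (Alternatively, one could repeat Step 1 verbatim with the $w$-analogue of Proposition \ref{prop:imaginarystd} obtained through the equivalence.)

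\emph{Step 3: the pairing as a Hom space.} By Theorem \ref{thm:categorification}(3), and since standard modules have finite projective dimension, $(\overline{P^w_{i,1}},P^w_{j,1})=\Extform{\Delta^w_i}{\Delta^w_j}$. Both modules lie in the same stratum $\gMod{R^w(\delta)}$. By the stratification of Theorem \ref{thm:stratification} (Kleshchev's axioms: the stratum algebra is a quotient by a stratifying ideal), $\Ext^k_{R(\delta)}(\Delta^w_i,X)\simeq\Ext^k_{R^w(\delta)}(\Delta^w_i,X)$ for $X\in\gMod{R^w(\delta)}$. The right-hand side vanishes for $k>0$ because $\Delta^w_i$ is projective there. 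Therefore
\[
(\overline{P^w_{i,1}},P^w_{j,1})=\dim_q\Hom_{R}(\Delta^w_i,\Delta^w_j)=[\Delta^w_j:L^w_i]_q .
\]

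\emph{Step 4: the grading, which I expect to be the main obstacle.} It remains to show $[\Delta^w_j:L^w_i]_q\in\delta_{i,j}+q\mathbb{Z}[[q]]$, i.e.\ that $e_iR^w(\delta)e_j$ is concentrated in degrees $\ge 0$ with degree-zero part $\delta_{i,j}\mathbf{k}$. Then the rational function lies in $\delta_{i,j}+q\mathbb{Q}[q]_{(q)}$. By Step 2 it suffices to treat $w=e$. There I would first try the presentation $\Delta^e_j(\delta)=\Cok\mathsf{R}$. We have $\Hom(\Delta^e_i,\Delta^e_j)\subset\Hom(\Delta^e_i,\Delta^e(\delta-\alpha_j)\circ R(\alpha_j))$ only after quotienting, so instead I would use adjunction: compute $\Hom(\Delta^e(\delta-\alpha_j)\circ R(\alpha_j),\Delta^e_i)$ via $\Res_{\delta-\alpha_j,\alpha_j}$ and Mackey filtrations. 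Terms other than the identity shuffle should contribute only positive degrees, because $(\delta-\alpha_j,\alpha_j)$-type shifts are positive and $\End(\Delta^e(\delta-\alpha_j))\simeq\mathbf{k}[z]$ is positively graded. This would show that $\End(\Delta^e_i)$ and $\Hom(\Delta^e_i,\Delta^e_j)$ are nonnegatively graded with the required degree-zero part. If this direct estimate becomes messy, the fallback is to verify it on characters: expand $\overline{\psi^e_{i,1}}$ and $\psi^e_{j,1}$ through the bilinear form, using $(f^e_\alpha,f^e_\alpha)=1/(1-q_\alpha^2)$ (Lemma \ref{lem:realrootvector}) and the almost-orthogonality of PBW monomials. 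One then checks directly that the cross terms lie in $q\mathbb{Q}[q]_{(q)}$.
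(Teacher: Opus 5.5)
There is a genuine gap, and it sits exactly where you expected: Step 4.

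\textbf{Steps 1--3 only reformulate the statement.} They are correct, but once $P^w_{i,1}=\chi(\Delta^w_i)$ and the higher $\Ext$'s vanish, the proposition is \emph{equivalent} to $\dim_q\Hom_R(\Delta^w_i,\Delta^w_j)\in\delta_{i,j}+q\mathbb{Q}[q]_{(q)}$. In the paper this graded statement is Lemma~\ref{lem:qdimimaginary}, and it is deduced \emph{from} the proposition; your Step 3 is literally the first line of that proof. The same holds for the distinctness $L^e_i(\delta)\not\simeq L^e_j(\delta)$ and for Lemma~\ref{lem:semisimplicity}. So all of the content lies in Step 4, for which you give no argument.

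\textbf{Why the Mackey heuristic does not close Step 4.} Computing $\Hom(\Delta^e(\delta-\alpha_j)\circ R(\alpha_j),\Delta^e_i(\delta))$ requires $\Res_{\delta-\alpha_j,\alpha_j}\Delta^e_i(\delta)$, which is a quotient of $\Res_{\delta-\alpha_j,\alpha_j}(\Delta^e(\delta-\alpha_i)\circ R(\alpha_i))$. For $i\neq j$ the latter has no identity layer at all. Its only Mackey layer is built from $\Res_{\delta-\alpha_i-\alpha_j,\alpha_j}\Delta^e(\delta-\alpha_i)$. This is nonzero when $\alpha_i+\alpha_j$ is a root, e.g.\ $j=i+1$ in type $A_N^{(1)}$ by Lemma~\ref{lem:homogeneous}. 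Correspondingly, the true answer $q/(1-q^2)$ is nonzero. Knowing that the Mackey shift $q^{-(\alpha_j,\alpha_i)}$ is positive says nothing about the degrees of $\Hom$ from $\Delta^e(\delta-\alpha_j)\otimes R(\alpha_j)$ into this layer. You would also still have to pass to the cokernel of $\mathsf{R}$ in the target and to the kernel of $\Hom(\mathsf{R},-)$ in the source. For $i=j$ you would additionally need $\hom_R(\Delta^e_i(\delta),\Delta^e_i(\delta))=\mathbf{k}$, i.e.\ that $L^e_i(\delta)$ occurs exactly once unshifted in $\Delta^e_i(\delta)$. The sketch does not address this.

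\textbf{What the fallback is missing.} It points toward the paper's route but omits both ingredients that make it work. The real obstacle is the bar: $\overline{\psi^w_{i,1}}$ is not expanded in the same PBW basis, so PBW orthogonality gives nothing directly.

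The paper first handles the bar (Lemma~\ref{lem:almostorth1}):
\begin{itemize}
\item It uses $(\overline{x},y)=\overline{(x,c(y))}$.
\item The elements $(1-q_j^2)f^w_{\widetilde{w\alpha_j}}$ and $(1-q_j^2)f^w_{\delta-\widetilde{w\alpha_j}}$ are dual canonical (Lemma~\ref{lem:realrootvector}), hence fixed by $c$.
\item Together with $c(xy)=q^{(\beta,\gamma)}c(y)c(x)$ this gives $c(\psi^w_{j,1})=-\psi^w_{j,1}$, hence $(\overline{P^w_{i,1}},P^w_{j,1})=-\overline{(P^w_{i,1},P^w_{j,1})}$.
\end{itemize}

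It then computes the unbarred form exactly (Lemma~\ref{lem:almostorth2}):
\begin{itemize}
\item Reduce to $w=e$ using $P^w_{i,1}=S_w(P^e_{i,1})$ \cite[Theorem 4.13]{MR3874704} and braid invariance of $(\cdot,\cdot)$.
\item The commutator formula for $[\psi^e_{i,1},e_j]$ \cite[Theorem 5.3.2]{MR1802170}, together with (\ref{eq:bracket}), gives ${}_jr(\psi^e_{i,1})=0$ and $r_j(\psi^e_{i,1})\in\mathbb{Q}(q)f^e_{\delta-\alpha_j}$.
\item Hence $(\psi^e_{i,1},\psi^e_{j,1})=\frac{1}{1-q_j^2}(r_j(\psi^e_{i,1}),f^e_{\delta-\alpha_j})$, which equals $\frac{1+q_i^2}{1-q_i^2}$, $\frac{q}{1-q^2}$ or $0$.
\end{itemize}
These values are fixed by $x\mapsto-\overline{x}$ and lie in $\delta_{i,j}+q\mathbb{Q}[q]_{(q)}$.

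\textbf{A smaller error in Step 2.} The termwise claim is false. When $w\alpha_i\in\mathring{\Phi}_-$, for instance $w=s_i$ where $S_i(f_i)=-e_it_i$, the element $S_w(f_i)$ does not even lie in $U_q^-(\mathfrak{g})$. Only the combination transports, $S_w(P^e_{i,1})=P^w_{i,1}$. That is the cited theorem, not a consequence of the description of real root vectors.
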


\begin{lemma} \label{lem:almostorth1}
 $(\overline{P_{i,1}^w},P_{j,1}^w) = - \overline{(P_{i,1}^w,P_{j,1}^w)}$. 
\end{lemma}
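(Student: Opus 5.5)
The plan is to deduce the identity from the anti-linear automorphism $c$ of Subsection \ref{sub:quantumgroups}. The key claim is
\[
c(P_{j,1}^w) = -P_{j,1}^w \quad (j \in \mathring{I}).
\]
Granting it, apply the defining property $(c(x),y) = \overline{(x,\overline{y})}$ with $x = P_{j,1}^w$ and $y = \overline{P_{i,1}^w}$. This gives
\[
-(P_{j,1}^w, \overline{P_{i,1}^w}) = (c(P_{j,1}^w), \overline{P_{i,1}^w}) = \overline{(P_{j,1}^w, P_{i,1}^w)}.
\]
Since the form $(\cdot,\cdot)$ is symmetric, this is exactly $(\overline{P_{i,1}^w},P_{j,1}^w) = -\overline{(P_{i,1}^w,P_{j,1}^w)}$.

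First I record how $c$ interacts with scalars. From $(c(x),y) = \overline{(x,\overline{y})}$ one gets $c(ax) = \overline{a}\,c(x)$ for $a \in \mathbb{Q}(q)$, so $c$ is semilinear with respect to $q \mapsto q^{-1}$.

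Next I compute $c$ on the real root vectors. Write $\alpha = \widetilde{w\alpha_j}$ and $\beta = \delta - \alpha$; both are positive real roots with $q_\alpha = q_\beta = q_j$. By Lemma \ref{lem:realrootvector}, $(1-q_j^2)f^w_\alpha$ lies in the dual canonical basis, and $c$ fixes that basis. Semilinearity then gives
\[
c(f^w_\alpha) = \frac{1-q_j^2}{1-q_j^{-2}}\, f^w_\alpha = -q_j^2 f^w_\alpha,
\]
and likewise $c(f^w_\beta) = -q_j^2 f^w_\beta$. (Alternatively, $f^w_\alpha$ is the character of a module $\Delta^{\preceq}(\alpha)$ with self-dual head, and one can use Theorem \ref{thm:categorification}(2) on $L^{\preceq}(\alpha)$.)

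Then I use the twisted anti-multiplicativity $c(xy) = q^{(\gamma,\gamma')}c(y)c(x)$ for $x$ of weight $-\gamma$ and $y$ of weight $-\gamma'$. Here
\[
(\beta,\alpha) = (\delta-\alpha,\alpha) = -(\alpha,\alpha) = -(\alpha_j,\alpha_j),
\]
so $q^{(\beta,\alpha)} = q_j^{-2}$. Hence
\[
c(f^w_\beta f^w_\alpha) = q_j^{-2}\,q_j^4\, f^w_\alpha f^w_\beta = q_j^2 f^w_\alpha f^w_\beta, \qquad c(f^w_\alpha f^w_\beta) = q_j^2 f^w_\beta f^w_\alpha.
\]
Since $P_{j,1}^w = \psi_{j,1}^w = f^w_\beta f^w_\alpha - q_j^2 f^w_\alpha f^w_\beta$, and the coefficient $q_j^2$ becomes $q_j^{-2}$ under $c$, we get
\[
c(P_{j,1}^w) = q_j^2 f^w_\alpha f^w_\beta - q_j^{-2} q_j^2 f^w_\beta f^w_\alpha = -P_{j,1}^w,
\]
which is the key claim.

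I expect the only delicate step to be the bookkeeping of conventions: that $c$ is antilinear rather than linear, the sign of the exponent in $c(xy) = q^{(\gamma,\gamma')}c(y)c(x)$, and the normalization $(1-q_\alpha^2)f_\alpha$ of the dual canonical basis element. These three conventions together produce the sign $-1$. The recursion defining $P_{j,k}^w$ gives $P_{j,1}^w = \psi_{j,1}^w$ directly, so no further input is needed.
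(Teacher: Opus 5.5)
Your proof is correct and follows the paper's proof. You compute $c(\psi_{j,1}^w) = -\psi_{j,1}^w$ from the $c$-invariance of the dual canonical basis elements $(1-q_j^2)f^w_{\widetilde{w\alpha_j}}$ and $(1-q_j^2)f^w_{\delta-\widetilde{w\alpha_j}}$ together with the twisted anti-multiplicativity of $c$, and then substitute into $(c(x),y)=\overline{(x,\overline{y})}$. The paper does the same, including the sign bookkeeping.
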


\begin{proof}
By definition, we need to prove $(\overline{\psi_{i,1}^w},\psi_{j,1}^w) = -\overline{(\psi_{i,1}^w,\psi_{j,1}^w)}$. 
Using the involution $c$ in Subsection \ref{sub:quantumgroups}, we have 
\[
(\overline{\psi_{i,1}^w},\psi_{j,1}^w) = \overline{(\psi_{i,1}^w, c(\psi_{j,1}^w))}, 
\]
where
\[
c(\psi_{j,1}^w) = q^{(\delta-\widetilde{w\alpha_j}, \widetilde{w\alpha_j})}(c(f_{\widetilde{w\alpha_j}}^w)c(f_{\delta-\widetilde{w\alpha_j}}^w) - q_j^{-2} c(f_{\delta-\widetilde{w\alpha_j}}^w)c(f_{\widetilde{w\alpha_j}}^w)).
\]
Since $(1-q_j^2)f_{\widetilde{w\alpha_j}}^w$ and $(1-q_j^2)f_{\delta-\widetilde{w\alpha_j}}^w$ belong to the dual canonical basis, 
they are invariant by $c$.
It follows that 
\begin{align*}
c(f_{\widetilde{w\alpha_j}}^w) &= \frac{(1-q_j^2)}{(1-q_j^{-2})} f_{\widetilde{w\alpha_j}}^w = -q_j^2 f_{\widetilde{w\alpha_j}}^w, \\
c(f_{\delta-\widetilde{w\alpha_j}}^w) &= \frac{(1-q_j^2)}{(1-q_j^{-2})} f_{\delta-\widetilde{w\alpha_j}}^w = -q_j^2  f_{\delta-\widetilde{w\alpha_j}}^w. 
\end{align*}
Hence, we obtain
\[
c(\psi_{j,1}^w) = q_j^{-2} q_j^4 (f_{\widetilde{w\alpha_j}}^wf_{\delta-\widetilde{w\alpha_j}}^w - q_j^{-2}f_{\delta-\widetilde{w\alpha_j}}^wf_{\widetilde{w\alpha_j}}^w) = - \psi_{j,1}^w. 
\]
The lemma is proved. 
\end{proof}

\begin{lemma} \label{lem:almostorth2}
\[
\Lusform{P_{i,1}^w}{P_{j,1}^w}  = \begin{cases}
  \displaystyle \dfrac{1 + q_i^2}{1-q_i^2}  & \text{if $i=j$}, \\
  \displaystyle \dfrac{q}{1-q^2}  & \text{if $a_{i,j} <0$}, \\
  \displaystyle 0 & \text{if $a_{i,j} = 0$}.
\end{cases}
\]
\end{lemma}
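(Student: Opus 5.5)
The plan is to compute $\Lusform{P_{i,1}^w}{P_{j,1}^w}$ directly from the definitions of the real root vectors and the bilinear form, after first reducing to the case $w = e$. By the recursion, $P_{i,1}^w = \psi_{i,1}^w = f^w_{\delta-\widetilde{w\alpha_i}} f^w_{\widetilde{w\alpha_i}} - q_i^2 f^w_{\widetilde{w\alpha_i}} f^w_{\delta-\widetilde{w\alpha_i}}$.

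\emph{Reduction to $w=e$.} Choose the convex orders as in the proof of Proposition \ref{prop:reduction}. Since $f^{\preceq}_\alpha$ is $S_x(f_i)$ or $S_x^{-1}(f_i)$, the vector $\psi^{s_iw}_{j,1}$ is obtained from $\psi^{w}_{j,1}$ by applying $S_i$, at least up to the identification given by Theorem \ref{thm:reflectionfunctor}. Both vectors lie in the subspace on which $S_i$ preserves $U_q^-(\mathfrak{g})$. By \cite[Proposition 38.2.1]{MR2759715}, $S_i$ is an isometry there, so $\Lusform{\psi^w_{i,1}}{\psi^w_{j,1}}$ is independent of $w$, and we may take $w = e$. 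Then $\widetilde{w\alpha_i} = \alpha_i$, and Proposition \ref{prop:imaginarystd} gives $\psi^e_{i,1} = \chi(\Delta_i^e(\delta))$. In particular $\psi^e_{j,1}$ is the character of an $R^e(\delta)$-module.

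\emph{Killing one term.} Write $\psi_i = \psi^e_{i,1}$. Using $(x,f_iy) = \frac{1}{1-q_i^2}({}_ir(x),y)$ and $(x,yf_i) = \frac{1}{1-q_i^2}(r_i(x),y)$, we get
\[
(\psi_i,\psi_j) = \frac{1}{1-q_i^2}\Bigl( \bigl(f_{\delta-\alpha_i}, r_i(\psi_j)\bigr) - q_i^2 \bigl(f_{\delta-\alpha_i}, {}_ir(\psi_j)\bigr)\Bigr).
\]
By Theorem \ref{thm:categorification}(1), ${}_ir(\psi_j)$ is the character of $\Res_{\alpha_i,\delta-\alpha_i}\Delta_j^e(\delta)$. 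This restriction vanishes because $\Delta^e_j(\delta)$ is an $R^e(\delta)$-module and $p(\alpha_i) \notin \mathring{\mathsf{Q}}_-$, so $e(\alpha_i,\delta-\alpha_i)$ lies in the defining ideal. Hence only $(f_{\delta-\alpha_i}, r_i(\psi_j))$ remains.

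\emph{Computing $r_i(\psi_j)$.} Expand $r(\psi_j)$ with the twisted multiplication \eqref{eq:twistedmultiplication}, using $r(f_j) = f_j\otimes 1 + 1\otimes f_j$, and keep only the components of weight $(-(\delta-\alpha_i),-\alpha_i)$.
\begin{itemize}
\item If $i = j$, the relevant terms come from $r_{\delta-\alpha_i,0}(f_{\delta-\alpha_i})\cdot(1\otimes f_i)$ and $(1\otimes f_i)\cdot(f_{\delta-\alpha_i}\otimes 1)$, together with the contributions of $r_i(f_{\delta-\alpha_i})f_i$. The latter pair to zero against $f_{\delta-\alpha_i}$ by weight and cuspidality. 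Collecting the powers of $q$ should give $(1+q_i^2)(f_{\delta-\alpha_i},f_{\delta-\alpha_i}) = \frac{1+q_i^2}{1-q_i^2}$ after dividing out the prefactor, by Lemma \ref{lem:realrootvector}.
\item If $i\neq j$, the surviving terms are $r_i(f_{\delta-\alpha_j})f_j - q_j^2 f_j r_i(f_{\delta-\alpha_j})$. To pair these with $f_{\delta-\alpha_i}$, interpret $(f_{\delta-\alpha_i},\chi(M))$ as $\dim_q\Hom(\Delta^e(\delta-\alpha_i),M)$; this is valid because $\Delta^e(\delta-\alpha_i)$ is projective in the cuspidal subcategory and is a standard module with vanishing higher Ext into the relevant objects. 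The pairing then only counts the multiplicity of $L^e(\delta-\alpha_i)$ in $\Res_{\delta-\alpha_i-\alpha_j,\alpha_i}L^e(\delta-\alpha_j)\circ L(j)$ and its twist. This multiplicity is zero when $a_{i,j}=0$, and a single copy in degree one when $a_{i,j}<0$, giving $\frac{q}{1-q^2}$.
\end{itemize}

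The main obstacle is this last case: identifying $r_i(f_{\delta-\alpha_j})$, equivalently the relevant part of the restriction of the cuspidal module $L^e(\delta-\alpha_j)$, precisely enough to read off the pairing with $f_{\delta-\alpha_i}$. I would first try to handle it through the short exact sequence of Proposition \ref{prop:imaginarystd} together with the Mackey filtration of $\Res_{\delta-\alpha_i,\alpha_i}(\Delta^e(\delta-\alpha_j)\circ R(\alpha_j))$. Failing that, I would compute explicitly in the standard convex order, where $L^e(\delta-\alpha_j)$ has a known character.
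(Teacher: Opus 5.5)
Your reduction to $w=e$, the vanishing of ${}_ir(\psi_j)$, and the cases $i=j$ and $a_{i,j}=0$ are fine. For those cases, your direct expansion via \eqref{eq:twistedmultiplication} is a valid alternative to the paper's argument. In coarse type $e$, if $\Res_{\beta,\gamma}\Delta^e(\delta-\alpha_k)\neq 0$ with $\beta\neq 0$, then $\beta$ is a sum of roots $\prec\delta$, each of $\alpha_0$-coefficient at least one. The weights $\delta-2\alpha_i$ and, when $a_{i,j}=0$, $\delta-\alpha_i-\alpha_j$ have $\alpha_0$-coefficient one but are not roots. Hence $r_i(f_{\delta-\alpha_i})=0$ and $r_i(f_{\delta-\alpha_j})=0$, which give $r_i(\psi_i)=(1-q_i^4)f_{\delta-\alpha_i}$ and $r_i(\psi_j)=0$ respectively. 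In the reduction, the factorwise argument breaks when $w\alpha_j=\alpha_i$, because then $S_i(f_i)=-e_it_i$ appears. What you actually need is $P^w_{i,1}=S_w(P^e_{i,1})$ from \cite[Theorem 4.13]{MR3874704}, which is what the paper cites.

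The genuine gap is the case $a_{i,j}<0$, the heart of the lemma, which you leave open; your sketch for it does not work as stated. First, the first term of $r_i(\psi_j)$ carries a factor $q^{-(\alpha_i,\alpha_j)}$ that you dropped. More seriously, $(f_{\delta-\alpha_i},\chi(M))$ is not $\dim_q\Hom(\Delta^e(\delta-\alpha_i),M)$. Theorem \ref{thm:categorification}(3) computes $(\overline{f_{\delta-\alpha_i}},\chi(M))$ instead, and $f_{\delta-\alpha_i}$ is not bar-invariant: in type $A_2^{(1)}$, $f_{\delta-\alpha_1}=f_0f_2-qf_2f_0$.

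The two pairings put the contribution in different terms. With the bar, $\Extform{\Delta^e(\delta-\alpha_i)}{X\circ L(j)}=0$ for every $X$: the module $X\circ L(j)$ is, up to shift, coinduced from $L(j)\otimes X$, and $\Res_{\alpha_j,\delta-\alpha_i-\alpha_j}\Delta^e(\delta-\alpha_i)=0$. Without the bar, it is instead the term $f_jr_i(f_{\delta-\alpha_j})$ that dies, leaving $(f_{\delta-\alpha_i},r_i(\psi_j))=\frac{q^{-(\alpha_i,\alpha_j)}}{1-q_j^2}(r_j(f_{\delta-\alpha_i}),r_i(f_{\delta-\alpha_j}))$. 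Either way you need the exact graded structure of $\Res_{\delta-\alpha_i-\alpha_j,\alpha_j}\Delta^e(\delta-\alpha_i)$ and of $\Res_{\delta-\alpha_i-\alpha_j,\alpha_i}\Delta^e(\delta-\alpha_j)$, uniformly over all untwisted affine types. Nothing in the proposal justifies the claimed ``single copy in degree one'', and explicit characters would only cover special cases such as type $A$.

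The paper avoids this entirely. It imports the commutator formula for $[\psi^e_{i,1},e_j]$ from \cite[Theorem 5.3.2 (3)]{MR1802170} and converts it through \eqref{eq:bracket} into ${}_jr(\psi^e_{i,1})=0$. The same step gives $r_j(\psi^e_{i,1})$ as an explicit scalar multiple of $f^e_{\delta-\alpha_j}$, simultaneously in all three cases. Lemma \ref{lem:realrootvector} then finishes the computation.
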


\begin{proof}
By \cite[Theorem 4.13]{MR3874704}, we have $P_{i,1}^w = S_w(P_{i,1}^{e})$.
Since $\Lusform{\cdot}{\cdot}$ is invariant by the braid group action \cite[Proposition 38.2.1]{MR2759715}, we may assume $w = e$. 
Now, we perform a calculation following \cite[Proposition 3.19]{MR2066942}. 
We may assume $a_{i,j} \geq -1$ since the bilinear form is symmetric. 

By \cite[Theorem 5.3.2. (3)]{MR1802170}, we have 
\[
[\psi_{i,1}^{e},e_j] = \begin{cases}
\displaystyle [2]_i t_i f_{\delta-\alpha_i}^{e} & \text{if $i=j$}, \\
\displaystyle t_j f_{\delta-\alpha_j}^{e} & \text{if $a_{i,j} = -1$}, \\
\displaystyle 0 & \text{if $a_{i,j} = 0$}.
\end{cases}
\]
Hence, using (\ref{eq:bracket}) we obtain
\begin{align*}
{}_jr(\psi_{i,1}^e) &= 0, \\
r_j(\psi_{i,1}^e) &= \begin{cases}
  \displaystyle -q_i^2 (q_i-q_i^{-1})[2]_i f_{\delta-\alpha_i}^e & \text{if $i=j$}, \\
  \displaystyle -q^2 (q-q^{-1}) f_{\delta-\alpha_j}^e & \text{if $a_{i,j} = -1$}, \\
  \displaystyle 0 & \text{if $a_{i,j} = 0$}.
  \end{cases}
\end{align*}
Since $\psi_{j,1}^e = f_{\delta-\alpha_j}^e f_j - q_j^2 f_j f_{\delta-\alpha_j}^e$, we deduce using Lemma \ref{lem:realrootvector}
\[
\Lusform{\psi_{i,1}^e}{\psi_{j,1}^e} = \frac{1}{1-q_j^2}\Lusform{r_j(\psi_{i,1}^e)}{f_{\delta-\alpha_j}^e} = \begin{cases}
  \displaystyle \frac{q_i}{1-q_i^2}[2]_i & \text{if $i=j$}, \\
  \displaystyle \frac{q}{1-q^2} & \text{if $a_{i,j} = -1$}, \\
  \displaystyle 0 & \text{if $a_{i,j} = 0$}.
  \end{cases}
\]
Hence, 
\[
\Lusform{P_{i,1}^e}{P_{j,1}^e} = \Lusform{\psi_{i,1}^e}{\psi_{j,1}^e} = \begin{cases}
  \displaystyle \dfrac{1 + q_i^2}{1-q_i^2}  & \text{if $i=j$}, \\
  \displaystyle \dfrac{q}{1-q^2}  & \text{if $a_{i,j} = -1$}, \\
  \displaystyle 0 & \text{if $a_{i,j} = 0$}.
\end{cases}
\]

\end{proof}

\begin{proof}[Proof of Proposition \ref{prop:almostorth}]
By Lemma \ref{lem:almostorth1} and Lemma \ref{lem:almostorth2}, we have
\[
\Lusform{\overline{P_{i,1}^w}}{P_{j,1}^w} = -\overline{\Lusform{P_{i,1}^w}{P_{j,1}^w}} = \begin{cases}
  \displaystyle \dfrac{1 + q_i^2}{1-q_i^2}  & \text{if $i=j$}, \\
  \displaystyle \dfrac{q}{1-q^2}  & \text{if $a_{i,j} = -1$}, \\
  \displaystyle 0 & \text{if $a_{i,j} = 0$}.
\end{cases}
\]
The proposition is proved. 
\end{proof}

\subsection{Positivity and Ext-vanishing}

In this subsection, assume that $\mathbf{k}$ is a field. 
By Proposition \ref{prop:imaginarystd}, we have 
\begin{equation} \label{eq:character}
\chi(\Delta_i^e(\delta)) = \psi_{i,1}^e = P_{i,1}^e. 
\end{equation}
Note that this holds regardless of the characteristic of $\mathbf{k}$ or the parameter $t$. 

\begin{lemma} \label{lem:qdimimaginary}
 Let $i,j \in \mathring{I}$. 
 Then, 
 \[
 \dim_q \Hom_{R(\delta)} (\Delta_i^e(\delta), \Delta_j^e(\delta)) \in \delta_{i,j} + q\mathbb{Q}[q]_{(q)}. 
 \]
 In particular, if $i \neq j$, then $L_i(\delta) \not \simeq L_j(\delta)$. 
\end{lemma}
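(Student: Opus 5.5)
The plan is to compute the graded dimension through the Ext-form of Theorem \ref{thm:categorification}(3) and then invoke Proposition \ref{prop:almostorth}.

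Since $\Delta_i^e(\delta)$ is a standard module, it has finite projective dimension. By Proposition \ref{prop:imaginarystd} it has an explicit resolution of length one by modules that are convolutions of projectives $\Delta^e(\delta-\alpha_i)$ and $R(\alpha_i)$. Hence $\Extform{\Delta_i^e(\delta)}{\Delta_j^e(\delta)}$ is well defined, and by Theorem \ref{thm:categorification}(3) and (\ref{eq:character}) it equals
\[
(\overline{\chi(\Delta_i^e(\delta))},\chi(\Delta_j^e(\delta))) = (\overline{P_{i,1}^e},P_{j,1}^e) \in \delta_{i,j} + q\mathbb{Q}[q]_{(q)}
\]
by Proposition \ref{prop:almostorth} with $w=e$.

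The key step, and the main obstacle, is showing that the higher Ext terms vanish, so that the Ext-form equals $\dim_q\Hom$. My first attempt is to compute $\Ext^k_{R(\delta)}(\Delta_i^e(\delta),\Delta_j^e(\delta))$ via the short exact sequence of Proposition \ref{prop:imaginarystd}. The first two terms $X_1 = q_i^2R(\alpha_i)\circ\Delta^e(\delta-\alpha_i)$ and $X_0 = \Delta^e(\delta-\alpha_i)\circ R(\alpha_i)$ are not projective over $R(\delta)$. Still, by adjunction $\Ext^k_{R}(X_0,M)=\Ext^k_{R(\delta-\alpha_i)\otimes R(\alpha_i)}(\Delta^e(\delta-\alpha_i)\otimes R(\alpha_i),\Res M)$, and similarly for $X_1$. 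For $M=\Delta_j^e(\delta)$, which is an $R^e(\delta)$-module, the restriction $\Res_{\alpha_i,\delta-\alpha_i}M$ vanishes. This holds because $e(\alpha_i,\delta-\alpha_i)$ lies in the defining ideal of $R^e$: $\alpha_i \notin \spn(\Phi_+\cap p^{-1}(\mathring{\mathsf Q}_-))$. So all $\Ext^k(X_1,M)=0$.

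For $X_0$, the restriction $\Res_{\delta-\alpha_i,\alpha_i}M$ has a filtration whose first factor's composition factors are cuspidal for the real root $\delta-\alpha_i$, since $M$ is an $R^e$-module (cf.\ Corollary \ref{cor:cuspidaltest}). By the stratification (Theorem \ref{thm:stratification}), $\Delta^e(\delta-\alpha_i)$ is projective in the corresponding stratum, so the higher Ext groups against such modules vanish. Standard modules have Ext-vanishing against "costandard-type" objects, and the cuspidal-quotient property gives $\Ext^{k}=0$ for $k\ge1$. Hence $\Ext^k(X_0,M)=0$ for $k\geq 1$.

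The long exact sequence then gives $\Ext^k(\Delta_i^e(\delta),M)=0$ for $k\ge 2$, together with an exact sequence
\[
0\to\Hom(\Delta_i^e,M)\to\Hom(X_0,M)\to\Hom(X_1,M)=0\to\Ext^1(\Delta_i^e,M)\to 0.
\]
So $\Ext^1=0$ as well, and $\Extform{\Delta_i^e(\delta)}{\Delta_j^e(\delta)}=\dim_q\Hom(\Delta_i^e(\delta),\Delta_j^e(\delta))$. Alternatively, one could simply argue that $\Delta_i^e(\delta)$ is projective in $\gMod{R^e(\delta)}$ and that $\Ext$ between $R^e$-modules computed over $R$ agrees with that over $R^e$, a property of the stratification in \cite{murata2025affinehighestweightstructures}.

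For the final claim, if $L_i(\delta)\simeq L_j(\delta)$ with $i\ne j$, then $\Delta_i^e(\delta)\simeq\Delta_j^e(\delta)$ as projective covers. The Hom space would then contain the identity in degree $0$, so the constant term would be $1$, contradicting the constant term $\delta_{i,j}=0$.
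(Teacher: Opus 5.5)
Your proposal is correct and follows the paper's route: the paper identifies $\dim_q\Hom_{R(\delta)}(\Delta_i^e(\delta),\Delta_j^e(\delta))$ with $(\overline{P_{i,1}^e},P_{j,1}^e)$ via Theorem \ref{thm:categorification}(3), takes the vanishing of the higher Ext groups directly from Theorem \ref{thm:stratification} (your ``alternatively''), and then applies Proposition \ref{prop:almostorth}. Your detour through the short exact sequence of Proposition \ref{prop:imaginarystd} is also valid, but it still depends on stratification-type Ext-vanishing, now for $\Delta^e(\delta-\alpha_i)$ in its real stratum, so it does not avoid Theorem \ref{thm:stratification}; there is one small slip: $\Delta^e(\delta-\alpha_i)$ is not projective over $R(\delta-\alpha_i)$, so finite projective dimension of $\Delta_i^e(\delta)$ should be quoted from the corollary on standard modules rather than read off from that sequence.
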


\begin{proof}
By Theorem \ref{thm:categorification} and Theorem \ref{thm:stratification}, we have 
\[
  \dim_q \Hom_{R(\delta)} (\Delta_i^e(\delta), \Delta_j^e(\delta)) = \Lusform{\overline{P_{i,1}^e}}{P_{j,1}^e}. 
\] 
Hence, the lemma follows from Proposition \ref{prop:almostorth}. 
\end{proof}

\begin{corollary} \label{cor:minusclesimple}
The set $\{L_i^e(\delta) \mid i \in \mathring{I} \}$ is a complete set of representatives of self-dual simple $R^e(\delta)$-modules up to isomorphism. 
\end{corollary}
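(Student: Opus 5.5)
The corollary makes two claims: the modules $L_i^e(\delta)$ for $i \in \mathring{I}$ are pairwise non-isomorphic self-dual simple $R^e(\delta)$-modules, and every self-dual simple $R^e(\delta)$-module is one of them. I would prove these separately.

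\emph{Simplicity and membership.} By Proposition \ref{prop:minuscule}, each $L_i^e(\delta)$ is simple and lies in $\gMod{R^e(\delta)}$.

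\emph{Self-duality.} I would argue as follows. $L^e(\delta-\alpha_i)$ is self-dual by definition, and so is $L(i)$. Hence $D(L^e(\delta-\alpha_i)\circ L(i)) \simeq q^{(\delta-\alpha_i,\alpha_i)} L(i)\circ L^e(\delta-\alpha_i)$. Since $L_i^e(\delta)$ is a head of the first product, its dual is a simple submodule of the second, up to a grading shift. Rather than tracking the shifts, I would simply replace $L_i^e(\delta)$ by its unique self-dual grading shift, which exists because every simple module is a shift of a self-dual one. This does not affect the statement up to isomorphism.

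\emph{Pairwise non-isomorphism.} This is the second assertion of Lemma \ref{lem:qdimimaginary}, which I would verify as follows. Suppose $L_i^e(\delta)\simeq q^d L_j^e(\delta)$ for some $i \neq j$. Taking projective covers in $\gMod{R^e(\delta)}$ gives $\Delta_i^e(\delta)\simeq q^d\Delta_j^e(\delta)$, so $\Hom(\Delta_i^e(\delta),\Delta_j^e(\delta))$ contains an isomorphism of degree $\pm d$. The inverse isomorphism has the opposite degree. But by Lemma \ref{lem:qdimimaginary} this Hom space is concentrated in strictly positive degrees when $i\neq j$, so it cannot contain maps of both degrees $d$ and $-d$. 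This is a contradiction.

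\emph{Completeness.} Every simple $R^e(\delta)$-module is $e$-cuspidal, since simple $R(\delta)$-modules that descend to $R^e(\delta)$ are exactly the $e$-cuspidal ones (Subsection \ref{sub:stratifications}). So the self-dual simple $R^e(\delta)$-modules are exactly the elements of $\Xi_1^e$. By Theorem \ref{thm:cuspdecomp}(2), $\lvert\Xi_1^e\rvert$ equals the number of multipartitions of $1$ indexed by $\mathring{I}$, which is $N=\lvert\mathring{I}\rvert$. The previous steps give $N$ pairwise non-isomorphic elements of $\Xi_1^e$, so they exhaust it.

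The only delicate point is the grading-shift bookkeeping in the self-duality step. Normalising by the self-dual shift bypasses it, so no real obstacle is expected.
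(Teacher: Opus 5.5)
Your proof has the same skeleton as the paper's. Simplicity comes from Proposition \ref{prop:minuscule}, pairwise non-isomorphism from Lemma \ref{lem:qdimimaginary}, and the count $\lvert \Xi_1^e\rvert = N$ from Theorem \ref{thm:cuspdecomp}(2). Your degree argument for non-isomorphism is fine once you apply the lemma to both $(i,j)$ and $(j,i)$, since the inverse isomorphism lies in $\Hom(\Delta_j^e(\delta),\Delta_i^e(\delta))$, not in $\Hom(\Delta_i^e(\delta),\Delta_j^e(\delta))$.

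The one step that does not work as written is self-duality. Replacing $L_i^e(\delta)$ by its self-dual shift is not harmless: in $\gMod{R}$ one has $q^dL\not\simeq L$ for $d\neq 0$. The corollary concerns the module of Definition \ref{def:minuscule} itself, and so do later uses such as $\Xi_1^i=\{L_i^e(\delta)\}$ and the definition of $L^e(\underline{n})$. So you must show $d=0$. The paper's two-line proof takes this for granted, but it is easy.

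By \cite[Proposition 6.16 (1)]{murata2025affinehighestweightstructures}, $\Res_{\delta-\alpha_i,\alpha_i}L_i^e(\delta)$ is isomorphic to the simple module $X=L^e(\delta-\alpha_i)\otimes L(i)$. There is no grading shift, because the adjoint of the degree-zero head surjection $L^e(\delta-\alpha_i)\circ L(i)\to L_i^e(\delta)$ is a nonzero degree-zero map $X\to\Res_{\delta-\alpha_i,\alpha_i}L_i^e(\delta)$ between simple modules.

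Now write $L_i^e(\delta)\simeq q^dL'$ with $L'$ self-dual, so that $DL_i^e(\delta)\simeq q^{-2d}L_i^e(\delta)$. Applying $\Res_{\delta-\alpha_i,\alpha_i}$ and using $\Res\circ D\simeq (D\otimes D)\circ\Res$ gives $q^{-2d}X\simeq (D\otimes D)X\simeq X$, which forces $d=0$. Equivalently, this is the self-duality clause of Theorem \ref{thm:unmixing} applied to the unmixing pair $(L^e(\delta-\alpha_i),L(i))$.
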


\begin{proof}
By Theorem \ref{thm:cuspdecomp}, there are exactly $\lvert \mathring{I} \rvert$ non-isomorphic self-dual simple $R^e(\delta)$-modules. 
Hence, the assertion follows from Lemma \ref{lem:qdimimaginary}.  
\end{proof}

\begin{lemma} \label{lem:semisimplicity}
For $i, j \in \mathring{I}$, we have 
\[
\ext_{R(\delta)}^1(L_i^e(\delta), L_j^e(\delta)) = 0.
\] 
\end{lemma}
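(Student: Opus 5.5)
We plan to deduce the vanishing of degree-zero $\ext^1$ from the $q$-dimension estimate in Lemma \ref{lem:qdimimaginary}, combined with the standard fact that extensions between cuspidal simples stay inside the stratum. The first step is to reduce to $R^e(\delta)$. Any extension
\[
0 \to L_j^e(\delta) \to E \to L_i^e(\delta) \to 0
\]
in $\gMod{R(\delta)}$ has all composition factors $e$-cuspidal. We would show that $E$ is annihilated by the idempotents $e(\beta,\gamma)$ defining $R^e(\delta)$. By exactness of $\Res_{\beta,\gamma}$, we have $\Res_{\beta,\gamma}E=0$ whenever it vanishes on both $L_i^e(\delta)$ and $L_j^e(\delta)$. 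Hence $E$ is an $R^e(\delta)$-module, and $\ext^1_{R(\delta)}(L_i^e(\delta),L_j^e(\delta))=\ext^1_{R^e(\delta)}(L_i^e(\delta),L_j^e(\delta))$ in degree $0$.

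The second step works inside $\gMod{R^e(\delta)}$. Take the projective cover $\Delta_i^e(\delta)\to L_i^e(\delta)$ with kernel $K_i$. Then
\[
\ext^1_{R^e(\delta)}(L_i^e(\delta),L_j^e(\delta))\neq 0
\]
if and only if $L_j^e(\delta)$ occurs in the head of $K_i$ in degree $0$, i.e.\ $\hd K_i$ has a summand $L_j^e(\delta)$ without shift. This would produce a degree-zero composition factor $L_j^e(\delta)$ of $\Delta_i^e(\delta)$ lying strictly below its head. By Corollary \ref{cor:minusclesimple}, the projective covers in $\gMod{R^e(\delta)}$ are exactly the $\Delta_k^e(\delta)$. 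So $\dim_q\Hom_{R(\delta)}(\Delta_j^e(\delta),\Delta_i^e(\delta))=[\Delta_i^e(\delta):L_j^e(\delta)]_q$, which equals $\delta_{i,j}+q\mathbb{Q}[q]_{(q)}$ by Lemma \ref{lem:qdimimaginary}, with the indices swapped. The constant term is thus $\delta_{i,j}$.

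For $i\neq j$, the constant term is $0$, so $L_j^e(\delta)$ does not appear in degree $0$ in $\Delta_i^e(\delta)$ at all, and hence not in $K_i$. For $i=j$, the constant term $1$ is used up by the head, so $K_i$ again has no degree-zero copy of $L_i^e(\delta)$. In both cases $\hom(K_i,L_j^e(\delta))=0$, which gives the vanishing.

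The main obstacle is the grading bookkeeping. We must check that $\Hom$ from the projective cover counts composition multiplicities with the convention $[X:L]_q=\dim_q\Hom(P,X)$, so that "degree zero" in $\ext^1=\ext$ of degree-preserving maps matches the constant term. We must also check that $\Delta_i^e(\delta)$ is a projective cover in $R^e(\delta)$ rather than $R(\delta)$. Lemma \ref{lem:qdimimaginary} is computed using the stratification, Theorem \ref{thm:stratification}, where $\Delta_i^e(\delta)=\Delta^w(L)$, so this matches the setting above.
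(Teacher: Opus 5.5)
Your proposal is correct and is essentially the paper's argument. Both take the kernel $K$ of $\Delta_i^e(\delta)\to L_i^e(\delta)$, use the long exact sequence, and observe that
\[
[K:L_j^e(\delta)]_q=\dim_q\Hom_{R(\delta)}(\Delta_j^e(\delta),\Delta_i^e(\delta))-\dim_q\Hom_{R(\delta)}(\Delta_j^e(\delta),L_i^e(\delta))\in q\mathbb{Q}[q]_{(q)}
\]
by Lemma~\ref{lem:qdimimaginary}, which gives $\hom(K,L_j^e(\delta))=0$. The only cosmetic difference is in how the third term of the sequence is killed: you first pass to $\gMod{R^e(\delta)}$ (which is closed under extensions) and use projectivity of $\Delta_i^e(\delta)$ there, whereas the paper kills $\ext^1_{R(\delta)}(\Delta_i^e(\delta),L_j^e(\delta))$ directly by Theorem~\ref{thm:stratification}.
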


\begin{proof}
Let $K$ be the kernel of the surjection $\Delta_i^e(\delta) \to L_i^e(\delta)$. 
We have an exact sequence
\[ 
\hom_{R(\delta)} (K, L_j^e(\delta)) \to \ext_{R(\delta)}^1 (L_i^e(\delta), L_j^e(\delta)) \to \ext_{R(\delta)}^1 (\Delta_i^e(\delta), L_j^e(\delta)). 
\]
The right term vanishes by Theorem \ref{thm:stratification}. 
Since $\Delta_j^e(\delta)$ is the projective cover of $L_j^e(\delta)$ in $\gMod{R^e(\delta)}$, we have 
\begin{align*}
[K:L_j^e(\delta)]_q &= \dim_q \Hom_{R(\delta)} (\Delta_j^e(\delta), K) \\
&= \dim_q \Hom_{R(\delta)} (\Delta_j^e(\delta), \Delta_i^e(\delta)) - \dim_q \Hom_{R(\delta)} (\Delta_j^e(\delta), L_i^e(\delta)), 
\end{align*}
which belongs to $q \mathbb{Q}[q]_{(q)}$ by Lemma \ref{lem:qdimimaginary}. 
Hence, the left term in the exact sequence vanishes. 
Therefore, the middle term also vanishes, which proves the lemma. 
\end{proof}

\subsection{Imaginary induction and restriction}

\begin{proposition} \label{prop:imaginaryind}
Let $m,n \in \mathbb{Z}_{\geq 1}$. 
\begin{enumerate}
\item For $X \in \gMod{R^e(m\delta)}$ and $Y \in \gMod{R^e(n\delta)}$, we have $X \circ Y \in \gMod{R^e((m+n)\delta)}$. 
\end{enumerate}
Assume that $\mathbf{k}$ is a field. Then, 
\begin{enumerate}
\item[(2)] For $X \in \gproj{R^e(m\delta)}$ and $Y \in \gproj{R^e(n\delta)}$, we have $X \circ Y \in \gproj{R^e((m+n)\delta)}$. 
\item[(3)] For $Z \in \gMod{R^e((m+n)\delta)}$, we have $\Res_{m\delta,n\delta} Z \in \gMod{(R^e(m\delta)\otimes R^e(n\delta))}$. 
\item[(4)] For $Z \in \gproj{R^e((m+n)\delta)}$, we have $\Res_{m\delta,n\delta} Z \in \gproj{(R^e(m\delta)\otimes R^e(n\delta))}$. 
\end{enumerate}
\end{proposition}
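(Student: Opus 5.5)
We prove (1) from the shuffle description of idempotents in a convolution product, then deduce (2)--(4) using that $R^e(n\delta)$ is the quotient by the idempotent ideal of a stratum, so that the stratification (Theorem \ref{thm:stratification}) controls projectives.

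For (1), put $C_- = \spn_{\mathbb{Z}_{\geq 0}}(\Phi_+^{\mathrm{min}} \cap p^{-1}(\mathring{\mathsf{Q}}_-))$ and $C_+ = \spn_{\mathbb{Z}_{\geq 0}}(\Phi_+^{\mathrm{min}}\cap p^{-1}(\mathring{\mathsf{Q}}_+))$. We must show $e(\beta,\gamma)(X\circ Y)=0$ whenever $\beta+\gamma=(m+n)\delta$ and $\beta\notin C_-$ or $\gamma\notin C_+$. By the Mackey filtration of $\Res_{\beta,\gamma}(X\circ Y)$, this restriction is filtered by terms built from $\Res_{\beta_1,\gamma_1}X\otimes\Res_{\beta_2,\gamma_2}Y$ with $\beta=\beta_1+\beta_2$, $\gamma=\gamma_1+\gamma_2$. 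A nonzero term requires $\beta_1,\beta_2\in C_-$ and $\gamma_1,\gamma_2\in C_+$. Since $C_\pm$ are closed under addition, this gives $\beta\in C_-$ and $\gamma\in C_+$, a contradiction. Here we use that $\beta_1+\gamma_1=m\delta$, so the cuts stay within the $R^e(m\delta)$ idempotent conditions. This argument works over any $\mathbf{k}$.

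For (3), take a convex order $\preceq$ of coarse type $e$. The category $\gMod{R^e(n\delta)}$ is the Serre subcategory whose simple constituents are $e$-cuspidal, i.e.\ the modules annihilated by the idempotent ideal. We show that every composition factor of $\Res_{m\delta,n\delta}Z$ is cuspidal on both sides. A composition factor $L'\boxtimes L''$ with $L'$ not cuspidal would have $\Res_{\beta',\gamma'}L'\neq0$ with $\beta'\notin C_-$ or $\gamma'\notin C_+$. Then $e(\beta',\gamma'+n\delta)Z\neq0$ or $e(m\delta+\beta'',\gamma'')Z\neq0$. Since $n\delta\in C_+\cap C_-$ (as $\delta\in\Phi_+^{\mathrm{min}}$ with $p(\delta)=0$), the failing condition persists after adding $n\delta$, which contradicts $Z\in\gMod{R^e}$. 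One still has to check that membership in $C_\pm$ is detected after adding $n\delta$. I would settle this with convexity, using $C_-\cap(\gamma-C_+)$-type arguments, or via Corollary \ref{cor:cuspidaltest}, which reduces cuspidality to the vanishing of real-root restrictions.

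For (2) and (4), projectivity is the real content, and the $\Ext$-vanishing from the stratification is the key input. First, $P\in\gMod{R^e(k\delta)}$ is projective there if and only if $\ext^1_{R}(P,M)=0$ for all $M\in\gMod{R^e(k\delta)}$. This holds because $\ext^1_{R^e}$ embeds into $\ext^1_R$ for a quotient algebra. For (2), we may take $X=\Delta^e(L)$ and $Y=\Delta^e(L')$ standard, since they are the indecomposable projectives of the strata. By adjunction, $\ext^1_R(X\circ Y,M)=\ext^1_{R\otimes R}(X\boxtimes Y,\Res M)$. By (3), $\Res M$ lies in $\gMod{(R^e\otimes R^e)}$. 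The stratification (Theorem \ref{thm:stratification}) gives $\ext^1_R(\Delta^e(L),N)=0$ for $N$ in the stratum, and a Künneth argument extends this to the tensor product, giving the vanishing. For (4), we use $\Hom(\Res Z,-)\simeq\Hom(Z,\Coind -)$ and $\Coind(M\otimes N)\simeq q^{(\cdot,\cdot)}N\circ M$. By (1) this module lies in $\gMod{R^e}$, so $\ext^1$ vanishes because $Z$ is projective there, provided $\ext^1_{R^e}$ and $\ext^1_R$ agree on these objects. The main obstacle is exactly this comparison of $\Ext$ in $R$ versus $R^e$. I would handle it using the standardly stratified property, which makes the $R^e(n\delta)$-projectives the standard modules $\Delta$ with $\ext^{\geq1}_R(\Delta,M)=0$ for $M$ in the same stratum, and then verify the needed statements only for these $\Delta$'s.
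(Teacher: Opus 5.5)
Your argument for (1) is fine and matches the paper. Your plan for (2) and (4), namely adjunction combined with (1) and (3), is also the paper's.

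\textbf{The gap is in (3).} The step ``the failing condition persists after adding $n\delta$'' does not follow from $n\delta\in C_+\cap C_-$. That fact only gives the converse implication $x\in C_\pm\Rightarrow x+n\delta\in C_\pm$.

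For $C_+$ the implication you need does hold. Indeed $C_+=\{\beta\in\mathsf{Q}_+\mid p(\beta)\in\mathring{\mathsf{Q}}_+\}$: write $\beta=p(\beta)+c_0\delta$, where $c_0$ is the $\alpha_0$-coefficient. So your idempotent argument does handle the left factor $L'$.

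For $C_-$ the implication is false. Take type $A_3^{(1)}$ and $\beta''=\alpha_0+\alpha_2=\delta-\alpha_1-\alpha_3$.
\begin{itemize}
\item $\beta''\notin C_-$. Every generator of $C_-$ has positive $\alpha_0$-coefficient, so $\beta''$ would have to be a single generator $\delta-\alpha$ with $\alpha=\alpha_1+\alpha_3\in\mathring{\Phi}_+$, and that is not a root.
\item Yet $\delta+\beta''=(\delta-\alpha_1)+(\delta-\alpha_3)\in C_-$, and $\gamma''=\alpha_1+\alpha_3\in C_+$.
\end{itemize}
Hence $e(m\delta+\beta'',\gamma'')$ is not among the generators of the ideal defining $R^e((m+1)\delta)$. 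No cone or convexity argument on idempotents can then force $e(\beta'',\gamma'')L''=0$.

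The missing input is module-theoretic: the real-root criterion of Corollary \ref{cor:cuspidaltest}, with condition (3) applied to $L''$. Suppose $\alpha\in\Phi_+^{\mathrm{re}}$ satisfies $p(\alpha)\in\mathring{\Phi}_-$ and $\Res_{n\delta-\alpha,\alpha}L''\neq 0$. Then $e((m+n)\delta-\alpha,\alpha)Z\neq 0$, which is impossible since $\alpha\notin C_+$. Symmetrically, condition (2) handles $L'$. This is exactly the paper's proof. You list the corollary only as an alternative, but it is the route that actually works.

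\textbf{On (2) and (4).} The ``main obstacle'' you name, comparing $\ext$ over $R$ and over $R^e$, is not one.
\begin{itemize}
\item The ideal defining $R^e(k\delta)$ is generated by idempotents, so $\gMod{R^e(k\delta)}$ is closed under extensions in $\gMod{R(k\delta)}$: if $eM'=0=eM''$ then $eM=e^2M\subset eM'=0$.
\item The same holds for $R^e(m\delta)\otimes R^e(n\delta)$ inside $R(m\delta)\otimes R(n\delta)$.
\item Hence $\ext^1$ over the quotient agrees with $\ext^1$ over $R$ on these Serre subcategories.
\end{itemize}
With this, (2) is immediate. $X\otimes Y$ is projective over $R^e(m\delta)\otimes R^e(n\delta)$, and $\Res_{m\delta,n\delta}Z$ lies in that subcategory by (3).

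Your K\"unneth step is unnecessary. As stated it is also unjustified, because $\Res_{m\delta,n\delta}M$ need not be a tensor product of modules.

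Your argument for (4) is the paper's, with one point to note. The Mackey argument of (1) applies verbatim to arbitrary $R^e(n\delta)\otimes R^e(m\delta)$-modules, not only to pure tensors $M\otimes N$. So $\Coind_{m\delta,n\delta}$ of any object of the subcategory lands in $\gMod{R^e((m+n)\delta)}$.
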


\begin{proof}
In this proof, we ignore grading shifts.
(1) follows from the definition of the algebra $R^e(k\delta) \ (k \in \mathbb{Z}_{\geq 0})$. 

Now, assume that $\mathbf{k}$ is a field. 
(3) is a consequence of the cuspidal decomposition \cite{MR3542489};  
To see this, take a simple subquotient $L_1 \otimes L_2$ of $\Res_{m\delta,n\delta}Z$. 
If $\alpha \in \Phi_+^{\mathrm{re}}$ satisfies $\Res_{\alpha,m\delta-\alpha} L_1 \neq 0$, then $\Res_{\alpha,(m+n)\delta - \alpha} Z \neq 0$. 
Since $Z$ is cuspidal, we have $p(\alpha) \in \mathring{\Phi}_-$. 
By Corollary \ref{cor:cuspidaltest}, we see that $L_1$ is an $R^e(m\delta)$-module. 
Similarly, considering $\Res_{n\alpha-\alpha,\alpha}L_2$ we see that $L_2$ is an $R^e(n\delta)$-module, which proves (3). 

Next, we prove (2). 
It is enough to prove 
\[
\Ext_{R^e}^1 (X \circ Y, Z) = 0
\]
for any $Z \in \gMod{R^e((m+n)\delta)}$. 
The left hand side is 
\begin{align*}
&\Ext_R^1 (X \circ Y, Z) \quad \text{since $\gMod{R^e}$ is a Serre subcategory of $\gMod{R}$} \\
&\simeq \Ext_{R(m\delta) \otimes R(n\delta)}^1 (X \otimes Y, \Res_{m\delta,n\delta}Z) \quad \text{by induction-restriction adjunction},
\end{align*}
which is zero by (3) and the assumption. 

(4) is proved in an analogous way using (1) and coinduction-restriction adjunction. 
\end{proof}

Let $n \in \mathbb{Z}_{\geq 1}$.
For $1 \leq k \leq n-1$, set 
\[
\tilde{\tau}_k = e((k-1)\delta) \boxtimes \tau_{w[\height \delta,\height \delta]}e(2\delta) \boxtimes e((n-k-1)\delta) \in R(n\delta). 
\]
For $w \in \mathfrak{S}_n$, 
we choose a reduced expression $w = s_{k_1} \cdots s_{k_l}$ and define $\tilde{\tau}_w \in R(n\delta)$ by 
\[
\tilde{\tau}_w = \tilde{\tau}_{k_1} \cdots \tilde{\tau}_{k_l}. 
\]
We say that a subset $A$ of $\mathfrak{S}_n$ is closed if, for any $w, v \in \mathfrak{S}_n$ satisfying $v \in A$ and $w \leq v$ in the Bruhat order, we have $w \in A$. 

The following lemma will be used repeatedly to study $\Res_{\delta^n} (M_1 \circ \cdots \circ M_n)$,
where $\Res_{\delta^n}$ stands for the restriction to $R(\delta)^{\otimes n}$.   

\begin{lemma} \label{lem:Mackey}
For $M_k \in \gMod{R^e(\delta)} \ (1 \leq k \leq n)$, 
we have 
\[
\Res_{\delta^n} (M_1 \circ \cdots \circ M_n) = \bigoplus_{w \in \mathfrak{S}_n} \tilde{\tau}_w (M_1 \boxtimes \cdots \boxtimes M_n),
\]
as $\mathbf{k}$-subspaces of $M_1 \circ \cdots \circ M_n$. 
Moreover, for any closed subset $A \subset \mathfrak{S}_n$, the subspace
\[
\bigoplus_{w \in A} \tilde{\tau}_w (M_1 \boxtimes \cdots \boxtimes M_n)
\]
is an $R(\delta)^{\otimes n}$ submodule. 
For any maximal element $w \in A$, we have an isomorphism of $R(\delta)^{\otimes n}$-modules
\begin{align*}
M_{w^{-1}(1)} \otimes \cdots \otimes M_{w^{-1}(n)} &\to \bigoplus_{v \in A} \tilde{\tau}_v (M_1 \boxtimes \cdots \boxtimes M_n)\bigg/\bigoplus_{v \in A\setminus \{w\}} \tilde{\tau}_v (M_1 \boxtimes \cdots \boxtimes M_n), \\
u_1 \otimes \cdots \otimes u_n &\mapsto \tilde{\tau}_w (u_{w(1)} \boxtimes \cdots \boxtimes u_{w(n)}) + \bigoplus_{v \in A \setminus \{w\}} \tilde{\tau}_v (M_1 \boxtimes \cdots \boxtimes M_n). 
\end{align*}
\end{lemma}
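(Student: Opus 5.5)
The plan is to reduce to the Mackey filtration of the restriction $\Res_{n\delta^{\times}}$ to $R(n\delta)^{\otimes n}$-type pieces, and then to show that on $\gMod{R^e(\delta)}$-modules only the "shuffles of whole $\delta$-blocks" survive. First, the general Mackey theorem for quiver Hecke algebras (Khovanov–Lauda) gives a filtration of $\Res_{\delta,\ldots,\delta}(M_1\circ\cdots\circ M_n)$ indexed by tuples of decompositions $\delta = \sum_j \beta_{k,j}$ of each factor. The subquotients are of the form $\Ind \circ (\text{twist}) \circ \Res_{\beta_{k,1},\ldots,\beta_{k,n}} M_k$.

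The key step is to show that every term with some $\beta_{k,j}\notin\{0,\delta\}$ vanishes. Here we use that $M_k$ is an $R^e(\delta)$-module. If $\Res_{\beta_{k,1},\ldots}M_k\ne0$, then every initial partial sum $\beta_{k,1}+\cdots+\beta_{k,j}$ lies in the span of roots with $p$-value in $\mathring{\mathsf Q}_-$, and every final sum lies in the span of roots with $p$-value in $\mathring{\mathsf Q}_+$. So $p(\beta_{k,1}+\cdots+\beta_{k,j})\in\mathring{\mathsf Q}_-$ for all $j$.

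On the other hand, the $j$-th target component $\sum_k\beta_{k,j}=\delta$ receives contributions from consecutive $k$. Consider the first nonzero piece $\beta_{k,j}$ feeding slot $j$, with $k$ minimal, and the last one. The last piece $\beta_{k',j}$ is an initial-type summand of the final block of $M_{k'}$, so it is a final sum and $p(\beta_{k',j})\in\mathring{\mathsf Q}_+$. Combined with the composite lying in the $R^e(\delta)$-condition for the target slot (applied via Proposition~\ref{prop:imaginaryind}-style reasoning, using cuspidality of the simple constituents of $M_k$), this forces each $\beta_{k,j}$ with $p(\beta_{k,j})\in\mathring{\mathsf Q}_+\cap\mathring{\mathsf Q}_-$ to have $p=0$, hence $\beta_{k,j}\in\{0,\delta\}$ by height. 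A cleaner route is to pass to composition factors, which reduces the claim to simple $M_k$, and use Corollary~\ref{cor:cuspidaltest} together with Theorem~\ref{thm:cuspdecomp}. A restriction $\Res_{\beta,\delta-\beta}L\ne0$ with $0\ne\beta\ne\delta$ forces $p(\beta)$ strictly negative, in the sense of a nonzero element of $\mathring{\mathsf Q}_-$. Summing the $p$-values over the pieces in one target slot then gives $p(\delta)=0$ as a sum of terms, and these terms must cancel. Ordering the pieces by $k$ shows this is impossible unless every piece is $0$ or $\delta$. I expect this sign bookkeeping to be the main obstacle.

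Once only permutations $w\in\mathfrak S_n$ of whole blocks survive, the surviving summand for $w$ is exactly $\tilde\tau_w(M_1\boxtimes\cdots\boxtimes M_n)$. This follows because the minimal coset representative of such a shuffle in $\mathfrak S_{n\height\delta}$ is the block permutation, whose $\tau$ equals $\tilde\tau_w$ independently of the reduced word, as in the remark about $w[n',n'']$. The direct sum decomposition then holds as vector spaces by freeness of $R(n\delta)e(\delta,\ldots,\delta)$ over $R(\delta)^{\otimes n}$.

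The filtration statement follows from the standard Mackey filtration, which is by closed subsets in Bruhat order. Concretely, $x\,\tilde\tau_w(u)$ for $x\in R(\delta)^{\otimes n}$ equals $\tilde\tau_w(x^{w}u)$ plus terms $\tilde\tau_v(\cdots)$ with $v<w$, obtained by moving $x_k$ and $\tau_k$ past crossings. The error terms either are lower shuffles or lie in vanishing non-block pieces. This gives both the submodule property and the isomorphism of the top quotient, with the twisted action $u_1\otimes\cdots\otimes u_n\mapsto\tilde\tau_w(u_{w(1)}\boxtimes\cdots\boxtimes u_{w(n)})$.
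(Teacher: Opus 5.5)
Your overall route is the paper's. You take the Khovanov--Lauda Mackey filtration of $\Res_{\delta^n}(M_1\circ\cdots\circ M_n)$, indexed by matrices $(\beta_{k,j})$ with $\sum_j\beta_{k,j}=\sum_k\beta_{k,j}=\delta$, and kill every term that is not a permutation matrix using the definition of $R^e(\delta)$. You also isolate the right constraint: if $e(\beta_{k,1},\ldots,\beta_{k,n})M_k\neq 0$, then $p(\beta_{k,1}+\cdots+\beta_{k,j})\in\mathring{\mathsf Q}_-$ for every $j$.

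However, the step that actually does the work is never carried out, and both versions you sketch fail as stated.
\begin{itemize}
\item The claim that the last nonzero piece $\beta_{k',j}$ of column $j$ is a final sum of row $k'$ is false unless $\beta_{k',j''}=0$ for all $j''>j$, which is what you are trying to prove.
\item Summing the $p$-values of a single column $j$ with $1<j<n$ gives nothing. An individual middle piece $\beta_{k,j}$ carries no sign constraint; only the row-wise initial segments do. So ordering the pieces of one column by $k$ cannot force any cancellation.
\item The appeal to an ``$R^e(\delta)$-condition for the target slot'' via Proposition~\ref{prop:imaginaryind}, and the detour through composition factors and Theorem~\ref{thm:cuspdecomp}, both need $\mathbf{k}$ to be a field. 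The lemma is stated, and later used over $\mathcal{O}$ in Subsection~\ref{sub:lattices}, without that assumption.
\end{itemize}

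The missing bookkeeping is one line: fix $j$ and sum the initial segments over all rows, rather than over one column:
\[
\sum_{k=1}^{n} p(\beta_{k,1}+\cdots+\beta_{k,j}) = p\Bigl(\sum_{j'\le j}\sum_{k}\beta_{k,j'}\Bigr) = p(j\delta) = 0 .
\]
Each summand lies in $\mathring{\mathsf Q}_-$, so all of them vanish. Hence $p(\beta_{k,j})=0$, so $\beta_{k,j}\in\mathbb{Z}_{\ge 0}\delta$. Since $\delta-\beta_{k,j}\in\mathsf{Q}_+$, this forces $\beta_{k,j}\in\{0,\delta\}$, and the matrix is a permutation matrix.

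An equivalent argument: column $1$ consists of initial segments, all in $\mathring{\mathsf Q}_-$ with sum $0$, so it contains a single $\delta$. Delete that row and column and induct.

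This uses only that the idempotents $e(\beta_{k,1}+\cdots+\beta_{k,j},\,\delta-\beta_{k,1}-\cdots-\beta_{k,j})$ in question are zero in $R^e(\delta)$, so it works over any $\mathbf{k}$. With this in place, the rest of your sketch matches the paper: the surviving shuffle is the block permutation with $\tau_x=\tilde\tau_w$, closed subsets give submodules, and the top quotient has the stated form.
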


\begin{proof}
In general, a restriction of an induced module has a Mackey filtration \cite[Proposition 2.18]{MR2525917}. 
In our case, it follows from the definition of $R^e(\delta)$ that most of the subquotients in the Mackey filtration vanish, leaving only those described in the lemma. 
Note also that we do not need any grading shift in the last isomorphism since $\deg (\tilde{\tau}_1 e(\delta,\delta)) = -(\delta,\delta) =0$. 
\end{proof}

For a closed subset $A \subset \mathfrak{S}_n$, let $F_A(M_1 \circ \cdots \circ M_n)$ denote the submodule 
\[
\bigoplus_{w \in A} \tilde{\tau}_w (M_1 \boxtimes \cdots \boxtimes M_n). 
\]
When $A$ is of the form $\{v \in \mathfrak{S}_n \mid v \leq w\}$ (resp.\ $\{v \in \mathfrak{S}_n \mid v < w\}$) for some $w$, 
we write $F_{\leq w} (M_1 \circ \cdots \circ M_n)$ (resp.\ $F_{< w}(M_1 \circ \cdots \circ M_n)$ instead of $F_A(M_1 \circ \cdots \circ M_n)$. 
By the defining relations of the quiver Hecke algebra, these submodules do not depend on the choices of reduced expressions involved in defining $\tilde{\tau}_w$.   

In the rest of this subsection, we assume that $\mathbf{k}$ is a field. 
For $n \in \mathbb{Z}_{\geq 0}$ and $\underline{i} = (i_1, \ldots, i_n) \in \mathring{I}^n$,   
we define 
\[
L^e(\underline{i}) = L_{i_1}^e(\delta) \circ \cdots \circ L_{i_n}^e(\delta), \ \Delta^e(\underline{i}) = \Delta_{i_1}^e(\delta) \circ \cdots \circ \Delta_{i_n}^e(\delta).  
\]
When we wish to emphasize the coefficient field, we write $L_{\mathbf{k}}^e(\underline{i}) = L^e(\underline{i})$. 

\begin{proposition} \label{prop:imaginaryMackey}
Let $n \in \mathbb{Z}_{\geq 0}$ and $\underline{i} \in \mathring{I}^n$. 
Then, we have isomorphisms of graded $R^e(\delta)^{\otimes n}$-modules
\begin{align*}
\Res_{\delta^n} (\Delta^e(\underline{i})) &\simeq \bigoplus_{w \in \mathfrak{S}_n} \Delta_{i_{w(1)}}^e(\delta) \otimes \cdots \otimes \Delta_{i_{w(n)}}^e(\delta), \\
\Res_{\delta^n} (L^e(\underline{i})) &\simeq \bigoplus_{w \in \mathfrak{S}_n} L_{i_{w(1)}}^e(\delta) \otimes \cdots  \otimes L_{i_{w(n)}}^e(\delta). 
\end{align*}
\end{proposition}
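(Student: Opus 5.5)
By Lemma \ref{lem:Mackey}, $\Res_{\delta^n}(M_1 \circ \cdots \circ M_n)$ has a filtration by the submodules $F_A$ for closed subsets $A \subset \mathfrak{S}_n$. Its subquotients are
\[
M_{w^{-1}(1)} \otimes \cdots \otimes M_{w^{-1}(n)} \quad (w \in \mathfrak{S}_n),
\]
with no grading shifts. We apply this with $M_k = \Delta_{i_k}^e(\delta)$, respectively $M_k = L_{i_k}^e(\delta)$. As $w$ runs over $\mathfrak{S}_n$, so does $w^{-1}$, so the subquotients are exactly the summands claimed. It therefore suffices to show that the filtration splits as a filtration of graded $R^e(\delta)^{\otimes n}$-modules.

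First we check that all the modules involved live over $R^e(\delta)^{\otimes n}$. By Proposition \ref{prop:imaginaryind}(1), $\Delta^e(\underline{i})$ and $L^e(\underline{i})$ lie in $\gMod{R^e(n\delta)}$. Iterating Proposition \ref{prop:imaginaryind}(3) (restriction in stages $n\delta \to \delta,(n-1)\delta \to \cdots$), their restrictions lie in $\gMod{R^e(\delta)^{\otimes n}}$. Hence every $F_A$ and every subquotient is an $R^e(\delta)^{\otimes n}$-module.

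For $\Delta$, we use projectivity. Each $\Delta_{i}^e(\delta)$ is projective in $\gMod{R^e(\delta)}$, so each subquotient $\Delta_{i_{w(1)}}^e(\delta)\otimes\cdots\otimes\Delta_{i_{w(n)}}^e(\delta)$ is projective over $R^e(\delta)^{\otimes n}$. Alternatively, Proposition \ref{prop:imaginaryind}(2),(4) gives directly that $\Res_{\delta^n}\Delta^e(\underline{i})$ is projective. Now we build the filtration step by step: choose a chain of closed subsets $\emptyset = A_0 \subset A_1 \subset \cdots \subset A_{n!} = \mathfrak{S}_n$, each obtained from the previous one by adding an element that is maximal in the larger set, for instance by ordering $\mathfrak{S}_n$ by a linear extension of the Bruhat order. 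Every step is then an extension whose top quotient is projective, so it splits. Induction gives the first isomorphism.

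For $L$, the subquotients are simple $R^e(\delta)^{\otimes n}$-modules $L_{j_1}^e(\delta)\otimes\cdots\otimes L_{j_n}^e(\delta)$; they are absolutely irreducible, so their tensor products are simple. It suffices to show that
\[
\ext^1_{R^e(\delta)^{\otimes n}}\bigl(L_{j_1}^e(\delta)\otimes\cdots\otimes L_{j_n}^e(\delta),\ L_{k_1}^e(\delta)\otimes\cdots\otimes L_{k_n}^e(\delta)\bigr)=0,
\]
since then every extension in the filtration splits. This Ext-vanishing is the main step. We use a Künneth formula over the field $\mathbf{k}$. Since $R^e(\delta)$ is finite-dimensional, we may take projective resolutions in $\gMod{R^e(\delta)}$ whose terms are finite sums of the $\Delta_i^e(\delta)$, and tensor them together. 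Then $\Ext^1$ of the tensor products is a sum of tensor products of $\Ext^{a}$'s with one factor $\Ext^1$ and the others $\Ext^0=\Hom$. Taking the degree-zero part, each term contains either a degree-zero $\Ext^1$ factor, which vanishes by Lemma \ref{lem:semisimplicity}, or is paired with graded pieces of $\Hom$'s in nonzero degrees. Self-dual simples have $\End = \mathbf{k}$ in degree $0$ and $\Hom$ zero between non-isomorphic ones, so those other factors are concentrated in degree $0$. The degree-zero part therefore vanishes.

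Here Lemma \ref{lem:semisimplicity} has to be applied with $\ext$ computed over $R^e(\delta)$ rather than $R(\delta)$. This is fine: $\gMod{R^e(\delta)}$ is a Serre subcategory closed under extensions, so $\ext^1$ agrees in the two categories, as was used in the proof of Proposition \ref{prop:imaginaryind}. Once the $\ext^1$-vanishing holds, the filtration of $\Res_{\delta^n} L^e(\underline{i})$ splits, and this gives the second isomorphism.
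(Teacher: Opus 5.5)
Your proposal follows the paper's proof: the Mackey filtration of Lemma~\ref{lem:Mackey} splits because the subquotients are projective in the case of $\Delta^e(\underline{i})$, and by the $\ext^1$-vanishing of Lemma~\ref{lem:semisimplicity} in the case of $L^e(\underline{i})$; your K\"unneth argument correctly fills in the step the paper leaves implicit, namely passing from $\ext^1$-vanishing between single factors to $\ext^1$-vanishing between the $n$-fold tensor products, using that the relevant $\Hom$ spaces between the simples are concentrated in degree zero. One correction: $R^e(\delta)$ is not finite-dimensional (for instance $\dim_q \End_R(\Delta_i^e(\delta)) = (1+q_i^2)/(1-q_i^2)$), but the K\"unneth step only needs resolutions by finitely generated graded projectives, which exist because $R^e(\delta)$ is a graded Noetherian quotient of $R(\delta)$, so your argument is unaffected.
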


\begin{proof}
By Lemma \ref{lem:Mackey}, the Mackey filtration of $\Res_{\delta^n}(\Delta^e(\underline{i}))$ consists of $n!$ subquotients, 
each isomorphic to $\Delta_{i_{w(1)}}^e(\delta) \otimes \cdots \otimes \Delta_{i_{w(n)}}^e(\delta)$ for $w \in \mathfrak{S}_n$. 
Since all of these subquotients are projective $R^e(\delta)^{\otimes n}$-modules, 
the filtration splits into a direct sum and we obtain the first isomorphism. 

Similarly, the Mackey filtration of the restriction of $L^e(\underline{i})$ consists of $n!$ subquotients, 
each isomorphic to $L_{i_{w(1)}}^e(\delta) \otimes \cdots \otimes L_{i_{w(n)}}^e(\delta)$ for $w  \in \mathfrak{S}_n$. 
By Lemma \ref{lem:semisimplicity}, the filtration splits into a direct sum and we obtain the second isomorphism. 
\end{proof}

\begin{lemma} \label{lem:tensorspacehom}
Let $n \in \mathbb{Z}_{\geq 0}$ and $\underline{i},\underline{j} \in \mathring{I}^n$. 
For $i \in \mathring{I}$, set $l_i = \lvert \{ 1 \leq k \leq n \mid i_k = i \} \rvert$ and $m_i = \lvert \{1 \leq k \leq n \mid j_k = i \} \rvert$. 

(1) We have 
\[
\dim_q \Hom_R(L^e(\underline{i}),L^e(\underline{j})) = \begin{cases}
\prod_{i \in \mathring{I}} (l_i!) & \text{if $l_i = m_i$ for all $i \in \mathring{I}$}, \\
0 & \text{otherwise}. 
\end{cases}
\]
In particular, the graded vector space $\Hom_R(L^e(\underline{i}),L^e(\underline{j}))$ is concentrated in degree zero. 

(2) We have 
\[
\dim_q \Hom_R(\Delta^e(\underline{i}),\Delta^e(\underline{j})) \in \begin{cases}
\prod_{i \in \mathring{I}} (l_i!) + q\mathbb{Q}[q]_{(q)} & \text{if $l_i = m_i$ for all $i \in \mathring{I}$}, \\
q \mathbb{Q}[q]_{(q)} & \text{otherwise}.
\end{cases}
\]

(3) The canonical surjective homomorphisms $\Delta^e(\underline{i}) \to L^e(\underline{i})$ and $\Delta^e(\underline{j}) \to L^e(\underline{j})$ induce isomorphisms
\[
\hom_R(\Delta^e(\underline{i}),\Delta^e(\underline{j})) \xrightarrow{\sim} \hom_R(\Delta^e(\underline{i}),L^e(\underline{j})) \xleftarrow{\sim} \hom_R(L^e(\underline{i}),L^e(\underline{j})). 
\]

(4) The isomorphism of (3) is compatible with compositions; 
for another $\underline{h} \in \mathring{I}^n$, the following diagram commutes: 
\[
\begin{tikzcd}
\hom_R(\Delta^e(\underline{h}),\Delta^e(\underline{i})) \otimes \hom_R(\Delta^e(\underline{i}),\Delta^e(\underline{j})) \arrow[r]\arrow[d,-,"\sim" sloped] & \hom_R(\Delta^e(\underline{h}),\Delta^e(\underline{j})) \arrow[d,-,"\sim"sloped] \\
\hom_R(L^e(\underline{h}),L^e(\underline{i})) \otimes \hom_R(L^e(\underline{i}),L^e(\underline{j})) \arrow[r] & \hom_R(L^e(\underline{h}),L^e(\underline{j})) 
\end{tikzcd}
\]
where the horizontal homomorphisms are given by composition, and the vertical isomorphisms are given by (3). 
\end{lemma}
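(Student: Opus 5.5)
The plan is to compute everything through the induction–restriction adjunction together with the Mackey decomposition of Proposition \ref{prop:imaginaryMackey}. Since $L^e(\underline{i})$ is obtained by repeated convolution, the adjunction for $\Ind_{\delta,\ldots,\delta}$ gives
\[
\Hom_R(L^e(\underline{i}),L^e(\underline{j})) \simeq \Hom_{R(\delta)^{\otimes n}}\bigl(L_{i_1}^e(\delta)\otimes\cdots\otimes L_{i_n}^e(\delta), \Res_{\delta^n}L^e(\underline{j})\bigr).
\]
By Proposition \ref{prop:imaginaryMackey} (with $\Res_{\delta^n}$ landing in $R^e(\delta)^{\otimes n}$-modules by Proposition \ref{prop:imaginaryind}(3)), the right-hand side is a direct sum over $w\in\mathfrak{S}_n$ of $\Hom(\bigotimes_k L_{i_k}^e(\delta), \bigotimes_k L_{j_{w(k)}}^e(\delta))$.

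The simple modules are absolutely irreducible and pairwise nonisomorphic (Corollary \ref{cor:minusclesimple}), so each summand is $\mathbf{k}$ in degree $0$ when $i_k=j_{w(k)}$ for all $k$, and $0$ otherwise. Counting the $w$ that satisfy this gives $\prod_i l_i!$ when the multiplicities agree and $0$ otherwise, which proves (1).

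For (2) we use the same adjunction with $\Delta$'s. This reduces the computation to a sum over $w$ of $\dim_q$ of $\Hom$ between tensor products of the $\Delta_i^e(\delta)$'s. Each such dimension is the product $\prod_k \dim_q\Hom(\Delta_{i_k}^e(\delta),\Delta_{j_{w(k)}}^e(\delta))$. By Lemma \ref{lem:qdimimaginary}, each factor lies in $\delta_{i_k,j_{w(k)}}+q\mathbb{Q}[q]_{(q)}$, and multiplying these factors and summing over $w$ gives (2).

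For (3), the map $\hom(\Delta^e(\underline{i}),L^e(\underline{j}))\leftarrow \hom(L^e(\underline{i}),L^e(\underline{j}))$ is injective because $\Delta^e(\underline{i})\to L^e(\underline{i})$ is surjective. Computing $\Hom(\Delta^e(\underline{i}),L^e(\underline{j}))$ via adjunction and Mackey, each summand becomes $\Hom(\bigotimes\Delta_{i_k}^e(\delta),\bigotimes L_{j_{w(k)}}^e(\delta))$. This is $\mathbf{k}$ exactly when the simples match: the tensor product of the projective covers in $\gMod{R^e(\delta)}$ is the projective cover of the tensor product in $R^e(\delta)^{\otimes n}$. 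Hence the degree-$0$ dimensions coincide and the map is bijective. For the left map we argue in three steps.
\begin{enumerate}
\item Injectivity: let $K$ be the kernel of $\Delta^e(\underline{j})\to L^e(\underline{j})$. Then $\hom(\Delta^e(\underline{i}),K)=0$, because $\dim_q\Hom(\Delta^e(\underline{i}),\Delta^e(\underline{j}))-\dim_q\Hom(\Delta^e(\underline{i}),L^e(\underline{j}))$ has zero constant term by (2) and the computation just made.
\item Surjectivity: $\Delta^e(\underline{i})$ is projective in $\gMod{R^e(n\delta)}$ by Proposition \ref{prop:imaginaryind}(2), so any map $\Delta^e(\underline{i})\to L^e(\underline{j})$ lifts through $\Delta^e(\underline{j})$.
\item Exactness: together, injectivity and surjectivity give the required bijection on degree-zero homomorphisms.
\end{enumerate}

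For (4), an element $f\in\hom(\Delta^e(\underline{h}),\Delta^e(\underline{i}))$ corresponds to $\bar f\in\hom(L^e(\underline{h}),L^e(\underline{i}))$ exactly when the square with the canonical surjections $\pi$ commutes, that is, when $\pi_{\underline{i}}f=\bar f\pi_{\underline{h}}$. Composing two commuting squares yields a commuting square, so $g f$ corresponds to $\bar g\bar f$, which is the required compatibility. The main obstacle is the injectivity step in (3): it requires tracking the constant terms carefully, and specifically confirming that $\dim_q\Hom(\Delta^e(\underline{i}),L^e(\underline{j}))$ has no positive-degree part.
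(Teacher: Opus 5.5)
Your proposal is correct and follows the paper's proof in substance. Parts (1) and (2) use adjunction together with Proposition \ref{prop:imaginaryMackey} and Lemma \ref{lem:qdimimaginary}; part (3) combines injectivity of the right map, surjectivity of the left map from projectivity of $\Delta^e(\underline{i})$, and a degree-zero dimension count; part (4) composes commuting squares. The one difference is that you compute $\hom_R(\Delta^e(\underline{i}),L^e(\underline{j}))$ directly, whereas the paper skips this: the two outer spaces already have the same dimension $\prod_i l_i!$ by (1) and (2), so the chain of a surjection followed by an injection forces both maps to be isomorphisms.
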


\begin{proof}
(1) and (2) follows from Proposition \ref{prop:imaginaryMackey} and Lemma \ref{lem:qdimimaginary}. 
To prove (3), first note that the second homomorphism is injective. 
Since $\Delta^e(\underline{i})$ is a projective $R^e(n\delta)$-module, the first homomorphism of (3) is surjective. 
Since the left and right spaces have the same dimension by (1) and (2), both homomorphisms are isomorphisms. 
(4) is straightforward. 
\end{proof}

\subsection{Imaginary cuspidal modules of one color} \label{sub:onecolor}

In this subsection, we assume that $\mathbf{k}$ is a field. 
Let $i \in \mathring{I}$. 
Let $\mathring{\varpi}_i^{\lor}$ be the fundamental coweight for $\mathring{\mathfrak{g}}$, 
namely, $\langle \mathring{\varpi}_i^{\lor}, \alpha_j \rangle = \delta_{i,j} \ (j \in \mathring{I})$. 
Following \cite{MR3670026}, we define 
\[
F_i^{\epsilon} = \{ \alpha \in \Phi_+^{\mathrm{min}} \mid \langle \mathring{\varpi}_i^{\lor}, p(\alpha) \rangle \in \epsilon \mathbb{Z}_{>0} \} \ (\epsilon \in \{-,0,+\}). 
\]
Let $\mathsf{\Delta}_i$ be the set of real roots in $F_i^0$ that cannot be written as sums of other roots in $F_i^0$. 
This set can be described as follows. 
Consider the diagram $\Gamma_i$ obtained from the Dynkin diagram of $\mathring{\mathfrak{g}}$ by removing the vertex $i$. 
Its connected components are Dynkin diagrams of some simple Lie algebras.
The set $\mathsf{\Delta}_i$ consists of $\alpha_j \ (j \in \mathring{I}\setminus \{i\})$ and $\delta - \alpha$ where $\alpha$ is the longest element of each connected component of $\Gamma_i$. 

\begin{lemma} \label{lem:cuspidalcriterion}
Let $i \in \mathring{I}$, and let $L$ be a self-dual simple $R(n\delta)$-module for some $n \geq 0$. 
The following are equivalent: 
\begin{enumerate}
\item for $\beta,\gamma \in \mathsf{Q}_+$ satisfying $\beta + \gamma = n\delta$ and $\Res_{\beta,\gamma} L \neq 0$, we have 
\[
\beta \in \spn_{\mathbb{Z}_{\geq 0}} (F_i^- \cup F_i^0); 
\]
\item for $\beta,\gamma \in \mathsf{Q}_+$ satisfying $\beta + \gamma = n\delta$ and $\Res_{\beta,\gamma} L \neq 0$, we have 
\[
\gamma \in \spn_{\mathbb{Z}_{\geq 0}}(F_i^+ \cup F_i^0).
\]
\end{enumerate}
\end{lemma}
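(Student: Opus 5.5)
We will deduce this from the convexity machinery of \cite{MR3542489, MR3670026}, by building convex orders adapted to the partition $\Phi_+^{\mathrm{min}} = F_i^- \sqcup F_i^0 \sqcup F_i^+$. By symmetry (using the duality $D$ together with $D(M\circ N)\simeq q^{(\beta,\gamma)}DN\circ DM$, which exchanges the roles of $\beta$ and $\gamma$ in nonvanishing restrictions, together with the symmetry $F_i^{\pm}\leftrightarrow F_i^{\mp}$), it suffices to prove (1)$\Rightarrow$(2).

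The first step is the bookkeeping that applies to every $\beta+\gamma=n\delta$ with $\Res_{\beta,\gamma}L\neq 0$. Both $\beta$ and $\gamma$ lie in $\mathsf{Q}_+$, and the pairing $\langle \mathring{\varpi}_i^{\lor}, p(\cdot)\rangle$ vanishes on $n\delta$. Hence the $\mathring{\varpi}_i^{\lor}$-coordinates of $p(\beta)$ and $p(\gamma)$ are negatives of each other. Under (1), $\beta$ is a nonnegative combination of roots in $F_i^-\cup F_i^0$, so $\langle \mathring{\varpi}_i^{\lor}, p(\beta)\rangle \le 0$, and therefore $\langle \mathring{\varpi}_i^{\lor}, p(\gamma)\rangle \ge 0$. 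This sign information alone does not show that $\gamma \in \spn_{\mathbb{Z}_{\ge0}}(F_i^+\cup F_i^0)$, because elements of $\mathsf{Q}_+$ with nonnegative coordinate can still need $F_i^-$ roots in every decomposition. We therefore need the representation theory.

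The second step is to choose a convex preorder (a ``coarse'' convex order in the sense of \cite{MR3670026}) in which $F_i^-$ comes first, then $F_i^0$, then $F_i^+$. Its existence follows from taking a generic linear functional that refines the sign of $\langle \mathring{\varpi}_i^{\lor},p(\cdot)\rangle$. For this order, the cuspidal decomposition theory of \cite{MR3542489}, in its coarse version from \cite{MR3670026} (proved with the same Mackey-filtration argument as Theorem \ref{thm:cuspdecomp}), shows that every simple $L$ is the head of $L_+\circ L_0\circ L_-$. Here $L_\epsilon$ is a simple module that is ``cuspidal'' for $F_i^\epsilon$, meaning its weight lies in $\spn_{\mathbb{Z}_{\ge0}}F_i^\epsilon$ and it satisfies the corresponding restriction conditions. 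Moreover, the pair of weights $(\wt L_+ + \wt L_0, \wt L_-)$ is extremal among nonvanishing restrictions: in the coarse bilexicographic order it is the unique minimal one. We will show that (1) forces $L_-$ to be trivial, and that each of conditions (1) and (2) is equivalent to $L_+=L_-=\mathbf{1}$, which is symmetric in the two conditions.

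The key step, and the main obstacle, is showing that (1) forces $L_- = \mathbf{1}$. We argue as follows. The nonzero restriction $\Res_{\beta',\gamma'}L$ with $\gamma' = -\wt L_-$ lies in $\spn F_i^-$ and $\beta' = n\delta-\gamma'$. Now apply (1) to $\beta'$: its $\mathring{\varpi}_i^{\lor}$-coordinate is $\le 0$. On the other hand, it equals minus that of $\gamma'$, which is $>0$ unless $\gamma'=0$. This contradiction gives $L_- = \mathbf{1}$. The coordinates of $L_+$ and $L_-$ must then cancel, so $L_+=\mathbf{1}$ as well. Hence $L=L_0$ is $F_i^0$-cuspidal, and any nonzero $\Res_{\beta,\gamma}L$ has $\gamma$ in $\spn_{\mathbb{Z}_{\ge0}}(F_i^0\cup F_i^+)$ by the cuspidality definition, which is (2).

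The delicate point is justifying the existence of this coarse cuspidal decomposition for a preorder with the infinite layer $F_i^0$. I would take this from \cite{MR3670026}, where exactly these sets $F_i^\epsilon$ appear.
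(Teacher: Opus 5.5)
Your core argument is essentially the paper's. The paper takes a convex total order $\preceq$ with $\alpha^-\prec\alpha^0\prec\alpha^+$ for $\alpha^\epsilon\in F_i^\epsilon$. It then argues, as in \cite[Corollary 3.45]{murata2025affinehighestweightstructures}, from the cuspidal decomposition of Theorem~\ref{thm:cuspdecomp}. Grouping the cuspidal factors of $L$ by the sets $F_i^\epsilon$ (note $\delta\in F_i^0$) exhibits $L$ as a quotient of $M_+\circ M_0\circ M_-$. The sign of $\langle\mathring{\varpi}_i^{\lor},p(\cdot)\rangle$ on the nonzero restriction $\Res_{\beta_++\beta_0,\beta_-}L$ then forces $\beta_\pm=0$, exactly as in your key step.

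The difference is that the paper refines your preorder to a total order, which removes the ``delicate point'' you flag. No semicuspidal theory for the infinite layer $F_i^0$ is needed. If all cuspidal factors of $L$ have roots in $F_i^0$, the Mackey filtration shows that any nonzero $\Res_{\beta,\gamma}L$ has $\gamma\in\spn_{\mathbb{Z}_{\geq 0}}\Phi^{\mathrm{min}}_{+,\succeq\alpha_{\min}}\subset\spn_{\mathbb{Z}_{\geq 0}}(F_i^0\cup F_i^+)$, and similarly $\beta\in\spn_{\mathbb{Z}_{\geq 0}}(F_i^-\cup F_i^0)$. Here $\alpha_{\min}$ is the smallest root occurring.

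One step should be dropped: your opening reduction via $D$ is wrong. We have $\Res_{\beta,\gamma}\circ D\simeq(D\otimes D)\circ\Res_{\beta,\gamma}$, so $D$ never exchanges $\beta$ and $\gamma$ in nonvanishing restrictions, and $L$ is self-dual anyway. Nothing in your setup supplies the exchange of $F_i^+$ with $F_i^-$ either. The reduction is also unnecessary. If (2) holds, apply it to the same restriction $\Res_{\beta_++\beta_0,\beta_-}L\neq 0$ to get $\langle\mathring{\varpi}_i^{\lor},p(\beta_-)\rangle\geq 0$. This forces $\beta_-=0$ and then $\beta_+=0$, so both implications come out of the same computation.
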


\begin{proof}
It follows as in \cite[Corollary 3.45]{murata2025affinehighestweightstructures} from the cuspidal decomposition of $L$ with respect to a convex order $\preceq$ satisfying 
\[
\alpha^- \prec \alpha^0 \prec \alpha^+ \ (\alpha^- \in F_i^-, \alpha^0 \in F_i^0, \alpha^+ \in F_i^+). 
\]
\end{proof}

\begin{lemma}\label{lem:onecolor}
Let $i \in \mathring{I}$, and let $L$ be a self-dual simple $R(n\delta)$-module for some $n \geq 0$. 
The following are equivalent: 
\begin{enumerate}
\item $L$ is $e$-cuspidal and $e(n\delta-\alpha_j,\alpha_j)L = 0$ for any $j \in \mathring{I}\setminus \{i\}$; 
\item (resp.\ (3)) \ For $\beta,\gamma \in \mathsf{Q}_+$ satisfying $\beta + \gamma = n\delta$ and $\Res_{\beta,\gamma} L \neq 0$, we have 
\[
\text{$\beta \in \spn_{\mathbb{Z}_{\geq 0}} (F_i^- \cup F_i^0)$ \ (resp.\ $\gamma \in \spn_{\mathbb{Z}_{\geq 0}}(F_i^+ \cup F_i^0)$)}, 
\]
and $e(n\delta-\alpha,\alpha)L$ (resp.\ $e(\alpha,n\delta-\alpha)L$) is zero for any $\alpha \in \mathsf{\Delta}_i$.
\end{enumerate}
\end{lemma}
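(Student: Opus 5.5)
The proof will use cuspidal decompositions for a convex order adapted to $i$, as in the proof of Lemma \ref{lem:cuspidalcriterion}. Fix a convex order $\preceq$ on $\Phi_+^{\mathrm{min}}$ of coarse type $e$ such that
\[
\alpha^- \prec \alpha^0 \prec \alpha^+ \quad (\alpha^- \in F_i^-, \alpha^0 \in F_i^0, \alpha^+ \in F_i^+).
\]
Such an order exists because $\langle \mathring{\varpi}_i^{\lor}, p(\cdot)\rangle$ is a linear functional. Refine it by a convex order on $F_i^0$, which is a root system of type $\Gamma_i$ affinized, and choose the refinement so that the order remains of coarse type $e$ on $F_i^-\cup F_i^+$. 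By Lemma \ref{lem:cuspidalcriterion}, conditions (2) and (3) have the same first half; call it condition $(\ast)$.

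\textbf{(1) $\Rightarrow$ (2), (3).} Let $L$ be $e$-cuspidal. By Corollary \ref{cor:cuspidaltest}, $\Res_{\beta,\gamma}L\neq 0$ forces $\beta$ into the span of roots $\alpha$ with $p(\alpha)\in\mathring{\mathsf{Q}}_-$ and $\gamma$ into the span of roots with $p(\alpha)\in\mathring{\mathsf{Q}}_+$. In particular $\langle\mathring{\varpi}_i^\lor,p(\beta)\rangle\le 0$, which should give $(\ast)$ once it is combined with the cuspidal decomposition.

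For the vanishing statements, take $\alpha\in\mathsf{\Delta}_i$.
\begin{itemize}
\item If $\alpha=\alpha_j$ with $j\neq i$, then $e(n\delta-\alpha_j,\alpha_j)L=0$ is hypothesis (1). Also $e(\alpha_j,n\delta-\alpha_j)L=0$, since $p(\alpha_j)\in\mathring{\Phi}_+$ and condition (2) of Corollary \ref{cor:cuspidaltest} applies.
\item If $\alpha=\delta-\theta$ with $\theta$ the highest root of a component of $\Gamma_i$, then $p(\alpha)=-\theta\in\mathring{\Phi}_-$. Condition (3) of Corollary \ref{cor:cuspidaltest} then gives $\Res_{n\delta-\alpha,\alpha}L=0$.
\item For $e(\alpha,n\delta-\alpha)L$ with $\alpha=\delta-\theta$, I would use the shuffle structure. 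A nonzero restriction to $(\delta-\theta,\,n\delta-\delta+\theta)$ can be further restricted so that it ends in some $\alpha_j$ with $j\in$ the component. This contradicts $e(n\delta-\alpha_j,\alpha_j)L=0$ together with cuspidality.
\end{itemize}
This last case is delicate, and I would settle it through the ordering argument below rather than directly.

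\textbf{(2) $\Rightarrow$ (1) and (3) $\Rightarrow$ (1).} This is the harder direction. Write the cuspidal decomposition of $L$ for $\preceq$ as the head of
\[
L_+\circ L_0\circ L_-,
\]
with factors supported on $F_i^+$, $F_i^0$, $F_i^-$ respectively. Condition $(\ast)$ together with Lemma \ref{lem:cuspidalcriterion} should force $L_+$ and $L_-$ to be trivial, so $L$ is cuspidal for the block $F_i^0$.

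Inside $F_i^0$, a module of weight $n\delta$ is the head of a product of real-root cuspidals and imaginary cuspidals of the subsystem. A minimal positive root in the order (resp. a maximal one) lies in the simple system $\mathsf{\Delta}_i$. Hence any non-imaginary factor would produce a nonzero $e(n\delta-\alpha,\alpha)L$ (resp. $e(\alpha,n\delta-\alpha)L$) for some $\alpha\in\mathsf{\Delta}_i$. The vanishing hypotheses forbid this, so $L$ is purely imaginary for $\preceq$.

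Imaginary modules for $\preceq$ are $e$-cuspidal, because $\preceq$ has coarse type $e$. Then $e(n\delta-\alpha_j,\alpha_j)L=0$ for $j\neq i$ holds since $\alpha_j\in\mathsf{\Delta}_i$.

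\textbf{Main obstacle.} The hardest step is making the "minimal root lies in $\mathsf{\Delta}_i$" argument precise. It requires choosing the convex order on $F_i^0$ so that its extreme elements are exactly the simple roots of $\mathsf{\Delta}_i$, and compatibly with coarse type $e$. I would first try to realize this by an infinite reduced word as in Lemma \ref{lem:oneroworder}, whose first and last letters are chosen from $\mathsf{\Delta}_i$.
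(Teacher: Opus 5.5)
Your (1)$\Rightarrow$(2) is fine. Its first half is in fact immediate: $\Phi_+^{\mathrm{min}}\cap p^{-1}(\mathring{\mathsf{Q}}_\pm)\subset F_i^\pm\cup F_i^0$, so $e$-cuspidality gives both spanning conditions at once. The problem is that nothing in your plan proves (1)$\Rightarrow$(3). For $\alpha=\delta-\theta_\Gamma$ (your $\theta$) your direct argument cannot work. If $L$ satisfies (1) and $n\geq 1$, the last letter of any $\nu$ with $e(\nu)L\neq 0$ lies in $\mathring{I}$ by $e$-cuspidality (Corollary \ref{cor:cuspidaltest}), and it is not some $j\neq i$ by hypothesis. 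So every word of $L$ ends in $i$, which belongs to no component of $\Gamma_i$, and you can never restrict further to end in a letter of $\Gamma$. Deferring this case to the ordering argument does not help, because that argument targets (2)$\Rightarrow$(1) and (3)$\Rightarrow$(1), the opposite direction. At best your scheme yields (1)$\Leftrightarrow$(2) and (3)$\Rightarrow$(1).

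The ordering argument also has a gap exactly where you flag it, and choosing the extreme roots of the order inside $\mathsf{\Delta}_i$ does not close it. Removing $L_\pm$ is correct. After that, the rightmost factor of the cuspidal decomposition is $L^{\preceq}(c\gamma)$ for the smallest root $\gamma$ in the \emph{support of the Lusztig datum of $L$}, not for the minimal root of the order. One-sidedness is not the issue: since $\preceq$ has coarse type $e$ and $p(n\delta)=0$, real factors $\succ\delta$ force real factors $\prec\delta$. What you actually need is a face-level statement: for every real $\gamma\in F_i^0$ with $\gamma\prec\delta$, $\Res_{\gamma-\alpha,\alpha}L^{\preceq}(\gamma)\neq 0$ for some $\alpha\in\mathsf{\Delta}_i$. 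This is the analogue, for the face root system with simple system $\mathsf{\Delta}_i$, of the fact that every word ends in a simple root. It is not elementary: letters are simple roots of $\mathfrak{g}$, but $\delta-\theta_\Gamma\in\mathsf{\Delta}_i$ has height at least $2$, and $\alpha_i\notin\mathsf{\Delta}_i$. The paper does not prove such statements by hand. It takes (2)$\Leftrightarrow$(3) from Lemma \ref{lem:cuspidalcriterion} together with \cite[Corollary 3.29]{MR3542489}. It then gets $e$-cuspidality from (2) and (3) jointly via \cite[Proposition 3.41]{MR3542489}; both results rest on Tingley--Webster's analysis of the face $F_i^0$. You need to invoke these results or prove the face-level analogues. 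As written, two steps are unproved: the vanishing of $e(\delta-\theta_\Gamma,(n-1)\delta+\theta_\Gamma)L$ in (1)$\Rightarrow$(3), and the exclusion of real factors in (2)$\Rightarrow$(1).
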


\begin{proof}
In view of Lemma \ref{lem:cuspidalcriterion}, the equivalence between (2) and (3) follows from \cite[Corollary 3.29]{MR3542489}. 

We prove that (1) implies (2). 
Assume that $L$ satisfies (1).  
The former condition of (2) follows from the assumption that $L$ is $e$-cuspidal.
The $e$-cuspidality also implies that $e(n\delta-\alpha,\alpha) L = 0$ for all $\alpha \in \mathsf{\Delta}_i \setminus \{\alpha_j \mid j \in \mathring{I} \setminus \{i\} \}$.
For $j \in \mathring{I} \setminus \{i\}$, we have $e(n\delta-\alpha_j,\alpha_j)L= 0$ by the assumption. 

It remains prove that (2) and (3) imply (1). 
Assume that $L$ satisfies (2) and (3). 
\cite[Proposition 3.41]{MR3542489} shows that $L$ is $e$-cuspidal. 
For $j \in \mathring{I} \setminus \{i\}$, we have $e(n\delta-\alpha_j,\alpha_j)L= 0$ by the assumption in (2). 
\end{proof}

For $i \in \mathring{I}$ and $n \in \mathbb{Z}_{\geq 0}$, 
let $\Xi_n^i$ be the subset of $\Xi_n^e$ consisting of all the self-dual simple $R^e(n\delta)$-modules up to isomorphism satisfying the three equivalent conditions in Lemma \ref{lem:onecolor}. 
Let $\Xi^i = \sqcup_{n \geq 0} \Xi_n^i$.

\begin{lemma} \label{lem:colorres}
Let $i \in \mathring{I}$, $m,n \in \mathbb{Z}_{\geq 0}$ and $L \in \Xi_{m+n}^i$. 
Then, every composition factor of $\Res_{m\delta,n\delta}L$ is of the form $L_1 \otimes L_2$ up to grading shift for some $L_1 \in \Xi_m^i$ and $L_2 \in \Xi_n^i$.
\end{lemma}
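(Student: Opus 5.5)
Let $L_1\otimes L_2$ be a composition factor of $\Res_{m\delta,n\delta}L$; up to grading shift we may take $L_1$ and $L_2$ to be self-dual simple modules over $R(m\delta)$ and $R(n\delta)$. The plan is to check that $L_1$ and $L_2$ satisfy the conditions of Lemma \ref{lem:onecolor}, using condition (2) or (3) as convenient. Two pieces of that lemma will come from the ambient cuspidality of $L$, and the remaining vanishing conditions will be obtained by transitivity of restriction.

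First, since $L \in \Xi_{m+n}^i$ is in particular $e$-cuspidal, Proposition \ref{prop:imaginaryind}(3) gives that $L_1$ and $L_2$ descend to $R^e(m\delta)$ and $R^e(n\delta)$. Hence both are $e$-cuspidal, and this is the first half of condition (1) of Lemma \ref{lem:onecolor}.

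For the second half we need $e(n\delta-\alpha_j,\alpha_j)L_2=0$ for $j\neq i$. Since $\Res$ is exact, this holds if $(1\otimes e(n\delta-\alpha_j,\alpha_j))\Res_{m\delta,n\delta}L=0$. That module is a summand of $e(m\delta,n\delta-\alpha_j,\alpha_j)L$, which is contained in $e((m+n)\delta-\alpha_j,\alpha_j)L=0$ by hypothesis. So $L_2\in\Xi_n^i$.

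For $L_1$ the same trick fails, because the hypothesis only controls the right end of $L$. Here I would use the equivalent condition (3) of Lemma \ref{lem:onecolor} applied to $L$: $e(\alpha,(m+n)\delta-\alpha)L=0$ for $\alpha\in\mathsf{\Delta}_i$, and every left restriction degree $\beta$ of $L$ satisfies $\beta \in\spn_{\mathbb{Z}_{\geq0}}(F_i^-\cup F_i^0)$. Restricting as before gives $e(\alpha,m\delta-\alpha)L_1=0$ for $\alpha\in\mathsf{\Delta}_i$, because $e(\alpha,m\delta-\alpha,n\delta)$ is a summand of $e(\alpha,(m+n)\delta-\alpha)$. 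The span condition also transfers: if $\Res_{\beta,\gamma}L_1\neq0$, then $\Res_{\beta,\gamma,n\delta}L\neq0$ by exactness, so $\Res_{\beta,\gamma+n\delta}L\neq0$. Hence $\beta$ lies in the required cone. We should check condition (3) exactly as stated (with $\gamma$); the analogous argument works symmetrically, using Lemma \ref{lem:cuspidalcriterion} to pass between the two cone conditions. Thus $L_1$ satisfies (3), and so $L_1\in\Xi_m^i$.

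The main point needing care is this asymmetry between the left and right factors: the conditions defining $\Xi^i$ refer only to one end, so we must switch to the dual characterization via the equivalence (2)$\Leftrightarrow$(3) of Lemma \ref{lem:onecolor}. The remaining steps are routine manipulations of idempotents together with exactness of restriction.
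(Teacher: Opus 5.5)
Your proof is correct. It follows the paper's overall strategy: verify, for each factor, one of the equivalent conditions of Lemma \ref{lem:onecolor} by restricting from $L$. The difference is which condition you verify for which factor.

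The paper asserts that condition (2) passes from $L$ to $L_1$ and condition (3) passes to $L_2$. Taken literally, only the cone parts of those conditions transfer by the restriction argument. The vanishing part of (2) for $L_1$ reads $e(m\delta-\alpha,\alpha)L_1=0$. This concerns $e(m\delta-\alpha,\alpha,n\delta)L$, which sits at the inner end and is not a summand of $e((m+n)\delta-\alpha,\alpha)L$. The same happens for the vanishing part of (3) for $L_2$.

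Your assignment avoids this problem:
\begin{itemize}
\item You transfer only outer-end vanishing conditions, namely $e(n\delta-\alpha_j,\alpha_j)$ on the right for $L_2$ and $e(\alpha,m\delta-\alpha)$ on the left for $L_1$. Each of these is an honest sub-idempotent inclusion.
\item You obtain the inner-end information from the $e$-cuspidality of both factors, via Proposition \ref{prop:imaginaryind}(3).
\end{itemize}
So your version spells out the step that the paper's one-line proof leaves implicit.

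Two presentational points. First, the phrase ``the analogous argument works symmetrically'' is misleading. The $\gamma$-cone condition for $L_1$ concerns the inner end and cannot be obtained by restriction. It is supplied either by Lemma \ref{lem:cuspidalcriterion} applied to $L_1$, starting from the $\beta$-cone condition you did transfer, or more directly by the $e$-cuspidality of $L_1$ from your first step. The latter works because every root $\alpha$ with $p(\alpha)\in\mathring{\mathsf{Q}}_+$ lies in $F_i^+\cup F_i^0$. Second, Proposition \ref{prop:imaginaryind} is stated only for $m,n\geq 1$, so note separately that the cases $m=0$ and $n=0$ are trivial.
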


\begin{proof}
We ignore grading shifts in this proof. 
Assume that $L_1 \otimes L_2$ is a composition factor of $\Res_{m\delta,n\delta}$, 
where $L_1$ is a simple $R(m\delta)$-module and $L_2$ is a simple $R(n\delta)$-module.
Since $L$ satisfies Condition (2) of Lemma \ref{lem:onecolor}, so does $L_1$, and hence $L_1 \in \Xi^i$. 
Similarly, Condition (3) passes to $L_2$, which means $L_2 \in \Xi^i$. 
\end{proof}

\begin{theorem} \label{thm:onecolor}
For $L_i \in \Xi^i \ (i \in \mathring{I})$, 
the convolution product $\circ_{i \in \mathring{I}} L_i$ of all these modules does not depend on the order. 
It yields a bijection
\[
\prod_{i \in \mathring{I}} \Xi^i \to \Xi^e, \ (L_i) \mapsto \circ_{i \in \mathring{I}}L_i. 
\]
\end{theorem}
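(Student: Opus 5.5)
We follow the strategy of \cite{MR3670026} and \cite{MR3589160}: first show the products are simple and independent of order, then show injectivity, and finish by counting with Theorem \ref{thm:cuspdecomp} (2).

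\textbf{Step 1: commutation.} Let $i \neq j$, $L_i \in \Xi_m^i$ and $L_j \in \Xi_n^j$. We will show $L_i \circ L_j \simeq L_j \circ L_i$ and that this module is simple; the same argument applies to longer products. Both $L_i$ and $L_j$ are real simple modules in the imaginary stratum, and we plan to use the $R$-matrix theory of Subsection \ref{sub:Rmatrix}, via the criterion that $M\circ N$ is simple if and only if $\mathrm{hd}(M\circ N)\simeq \mathrm{soc}(M\circ N)$ up to shift.

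\textbf{Step 1a: control the Mackey filtration.} We plan to compute $\Res_{(m+n)\delta-\gamma,\gamma}(L_i\circ L_j)$ via the Mackey filtration. Conditions (2) and (3) of Lemma \ref{lem:onecolor}, applied to the colour-$i$ factor with respect to $F_i^{\pm}$ and to the colour-$j$ factor with respect to $F_j^{\pm}$, force the only surviving Mackey pieces to be those splitting each factor along multiples of $\delta$. Concretely, a piece survives only if $\gamma$ has zero $\mathring\varpi_i^\lor$- and $\mathring\varpi_j^\lor$-pairings compatible with both factors.

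\textbf{Step 1b: deduce simplicity.} From Step 1a, the image of $L_i\boxtimes L_j$ generates, and every nonzero submodule must meet the top Mackey piece. Hence $L_i\circ L_j$ has simple socle, and symmetrically simple head. Comparing characters shows $\chi(L_i\circ L_j)=\chi(L_j\circ L_i)$, since the $\psi_{i,k}^e$ commute and hence the relevant products commute. Together with the nonzero $R$-matrix $L_i\circ L_j\to L_j\circ L_i$, this gives the isomorphism and simplicity. Cuspidality of the product follows from Proposition \ref{prop:imaginaryind} (1).

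\textbf{Step 2: injectivity.} From $L=\circ_i L_i$ we will recover each $L_i$. Using Proposition \ref{prop:imaginaryind} (3) and Lemma \ref{lem:colorres}, a restriction of $L$ of the form $\Res_{n_1\delta,\ldots,n_N\delta}$ has a unique constituent of type $L'_1\otimes\cdots\otimes L'_N$ with $L'_k\in\Xi^k$. That constituent is $L_1\otimes\cdots\otimes L_N$, namely the top Mackey piece. The degrees $n_k$ are detected as the unique decomposition for which this occurs.

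\textbf{Step 3: counting.} For surjectivity, we count. The number of elements of $\Xi_n^i$ equals the number of partitions of $n$: this is the one-colour count, which we would obtain from \cite[Theorem 2.19]{MR3542489} applied to the face $F_i^0$, or by showing that $\Xi^i$ is closed under the heads $\hd(L\circ L_i^e(\delta))$ and matching with partitions. Given this, both sides of the map have cardinality equal to the number of multipartitions of $n$ by Theorem \ref{thm:cuspdecomp} (2), so injectivity gives bijectivity.

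\textbf{Main obstacle.} The hardest point is Step 1a: showing that the Mackey filtration of $L_i\circ L_j$ has only the expected surviving pieces for $i\neq j$. This requires careful use of the $F_i^\pm$-constraints on both factors at once. If that direct route fails, we would fall back on the face/parabolic approach of \cite{MR3670026}, where commuting faces yield a product decomposition.
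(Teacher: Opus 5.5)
Your proposal has genuine gaps; the main one is in Step 1. Elements of $\Xi^i$ are not real in general. For example, $L_i^e(\delta)\in\Xi^i_1$ is not real: by Lemma \ref{lem:tensorspacehom}(1), which is proved independently of this theorem, $\End_R(L_i^e(\delta)^{\circ 2})$ is two-dimensional, so $L_i^e(\delta)^{\circ 2}$ is not simple. Hence Theorem \ref{thm:rmatrix}, which needs a real (affinizable) factor, gives you no $R$-matrix with the required properties. Without its part (3), i.e. $[M\circ N:M\mynabla N]_q=1$, the facts ``simple head, simple socle, equal characters, a nonzero map $L_i\circ L_j\to L_j\circ L_i$'' do not imply simplicity; a uniserial module with factors $S,T,S$ already defeats this.

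Step 1a is also not correct as stated. For restrictions to $(k\delta,(m+n-k)\delta)$, cuspidality does force each factor to split along multiples of $\delta$. But the mixed pieces are typically nonzero: already for $m=n=1$, Proposition \ref{prop:imaginaryMackey} gives $\Res_{\delta,\delta}(L_i\circ L_j)\simeq(L_i\otimes L_j)\oplus(L_j\otimes L_i)$. So nothing forces a nonzero submodule to meet $L_i\boxtimes L_j$. What can be proved is a multiplicity statement such as $[\Res_{m\delta,n\delta}(L_j\circ L_i):L_i\otimes L_j]_q=1$. Even that needs the smaller products $M_1\circ M_2$ (with $M_1\in\Xi^j$, $M_2\in\Xi^i$) to be simple and outside $\Xi^i$, i.e. an induction you have not set up.

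The character comparison is circular. Nothing available at this point places $\chi(L_i)$ in the commutative subalgebra generated by the $\psi^e_{i,k}$. In the paper such facts (the bialgebras $B,B'$ of Subsection \ref{sub:character}) come only after Theorem \ref{thm:howe} and Corollary \ref{cor:LRrule}, which use the present theorem.

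The grading is also left unchecked. Since $(m\delta,n\delta)=0$, we have $D(L_i\circ L_j)\simeq L_j\circ L_i$. So the product lies in $\Xi^e$ (self-dual modules) exactly when $L_i\circ L_j\simeq L_j\circ L_i$ with \emph{no} shift, whereas an $R$-matrix only gives an isomorphism up to $q^{\Lambda}$.

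Step 3 has a second gap: you need $|\Xi^i_n|$ to equal the number of partitions of $n$, and nothing provides this. Theorem \ref{thm:cuspdecomp}(2) counts only $\Xi^e_n$, which does not determine the individual $|\Xi^i_n|$. Applying \cite[Theorem 2.19]{MR3542489} ``to the face $F_i^0$'' is not an argument. Surjectivity is the substantive content of \cite[Corollary 3.45]{MR3542489} and rests on the face-wise cuspidal decomposition there.

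The paper does not reprove any of this. It takes the ungraded statement (simplicity, independence of order, bijectivity) from \cite[Corollary 3.45]{MR3542489} and proves only the graded refinement. Knowing that $L_i\circ L_j$ and $L_j\circ L_i$ are simple and isomorphic up to shift, it suffices that $\hom_R(q^dL_i\circ L_j,L_j\circ L_i)=0$ for $d\neq 0$. By adjunction this follows from $[\Res_{m\delta,n\delta}(L_j\circ L_i):L_i\otimes L_j]_q=1$. That multiplicity is checked on the Mackey filtration using Lemma \ref{lem:colorres} and the (ungraded) simplicity of $M_1\circ M_2$ and $N_1\circ N_2$, which shows the mixed pieces contain no shift of $L_i\otimes L_j$.

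To repair your argument, either cite Tingley--Webster for the ungraded part as the paper does, or reproduce their face decomposition. The $R$-matrix route does not apply here.
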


\begin{proof}
Ignoring grading shifts, this is \cite[Corollary 3.45]{MR3542489}. 
We verify that the bijection respects grading. 

Let $i,j \in \mathring{I}$ satisfying $i \neq j$, and let $m,n \in \mathbb{Z}_{\geq 1}$. 
We prove that given $L_i \in \Xi_m^i$ and $L_j \in \Xi_n^j$, 
$L_i \circ L_j \simeq L_j \circ L_i$ as graded modules. 
Since we already know that both sides are simple and mutually isomorphic as ungraded modules, 
it suffices to show that
\[
\hom_R (q^d L_i\circ L_j, L_j \circ L_i) = 0 
\]
for $d \neq 0$. 
By induction-restriction adjunction, the left hand side is isomorphic to 
\[
\hom_{R(m\delta)\otimes R(n\delta)}(q^dL_i \otimes L_j, \Res_{m\delta,n\delta}(L_j\circ L_i)). 
\]
Hence, it is enough to prove that $[\Res_{m\delta,n\delta}(L_j \circ L_i):L_i \otimes L_j]_q = 1$.    
We consider the Mackey filtration \cite[Proposition 2.18]{MR2525917} of $\Res_{m\delta,n\delta}(L_j \circ L_i)$. 

Since both $L_i$ and $L_j$ are $e$-cuspidal, 
any nonzero subquotient of the Mackey filtration  is of the form 
\begin{equation} \label{eq:subquotient}
(\Ind_{m_1\delta,m_2\delta}\otimes \Ind_{n_1\delta,n_2\delta}) (\Res_{m_1\delta,n_1\delta}L_j \otimes \Res_{m_2\delta,n_2\delta}L_i), 
\end{equation}
where $m_1, m_2, n_1, n_2 \in \mathbb{Z}_{\geq 0}$ satisfy 
\[
m_1 + m_2 = m,\ n_1 + n_2 = n, \ m_1 + n_1 = n,\ m_2 + n_2 = m. 
\]
Here, no grading shift is necessary since $(\delta,\delta) = 0$. 

Note that $m_1=n_2, m_2 = m-m_1$ and $n_1 = n-m_1$. 
If $m_1=n_2 = 0, m_2 = m$ and $n_1 = n$, this subquotient is isomorphic to $L_j \otimes L_i$.
Next, assume $m_1 = n_2 \neq 0$. 
By Lemma \ref{lem:colorres}, 
any composition factor of the restriction $\Res_{m_1\delta,n_1\delta}L_j$ is of the form $M_1 \otimes N_1$ up to grading shifts for some $M_1, N_1 \in \Xi^j$.
Similarly, any composition factor of $\Res_{m_2\delta,n_2\delta}L_i$ is of the form $M_2 \otimes N_2$ up to grading shift for some $M_2, N_2 \in \Xi^i$.
Hence, the module (\ref{eq:subquotient}) has a filtration whose successive quotients are of the form
\[
(M_1 \circ M_2) \otimes (N_1 \circ N_2)
\]
up to grading shifts for some $M_1 \in \Xi_{m_1}^j, N_1 \in \Xi_{n_1}^j, M_2 \in \Xi_{m_2}^i$ and $N_2 \in \Xi_{n_2}^i$. 
We already know that $M_1 \circ M_2$ and $N_1 \circ N_2$ are simple modules.
Since $m_1 = n_2 \neq 0$, $M_1 \in \Xi_{m_1}^j$ and $N_2 \in \Xi_{n_2}^i$ are nontrivial. 
Hence, no grading shifts of $M_1 \circ M_2$ belong to $\Xi^i$, while no grading shifts of $N_1 \circ N_2$ belong to $\Xi^j$ by the first paragraph of the proof. 
It implies that the subquotient (\ref{eq:subquotient}) does not contain any grading shift of $L_i \otimes L_j$ as a composition factors if $m_1 = n_2 \neq 0$. 

Therefore, we obtain $[\Res_{m\delta,n\delta}(L_j \circ L_i):L_i \otimes L_j]_q = 1$. 
\end{proof}

\begin{lemma} \label{lem:minusculeonecolor}
For $i \in \mathring{I}$, we have $\Xi_1^i = \{L_i^e(\delta)\}$. 
\end{lemma}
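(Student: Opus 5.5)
By Corollary \ref{cor:minusclesimple}, $\Xi_1^e = \{L_j^e(\delta) \mid j \in \mathring{I}\}$, and $\Xi_1^i \subset \Xi_1^e$. So the claim splits into two facts: $L_i^e(\delta)$ satisfies condition (1) of Lemma \ref{lem:onecolor}, and no $L_j^e(\delta)$ with $j \neq i$ does. Since every $L_j^e(\delta)$ is $e$-cuspidal, both facts concern only the vanishing or nonvanishing of $e(\delta-\alpha_k,\alpha_k)L_j^e(\delta)$ for $k \in \mathring{I}$. My plan is to prove
\[
e(\delta-\alpha_k,\alpha_k)L_j^e(\delta) \neq 0 \iff k=j \qquad (j,k \in \mathring{I}).
\]

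\textbf{The case $k=j$.} Nonvanishing is immediate from Definition \ref{def:minuscule}. $L_j^e(\delta)$ is a quotient of $L^e(\delta-\alpha_j)\circ L(j)$, so Frobenius reciprocity gives a nonzero map
\[
L^e(\delta-\alpha_j)\otimes L(j) \to \Res_{\delta-\alpha_j,\alpha_j}L_j^e(\delta).
\]
Hence $e(\delta-\alpha_j,\alpha_j)L_j^e(\delta)\neq 0$.

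\textbf{The case $k\neq j$.} Here I would argue by characters. Suppose $e(\delta-\alpha_k,\alpha_k)L_j^e(\delta)\neq 0$. Then $\Res_{\delta-\alpha_k,\alpha_k}L_j^e(\delta)$ has a simple quotient $M\otimes L(k)$.

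\begin{enumerate}
\item \emph{Identify $M$.} $M$ is a simple $R(\delta-\alpha_k)$-module, and cuspidality of $L_j^e(\delta)$ constrains its restrictions. Using convexity with respect to a coarse type $e$ order, I expect $M \simeq L^e(\delta-\alpha_k)$ up to shift. This is the step I expect to be the main obstacle. It would follow from a cuspidal decomposition argument: $\delta-\alpha_k$ is minimal-ish among roots $\prec\delta$ adjacent to $\delta$.
\item \emph{Produce a map.} Adjunction then gives a nonzero map $L^e(\delta-\alpha_k)\circ L(k)\to L_j^e(\delta)$. The source has simple head $L_k^e(\delta)$ (Definition \ref{def:minuscule}, Proposition \ref{prop:minuscule}), so $L_j^e(\delta)\simeq L_k^e(\delta)$ up to shift.
\item \emph{Conclude.} This contradicts Lemma \ref{lem:qdimimaginary}.
\end{enumerate}

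\textbf{A more robust alternative for $k \neq j$.} This route avoids identifying $M$. By \eqref{eq:character}, $\chi(\Delta_j^e(\delta))=\psi_{j,1}^e$. In the proof of Lemma \ref{lem:almostorth2}, $r_k(\psi_{j,1}^e)$ was computed. So I would:

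\begin{enumerate}
\item Express $\chi(L_j^e(\delta))$ through $\Delta_j^e(\delta)$, using the projective cover together with the composition multiplicities controlled by Lemma \ref{lem:qdimimaginary}.
\item Compute $\dim_q e(\delta-\alpha_k,\alpha_k)L_j^e(\delta)$ via Theorem \ref{thm:categorification}(1), from $r_k$ of the character.
\end{enumerate}

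This route is obstructed because $r_k(\psi_{j,1}^e)$ is nonzero for $a_{j,k}=-1$ at the level of $\Delta$. So one really needs the simple module.

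I would therefore commit to the first route and handle step 1 as follows. Choose the convex order from Lemma \ref{lem:oneroworder} with $w=e$, where $\delta-\alpha_k\prec\delta\prec\alpha_k$. Apply the $\preceq$-cuspidal decomposition (Theorem \ref{thm:cuspdecomp}) to $M$. By Corollary \ref{cor:cuspidaltest}, every real root appearing in it must have $p$-value in $\mathring{\Phi}_-$ and be $\prec \delta$. The only such root summing with the others to $\delta-\alpha_k$ is $\delta-\alpha_k$ itself, which forces $M\simeq L^e(\delta-\alpha_k)$ (Lemma \ref{lem:realrootmodule}).
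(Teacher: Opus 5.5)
\textbf{Verdict.} Your proof is correct, but it takes a different route from the paper.

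\textbf{The paper's route.} The paper never proves the vanishing $e(\delta-\alpha_k,\alpha_k)L_i^e(\delta)=0$ for $k\neq i$ directly. It invokes Theorem \ref{thm:onecolor}, the Tingley--Webster factorization of $e$-cuspidal simples into one-color pieces, to get a \emph{disjoint} union $\Xi_1^e=\bigsqcup_{i\in\mathring{I}}\Xi_1^i$. After that it needs only two inputs:
\begin{itemize}
\item the easy nonvanishing $e(\delta-\alpha_i,\alpha_i)L_i^e(\delta)\neq 0$, which is your case $k=j$;
\item the list $\Xi_1^e=\{L_j^e(\delta)\}$ from Corollary \ref{cor:minusclesimple}.
\end{itemize}
Each $L_i^e(\delta)$ lies in exactly one $\Xi_1^j$ and cannot lie in $\Xi_1^j$ for $j\neq i$. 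So the vanishing for $k\neq i$ is forced by pigeonhole rather than proven.

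\textbf{Your route.} You prove the vanishing by hand.
\begin{itemize}
\item You identify the $R(\delta-\alpha_k)$-factor through the $\preceq$-cuspidal decomposition. The $\boldsymbol{c}_+$-part is excluded because the leftmost factor of $\overline{\Delta}^{\preceq}(\omega)$ yields a nonzero restriction of $L_j^e(\delta)$ whose $p$-value lies in $\mathring{\mathsf{Q}}_+\setminus\{0\}$. The imaginary part is excluded by weight. Writing the remaining roots as $-\alpha+m\delta$ with $m\geq 1$ forces the single root $\delta-\alpha_k$.
\item You then use the simple head of $L^e(\delta-\alpha_k)\circ L(k)$ (Proposition \ref{prop:minuscule}) together with Lemma \ref{lem:qdimimaginary}.
\end{itemize}

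\textbf{Comparison.} Your argument bypasses Theorem \ref{thm:onecolor} entirely; it needs only Theorem \ref{thm:cuspdecomp} in weight $\delta-\alpha_k$. It also explains concretely \emph{why} $L_j^e(\delta)$ has no $\alpha_k$-tail for $k\neq j$. The paper's argument is shorter, but only because it leans on the deep factorization theorem, which it needs elsewhere anyway.

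\textbf{A small slip to fix.} Frobenius reciprocity for $(\Ind,\Res)$ converts a \emph{submodule} $M\otimes L(k)\subset \Res_{\delta-\alpha_k,\alpha_k}L_j^e(\delta)$ into a nonzero map $M\circ L(k)\to L_j^e(\delta)$. A simple \emph{quotient}, as you wrote, would instead give a map into $\operatorname{Coind}$, i.e.\ into $L(k)\circ M$ up to shift. Take a simple submodule instead. Your identification of $M$ only uses that $M\otimes L(k)$ is a subquotient of the restriction, so nothing else changes.
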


\begin{proof}
By Theorem \ref{thm:onecolor}, we have 
\[
\Xi_1^e = \sqcup_{i \in \mathring{I}} \Xi_1^i. 
\]
By Definition \ref{def:minuscule} and induction-restriction adjunction, we have $e(\delta-\alpha_i,i)L_i^e(\delta) \neq 0$. 
Hence, $L_i^e(\delta) \not \in \Xi_1^j$ for any $j \in \mathring{I} \setminus \{i\}$. 
On the other hand, we have $\Xi_1^e = \{L_i^e(\delta) \mid i \in \mathring{I}\}$ by Corollary \ref{cor:minusclesimple}. 
Hence, the lemma follows. 
\end{proof}

\section{Quantum imaginary Schur-Weyl duality}

\subsection{Action of the Hecke algebra} \label{sub:heckeaction}

In this subsection, we assume that $\mathbf{k}$ is a field. 
Recall the parameter $t \in \mathbf{k}^{\times}$ from Subsection \ref{sub:quiverHecke}; 
it appears in the coefficients of the polynomials $Q$ in type $A_N^{(1)}$, 
whereas it is set to be $1$ in all the other types. 

For $n \geq 0$, let $H_n = H_n(t)$ be the Hecke algebra of $\mathfrak{S}_n$, 
a $\mathbf{k}$-algebra generated by $T_k \ (1 \leq k \leq n-1)$ with relations
\[
T_k T_l = T_l T_k \ (k-l \geq 2), \ (T_k+1)(T_k-t) = 0, \ T_kT_{k+1}T_k = T_{k+1}T_kT_{k+1}.
\]
Note that if $X_N^{(1)}$ is not $A_N^{(1)}$, $H_n$ is just the group algebra of $\mathfrak{S}_n$. 
For $m, n \in \mathbb{Z}_{\geq 0}$, there is a canonical inclusion
\[
\iota_{m,n} \colon H_m \otimes H_n \to H_{m+n}, 
\]
given by 
\[
\iota_{m,n} (T_k \otimes 1) = T_k, \ \iota_{m,n} (1 \otimes T_l) = T_{m+l} \ (1 \leq k \leq m-1, 1 \leq l \leq n-1)
\]
For $\underline{n} = (n_i)_{i \in \mathring{I}} \in \mathbb{Z}_{\geq 0}^{\mathring{I}}$, 
we define 
\[
H_{\underline{n}} = H_{n_1} \otimes \cdots  \otimes H_{n_N}. 
\]
For $\underline{m}, \underline{n} \in \mathbb{Z}_{\geq 0}^{\mathring{I}}$, 
there is a canonical inclusion
\[
\iota_{\underline{m},\underline{n}} \colon H_{\underline{m}} \otimes H_{\underline{n}} \to H_{\underline{m} + \underline{n}}, 
\]
given by
\[
\iota_{\underline{m},\underline{n}}(x_1 \otimes \cdots \otimes x_N) \otimes (y_1 \otimes \cdots \otimes y_N) = \iota_{m_1,n_1}(x_1 \otimes y_1) \otimes \cdots \otimes \iota_{m_N,n_N}(x_N \otimes y_N).
\]
When we wish to emphasize the coefficient field, we write $H_{\underline{n},\mathbf{k}} = H_{\underline{n}}$. 
  
Set $\lvert \underline{n} \rvert = \sum_{i \in \mathring{I}} n_i$. 
We define $R^e(\lvert \underline{n} \rvert)$-modules
\[
L^e(\underline{n}) = L_1^e(\delta)^{\circ n_1} \circ \cdots \circ L_N^e(\delta)^{\circ n_N}, \ \Delta^e(\underline{n}) = \Delta_1^e(\delta)^{\circ n_1} \circ \cdots \circ \Delta_N^e(\delta)^{\circ n_N}. 
\]
When we wish to emphasize the coefficient field, we write $L_{\mathbf{k}}^e(\underline{n}) = L^e(\underline{\lambda})$. 

\begin{lemma} \label{lem:singlecolor}
Let $i,j \in \mathring{I}$ and assume $i \neq j$. Then, 
\begin{enumerate} 
  \item the modules $L_i^e(\delta) \circ L_j^e(\delta)$ and $L_j^e(\delta) \circ L_i^e(\delta)$ are self-dual simple and isomorphic to each other;
  \item $\Delta_i^e(\delta) \circ \Delta_j^e(\delta)$ is a projective cover of $L_i^e(\delta) \circ L_j^e(\delta)$ in $\gMod{R^e}$. 
\end{enumerate}
\end{lemma}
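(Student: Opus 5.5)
For part (1), I will combine Lemma \ref{lem:minusculeonecolor} with Theorem \ref{thm:onecolor}. By Lemma \ref{lem:minusculeonecolor}, $L_i^e(\delta) \in \Xi_1^i$ and $L_j^e(\delta) \in \Xi_1^j$ with $i \neq j$. Theorem \ref{thm:onecolor} (applied with $L_k$ the trivial module $\mathbf{k} \in \Xi_0^k$ for $k \neq i,j$) shows that $L_i^e(\delta) \circ L_j^e(\delta)$ is a self-dual simple module in $\Xi_2^e$. It also shows that the convolution does not depend on the order, so $L_i^e(\delta)\circ L_j^e(\delta) \simeq L_j^e(\delta)\circ L_i^e(\delta)$ as graded modules; the proof of that theorem already handles the grading.

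Self-duality can also be checked directly. We have $D(L_i^e(\delta)\circ L_j^e(\delta)) \simeq q^{(\delta,\delta)} DL_j^e(\delta)\circ DL_i^e(\delta) = L_j^e(\delta)\circ L_i^e(\delta)$, since $(\delta,\delta)=0$ and the $L^e_k(\delta)$ are self-dual. Combined with the commutation isomorphism, this confirms self-duality.

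For part (2), Proposition \ref{prop:imaginaryind}(2) shows that $\Delta_i^e(\delta)\circ\Delta_j^e(\delta)$ lies in $\gproj{R^e(2\delta)}$. The surjections $\Delta_k^e(\delta)\to L_k^e(\delta)$ induce a surjection $\Delta_i^e(\delta)\circ\Delta_j^e(\delta) \to L_i^e(\delta)\circ L_j^e(\delta)$, because convolution is exact. It remains to show that the head of $\Delta_i^e(\delta)\circ\Delta_j^e(\delta)$ is exactly this simple module, i.e.\ that the projective is indecomposable with simple head. I will compute $\hom_R(\Delta_i^e(\delta)\circ\Delta_j^e(\delta), L)$ for each self-dual simple $L\in\Xi_2^e$ and each grading shift. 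By adjunction this equals $\hom(\Delta_i^e(\delta)\otimes\Delta_j^e(\delta), \Res_{\delta,\delta}L)$. By Proposition \ref{prop:imaginaryind}(3), $\Res_{\delta,\delta}L$ is an $R^e(\delta)^{\otimes 2}$-module, so this hom space counts the multiplicity $[\Res_{\delta,\delta} L : L_i^e(\delta)\otimes L_j^e(\delta)]$ in degree $0$ (with $q^d$ shifts recorded separately).

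The cleanest way to get the count is Lemma \ref{lem:tensorspacehom}(3) with $\underline{i}=\underline{j}=(i,j)$. Then $l_i=l_j=1$, so $\hom_R(\Delta^e(\underline{i}),L^e(\underline{i}))$ is one-dimensional, and by (1) all $\Hom$ between $L^e(\underline i)$ and itself is concentrated in degree zero. For an arbitrary simple $L$ (possibly shifted), I will use the Mackey description of Proposition \ref{prop:imaginaryMackey}: if $\hom(\Delta_i^e(\delta)\otimes\Delta_j^e(\delta),\Res_{\delta,\delta}L)\ne0$, then $L$ is a quotient of $\Delta_i^e(\delta)\circ\Delta_j^e(\delta)$, hence a composition factor of $L_i^e(\delta)\circ L_j^e(\delta)$ up to shift. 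This holds because $\Delta_i^e(\delta)\circ\Delta_j^e(\delta)$ has a filtration by shifts of convolutions of composition factors of the $\Delta_k^e(\delta)$, all of which are $L_k^e(\delta)$ by Corollary \ref{cor:minusclesimple} and cuspidality.

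This is the step I expect to be the main obstacle: the composition factors of $\Delta^e_i(\delta)$ include $L^e_j(\delta)$ with positive shifts. The head is therefore controlled more cleanly by the argument of Lemma \ref{lem:tensorspacehom}(2). The endomorphism algebra $\End(\Delta_i^e(\delta)\circ\Delta_j^e(\delta))$ has graded dimension in $1+q\mathbb{Q}[q]_{(q)}$, so its degree-zero part is $\mathbf{k}$, and it is nonnegatively graded. A finitely generated graded projective module over a Laurentian algebra whose degree-$0$ endomorphisms are $\mathbf{k}$ and whose negative-degree endomorphisms vanish is indecomposable with simple head (graded Krull–Schmidt / local endomorphism ring argument). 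Its head is then $L_i^e(\delta)\circ L_j^e(\delta)$, giving the projective cover.
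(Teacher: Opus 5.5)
Your argument is correct, but for part (2) it takes a different route from the paper. Part (1) is the paper's argument (Theorem \ref{thm:onecolor} with Lemma \ref{lem:minusculeonecolor}), and your direct self-duality check via $D(M\circ N)\simeq q^{(\delta,\delta)}DN\circ DM$ is a harmless extra.

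\textbf{The paper's route for (2).} It computes $\hom$ from $P=\Delta_i^e(\delta)\circ\Delta_j^e(\delta)$ into every simple $R^e(2\delta)$-module $L$. By adjunction this is $[\Res_{\delta,\delta}L : L_i^e(\delta)\otimes L_j^e(\delta)]_q$. Proposition \ref{prop:imaginaryMackey} shows this is $1$ for $L=L_i^e(\delta)\circ L_j^e(\delta)$. The classification of simples from Theorem \ref{thm:onecolor} shows it is $0$ otherwise: either $L\in\Xi_2^k$, handled by Lemma \ref{lem:colorres}, or $L\simeq L_1\circ L_2$ for a different pair of colours, handled by Proposition \ref{prop:imaginaryMackey}.

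\textbf{Your route for (2).} You take $\underline{i}=\underline{j}=(i,j)$ in Lemma \ref{lem:tensorspacehom}(2) to get $\hom_R(P,P)=\mathbf{k}$. So the graded projective $P$ (Proposition \ref{prop:imaginaryind}(2)) is indecomposable in $\gMod{R^e(2\delta)}$. Since $R^e(2\delta)$ is Laurentian, $P$ is then the projective cover of its simple head. Because $P$ surjects onto the simple module $L_i^e(\delta)\circ L_j^e(\delta)$, that module is the head.

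\textbf{Comparison.} Your route avoids the case analysis over all simples of $R^e(2\delta)$ and does not need Lemma \ref{lem:colorres} for this step. Its costs are the general fact that indecomposable finitely generated graded projectives over a Laurentian algebra are projective covers, and the almost-orthogonality input (Lemma \ref{lem:qdimimaginary}) that enters through Lemma \ref{lem:tensorspacehom}. The paper's route instead gives an explicit Hom computation into each simple. Only degree zero matters for indecomposability, so you do not need the vanishing of negative-degree endomorphisms.

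\textbf{Delete your middle paragraph.} It claims that every composition factor of $\Delta_k^e(\delta)$ is a shift of $L_k^e(\delta)$, and hence that any simple quotient of $P$ is a composition factor of $L_i^e(\delta)\circ L_j^e(\delta)$. That claim is false, as you yourself observe: $\Delta_i^e(\delta)$ has shifted copies of $L_j^e(\delta)$ among its composition factors when $a_{i,j}<0$. The proof should rest on the endomorphism-ring argument alone.
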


\begin{proof} 
(1) follows from Theorem \ref{thm:onecolor} and Lemma \ref{lem:minusculeonecolor}. 

(2) Let $P = \Delta_i^e(\delta) \circ \Delta_j^e(\delta)$, 
which is a projective $R^e(2\delta)$-module by Proposition \ref{prop:imaginaryind} (2). 
For a simple $R^e(2\delta)$-module $L$, we have by induction-restriction adjunction: 
\begin{equation} \label{eq:singlecolor1}
\Hom_{R^e}(P,L) \simeq \Hom_{R^e(\delta)\otimes R^e(\delta)} (\Delta_i^e(\delta) \otimes \Delta_j^e(\delta), \Res_{\delta,\delta}L). 
\end{equation}
If $L = L_i^e(\delta)\circ L_j^e(\delta)$, Proposition \ref{prop:imaginaryMackey} shows 
\[
\Res_{\delta,\delta} L \simeq (L_i^e(\delta) \otimes L_j^e(\delta)) \oplus (L_j^e(\delta)\otimes L_i^e(\delta)).
\]
Since $i \neq j$, it follows that the graded dimension of (\ref{eq:singlecolor1}) is $1$ in this case. 

It remains to prove that, if any grading shifts of $L$ is not isomorphic to $L_i^e(\delta)\circ L_j^e(\delta)$, 
the space (\ref{eq:singlecolor1}) is zero. 
It suffices to show that $[\Res_{\delta,\delta}L \colon L_i^e(\delta)\otimes L_j^e(\delta)]_q = 0$. 
We ignore grading below. 
By Theorem \ref{thm:onecolor} and Lemma \ref{lem:minusculeonecolor}, 
either $L$ belongs to $\Xi_2^k$ for some $k \in \mathring{I}$, or $L \simeq L_1 \circ L_2$ for some $L_1 \in \Xi_1^{k_1}, L_2 \in \Xi_1^{k_2}$ with $k_1, k_2 \in \mathring{I}$ satisfying $k_1 \neq k_2$ and $\{k_1,k_2\} \neq \{i,j\}$. 
In the former case, the assertion follows from Lemma \ref{lem:colorres}. 
In the latter case, the assertion follows from Proposition \ref{prop:imaginaryMackey}. 

\end{proof}
Let $1 \leq i < j \leq N$.
Using Lemma \ref{lem:singlecolor}, we choose an isomorphism 
\begin{equation} \label{eq:isom1}
\kappa_{j,i} \colon L_j^e(\delta) \circ L_i^e(\delta) \to L_i^e(\delta) \circ L_j^e(\delta), 
\end{equation}
which is unique up to a scalar multiple. 
We define $\kappa_{i,j} = \kappa_{j,i}^{-1}$. 
By Lemma \ref{lem:tensorspacehom}, $\kappa_{j,i}$ uniquely lifts to a homogeneous homomorphism 
\begin{equation} \label{eq:isom2}
\Delta_j^e(\delta) \circ \Delta_i^e(\delta) \to \Delta_i^e(\delta) \circ \Delta_j^e(\delta).   
\end{equation}
By Lemma \ref{lem:tensorspacehom}, the inverse of (\ref{eq:isom1}) also has a lift, which implies that (\ref{eq:isom2}) is an isomorphism. 

Using these isomorphisms, we define isomorphisms 
\begin{equation} \label{eq:isom3}
L^e(\underline{m}) \circ L^e(\underline{n}) \simeq L^e(\underline{m}+\underline{n}), \ \Delta^e(\underline{m}) \circ \Delta^e(\underline{n}) \simeq \Delta^e(\underline{m}+\underline{n}) \ (\underline{m},\underline{n} \in \mathbb{Z}_{\geq 0}^{\mathring{I}})
\end{equation}
as follows.
Consider an element $w = w_{\underline{m},\underline{n}}$ of $\mathfrak{S}_{\lvert\underline{m}+\underline{n}\rvert}$ defined by 
\begin{align*}
w\left(\sum_{1 \leq p \leq k-1} m_p +r\right) &= \sum_{1\leq p \leq k-1} (m_p+n_p) + r \ (1 \leq k \leq N, 1 \leq r \leq m_k),  \\ 
w\left(\lvert \underline{m}\rvert + \sum_{1 \leq p \leq k-1}n_p + r\right) &= \sum_{1 \leq p \leq k-1} (m_p + n_p)+ m_k + r \ (1 \leq k \leq N, 1 \leq r \leq n_k). 
\end{align*}
Take a reduced expression of $w$ and compose isomorphisms (\ref{eq:isom1}) or (\ref{eq:isom2}) accordingly to obtain isomorphisms (\ref{eq:isom3}). 
We claim that this isomorphism is independent of the choice of the reduced expression. 
In fact, there exist no $1 \leq a < b < c \leq \lvert \underline{m} + \underline{n}\rvert$ such that $w(a) > w(b) > w(c)$,
hence any reduced expression of $w$ can be obtained from another by only using the relations $s_k s_l = s_ls_k \ (l-k \geq 2)$. 

The isomorphisms (\ref{eq:isom3}) induce homomorphisms
\begin{align} \label{eq:hom}
&\grend_R(\Delta^e(\underline{m})) \otimes \grend_R(\Delta^e(\underline{n})) \xrightarrow{can} \grend_R(\Delta^e(\underline{m})\circ \Delta^e(\underline{n})) \simeq \grend_R(\Delta^e(\underline{m}+\underline{n})), \\ 
&\grend_R(L^e(\underline{m})) \otimes \grend_R(L^e(\underline{n})) \xrightarrow{can} \grend_R(L^e(\underline{m})\circ L^e(\underline{n})) \simeq \grend_R(L^e(\underline{m}+\underline{n})). \notag
\end{align}
The following theorem is the main result of this paper. 

\begin{theorem} \label{thm:heckeaction}
  The following assertions hold: 
  \begin{enumerate}
  \item for any $\underline{n} \in \mathbb{Z}_{\geq 0}^{\mathring{I}}$ and $d < 0$, we have $\End_R(\Delta^e(\underline{n}))_d = 0$;
  \item for any $\underline{n} \in \mathbb{Z}_{\geq 0}^{\mathring{I}}$, $\End_R(L^e(\underline{n}))$ is concentrated in degree zero; 
  \item for each $\underline{n} \in \mathbb{Z}_{\geq 0}^{\mathring{I}}$, there exist isomorphisms of algebras
  \[
 \grend_{R(\lvert \underline{n} \rvert \delta)} (\Delta^e(\underline{n})) \simeq \grend_{R(\lvert \underline{n}\rvert\delta)} (L^e(\underline{n})) \simeq H_{\underline{n}},
  \]
  through which the homomorphisms (\ref{eq:hom}) coincides with the canonical inclusion
  \[
  \iota_{\underline{m},\underline{n}} \colon H_{\underline{m}}\otimes H_{\underline{n}} \to H_{\underline{m}+\underline{n}}. 
  \]
\end{enumerate}
\end{theorem}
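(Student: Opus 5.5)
Parts (1) and (2) come from Lemma~\ref{lem:tensorspacehom}. Write $\underline{i}$ for the sequence $(1^{n_1},\ldots,N^{n_N})$, so that $\Delta^e(\underline{n})=\Delta^e(\underline{i})$ and $L^e(\underline{n})=L^e(\underline{i})$. Then Lemma~\ref{lem:tensorspacehom}(1) gives $\dim_q\End_R(L^e(\underline{n}))=\prod_i n_i!$, concentrated in degree zero, which is (2). Lemma~\ref{lem:tensorspacehom}(2) gives $\dim_q\End_R(\Delta^e(\underline{n}))\in\prod_i n_i!+q\mathbb{Q}[q]_{(q)}$. Since $\End_R(\Delta^e(\underline{n}))$ is a Laurentian graded space, this leaves no room for negative degrees, which is (1). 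Lemma~\ref{lem:tensorspacehom}(3),(4) then give an algebra isomorphism $\grend(\Delta^e(\underline{n}))\simeq\grend(L^e(\underline{n}))$, where $\grend$ is the degree-zero part. This isomorphism is compatible with the maps (\ref{eq:hom}): the lifts of $\kappa_{j,i}$ and the canonical maps $\Delta\to L$ commute with convolution. It also matches the reordering isomorphisms (\ref{eq:isom3}), by uniqueness of lifts. So the remaining task for (3) is to identify $E_{\underline{n}}:=\End_R(L^e(\underline{n}))$ with $H_{\underline{n}}$ compatibly with $\iota_{\underline{m},\underline{n}}$.

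The key step is the one-color case $n\mathbf{e}_i$, i.e.\ producing $T\in\End_R(L_i^e(\delta)^{\circ 2})$ with $(T+1)(T-t)=0$, $T\neq$ scalar, and then verifying the braid relation in $\End(L_i^e(\delta)^{\circ3})$. Once this is done, set $T_k=\id^{\circ(k-1)}\circ T\circ\id^{\circ(n-k-1)}$; these satisfy the Hecke relations, so we get a map $H_n\to\End_R(L_i^e(\delta)^{\circ n})$. Injectivity comes from examining the action on $\Res_{\delta^n}$ via Lemma~\ref{lem:Mackey}: the image of $T_w$ has leading term $\tilde\tau_w$ in the Mackey filtration, so the images of the $T_w$ are linearly independent. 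Counting dimensions, $n!$, finishes the one-color case. For general $\underline{n}$, the one-color pieces embed via convolution and the isomorphisms (\ref{eq:isom3}), giving $H_{\underline{n}}\to E_{\underline{n}}$. Injectivity again follows by the same Mackey leading-term argument, restricted to permutations preserving colors, and a dimension count gives bijectivity. Compatibility with $\iota_{\underline{m},\underline{n}}$ is then by construction, since reordering by the $\kappa$'s conjugates $T_k\otimes1$ and $1\otimes T_l$ to the correct generators.

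For the one-color construction I would follow the reduction announced in the introduction. In type $A_1^{(1)}$, take $T$ to be an explicit linear combination of the action of $\tilde\tau_1$ (right multiplication, made into a module endomorphism via intertwiners) and the identity. Then compute $\tilde\tau_1^2$ on $L_i^e(\delta)\boxtimes L_i^e(\delta)$ using $Q_{0,1}=(u-v)(v-tu)$; this is where $t$ enters and yields the quadratic relation. The braid relation is checked directly on the generating subspace $L\boxtimes L\boxtimes L$, which suffices because the endomorphisms are determined by their value there. For type $A_N^{(1)}$, I would use monoidal functors $\mathcal{F}_i\colon\gMod{\underline{R}}\to\gMod{R}$ in the spirit of \cite{MR3670026,MR3790066}. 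These are built by sending the $A_1^{(1)}$ nodes to $\alpha_i$ and to the root $\delta-\alpha_i$, realised by $L^e(\delta-\alpha_i)$, i.e.\ the cuspidal module $L^{\preceq}(\delta-\alpha_i)$, and they carry $\underline{L}^e_1(\delta)$ to $L_i^e(\delta)$. Monoidality then transports the Hecke relations. The main obstacle is this step: constructing $\mathcal{F}_i$ (equivalently, identifying the effective parameter in $R$-matrix terms, with $t$ emerging from $Q_{N,0}=u-tv$), and doing the explicit $A_1^{(1)}$ computation of the quadratic relation. Should the functor approach be too heavy, I would fall back on normalized $R$-matrices: take $T=$ the renormalized $\mathsf{R}$ on $L_i^e(\delta)^{\circ2}$ plus a multiple of $\id$. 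The braid relation is automatic from the Yang--Baxter property of $R$-matrices. The quadratic relation holds because $\End$ of $L_i^e(\delta)^{\circ 2}$ is $2$-dimensional, so $T$ satisfies some quadratic; it remains to compute its eigenvalues as $-1,t$ via the action on $\Res_{\delta,\delta}$.
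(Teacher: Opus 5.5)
Your treatment of (1), (2), the identification $\grend_R(\Delta^e(\underline{n}))\simeq\grend_R(L^e(\underline{n}))$, and injectivity via Mackey leading terms follows the paper. So does your plan for the one-color relations: $\tau_{w[2,2]}$ plus a scalar in type $A_1^{(1)}$, transported to $A_N^{(1)}$ by $\mathcal{F}_i$, with $t$ entering through $Q_{\underline{0},\underline{1}}(u,v)=(u-v)(v-tu)$.

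The gap is your claim that compatibility with $\iota_{\underline{m},\underline{n}}$ holds ``by construction''. The maps (\ref{eq:hom}) involve conjugation by the reordering isomorphisms (\ref{eq:isom3}). These are composites of the $\kappa_{j,i}$, which are isomorphisms between particular simple modules, fixed only up to scalar. They are not components of a braiding natural in $L_i^e(\delta)^{\circ 2}$. Take $i\neq j$ and conjugate by the composite $\kappa\colon L_i^e(\delta)^{\circ 2}\circ L_j^e(\delta)\xrightarrow{\sim}L_j^e(\delta)\circ L_i^e(\delta)^{\circ 2}$. This sends $\mathsf{T}_i\circ\id$ to $\id\circ\psi(\mathsf{T}_i)$ for some algebra automorphism $\psi$ of $\End_R(L_i^e(\delta)^{\circ 2})\simeq H_2$; the scalars cancel. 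Nothing in your construction rules out $\psi(T_1)=-T_1+t-1$, and in that case (\ref{eq:hom}) would not coincide with $\iota_{\underline{m},\underline{n}}$ under your identification. This is item (d) of the paper's reduction, namely Proposition \ref{prop:check}(2) for $(m,n)=(2,1)$, and it needs a real argument.

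In characteristic $\neq 2$, the paper compares the coefficient of $\tilde\tau_{w_3}$ in the Mackey filtration of $L_j^e(\delta)\circ L_i^e(\delta)^{\circ 2}$ on the two sides of $(\id\circ\psi(\mathsf{T}_i))\,\kappa=\kappa\,(\mathsf{T}_i\circ\id)$. A sign change would force $b_ic_{i,j}^2=-b_ic_{i,j}^2$, which is impossible.

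In characteristic $2$ with $t\neq 1$, however, the nontrivial automorphism is $T_1\mapsto T_1+t+1$. It has the same leading term as the identity, so no leading-term argument of the kind you use for injectivity can detect it. There the paper proceeds as follows:
\begin{enumerate}
\item It splits $L_i^e(\delta)^{\circ 2}=K\oplus K'$ into the $(-1)$- and $t$-eigenspaces of $\mathsf{T}_i$.
\item A nontrivial $\psi$ would give $K\circ L_j^e(\delta)\simeq L_j^e(\delta)\circ K'$, hence $[K]=[K']$ by Theorem \ref{thm:onecolor}, and so $[L_i^e(\delta)^{\circ 2}]=2[K]$.
\item This contradicts Lemma \ref{lem:string}, proved via the crystal structure: some composition factor of $L\circ L$ occurs exactly once.
\end{enumerate}

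Your fallback route would not work as stated either. $L_i^e(\delta)$ is not real, since its self-convolution has a $2$-dimensional endomorphism ring, so Theorem \ref{thm:rmatrix} supplies no normalized $R$-matrix on $L_i^e(\delta)^{\circ 2}$. Moreover, adding a scalar to a Yang--Baxter solution destroys the braid relation unless the shift is specially tuned. Indeed, the paper's $\tau$ itself fails the braid relation (Lemma \ref{lem:taubraid}), and only $\tau+t$ satisfies it. Since $H_2(t)\simeq\mathbf{k}\times\mathbf{k}$ for every $t\neq -1$, the parameter is detected by the braid relation, not by eigenvalues.

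Two smaller points on the functor route. First, $\mathcal{F}_i$ must be built from $\Delta^e(\delta-\alpha_i)$ and $R(\alpha_i)$ with their polynomial endomorphisms $z,w$, not from $L^e(\delta-\alpha_i)$. The relation $\tau_kx_{k+1}-x_k\tau_k=1$ for equal colors cannot hold if the dots act by zero. Second, you still need to check that $\mathcal{F}_i(\mathsf{T})$ is nonconstant, as in Lemma \ref{lem:Tiform}.
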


(1) and (2) are special cases of Lemma \ref{lem:tensorspacehom}.  
We also see using Proposition \ref{prop:imaginaryMackey} that there are canonical isomorphisms
\[
\grend_R(\Delta^e(\underline{n})) \simeq \grend_R(L^e(\underline{n})) \simeq \grend_R(L_1^e(\delta)^{\circ n_1}) \otimes \cdots \otimes \grend_R(L_N^e(\delta)^{\circ n_N}). 
\]
Hence, Theorem \ref{thm:heckeaction} is reduced to the following proposition. 

\begin{proposition} \label{prop:check}
For each $i \in \mathring{I}$ and $n \in \mathbb{Z}_{\geq 0}$, there exists an isomorphism of algebras
\begin{equation} \label{eq:isom4}
\grend_R(L_i^e(\delta)^{\circ n}) \simeq H_n, 
\end{equation}
that satisfies the following conditions: 
\begin{enumerate}
\item under the identification (\ref{eq:isom4}), the following two canonical homomorphisms coincide for $i \in \mathring{I}$ and $m,n \in \mathbb{Z}_{\geq 0}$:  
\[
\grend_R(L_i^e(\delta)^{\circ m}) \otimes \grend_R(L_i^e(\delta)^{\circ n}) \to \grend_R(L_i^e(\delta)^{\circ m+n}), \ \iota_{m,n} \colon H_m \otimes H_n \to H_{m+n};
\]
\item for $i, j \in \mathring{I}$ with $i \neq j$ and $m,n \in \mathbb{Z}_{\geq 0}$, the induced isomorphism 
\begin{align*}
&H_m \otimes H_n \simeq \grend_R(L_i^e(\delta)^{\circ m}) \otimes \grend_R(L_j^e(\delta)^{\circ n}) \simeq \grend_R(L_i^e(\delta)^{\circ m}\circ L_j^e(\delta)^{\circ n}) \\
&\overset{(\ref{eq:isom3})}{\simeq} \grend_R(L_j^e(\delta)^{\circ n}\circ L_i^e(\delta)^{\circ m}) \simeq \grend_R(L_j^e(\delta)^{\circ n}) \otimes \grend_R(L_i^e(\delta)^{\circ m}) \simeq H_n \otimes H_m
\end{align*}
coincides with $x \otimes y \mapsto y \otimes x \ (x \in H_m, y \in H_n)$. 
\end{enumerate}
\end{proposition}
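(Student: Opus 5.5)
We first fix the dimension count, then build the generators, then check the Hecke relations by reduction to rank one. By Lemma \ref{lem:tensorspacehom} (1), $\End_R(L_i^e(\delta)^{\circ n})$ is concentrated in degree zero and has dimension $n!=\dim H_n$. It therefore suffices to construct $n-1$ endomorphisms $T_1,\dots,T_{n-1}$ of $L_i^e(\delta)^{\circ n}$ that satisfy the defining relations of $H_n$ and generate an algebra of dimension $n!$.

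\emph{Construction of the generators.} Set $T_k=\id^{\circ(k-1)}\circ T\circ \id^{\circ(n-k-1)}$ for a single endomorphism $T\in\End(L_i^e(\delta)^{\circ 2})$. The latter space is $2$-dimensional, with basis $\id$ and one further element $\sigma$. To get $\sigma$, we look at the Mackey filtration of Lemma \ref{lem:Mackey}. The $R(\delta)^{\otimes 2}$-submodule $\tilde\tau_1(L\boxtimes L)$ modulo $F_{<s_1}$ splits off by Proposition \ref{prop:imaginaryMackey}. The element $v\boxtimes u\mapsto\tilde\tau_1(u\boxtimes v)$ (suitably corrected, e.g.\ by $x$'s or by the R-matrix construction of Subsection \ref{sub:Rmatrix}) then gives an endomorphism not proportional to $\id$. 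Since the algebra is $2$-dimensional, $\sigma$ satisfies a quadratic equation, and we normalize it to $T$ with $(T+1)(T-t)=0$. Two things must be checked here. First, the eigenvalue ratio is exactly $-t$ and not some other scalar. Second, the braid relation $T_1T_2T_1=T_2T_1T_2$ must hold in $\End(L_i^e(\delta)^{\circ 3})$.

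\emph{Reduction to type $A_1^{(1)}$.} We use the monoidal functors $\mathcal F_i\colon\gMod{\underline R}\to\gMod R$ from the introduction, following \cite{MR3670026,MR3790066}. The plan is to show that $\mathcal F_i$ sends the minuscule module $\underline{L}^e_1(\delta)$ of type $A_1^{(1)}$ (with the appropriate parameter, namely $t$ itself in type $A$ and $1$ otherwise) to $L_i^e(\delta)$. Since $\mathcal F_i$ is monoidal, it then sends $\underline L^e_1(\delta)^{\circ n}$ to $L_i^e(\delta)^{\circ n}$. We would identify the image as simple, self-dual and in $\Xi_1^i$ via Lemma \ref{lem:onecolor}, the relevant restrictions being computable from the definition of $\mathcal F_i$, and then conclude by Lemma \ref{lem:minusculeonecolor}. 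Faithfulness of $\mathcal F_i$ on these Hom spaces, together with the equality of dimensions ($n!$ on both sides), makes $\mathcal F_i$ induce an isomorphism of endomorphism algebras compatible with convolution. Everything therefore reduces to type $A_1^{(1)}$ with $i=1$.

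\emph{Type $A_1^{(1)}$.} Here $\delta=\alpha_0+\alpha_1$, and $L_1^e(\delta)$ is one-dimensional, concentrated in $e(0,1)$, with $x_1,x_2$ acting by zero. The module $L^{\circ n}$ has an explicit basis $\tau_w$ applied to $u^{\boxtimes n}$ indexed by shuffles. We write $T$ explicitly as a combination of $\tau_2\tau_1\tau_3\tau_2$ and $\id$ acting on $L\circ L$. We check $T^2=(t-1)T+t$ using the relation $\tau_k^2=Q_{0,1}=(x_k-x_{k+1})(x_{k+1}-tx_k)$. We check the braid relation on $L^{\circ 3}$ using the cubic relation with $\overline Q$. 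Both checks are finite computations on the vector $u^{\boxtimes n}$, since the $x$'s act nilpotently in a controlled way. Linear independence of the words $T_w$ then follows from a leading-term argument in the Mackey filtration: $T_w$ acts as $\tilde\tau_w$ plus terms from $F_{<w}$. Hence the $T_w$ span $n!$ dimensions and form a basis, which gives (\ref{eq:isom4}).

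\emph{Compatibility conditions.} Condition (1) holds by construction, since $T_k$ is built by convolving with identities. For condition (2), the isomorphisms (\ref{eq:isom3}) are compositions of $\kappa$'s, and naturality of the convolution of morphisms makes conjugation by them carry $f\circ g$ to $g\circ f$. The only issue is the scalar normalization, which is immaterial because it cancels under conjugation.

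The main obstacle is the rank-one verification that the eigenvalue ratio is exactly $-t$ and that the braid relation holds, together with proving that $\mathcal F_i$ carries the $A_1^{(1)}$ minuscule module to $L_i^e(\delta)$ with the correct parameter. I would try the explicit diagrammatic computation first.
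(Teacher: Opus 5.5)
Your $A_1^{(1)}$ computation, the injectivity argument via leading terms in the Mackey filtration, and condition (1) follow the paper. There are two genuine gaps.

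\emph{The reduction to $A_1^{(1)}$ fails as stated.} $\mathcal{F}_i$ is only right exact, and for $N\geq 2$ it does not send $\underline{L}^e_{\underline 1}(\underline\delta)$ to $L_i^e(\delta)$.

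\begin{itemize}
\item Present the one-dimensional module $\underline{L}^e_{\underline 1}(\underline\delta)$ as $\underline R(\underline\delta)e(\underline0,\underline1)$ modulo $x_1e(\underline0,\underline1)$, $x_2e(\underline0,\underline1)$ and $\tau_1e(\underline0,\underline1)$.
\item With this presentation, $\mathcal{F}_i(\underline{L}^e_{\underline1}(\underline\delta))$ is the cokernel of the map $L(\underline1)\circ L(\underline0)\to L(\underline0)\circ L(\underline1)$ induced by $\tau_{\underline1,\underline0}$. That is, it is $L(\underline0)\circ L(\underline1)$ modulo its socle.
\item In type $A_2^{(1)}$ with $i=1$, $L(\underline0)=L(0,2)$ is one-dimensional. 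Then $L(0,2)\circ L(1)$ is three-dimensional and uniserial, with one vector in each of $e(0,2,1)$, $e(0,1,2)$, $e(1,0,2)$.
\item Hence $\mathcal{F}_1(\underline{L}^e_{\underline1}(\underline\delta))$ is a two-dimensional nonsplit extension of $L_1^e(\delta)$ by a grading shift of $L_2^e(\delta)$. It is not simple, so your identification via Lemma~\ref{lem:onecolor} cannot work.
\end{itemize}

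The paper avoids this by applying $\mathcal{F}_i$ to the projective $\underline\Delta^e_{\underline1}(\underline\delta)$.

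\begin{itemize}
\item Right exactness and the presentation in Proposition~\ref{prop:imaginarystd} give $\mathcal{F}_i(\underline\Delta^e_{\underline1}(\underline\delta))\simeq\Delta_i^e(\delta)$ (Lemma~\ref{lem:GDelta}).
\item So $\mathcal{F}_i(\mathsf{T}_{\underline1})$ is an endomorphism of $\Delta_i^e(\delta)^{\circ2}$. It satisfies (a) and (b) by monoidality.
\item Lemma~\ref{lem:tensorspacehom}(3)(4) transfers it, together with its relations, to $L_i^e(\delta)^{\circ2}$.
\end{itemize}

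No faithfulness of $\mathcal{F}_i$ is needed, only nonconstancy, which comes from the explicit formula in Lemma~\ref{lem:Tiform}. The point you flag as the main obstacle, getting the parameter exactly $t$, is Proposition~\ref{prop:computeQ}: $Q_{\underline0,\underline1}(u,v)=(u-v)(v-tu)$. It is computed from the explicit model of $\Delta(\underline0)$ in Lemma~\ref{lem:Deltazero}, where $x_k$ acts by $z$ or by $tz$ according to whether $\nu_k<i$ or $\nu_k>i$. This is exactly where $t$ enters, and your proposal gives no mechanism for it.

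\emph{Condition (2) is not a formal consequence of naturality.}

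\begin{itemize}
\item $\gMod{R}$ is not braided, and the $\kappa$'s are merely chosen isomorphisms between specific objects.
\item $L_j^e(\delta)$ is not real, since its square has a $2$-dimensional endomorphism algebra. So no naturality of renormalized R-matrices is available either.
\item Conjugation by (\ref{eq:isom3}) induces an algebra automorphism $\psi$ of $\grend_R(L_i^e(\delta)^{\circ2}\circ L_j^e(\delta))\simeq H_2$. The real issue is not a scalar, which indeed cancels, but whether $\psi$ could be $T_1\mapsto -T_1+t-1$.
\end{itemize}

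The paper reduces (2) to $(m,n)=(2,1)$ and rules this automorphism out in two cases.

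\begin{itemize}
\item If $\mathbf{k}$ has characteristic different from $2$, it compares leading terms at $\tilde\tau_{w_3}$ in the Mackey filtration.
\item If $\mathbf{k}$ has characteristic $2$ and $t\neq1$, it uses the eigenspace decomposition $L_i^e(\delta)^{\circ2}=K\oplus K'$. If $\psi\neq\id$, then $K\circ L_j^e(\delta)\simeq L_j^e(\delta)\circ K'$, which forces $[K]=[K']$ by Theorem~\ref{thm:onecolor}. This contradicts Lemma~\ref{lem:string}, which says some composition factor of $L\circ L$ has multiplicity one (proved via the crystal structure).
\end{itemize}
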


By Lemma \ref{lem:tensorspacehom}, 
we have $\dim \grend_R(L_i^e(\delta)^{\circ n}) = n!$. 
Hence, to prove the proposition it suffices to give a nonconstant endomorphism $\mathsf{T}_i$ of $L_i^e(\delta)^{\circ 2}$ for each $i \in \mathring{I}$, 
and verify that the following assertions hold: 
\begin{enumerate}[label = (\alph*)]
\item $(\mathsf{T}_i+1)(\mathsf{T}_i-t) = 0$ in $\grend_R(L_i^e(\delta)^{\circ 2})$; 
\item $\mathsf{T}_i^{1,2} \mathsf{T}_i^{2,3}\mathsf{T}_i^{1,2} = \mathsf{T}_i^{2,3}\mathsf{T}_i^{1,2}\mathsf{T}_i^{2,3}$ in $\grend_R(L_i^e(\delta)^{\circ 3})$;
\item the induced homomorphism $H_n \to \grend_R(L_i^e(\delta)^{\circ n})$ is injective for any $n \geq 0$;
\item Proposition \ref{prop:check} (2) holds for $j \neq i$ and $(m,n) = (2,1)$. 
\end{enumerate}
Here, for $X_k, Y_k \in \gMod{R} \ (1 \leq k \leq r)$ and a homomorphism $f \colon X_k \circ X_{k+1} \to Y_k \circ Y_{k+1}$ for a specific $1 \leq k \leq r-1$, 
$f^{k,k+1}$ denotes the induced homomorphism from $X_1 \circ \cdots \circ X_r$ to $Y_1 \circ \cdots \circ Y_r$. 

In the next subsection, we will see that (c) and (d) automatically hold for nonconstant $T_i$ satisfying (a) and (b). 
Hence, it remains to construct nonconstant $\mathsf{T}_i$ satisfying (a) and (b). 
When $t =1$, this is already known \cite[Section 14]{MR3694676}, \cite[Section 4.2]{MR3589160}.
Note that, outside of type $A_N^{(1)}$, the parameter $t$ reduces to $1$ (Subsection \ref{sub:quiverHecke}); 
hence the  result follows from the existing works. 

To find $\mathsf{T}_i$ in the type $A_N^{(1)}$ case with an arbitrary parameter $t$, 
we first complete the type $A_1^{(1)}$ case by explicitly constructing an endomorphism $\mathsf{T}_1$ of $L_1^e(\delta)^{\circ 2}$ in Subsection \ref{sub:A1case}. 
Then, we explain how to reduce the general case of type $A_N^{(1)}$ to type $A_1^{(1)}$ in Subsection \ref{sub:ANcase}. 

\subsection{Proof of (c) and (d)}

In this subsection, assume that $\mathbf{k}$ is a field. 
We prove that given nonconstant endomorphisms $\mathsf{T}_i$ of $L_i^e(\delta)^{\circ 2}$ for $i \in \mathring{I}$ satisfying (a) and (b), 
they also satisfy (c) and (d). 
Note that we have an isomorphism 
\begin{align*}
\End_R(L_i^e(\delta)^{\circ 2}) &\simeq \Hom_{R(\delta)\otimes R(\delta)} (L_i^e(\delta)^{\otimes 2},\Res_{\delta,\delta}L_i^e(\delta)^{\circ 2}), \\
f &\mapsto (u\otimes v \mapsto f(u\boxtimes v)), 
\end{align*}
by induction-restriction adjunction. 
By Lemma \ref{lem:Mackey}, the Mackey-filtration of $\Res_{\delta,\delta}L_i^e(\delta)^{\circ 2}$ yields a short exact sequence of $R^e(\delta)^{\otimes 2}$-modules
\[
0 \to F_{<s_1}L_i^e(\delta)^{\circ 2} = L_i^e(\delta) \otimes L_i^e(\delta) \to \Res_{\delta,\delta}L_i^e(\delta)^{\circ 2} \to L_i^e(\delta)\otimes L_i^e(\delta) \to 0. 
\]
Hence, the assumption that $\mathsf{T}_i$ is nonconstant implies the following lemma. 

\begin{lemma} \label{lem:leadingterm}
Let $i \in \mathring{I}$. 
There exists $b_i \in \mathbf{k}^{\times}$ such that 
\[
\mathsf{T}_i(u\boxtimes v) \equiv b_i \tilde{\tau}_1 (v \boxtimes u) \mod F_{<s_1} L_i^e(\delta)^{\circ 2} \ (u,v \in L_i^e(\delta)). 
\]
\end{lemma}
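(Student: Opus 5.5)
Set $L = L_i^e(\delta)$ and $\Phi(f)(u\otimes v) = f(u\boxtimes v)$. By the adjunction isomorphism recalled just before the lemma, $f \mapsto \Phi(f)$ identifies $\End_R(L^{\circ 2})$ with $\Hom_{R(\delta)\otimes R(\delta)}(L\otimes L, \Res_{\delta,\delta}L^{\circ 2})$. The plan is to compose $\Phi(\mathsf{T}_i)$ with the projection onto the top Mackey quotient and show this composite is a nonzero multiple of the canonical map.

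First, the short exact sequence from Lemma \ref{lem:Mackey} with $A = \mathfrak{S}_2$ and maximal element $w = s_1$ gives a projection
\[
\pi \colon \Res_{\delta,\delta}L^{\circ 2} \to \Res_{\delta,\delta}L^{\circ 2}/F_{<s_1}L^{\circ 2} \simeq L \otimes L .
\]
Under this identification, $\tilde{\tau}_1(v\boxtimes u)$ corresponds to $u\otimes v$. Thus $\pi\circ\Phi(\mathsf{T}_i)$ is an $R(\delta)^{\otimes 2}$-endomorphism of $L\otimes L$. Since $L$ is absolutely irreducible (simple modules over $R$ are absolutely irreducible), $L\otimes L$ is a simple $R(\delta)^{\otimes 2}$-module with endomorphism ring $\mathbf{k}$. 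Hence $\pi\circ\Phi(\mathsf{T}_i) = b_i\cdot\id$ for some $b_i\in\mathbf{k}$. Unwinding the identification, this says exactly
\[
\mathsf{T}_i(u\boxtimes v)\equiv b_i\tilde{\tau}_1(v\boxtimes u) \mod F_{<s_1}L^{\circ 2},
\]
where I use $\deg\tilde{\tau}_1 e(\delta,\delta) = 0$, so no grading shift appears.

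It remains to show $b_i\neq 0$; this is the main point. Suppose $b_i = 0$. Then $\Phi(\mathsf{T}_i)$ lands in $F_{<s_1}L^{\circ 2} = L\boxtimes L \simeq L\otimes L$, so it is again an endomorphism of the simple module $L\otimes L$. Hence $\Phi(\mathsf{T}_i)(u\otimes v) = c\,(u\boxtimes v)$ for some $c\in\mathbf{k}$. But $\Phi(c\,\id_{L^{\circ 2}})$ is also $u\otimes v\mapsto c\,u\boxtimes v$, and $\Phi$ is injective. Therefore $\mathsf{T}_i = c\,\id$, contradicting the assumption that $\mathsf{T}_i$ is nonconstant. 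So $b_i\in\mathbf{k}^{\times}$.
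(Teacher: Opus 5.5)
Your proof is correct and is the paper's argument: the paper likewise identifies $\End_R(L_i^e(\delta)^{\circ 2})$ with $\Hom_{R(\delta)\otimes R(\delta)}(L_i^e(\delta)^{\otimes 2},\Res_{\delta,\delta}L_i^e(\delta)^{\circ 2})$ by induction--restriction adjunction and uses the two-step Mackey filtration from Lemma~\ref{lem:Mackey}. The paper leaves implicit the two Schur-lemma steps you spell out: reading off $b_i$ on the top quotient, and showing that $b_i=0$ would force $\mathsf{T}_i$ to be a scalar.
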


Now, we prove (c) for $i \in \mathring{I}$. 

\begin{proof}[Proof of (c)]
Let $n \geq 1$ and let $\pi_i \colon H_n \to \End_R(L_i^e(\delta)^{\circ n})$ be the homomorphism induced from $\mathsf{T}_i$. 
By Lemma \ref{lem:leadingterm}, for any $w \in \mathfrak{S}_n$ and $u_1, \ldots, u_n \in L_i^e(\delta)$, we have 
\[
\pi_i(T_w) (u_1 \boxtimes \cdots \boxtimes u_n) \equiv b_i^{\ell(w)}\tilde{\tau}_w(u_{w^{-1}(1)} \boxtimes \cdots \boxtimes u_{w^{-1}(n)}) \mod F_{<w} L_i^e(\delta)^{\circ n}
\]
Hence, $\{\pi_i(T_w) \mid w \in \mathfrak{S}_n\}$ is linearly-independent, which shows that $\pi_i$ is injective. 
\end{proof}

Next, we prove (d) for $i,j \in \mathring{I}$ such that $i \neq j$. 

\begin{proof}[Proof of (d)]
Since $(m,n) = (2,1)$, the composition described in Proposition \ref{prop:check} (2) yields a $\mathbf{k}$-algebra automorphism
\[
\psi \colon H_2 \simeq  H_2 \otimes H_1 \simeq H_1 \otimes H_2 \simeq H_2, 
\]
which must be shown to be the identity. 
Suppose for contradiction that it is not the identity. 
Note that $H_2$ has only at most one nontrivial $\mathbf{k}$-algebra automorphism given by 
\[
T_1 \mapsto -T_1 + t-1. 
\] 
When the characteristic of $\mathbf{k}$ is $2$ and $t=1$, this coincides with the identity.  
Therefore, we may assume that $\psi(T_1) = -T_1 +t -1$ and either the characteristic of $\mathbf{k}$ is not $2$ or $t \neq 1$. 

First, we assume that the characteristic of $\mathbf{k}$ is not $2$. 
Recall that the definition of automorphism $\psi$ involves an isomorphism $\kappa_{i,j} \colon L_i^e(\delta)\circ L_j^e(\delta) \to L_j^e(\delta)\circ L_i^e(\delta)$ of (\ref{eq:isom1}). 
Similarly to Lemma \ref{lem:leadingterm}, there exists $c_{i,j} \in \mathbf{k}^{\times}$ such that 
\[
\kappa_{i,j} (u \boxtimes v) \equiv c_{i,j} \tilde{\tau}_1(v \boxtimes u) \mod F_{<s_1} (L_j^e(\delta)\circ L_i^e(\delta)), 
\]
for $u \in L_i^e(\delta), v \in L_j^e(\delta)$.
Now, the equation $\psi(T_1) = -T_1 + t-1$ implies 
\[
b_ic_{i,j}^2\tilde{\tau}_{w_3} (u_1 \boxtimes u_2 \boxtimes u_3) \equiv -b_ic_{i,j}^2 \tilde{\tau}_{w_3}(u_1 \boxtimes u_2 \boxtimes u_3) \mod F_{< w_3} (L_j^e(\delta) \circ L_i^e(\delta)^{\circ 2}), 
\]
for any $u_1 \in L_j^e(\delta), u_2,u_3 \in L_i^e(\delta)$. 
Since the characteristic is not $2$, it yields a contradiction. 

Next, we assume that the characteristic of $\mathbf{k}$ is $2$ and $t \neq 1$. 
Then, $t \neq -1$. 
Hence, the module $L_i^e(\delta)^{\circ 2}$ has an eigenspace decomposition
\[
L_i^e(\delta)^{\circ 2} = \Ker (T_1+1) \oplus \Ker (T_1-t). 
\]
Let $K = \Ker (T_1+1), K' = \Ker (T_1-t)$. 
Since $\psi(T_1) = -T_1 + t-1$, we have 
\begin{equation} \label{eq:contradiction}
K\circ L_j^e(\delta) \simeq L_j^e(\delta) \circ K'. 
\end{equation}
We claim that $[K] = [K']$ in the Grothendieck group $K(\gmod{R})$. 
Since $K$ is a submodule of $L_i^e(\delta)^{\circ 2}$ and $L_i^e(\delta) \in \Xi^i$ by Lemma \ref{lem:minusculeonecolor}, 
we have $e(2\delta-\alpha_j,j)K = 0$ for any $j \in \mathring{I} \setminus \{i\}$. 
Hence, Lemma \ref{lem:onecolor} shows that all the composition factors of $K$ belong to $\Xi^i$ up to grading shifts. 
This is also true for $K'$. 
Therefore, Theorem \ref{thm:onecolor} and (\ref{eq:contradiction}) imply $[K] = [K']$. 
Hence, $[L_i^e(\delta)^{\circ 2}] = [K] + [K'] = 2[K]$. 
However, there exists a simple module that appears in the composition series of $L_i^e(\delta)^{\circ 2}$ exactly once as shown in Lemma \ref{lem:string} below.  
It yields a contradiction.
\end{proof}

In the proof above, we used the following lemma:

\begin{lemma}\label{lem:string}
For a simple module $L \in \gmod{R}$, 
there exists a simple module $L'$ such that 
\[
[L\circ L \colon L']_{q =1} = 1. 
\]
\end{lemma}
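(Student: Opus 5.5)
We use the crystal (string) structure of simple modules, in the spirit of Lauda--Vazirani and Kleshchev's ``$\varepsilon_i$'' theory. For $i \in I$ and a simple module $M$, let $\varepsilon_i(M)$ be the largest $k$ with $e(\beta - k\alpha_i, k\alpha_i)M \neq 0$, where $\wt M = -\beta$. Recall the standard facts, valid over any field for any quiver Hecke algebra (see \cite{MR2525917} and \cite{MR3314831,MR3758148}):
\begin{itemize}
\item $\Res_{\beta-k\alpha_i,k\alpha_i}M$ with $k=\varepsilon_i(M)$ has simple socle $\tilde{e}_i^{\,k}M \otimes L(i^k)$;
\item $\varepsilon_i(\tilde{e}_i^{\,k} M) = 0$;
\item for simple $M,N$ we have $\varepsilon_i(M\circ N) = \varepsilon_i(M)+\varepsilon_i(N)$ on composition factors, in the sense that every composition factor of $M \circ N$ has $\varepsilon_i \le \varepsilon_i(M)+\varepsilon_i(N)$.
\end{itemize}
The plan is to peel off maximal strings one color at a time and reduce to a statement about products of a simple module with itself in which the multiplicity one is forced by a Mackey count.

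Concretely, fix a simple $L$ of weight $-\beta$ and proceed by induction on $\height\beta$; the case $\beta=0$ is trivial. Choose $i$ with $k=\varepsilon_i(L)>0$ and put $L_0=\tilde{e}_i^{\,k}L$. In $L\circ L$, consider the subspace $e(2\beta-2k\alpha_i,2k\alpha_i)(L\circ L)$. By the Mackey filtration, together with $\varepsilon_i(L)=k$, only the shuffle in which both $i$-tails go to the right contributes. Hence
\[
\Res_{2\beta-2k\alpha_i,2k\alpha_i}(L\circ L)
\]
has a filtration with subquotients of the form
\[
(X\circ Y)\otimes (L(i^k)\circ L(i^k)),
\]
where $X\otimes L(i^k)$ and $Y\otimes L(i^k)$ run over the composition factors of $\Res_{\beta-k\alpha_i,k\alpha_i}L$ with full $i$-tail. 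Among these, exactly one is $L_0\otimes L(i^k)$ up to grading shift, with multiplicity one: this is the standard fact $[\Res L : L_0\otimes L(i^k)]_{q=1}=1$, which follows from $\tilde{f}_i$ and $\tilde{e}_i$ being inverse on simples with $\varepsilon_i$ computed by $L(i^k)$.

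By the induction hypothesis applied to $L_0$, there is a simple $L_0'$ with $[L_0\circ L_0:L_0']_{q=1}=1$. Moreover $\varepsilon_i(L_0')=0$, since $\varepsilon_i(L_0)=0$. Take $L'=\tilde{f}_i^{\,2k}L_0'$, so that $\varepsilon_i(L')=2k$ and $\tilde{e}_i^{\,2k}L'=L_0'$. Then, via adjunction or the socle property,
\[
[L\circ L:L']_{q=1} = \big[\Res_{2\beta-2k\alpha_i,2k\alpha_i}(L\circ L) : L_0'\otimes L(i^{2k})\big]_{q=1}.
\]
In the filtration above, $X$ and $Y$ must satisfy $\varepsilon_i=0$ and be the unique factors mentioned, so $X=Y=L_0$. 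The factor $L(i^k)\circ L(i^k)\simeq [\binom{2k}{k}]_i\,L(i^{2k})$ contributes multiplicity $\binom{2k}{k}$ at $q=1$, and this breaks the count.

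This binomial multiplicity is the main obstacle, so the string argument has to be modified rather than applied directly. The fix is to apply $\Res$ with the idempotent $e(\ldots, i^{2k})$ and use the nil-Hecke structure: $e(i^{2k})$ acting on $L(i^{2k})$ has graded dimension $[2k]_i!$, so we should compare \emph{graded} multiplicities via $\ch_q$ (Corollary \ref{cor:formalcharacter}) instead of $q=1$ counts, dividing by the $L(i^{2k})$ contribution exactly. Under this division the factor $L(i^k)\circ L(i^k)$ again yields exactly one copy of $L(i^{2k})$, up to the scalar $\binom{2k}{k}_i$ that appears identically on both sides. Thus the multiplicity of $L'$ equals that of $L_0'$ in $L_0\circ L_0$, which is $1$, and the induction closes. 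I expect verifying this normalized count, namely that no other composition factor of $L\circ L$ has $\tilde{e}_i^{\,2k}$ equal to $L_0'$, to be the delicate step. It should follow because $\tilde{e}_i^{\,\max}$ is injective on simples with a fixed $\varepsilon_i$.
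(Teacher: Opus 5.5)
Your skeleton is the paper's: induct on the weight, peel off a maximal $i$-string $k=\varepsilon_i(L)$, use the Mackey filtration to get $\Res_{2\beta-2k\alpha_i,2k\alpha_i}(L\circ L)\simeq (L_0\circ L_0)\otimes (L(i^k)\circ L(i^k))$ up to shift, and transfer multiplicities using injectivity of $\tilde{e}_i^{2k}$ on simples with $\varepsilon_i=2k$. Your reduction $[L\circ L:L']_{q=1}=[\Res_{2\beta-2k\alpha_i,2k\alpha_i}(L\circ L):L_0'\otimes L(i^{2k})]_{q=1}$ is also correct.

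\textbf{The error is in the step you call the main obstacle.} The claimed isomorphism $L(i^k)\circ L(i^k)\simeq [\binom{2k}{k}]_i\, L(i^{2k})$ is false. Since $L(i^n)\simeq q_i^{n(n-1)/2}L(i)^{\circ n}$, the product $L(i^k)\circ L(i^k)$ is a grading shift of $L(i)^{\circ 2k}$. It is therefore a \emph{single} shifted copy of $L(i^{2k})$. The $\binom{2k}{k}$ you see is the number of shuffles, and it is absorbed into the dimension of one simple: $\binom{2k}{k}(k!)^2=(2k)!=\dim L(i^{2k})$. The quantum binomial belongs to projectives, $P(i^k)\circ P(i^k)\simeq P(i^{2k})^{\oplus[\binom{2k}{k}]_i}$, i.e.\ to divided powers $f_i^{(k)}f_i^{(k)}=\binom{2k}{k}_i f_i^{(2k)}$. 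By contrast, $\chi(L(i^n))$ is a multiple of $f_i^n$.

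\textbf{Your proposed fix is not a valid argument.} Your own identity above equates $[L\circ L:L']_{q=1}$ with an honest composition multiplicity in $\Res(L\circ L)$. If the binomial factor were really present, the multiplicity of $L'$ would be $\binom{2k}{k}$. No normalization ``appearing on both sides'' could change that, whether via $\ch_q$ or via $\Hom(P(i^{2k}),-)$, since both count the same copies.

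\textbf{With the correct isomorphism there is no obstacle.} Up to shift, the divided restriction of $L\circ L$ (the paper's $E_i^{(2n)}$, in your right-handed convention) is $L_0\circ L_0$, and every composition factor of $L\circ L$ has $\varepsilon_i\le 2k$. The multiplicity one of $L_0'$ in $L_0\circ L_0$ then transfers directly to $L'=\tilde{f}_i^{2k}L_0'$. This is exactly the paper's proof.
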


To prove this lemma, we use the following result: 

\begin{theorem}[{\cite{MR2822211}}]\label{thm:crystal}
The set $\mathcal{B}$ of isomorphism classes of simple modules in $\gmod{R}$ up to grading shift has a $\mathfrak{g}$-crystal structure given by 
\begin{align*}
&\wt (L) = -\beta \quad \text{if $L \in \gmod{R(\beta)}$}, \\
&\tilde{f}_i(L) = \hd (L(i) \circ L), \\
&\tilde{e}_i(L) = \soc E_iL, \\
&\varepsilon_i(L) = \max \{n \geq 0 \mid E_i^nL \neq 0 \},  
\end{align*}
where $E_i$ is the functor $\gmod{R(\beta)} \to \gmod{R(\beta-\alpha_i)}$ defined by $E_iM = \Res_{\alpha_i,\beta-\alpha_i}M$. 
This crystal $\mathcal{B}$ is isomorphic to the crystal $B(\infty)$. 
Moreover, for any simple module $L \in \gmod{R(\beta)}$ with $\varepsilon_i(L) = n$, we have 
\[
\tilde{e}_i^{n} (L) = E_i^{(n)}L, \ L(i^n)\otimes \tilde{e}_i^n(L)\simeq E_i^nL
\]
where $E_i^{(n)}L = \Hom_{R(n\alpha_i)} (P(i^n), e(n\alpha_i,\beta-n\alpha_i)L)$. 
\end{theorem}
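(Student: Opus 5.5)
The plan is to follow the Lauda--Vazirani strategy of \cite{MR2822211}, in three steps: first the structure of $E_iL$ for a simple $L$, then the crystal axioms on $\mathcal{B}$, and finally the identification with $B(\infty)$ through Kashiwara--Saito's characterization of $B(\infty)$.

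\textbf{Step 1: structure of $E_i^nL$.} Let $L\in\gmod{R(\beta)}$ be simple with $n=\varepsilon_i(L)$. I will first show that $E_i^nL=\Res_{n\alpha_i,\beta-n\alpha_i}L$ is isomorphic to $L(i^n)\otimes K$ for a simple $R(\beta-n\alpha_i)$-module $K$. Take a simple submodule $L(i^n)\otimes K$ of $E_i^nL$; this is possible because the only simple $R(n\alpha_i)$-module is $L(i^n)$. Frobenius reciprocity then gives a nonzero map $L(i^n)\circ K\to L$, which is surjective since $L$ is simple. Next, the Mackey filtration of $\Res_{n\alpha_i,\cdot}(L(i^n)\circ K)$ has top piece $L(i^n)\otimes K$. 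Every other piece involves $E_i^{k}K$ with $k\ge1$, and these vanish because $\varepsilon_i(K)=0$ by maximality of $n$. Hence $E_i^n(L(i^n)\circ K)\simeq L(i^n)\otimes K$. It follows that $L(i^n)\circ K$ has simple head $L$, and that $E_i^nL\simeq L(i^n)\otimes K$. Applying $\Hom_{R(n\alpha_i)}(P(i^n),-)$, and using $\dim\Hom(P(i^n),L(i^n))=1$, then gives $E_i^{(n)}L\simeq K$, up to the grading convention fixed by self-duality.

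\textbf{Step 2: crystal axioms.} I would use the same argument with $n$ replaced by $1$, taking $K$ to be any simple submodule of $E_iL$ and running an induction on $\varepsilon_i$. This yields three facts. First, $\soc E_iL$ is simple. Second, $\varepsilon_i(\tilde e_iL)=\varepsilon_i(L)-1$. Third, $\tilde f_i$ and $\tilde e_i$ are mutually inverse on the relevant subsets: $\tilde e_i\tilde f_iM\simeq M$, and $\tilde f_i\tilde e_iL\simeq L$ whenever $\varepsilon_i(L)>0$. The key input for the third fact is that $\hd(L(i)\circ M)$ is simple; this comes from the nil-Hecke algebra computation $\Hom(L(i)\circ M,L(i)\circ M)$ via the same Mackey argument. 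Iterating $\tilde e_i$ identifies $\tilde e_i^nL$ with $K$ from Step 1. Setting $\varphi_i=\varepsilon_i+\langle h_i,\wt\rangle$ then gives an abstract crystal.

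\textbf{Step 3: identification with $B(\infty)$.} Introduce the star operations $\tilde e_i^*,\tilde f_i^*$, defined with $\Res_{\beta-\alpha_i,\alpha_i}$ and $\hd(M\circ L(i))$, or equivalently obtained by twisting with the anti-involution $\varphi$ composed with the diagram flip. Then verify the Kashiwara--Saito criterion. Its conditions are:
\begin{itemize}
\item the trivial module is the unique element of weight $0$;
\item $\varepsilon_i^*(L)=\varepsilon_i(L^{*})$, where $L^{*}$ denotes the image of $L$ under this twist;
\item $\tilde f_i$ and $\tilde f_j^*$ commute for $i\neq j$;
\item the compatibility $\varepsilon_i(L)+\varepsilon_i^*(L)+\langle h_i,\wt L\rangle\ge0$, with equality forcing $\tilde f_iL=\tilde f_i^*L$, and with $\tilde f_i^*L$ controlled when the sum is positive.
\end{itemize}
The uniqueness in weight $0$ is immediate. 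The commutation of $\tilde f_i$ with $\tilde f_j^*$ follows from associativity of convolution: both sides are the head of $L(i)\circ L\circ L(j)$, which is simple.

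\textbf{Main obstacle.} I expect the hardest step to be the last condition. I would first try to prove it through the character-theoretic Serre relations. For weight reasons, $L(i)^{\circ(1-a_{i,j})}$-type shuffles control when $L(i)\circ L$ and $L\circ L(i)$ share a simple head. Concretely, I would compare $\varepsilon_i(\tilde f_i^*L)$ with $\varepsilon_i(L)$ using the Mackey filtration of $E_i(L\circ L(i))$, which is an extension of $E_iL\circ L(i)$ and $L$. Once the criterion holds, $\mathcal{B}\simeq B(\infty)$ follows.
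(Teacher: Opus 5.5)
The paper does not prove Theorem~\ref{thm:crystal}; it quotes it from Lauda--Vazirani \cite{MR2822211}, and your outline follows their strategy. Your Step~1 is correct and self-contained. It already covers everything the paper later uses: Lemma~\ref{lem:string} only needs $E_i^nL\simeq L(i^n)\otimes E_i^{(n)}L$ and $L\simeq\hd(L(i^n)\circ E_i^{(n)}L)$ for $n=\varepsilon_i(L)$.

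The real gap is Step~3, which you leave open. The decisive Kashiwara--Saito conditions are the inequality $\varepsilon_i(L)+\varepsilon_i^*(L)+\langle h_i,\wt L\rangle\ge 0$ together with the accompanying conditions in the cases of equality and strict inequality. These are essentially the content of the ``jump'' formula
$\max\{J\mid \varepsilon_i(\tilde f_i^{*J}L)=\varepsilon_i(L)\}=\varepsilon_i(L)+\varepsilon_i^*(L)+\langle h_i,\wt L\rangle$ (and its $*$-counterpart), which Lauda--Vazirani prove by a separate argument. The tool you propose cannot produce it. The Mackey filtration of $E_i(L\circ L(i))$ yields only two things: $\varepsilon_i(\tilde f_i^*L)\in\{\varepsilon_i(L),\varepsilon_i(L)+1\}$, and in the second case $\tilde f_i^*L\simeq\tilde f_iL$. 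It says nothing about \emph{when} the increase occurs. Moreover it uses only the shuffle structure of $R$ and the nil-Hecke algebra of colour $i$. The mixed relations $\tau_k^2e(\nu)=Q_{\nu_k,\nu_{k+1}}(x_k,x_{k+1})e(\nu)$ with $\nu_k\neq\nu_{k+1}$ never enter, so the ungraded information it gives is blind to the integers $a_{i,j}$ that appear in $\langle h_i,\wt L\rangle$. You need an input that uses those relations quantitatively, for example an $\mathfrak{sl}_2$-type analysis on cyclotomic quotients, or an argument actually carried out with the quantum Serre relations satisfied by characters. Without it, $\mathcal B\simeq B(\infty)$ is not established.

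Step~2 is also missing a lemma. Simplicity of $\soc E_iL$ does not follow from ``the same argument with $n$ replaced by $1$'', because for $\varepsilon=\varepsilon_i(L)>1$ the Mackey filtration no longer collapses. Step~1 does show that every simple submodule of $E_iL$ is a shift of $Q=\hd(L(i^{\varepsilon-1})\circ N)$ with $N=E_i^{(\varepsilon)}L$; the problem is multiplicity. The space $\Hom_{R(\beta-\alpha_i)}(Q,E_iL)\simeq\Hom_R(R(\alpha_i)\circ Q,L)$ embeds, up to shift, into $\Hom_R(R(\alpha_i)\circ L(i^{\varepsilon-1})\circ N,L)\simeq\Hom_{R((\varepsilon-1)\alpha_i)}(L(i^{\varepsilon-1}),\Res L(i^{\varepsilon}))$, with $R((\varepsilon-1)\alpha_i)$ acting on the last $\varepsilon-1$ strands. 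So you need the nil-Hecke fact that this restriction of $L(i^\varepsilon)$ has simple socle (it is uniserial, not semisimple). Simplicity of $\hd(L(i)\circ Q)$ alone only shows that the kernel of $x_1\in R(\alpha_i)$ on $\soc E_iL$ is one copy of $Q$. That does not bound the size of the $x_1$-Jordan block.

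Two smaller points.
\begin{itemize}
\item Simplicity of $\hd(L(i)\circ M)$ does not come from an endomorphism computation: for $\varepsilon_i(M)>0$ the Mackey filtration of $E_i(L(i)\circ M)$ has two nonzero layers. It comes instead from the fact that $L(i)\circ M$ is a quotient of $L(i)\circ L(i^{\varepsilon_i(M)})\circ E_i^{(\varepsilon_i(M))}M$, which is a shift of $L(i^{\varepsilon_i(M)+1})\circ E_i^{(\varepsilon_i(M))}M$, whose head is simple by Step~1.
\item The simplicity of $\hd(L(i)\circ L\circ L(j))$, which you use for $\tilde f_i\tilde f_j^*=\tilde f_j^*\tilde f_i$, also needs an argument. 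Restrict to $R((\varepsilon_i(L)+1)\alpha_i)\otimes R(\beta+\alpha_j-\varepsilon_i(L)\alpha_i)$, where only one Mackey layer survives, and use that $E_i^{(\varepsilon_i(L))}L\circ L(j)$ has simple head.
\end{itemize}
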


\begin{proof}[Proof of Lemma \ref{lem:string}]
We may ignore grading.
We proceed by induction on $\wt L$. 
If $\wt L =0$, the assertion is trivial. 
We assume $\wt L \neq 0$. 
We expand $[L\circ L]$ in $K(\gmod{R})_{q = 1}$ as 
\[
[L \circ L] = \sum_{L' \in \mathcal{B}} a_{L'} [L'] \ (a_{L'} \in \mathbb{Z}_{\geq 0}). 
\]
By Theorem \ref{thm:crystal}, there exists $i \in I$ such that $\varepsilon_i(L) \neq 0$. 
Put $n = \varepsilon_i(L)$ and $L_0 = \tilde{e}_i^n(L) = E_i^{(n)}L$. 
By considering the Mackey-filtration \cite[Proposition 2.18]{MR2525917}, 
we have 
\[
E_i^{2n+1}(L \circ L) = 0, \ E_i^{2n}(L\circ L) = (L(i^n) \circ L(i^n)) \otimes (L_0 \circ L_0) \simeq L(i^{2n})\otimes (L_0 \circ L_0). 
\]
Hence, 
\[
E_i^{(2n)}(L\circ L) \simeq L_0 \circ L_0. 
\]
It follows that $a_{L'} = 0$ if $\varepsilon_i(L') \geq 2n+1$ and 
\[
[L_0 \circ L_0] = \sum_{L' \in \mathcal{B}, \varepsilon_i(L') = 2n} a_{L'} [\tilde{e}_i^{2n}L']. 
\] 
By the induction hypothesis, there exists $L' \in \mathcal{B}$ such that $\varepsilon_i(L') = 2n$ and $a_{L'} = 1$. 
The lemma is proved. 
\end{proof}

\subsection{The $A_1^{(1)}$ case} \label{sub:A1case}

In this subsection, assume that $\mathbf{k}$ is a field. 
We address the case of $X_N^{(1)} = A_1^{(1)}$ and construct an endomorphism $\mathsf{T}=\mathsf{T}_1$ of $L_1^e(\delta)^{\circ 2}$ satisfying (a) and (b). 
Since $\delta-\alpha_1 = \alpha_0$, we have 
\[
L^e(\delta-\alpha_1) = L(0), \ \Delta^e(\delta-\alpha_1) = R(\alpha_0). 
\]

\begin{lemma} \label{lem:minusculestr}
The module $L_1^e(\delta)$ is spanned by a single vector $v$ as a $\mathbf{k}$-vector space, 
and the action of $R(\delta)$ is given by 
\begin{align*}
e(0,1)v = v, \ e(1,0) v = 0, \ \tau_1 v = 0, \ x_1 v = 0, \ x_2 v = 0. 
\end{align*}
\end{lemma}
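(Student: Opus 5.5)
In type $A_1^{(1)}$ we have $\delta = \alpha_0 + \alpha_1$, so $R(\delta)$ has only the two idempotents $e(0,1)$ and $e(1,0)$. The plan is to compute $L_1^e(\delta) = \hd(L^e(\delta-\alpha_1)\circ L(1)) = \hd(L(0)\circ L(1))$ directly from this presentation.

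First consider $L(0)\circ L(1) = R(\delta)e(0,1)\otimes_{R(\alpha_0)\otimes R(\alpha_1)} (L(0)\otimes L(1))$. As a vector space it is spanned by $1\boxtimes 1$ and $\tau_1(1\boxtimes 1)$, which lie in $e(0,1)$ and $e(1,0)$ respectively. The generators act as follows:
\begin{itemize}
\item $x_1$ and $x_2$ kill $1\boxtimes 1$;
\item $x_k$ acts on $\tau_1(1\boxtimes 1)$ through the commutation relations, which here have no correction term since $\nu_1\neq\nu_2$, so it is again zero;
\item $\tau_1^2 e(0,1) = Q_{0,1}(x_1,x_2)e(0,1)$, which acts by zero because $Q_{0,1}$ is homogeneous of degree $2$ in the $x$'s.
\end{itemize}
Hence $\mathbf{k}\tau_1(1\boxtimes 1)$ is a proper nonzero submodule. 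The quotient is one-dimensional and spanned by the image $v$ of $1\boxtimes 1$, with
\[
e(0,1)v=v,\quad e(1,0)v=0,\quad \tau_1v=0,\quad x_1v=x_2v=0 .
\]
This quotient is simple, and it is the head because the submodule $\mathbf{k}\tau_1(1\boxtimes 1)$ is the unique maximal submodule: any proper submodule cannot contain $1\boxtimes 1$, which generates.

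To finish, I would use Proposition \ref{prop:minuscule}, which says $L_1^e(\delta)$ is simple. Alternatively, the identification follows from the characterization $e(\delta-\alpha_1,\alpha_1)L_1^e(\delta)\neq 0$ in Lemma \ref{lem:minusculeonecolor}, since that condition singles out the $e(0,1)$-supported simple. Either way, $L_1^e(\delta)$ is the one-dimensional module just described. The module is concentrated in a single degree, and self-duality is automatic for a one-dimensional module placed in degree zero; this matches $L(0)$ and $L(1)$ being self-dual in degree $0$.

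The only point needing care is the action of the $x_k$ on $\tau_1(1\boxtimes 1)$ and the claim that $\tau_1^2$ acts by zero, but these are immediate from homogeneity of $Q_{0,1}$. I expect no real obstacle.
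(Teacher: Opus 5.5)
Your argument is correct and is essentially the paper's: both identify $L_1^e(\delta)=\hd(L(0)\circ L(1))$ with the one-dimensional module $\mathbf{k}v$. The paper does this by producing a surjection $L(0)\circ L(1)\to\mathbf{k}v$ via induction--restriction adjunction and invoking simplicity of the head (Proposition \ref{prop:minuscule}). You instead compute the two-dimensional module $L(0)\circ L(1)$ and its unique maximal submodule $\mathbf{k}\tau_1(1\boxtimes 1)$ directly; uniqueness holds because submodules split along $e(0,1),e(1,0)$ and $e(0,1)(L(0)\circ L(1))=\mathbf{k}(1\boxtimes 1)$, a point worth stating explicitly. This direct computation makes your final appeal to Proposition \ref{prop:minuscule} redundant, while the paper's route avoids describing $L(0)\circ L(1)$ explicitly.
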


\begin{proof}
It is straightforward to see that there exists a one-dimensional module $L = \mathbf{k}v$ given by the formulas of the lemma. 
By induction-restriction adjunction, we have a surjective homomorphism $L(0) \circ L(1) \to L$. 
Hence, $L \simeq \hd (L(0) \circ L(1)) = L_1^e(\delta)$ (see Proposition \ref{prop:minuscule}). 
\end{proof}

By Proposition \ref{prop:imaginarystd}, we have a short exact sequence 
\[
0 \to q^2 R(\alpha_1) \circ R(\alpha_0) \to R(\alpha_0) \circ R(\alpha_1) \to \Delta_1^e(\delta) \to 0.
\]
Note that $R(\alpha_j) \circ R(\alpha_k) \ (j, k \in I)$ is canonically identified with $R(\alpha_j+\alpha_k)e(j,k)$, 
and the injective homomorphism above coincides with the right multiplication by $\tau_1$. 

\begin{lemma}
There uniquely exists an endomorphism $\tau$ of $\Delta_1^e(\delta)^{\circ 2}$ (resp.\ $L_1^e(\delta)^{\circ 2}$) such that 
\[
\tau(u \boxtimes u') = \tau_{w[2,2]}(u' \boxtimes u) \ (u, u' \in \Delta_1^e(\delta) (\text{resp. $L_1^e(\delta)$})). 
\]
\end{lemma}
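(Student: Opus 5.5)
Uniqueness is immediate, since $\Delta_1^e(\delta)\boxtimes\Delta_1^e(\delta)$ (resp.\ $L_1^e(\delta)\boxtimes L_1^e(\delta)$) generates the convolution product as an $R(2\delta)$-module. For existence, the plan is to use the induction–restriction adjunction: an $R(2\delta)$-endomorphism of $M\circ M$ is the same as an $R(\delta)\otimes R(\delta)$-homomorphism $M\otimes M\to\Res_{\delta,\delta}(M\circ M)$. So it suffices to show that the map
\[
\phi\colon M\otimes M\to M\circ M,\quad u\otimes u'\mapsto \tau_{w[2,2]}(u'\boxtimes u)
\]
is $R(\delta)\otimes R(\delta)$-linear, where $M=\Delta_1^e(\delta)$ or $L_1^e(\delta)$.

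We first treat $M=\Delta_1^e(\delta)$ by presentations. Write $\Delta_1^e(\delta)=R(\delta)e(0,1)/R(\delta)\tau_1e(1,0)$, following the short exact sequence just above. Then $\Delta_1^e(\delta)^{\otimes 2}$, as an $R(\delta)^{\otimes 2}$-module, is cyclic, generated by $e(0,1)\otimes e(0,1)$ with relations generated by $\tau_1 e(1,0)\otimes 1$ and $1\otimes \tau_1e(1,0)$. Hence $\phi$ exists as a module map sending the generator to $\tau_{w[2,2]}(\bar e\boxtimes\bar e)$ (with $\bar e$ the image of $e(0,1)$) provided three things hold. First, $\tau_{w[2,2]}(\bar e\boxtimes \bar e)$ lies in the summand $e(\delta,\delta)(M\circ M)$; this is automatic, since $\tau_{w[2,2]}e(0,1,0,1)=e(0,1,0,1)\tau_{w[2,2]}$. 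Second, it is killed by $\tau_1\otimes 1$ acting through $e(1,0,0,1)$, i.e.\ by $\tau_1$ placed in positions $1,2$ composed with $e(1,0,0,1)$. Third, the analogous statement holds for $\tau_3$. Actually the annihilator condition must be phrased as: the element $\tau_1\tau_{w[2,2]}(\bar e\boxtimes\bar e)$ with the idempotent $e(1,0,0,1)$ applied appropriately vanishes. More precisely, the condition is that $(\tau_1 e(1,0)\boxtimes 1)$ applied to the image of a generator $e(1,0)$-vector vanishes. I would instead take a cleaner route: define $\phi$ on the free module $R(\delta)e(0,1)^{\otimes2}$ and check that it kills $\tau_1e(1,0)\otimes x$ and $x\otimes\tau_1e(1,0)$. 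This reduces to checking $\tau_{w[2,2]}\,(1\boxtimes\tau_1)e(0,1,1,0)$, rewritten using braid relations as $\tau_1\tau_{w[2,2]}$ up to correction terms, and showing that it kills $\bar e\boxtimes\bar e$. Here $w[2,2]=s_2s_1s_3s_2$, and moving $\tau_3$ (from $1\boxtimes\tau_1$) through $\tau_2\tau_1\tau_3\tau_2$ produces $\tau_1\tau_2\tau_1\tau_3\tau_2$-type terms plus error terms $\overline{Q}$ that arise from the cubic braid relation on colors $(1,0,1)$ or $(0,1,0)$. These must be shown to act by zero on $\bar e\boxtimes\bar e$, using the relation $\tau_1\bar e=0$ when the idempotent is $e(1,0)$ and the dot-sliding relations. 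This braid-relation bookkeeping, with the explicit $\overline{Q}_{0,1,0}$ and $\overline{Q}_{1,0,1}$ coming from $Q_{0,1}(u,v)=(u-v)(v-tu)$, is the main obstacle, and I would carry it out by direct diagrammatic computation.

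For $M=L_1^e(\delta)$, I would either repeat the same check, which is easier because Lemma~\ref{lem:minusculestr} says that $x_1,x_2,\tau_1$ act by $0$ on $v$ (so all polynomial error terms vanish), or descend from the $\Delta$ case. For the descent: the map $\tau$ on $\Delta_1^e(\delta)^{\circ2}$ preserves the kernel of $\Delta_1^e(\delta)^{\circ2}\to L_1^e(\delta)^{\circ2}$, since that kernel is generated by $K\boxtimes\Delta+\Delta\boxtimes K$, where $K$ is the kernel of $\Delta_1^e(\delta)\to L_1^e(\delta)$, and the defining formula for $\tau$ sends these to the same kind of elements. Hence $\tau$ induces the required endomorphism of $L_1^e(\delta)^{\circ 2}$, and uniqueness holds as noted at the start.
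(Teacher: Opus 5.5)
\textbf{There is a genuine gap: the map you propose to prove linear is not linear.} Your reduction asks you to show that $\phi\colon u\otimes u'\mapsto \tau_{w[2,2]}(u'\boxtimes u)$ is $R(\delta)^{\otimes 2}$-linear. For $M=\Delta_1^e(\delta)$ this is false.

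Let $\bar e$ be the image of $e(0,1)$. Then $\Delta_1^e(\delta)\cong \mathbf{k}[x_1,x_2]\bar e/(Q_{0,1}(x_1,x_2))$, and $x_1\bar e$, $x_2\bar e$ are linearly independent. (The relation submodule is $R(\delta)\tau_1e(0,1)$, not $R(\delta)\tau_1e(1,0)$.) Use $\tau_1x_2e(0,0,1,1)=(x_1\tau_1+1)e(0,0,1,1)$, then the braid relation for the colours $(1,0,1)$ in positions $2,3,4$, together with $\tau_3(\bar e\boxtimes\bar e)=0$. This gives
\[
\phi(x_1\bar e\otimes\bar e)=\tau_{w[2,2]}x_3(\bar e\boxtimes\bar e)=x_1\phi(\bar e\otimes\bar e)+\tau_2\tau_3\tau_2(\bar e\boxtimes\bar e),\qquad \tau_2\tau_3\tau_2(\bar e\boxtimes\bar e)=(x_2+x_4-(1+t)x_3)(\bar e\boxtimes\bar e).
\]
The last element is nonzero, because its component $x_2\bar e\otimes\bar e$ in $M\otimes M\subset M\circ M$ is nonzero.

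Consequently no endomorphism of $\Delta_1^e(\delta)^{\circ 2}$ satisfies the displayed formula for all $u,u'$. The formula can only be prescribed on the generator, $\tau(\bar e\boxtimes\bar e)=\tau_{w[2,2]}(\bar e\boxtimes\bar e)$. That is exactly what the paper's proof produces, namely right multiplication by $\tau_{w[2,2]}$ on $R(2\delta)e(0,1,0,1)/K$ with $K=R(2\delta)\tau_1e(0,1,0,1)+R(2\delta)\tau_3e(0,1,0,1)$. It is also all that is used later; for instance, the relations (a) and (b) are checked on $v^{\boxtimes n}$.

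Your sentence ``hence $\phi$ exists as a module map sending the generator to $\tau_{w[2,2]}(\bar e\boxtimes\bar e)$'' conflates two different maps. Checking the relations on the cyclic generator produces $a\bar e\otimes b\bar e\mapsto (a\boxtimes b)\tau_{w[2,2]}(\bar e\boxtimes\bar e)$, and that map is not $\phi$.

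\textbf{What survives, and how to fix the $L$ case.} Read as a generator-level check, your relation verification is the paper's argument, $K\tau_{w[2,2]}\subset K$. The step you call the main obstacle is short. Using $\tau_1\tau_2\tau_1=\tau_2\tau_1\tau_2$ on $e(0,0,1,1)$ and the $(1,0,1)$ braid relation,
\[
\tau_1\tau_{w[2,2]}e(0,1,0,1)=\tau_{w[2,2]}\tau_3e(0,1,0,1)-\tau_2\,\overline{Q}_{1,0,1}(x_1,x_3,x_4)\,\tau_1e(0,1,0,1)\in K .
\]
Symmetrically, $\tau_3\tau_{w[2,2]}e(0,1,0,1)\in K$, with $\overline{Q}_{0,1,0}$ in place of $\overline{Q}_{1,0,1}$.

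For $L_1^e(\delta)$, which is one-dimensional, the all-$u,u'$ formula is equivalent to its value on $v\boxtimes v$, so that half of the statement is fine. However, your descent argument invokes the false all-elements formula on $\Delta_1^e(\delta)$ to conclude that the kernel of $\Delta_1^e(\delta)^{\circ 2}\to L_1^e(\delta)^{\circ 2}$ is preserved. Replace it in one of two ways:
\begin{enumerate}
\item Use Lemma \ref{lem:tensorspacehom} (3), as the paper does. Since $\tau$ has degree $0$, the composite of $\tau$ with the surjection onto $L_1^e(\delta)^{\circ 2}$ factors uniquely through that surjection.
\item Check directly on $v\boxtimes v$. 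Besides $\tau_1$ and $\tau_3$, you must then also verify $x_k\tau_{w[2,2]}(v\boxtimes v)=0$ for $1\le k\le 4$, because the dots kill $v$.
\end{enumerate}
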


\begin{proof}
By Lemma \ref{lem:tensorspacehom}, 
it suffices to prove the lemma for $\Delta_1^e(\delta)$. 
By the discussion above, we have 
\[
\Delta_1^e(\delta)^{\circ 2} \simeq R(2\delta)e(0,1,0,1)/K, \ K = R(2\delta)\tau_1e(0,1,0,1) + R(2\delta)\tau_3 e(0,1,0,1). 
\]
Hence, it suffices to prove that $K \tau_{w[2,2]} \subset K$. 
Note that 
\[
\tau_{w[2,2]} = \tau_2 \tau_1 \tau_3 \tau_2 = \tau_2 \tau_3 \tau_1 \tau_2. 
\]
For $u \in R(2\delta)$, we have 
\begin{align*}
&u\tau_1 \tau_{w[2,2]}e(0,1,0,1) \\
&= u \tau_1 \tau_2 \tau_1 \tau_3 \tau_2 e(0,1,0,1) \\
&= u\tau_2 \tau_1 \tau_2 \tau_3 \tau_2 e(0,1,0,1) \\
&= u\tau_2 \tau_1 \tau_3 \tau_2 \tau_3 e(0,1,0,1) - u\tau_2 \tau_1 \overline{Q}_{1,0,1}(x_2,x_3,x_4) e(0,1,0,1) \\
&= (u\tau_2 \tau_1 \tau_3 \tau_2) \tau_3 e(0,1,0,1) - (u\tau_2 \overline{Q}_{1,0,1} (x_1,x_3,x_4))\tau_1 e(0,1,0,1), 
\end{align*}
which belongs to $K$. 
Similarly, $u \tau_3 \tau_{w[2,2]}e(0,1,0,1)$ belongs to $K$. 
\end{proof}

We define $\mathsf{T} = \tau +t$. 
By definition, $\mathsf{T}$ is nonconstant. 
In the rest of this subsection, we prove that $\mathsf{T}$ satisfies (a) and (b) by direct calculations. 
It suffices to show these relations on the generating vector $v^{\boxtimes n} \in L_i^e(\delta)^{\circ n} \ (n = 2,3)$. 
We use the following diagrammatic presentation.
An element of $R(n\delta)$ is represented by a linear combination of dotted planar diagrams decorated by $I$, following \cite[Section 2.1]{MR2525917}, 
where each element $e(\nu), x_k^ae(\nu)$ and $\tau_ke(\nu)$ corresponds to 
\[
\xy 0;/r.17pc/: 
(0,0)*{\sline};
(0,-4)*{\scriptstyle \nu_1}; 
(4,0)*{\sline};
(4,-4)*{\scriptstyle \nu_2};
\endxy \cdots \xy 0;/r.17pc/: 
(0,0)*{\sline};
(0,-4)*{\scriptstyle \nu_{2n}};
\endxy, \ \xy 0;/r.17pc/: 
(0,0)*{\sline};
(0,-4)*{\scriptstyle \nu_1};
\endxy \cdots \xy 0;/r.17pc/: 
(0,0)*{\sdot};
(2,0)*{\scriptstyle a};
(0,-4)*{\scriptstyle \nu_k};
\endxy \cdots \xy 0;/r.17pc/: 
(0,0)*{\sline};
(0,-4)*{\scriptstyle \nu_{2n}};
\endxy, \ \xy 0;/r.17pc/: 
(0,0)*{\sline};
(0,-4)*{\scriptstyle \nu_1};
\endxy \cdots \xy 0;/r.17pc/: 
(0,0)*{\cross};
(-3,-4)*{\scriptstyle \nu_k};
(3,-4)*{\scriptstyle \nu_{k+1}};
\endxy \cdots \xy 0;/r.17pc/: 
(0,0)*{\sline};
(0,-4)*{\scriptstyle \nu_{2n}};
\endxy, 
\]
and the multiplication $u_1 u_2$ in $R(n\delta)$ is depicted by placing the diagram of $u_1$ on top of that of $u_2$. 
For $u \in R(n\delta)$, the element 
\[
u (v^{\boxtimes n}) = ue(0,1,0,1,\ldots,0,1) (v^{\boxtimes n})
\]
in $L_i^e(\delta)^{\circ n}$ is represented by placing the diagram of $ue(0,1,0,1, \ldots, 0,1)$ on top of 
\[
\xy 0;/r.17pc/:
(-4,0)*{\fcolorbox{black}{white}{$v$}};
(4,0)*{\fcolorbox{black}{white}{$v$}}; 
\endxy \cdots 
\xy 0;/r.17pc/:
(0,0)*{\fcolorbox{black}{white}{$v$}};
\endxy \quad \text{($n$ boxes).} 
\]
Since the lower endpoints of the diagram of $ue(0,1,\ldots,0,1)$ are always labelled with $(0,1,\ldots,0,1)$, 
we shall omit this decoration. 
For instance, the relations in Lemma \ref{lem:minusculestr} are represented by
\[
\xy 0;/r.17pc/:
(0,0)*{\xybox{
(0,2)*{\cross};
(0,-2.5)*{\fcolorbox{black}{white}{$v$}};  
}} \endxy \ = \ \xy 0;/r.17pc/:
(0,0)*{\xybox{
(-2,2)*{\sdot};
(2,2)*{\sline};
(0,-2.5)*{\fcolorbox{black}{white}{$v$}};  
}} \endxy \ = \ \xy 0;/r.17pc/:
(0,0)*{\xybox{
(-2,2)*{\sline};
(2,2)*{\sdot};
(0,-2.5)*{\fcolorbox{black}{white}{$v$}};  
}} \endxy \ = 0. 
\]

The quadratic relation of (a), 
\[
(\mathsf{T}+1)(\mathsf{T}-t) = 0, 
\]
immediately follows from the lemma below:

\begin{lemma}
We have $\tau^2 = - (1+t) \tau$. 
\end{lemma}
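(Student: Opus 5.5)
Since $L_1^e(\delta)^{\circ 2}$ is generated by $v\boxtimes v$, it suffices to check $\tau^2(v\boxtimes v) = -(1+t)\tau(v\boxtimes v)$. By definition $\tau(v\boxtimes v)=\tau_{w[2,2]}(v\boxtimes v)$, so $\tau^2(v\boxtimes v)=\tau(\tau_{w[2,2]}(v\boxtimes v))=\tau_{w[2,2]}\tau_{w[2,2]}(v\boxtimes v)$, because $\tau$ is $R$-linear. We therefore have to compute $\tau_{w[2,2]}^2 e(0,1,0,1)$ acting on $v\boxtimes v$. The idempotent $e(0,1,0,1)$ is sent by $\tau_{w[2,2]}$ back to $e(0,1,0,1)$, since $w[2,2]$ swaps the blocks $(0,1)$ and $(0,1)$. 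I would write $\tau_{w[2,2]}=\tau_2\tau_1\tau_3\tau_2$ and expand the product $\tau_2\tau_1\tau_3\tau_2\tau_2\tau_1\tau_3\tau_2\, e(0,1,0,1)$ diagrammatically.

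The key steps run as follows. First, the middle $\tau_2^2$ acts on strands coloured $(1,0)$ (after the first $\tau_2$, the sequence is $(0,0,1,1)$, so the middle square is at colours $(0,1)$). It equals $Q_{0,1}(x_2,x_3)=(x_2-x_3)(x_3-tx_2)$, which expands as $-t x_2^2+(1+t)x_2x_3-x_3^2$. Second, I slide each dot monomial outward through $\tau_1\tau_3$ on the right (strands of equal colour $0,0$ and $1,1$) using the nil-Hecke relations $\tau_k x_{k+1}-x_k\tau_k=1$ on equal colours. Then I push the dots down through the final $\tau_2$ (different colours, so dots simply commute) until they reach $v\boxtimes v$. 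There they act by zero, because $x_1,x_2$ annihilate $v$ (Lemma \ref{lem:minusculestr}). Any term in which a $\tau_1$ or $\tau_3$ acts directly on $e(0,1)$-pieces adjacent to the boxes also vanishes, since $\tau_1 v=0$. Third, the surviving terms are exactly those where the dots were absorbed by the nil-Hecke correction terms. The pure squares $x_2^2,x_3^2$ moved past $\tau_1$ or $\tau_3$ leave one dot and one crossing, and these should die. The mixed term $(1+t)x_2x_3$ yields, via $\tau_1x_2\tau_1$- and $\tau_3x_3\tau_3$-type reductions, a term $\pm(1+t)\tau_2\cdot(\text{undotted})\cdot\tau_2$ that collapses to $-(1+t)\tau_{w[2,2]}(v\boxtimes v)$. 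It will likely be cleaner to compute in the other order, namely to push dots through $\tau_1\tau_3$ on the left side, where $\tau_1\tau_3$ acts on colours $(0,0)$ and $(1,1)$ in the middle of the word, and to use the nil-Hecke identity $\tau x_2 \tau=\tau$-type relations ($\tau_1 x_1\tau_1$ on equal colours gives $-\tau_1$ and so on).

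The main obstacle is bookkeeping the signs and the $t$-dependence. One has to check that the $x_2^2$ and $x_3^2$ terms contribute nothing, either because $\tau_k^2=0$ on equal colours or because dots reach $v$. One must also check that the coefficient from the mixed term is exactly $-(1+t)$ and not $+(1+t)$. I would do the calculation carefully with diagrams, verifying each step against the relation $(\tau_kx_{k+1}-x_k\tau_k)e(\nu)=\delta_{\nu_k,\nu_{k+1}}e(\nu)$ and the vanishing of $\tau_1,x_1,x_2$ on $v$. As a sanity check I would compare with the case $t=1$, where the known result (McNamara) gives $\tau^2=-2\tau$, consistent with the symmetric group relation for $s=-1-\tau$ after normalization.
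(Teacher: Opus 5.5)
Your plan is the paper's own computation: reduce to $v\boxtimes v$, rewrite the middle $\tau_2^2e(0,0,1,1)$ as $Q_{0,1}(x_2,x_3)=-tx_2^2+(1+t)x_2x_3-x_3^2$, slide the dots down through the lower $\tau_1\tau_3$ using $x_2\tau_1=\tau_1x_1+1$ and $x_3\tau_3=\tau_3x_4-1$ on equal colours, and kill every dotted term on $v\boxtimes v$. The sign you left open comes out right: the $x_2^2$ and $x_3^2$ terms vanish, and $x_2x_3\,\tau_1\tau_3\tau_2(v\boxtimes v)=-\tau_2(v\boxtimes v)$, so the total is $-(1+t)\tau_{w[2,2]}(v\boxtimes v)=-(1+t)\tau(v\boxtimes v)$.
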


\begin{proof}
\begin{align*}
\xy 0;/r.17pc/:
(0,0)*{\xybox{(0,6)*{\stau};
(0,18)*{\stau};
(-4,-2.5)*{\fcolorbox{black}{white}{$v$}};
(4,-2.5)*{\fcolorbox{black}{white}{$v$}};  
}}
\endxy =& -t \ \xy 0;/r.17pc/:
(0,0)*{\xybox{
(0,2)*{\cross};
(-6,2)*{\sline};
(6,2)*{\sline};
(-4,6)*{\cross};
(4,6)*{\cross};
(-6,10)*{\sline};
(-2,10)*{\sdot};
(0,10)*{\scriptstyle 2};
(2,10)*{\sline};
(6,10)*{\sline};
(-4,14)*{\cross};
(4,14)*{\cross};
(0,18)*{\cross};
(-6,18)*{\sline};
(6,18)*{\sline};
(-4,-2.5)*{\fcolorbox{black}{white}{$v$}};
(4,-2.5)*{\fcolorbox{black}{white}{$v$}}; 
}}
\endxy \ + (1+t) \ \xy 0;/r.17pc/:
(0,0)*{\xybox{
(0,2)*{\cross};
(-6,2)*{\sline};
(6,2)*{\sline};
(-4,6)*{\cross};
(4,6)*{\cross};
(-6,10)*{\sline};
(-2,10)*{\sdot};
(2,10)*{\sdot};
(6,10)*{\sline};
(-4,14)*{\cross};
(4,14)*{\cross};
(-6,18)*{\sline};
(0,18)*{\cross};
(6,18)*{\sline};
(-4,-2.5)*{\fcolorbox{black}{white}{$v$}};
(4,-2.5)*{\fcolorbox{black}{white}{$v$}};
}}
\endxy \ - \ \xy 0;/r.17pc/:
(0,0)*{\xybox{
(0,2)*{\cross};
(-6,2)*{\sline};
(6,2)*{\sline};
(-4,6)*{\cross};
(4,6)*{\cross};
(-6,10)*{\sline};
(-2,10)*{\sline};
(2,10)*{\sdot};
(4,10)*{\scriptstyle 2};
(6,10)*{\sline};
(-4,14)*{\cross};
(4,14)*{\cross};
(-6,18)*{\sline};
(0,18)*{\cross};
(6,18)*{\sline};
(-4,-2.5)*{\fcolorbox{black}{white}{$v$}};
(4,-2.5)*{\fcolorbox{black}{white}{$v$}};
}}
\endxy \\
=& -t \left( \
\xy 0;/r.17pc/:
(0,0)*{\xybox{
(0,2)*{\cross};
(-6,2)*{\sdot};
(-4,2)*{\scriptstyle 2};
(6,2)*{\sline};
(-4,6)*{\cross};
(4,6)*{\cross};
(-6,10)*{\sline};
(-2,10)*{\sline};
(2,10)*{\sline};
(6,10)*{\sline};
(-4,14)*{\cross};
(4,14)*{\cross};
(-6,18)*{\sline};
(0,18)*{\cross};
(6,18)*{\sline};
(-4,-2.5)*{\fcolorbox{black}{white}{$v$}};
(4,-2.5)*{\fcolorbox{black}{white}{$v$}};
}}
\endxy \ + \ \xy 0;/r.17pc/:
(0,0)*{\xybox{
(0,2)*{\cross};
(-6,2)*{\sline};
(6,2)*{\sline};
(-6,6)*{\sdot};
(-2,6)*{\sline};
(4,6)*{\cross};
(-6,10)*{\sline};
(-2,10)*{\sline};
(2,10)*{\sline};
(6,10)*{\sline};
(-4,14)*{\cross};
(4,14)*{\cross};
(-6,18)*{\sline};
(0,18)*{\cross};
(6,18)*{\sline};
(-4,-2.5)*{\fcolorbox{black}{white}{$v$}};
(4,-2.5)*{\fcolorbox{black}{white}{$v$}};
}}
\endxy \ + \ \xy 0;/r.17pc/:
(0,0)*{\xybox{
(0,2)*{\cross};
(-6,2)*{\sline};
(6,2)*{\sline};
(-6,6)*{\sline};
(-2,6)*{\sdot};
(4,6)*{\cross};
(-6,10)*{\sline};
(-2,10)*{\sline};
(2,10)*{\sline};
(6,10)*{\sline};
(-4,14)*{\cross};
(4,14)*{\cross};
(-6,18)*{\sline};
(0,18)*{\cross};
(6,18)*{\sline};
(-4,-2.5)*{\fcolorbox{black}{white}{$v$}};
(4,-2.5)*{\fcolorbox{black}{white}{$v$}};
}}
\endxy
\ \right) + (1+t) \left( \ 
\xy 0;/r.17pc/:
(0,0)*{\xybox{
(0,2)*{\cross};
(-6,2)*{\sdot};
(6,2)*{\sline};
(-4,6)*{\cross};
(4,6)*{\cross};
(-6,10)*{\sline};
(-2,10)*{\sline};
(2,10)*{\sdot};
(6,10)*{\sline};
(-4,14)*{\cross};
(4,14)*{\cross};
(-6,18)*{\sline};
(0,18)*{\cross};
(6,18)*{\sline};
(-4,-2.5)*{\fcolorbox{black}{white}{$v$}};
(4,-2.5)*{\fcolorbox{black}{white}{$v$}};
}}
\endxy \ + \
\xy 0;/r.17pc/:
(0,0)*{\xybox{
(0,2)*{\cross};
(-6,2)*{\sline};
(6,2)*{\sline};
(-6,6)*{\sline};
(-2,6)*{\sline};
(4,6)*{\cross};
(-6,10)*{\sline};
(-2,10)*{\sline};
(2,10)*{\sdot};
(6,10)*{\sline};
(-4,14)*{\cross};
(4,14)*{\cross};
(-6,18)*{\sline};
(0,18)*{\cross};
(6,18)*{\sline};
(-4,-2.5)*{\fcolorbox{black}{white}{$v$}};
(4,-2.5)*{\fcolorbox{black}{white}{$v$}};
}}
\endxy
\ \right) \\
&- \left( \
\xy 0;/r.17pc/:
(0,0)*{\xybox{
(0,2)*{\cross};
(-6,2)*{\sline};
(8,2)*{\scriptstyle 2};
(6,2)*{\sdot};
(-4,6)*{\cross};
(4,6)*{\cross};
(-6,10)*{\sline};
(-2,10)*{\sline};
(2,10)*{\sline};
(6,10)*{\sline};
(-4,14)*{\cross};
(4,14)*{\cross};
(-6,18)*{\sline};
(0,18)*{\cross};
(6,18)*{\sline};
(-4,-2.5)*{\fcolorbox{black}{white}{$v$}};
(4,-2.5)*{\fcolorbox{black}{white}{$v$}};
}}
\endxy \ - \ \xy 0;/r.17pc/:
(0,0)*{\xybox{
(0,2)*{\cross};
(-6,2)*{\sline};
(6,2)*{\sline};
(6,6)*{\sdot};
(2,6)*{\sline};
(-4,6)*{\cross};
(-6,10)*{\sline};
(-2,10)*{\sline};
(2,10)*{\sline};
(6,10)*{\sline};
(-4,14)*{\cross};
(4,14)*{\cross};
(-6,18)*{\sline};
(0,18)*{\cross};
(6,18)*{\sline};
(-4,-2.5)*{\fcolorbox{black}{white}{$v$}};
(4,-2.5)*{\fcolorbox{black}{white}{$v$}};
}}
\endxy \ - \ \xy 0;/r.17pc/:
(0,0)*{\xybox{
(0,2)*{\cross};
(-6,2)*{\sline};
(6,2)*{\sline};
(6,6)*{\sline};
(2,6)*{\sdot};
(-4,6)*{\cross};
(-6,10)*{\sline};
(-2,10)*{\sline};
(2,10)*{\sline};
(6,10)*{\sline};
(-4,14)*{\cross};
(4,14)*{\cross};
(-6,18)*{\sline};
(0,18)*{\cross};
(6,18)*{\sline};
(-4,-2.5)*{\fcolorbox{black}{white}{$v$}};
(4,-2.5)*{\fcolorbox{black}{white}{$v$}};
}}
\endxy
\ \right) \\
=& -t \left( 0+0+ \ 
\xy 0;/r.17pc/:
(0,0)*{\xybox{
(0,2)*{\cross};
(-6,2)*{\sline};
(6,2)*{\sline};
(1,1)*{\scriptstyle \bullet};
(-6,6)*{\sline};
(-2,6)*{\sline};
(4,6)*{\cross};
(-6,10)*{\sline};
(-2,10)*{\sline};
(2,10)*{\sline};
(6,10)*{\sline};
(-4,14)*{\cross};
(4,14)*{\cross};
(-6,18)*{\sline};
(0,18)*{\cross};
(6,18)*{\sline};
(-4,-2.5)*{\fcolorbox{black}{white}{$v$}};
(4,-2.5)*{\fcolorbox{black}{white}{$v$}};
}}
\endxy \  
\right) + (1+t) \left( 0 + \left( \
\xy 0;/r.17pc/:
(0,0)*{\xybox{
(0,2)*{\cross};
(-6,2)*{\sline};
(6,2)*{\sdot};
(-6,6)*{\sline};
(-2,6)*{\sline};
(4,6)*{\cross};
(-6,10)*{\sline};
(-2,10)*{\sline};
(2,10)*{\sline};
(6,10)*{\sline};
(-4,14)*{\cross};
(4,14)*{\cross};
(-6,18)*{\sline};
(0,18)*{\cross};
(6,18)*{\sline};
(-4,-2.5)*{\fcolorbox{black}{white}{$v$}};
(4,-2.5)*{\fcolorbox{black}{white}{$v$}};
}}
\endxy \ - \ \xy 0;/r.17pc/:
(0,0)*{\xybox{
(0,2)*{\cross};
(-6,2)*{\sline};
(6,2)*{\sline};
(-6,6)*{\sline};
(-2,6)*{\sline};
(2,6)*{\sline};
(6,6)*{\sline};
(-6,10)*{\sline};
(-2,10)*{\sline};
(2,10)*{\sline};
(6,10)*{\sline};
(-4,14)*{\cross};
(4,14)*{\cross};
(-6,18)*{\sline};
(0,18)*{\cross};
(6,18)*{\sline};
(-4,-2.5)*{\fcolorbox{black}{white}{$v$}};
(4,-2.5)*{\fcolorbox{black}{white}{$v$}};
}}
\endxy
\ \right)
\right) \\
&- \left(0-0 - 
\xy 0;/r.17pc/:
(0,0)*{\xybox{
(0,2)*{\cross};
(-6,2)*{\sline};
(6,2)*{\sline};
(-1,1)*{\scriptstyle \bullet};
(6,6)*{\sline};
(2,6)*{\sline};
(-4,6)*{\cross};
(-6,10)*{\sline};
(-2,10)*{\sline};
(2,10)*{\sline};
(6,10)*{\sline};
(-4,14)*{\cross};
(4,14)*{\cross};
(-6,18)*{\sline};
(0,18)*{\cross};
(6,18)*{\sline};
(-4,-2.5)*{\fcolorbox{black}{white}{$v$}};
(4,-2.5)*{\fcolorbox{black}{white}{$v$}};
}}
\endxy 
\ \right) \\
=& -(1+t) \ \xy 0;/r.17pc/:
(0,0)*{\xybox{(0,6)*{\stau};
(-4,-2.5)*{\fcolorbox{black}{white}{$v$}};
(4,-2.5)*{\fcolorbox{black}{white}{$v$}};  
}}
\endxy. 
\end{align*}
\end{proof}

In order to prove the braid relation of (b), 
\[
\mathsf{T}^{2,3}\mathsf{T}^{1,2}\mathsf{T}^{2,3} = \mathsf{T}^{1,2}\mathsf{T}^{2,3}\mathsf{T}^{1,2}, 
\]
we use the following lemma. 

\begin{lemma} \label{lem:taubraid}
We have 
\[
\tau^{2,3}\tau^{1,2}\tau^{2,3} - \tau^{1,2}\tau^{2,3}\tau^{1,2} = t(\tau^{2,3}-\tau^{1,2}). 
\]
\end{lemma}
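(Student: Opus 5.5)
Both sides of the identity in Lemma \ref{lem:taubraid} are endomorphisms of $L_1^e(\delta)^{\circ 3}$, and this module is generated by $v^{\boxtimes 3}$. It therefore suffices to evaluate both sides on $v^{\boxtimes 3}$. By the previous lemma, $\tau^{1,2}$ (resp.\ $\tau^{2,3}$) sends $u_1\boxtimes u_2\boxtimes u_3$ to $\tilde\tau_1(u_2\boxtimes u_1\boxtimes u_3)$ (resp.\ $\tilde\tau_2(u_1\boxtimes u_3\boxtimes u_2)$). Since each endomorphism is $R$-linear, we get on $v^{\boxtimes 3}$ that
\[
\tau^{2,3}\tau^{1,2}\tau^{2,3}(v^{\boxtimes 3}) = \tilde\tau_2\tilde\tau_1\tilde\tau_2\, v^{\boxtimes 3},
\qquad
\tau^{1,2}\tau^{2,3}\tau^{1,2}(v^{\boxtimes 3}) = \tilde\tau_1\tilde\tau_2\tilde\tau_1\, v^{\boxtimes 3},
\]
where we note that the later maps are applied first but act by left multiplication on the result. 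All tensor factors equal $v$, so no reordering issues arise. The claim thus reduces to the identity
\[
(\tilde\tau_2\tilde\tau_1\tilde\tau_2-\tilde\tau_1\tilde\tau_2\tilde\tau_1)\,e(0,1,0,1,0,1)\,v^{\boxtimes 3} = t(\tilde\tau_2-\tilde\tau_1)\,v^{\boxtimes 3}
\]
in $L_1^e(\delta)^{\circ 3}$, which is an explicit computation in $R(3\delta)$ with $\height 3\delta=6$.

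To carry out the computation, I expand $\tilde\tau_1=\tau_2\tau_1\tau_3\tau_2$ and $\tilde\tau_2=\tau_4\tau_3\tau_5\tau_4$. Both triple products are then words of length $12$ representing the same reduced permutation $w[2,2,2]$-type element, namely the longest element of $\mathfrak S_3$ acting on blocks of size two. I transform the diagram of $\tilde\tau_1\tilde\tau_2\tilde\tau_1$ into that of $\tilde\tau_2\tilde\tau_1\tilde\tau_2$ by braid moves $\tau_k\tau_{k+1}\tau_k\leftrightarrow\tau_{k+1}\tau_k\tau_{k+1}$ together with commutations. Each braid move is exact except when the three strands are labelled $(j,j',j)$ with $j\neq j'$, where it produces the error term $\overline{Q}_{j,j',j}$. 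With $Q_{0,1}(u,v)=(u-v)(v-tu)$ we have $\overline{Q}_{0,1,0}(u,u',u'')=-t(u+u'')+(1+t)u'$, and symmetrically for $\overline{Q}_{1,0,1}$. The strands are labelled $0,1,0,1,0,1$ from the bottom, and only a few of the braid moves involve same-coloured outer strands; each such move contributes a term with a single dot inserted into a diagram with one fewer crossing.

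The final step is to simplify these error terms using the relations of Lemma \ref{lem:minusculestr}: $x_1v=x_2v=0$, $\tau_1v=0$, and $e(1,0)v=0$. In each error diagram I slide the dot downward through crossings using the dot–crossing relations. A dot reaching the box $v$ kills the term. Each slide past a crossing of same-coloured strands produces a correction term in which that crossing is removed, and a crossing of two strands sitting directly above one box $v$ also kills the term. The expected outcome, by analogy with the quadratic-relation computation above, is that most terms vanish. The survivors should be precisely $t\tilde\tau_2 v^{\boxtimes 3}$ and $-t\tilde\tau_1 v^{\boxtimes 3}$, possibly after $(1+t)$-weighted terms cancel against each other.

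The main obstacle is bookkeeping. One has to choose the sequence of braid moves so that the number of defect moves is minimal and each defect term is easy to reduce. I would first try the symmetric strategy of moving the middle block's strands one at a time and tracking the dot terms diagrammatically, exactly in the style of the proof of $\tau^2=-(1+t)\tau$. As a consistency check, at $t=1$ the identity should specialize to the known braid relation for the symmetric group action from \cite[Section 14]{MR3694676}.
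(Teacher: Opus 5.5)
Your reduction to $v^{\boxtimes 3}$ and your plan are the paper's route: pass between the two length-$12$ reduced words by braid moves and commutations, then simplify the $\overline{Q}$-defects on $v$. But the proposal stops where the proof begins. That the survivors are exactly $t\tilde\tau_2v^{\boxtimes3}$ and $-t\tilde\tau_1v^{\boxtimes3}$ is only announced as the ``expected outcome'', and nothing you wrote fixes the coefficient. Moreover, the simplification procedure you describe would not finish the job. A defective braid move replaces \emph{three} crossings (not one) by a linear polynomial in the dots. After the dot--crossing corrections, the surviving error term is a \emph{dotless} diagram that is not killed by $x_1v=x_2v=0$ or $\tau_1v=0$. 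It needs a second application of the braid relation, and the factor $t$ comes from that second defect.

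Concretely, call the strands $a_0,a_1,b_0,b_1,c_0,c_1$. In a shortest chain of moves, each of the eight triangles formed by one strand from each block is flipped once. A flip is defective only when the outer strands carry equal labels that differ from the middle one, i.e.\ only for $(a_1,b_0,c_1)$ and $(a_0,b_1,c_0)$. The paper accordingly telescopes the difference as $X+Y+Z+W$, with $Y=Z=0$ by the defining relations. Write $Q_{0,1}(u,v)=au^2+buv+cv^2$, so $(a,b,c)=(-t,1+t,-1)$. The defect of the flip of $(a_1,b_0,c_1)$ is then $X=\tau_4\tau_3\tau_5\tau_2\tau_1(cx_3+bx_4+cx_5)\tau_2\tau_3\tau_5\tau_4v^{\boxtimes3}$. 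The three terms behave as follows. The $x_3$-term vanishes. The $x_4$-term vanishes after a correction, since $\tau_5^2e(\ast,1,1)=0$. The $x_5$-term becomes $-c\,\tau_4\tau_3\tau_5\tau_2\tau_1\tau_2\tau_3\tau_4v^{\boxtimes3}$, in which $\tau_2\tau_1\tau_2$ sits on labels $(0,1,0)$. Rewrite it as $\tau_1\tau_2\tau_1+\overline{Q}_{0,1,0}(x_1,x_2,x_3)$; only the term $ax_3$ survives. Since $x_3\tau_3\tau_4v^{\boxtimes3}=-\tau_4v^{\boxtimes3}$, this gives $X=ac\,\tilde\tau_2v^{\boxtimes3}$. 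So $t=ac$ is a product of outer coefficients of two different defects, and the $(1+t)$-terms die individually rather than cancelling. Finally, $W=-ac\,\tilde\tau_1v^{\boxtimes3}$ follows without a second computation. The isomorphism between the quiver Hecke algebras for $(a,b,c)$ and $(c,b,a)$ obtained by flipping diagrams left to right and swapping colours ($x_k\mapsto x_{7-k}$, $\tau_k\mapsto-\tau_{6-k}$) carries $W$ to $-X$.
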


Note that the left hand side is 
\begin{align*}
&(\mathsf{T}^{2,3}-t)(\mathsf{T}^{1,2}-t)(\mathsf{T}^{2,3}-t) - (\mathsf{T}^{1,2}-t)(\mathsf{T}^{2,3}-t)(\mathsf{T}^{1,2}-t) \\
&= \mathsf{T}^{2,3}\mathsf{T}^{1,2}\mathsf{T}^{2,3} - \mathsf{T}^{1,2}\mathsf{T}^{2,3}\mathsf{T}^{1,2} -t (\mathsf{T}^{2,3})^2 + t(\mathsf{T}^{1,2})^2 + t^2\mathsf{T}^{2,3}-t^2\mathsf{T}^{1,2} \\
&= \mathsf{T}^{2,3}\mathsf{T}^{1,2}\mathsf{T}^{2,3} - \mathsf{T}^{1,2}\mathsf{T}^{2,3}\mathsf{T}^{1,2} + t(\mathsf{T}^{2,3}- \mathsf{T}^{1,2}) \quad \text{by (a)}. 
\end{align*}
Hence, (b) follows from this lemma. 

\begin{proof}[Proof of Lemma \ref{lem:taubraid}]
The left hand side is 
\begin{align*}
\xy 0;/r.17pc/:
(0,0)*{\xybox{
(4,6)*{\stau};
(-10,2)*{\sline};
(-10,6)*{\sline};
(-10,10)*{\sline};
(-6,2)*{\sline};
(-6,6)*{\sline};
(-6,10)*{\sline};
(10,14)*{\sline};
(10,18)*{\sline};
(10,22)*{\sline};
(6,14)*{\sline};
(6,18)*{\sline};
(6,22)*{\sline};
(-10,26)*{\sline};
(-10,30)*{\sline};
(-10,34)*{\sline};
(-6,26)*{\sline};
(-6,30)*{\sline};
(-6,34)*{\sline};
(-4,18)*{\stau};
(4,30)*{\stau};
(-8,-2.5)*{\fcolorbox{black}{white}{$v$}};
(0,-2.5)*{\fcolorbox{black}{white}{$v$}};
(8,-2.5)*{\fcolorbox{black}{white}{$v$}};  
}}
\endxy \ - \ \xy 0;/r.17pc/:
(0,0)*{\xybox{
(-4,6)*{\stau};
(10,2)*{\sline};
(10,6)*{\sline};
(10,10)*{\sline};
(6,2)*{\sline};
(6,6)*{\sline};
(6,10)*{\sline};
(-10,14)*{\sline};
(-10,18)*{\sline};
(-10,22)*{\sline};
(-6,14)*{\sline};
(-6,18)*{\sline};
(-6,22)*{\sline};
(10,26)*{\sline};
(10,30)*{\sline};
(10,34)*{\sline};
(6,26)*{\sline};
(6,30)*{\sline};
(6,34)*{\sline};
(4,18)*{\stau};
(-4,30)*{\stau};
(-8,-2.5)*{\fcolorbox{black}{white}{$v$}};
(0,-2.5)*{\fcolorbox{black}{white}{$v$}};
(8,-2.5)*{\fcolorbox{black}{white}{$v$}};  
}}
\endxy \ = X + Y + Z + W, 
\end{align*} 
where 
\begin{align*}
X = \ \xy 0;/r.17pc/:
(0,0)*{\xybox{
(4,6)*{\stau};
(-10,2)*{\sline};
(-10,6)*{\sline};
(-10,10)*{\sline};
(-6,2)*{\sline};
(-6,6)*{\sline};
(-6,10)*{\sline};
(10,14)*{\sline};
(10,18)*{\sline};
(10,22)*{\sline};
(6,14)*{\sline};
(6,18)*{\sline};
(6,22)*{\sline};
(-10,26)*{\sline};
(-10,30)*{\sline};
(-10,34)*{\sline};
(-6,26)*{\sline};
(-6,30)*{\sline};
(-6,34)*{\sline};
(-4,18)*{\stau};
(4,30)*{\stau};
(-8,-2.5)*{\fcolorbox{black}{white}{$v$}};
(0,-2.5)*{\fcolorbox{black}{white}{$v$}};
(8,-2.5)*{\fcolorbox{black}{white}{$v$}};  
}}
\endxy \ - \ \xy 0;/r.17pc/:
(0,0)*{\xybox{
(4,2)*{\cross};
(-2,2)*{\sline};
(10,2)*{\sline};
(0,6)*{\cross};
(8,6)*{\cross};
(2,10)*{\sline};
(6,10)*{\sline};
(10,10)*{\sline};
(-10,2)*{\sline};
(-10,6)*{\sline};
(-10,10)*{\sline};
(-6,2)*{\sline};
(-6,6)*{\sline};
(-4,10)*{\cross};
(10,14)*{\sline};
(10,18)*{\sline};
(10,22)*{\sline};
(-10,26)*{\sline};
(-10,30)*{\sline};
(-10,34)*{\sline};
(-4,26)*{\cross};
(-6,30)*{\sline};
(-6,34)*{\sline};
(2,26)*{\sline};
(6,26)*{\sline};
(10,26)*{\sline};
(0,30)*{\cross};
(8,30)*{\cross};
(4,34)*{\cross};
(10,34)*{\sline};
(-2,34)*{\sline};
(-10,14)*{\sline};
(-6,14)*{\sline};
(-8,18)*{\cross};
(-10,22)*{\sline};
(-6,22)*{\sline};
(0,14)*{\cross};
(6,14)*{\sline};
(-2,18)*{\sline};
(4,18)*{\cross};
(0,22)*{\cross};
(6,22)*{\sline};
(-8,-2.5)*{\fcolorbox{black}{white}{$v$}};
(0,-2.5)*{\fcolorbox{black}{white}{$v$}};
(8,-2.5)*{\fcolorbox{black}{white}{$v$}};  
}}
\endxy \ , &\ Y = \ \xy 0;/r.17pc/:
(0,0)*{\xybox{
(4,2)*{\cross};
(-2,2)*{\sline};
(10,2)*{\sline};
(0,6)*{\cross};
(8,6)*{\cross};
(2,10)*{\sline};
(6,10)*{\sline};
(10,10)*{\sline};
(-10,2)*{\sline};
(-10,6)*{\sline};
(-10,10)*{\sline};
(-6,2)*{\sline};
(-6,6)*{\sline};
(-4,10)*{\cross};
(10,14)*{\sline};
(10,18)*{\sline};
(10,22)*{\sline};
(-10,26)*{\sline};
(-10,30)*{\sline};
(-10,34)*{\sline};
(-4,26)*{\cross};
(-6,30)*{\sline};
(-6,34)*{\sline};
(2,26)*{\sline};
(6,26)*{\sline};
(10,26)*{\sline};
(0,30)*{\cross};
(8,30)*{\cross};
(4,34)*{\cross};
(10,34)*{\sline};
(-2,34)*{\sline};
(-10,14)*{\sline};
(-6,14)*{\sline};
(-8,18)*{\cross};
(-10,22)*{\sline};
(-6,22)*{\sline};
(0,14)*{\cross};
(6,14)*{\sline};
(-2,18)*{\sline};
(4,18)*{\cross};
(0,22)*{\cross};
(6,22)*{\sline};
(-8,-2.5)*{\fcolorbox{black}{white}{$v$}};
(0,-2.5)*{\fcolorbox{black}{white}{$v$}};
(8,-2.5)*{\fcolorbox{black}{white}{$v$}};  
}}
\endxy \ - \ \xy 0;/r.17pc/:
(0,0)*{\xybox{
(4,2)*{\cross};
(-2,10)*{\sline};
(10,2)*{\sline};
(-10,2)*{\sline};
(2,6)*{\sline};
(-2,6)*{\sline};
(8,6)*{\cross};
(-8,6)*{\cross};
(2,10)*{\sline};
(6,10)*{\sline};
(10,10)*{\sline};
(-6,10)*{\sline};
(-10,10)*{\sline};
(-4,2)*{\cross};
(10,14)*{\sline};
(10,18)*{\sline};
(10,22)*{\sline};
(-10,14)*{\sline};
(-10,18)*{\sline};
(-10,22)*{\sline};
(-4,34)*{\cross};
(2,26)*{\sline};
(6,26)*{\sline};
(10,26)*{\sline};
(-6,26)*{\sline};
(-10,26)*{\sline};
(-2,30)*{\sline};
(2,30)*{\sline};
(8,30)*{\cross};
(-8,30)*{\cross};
(4,34)*{\cross};
(10,34)*{\sline};
(-10,34)*{\sline};
(-2,26)*{\sline};
(0,14)*{\cross};
(6,14)*{\sline};
(-6,14)*{\sline};
(4,18)*{\cross};
(0,22)*{\cross};
(6,22)*{\sline};
(-6,22)*{\sline};
(-4,18)*{\cross};
(-8,-2.5)*{\fcolorbox{black}{white}{$v$}};
(0,-2.5)*{\fcolorbox{black}{white}{$v$}};
(8,-2.5)*{\fcolorbox{black}{white}{$v$}};  
}}
\endxy \ , \\
Z = \ \xy 0;/r.17pc/:
(0,0)*{\xybox{
(4,2)*{\cross};
(-2,10)*{\sline};
(10,2)*{\sline};
(-10,2)*{\sline};
(2,6)*{\sline};
(-2,6)*{\sline};
(8,6)*{\cross};
(-8,6)*{\cross};
(2,10)*{\sline};
(6,10)*{\sline};
(10,10)*{\sline};
(-6,10)*{\sline};
(-10,10)*{\sline};
(-4,2)*{\cross};
(10,14)*{\sline};
(10,18)*{\sline};
(10,22)*{\sline};
(-10,14)*{\sline};
(-10,18)*{\sline};
(-10,22)*{\sline};
(-4,34)*{\cross};
(2,26)*{\sline};
(6,26)*{\sline};
(10,26)*{\sline};
(-6,26)*{\sline};
(-10,26)*{\sline};
(-2,30)*{\sline};
(2,30)*{\sline};
(8,30)*{\cross};
(-8,30)*{\cross};
(4,34)*{\cross};
(10,34)*{\sline};
(-10,34)*{\sline};
(-2,26)*{\sline};
(0,14)*{\cross};
(6,14)*{\sline};
(-6,14)*{\sline};
(4,18)*{\cross};
(0,22)*{\cross};
(6,22)*{\sline};
(-6,22)*{\sline};
(-4,18)*{\cross};
(-8,-2.5)*{\fcolorbox{black}{white}{$v$}};
(0,-2.5)*{\fcolorbox{black}{white}{$v$}};
(8,-2.5)*{\fcolorbox{black}{white}{$v$}};  
}}
\endxy \ - \ \xy 0;/r.17pc/:
(0,0)*{\xybox{
(-4,2)*{\cross};
(2,2)*{\sline};
(-10,2)*{\sline};
(-0,6)*{\cross};
(-8,6)*{\cross};
(-2,10)*{\sline};
(-6,10)*{\sline};
(-10,10)*{\sline};
(10,2)*{\sline};
(10,6)*{\sline};
(10,10)*{\sline};
(6,2)*{\sline};
(6,6)*{\sline};
(4,10)*{\cross};
(-10,14)*{\sline};
(-10,18)*{\sline};
(-10,22)*{\sline};
(10,26)*{\sline};
(10,30)*{\sline};
(10,34)*{\sline};
(4,26)*{\cross};
(6,30)*{\sline};
(6,34)*{\sline};
(-2,26)*{\sline};
(-6,26)*{\sline};
(-10,26)*{\sline};
(-0,30)*{\cross};
(-8,30)*{\cross};
(-4,34)*{\cross};
(-10,34)*{\sline};
(2,34)*{\sline};
(10,14)*{\sline};
(6,14)*{\sline};
(8,18)*{\cross};
(10,22)*{\sline};
(6,22)*{\sline};
(-0,14)*{\cross};
(-6,14)*{\sline};
(2,18)*{\sline};
(-4,18)*{\cross};
(-0,22)*{\cross};
(-6,22)*{\sline};
(-8,-2.5)*{\fcolorbox{black}{white}{$v$}};
(0,-2.5)*{\fcolorbox{black}{white}{$v$}};
(8,-2.5)*{\fcolorbox{black}{white}{$v$}};  
}}
\endxy \ &, \ W = \xy 0;/r.17pc/:
(0,0)*{\xybox{
(-4,2)*{\cross};
(2,2)*{\sline};
(-10,2)*{\sline};
(-0,6)*{\cross};
(-8,6)*{\cross};
(-2,10)*{\sline};
(-6,10)*{\sline};
(-10,10)*{\sline};
(10,2)*{\sline};
(10,6)*{\sline};
(10,10)*{\sline};
(6,2)*{\sline};
(6,6)*{\sline};
(4,10)*{\cross};
(-10,14)*{\sline};
(-10,18)*{\sline};
(-10,22)*{\sline};
(10,26)*{\sline};
(10,30)*{\sline};
(10,34)*{\sline};
(4,26)*{\cross};
(6,30)*{\sline};
(6,34)*{\sline};
(-2,26)*{\sline};
(-6,26)*{\sline};
(-10,26)*{\sline};
(-0,30)*{\cross};
(-8,30)*{\cross};
(-4,34)*{\cross};
(-10,34)*{\sline};
(2,34)*{\sline};
(10,14)*{\sline};
(6,14)*{\sline};
(8,18)*{\cross};
(10,22)*{\sline};
(6,22)*{\sline};
(-0,14)*{\cross};
(-6,14)*{\sline};
(2,18)*{\sline};
(-4,18)*{\cross};
(-0,22)*{\cross};
(-6,22)*{\sline};
(-8,-2.5)*{\fcolorbox{black}{white}{$v$}};
(0,-2.5)*{\fcolorbox{black}{white}{$v$}};
(8,-2.5)*{\fcolorbox{black}{white}{$v$}};  
}}
\endxy \ -  \ \xy 0;/r.17pc/:
(0,0)*{\xybox{
(-4,6)*{\stau};
(10,2)*{\sline};
(10,6)*{\sline};
(10,10)*{\sline};
(6,2)*{\sline};
(6,6)*{\sline};
(6,10)*{\sline};
(-10,14)*{\sline};
(-10,18)*{\sline};
(-10,22)*{\sline};
(-6,14)*{\sline};
(-6,18)*{\sline};
(-6,22)*{\sline};
(10,26)*{\sline};
(10,30)*{\sline};
(10,34)*{\sline};
(6,26)*{\sline};
(6,30)*{\sline};
(6,34)*{\sline};
(4,18)*{\stau};
(-4,30)*{\stau};
(-8,-2.5)*{\fcolorbox{black}{white}{$v$}};
(0,-2.5)*{\fcolorbox{black}{white}{$v$}};
(8,-2.5)*{\fcolorbox{black}{white}{$v$}};  
}}
\endxy \ . 
\end{align*}
Note that the two diagrams in $X$ or $Y$ differ by the position of the third strand,
whereas those in $Z$ or $W$ differ by the position of the fourth strand. 
By the defining relations of the quiver Hecke algebra, both $Y$ and $Z$ are zero. 

To compute $X$ and $W$, we temporarily consider a general choice of polynomial $Q_{0,1}(u_1,u_2) = au_1^2 + bu_1u_2+ cu_2^2 \ (a,c \in \mathbf{k}^{\times},  b \in \mathbf{k})$, 
and write $R_{a,b,c}(3\delta) = R_Q(3\delta)$. 
Since $\overline{Q}_{0,1,0}(u_1,u_2,u_3) = au_1 + au_3 + bu_2$ and $\overline{Q}_{1,0,1}(u_1,u_2,u_3) = cu_1 + cu_3+bu_2$, we have 
\begin{align*}
X =& c \ \xy 0;/r.17pc/:
(0,0)*{\xybox{
(4,2)*{\cross};
(-2,2)*{\sline};
(10,2)*{\sline};
(0,6)*{\cross};
(8,6)*{\cross};
(2,10)*{\sline};
(6,10)*{\sline};
(10,10)*{\sline};
(-10,2)*{\sline};
(-10,6)*{\sline};
(-10,10)*{\sline};
(-6,2)*{\sline};
(-6,6)*{\sline};
(-4,10)*{\cross};
(-10,18)*{\sline};
(-10,22)*{\sline};
(-10,26)*{\sline};
(-4,18)*{\cross};
(-6,22)*{\sline};
(-6,26)*{\sline};
(2,18)*{\sline};
(6,18)*{\sline};
(10,18)*{\sline};
(0,22)*{\cross};
(8,22)*{\cross};
(4,26)*{\cross};
(10,26)*{\sline};
(-2,26)*{\sline};
(-8,14)*{\cross};
(-2,14)*{\sdot};
(2,14)*{\sline};
(6,14)*{\sline};
(10,14)*{\sline};
(-8,-2.5)*{\fcolorbox{black}{white}{$v$}};
(0,-2.5)*{\fcolorbox{black}{white}{$v$}};
(8,-2.5)*{\fcolorbox{black}{white}{$v$}};  
}}
\endxy \ +c \ \xy 0;/r.17pc/:
(0,0)*{\xybox{
(4,2)*{\cross};
(-2,2)*{\sline};
(10,2)*{\sline};
(0,6)*{\cross};
(8,6)*{\cross};
(2,10)*{\sline};
(6,10)*{\sline};
(10,10)*{\sline};
(-10,2)*{\sline};
(-10,6)*{\sline};
(-10,10)*{\sline};
(-6,2)*{\sline};
(-6,6)*{\sline};
(-4,10)*{\cross};
(-10,18)*{\sline};
(-10,22)*{\sline};
(-10,26)*{\sline};
(-4,18)*{\cross};
(-6,22)*{\sline};
(-6,26)*{\sline};
(2,18)*{\sline};
(6,18)*{\sline};
(10,18)*{\sline};
(0,22)*{\cross};
(8,22)*{\cross};
(4,26)*{\cross};
(10,26)*{\sline};
(-2,26)*{\sline};
(-8,14)*{\cross};
(-2,14)*{\sline};
(2,14)*{\sline};
(6,14)*{\sdot};
(10,14)*{\sline};
(-8,-2.5)*{\fcolorbox{black}{white}{$v$}};
(0,-2.5)*{\fcolorbox{black}{white}{$v$}};
(8,-2.5)*{\fcolorbox{black}{white}{$v$}};  
}}
\endxy \ + b \ \xy 0;/r.17pc/:
(0,0)*{\xybox{
(4,2)*{\cross};
(-2,2)*{\sline};
(10,2)*{\sline};
(0,6)*{\cross};
(8,6)*{\cross};
(2,10)*{\sline};
(6,10)*{\sline};
(10,10)*{\sline};
(-10,2)*{\sline};
(-10,6)*{\sline};
(-10,10)*{\sline};
(-6,2)*{\sline};
(-6,6)*{\sline};
(-4,10)*{\cross};
(-10,18)*{\sline};
(-10,22)*{\sline};
(-10,26)*{\sline};
(-4,18)*{\cross};
(-6,22)*{\sline};
(-6,26)*{\sline};
(2,18)*{\sline};
(6,18)*{\sline};
(10,18)*{\sline};
(0,22)*{\cross};
(8,22)*{\cross};
(4,26)*{\cross};
(10,26)*{\sline};
(-2,26)*{\sline};
(-8,14)*{\cross};
(-2,14)*{\sline};
(2,14)*{\sdot};
(6,14)*{\sline};
(10,14)*{\sline};
(-8,-2.5)*{\fcolorbox{black}{white}{$v$}};
(0,-2.5)*{\fcolorbox{black}{white}{$v$}};
(8,-2.5)*{\fcolorbox{black}{white}{$v$}};  
}}
\endxy \\
=& c \ \xy 0;/r.17pc/:
(0,0)*{\xybox{
(4,2)*{\cross};
(-2,2)*{\sline};
(10,2)*{\sline};
(0,6)*{\cross};
(8,6)*{\cross};
(2,10)*{\sline};
(6,10)*{\sline};
(10,10)*{\sline};
(-10,2)*{\sline};
(-10,6)*{\sline};
(-10,10)*{\sline};
(-6,2)*{\sdot};
(-6,6)*{\sline};
(-4,10)*{\cross};
(-10,18)*{\sline};
(-10,22)*{\sline};
(-10,26)*{\sline};
(-4,18)*{\cross};
(-6,22)*{\sline};
(-6,26)*{\sline};
(2,18)*{\sline};
(6,18)*{\sline};
(10,18)*{\sline};
(0,22)*{\cross};
(8,22)*{\cross};
(4,26)*{\cross};
(10,26)*{\sline};
(-2,26)*{\sline};
(-8,14)*{\cross};
(-2,14)*{\sline};
(2,14)*{\sline};
(6,14)*{\sline};
(10,14)*{\sline};
(-8,-2.5)*{\fcolorbox{black}{white}{$v$}};
(0,-2.5)*{\fcolorbox{black}{white}{$v$}};
(8,-2.5)*{\fcolorbox{black}{white}{$v$}};  
}}
\endxy \ +c \ \left(\ \xy 0;/r.17pc/:
(0,0)*{\xybox{
(4,2)*{\cross};
(-2,2)*{\sline};
(10,2)*{\sdot};
(0,6)*{\cross};
(8,6)*{\cross};
(2,10)*{\sline};
(6,10)*{\sline};
(10,10)*{\sline};
(-10,2)*{\sline};
(-10,6)*{\sline};
(-10,10)*{\sline};
(-6,2)*{\sline};
(-6,6)*{\sline};
(-4,10)*{\cross};
(-10,18)*{\sline};
(-10,22)*{\sline};
(-10,26)*{\sline};
(-4,18)*{\cross};
(-6,22)*{\sline};
(-6,26)*{\sline};
(2,18)*{\sline};
(6,18)*{\sline};
(10,18)*{\sline};
(0,22)*{\cross};
(8,22)*{\cross};
(4,26)*{\cross};
(10,26)*{\sline};
(-2,26)*{\sline};
(-8,14)*{\cross};
(-2,14)*{\sline};
(2,14)*{\sline};
(6,14)*{\sline};
(10,14)*{\sline};
(-8,-2.5)*{\fcolorbox{black}{white}{$v$}};
(0,-2.5)*{\fcolorbox{black}{white}{$v$}};
(8,-2.5)*{\fcolorbox{black}{white}{$v$}};  
}}
\endxy \ - \ \xy 0;/r.17pc/:
(0,0)*{\xybox{
(4,2)*{\cross};
(-2,2)*{\sline};
(10,2)*{\sline};
(0,6)*{\cross};
(6,6)*{\sline};
(10,6)*{\sline};
(2,10)*{\sline};
(6,10)*{\sline};
(10,10)*{\sline};
(-10,2)*{\sline};
(-10,6)*{\sline};
(-10,10)*{\sline};
(-6,2)*{\sline};
(-6,6)*{\sline};
(-4,10)*{\cross};
(-10,18)*{\sline};
(-10,22)*{\sline};
(-10,26)*{\sline};
(-4,18)*{\cross};
(-6,22)*{\sline};
(-6,26)*{\sline};
(2,18)*{\sline};
(6,18)*{\sline};
(10,18)*{\sline};
(0,22)*{\cross};
(8,22)*{\cross};
(4,26)*{\cross};
(10,26)*{\sline};
(-2,26)*{\sline};
(-8,14)*{\cross};
(-2,14)*{\sline};
(2,14)*{\sline};
(6,14)*{\sline};
(10,14)*{\sline};
(-8,-2.5)*{\fcolorbox{black}{white}{$v$}};
(0,-2.5)*{\fcolorbox{black}{white}{$v$}};
(8,-2.5)*{\fcolorbox{black}{white}{$v$}};  
}}
\endxy
\ \right) + b \left(\ \xy 0;/r.17pc/:
(0,0)*{\xybox{
(4,2)*{\cross};
(-2,2)*{\sdot};
(10,2)*{\sline};
(0,6)*{\cross};
(8,6)*{\cross};
(2,10)*{\sline};
(6,10)*{\sline};
(10,10)*{\sline};
(-10,2)*{\sline};
(-10,6)*{\sline};
(-10,10)*{\sline};
(-6,2)*{\sline};
(-6,6)*{\sline};
(-4,10)*{\cross};
(-10,18)*{\sline};
(-10,22)*{\sline};
(-10,26)*{\sline};
(-4,18)*{\cross};
(-6,22)*{\sline};
(-6,26)*{\sline};
(2,18)*{\sline};
(6,18)*{\sline};
(10,18)*{\sline};
(0,22)*{\cross};
(8,22)*{\cross};
(4,26)*{\cross};
(10,26)*{\sline};
(-2,26)*{\sline};
(-8,14)*{\cross};
(-2,14)*{\sline};
(2,14)*{\sline};
(6,14)*{\sline};
(10,14)*{\sline};
(-8,-2.5)*{\fcolorbox{black}{white}{$v$}};
(0,-2.5)*{\fcolorbox{black}{white}{$v$}};
(8,-2.5)*{\fcolorbox{black}{white}{$v$}};  
}}
\endxy \ + \ \xy 0;/r.17pc/:
(0,0)*{\xybox{
(4,2)*{\cross};
(-2,2)*{\sline};
(10,2)*{\sline};
(-2,6)*{\sline};
(2,6)*{\sline};
(8,6)*{\cross};
(2,10)*{\sline};
(6,10)*{\sline};
(10,10)*{\sline};
(-10,2)*{\sline};
(-10,6)*{\sline};
(-10,10)*{\sline};
(-6,2)*{\sline};
(-6,6)*{\sline};
(-4,10)*{\cross};
(-10,18)*{\sline};
(-10,22)*{\sline};
(-10,26)*{\sline};
(-4,18)*{\cross};
(-6,22)*{\sline};
(-6,26)*{\sline};
(2,18)*{\sline};
(6,18)*{\sline};
(10,18)*{\sline};
(0,22)*{\cross};
(8,22)*{\cross};
(4,26)*{\cross};
(10,26)*{\sline};
(-2,26)*{\sline};
(-8,14)*{\cross};
(-2,14)*{\sline};
(2,14)*{\sline};
(6,14)*{\sline};
(10,14)*{\sline};
(-8,-2.5)*{\fcolorbox{black}{white}{$v$}};
(0,-2.5)*{\fcolorbox{black}{white}{$v$}};
(8,-2.5)*{\fcolorbox{black}{white}{$v$}};  
}}
\endxy
\ \right) \\
=& 0 + c\left( 0 - \left(\ 
\xy 0;/r.17pc/:
(0,0)*{\xybox{
(4,2)*{\cross};
(-2,2)*{\sline};
(10,2)*{\sline};
(0,6)*{\cross};
(6,6)*{\sline};
(10,6)*{\sline};
(2,10)*{\sline};
(6,10)*{\sline};
(10,10)*{\sline};
(-10,2)*{\sline};
(-10,6)*{\sline};
(-8,10)*{\cross};
(-6,2)*{\sline};
(-6,6)*{\sline};
(-2,10)*{\sline};
(-2,18)*{\sline};
(-10,22)*{\sline};
(-10,26)*{\sline};
(-8,18)*{\cross};
(-6,22)*{\sline};
(-6,26)*{\sline};
(2,18)*{\sline};
(6,18)*{\sline};
(10,18)*{\sline};
(0,22)*{\cross};
(8,22)*{\cross};
(4,26)*{\cross};
(10,26)*{\sline};
(-2,26)*{\sline};
(-4,14)*{\cross};
(-10,14)*{\sline};
(2,14)*{\sline};
(6,14)*{\sline};
(10,14)*{\sline};
(-8,-2.5)*{\fcolorbox{black}{white}{$v$}};
(0,-2.5)*{\fcolorbox{black}{white}{$v$}};
(8,-2.5)*{\fcolorbox{black}{white}{$v$}};  
}}
\endxy \ + a \ \xy 0;/r.17pc/:
(0,0)*{\xybox{
(4,2)*{\cross};
(-2,2)*{\sline};
(10,2)*{\sline};
(0,6)*{\cross};
(6,6)*{\sline};
(10,6)*{\sline};
(2,10)*{\sline};
(6,10)*{\sline};
(10,10)*{\sline};
(-10,2)*{\sline};
(-10,6)*{\sline};
(-10,10)*{\sline};
(-6,2)*{\sline};
(-6,6)*{\sline};
(-6,10)*{\sline};
(-2,10)*{\sline};
(-10,18)*{\sline};
(-10,22)*{\sline};
(-10,26)*{\sline};
(-6,18)*{\sline};
(-2,18)*{\sline};
(-6,22)*{\sline};
(-6,26)*{\sline};
(2,18)*{\sline};
(6,18)*{\sline};
(10,18)*{\sline};
(0,22)*{\cross};
(8,22)*{\cross};
(4,26)*{\cross};
(10,26)*{\sline};
(-2,26)*{\sline};
(-10,14)*{\sdot};
(-6,14)*{\sline};
(-2,14)*{\sline};
(2,14)*{\sline};
(6,14)*{\sline};
(10,14)*{\sline};
(-8,-2.5)*{\fcolorbox{black}{white}{$v$}};
(0,-2.5)*{\fcolorbox{black}{white}{$v$}};
(8,-2.5)*{\fcolorbox{black}{white}{$v$}};  
}}
\endxy \ +a \ \xy 0;/r.17pc/:
(0,0)*{\xybox{
(4,2)*{\cross};
(-2,2)*{\sline};
(10,2)*{\sline};
(0,6)*{\cross};
(6,6)*{\sline};
(10,6)*{\sline};
(2,10)*{\sline};
(6,10)*{\sline};
(10,10)*{\sline};
(-10,2)*{\sline};
(-10,6)*{\sline};
(-10,10)*{\sline};
(-6,2)*{\sline};
(-6,6)*{\sline};
(-6,10)*{\sline};
(-2,10)*{\sline};
(-10,18)*{\sline};
(-10,22)*{\sline};
(-10,26)*{\sline};
(-6,18)*{\sline};
(-2,18)*{\sline};
(-6,22)*{\sline};
(-6,26)*{\sline};
(2,18)*{\sline};
(6,18)*{\sline};
(10,18)*{\sline};
(0,22)*{\cross};
(8,22)*{\cross};
(4,26)*{\cross};
(10,26)*{\sline};
(-2,26)*{\sline};
(-10,14)*{\sline};
(-6,14)*{\sline};
(-2,14)*{\sdot};
(2,14)*{\sline};
(6,14)*{\sline};
(10,14)*{\sline};
(-8,-2.5)*{\fcolorbox{black}{white}{$v$}};
(0,-2.5)*{\fcolorbox{black}{white}{$v$}};
(8,-2.5)*{\fcolorbox{black}{white}{$v$}};  
}}
\endxy \ + b \ \xy 0;/r.17pc/:
(0,0)*{\xybox{
(4,2)*{\cross};
(-2,2)*{\sline};
(10,2)*{\sline};
(0,6)*{\cross};
(6,6)*{\sline};
(10,6)*{\sline};
(2,10)*{\sline};
(6,10)*{\sline};
(10,10)*{\sline};
(-10,2)*{\sline};
(-10,6)*{\sline};
(-10,10)*{\sline};
(-6,2)*{\sline};
(-6,6)*{\sline};
(-6,10)*{\sline};
(-2,10)*{\sline};
(-10,18)*{\sline};
(-10,22)*{\sline};
(-10,26)*{\sline};
(-6,18)*{\sline};
(-2,18)*{\sline};
(-6,22)*{\sline};
(-6,26)*{\sline};
(2,18)*{\sline};
(6,18)*{\sline};
(10,18)*{\sline};
(0,22)*{\cross};
(8,22)*{\cross};
(4,26)*{\cross};
(10,26)*{\sline};
(-2,26)*{\sline};
(-10,14)*{\sline};
(-6,14)*{\sdot};
(-2,14)*{\sline};
(2,14)*{\sline};
(6,14)*{\sline};
(10,14)*{\sline};
(-8,-2.5)*{\fcolorbox{black}{white}{$v$}};
(0,-2.5)*{\fcolorbox{black}{white}{$v$}};
(8,-2.5)*{\fcolorbox{black}{white}{$v$}};  
}}
\endxy
\ \right) \right) \\
&+ b \left(
0 + \left( \ \xy 0;/r.17pc/:
(0,0)*{\xybox{
(4,2)*{\cross};
(-2,2)*{\sline};
(10,2)*{\sline};
(-2,6)*{\sline};
(2,6)*{\sline};
(8,6)*{\cross};
(2,10)*{\sline};
(6,10)*{\sline};
(10,10)*{\sline};
(-10,2)*{\sline};
(-10,6)*{\sline};
(-2,10)*{\sline};
(-6,2)*{\sline};
(-6,6)*{\sline};
(-8,10)*{\cross};
(-2,18)*{\sline};
(-10,22)*{\sline};
(-10,26)*{\sline};
(-8,18)*{\cross};
(-6,22)*{\sline};
(-6,26)*{\sline};
(2,18)*{\sline};
(6,18)*{\sline};
(10,18)*{\sline};
(0,22)*{\cross};
(8,22)*{\cross};
(4,26)*{\cross};
(10,26)*{\sline};
(-2,26)*{\sline};
(-4,14)*{\cross};
(-10,14)*{\sline};
(2,14)*{\sline};
(6,14)*{\sline};
(10,14)*{\sline};
(-8,-2.5)*{\fcolorbox{black}{white}{$v$}};
(0,-2.5)*{\fcolorbox{black}{white}{$v$}};
(8,-2.5)*{\fcolorbox{black}{white}{$v$}};  
}}
\endxy \ +a \ \xy 0;/r.17pc/:
(0,0)*{\xybox{
(4,2)*{\cross};
(-2,2)*{\sline};
(10,2)*{\sline};
(-2,6)*{\sline};
(2,6)*{\sline};
(8,6)*{\cross};
(2,10)*{\sline};
(6,10)*{\sline};
(10,10)*{\sline};
(-10,2)*{\sline};
(-10,6)*{\sline};
(-10,10)*{\sline};
(-6,2)*{\sline};
(-6,6)*{\sline};
(-6,10)*{\sline};
(-2,10)*{\sline};
(-10,18)*{\sline};
(-10,22)*{\sline};
(-10,26)*{\sline};
(-6,18)*{\sline};
(-2,18)*{\sline};
(-6,22)*{\sline};
(-6,26)*{\sline};
(2,18)*{\sline};
(6,18)*{\sline};
(10,18)*{\sline};
(0,22)*{\cross};
(8,22)*{\cross};
(4,26)*{\cross};
(10,26)*{\sline};
(-2,26)*{\sline};
(-10,14)*{\sdot};
(-6,14)*{\sline};
(-2,14)*{\sline};
(2,14)*{\sline};
(6,14)*{\sline};
(10,14)*{\sline};
(-8,-2.5)*{\fcolorbox{black}{white}{$v$}};
(0,-2.5)*{\fcolorbox{black}{white}{$v$}};
(8,-2.5)*{\fcolorbox{black}{white}{$v$}};  
}}
\endxy \ + a \ \xy 0;/r.17pc/:
(0,0)*{\xybox{
(4,2)*{\cross};
(-2,2)*{\sline};
(10,2)*{\sline};
(-2,6)*{\sline};
(2,6)*{\sline};
(8,6)*{\cross};
(2,10)*{\sline};
(6,10)*{\sline};
(10,10)*{\sline};
(-10,2)*{\sline};
(-10,6)*{\sline};
(-10,10)*{\sline};
(-6,2)*{\sline};
(-6,6)*{\sline};
(-6,10)*{\sline};
(-2,10)*{\sline};
(-10,18)*{\sline};
(-10,22)*{\sline};
(-10,26)*{\sline};
(-6,18)*{\sline};
(-2,18)*{\sline};
(-6,22)*{\sline};
(-6,26)*{\sline};
(2,18)*{\sline};
(6,18)*{\sline};
(10,18)*{\sline};
(0,22)*{\cross};
(8,22)*{\cross};
(4,26)*{\cross};
(10,26)*{\sline};
(-2,26)*{\sline};
(-10,14)*{\sline};
(-6,14)*{\sline};
(-2,14)*{\sdot};
(2,14)*{\sline};
(6,14)*{\sline};
(10,14)*{\sline};
(-8,-2.5)*{\fcolorbox{black}{white}{$v$}};
(0,-2.5)*{\fcolorbox{black}{white}{$v$}};
(8,-2.5)*{\fcolorbox{black}{white}{$v$}};  
}}
\endxy \ + b \ \xy 0;/r.17pc/:
(0,0)*{\xybox{
(4,2)*{\cross};
(-2,2)*{\sline};
(10,2)*{\sline};
(-2,6)*{\sline};
(2,6)*{\sline};
(8,6)*{\cross};
(2,10)*{\sline};
(6,10)*{\sline};
(10,10)*{\sline};
(-10,2)*{\sline};
(-10,6)*{\sline};
(-10,10)*{\sline};
(-6,2)*{\sline};
(-6,6)*{\sline};
(-6,10)*{\sline};
(-2,10)*{\sline};
(-10,18)*{\sline};
(-10,22)*{\sline};
(-10,26)*{\sline};
(-6,18)*{\sline};
(-2,18)*{\sline};
(-6,22)*{\sline};
(-6,26)*{\sline};
(2,18)*{\sline};
(6,18)*{\sline};
(10,18)*{\sline};
(0,22)*{\cross};
(8,22)*{\cross};
(4,26)*{\cross};
(10,26)*{\sline};
(-2,26)*{\sline};
(-10,14)*{\sline};
(-6,14)*{\sdot};
(-2,14)*{\sline};
(2,14)*{\sline};
(6,14)*{\sline};
(10,14)*{\sline};
(-8,-2.5)*{\fcolorbox{black}{white}{$v$}};
(0,-2.5)*{\fcolorbox{black}{white}{$v$}};
(8,-2.5)*{\fcolorbox{black}{white}{$v$}};  
}}
\endxy
\ \right)  
\right). 
\end{align*} 
Note that the last four diagrams are zero since $\tau_5^2 e(*,1,1) = 0$ in $R(3\delta)$. 
Hence, 
\begin{align*}
X =& -ac \ \xy 0;/r.17pc/:
(0,0)*{\xybox{
(4,2)*{\cross};
(-2,2)*{\sline};
(10,2)*{\sline};
(0,6)*{\cross};
(6,6)*{\sline};
(10,6)*{\sline};
(-10,2)*{\sline};
(-10,6)*{\sline};
(-6,2)*{\sline};
(-6,6)*{\sline};
(-10,14)*{\sline};
(-10,18)*{\sline};
(-6,14)*{\sline};
(-6,18)*{\sline};
(0,14)*{\cross};
(8,14)*{\cross};
(4,18)*{\cross};
(10,18)*{\sline};
(-2,18)*{\sline};
(-10,10)*{\sline};
(-6,10)*{\sline};
(-2,10)*{\sdot};
(2,10)*{\sline};
(6,10)*{\sline};
(10,10)*{\sline};
(-8,-2.5)*{\fcolorbox{black}{white}{$v$}};
(0,-2.5)*{\fcolorbox{black}{white}{$v$}};
(8,-2.5)*{\fcolorbox{black}{white}{$v$}};  
}}
\endxy \ = -ac \left( \ \xy 0;/r.17pc/:
(0,0)*{\xybox{
(4,2)*{\cross};
(-2,2)*{\sline};
(10,2)*{\sline};
(0,6)*{\cross};
(6,6)*{\sline};
(10,6)*{\sline};
(-10,2)*{\sline};
(-10,6)*{\sline};
(-6,2)*{\sline};
(-6,6)*{\sline};
(-10,14)*{\sline};
(-10,18)*{\sline};
(-6,14)*{\sline};
(-6,18)*{\sline};
(0,14)*{\cross};
(8,14)*{\cross};
(4,18)*{\cross};
(10,18)*{\sline};
(-2,18)*{\sline};
(-10,10)*{\sline};
(-6,10)*{\sline};
(-2,10)*{\sline};
(2,10)*{\sline};
(6,10)*{\sline};
(10,10)*{\sline};
(5,1)*{\scriptstyle \bullet};
(-8,-2.5)*{\fcolorbox{black}{white}{$v$}};
(0,-2.5)*{\fcolorbox{black}{white}{$v$}};
(8,-2.5)*{\fcolorbox{black}{white}{$v$}};  
}}
\endxy \ - \ \xy 0;/r.17pc/:
(0,0)*{\xybox{
(4,2)*{\cross};
(-2,2)*{\sline};
(10,2)*{\sline};
(-2,6)*{\sline};
(2,6)*{\sline};
(6,6)*{\sline};
(10,6)*{\sline};
(-10,2)*{\sline};
(-10,6)*{\sline};
(-6,2)*{\sline};
(-6,6)*{\sline};
(-10,14)*{\sline};
(-10,18)*{\sline};
(-6,14)*{\sline};
(-6,18)*{\sline};
(0,14)*{\cross};
(8,14)*{\cross};
(4,18)*{\cross};
(10,18)*{\sline};
(-2,18)*{\sline};
(-10,10)*{\sline};
(-6,10)*{\sline};
(-2,10)*{\sline};
(2,10)*{\sline};
(6,10)*{\sline};
(10,10)*{\sline};
(-8,-2.5)*{\fcolorbox{black}{white}{$v$}};
(0,-2.5)*{\fcolorbox{black}{white}{$v$}};
(8,-2.5)*{\fcolorbox{black}{white}{$v$}};  
}}
\endxy
\ \right) = ac\ \xy 0;/r.17pc/:
(0,0)*{\xybox{(0,6)*{\stau};
(-14,2)*{\sline};
(-14,6)*{\sline};
(-14,10)*{\sline};
(-10,2)*{\sline};
(-10,6)*{\sline};
(-10,10)*{\sline};
(-4,-2.5)*{\fcolorbox{black}{white}{$v$}};
(4,-2.5)*{\fcolorbox{black}{white}{$v$}};  
(-12,-2.5)*{\fcolorbox{black}{white}{$v$}};
}}
\endxy \ . 
\end{align*}

Let $\overline{0} = 1, \overline{1} = 0$. 
Consider an isomorphism $R_{a,b,c}(3\delta) \to R_{c,b,a}(3\delta)$ given by 
\[
e(\nu_1,\ldots,\nu_6) \mapsto e(\overline{\nu_6},\ldots,\overline{\nu_1}), \ x_k \mapsto x_{7-k}, \ \tau_k \mapsto -\tau_{6-k}. 
\]
Be careful of the sign. 
Under this isomorphism, $W$ for $R_{a,b,c}(3\delta)$ coincides with $-X$ for $R_{c,b,a}(3\delta)$. 
(The sign is determined by the fact that the number of crossings in every relevant diagram is even.)
Hence,
\[
W  = - ca\ \xy 0;/r.17pc/:
(0,0)*{\xybox{(0,6)*{\stau};
(14,2)*{\sline};
(14,6)*{\sline};
(14,10)*{\sline};
(10,2)*{\sline};
(10,6)*{\sline};
(10,10)*{\sline};
(-4,-2.5)*{\fcolorbox{black}{white}{$v$}};
(4,-2.5)*{\fcolorbox{black}{white}{$v$}};  
(12,-2.5)*{\fcolorbox{black}{white}{$v$}};
}}
\endxy \ . 
\]

Since our choice of polynomials $Q$ is given by $(a,b,c) = (-t,1+t,-1)$,
we deduce 
\[
X + W = t (\tau^{2,3}-\tau^{1,2})(v\boxtimes v \boxtimes v). 
\]
Therefore, the assertion follows. 
\end{proof}

\subsection{The $A_N^{(1)}$ case} \label{sub:ANcase}

In this subsection, assume that $\mathbf{k}$ is a field. 
Let $X_N^{(1)} = A_N^{(1)}$ and fix $i \in \mathring{I}$, namely, $1 \leq i \leq N$. 
We construct an endomorphism $\mathsf{T}_i$ of $L_i^e(\delta)^{\circ 2}$ satisfying (a) and (b). 
To this end, we will introduce, following \cite{MR3670026}, a monoidal functor 
\[
\mathcal{F}_i \colon \gMod{\underline{R}} \to \gMod{R}, 
\]
where $\underline{R}$ is the quiver Hecke algebra of type $A_1^{(1)}$ associated with the same parameter $t$ as $R$. 
To avoid confusion between these two different root systems, 
the following notations for type $A_1^{(1)}$ are used only in this subsection. 
We index the Dynkin nodes with underlined numbers $\underline{0}$ and $\underline{1}$, 
and set $\underline{I} = \{\underline{0}, \underline{1} \}$. 
The simple roots are denoted by $\alpha_{\underline{0}}$ and $\alpha_{\underline{1}}$.  
The positive root lattice is denoted by $\underline{\mathsf{Q}}_+$, and the null root is denoted by $\underline{\delta}$.  
Furthermore, the simple module defined in Definition \ref{def:minuscule} (for $\underline{1}$) is denoted by $\underline{L}_{\underline{1}}^e(\underline{\delta})$, 
and the module in Definition \ref{def:minusculeproj} is denoted by $\underline{\Delta}_{\underline{1}}^e(\underline{\delta})$. 
In this subsection, we do not use the underlined notation for multipartitions or elements of $\mathbb{Z}_{\geq 0}^{\mathring{I}}$. 

We define 
\[
\beta_{\underline{0}} = \delta-\alpha_i, \ \beta_{\underline{1}} = \alpha_i \in \mathsf{Q}_+. 
\]
Recall the modules $L^e(\delta-\alpha_i)$ and $\Delta^e(\delta-\alpha_i)$ from Subsection \ref{sub:minuscule}. 
We write 
\[
L(\underline{0}) = L^e(\delta-\alpha_i), \ \Delta(\underline{0}) = \Delta^e(\delta-\alpha_i), L(\underline{1}) = L(i), \ \Delta(\underline{1}) = R(\alpha_i) \in \gMod{R}.  
\]
For $\underline{\alpha} \in \underline{\mathsf{Q}}_+$, 
we will define a right action of $\underline{R}(\underline{\alpha})$ on 
\[
\Delta(\underline{\alpha}) = \bigoplus_{\underline{\nu} \in \underline{I}^{\underline{\alpha}}} \Delta(\underline{\nu}), \ \Delta(\underline{\nu}) = \Delta(\underline{\nu}_1) \circ \cdots \circ \Delta(\underline{\nu}_n) \in \gMod{R}, 
\]
where $n$ is the height of $\underline{\alpha}$.
If $\underline{\alpha} = a \alpha_{\underline{0}} + b \alpha_{\underline{1}}$, 
then $\Delta(\underline{\alpha})$ is an $(R(a\beta_{\underline{0}}+b\beta_{\underline{1}}), R(\underline{\alpha}))$-bimodule. 
The functor $\mathcal{F}_i$ will be defined as 
\[
\mathcal{F}_i(M) = \Delta(\underline{\alpha}) \otimes_{\underline{R}(\underline{\alpha})} M \ (M \in \gMod{\underline{R}(\underline{\alpha})}). 
\]

We begin with describing the explicit structures of $L(\underline{0})$ and $\Delta(\underline{0})$. 
Let 
\[
y = s_0(s_1 s_2\cdots s_{i-1})(s_N s_{N-1}\cdots s_{i+1}) \in W, 
\]
and let $C$ be the subset of $I^{\delta-\alpha_i}$ consisting of all reduced words of $y$. 
Note that any element of $C$ is obtained from $(0,1,2,\ldots,i-1,N,N-1,\ldots,i+1)$ by repeatedly interchanging consecutive entries $j,k$ satisfying $j-k \not \in \{1,-1\} + (N+1)\mathbb{Z}$.  

\begin{lemma}\label{lem:homogeneous}
The module $L(\underline{0})$ has a homogeneous basis $\{v_{\nu} \mid \nu \in C\}$ with $\deg v_{\nu} = 0$, 
where the action of $R(\delta-\alpha_i)$ is given by 
\begin{align*}
e(\nu')v_{\nu} &= \delta_{\nu',\nu} v_{\nu} \ (\nu' \in I^{\delta-\alpha_i}), \\
x_kv_{\nu} &= 0 \ (1 \leq k \leq N), \\
\tau_kv_{\nu} &= \begin{cases}
v_{s_k\nu} & \text{if $\nu_k - \nu_{k+1} \not \in \{1,-1\} + (N+1)\mathbb{Z}$}, \\
0 & \text{otherwise}, 
\end{cases} \ (1 \leq k \leq N-1). 
\end{align*}
\end{lemma}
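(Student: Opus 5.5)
The plan is to construct the module $M=\bigoplus_{\nu\in C}\mathbf{k}v_\nu$ explicitly, show it is a self-dual simple $R(\delta-\alpha_i)$-module, and then check that it is $\preceq$-cuspidal for a convex order of coarse type $e$. Once that is done, uniqueness in Theorem \ref{thm:cuspdecomp} (1) together with Lemma \ref{lem:realrootmodule} forces $M\simeq L(\underline{0})=L^e(\delta-\alpha_i)$.

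First I would check that the formulas define an $R(\delta-\alpha_i)$-module. Let $y$ be the element of the statement. The root $\delta-\alpha_i$ has height $N$ and every $j\in I\setminus\{i\}$ occurs exactly once, so no two entries of $\nu\in C$ coincide. Hence $\tau_kx_{k+1}-x_k\tau_k$ must act as $0$, which holds since all $x$ act by $0$.

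Next, $\tau_k^2e(\nu)$ must act as $Q_{\nu_k,\nu_{k+1}}(x_k,x_{k+1})$. If $\nu_k,\nu_{k+1}$ are not adjacent, then $Q=1$ and $\tau_k^2v_\nu=v_\nu$. If they are adjacent, then $Q$ is linear (either $u-v$ or $u-tv$) and kills $v_\nu$, consistent with $\tau_kv_\nu=0$.

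For the braid relation, $\overline{Q}_{\nu_k,\nu_{k+1},\nu_{k+2}}$ vanishes because $\nu_k\neq\nu_{k+2}$. So it suffices to show that $\tau_k\tau_{k+1}\tau_k$ and $\tau_{k+1}\tau_k\tau_{k+1}$ act equally. Each side is nonzero exactly when the three letters are pairwise nonadjacent, and in that case both give $v_{s_ks_{k+1}s_k\nu}$. Otherwise I expect both sides to vanish by tracking when some swap hits an adjacent pair, which is routine case checking since each letter appears once.

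One also needs $s_k\nu\in C$ whenever $\tau_kv_\nu\neq0$. This is exactly the commutation move for reduced words of $y$, and $C$ is closed under such moves. Giving each $v_\nu$ degree $0$ is consistent: $\deg\tau_ke(\nu)=-(\alpha_{\nu_k},\alpha_{\nu_{k+1}})=0$ for nonadjacent pairs, and the $x_k$ act by zero. Simplicity follows because $C$ is connected under commutation moves (Tits/Matsumoto for fully commutative $y$) and the idempotents separate the basis vectors. Self-duality holds because the anti-involution $\varphi$ preserves the matrix form, since $\tau_k$ acts by a symmetric permutation matrix.

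The main step is cuspidality. The weights $\alpha_{\nu_1}+\cdots+\alpha_{\nu_k}$ of initial segments of reduced words of $y$ are exactly the sums of the form $\sum_{\beta\in\Phi(y^{-1})\text{-prefix}}\beta$, that is, the supports of $\Res_{\beta,\gamma}M\neq0$. Each such prefix sum of $y=s_0(s_1\cdots s_{i-1})(s_N\cdots s_{i+1})$ is a sum of real roots in the inversion set of $y^{-1}$. I would use Corollary \ref{cor:cuspidaltest} (ii)/(iii) adapted to a real root, or more directly the criterion of \cite{MR3542489}: $L(\delta-\alpha_i)$ is the real cuspidal module for the root $y^{-1}$-reflection, and a one-dimensional module whose words are the reduced words of $y$ coincides with the module $\mathcal{S}$-image obtained from $L(i_1)$ via the reflection functors of Theorem \ref{thm:reflectionfunctor}.

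Concretely, I would compute $\chi(M)$ and compare it with $(1-q^2)S_{y'}(f_j)$ for the appropriate $y',j$, which equals $\chi(L^e(\delta-\alpha_i))$ by Lemma \ref{lem:realrootvector} and the discussion preceding it. Since both $M$ and $L(\underline 0)$ are simple, equality of characters via Corollary \ref{cor:formalcharacter} gives $M\simeq L(\underline 0)$. I expect this identification, whether done through cuspidality or characters, to be the main obstacle. The fallback is to verify directly that $\hd(\circ)$ in the cuspidal construction contains $M$ using the description of $L^e(\delta-\alpha_i)$ in \cite[Proposition 6.12]{murata2025affinehighestweightstructures}.
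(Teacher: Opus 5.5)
Your construction of $M=\bigoplus_{\nu\in C}\mathbf{k}v_\nu$ is fine, and so are your checks that it is a graded, self-dual, simple $R(\delta-\alpha_i)$-module; this is exactly the part the paper calls ``easy to see''. The gap is the identification $M\simeq L(\underline 0)$, which is the real content of the lemma. You list three possible routes but carry out none of them, and the cuspidality sketch rests on incorrect statements:
\begin{itemize}
\item The prefix weights $\alpha_{\nu_1}+\cdots+\alpha_{\nu_k}$ are not sums of inversion roots of $y^{-1}$. Those roots are $\alpha_0+\alpha_1+\cdots+\alpha_a$ and $\alpha_0+\alpha_N+\cdots+\alpha_{N-b+1}$, all with $\alpha_0$-coefficient $1$. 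So for $2\le i\le N-1$ the prefix weight $\alpha_0+\alpha_1+\alpha_N$ is not a nonnegative combination of them.
\item $M$ is not one-dimensional: $\dim M=\lvert C\rvert=\binom{N-1}{i-1}$.
\item Corollary \ref{cor:cuspidaltest} concerns imaginary strata and has no real-root version in the paper.
\item The comparison of $\chi(M)$ with $(1-q^2)S_x(f_0)$ is left as an uncomputed expectation.
\end{itemize}

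The cuspidality route can nonetheless be completed in a few lines. A prefix of a word in $C$ is an order ideal of the heap of $y$, which is the union of the chains $0<1<\cdots<i-1$ and $0<N<\cdots<i+1$. Hence $e(\beta,\gamma)M\neq0$ with $\beta,\gamma\neq0$ forces
\[
\beta=\delta-(\alpha_{a+1}+\cdots+\alpha_{N-b}),\qquad \gamma=(\alpha_{a+1}+\cdots+\alpha_{i-1})+(\alpha_{i+1}+\cdots+\alpha_{N-b})
\]
with $a+1\le i\le N-b$. So $\beta$ is a real root, and $\gamma$ is a sum of at most two finite positive roots. For any convex order $\preceq$ of coarse type $e$, these finite roots satisfy $\succ\delta\succ\delta-\alpha_i$.

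Suppose $\beta\succ\delta-\alpha_i$. Take $A=\{\delta-\alpha_i\}$ and $B=\{\beta\}\cup\{\text{the summands of }\gamma\}$. Then $\delta-\alpha_i=\beta+\gamma$ lies in both spans, which violates Definition \ref{def:convexpreorder}. So $\beta\preceq\delta-\alpha_i$, $M$ is $\preceq$-cuspidal, and Theorem \ref{thm:cuspdecomp}(1) gives $M\simeq L(\underline0)$.

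The paper argues differently, via the crystal of Theorem \ref{thm:crystal}:
\begin{itemize}
\item From $L(\underline 0)\otimes L(i)\hookrightarrow\Res_{\delta-\alpha_i,\alpha_i}L_i^e(\delta)$ and the $e$-cuspidality of $L_i^e(\delta)$, it gets $\varepsilon_j(L(\underline0))=\delta_{j,0}$.
\item $M$ has the same $\varepsilon_j$, because every word in $C$ starts with $0$.
\item Only one element of $B(\infty)$ of weight $-(\delta-\alpha_i)$ has these $\varepsilon_j$, since it corresponds to an extremal weight of $B(\varpi_0)$.
\end{itemize}
The convexity argument above is more elementary and avoids both crystals and $L_i^e(\delta)$.
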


\begin{proof}
It is essentially the same as \cite[Corollary 5.18]{MR3175157} since $R(\delta-\alpha_i)$ is of finite type $A_N$ and it does not depend on the choice of polynomials $Q$ up to isomorphism by Lemma \ref{lem:modifyQ}.  
For completeness, we provide a proof.
It is easy to see that the formulas given in the lemma define a self-dual simple module $L(C)$.
We use Theorem \ref{thm:crystal} to prove that $L(C) \simeq L(\underline{0})$.  
Since $L_i^e(\delta)$ is defined as  the head of $L(\underline{0}) \circ L(\underline{1})$, 
we have a nonzero homomorphism $L(\underline{0}) \otimes L(\underline{1}) \to \Res_{\delta-\alpha_i,\alpha_i}L_i^e(\delta)$ by induction-restriction adjunction. 
Since $L(\underline{0})  \otimes L(\underline{1})$ is simple, this homomorphism is injective.
Hence, we have $\varepsilon_j(L(\underline{0})) \leq \varepsilon_j(L_i^e(\delta))$ for any $j \in I$.
Since $L_i^e(\delta)$ is $e$-cuspidal, we deduce that $\varepsilon_j(L(\underline{0})) = 0$ for any $j \in \mathring{I}$.  
On the other hand, we have $\varepsilon_j(L(C)) = 0$ for any $j \in \mathring{I}$ by definition. 
Since $\delta-\alpha_i = \sum_{j \in I, j\neq i}\alpha_j$, we have 
\[
\varepsilon_0(L(\underline{0})) = \varepsilon_0(L(C)) = 1. 
\]
Note that there are only one element $b \in B(\infty)$ such that 
\[
\wt (b) = -(\delta-\alpha_i), \ \varepsilon_j(b) = \delta_{j,0} \ (j \in I).  
\]
Indeed, the set of elements $b$ satisfying these conditions are identified with the weight $\varpi_0-(\delta-\alpha_i)$ set of the highest weight crystal $B(\varpi_0)$, 
which consists of exactly one element since $\varpi_0 - (\delta-\alpha_i) = y\varpi_0$. 
Hence, $L(\underline{0}) \simeq L(C)$. 
\end{proof}

\begin{lemma} \label{lem:Deltazero}
$\Delta(\underline{0})$ has a homogeneous basis $\{z^dv_{\nu} \mid \nu \in C, d \in \mathbb{Z}_{\geq 0}\}$ with $\deg z^d v_{\nu} = 2d$, 
where the action of $R(\delta-\alpha_i)$ is given by 
\begin{align*}
e(\nu') z^dv_{\nu} &= \delta_{\nu',\nu}z^dv_{\nu} \ (\nu' \in I^{\delta-\alpha_i}), \\
x_k z^dv_{\nu} &= \begin{cases}
z^{d+1}v_{\nu} & \text{if $0 \leq \nu_k \leq i-1$}, \\
tz^{d+1}v_{\nu} & \text{if $i+1 \leq \nu_k \leq N$}, 
\end{cases} \ (1 \leq k \leq N)\\
\tau_kz^dv_{\nu} &= \begin{cases}
z^dv_{s_k\nu} & \text{if $\nu_k - \nu_{k+1} \not \in \{1,-1\} + (N+1)\mathbb{Z}$}, \\
0 & \text{otherwise}, 
\end{cases} \ (1\leq k \leq N -1). 
\end{align*}
\end{lemma}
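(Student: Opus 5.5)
Our plan is to construct the module $M$ with the basis $\{z^dv_\nu\}$ and the stated action directly, check that it is a projective $R^e$-type object with the right head and graded dimension, and then identify it with $\Delta(\underline{0})$ through the universal property of a projective cover. We first confirm that the formulas define an $R(\delta-\alpha_i)$-module for the normalized $Q$ of Corollary \ref{cor:Qchoice}.

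The idempotent and commutation relations, together with $\tau_k x_{k+1}-x_k\tau_k=0$ (note that $\nu_k\neq\nu_{k+1}$ always, since the entries of $\nu\in C$ are distinct), are immediate. This uses that swapping commuting letters preserves each letter's weight ($1$ or $t$), since the weight depends only on the letter. The braid relation involves only $\tau$'s on distinct letters, so $\overline{Q}=0$ and it reduces to the corresponding check in Lemma \ref{lem:homogeneous}.

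The main obstacle is the quadratic relation $\tau_k^2e(\nu)=Q_{\nu_k,\nu_{k+1}}(x_k,x_{k+1})e(\nu)$.
\begin{itemize}
\item If $\nu_k,\nu_{k+1}$ are non-adjacent, $Q=1$ and $\tau_k^2$ acts as the identity.
\item If they are adjacent, $\tau_k$ acts by $0$, and we must show $Q_{\nu_k,\nu_{k+1}}(x_k,x_{k+1})$ also acts by $0$. In a reduced word of $y$, adjacent letters appear in a fixed relative order: $0,1,\dots,i-1$ increasing, $0$ before $N$, and then $N,\dots,i+1$ decreasing.
\item For a pair $(j-1,j)$ with $j\le i-1$, $Q_{j-1,j}(u,v)=u-v$ and both dots act by $z$, so it acts by $0$.
\item For a pair $(j+1,j)$ with $j\ge i+1$, $Q_{j+1,j}(u,v)=Q_{j,j+1}(v,u)=v-u$ and both act by $tz$, giving $0$.
\item For the pair $(0,N)$, $Q_{0,N}(u,v)=Q_{N,0}(v,u)=v-tu$ with $u\mapsto z$ and $v\mapsto tz$, giving $0$.
\end{itemize}
This is exactly why the weight $t$ sits on the letters $i+1,\dots,N$. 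The grading is consistent because $\deg x_k=2$ and the $\tau$'s on commuting letters have degree $0$.

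Next we identify $M$ with $\Delta(\underline{0})$.
\begin{itemize}
\item $M$ is generated by $v_\nu$ for any fixed $\nu\in C$, and $M/zM\simeq L(\underline{0})$ by Lemma \ref{lem:homogeneous}, so $M$ has simple head $L(\underline{0})$.
\item Every composition factor of $M$ is a shift of $L(\underline{0})$.
\item Hence $M$ is a quotient of $\Delta(\underline{0})$: it is the projective cover in the Serre subcategory generated by $L(\underline{0})$, as in Subsection \ref{sub:stratifications}.
\item Comparing graded characters, $\chi(\Delta^e(\delta-\alpha_i))=\frac{1}{1-q^2}\chi(L(\underline{0}))$ by Corollary \ref{cor:maximalext} (here $q_\alpha=q$). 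By construction $\ch_q M=\frac{1}{1-q^2}\ch_q L(\underline{0})$ as well.
\item Corollary \ref{cor:formalcharacter} then forces the surjection $\Delta(\underline{0})\to M$ to be an isomorphism, since the two modules have equal graded dimension.
\end{itemize}
The basis $\{z^dv_\nu\}$ of $M$ then transfers to $\Delta(\underline{0})$, with $\deg z^dv_\nu=2d$ as stated.
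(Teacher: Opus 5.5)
Your proposal is correct and follows the paper's proof essentially step by step. Both arguments verify the relations, where the only nontrivial one is the quadratic relation, settled by the same case analysis on adjacent letters. Both then observe that $M$ has head $L(\underline{0})$ with all composition factors grading shifts of it, obtain a surjection $\Delta(\underline{0})\to M$, and conclude by comparing characters via Corollary \ref{cor:maximalext}. Your opening claim that $M$ will be checked to be projective is never used and is not needed.
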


\begin{proof}
It is straightforward to see that the formulas given in the lemma define an $R(\delta-\alpha_i)$-module $M$. 
For instance, one of the necessary relations is 
\[
\tau_k^2z^dv_{\nu} = Q_{\nu_k,\nu_{k+1}}(x_k,x_{k+1})z^dv_{\nu}, 
\]
which can be verified as follows: 
\begin{align*}
&\tau_k^2z^dv_{\nu} = \begin{cases}
z^dv_{\nu} & \text{if $\nu_k - \nu_{k+1} \not \in \{1,-1\} + (N+1)\mathbb{Z}$}, \\
0 & \text{otherwise},
\end{cases} \\
&Q_{\nu_k,\nu_{k+1}}(x_k,x_{k+1})z^dv_{\nu} \\
&= \begin{cases}
z^dv_{\nu} & \text{if $\nu_k - \nu_{k+1} \not \in \{1,-1\} + (N+1)\mathbb{Z}$}, \\
Q_{\nu_k,\nu_{k+1}}(z,z)z^dv_{\nu} = (z-z)z^dv_{\nu}= 0 & \text{if $\nu_k - \nu_{k+1} \in \{1,-1\}, 0\leq \nu_k,\nu_{k+1} \leq i-1$}, \\
Q_{N,0} (tz,z) z^dv_{\nu} = (tz-tz) z^dv_{\nu} = 0 & \text{if $\{\nu_k,\nu_{k+1}\} = \{N,0\}$}, \\
Q_{\nu_k,\nu_{k+1}}(tz,tz)z^dv_{\nu} = (tz-tz)z^dv_{\nu} = 0 & \text{if $\nu_k - \nu_{k+1} \in \{1,-1\}, i+1 \leq \nu_k,\nu_{k+1} \leq N$}. 
\end{cases}
\end{align*} 
We have a surjective homomorphism $M \to L(\underline{0})$ given by 
\[
z^dv_{\nu} \mapsto \delta_{d,0} v_{\nu} \ (\nu \in C, d \in \mathbb{Z}_{\geq 0}). 
\]
This homomorphism restricts to an isomorphism of vector spaces $M_0 \simeq L(\underline{0})$.  
Since $L(\underline{0})$ is simple, we see that any nonzero element of $M_0$ generate $M$ as an $R(\delta-\alpha_i)$-module. 
Hence, the head of $M$ as a graded module is $L(\underline{0})$. 
By definition, we have
\begin{equation} \label{eq:character}
\chi(M) = \frac{1}{1-q^2} \chi(L(\underline{0})). 
\end{equation}
In particular, all the composition factors of $M$ are grading shifts of $L(\underline{0})$. 
Hence, we have a surjective homomorphism $\Delta(\underline{0}) \to M$. 
Since $\chi(\Delta(\underline{0})) = \chi(M)$ by Corollary \ref{cor:maximalext} and (\ref{eq:character}), 
this must be an isomorphism.   
\end{proof}

Note that there is an endomorphism $z$ of $\Delta(\underline{0})$ given by $z^dv_{\nu}\mapsto z^{d+1}v_{\nu}$.  
We also have an endomorphism $w$ of $\Delta(\underline{1}) = R(\alpha_i)$ given by the multiplication by $x_1$. 

\begin{lemma}
We have 
\[
\End_R (\Delta(\underline{0})) = \mathbf{k}[z], \ \End_R(\Delta(\underline{1})) = \mathbf{k}[w], 
\]
both of which are isomorphic to the polynomial algebra in one variable. 
\end{lemma}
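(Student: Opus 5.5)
The two cases are quite different. For $\Delta(\underline{1}) = R(\alpha_i)$ I will argue directly. For $\Delta(\underline{0})$ I will use the stratification theorem together with the explicit model of Lemma \ref{lem:Deltazero}.

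For $\Delta(\underline{1})$: $R(\alpha_i)$ is the polynomial algebra $\mathbf{k}[x_1]$, and it is commutative. Hence $\End_R(R(\alpha_i)) \simeq R(\alpha_i)^{\mathrm{op}} = \mathbf{k}[x_1]$, acting by right multiplication. Since the algebra is commutative, right multiplication by $x_1$ is the same as the endomorphism $w$, so $\End_R(\Delta(\underline{1})) = \mathbf{k}[w]$.

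For $\Delta(\underline{0}) = \Delta^e(\delta-\alpha_i)$, which is the standard module attached to a real root, Theorem \ref{thm:stratification} with $n=1$ gives $\End_R(\Delta(\underline{0})) \simeq \mathbf{k}[z_1]$ with $\deg z_1 = (\delta-\alpha_i,\delta-\alpha_i) = 2$. Consequently
\[
\dim_q \End_R(\Delta(\underline{0})) = \frac{1}{1-q^2}.
\]
It remains to identify this algebra with $\mathbf{k}[z]$ for the explicit endomorphism $z$.

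First I check that $z$ really is an $R(\delta-\alpha_i)$-endomorphism. Using the formulas of Lemma \ref{lem:Deltazero}, $z$ commutes with the actions of $e(\nu')$, $x_k$ and $\tau_k$, because each of those formulas is "$z$-linear": it only shifts $d$ by a fixed amount independent of $d$. Next, the powers $z^d$ for $d \geq 0$ are linearly independent, since $z^d$ sends $v_\nu$ to $z^d v_\nu \neq 0$ and the $z^d$ have distinct degrees $2d$. Therefore $\mathbf{k}[z] \hookrightarrow \End_R(\Delta(\underline{0}))$. Both sides have graded dimension $1/(1-q^2)$, so this inclusion is an equality.

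If one prefers to avoid quoting the stratification theorem, there is a direct argument. Any endomorphism $f$ of $\Delta(\underline{0})$ is determined by its value on a single generator $v_{\nu_0}$, because $M_0$ generates $\Delta(\underline{0})$, as shown in the proof of Lemma \ref{lem:Deltazero}. The image $f(v_{\nu_0})$ lies in $e(\nu_0)\Delta(\underline{0})$, which is spanned by the vectors $z^d v_{\nu_0}$. Subtracting the matching polynomial in $z$ then reduces $f$ to zero.

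The main point needing care is the verification that $z$ commutes with the $\tau_k$ and $x_k$ actions, but this is immediate from the formulas. The real content is the dimension count, which comes from Theorem \ref{thm:stratification}.
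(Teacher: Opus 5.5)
Your proof is correct and is essentially the paper's argument: the paper simply cites Theorem~\ref{thm:stratification}, and your injection-plus-graded-dimension count (together with the direct computation for $R(\alpha_i)=\mathbf{k}[x_1]$) just makes explicit why the particular endomorphisms $z$ and $w$ are generators. Note only that your ``direct'' alternative is not really independent of the stratification theorem. It uses the explicit basis of $\Delta(\underline{0})$ from Lemma~\ref{lem:Deltazero}, and that lemma was itself proved using Corollary~\ref{cor:maximalext}, which rests on Theorem~\ref{thm:stratification}.
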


\begin{proof}
It follows from Theorem \ref{thm:stratification}.
\end{proof}

\begin{lemma}[{\cite[Lemma 3.10]{MR3670026}}] \label{lem:tau}
For $\underline{j}, \underline{k} \in \underline{I}$, there exists a unique homomorphism
\[
\tau_{\underline{j},\underline{k}} \colon q^{-(\alpha_{\underline{j}},\alpha_{\underline{k}})} \Delta(\underline{j}) \circ \Delta(\underline{k}) \to \Delta(\underline{k}) \circ \Delta(\underline{j}), 
\]
such that 
\[
\tau_{\underline{j},\underline{k}}(u \boxtimes v) = \tau_{w[\height \beta_{\underline{k}}, \height \beta_{\underline{j}}]}(v \boxtimes u) \ (u \in \Delta(\underline{j}), v \in \Delta(\underline{k})).
\]
\end{lemma}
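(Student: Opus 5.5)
Uniqueness is automatic: $\Delta(\underline{j})\circ\Delta(\underline{k})$ is generated by $\Delta(\underline{j})\boxtimes\Delta(\underline{k})$. For existence I mimic the lemma giving $\tau$ on $\Delta_1^e(\delta)^{\circ 2}$. Each $\Delta(\underline{j})$ is a cyclic quotient of $R(\beta_{\underline{j}})$: for $\underline{j}=\underline{1}$ it is $R(\alpha_i)$ itself, and for $\underline{j}=\underline{0}$ it is $R(\delta-\alpha_i)e(\nu^0)/K_{\underline{0}}$ with $\nu^0=(0,1,\dots,i-1,N,\dots,i+1)$. I will read the presentation of $K_{\underline{0}}$ off Lemma \ref{lem:Deltazero}. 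It is generated by $e(\nu^0)$ together with the relations $\tau_k e(\nu^0)=0$ when $\nu^0_k-\nu^0_{k+1}\in\{\pm1\}+(N+1)\mathbb{Z}$, and the relations $(x_k-c_k x_{k'})e(\nu^0)=0$ with the appropriate scalars $c_k\in\{1,t,t^{-1}\}$ that identify all dots with multiples of $z$. The fact that these relations define $\Delta(\underline{0})$ follows by comparing graded dimensions: the quotient surjects onto $M$, and a spanning argument shows it has dimension at most that of $M$.

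Given this, $\Delta(\underline{j})\circ\Delta(\underline{k}) \simeq R(\beta_{\underline{j}}+\beta_{\underline{k}})e(\nu^{\underline{j}},\nu^{\underline{k}})/(K_{\underline{j}}\boxtimes 1 + 1\boxtimes K_{\underline{k}})$. The desired map is then right multiplication by $\tau_{w[\height\beta_{\underline{k}},\height\beta_{\underline{j}}]}e(\nu^{\underline{k}},\nu^{\underline{j}})$, sending $e(\nu^{\underline{j}},\nu^{\underline{k}})$ to $\tau_w(v\boxtimes u)$. What has to be checked is that right multiplication by $\tau_w$ carries each generator of the left ideal for the source into the left ideal for the target.

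The dot relations are easy. For a generator $(x_k-cx_{k'})$ supported on one block, $x$ slides through $\tau_w$ and becomes the corresponding dot in the other block. Since the two blocks carry disjoint labels, the labels $\nu_k$ and $\nu_{k+1}$ differ at every crossing between blocks, so no correction terms appear and each dot relation goes to a dot relation.

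The $\tau_k$ relations are the main obstacle. The basic tool is the braid relation with the $\overline{Q}$ error term, exactly as in the computation of $K\tau_{w[2,2]}$ before. Pushing $\tau_k$ through the block of crossings gives $\tau_w\tau_{k'}$ plus error terms. An error term only occurs when a strand of the other block has the same label as a strand on both sides of it. When $\underline{j}\neq\underline{k}$ the labels of the two blocks are disjoint, so no such triple exists and every error term vanishes.

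When $\underline{j}=\underline{k}=\underline{0}$, repeated labels do occur. An error term is a polynomial $\overline{Q}$ in dots multiplied by diagrams with fewer crossings. I plan to show that these diagrams contain a crossing between adjacent strands of one block, so that they lie in the ideal. Concretely, I would factor $\tau_w$ so that the offending crossing $\tau_k$ appears on the right end, as in the earlier $A_1^{(1)}$ computation, and use the explicit word $\nu^0$ to see that only the pairs $(j,j\pm1)$ can cause trouble.

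The grading shift $q^{-(\alpha_{\underline{j}},\alpha_{\underline{k}})}$ is checked by computing $\deg\tau_w e(\nu^{\underline{k}},\nu^{\underline{j}}) = -(\beta_{\underline{j}},\beta_{\underline{k}})$. This agrees with the required shift because $(\beta_{\underline{0}},\beta_{\underline{1}})=(\delta-\alpha_i,\alpha_i)=-2=(\alpha_{\underline{0}},\alpha_{\underline{1}})$, and similarly for the other cases.
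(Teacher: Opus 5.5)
Your strategy is the one the paper uses for $\Delta_1^e(\delta)^{\circ 2}$ in type $A_1^{(1)}$: present both factors cyclically and check that right multiplication by $\tau_w$ carries one left ideal into the other. However, the presentation you give for $\Delta(\underline{0})$ is wrong as soon as $1<i<N$. In $\nu^0$ the labels $0$ and $N$ are Dynkin-adjacent but sit in positions $1$ and $i+1$, and nothing in your list of relations prevents the $N$-strand from crossing the $0$-strand.

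Take $N=3$, $i=2$, $\nu^0=(0,1,3)$.
\begin{itemize}
\item Dividing $R(\beta_{\underline 0})e(\nu^0)$ by $\tau_1e(\nu^0)$, which already forces $x_1=x_2$, gives $M_{01}\circ R(\alpha_3)$, where $M_{01}=R(\alpha_0+\alpha_1)e(0,1)/R(\alpha_0+\alpha_1)\tau_1e(0,1)\cong\mathbf{k}[z]$.
\item This module is free over $\mathbf{k}[z,w]$ on $g,\tau_2g,\tau_1\tau_2g$, where $g$ is the image of $e(\nu^0)$.
\item The remaining relation $(tx_1-x_3)e(\nu^0)$ generates exactly the image of the endomorphism $tz-w$.
\item So your quotient is free of rank $3$ over $\mathbf{k}[z]$, and $\tau_1\tau_2g$ spans a nonzero $e(3,0,1)$-part.
\item By contrast, $\Delta(\underline 0)$ has rank $2$ and support $C=\{(0,1,3),(0,3,1)\}$.
\end{itemize}
So the promised spanning argument cannot exist. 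You must add the relations $\tau_k\tau_{u_\nu}e(\nu^0)$ for all $\nu\in C$ and all $k$ with $s_k\nu\notin C$, where $\tau_{u_\nu}v_{\nu^0}=v_\nu$; an example is $\tau_1\tau_2\cdots\tau_ie(\nu^0)$. All of these have degree $1$, and the dot relations follow from them, e.g.\ $\tau_i\cdots\tau_1\tau_1\cdots\tau_ie(\nu^0)=(x_{i+1}-tx_1)e(\nu^0)$. With this corrected presentation, your argument for $\underline j\neq\underline k$ does go through, since all three strands in every braid move carry distinct labels.

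The case $\underline j=\underline k=\underline 0$, which is the only nontrivial one, is not actually proved. Here the two blocks carry the same labels, so your claim that dots slide through $\tau_{w[N,N]}$ without correction is false: each strand crosses its equally labelled partner once. Likewise the braid error terms are nonzero scalars times shorter diagrams. Showing that these lie in the target ideal is the whole content of the lemma. Your heuristic that they ``end in an adjacent crossing'' is unverified, especially for the corrected relations, which are not single crossings.

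The paper's Remark~\ref{rem:unmixingr} avoids presentations altogether:
\begin{itemize}
\item for $\underline j\neq\underline k$, the restriction of the product is a single Mackey piece (unmixing), so adjunction produces the map directly;
\item $\tau_{\underline 1,\underline 1}$ is simply right multiplication by $\tau_1$;
\item the $\underline 0,\underline 0$ case is quoted from \cite{MR3670026}.
\end{itemize}

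A clean way to finish your argument is a degree count. By cuspidality of $\Delta(\underline 0)$ and convexity, the Mackey filtration of $\Res_{\beta_{\underline 0},\beta_{\underline 0}}(\Delta(\underline 0)^{\circ 2})$ has only two pieces. The bottom piece is $F_{<w}=\Delta(\underline 0)\boxtimes\Delta(\underline 0)$, which is concentrated in degrees $\geq 0$. On the top piece, $u\otimes v\mapsto\tau_w(v\boxtimes u)+F_{<w}$ is an isomorphism from $q^{-2}\Delta(\underline 0)\otimes\Delta(\underline 0)$. Now apply a degree-one relation $g$, in either block, to $\tau_w(v_{\nu^0}\boxtimes v_{\nu^0})$. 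The result lies in $F_{<w}$ and has degree $-1$, so it vanishes.
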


\begin{remark} \label{rem:unmixingr}
If $\underline{j} = \underline{k} = \underline{1}$, the module $\Delta(\underline{1}) \circ \Delta(\underline{1})$ is canonically isomorphic to $R(2\alpha_i)$ and $\tau_{\underline{1},\underline{1}}$ is the right multiplication by $\tau_1$. 
If $\underline{j} \neq \underline{k}$, the existence of the homomorphism $\tau_{\underline{j},\underline{k}}$ is proved as follows. 
Since $\delta-\alpha_i = \sum_{j \in I, j\neq i} \alpha_j$ and $\alpha_i$ does not appear in this expansion, 
the Mackey filtration of $\Res_{\alpha_{\underline{j}},\alpha_{\underline{k}}} (\Delta(\underline{k})\circ \Delta(\underline{j}))$ is one-step, namely, we have 
\begin{align*}
\Res_{\alpha_i,\delta-\alpha_i} (\Delta(\underline{0})\circ \Delta(\underline{1})) &\simeq q^2 \Delta(\underline{1}) \otimes \Delta(\underline{0}), \\
\Res_{\delta-\alpha_i,\alpha_i} (\Delta(\underline{1})\circ \Delta(\underline{0})) &\simeq q^2 \Delta(\underline{0}) \otimes \Delta(\underline{1}). 
\end{align*}
By induction-restriction adjunction, it shows the existence of $\tau_{\underline{j},\underline{k}}$. 
Note that $\tau_{\underline{1},\underline{0}}$ coincides with the injective homomorphism $\mathsf{R}$ in Proposition \ref{prop:imaginarystd}, and we have 
\[
\Cok (\tau_{\underline{1},\underline{0}}) \simeq \Delta_i^e(\delta). 
\]
\end{remark}

By \cite[Section 3.4]{MR3670026}, there exist homogeneous quadratic polynomials $Q_{\underline{0},\underline{1}}(u,v) = Q_{\underline{1},\underline{0}}(v,u)$ satisfying
\[
\tau_{\underline{1},\underline{0}}\tau_{\underline{0},\underline{1}} = Q_{\underline{0},\underline{1}}(z,w), \ \tau_{\underline{0},\underline{1}}\tau_{\underline{1},\underline{0}} = Q_{\underline{1},\underline{0}}(w,z), 
\]
as endomorphisms of $\Delta(\underline{0}) \circ \Delta(\underline{1})$ and of $\Delta(\underline{1}) \circ \Delta(\underline{0})$ respectively. 
The following is the key new result in this subsection: 

\begin{proposition} \label{prop:computeQ}
We have $Q_{\underline{0},\underline{1}}(u,v) = (u-v)(v-tu)$. 
\end{proposition}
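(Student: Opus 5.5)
Since $\tau_{\underline{1},\underline{0}}\tau_{\underline{0},\underline{1}}$ is an endomorphism of $\Delta(\underline{0})\circ\Delta(\underline{1})$, it is determined by its value on generators $z^0v_\nu\boxtimes 1$. I would compute it on one well-chosen generator and compare with $Q_{\underline{0},\underline{1}}(z,w)$ applied to the same vector. The degree count gives $\deg Q_{\underline{0},\underline{1}}=4$, so $Q_{\underline{0},\underline{1}}(u,v)=au^2+buv+cv^2$ with $\deg z=\deg w=2$. I would fix $\nu^0=(0,1,\ldots,i-1,N,N-1,\ldots,i+1)\in C$ and set $m=\height(\delta-\alpha_i)=N$. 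The vector $\tau_{\underline{1},\underline{0}}\tau_{\underline{0},\underline{1}}(v_{\nu^0}\boxtimes 1)$ then equals
\[
\tau_{w[N,1]}\tau_{w[1,N]}\,(v_{\nu^0}\boxtimes 1)=\tau_1\cdots\tau_N\,\tau_N\cdots\tau_1\,(v_{\nu^0}\boxtimes 1),
\]
with the $\tau$'s ordered according to $w[1,N]$ and $w[N,1]$. The strand labelled $i$ starts at position $N+1$, moves to the far left, and returns.

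The evaluation proceeds from the innermost pair outward, using the relation $\tau_k^2e(\nu)=Q_{\nu_k,\nu_{k+1}}(x_k,x_{k+1})e(\nu)$ each time the $i$-strand crosses a strand labelled $j$. The factor is $1$ if $a_{i,j}=0$. The $i$-strand meets exactly two adjacent labels, $i-1$ and $i+1$ (mod $N+1$), because $\nu^0$ contains every label except $i$. In the sequence $\nu^0$, the label $i+1$ sits at position $N$ (or is $0$ when $i=N$), and $i-1$ sits at position $i$.

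When the $i$-strand passes $\nu_k$ it is adjacent to $\nu_k$, so the two factors are $Q_{i+1,i}(x_{\text{pos}},x)=x_{\text{pos}}-x$ and $Q_{i-1,i}$, both written with the paper's $Q$. These factors carry dots on the strands of $\Delta(\underline{0})$. Lemma \ref{lem:Deltazero} turns those dots into $z$ or $tz$: labels $0,\ldots,i-1$ give $z$, labels $i+1,\ldots,N$ give $tz$, and a dot on the $i$-strand gives $w$. Sliding dots past crossings generates correction terms $\delta_{\nu_k,\nu_{k+1}}$, but these vanish because the $i$-strand never meets an $i$-strand. Some dots also become sandwiched between the remaining crossings. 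They can be moved freely because $x_k$ commutes with $\tau_l$ whenever the labels differ, up to the relations $x\tau-\tau x=\delta$, which are zero for unequal labels.

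I expect the outcome to be a product of two linear factors. The factor from $i-1$ is $(z-w)$ or $(w-z)$. The factor from $i+1$ is something like $(tz-w)$. The special polynomial $Q_{N,0}=u-tv$ enters exactly when one of the neighbours is $0$ or $N$: that happens for $i=1$, where $i-1=0$, and for $i=N$, where $i+1=0$. The main obstacle is the careful bookkeeping of signs and of the orientation of each $Q_{j,k}$, meaning which argument is the $i$-strand, in these edge cases.

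I would handle the edge cases separately and verify that every case gives the same product, $(u-v)(v-tu)$ up to the sign fixed by the convention $Q_{\underline{0},\underline{1}}(u,v)=Q_{\underline{1},\underline{0}}(v,u)$. The value of $t$ is forced in each case, because the product of the $c_{i,i}$ normalizations is exactly $t$ (as in Corollary \ref{cor:Qchoice}). Finally, since the $v_\nu$ for $\nu\in C$ are related by $\tau$'s that commute with the whole construction, equality on $v_{\nu^0}\boxtimes 1$ gives equality of the endomorphisms.
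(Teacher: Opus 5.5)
Your strategy is the paper's: evaluate $\tau_{\underline{1},\underline{0}}\tau_{\underline{0},\underline{1}}$ on $v_\nu\boxtimes e(i)$ for $\nu=(0,1,\ldots,i-1,N,\ldots,i+1)$, reduce it to $\prod_k Q_{\nu_k,i}(x_k,x_{N+1})$, and convert the dots with Lemma \ref{lem:Deltazero}. As written, however, the proposal does not determine the polynomial, because the overall sign is left open.

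The symmetry $Q_{\underline{0},\underline{1}}(u,v)=Q_{\underline{1},\underline{0}}(v,u)$ that you invoke is not a normalization and cannot fix a sign. The maps $\tau_{\underline{0},\underline{1}}$ and $\tau_{\underline{1},\underline{0}}$ are the specific maps of Lemma \ref{lem:tau}, and they later define the right $\underline{R}$-action without rescaling. So the sign is an output of the computation and has to be tracked.

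The one sign you do write down is also wrong. From $Q_{i,i+1}(u,v)=u-v$ you get $Q_{i+1,i}(u,v)=v-u$. Followed through, your formula $Q_{i+1,i}(x_{\mathrm{pos}},x)=x_{\mathrm{pos}}-x$ would give $-(u-v)(v-tu)$.

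Done carefully, the operator is $\tau_N\cdots\tau_1\tau_1\cdots\tau_N$, with the rightmost $\tau_N$ acting first; your displayed order is reversed. The middle $\tau_1^2$ acts on $e(\nu_1,i,\nu_2,\ldots)$, so each factor is $Q_{\nu_k,i}(x_k,x_{N+1})$ with the $\nu_k$-variable first. For $1\le i\le N-1$ this gives
\[
Q_{i-1,i}(x_i,x_{N+1})\,Q_{i+1,i}(x_N,x_{N+1})=(x_i-x_{N+1})(x_{N+1}-x_N)\ \longmapsto\ (z-w)(w-tz).
\]
For $i=N$, where $\nu=(0,1,\ldots,N-1)$, it gives
\[
Q_{0,N}(x_1,x_{N+1})\,Q_{N-1,N}(x_N,x_{N+1})=(x_{N+1}-tx_1)(x_N-x_{N+1})\ \longmapsto\ (w-tz)(z-w).
\]

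Your account of where $t$ enters is also off.
\begin{itemize}
\item $Q_{N,0}$ is never used when $i=1$; there the relevant polynomial is $Q_{0,1}=u-v$. In general $Q_{N,0}$ appears only when $i=N$.
\item For $1\le i\le N-1$, the parameter $t$ comes only from $x_N$ acting by $tz$ on the strand labelled $i+1$, by Lemma \ref{lem:Deltazero}.
\item For $i=N$, it comes only from $Q_{0,N}(u,v)=v-tu$, while $x_1$ acts by $z$.
\item The $c_{i,i}$ of Corollary \ref{cor:Qchoice} play no role, since $Q$ is already in normalized form.
\end{itemize}

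Finally, to read off the coefficients of $Q_{\underline{0},\underline{1}}$ you need that $z^2$, $zw$, $w^2$ send $v_\nu\boxtimes e(i)$ to linearly independent vectors. This holds, since the images are $z^2v_\nu\boxtimes e(i)$, $zv_\nu\boxtimes x_1e(i)$ and $v_\nu\boxtimes x_1^2e(i)$ in $\Delta(\underline{0})\boxtimes\Delta(\underline{1})$.
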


\begin{proof}
Let $\nu = (0,1,\ldots,i-1,N,N-1,\ldots,i+1) \in C$. 
We have 
\begin{align*}
Q_{\underline{0},\underline{1}}(z,w)(v_{\nu}\boxtimes e(i)) &=\tau_{\underline{1},\underline{0}}\tau_{\underline{0},\underline{1}}(v_{\nu} \boxtimes e(i)) \\
&= \tau_N \tau_{N-1}\cdots \tau_2 \tau_1 \tau_1 \tau_2 \cdots \tau_{N-1}\tau_N (v_{\nu} \boxtimes e(i)).  \\
\end{align*}
When $i \in \{1, \ldots, N-1\}$, this is equal to 
\begin{align*}
&Q_{i-1,i}(x_i,x_{N+1})Q_{i+1,i}(x_N,x_{N+1})(v_{\nu} \boxtimes e(i)) \\
&= (x_i-x_{N+1})(-x_N +  x_{N+1})(v_{\nu} \boxtimes e(i)) \\ 
&= (z-w)(-tz+w)(v_{\nu}\boxtimes e(i)) \quad \text{by Lemma \ref{lem:Deltazero}}. 
\end{align*}
On the other hand, when $i = N$ and hence $\nu = (0,1,\ldots, N-1)$, it is 
\begin{align*}
&Q_{0,N}(x_1,x_{N+1})Q_{N-1,N}(x_N,x_{N+1})(v_{\nu}\boxtimes e(N)) \\
&= (-tx_1+x_{N+1})(x_N-x_{N+1})(v_{\nu}\boxtimes e(N)) \\
&= (-tz+w)(z-w)(v_{\nu}\boxtimes e(N)) \quad \text{by Lemma \ref{lem:Deltazero}}. 
\end{align*}
The proposition is proved.
\end{proof}

By \cite[Theorem 3.12]{MR3670026}, there exists $c \in \mathbf{k}^{\times}$ such that 
\[
\tau_{\underline{0},\underline{0}}(\id \otimes z) - (z \otimes \id)\tau_{\underline{0},\underline{0}} = (\id \otimes z)\tau_{\underline{0},\underline{0}} - \tau_{\underline{0},\underline{0}}(z \otimes \id) = c \id, 
\]
as endomorphisms of $\Delta(\underline{0})^{\circ 2}$. 
Recall that $\underline{R}$ is the quiver Hecke algebra of type $A_1^{(1)}$ associated with the same parameter $t$ as $R$. 
For $\underline{\alpha} \in \underline{\mathsf{Q}}_+$ of height $n$, 
the module $\Delta(\underline{\alpha})\in \gMod{R}$ is a right $\underline{R}(\underline{\alpha})$-module as follows: 
\begin{itemize}
\item for $\underline{\nu} \in \underline{I}^{\underline{\alpha}}$, $e(\underline{\nu})$ acts as the projection to $\Delta(\underline{\nu}_1) \circ \cdots \circ \Delta(\underline{\nu}_n)$; 
\item if $\underline{\nu}_k = \underline{0}$, $e(\underline{\nu})x_k$ acts as $\id_{\Delta(\underline{\nu}_1)} \otimes \cdots \otimes z \otimes \cdots \otimes \id_{\Delta(\underline{\nu}_n)}$ on $\Delta(\underline{\nu}_k)$, where $z$ appears in the $k$-th position;
\item similarly, if $\underline{\nu}_k = \underline{1}$, $e(\underline{\nu})x_k$ acts as an endomorphism induced from $w$ on $\Delta(\underline{\nu}_k)$; 
\item $e(\underline{\nu})\tau_k$ acts as $c^{-\delta_{\underline{\nu}_k, \underline{0}}\delta_{\underline{\nu}_{k+1}, \underline{0}}}\tau_{\underline{\nu}_k,\underline{\nu}_{k+1}}$ from $\Delta(\underline{\nu})$ to $\Delta(s_k\underline{\nu})$. 
\end{itemize}
We define $\mathcal{F}_i \colon \gMod{\underline{R}} \to \gMod{R}$ by
\[
\mathcal{F}_i(M) = \Delta(\underline{\alpha})\otimes_{\underline{R}(\underline{\alpha})} M \ (M \in \gMod{\underline{R}(\underline{\alpha})}). 
\]
By definition, $\mathcal{F}_i$ is right exact and monoidal. 

\begin{remark}
We adopt a slightly different normalization for the right action of $\underline{R}(\underline{\alpha})$ on $\Delta(\underline{\alpha})$ from the one in \cite{MR3670026}.
In fact, these two actions only differ by a certain automorphism of $\underline{R}(\underline{\alpha})$. 
\end{remark}

\begin{remark}
The functor $\mathcal{F}_i$ is the same as the one obtained by the procedure \cite[Section 4.2]{MR3790066}. 
In fact, the homomorphism $\tau_{\underline{j}, \underline{k}}$ is characterized up to a scalar multiple as the homogeneous homomorphism of degree $-(\alpha_{\underline{j}}, \alpha_{\underline{k}})$, 
and hence coincides with the homomorphisms $\mathsf{R}_{j,k}$ (if $j \neq k$) or $\mathsf{r}_j$ (if $j =k$) used in \cite{MR3790066}. 
\end{remark}

\begin{lemma} \label{lem:GDelta}
We have 
\[
\mathcal{F}_i(\underline{\Delta}_{\underline{1}}^e(\underline{\delta})) \simeq \Delta_i^e(\delta). 
\]
\end{lemma}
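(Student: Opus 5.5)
We compute $\mathcal{F}_i$ on a projective presentation of $\underline{\Delta}_{\underline{1}}^e(\underline{\delta})$ and compare with Proposition \ref{prop:imaginarystd}. Since $\mathcal{F}_i$ is right exact, it preserves cokernels, so we only need to control it on projectives and on one map.

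First, in type $A_1^{(1)}$, Proposition \ref{prop:imaginarystd} applied to $\underline{R}$ gives the exact sequence
\[
q^2 \underline{R}(\alpha_{\underline{1}}) \circ \underline{R}(\alpha_{\underline{0}}) \xrightarrow{\cdot \tau_1} \underline{R}(\alpha_{\underline{0}}) \circ \underline{R}(\alpha_{\underline{1}}) \to \underline{\Delta}_{\underline{1}}^e(\underline{\delta}) \to 0,
\]
as noted at the start of Subsection \ref{sub:A1case}. Here $\underline{R}(\alpha_{\underline{j}}) \circ \underline{R}(\alpha_{\underline{k}}) = \underline{R}(\underline{\delta})e(\underline{j},\underline{k})$, and the map is right multiplication by $\tau_1$ from $e(\underline{1},\underline{0})$ to $e(\underline{0},\underline{1})$. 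Second, I compute $\mathcal{F}_i$ on these projectives: $\mathcal{F}_i(\underline{R}(\underline{\delta})e(\underline{\nu})) = \Delta(\underline{\delta})\otimes_{\underline{R}(\underline{\delta})}\underline{R}(\underline{\delta})e(\underline{\nu}) \simeq \Delta(\underline{\delta})e(\underline{\nu}) = \Delta(\underline{\nu}_1)\circ\Delta(\underline{\nu}_2)$. This gives $\mathcal{F}_i(\underline{R}(\alpha_{\underline{0}})\circ\underline{R}(\alpha_{\underline{1}})) \simeq \Delta(\underline{0})\circ\Delta(\underline{1}) = \Delta^e(\delta-\alpha_i)\circ R(\alpha_i)$, and similarly for the other term. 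I need to check that the grading shifts match: the shift $q^2 = q^{-(\alpha_{\underline{1}},\alpha_{\underline{0}})}$ on the $\underline{R}$ side should correspond to $q_i^2 = q^2$ in type $A_N^{(1)}$, which follows since $(\beta_{\underline{0}},\beta_{\underline{1}}) = (\delta-\alpha_i,\alpha_i) = -2 = (\alpha_{\underline{0}},\alpha_{\underline{1}})$.

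Third, under these identifications, the image of right multiplication by $\tau_1$ is the right action of $e(\underline{1},\underline{0})\tau_1$ on $\Delta(\underline{\delta})$. By the definition of the right action, this is $\tau_{\underline{1},\underline{0}}$, with no scalar $c^{-1}$ because the colors differ. To confirm this I will trace the Yoneda identification $\Hom(\underline{R}e(\underline{\nu}),\underline{R}e(\underline{\nu}'))\simeq e(\underline{\nu})\underline{R}e(\underline{\nu}')$ to make sure the direction and composition conventions for right modules agree. By Remark \ref{rem:unmixingr}, $\tau_{\underline{1},\underline{0}}$ coincides with the map $\mathsf{R}$ of Proposition \ref{prop:imaginarystd}. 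Right exactness of $\mathcal{F}_i$ then gives $\mathcal{F}_i(\underline{\Delta}_{\underline{1}}^e(\underline{\delta})) \simeq \Cok(\mathsf{R}) \simeq \Delta_i^e(\delta)$.

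I expect the main obstacle to be this bookkeeping in the third step: checking that the right action of $\tau_1 e(\underline{1},\underline{0})$, rather than some twisted or reversed version, corresponds exactly to $\tau_{\underline{1},\underline{0}}$ from $q^2\Delta(\underline{1})\circ\Delta(\underline{0})$ to $\Delta(\underline{0})\circ\Delta(\underline{1})$. Even if a nonzero scalar appears, the cokernel is unchanged. It also suffices that the map is some nonzero multiple of $\mathsf{R}$, and this holds because a homogeneous homomorphism of this degree is unique up to scalar, by the same Mackey argument used in Remark \ref{rem:unmixingr}.
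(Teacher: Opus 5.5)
Your proposal is correct and follows essentially the same route as the paper. The paper likewise applies the right exact functor $\mathcal{F}_i$ to the presentation of $\underline{\Delta}_{\underline{1}}^e(\underline{\delta})$ by right multiplication by $\tau_1$, identifies the resulting map as $\tau_{\underline{1},\underline{0}}$, and concludes via Remark \ref{rem:unmixingr} that its cokernel is $\Delta_i^e(\delta)$.
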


\begin{proof}
By Proposition \ref{prop:imaginarystd}, we have a short exact sequence 
\[
0 \to q^2 \underline{R}(\alpha_{\underline{1}}) \circ \underline{R}(\alpha_{\underline{0}}) \to \underline{R}(\alpha_{\underline{0}}) \circ \underline{R}(\alpha_{\underline{1}}) \to \underline{\Delta}_{\underline{1}}^e(\underline{\delta}) \to 0. 
\]
Note that, we have canonical isomorphisms
\[
\underline{R}(\alpha_{\underline{0}}) \circ \underline{R}(\alpha_{\underline{1}}) \simeq \underline{R}(\underline{\delta})e(\underline{0},\underline{1}), \ \underline{R}(\alpha_{\underline{1}}) \circ \underline{R}(\alpha_{\underline{0}}) \simeq \underline{R}(\underline{\delta})e(\underline{1},\underline{0}),
\]
and the injective homomorphism in this short exact sequence is the right multiplication by $\tau_1$. 
Hence, applying $\mathcal{F}_i$ we obtain an exact sequence 
\[
q^2 \Delta(\underline{1}) \circ \Delta(\underline{0}) \to \Delta(\underline{0}) \circ \Delta(\underline{1}) \to \mathcal{F}_i(\underline{\Delta}_{\underline{1}}^e(\underline{\delta})) \to 0, 
\]
where the first homomorphism is $\tau_{\underline{1},\underline{0}}$. 
By Remark \ref{rem:unmixingr}, we deduce the lemma. 
\end{proof}

In Subsection \ref{sub:A1case}, we have constructed an endomorphism $\mathsf{T}_{\underline{1}}$ of $\underline{\Delta}_{\underline{1}}^e(\underline{\delta})^{\circ 2}$ that satisfies (a) and (b).  
Identifying $\Delta_i^e(\delta)$ with $\mathcal{F}_i(\underline{\Delta}_{\underline{1}}^e(\underline{\delta}))$ using Lemma \ref{lem:GDelta}, 
we define the following endomorphism of $\Delta_i^e(\delta)^{\circ 2}$:
\[
\mathsf{T}_i = \mathcal{F}_i(\mathsf{T}_{\underline{1}}). 
\]
Since the functor $\mathcal{F}_i$ is monoidal, the endomorphism $\mathsf{T}_i$ also satisfies (a) and (b). 
Hence, it remains to prove that $\mathsf{T}_i$ is nonconstant, 
which can be seen from the lemma below: 

\begin{lemma} \label{lem:Tiform}
For $u, u' \in \Delta_i^e(\delta)$, we have 
\[
\mathsf{T}_i (u \boxtimes u') = \tau_{w[N+1,N+1]}(u' \boxtimes u) +t  u\boxtimes u'. 
\]
\end{lemma}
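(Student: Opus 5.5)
The plan is to unwind the definition $\mathsf{T}_i = \mathcal{F}_i(\mathsf{T}_{\underline{1}})$ with $\mathsf{T}_{\underline{1}} = \tau + t$. The identity summand clearly maps to $t\,\id$, so the whole content is that $\mathcal{F}_i(\tau)$ acts as $u \boxtimes u' \mapsto \tau_{w[N+1,N+1]}(u'\boxtimes u)$.

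\textbf{Step 1: write $\tau$ in $\underline{R}$.} On $\underline{\Delta}_{\underline{1}}^e(\underline{\delta})^{\circ 2}$, which is a quotient of $\underline{R}(2\underline{\delta})e(\underline{0},\underline{1},\underline{0},\underline{1})$, the endomorphism $\tau$ is right multiplication by $\tau_{w[2,2]} = \tau_2\tau_1\tau_3\tau_2$. Applying $\mathcal{F}_i$ (monoidal and right exact), the surjection
\[
\Delta(\underline{0})\circ\Delta(\underline{1})\circ\Delta(\underline{0})\circ\Delta(\underline{1}) \to \Delta_i^e(\delta)^{\circ 2}
\]
intertwines $\mathcal{F}_i(\tau)$ with the right action of $e(\underline{0},\underline{1},\underline{0},\underline{1})\tau_2\tau_1\tau_3\tau_2$. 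It therefore suffices to compute this composite on generators $a\boxtimes b\boxtimes a'\boxtimes b'$ upstairs.

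\textbf{Step 2: compose the four crossings.} The right action applies, in order (reading the product appropriately as composition of maps on $\Delta(\underline{\nu})$), the maps $\tau_{\underline{1},\underline{0}}$ in the middle slot, then $\tau_{\underline{0},\underline{0}}$ and $\tau_{\underline{1},\underline{1}}$ in the outer slots, then $\tau_{\underline{1},\underline{0}}$ in the middle again. The only crossing of like colors are $\tau_{\underline{0},\underline{0}}$ and $\tau_{\underline{1},\underline{1}}$. The factor $c^{-1}$ attached to $\tau_{\underline{0},\underline{0}}$ is a nonzero scalar. Since $\tau$ is characterized only up to the uniquely determined lift, I would check that any scalar is absorbed, or better, that the normalization was chosen so that the scalar is $1$ up to the sign/scalar fixed by $\mathsf{T}_{\underline{1}}$.

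Each $\tau_{\underline{j},\underline{k}}$ acts by $u\boxtimes v \mapsto \tau_{w[\cdot,\cdot]}(v\boxtimes u)$ (Lemma \ref{lem:tau}). The composite is then $(\tau_{w_1}\tau_{w_2}\tau_{w_3}\tau_{w_4})$ applied to the permuted tensor $a'\boxtimes b'\boxtimes a\boxtimes b$. Here the $w_k$ are block-shuffle elements, suitably shifted, of blocks of heights $N$ and $1$. The lengths add up to $\ell(w[N+1,N+1]) = (N+1)^2$, because the block-level word $s_2s_1s_3s_2$ is reduced for the block permutation. Hence the product of the $\tau_{w_k}$ is a reduced product equal to $\tau_{w[N+1,N+1]}$, with no choice ambiguity since $w[N+1,N+1]$ has no $3$-pattern issues. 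This gives $\tau_{w[N+1,N+1]}(u'\boxtimes u)$ with $u = a\boxtimes b$ and $u' = a'\boxtimes b'$.

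\textbf{Main obstacle.} The hard part is the scalar bookkeeping: the constant $c$ from \cite[Theorem 3.12]{MR3670026} and the normalization of the $\underline{R}$-action. The statement asserts the coefficient is exactly $1$, which requires $c^{-1}\tau_{\underline{0},\underline{0}}$ to correspond to the honest diagram $\tau_{w[N,N]}$. I would check this by evaluating on the degree-zero vectors $v_\nu\boxtimes v_{\nu'}$, using the explicit basis of Lemma \ref{lem:Deltazero} to compute $c$ from the relation $\tau_{\underline{0},\underline{0}}(\id\otimes z)-(z\otimes\id)\tau_{\underline{0},\underline{0}} = c\,\id$. If $c \neq 1$, I would instead argue that the statement holds for the rescaled $\mathsf{T}_i$ after adjusting the $\underline{R}$-action normalization, which the paper's remark on normalizations permits. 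Either way, nonconstancy follows because the leading term $\tau_{w[N+1,N+1]}$ survives modulo $F_{<s_1}$, by Lemma \ref{lem:Mackey}.
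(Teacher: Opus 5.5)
\paragraph{Verdict.} Your route is the paper's, and the point you flag is a genuine gap that your fallback cannot close. The route: realize $\mathsf{T}_{\underline{1}}$ as right multiplication by $\tau_{w[2,2]}+t$ on $\underline{R}(2\underline{\delta})e(\underline{0},\underline{1},\underline{0},\underline{1})$, apply the right exact functor $\mathcal{F}_i$, and evaluate the composite of the four block crossings on the generating vector. Your length count $N+N^2+1+N=(N+1)^2$ correctly identifies the product with $\tau_{w[N+1,N+1]}$, which is well defined because $w[N+1,N+1]$ is fully commutative.

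\paragraph{Why rescaling does not help.} The factor $c^{-1}$ on $e(\underline{0},\underline{0})\tau_1$ is not a free normalization. Once $z$, $w$ and $Q_{\underline{0},\underline{1}}=(u-v)(v-tu)$ are fixed, it is forced by the relation $(x_2\tau_1-\tau_1x_1)e(\underline{0},\underline{0})=e(\underline{0},\underline{0})$ of $\underline{R}$. The lemma is about the specific endomorphism $\mathcal{F}_i(\mathsf{T}_{\underline{1}})$, and its right-hand side $\mathsf{X}\colon u\boxtimes u'\mapsto\tau_{w[N+1,N+1]}(u'\boxtimes u)$ is intrinsic. What the diagram chase actually proves is $\mathsf{T}_i=c^{-1}\mathsf{X}+t$. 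Combined with $\tau^2=-(1+t)\tau$ in type $A_1^{(1)}$, this gives $\mathsf{X}^2=-c(1+t)\mathsf{X}$. Hence $\mathsf{X}+t$ satisfies (a) exactly when $(1-c)(1+t)=0$, and no renormalization changes that. A side remark: for $\underline{j}=\underline{k}$ the formula of Lemma \ref{lem:tau} holds only on the degree-zero generators. If it held for all $u,v$, the commutator defining $c$ would vanish.

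\paragraph{The scalar is not always $1$.} Take $N=2$, $i=1$. Then $\Delta(\underline{0})$ is spanned by $z^dv$ with $v\in e(0,2)\Delta(\underline{0})$, $z=x_1$ and $\tau_1v=0$. Evaluating on $v\boxtimes v$ gives
\[
(\id\otimes z)\tau_{\underline{0},\underline{0}}-\tau_{\underline{0},\underline{0}}(z\otimes\id)=\tau_{w[2,2]}x_3-x_1\tau_{w[2,2]}=\tau_2\tau_3\tau_2=\tau_3\tau_2\tau_3-\overline{Q}_{2,0,2}(x_2,x_3,x_4).
\]
Here $\overline{Q}_{2,0,2}=1$ because $Q_{2,0}(u,v)=u-tv$, and $\tau_3(v\boxtimes v)=0$, so $c=-1$. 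For comparison, $N=2$, $i=2$ gives $\overline{Q}_{1,0,1}=-1$ and $c=1$. A direct computation in $L_1^e(\delta)^{\circ 2}$, where $L_1^e(\delta)$ is the one-dimensional module on the word $(0,2,1)$, confirms this:
\[
\tau_{w[3,3]}^2(v\boxtimes v)=(1+t)\,\tau_{w[3,3]}(v\boxtimes v).
\]
So in this case the displayed formula is off by the sign $c^{-1}$.

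\paragraph{Relation to the paper's proof.} The paper's proof has the same omission: its left vertical arrow drops $c^{-1}$, and it writes $\tau^{2,3}_{\underline{0},\underline{0}}$ where $\tau^{2,3}_{\underline{0},\underline{1}}$ is meant. The correct statement is
\[
\mathsf{T}_i(u\boxtimes u')=c^{-1}\tau_{w[N+1,N+1]}(u'\boxtimes u)+t\,u\boxtimes u'
\]
on generators. This is all that is used later. $\mathsf{T}_i$ is still nonconstant. In the proof of Theorem \ref{thm:howe} one gets $a=-c(t+1)$, and still $\tau_{u_0}(v\boxtimes v)T_1=-\tau_{u_0}(v\boxtimes v)$.
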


Note that, when $N =1$, this action is the same as the one given in Subsection \ref{sub:A1case}. 

\begin{proof}
Recall that $\mathsf{T}_{\underline{1}} =  \tau +t$, where 
\[
\tau (u \boxtimes u') = \tau_{w[2,2]}(u' \boxtimes u) \ (u,u' \in \underline{\Delta}_{\underline{1}}^e(\underline{\delta})). 
\] 
Hence, we have a commutative diagram 
\[
\begin{tikzcd}
\underline{R}(2\underline{\delta})e(\underline{0},\underline{1},\underline{0},\underline{1}) \arrow[r,twoheadrightarrow]\arrow[d,"\times (\tau_{w[2,2]}+t)"'] & \underline{\Delta}_{\underline{1}}^e(\underline{\delta})^{\circ 2} \arrow[d,"\mathsf{T}_{\underline{1}}"] \\
\underline{R}(2\underline{\delta})e(\underline{0},\underline{1},\underline{0},\underline{1}) \arrow[r,twoheadrightarrow] & \underline{\Delta}_{\underline{1}}^e(\underline{\delta})^{\circ 2} \\
\end{tikzcd}
\]
where both horizontal homomorphisms are induced from the surjection $\underline{R}(\alpha_{\underline{0}}) \circ \underline{R}(\alpha_{\underline{1}}) \to \underline{\Delta}_{\underline{1}}^e(\underline{\delta})$. 
Applying the right exact functor $\mathcal{F}_i$, we obtain a commutative diagram
\[
\begin{tikzcd}
\Delta(\underline{0})\circ \Delta(\underline{1}) \circ \Delta(\underline{0}) \circ \Delta(\underline{1}) \arrow[r,twoheadrightarrow]\arrow[d,"\tau_{\underline{0},\underline{0}}^{2,3}\tau_{\underline{0},\underline{0}}^{1,2}\tau_{\underline{1},\underline{1}}^{3,4}\tau_{\underline{1},\underline{0}}^{2,3} +t"'] & \Delta_i^e(\delta)^{\circ 2}\arrow[d,"\mathsf{T}_i"] \\
\Delta(\underline{0})\circ \Delta(\underline{1}) \circ \Delta(\underline{0}) \circ \Delta(\underline{1}) \arrow[r,twoheadrightarrow] & \Delta_i^e(\delta)^{\circ 2} \\
\end{tikzcd}
\]
where both horizontal homomorphisms are surjective. 
Therefore, we obtain 
\[
\mathsf{T}_i (u \boxtimes u') = \tau_{w[N+1,N+1]}(u' \boxtimes u) +t u\boxtimes u' \ (u,u' \in \Delta_i^e(\delta)). 
\]
\end{proof}

Now, it is proved that $\mathsf{T}_i$ is nonconstant and satisfies both (a) and (b), 
which completes the proof of Theorem \ref{thm:heckeaction}. 

\subsection{Lattices} \label{sub:lattices}

In this subsection, we fix $\mathcal{O}$ and $t \in \mathcal{O}^{\times}$ as follows: 
\begin{itemize}
\item For type $A_N^{(1)}$, let $\mathcal{O} = F[z,z^{-1}]$ for some field $F$ and $t = z$. 
\item For the other types, let $\mathcal{O} = \mathbb{Z}$ and $t = 1$. 
\end{itemize}
Over this coefficient ring, we have the following result: 

\begin{lemma} \label{lem:Oform}
Let $i \in \mathring{I}$. 
There exists an $R_{\mathcal{O}}(\delta)$-module $L_{i,\mathcal{O}}^e(\delta)$ that is free of finite rank as an $\mathcal{O}$-module, 
and such that $L_{i,\mathcal{O}}^e(\delta) \otimes_{\mathcal{O}} E$ is the minuscule imaginary cuspidal module $L_{i,E}^e(\delta)$ for any field extension $E \supset F$ and $t \in E^{\times}$. 
\end{lemma}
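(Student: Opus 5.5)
The plan is to construct $L_{i,\mathcal{O}}^e(\delta)$ as the cokernel of an $R$-matrix over $\mathcal{O}$, following the way $\Delta_i^e(\delta)$ is presented in Proposition \ref{prop:imaginarystd} and Remark \ref{rem:unmixingr}, but with $\Delta$'s replaced by their simple heads.

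First I produce $\mathcal{O}$-forms of the two building blocks $L(\underline{0}) = L^e(\delta-\alpha_i)$ and $L(\underline{1}) = L(i)$.

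In type $A_N^{(1)}$, Lemma \ref{lem:homogeneous} gives formulas for $L(\underline{0})$ that involve only the integers $0,1$ and not $t$. So the same formulas define an $R_{\mathcal{O}}(\delta-\alpha_i)$-module $L_{\mathcal{O}}(\underline{0})$, free on $\{v_\nu \mid \nu\in C\}$; the defining relations hold because they hold after every specialization. Similarly $L_{\mathcal{O}}(i)=\mathcal{O}$ with $x_1$ acting by $0$.

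For the other types ($\mathcal{O}=\mathbb{Z}$, $t=1$), I would obtain a $\mathbb{Z}$-form of the real cuspidal module $L^e(\delta-\alpha_i)$ in one of two ways. One is the reflection-functor description $f^{\preceq}_{\delta-\alpha_i}=S_x(f_j)$ of Subsection \ref{sub:affinePBW}, noting that the reflection functors of \cite{murata2025diagrammaticapproachreflectionfunctors} are defined over $\mathbb{Z}$. The other is the standard fact that cuspidal modules of real roots admit $\mathbb{Z}$-forms whose reductions are simple in all characteristics, as in \cite{MR3589160}.

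Next, over $\mathcal{O}$ I take the homomorphism
\[
\mathsf{R}_{\mathcal{O}}\colon q^2 L_{\mathcal{O}}(i)\circ L_{\mathcal{O}}(\underline{0}) \to L_{\mathcal{O}}(\underline{0})\circ L_{\mathcal{O}}(i),\quad u\boxtimes v\mapsto \tau_1\cdots\tau_{\height(\delta-\alpha_i)}(v\boxtimes u).
\]
Well-definedness follows from the one-step Mackey filtration argument of Remark \ref{rem:unmixingr}, which is characteristic-free since $\alpha_i$ does not occur in $\delta-\alpha_i$. I define $L_{i,\mathcal{O}}^e(\delta)=\Cok \mathsf{R}_{\mathcal{O}}$.

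Because convolution and cokernels commute with base change, $L_{i,\mathcal{O}}^e(\delta)\otimes_{\mathcal{O}} E$ is the cokernel $C_E$ of the analogous map over $E$. Conceptually, in type $A$, $C_E$ is $\mathcal{F}_i$ applied to the one-dimensional $A_1^{(1)}$-module of Lemma \ref{lem:minusculestr}. Equivalently, it is $\Delta_i^e(\delta)$ modulo the images of $z$ and $w$, using the right exactness of $\mathcal{F}_i$ and Lemma \ref{lem:GDelta}.

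$C_E$ is a quotient of $L(\underline{0})\circ L(i)$, whose head is $L_{i,E}^e(\delta)$ by Definition \ref{def:minuscule} and Proposition \ref{prop:minuscule}. So $C_E$ surjects onto $L_{i,E}^e(\delta)$, and it remains to show $\dim C_E=\dim L_{i,E}^e(\delta)$.

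The dimension comparison, together with freeness over $\mathcal{O}$, is the main obstacle. I would attack it via characters. First I show that $\mathsf{R}_E$ is injective for every $E$ and $t$. The argument is the same as the injectivity in Proposition \ref{prop:imaginarystd}: via \cite[Proposition 3.20]{murata2025affinehighestweightstructures}, or directly, by checking that $\mathsf{R}_E$ is nonzero on the simple socle piece $L(i)\otimes L(\underline{0})$ of the restriction. Injectivity gives
\[
\chi(C_E)=\chi(L(\underline{0}))f_i-q^2f_i\chi(L(\underline{0})),
\]
and this is independent of $E$ and $t$. Since $\chi(L(\underline{0}))=(1-q_i^2)f^e_{\delta-\alpha_i}$ by Lemma \ref{lem:realrootvector}, this equals $(1-q_i^2)\psi_{i,1}^e$.

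On the other hand, $\chi(\Delta_i^e(\delta))=\psi_{i,1}^e$ by (\ref{eq:character}). Also $\End(\Delta_i^e(\delta))$ has graded dimension $1/(1-q_i^2)$: this is the degree-$0$ statement of Lemma \ref{lem:qdimimaginary} combined with the stratification, since $\Delta_i^e(\delta)$ is a free module of rank one over the polynomial ring generated by $z$. Together these give $\chi(L^e_{i,E}(\delta))=(1-q_i^2)\psi^e_{i,1}$, so $\dim C_E=\dim L^e_{i,E}(\delta)$ and the surjection $C_E\to L^e_{i,E}(\delta)$ is an isomorphism.

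Finally, $\mathsf{R}_{\mathcal{O}}$ is a map between free $\mathcal{O}$-modules of finite rank that stays injective modulo every maximal ideal of the PID $\mathcal{O}$ (and for $\mathcal{O}=F[z,z^{-1}]$ also after all field extensions). Hence its cokernel is torsion-free and finitely generated, therefore free of finite rank, which completes the argument.
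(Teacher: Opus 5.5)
\textbf{There is a genuine gap: the cokernel you define is not $L^e_{i,E}(\delta)$ in general.} The map $\mathsf{R}_E$ is the $R$-matrix between the simple modules $L(i)$ and $L(\underline{0})$. By Theorem \ref{thm:rmatrix} its image is the simple socle of $L(\underline{0})\circ L(i)$, so $C_E$ is $L(\underline{0})\circ L(i)$ modulo its socle. This equals the head $L^e_{i,E}(\delta)$ only when $L(\underline{0})\circ L(i)$ has length two, which already fails in type $A_2^{(1)}$ with $i=1$.

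In that case $L(\underline{0})$ is one-dimensional on the word $(0,2)$, with all $x_k,\tau_k$ acting by zero. The module $M=L(\underline{0})\circ L(1)$ has basis $w,\tau_2w,\tau_1\tau_2w$, with words $(0,2,1),(0,1,2),(1,0,2)$ in degrees $0,1,2$. Since $\tau_2^2w=(x_3-x_2)w=0$ and $\tau_1^2\tau_2w=(x_1-x_2)\tau_2w=0$, $M$ is uniserial. From top to bottom its composition factors are $L_1^e(\delta)$, then $qL_2^e(\delta)$, then the one-dimensional module on $(1,0,2)$.

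The image of $\mathsf{R}_E$ is the line spanned by $\tau_1\tau_2w$. Hence $C_E$ is two-dimensional and contains $qL^e_2(\delta)$. By contrast, $L^e_1(\delta)$, the homogeneous module on the single reduced word $(0,2,1)$ of $s_0s_2s_1$, is one-dimensional. This fits your own remark that $C_E\simeq\Delta^e_i(\delta)/(z,w)$: by the proofs of Lemma \ref{lem:qdimimaginary} and Proposition \ref{prop:almostorth}, each neighbour $L^e_j(\delta)$ occurs in $\Delta^e_i(\delta)$ with multiplicity $q/(1-q^2)$, and killing $z,w$ need not remove the degree-one copy. So $\mathcal{F}_i$ does not send the simple module $\underline{L}^e_{\underline{1}}(\underline{\delta})$ to a simple module.

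\textbf{The two steps meant to exclude this are both false.}

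First, $\mathsf{R}_E$ can never be injective. Its source and target are finite-dimensional of the same dimension $(\height\delta)\dim L(\underline{0})$, so injectivity would force $C_E=0$. The injectivity in Proposition \ref{prop:imaginarystd} concerns the modules $\Delta$, on which the dots act freely. Your bookkeeping hides this because you used $\chi(L(i))=f_i$, whereas $\chi(L(i))=(1-q_i^2)f_i$ by Corollary \ref{cor:maximalext}. With the correct value your formula becomes $(1-q_i^2)^2\psi^e_{i,1}$, which vanishes at $q=1$.

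Second, $\chi(L^e_i(\delta))\neq(1-q_i^2)\psi^e_{i,1}$. In fact $\dim_q\End_R(\Delta^e_i(\delta))=(1+q_i^2)/(1-q_i^2)$, and the neighbours contribute to $\chi(\Delta^e_i(\delta))$. Already in $A_1^{(1)}$ one has $\chi(\Delta^e_1(\delta))=\frac{1+q^2}{1-q^2}\chi(L^e_1(\delta))$. There your cokernel happens to be correct, but only because $L(0)\circ L(1)$ has length two.

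\textbf{What is needed is an $\mathcal{O}$-form whose specializations are visibly simple.} The paper takes, in type $A_N^{(1)}$, the homogeneous module with $\mathcal{O}$-basis $\{v_\nu\mid\nu\in C_i\}$, where $C_i$ is the set of reduced words of $w_i=(s_0s_1\cdots s_{i-1})(s_Ns_{N-1}\cdots s_i)$, with the $t$-free formulas of Lemma \ref{lem:homogeneous}. Each base change is simple, $e$-cuspidal and killed by $e(\delta-\alpha_j,\alpha_j)$ for $j\neq i$. Hence it is $L^e_{i,\mathbf{k}}(\delta)$ by Lemma \ref{lem:minusculeonecolor}. In the other types the paper follows Kleshchev's argument for the integral form, with the same identification through $\Xi^i_1=\{L^e_i(\delta)\}$.
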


\begin{proof}
It is entirely analogous to \cite[Remark 5.5]{MR3175157} using Lemma \ref{lem:minusculeonecolor} (indeed, it is identical except for type $A_N^{(1)}$). 

Alternatively, in type $A_N^{(1)}$, one can explicitly construct the $\mathcal{O}$-form since $L_i^e(\delta)$ is a homogeneous module in the sense of \cite{MR2677617}. 
In fact, for $i \in \mathring{I}$, let 
\[
w_i = (s_0s_1 \cdots s_{i-1})(s_N s_{N-1} \cdots s_i) \in W,
\]
and let $C_i$ be the subset of $I^{\delta}$ consisting of all reduced words of $w_i$. 
We can define $L_{i,\mathcal{O}}^e(\delta)$ as an $R_{\mathcal{O}}(\delta)$-module equipped with a free $\mathcal{O}$-basis $\{v_{\nu} \mid \nu \in C_i\}$ concentrated in degree zero, 
where the action is given by the same formula to as in Lemma \ref{lem:homogeneous}.  
It is easy to see that its base change to any field $\mathbf{k}$ is an simple module, 
which is isomorphic to $L_{i,\mathbf{k}}^e(\delta)$ by the characterization given in Lemma \ref{lem:minusculeonecolor}. 
\end{proof}

\begin{remark}
The proof shows that $L_i^e(\delta)$ is defined over $\mathbb{Z}[t,t^{-1}]$. 
\end{remark}

\begin{corollary}
Let $\mathbf{k}$ be a field. 
For $i \in \mathring{I}$. 
the element $\chi(L_{i,\mathbf{k}}^e(\delta)) \in U_q^-(\mathfrak{g})$ does not depend on $\mathbf{k}$.   
\end{corollary}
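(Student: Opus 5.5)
The strategy is to compare both sides through the formal character $\ch_q$ and use its injectivity (Corollary \ref{cor:formalcharacter}). By Lemma \ref{lem:Oform}, for any field $\mathbf{k}$ we may realise $L_{i,\mathbf{k}}^e(\delta)$ as a base change. Concretely, for type $A_N^{(1)}$ let $F$ be the prime field of $\mathbf{k}$ and $\mathcal{O}=F[z,z^{-1}]$; for the other types take $\mathcal{O}=\mathbb{Z}$. Then $L_{i,\mathbf{k}}^e(\delta)\simeq L_{i,\mathcal{O}}^e(\delta)\otimes_{\mathcal{O}}\mathbf{k}$, where $\mathcal{O}\to\mathbf{k}$ sends $z\mapsto t$ (respectively $\mathbb{Z}\to\mathbf{k}$).

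Since $L_{i,\mathcal{O}}^e(\delta)$ is graded and free of finite rank over $\mathcal{O}$, each graded piece $e(\nu)L_{i,\mathcal{O}}^e(\delta)_d$ is a direct summand of a free module, hence projective. Over the PID $\mathcal{O}$ (or $\mathbb{Z}$) it is therefore free of some rank $r_{\nu,d}$, and this rank is preserved by base change. It follows that $\ch_q(L_{i,\mathbf{k}}^e(\delta))(\nu)=\sum_d r_{\nu,d}q^d$ depends only on the $\mathcal{O}$-form, and thus only on the characteristic of $\mathbf{k}$. In type $A_N^{(1)}$ one can see this even more directly: Lemma \ref{lem:Oform} gives the explicit basis $\{v_\nu\mid\nu\in C_i\}$ in degree $0$, so $\ch_q(L_{i,\mathbf{k}}^e(\delta))$ is the indicator function of $C_i$, independently of $\mathbf{k}$ and $t$.

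To remove the dependence on the characteristic, we compare to the generic point. The field of fractions $E=\mathrm{Frac}(\mathcal{O})$ (that is, $F(z)$ or $\mathbb{Q}$) also receives a base change, and the same rank argument shows that $\ch_q$ over $\mathbf{k}$ equals $\ch_q$ over $E$. This does not yet compare two different prime fields. In type $A_N^{(1)}$ the explicit combinatorial description settles it immediately. In the other types we use $\mathcal{O}=\mathbb{Z}$ for every characteristic at once, so $\mathbf{k}$ and $\mathbb{Q}$ give the same ranks. A small subtlety is that Lemma \ref{lem:Oform} is stated for extensions of $F$, whereas an arbitrary $\mathbf{k}$ is an extension of its prime field; one simply chooses $F$ to be that prime field.

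Finally, Corollary \ref{cor:formalcharacter} gives $\theta(\chi(L_{i,\mathbf{k}}^e(\delta)))=\ch_q(L_{i,\mathbf{k}}^e(\delta))$. Since $\theta$ is injective, $\chi(L_{i,\mathbf{k}}^e(\delta))$ is independent of $\mathbf{k}$. The only step needing care is the comparison across different characteristics in type $A_N^{(1)}$. Each prime field gives a different ring $\mathcal{O}$, so the uniform argument there relies on the explicit homogeneous basis indexed by $C_i$, or equivalently on the $\mathbb{Z}[t,t^{-1}]$-form from the remark after Lemma \ref{lem:Oform}.
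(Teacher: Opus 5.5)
Your proposal is correct and follows the paper's proof. Lemma \ref{lem:Oform} shows that $\ch_q(L_{i,\mathbf{k}}^e(\delta))$ is independent of $\mathbf{k}$, and the injectivity of $\theta$ in Corollary \ref{cor:formalcharacter} transfers this independence to $\chi(L_{i,\mathbf{k}}^e(\delta))$; your extra care in type $A_N^{(1)}$, comparing different characteristics via the explicit basis indexed by $C_i$ (equivalently the $\mathbb{Z}[t,t^{-1}]$-form), simply makes explicit a point the paper's one-line appeal to Lemma \ref{lem:Oform} leaves implicit.
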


\begin{proof}
By Corollary \ref{cor:formalcharacter}, we have   
\[
\ch_q(L_{i,\mathbf{k}}^e(\delta)) = \theta(\chi(L_{i,\mathbf{k}}^e(\delta))). 
\]
By Lemma  \ref{lem:Oform}, the left hand side does not depend on $\mathbf{k}$. 
Since $\theta$ is independent of $\mathbf{k}$ and is injective, 
it follows that $\chi(L_{i,\mathbf{k}}^e(\delta))$ is also independent of $\mathbf{k}$.
\end{proof}

We fix such a module $L_{i,\mathcal{O}}^e(\delta)$ for each $i \in \mathring{I}$. 
For $\underline{n} \in \mathbb{Z}_{\geq 0}^{\mathring{I}}$, 
we define 
\[
L_{\mathcal{O}}^e(\underline{n}) = L_{1,\mathcal{O}}^e(\delta)^{\circ n_1} \circ \cdots \circ L_{N,\mathcal{O}}^e(\delta)^{\circ n_N}. 
\]
Let $\mathcal{K}$ be the fraction field of $\mathcal{O}$. 
We may regard $L_{\mathcal{O}}^e(\underline{n})$ as a $\mathcal{O}$ lattice of $L_{\mathcal{K}}^e(\underline{n})$. 
Note that the endomorphism algebra $\End_{R_{\mathcal{O}}(\delta)} (L_{i,\mathcal{O}}^e(\delta))$ is isomorphic to $\mathcal{O}$, 
and there exists no nontrivial homomorphism from $L_{i,\mathcal{O}}^e(\delta)$ to $L_{j,\mathcal{O}}^e(\delta)$ if $i \neq j$. 
Let $H_{\underline{n},\mathcal{O}}$ denote the tensor product of Hecke algebras over $\mathcal{O}$ defined in a similar way to Subsection \ref{sub:heckeaction}. 

Hereafter, we need to fix an isomorphism of Theorem \ref{thm:heckeaction} as follows: 
\begin{itemize}
  \item For type $A_N^{(1)}$, we use the one constructed in Subsection \ref{sub:ANcase}; that is, the one determined by Lemma \ref{lem:Tiform}. 
  \item For the other types, we use the one constructed in \cite[Section 4.2]{MR3589160}. 
\end{itemize}

\begin{proposition}
For $\underline{n} \in \mathbb{Z}_{\geq 0}^{\mathring{I}}$, 
there exists a unique isomorphism 
\[
\End_{R_{\mathcal{O}}}(L_{\mathcal{O}}^e(\underline{n})) \simeq H_{\underline{n}, \mathcal{O}}, 
\]
whose base change to any field $E$ coincides with the isomorphism of Theorem \ref{thm:heckeaction} specified above. 
\end{proposition}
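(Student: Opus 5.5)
The plan is to build the Hecke action directly over $\mathcal{O}$ by the same formulas used over fields, show that it gives an isomorphism onto $\End_{R_{\mathcal{O}}}(L_{\mathcal{O}}^e(\underline{n}))$, and deduce uniqueness from a lattice argument.

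\emph{Existence of the action over $\mathcal{O}$.} In type $A_N^{(1)}$, Lemma \ref{lem:Tiform} gives the endomorphism explicitly:
\[
\mathsf{T}_i(u\boxtimes u') = \tau_{w[N+1,N+1]}(u'\boxtimes u) + t\,u\boxtimes u'.
\]
This formula makes sense on $L_{i,\mathcal{O}}^e(\delta)^{\circ 2}$, provided it is well defined there. Well-definedness holds after tensoring with $\mathcal{K}$, and $L^e_{\mathcal{O}}$ is an $\mathcal{O}$-lattice (free) in $L^e_{\mathcal{K}}$, so the formula defines an $R_{\mathcal{O}}$-endomorphism of the lattice. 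For the other types I would use the analogous explicit formulas of \cite[Section 4.2]{MR3589160}, which are defined over $\mathbb{Z}$.

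The Hecke relations (a) and (b) hold over $\mathcal{K}$ by Theorem \ref{thm:heckeaction}. Since $L_{\mathcal{O}}^e(\underline{n})$ is torsion free, they hold over $\mathcal{O}$. The reordering isomorphisms $\kappa_{j,i}$ must also be chosen over $\mathcal{O}$. Here $\Hom_{R_{\mathcal{O}}}(L_{j,\mathcal{O}}^e\circ L_{i,\mathcal{O}}^e, L_{i,\mathcal{O}}^e\circ L_{j,\mathcal{O}}^e)$ is a lattice in a one-dimensional $\mathcal{K}$-space, and $\mathcal{O}$ is a PID, so it is free of rank one. I would take a generator and check that its specialization to every field is nonzero, hence an isomorphism. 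This yields an algebra map $\pi\colon H_{\underline{n},\mathcal{O}}\to \End_{R_{\mathcal{O}}}(L^e_{\mathcal{O}}(\underline{n}))$ whose base change to each field $E$ is the map fixed above.

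\emph{Bijectivity.} Injectivity follows by tensoring with $\mathcal{K}$: $\pi_{\mathcal{K}}$ is an isomorphism and $H_{\underline{n},\mathcal{O}}$ is free. For surjectivity, take $f\in\End_{R_{\mathcal{O}}}(L^e_{\mathcal{O}}(\underline{n}))$. Then $f=\pi(h)$ for some $h\in H_{\underline{n},\mathcal{K}}$, and I must show $h\in H_{\underline{n},\mathcal{O}}$. Use the leading-term argument from the proof of (c). Modulo $F_{<w}$,
\[
\pi(T_w)(u_1\boxtimes\cdots\boxtimes u_n) \equiv b^{\ell(w)}\tilde\tau_w(u_{w^{-1}(1)}\boxtimes\cdots\boxtimes u_{w^{-1}(n)}),
\]
with $b\in\mathcal{O}^\times$; in the explicit formula the coefficient is $1$. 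Write $h=\sum_w c_wT_w$. Choose $w$ maximal in Bruhat order among those with $c_w\notin\mathcal{O}$. Apply $f$ and read off the component in the Mackey subquotient indexed by $w$ (Lemma \ref{lem:Mackey}); these subquotients are free $\mathcal{O}$-modules compatible with the lattice. This component has coefficient $c_w$ plus $\mathcal{O}$-integral contributions from larger elements. Since $f$ preserves the lattice, $c_w\in\mathcal{O}$, a contradiction.

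\emph{Uniqueness and the main obstacle.} Uniqueness is automatic: $\End_{R_{\mathcal{O}}}(L^e_{\mathcal{O}}(\underline{n}))$ embeds into its base change to $\mathcal{K}$, so the isomorphism is determined by its base change to $E=\mathcal{K}$. The main obstacle is the integrality in the surjectivity step: one must check that the Mackey filtration pieces are $\mathcal{O}$-direct summands, and that the reordering isomorphisms can be normalized over $\mathcal{O}$ so that they specialize to isomorphisms at every field. For the latter I would use that $\End_{R_E}(L^e_{i,E}(\delta))=E$ and that the base change of $L^e_{\mathcal{O}}$ is simple at every point (Lemma \ref{lem:Oform}), so a generator of the rank-one Hom lattice cannot vanish at any point.
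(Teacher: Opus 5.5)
Your proposal is correct and follows essentially the same route as the paper. Both arguments restrict the $\mathcal{K}$-action given by the explicit formula of Lemma \ref{lem:Tiform} to the $\mathcal{O}$-lattice, then use the unitriangular leading terms $\tilde{\tau}_w$ in the Mackey filtration to get bijectivity over $\mathcal{O}$, citing \cite{MR3589160} outside type $A$. The only difference is your step choosing the $\kappa_{j,i}$ over $\mathcal{O}$, which is harmless but not needed here, since $H_{\underline{n}}$ acts on $L^e_{\mathcal{O}}(\underline{n})$ factor by factor without any reordering.
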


\begin{proof}
Except for type $A_N^{(1)}$, it is proved in \cite{MR3589160}. 
Assume $X_N^{(1)} = A_N^{(1)}$ and consider the inclusion $L_{\mathcal{O}}^e(\underline{n}) \subset L_{\mathcal{K}}^e(\underline{e})$. 
We already have an action of $H_{\underline{n},\mathcal{K}}$ on $L_{\mathcal{K}}^e(\underline{e})$ determined by the formula in Lemma \ref{lem:Tiform}. 
It shows that the subspace $L_{\mathcal{O}}^e(\underline{n})$ is stable under the action of $H_{\underline{n},\mathcal{O}}$. 
Hence, we obtain an algebra homomorphism 
\[
H_{\underline{n},\mathcal{O}} \to \End_{R_{\mathcal{O}}}(L_{\mathcal{O}}^e(\underline{n})). 
\]
Again by Lemma \ref{lem:Tiform}, this homomorphism is compatible with base change to any field. 
It remains to prove that this is an isomorphism. 
Note that, for $n \in \mathbb{Z}_{\geq 0}$ and $i \in \mathring{I}$, 
the action of $H_{n,\mathcal{O}}$ on $L_{i,\mathcal{O}}^e(\delta)^{\circ n}$ satisfies 
\[
T_w (u_1 \boxtimes \cdots \boxtimes u_n) = \tilde{\tau}_w (u_{w^{-1}(1)} \boxtimes \cdots \boxtimes u_{w^{-1}(n)}) \mod F_{<w} (L_{i,\mathcal{O}}^e(\delta)^{\circ n}) \ (w \in \mathfrak{S}_n). 
\]
Hence, the Mackey-filtration (Lemma \ref{lem:Mackey}) shows that that homomorphism is an isomorphism. 
\end{proof}

\section{Quantum imaginary Howe duality}

Let $\underline{n} = (n_i) \in \mathbb{Z}_{\geq 0}^{\mathring{I}}$ and put $n = \lvert \underline{n} \rvert = \sum_i n_i$. 
Recall $L^e(\underline{n}) = L_1^e(\delta)^{\circ n_1} \circ \cdots \circ L_N^e(\delta)^{n_N}$. 
We define a finite dimensional algebra
\[
\mathscr{S}_{\underline{n}} = R(\lvert \underline{n} \rvert \delta)/\Ann(L^e(\underline{n})). 
\]
In this section, we show that $\mathscr{S}_{\underline{n}}$ is Morita equivalent to a tensor product of $t$-Schur algebras. 
When $t = 1$, all the results in this section are already known \cite{MR3589160}, 
in which case the $t$-Schur algebra becomes the ordinary Schur algebra. 
We will see that even when $t \neq 1$ (with $X_N^{(1)} = A_N^{(1)}$), analogous results hold. 

\subsection{$t$-Schur algebras} \label{sub:tSchur}

In this subsection, we recall the definition of $t$-Schur algebras. 
Detailed information can be found in \cite{MR1804485}.

For $n, h \in \mathbb{Z}_{\geq 0}$, 
let $X(h,n)$ be the set of all compositions $\lambda = (\lambda_1,\ldots,\lambda_h)$ of $n$. 
(When $h=0$, $X(0,0)$ consists of a unique element while $X(0,n)$ is empty if $n \geq 1$.)
For a composition $\lambda \in X(h,n)$, 
we define 
\begin{equation} \label{eq:ylambda}
y_{\lambda} = (-1)^{\ell (w_{\lambda})} \sum_{w \in \mathfrak{S}_{\lambda}} (-t)^{\ell(w_{\lambda}) - \ell(w)} T_w \in H_{\lambda}, 
\end{equation}
where $H_{\lambda} = H_{\lambda_1} \otimes \cdots \otimes H_{\lambda_h} \subset H_n$ and $w_{\lambda}$ is the longest element of $\mathfrak{S}_{\lambda}$. 
We define a left $H_n$-module
\[
N^{\lambda} = H_n y_{\lambda}. 
\]
If $\lambda = (n)$, we write $y_n = y_{(n)}$.
In this case, the module $N^{(n)}$ is one-dimensional and $T_w y_n = (-1)^{\ell(w)} y_n$. 
For general $\lambda$, the module $N^{\lambda}$ is isomorphic to $H_n \otimes_{H_{\lambda}} H_{\lambda} y_{\lambda}$. 

We define the $t$-Schur algebra $S_{h,n}$ as the endomorphism algebra
\[
\End_{H_n} \left( \bigoplus_{\lambda \in X(h,n)} N^{\lambda} \right)^{\mathrm{op}}.  
\] 
(When $h=0$, $S_{0,0}$ is $\mathbf{k}$ while $S_{0,n}$ is the zero algebra if $n \geq 1$.)
For $\lambda, \mu \in X(h,n)$ and $u \in \mathfrak{S}_n$, 
the right multiplication in $H_n$ by 
\[
s_{\lambda,\mu}^u = \sum_{w \in \mathfrak{S}_{\lambda}u\mathfrak{S}_{\mu} \cap {}^{\lambda}\mathfrak{S}_n} T_w^{\sharp}
\]
induces a well-defined homomorphism of left $H_n$-modules $N^{\lambda} \to N^{\mu}$, 
where $\sharp$ is the automorphism of $H_n$ defined by $T_k^{\sharp} = -T_k + t-1$. 
Moreover, it yields an element $\varphi_{\lambda,\mu}^u$ of $S_{h,n}$ via extension by zero. 
For $\lambda \in X(h,n)$, we write $\varphi_{\lambda,\lambda}^e = \varphi(\lambda)$, which is an idempotent. 
It is known that 
\[
\{ \varphi_{\lambda,\mu}^u \mid \lambda, \mu \in X(h,n), u \in {}^{\lambda}\mathfrak{S}_n^{\mu} \}
\]
is a basis of $S_{h,n}$. 
When $h \geq n$, let $\kappa \colon H_n \to S_{h,n}$ be the nonunital algebra homomorphism given by 
\[
\kappa(T_w) = \varphi_{(1^n),(1^n)}^w \ (w \in \mathfrak{S}_n),
\]
and let $\kappa^{\sharp}$ be the composition $\kappa \circ \sharp$. 
These two homomorphisms restricts to isomorphisms $H_n \simeq \varphi(1^n) S_{h,n} \varphi(1^n)$. 

For $h',h'' \in \mathbb{Z}_{\geq 0}$ with $h' + h'' = h$,
there is a Levi subalgebra
\[
S_{(h',h''),n} = \prod_{n',n'' \geq 0, n' + n'' = n} (S_{h',n'} \otimes S_{h'',n''}) \subset S_{h,n},
\]
which allows us to define a functor
\[
\Ind_{S_{(h',h''),n}}^{S_{h,n}} \Infl_{S_{h',n'}\otimes S_{h'',n''}}^{S_{(h',h''),n}} \colon \Mod{(S_{h',n'}\otimes S_{h'',n''})} \to \Mod{S_{h,n}}. 
\]

There are $\mathring{I}$-tuple analogues of the definitions above. 
For $\underline{n} = (n_i), \underline{h} = (h_i) \in \mathbb{Z}_{\geq 0}^{\mathring{I}}$, 
we define 
\[
X(\underline{h},\underline{n}) = \prod_{i \in \mathring{I}} X(h_i,n_i), \ S_{\underline{h},\underline{n}} = \bigotimes_{i \in \mathring{I}} S_{h_i,n_i}. 
\]
Similarly, for $\underline{\lambda} = (\lambda^{(i)}), \underline{\mu} = (\mu^{(i)}) \in X(\underline{h},\underline{n})$ and $\underline{u} = (u_i) \in \mathfrak{S}_{\underline{n}}$, 
we define 
\begin{align*}
s_{\underline{\lambda},\underline{\mu}}^{\underline{u}} &= s_{\lambda^{(1)}, \mu^{(1)}}^{u_1} \otimes \cdots \otimes s_{\lambda^{(N)}, \mu^{(N)}}^{u_N} \in H_{\underline{n}}, \\
\varphi_{\underline{\lambda},\underline{\mu}}^{\underline{u}} &= \varphi_{\lambda^{(1)},\mu^{(1)}}^{u_1} \otimes \cdots \otimes \varphi_{\lambda^{(N)},\mu^{(N)}}^{u_N} \in S_{\underline{h},\underline{n}}, \\
\varphi(\underline{\lambda}) &= \varphi(\lambda^{(1)}) \otimes \cdots \otimes \varphi(\lambda^{(N)}) \in S_{\underline{h},\underline{n}}. 
\end{align*}
For $\underline{h}', \underline{h}'', \underline{n}', \underline{n}'' \in \mathbb{Z}_{\geq 0}^{\mathring{I}}$ with $\underline{h}' + \underline{h}'' = \underline{h}, \underline{n}' + \underline{n}'' = \underline{n}$, 
there is a Levi subalgebra $S_{(\underline{h}',\underline{h}''),\underline{n}}$ of $S_{\underline{h},\underline{n}}$ and an exact functor
\[
\Ind_{S_{(\underline{h}',\underline{h}''),\underline{n}}}^{S_{\underline{h},\underline{n}}} \Infl_{S_{\underline{h}',\underline{n}'}\otimes S_{\underline{h}'',\underline{n}''}}^{S_{(\underline{h}',\underline{h}''),\underline{n}}} \colon \Mod{(S_{\underline{h}',\underline{n}'}\otimes S_{\underline{h}'',\underline{n}''})} \to \Mod{S_{\underline{h},\underline{n}}}. 
\]
 
\subsection{Quantum imaginary Howe duality}

Let $\underline{n} = (n_i) \in \mathbb{Z}_{\geq 0}^{\mathring{I}}$ and set $n = \lvert \underline{n} \rvert$. 
By Theorem \ref{thm:heckeaction}, $L^e(\underline{n})$ is a left $H_{\underline{n}}$-module.  
By using the antiautomorphism of $H_{\underline{n}}$ fixing all the generators $T_k$, 
we obtain a right action of $H_{\underline{n}}$ on $L^e(\underline{n})$. 
In what follows, we regard $L^e(\underline{n})$ as an $(\mathscr{S}_{\underline{n}},H_{\underline{n}})$-bimodule in this way. 
For $i \in \mathring{I}$, let $\iota_i \colon H_{n_i} \to H_{\underline{n}}$ denote the inclusion into the $i$-th factor.  

Let $\underline{h} \in \mathbb{Z}_{\geq 0}^{\mathring{I}}$. 
For $\underline{\lambda} \in X(\underline{h},\underline{n})$, 
we define a submodule of $L^e(\underline{n})$: 
\[
Z^{\underline{\lambda}} = \{ v \in L^e(\underline{n}) \mid v \cdot \iota_i (T_w) = (-1)^{\ell(w)}v \ (i \in \mathring{I}, w \in \mathfrak{S}_{\lambda^{(i)}}) \}. 
\]
For $\underline{\lambda}, \underline{\mu} \in X(\underline{h},\underline{n})$ and $\underline{u} \in \mathfrak{S}_{\underline{n}}$, 
the right multiplication by $s_{\underline{\lambda},\underline{\mu}}^{\underline{u}}$ on $L^e(\underline{n})$ induces a homomorphism of $\mathscr{S}_{\underline{n}}$-modules $Z^{\underline{\lambda}}\to Z^{\underline{\mu}}$. 
It yields an endomorphism $\psi_{\underline{\lambda},\underline{\mu}}^{\underline{u}}$ of $Z^{\underline{h},\underline{n}} \coloneqq \oplus_{\underline{\lambda} \in X(\underline{h},\underline{n})} Z^{\underline{\lambda}}$ via extension by zero. 
We regard $S_{\underline{h},\underline{n}}$ and $H_{\underline{n}}$ as graded algebras concentrated in degree zero. 

In the rest of this subsection, we assume that $\mathbf{k}$ is a field. 

\begin{theorem} \label{thm:howe}
Assume that $\mathbf{k}$ is a field. 

(1) $Z^{\underline{h},\underline{n}}$ is a projective $\mathscr{S}_{\underline{n}}$-module, 
and we have an isomorphism of graded $\mathbf{k}$-algebras
\[
\End_{\mathscr{S}_{\underline{n}}} ( Z^{\underline{h},\underline{n}})^{\mathrm{op}} \simeq S_{\underline{h},\underline{n}}
\]
given by $\psi_{\underline{\lambda},\underline{\mu}}^{\underline{u}} \mapsto \varphi_{\underline{\lambda},\underline{\mu}}^{\underline{u}} \ (\underline{\lambda},\underline{\mu} \in X(\underline{h},\underline{n}), \underline{u} \in {}^{\underline{\lambda}}\mathfrak{S}_{\underline{n}}^{\underline{\mu}})$. 

(2) If $h_i \geq n_i$ for all $i \in \mathring{I}$, then $Z^{\underline{h},\underline{n}}$ is a projective generator of $\mathscr{S}_{\underline{n}}$, 
and the functors
\begin{align*}
\alpha_{\underline{h},\underline{n}} \coloneqq \Hom_{\mathscr{S}_{\underline{n}}}(Z^{\underline{h},\underline{n}},?) \colon \gmod{\mathscr{S}_{\underline{n}}} &\to \gmod{S_{\underline{h},\underline{n}}}, \\
\beta_{\underline{h},\underline{n}} \coloneqq Z^{\underline{h},\underline{n}}\otimes_{S_{\underline{h},\underline{n}}} ? \colon \gmod{S_{\underline{h},\underline{n}}} &\to \gmod{\mathscr{S}_{\underline{n}}}, 
\end{align*}
are mutually quasi-inverse equivalences. 
Moreover, we have a commutative diagram 
\[
\begin{tikzcd}
\gmod{\mathscr{S}_{\underline{n}}} \arrow[rr,yshift=0.7ex,"\alpha_{\underline{h},\underline{n}}"]\arrow[rd,"\gamma_{\underline{n}}"'] && \gmod{S_{\underline{h},\underline{n}}} \arrow[ll,yshift=-0.7ex,"\beta_{\underline{h},\underline{n}}"]\arrow[ld,"f_{\underline{h},\underline{n}}"] \\
& \gmod{H_{\underline{n}}} & 
\end{tikzcd}
\]
where 
\[
\gamma_{\underline{n}} = \Hom_{\mathscr{S}_{\underline{n}}}(L^e(\underline{n}),?), \ f_{\underline{h},\underline{n}}(X) = \varphi ((1^{n_i})_i) X, 
\]
and $H_{\underline{n}}$ is identified with $\varphi ((1^{n_i})_i) S_{\underline{h},\underline{n}} \varphi ((1^{n_i})_i)$ via $\kappa^{\sharp}$. 
\end{theorem}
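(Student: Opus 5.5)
The plan is to follow the strategy of Kleshchev--Muth \cite{MR3589160}, replacing the group algebra of $\mathfrak{S}_n$ by $H_{\underline{n}}$, and to reduce everything to standard facts about $t$-Schur algebras through a double-centralizer property. The first step is to show that $L^e(\underline{n})$ is a projective generator of $\gmod{\mathscr{S}_{\underline{n}}}$ and that $\End_{\mathscr{S}_{\underline{n}}}(L^e(\underline{n})) = H_{\underline{n}}$.

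The endomorphism identity holds because $\mathscr{S}_{\underline{n}}$ is a quotient of $R(n\delta)$, so Theorem \ref{thm:heckeaction} gives the endomorphism algebra directly. For projectivity, I would use the isomorphism $\hom_R(\Delta^e(\underline{n}),L^e(\underline{n}))\simeq \hom_R(L^e(\underline{n}),L^e(\underline{n}))$ from Lemma \ref{lem:tensorspacehom}(3). The surjection $\Delta^e(\underline{n})\to L^e(\underline{n})$ factors through $\mathscr{S}_{\underline{n}}\otimes_{R}\Delta^e(\underline{n})$, since $\Ann(L^e(\underline{n}))$ kills $L^e(\underline{n})$. This base change is projective over $\mathscr{S}_{\underline{n}}$. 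Because $L^e(\underline{n})$ is faithful over $\mathscr{S}_{\underline{n}}$ and finite dimensional, $\mathscr{S}_{\underline{n}}$ embeds into $L^e(\underline{n})^{\oplus m}$. Combining this with the Hom-dimension equality, I would show that $\mathscr{S}_{\underline{n}}\otimes_R\Delta^e(\underline{n})\simeq L^e(\underline{n})$, so $L^e(\underline{n})$ is projective.

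Alternatively, and probably more robustly, I would argue as follows. Every simple $\mathscr{S}_{\underline{n}}$-module occurs in the head of $L^e(\underline{n})$, by faithfulness. Then $L^e(\underline{n})$ is a quotient of the projective module $P=\mathscr{S}_{\underline{n}}\otimes_R\Delta^e(\underline{n})$. Comparing graded dimensions of $\Hom(P,P)$ and $\Hom(P,L^e(\underline{n}))$ via Lemma \ref{lem:tensorspacehom}(2),(3) forces the kernel to vanish, because $\hom(P,-)$ detects nonzero submodules that lie in degrees $\geq 1$. I expect this step to be the main obstacle, and would follow \cite[Section 5]{MR3589160} closely.

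Once $L^e(\underline{n})$ is a projective generator with endomorphism ring $H_{\underline{n}}$, the functor $\gamma_{\underline{n}}$ is a Morita equivalence $\gmod{\mathscr{S}_{\underline{n}}}\simeq \Mod{H_{\underline{n}}}$ (degree zero). Here I use that $\End$ is concentrated in degree zero (Theorem \ref{thm:heckeaction}(2)), so the grading is just a direct sum of shifts. Under this equivalence, $Z^{\underline{\lambda}}$ corresponds to $\Hom_{H_{\underline{n}}}(N^{\underline{\lambda}},-)$ applied to $L^e(\underline{n})$. Concretely, $Z^{\underline{\lambda}}=L^e(\underline{n})\otimes_{H_{\underline{n}}} H_{\underline{n}}y_{\underline{\lambda}}$ up to the twist by $\sharp$ and the antiautomorphism. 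Indeed, the sign-invariants of the right action are the image of right multiplication by $y_{\underline{\lambda}}$ when the module is projective over $H_{\underline{n}}$. This is where the twist in the definitions of $y_\lambda$ and $s^u_{\lambda,\mu}$ must be matched carefully. I would verify, for $H_m$, that $\{v\mid vT_w=(-1)^{\ell(w)}v\}=v\,y'$ for the appropriate idempotent-like element, using that $L^e(\underline{n})$ is free over $H_{\underline{n}}$; the Mackey-filtration leading terms give a free $H_{\underline{n}}$-basis.

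With this in hand, $Z^{\underline{h},\underline{n}}$ corresponds to $\bigoplus N^{\underline{\lambda}}$ (twisted). Then $Z^{\underline{h},\underline{n}}$ is projective, being a summand of sums of $L^e(\underline{n})\otimes_{H}H y$ over a semisimple-free situation. More precisely, $Z^{\underline{\lambda}}\simeq \Hom_{H_{\underline{n}}}(M,L^e(\underline{n}))$-type modules, which are images of projectives under the equivalence. Its endomorphism algebra is then $\End_{H_{\underline{n}}}(\bigoplus N^{\underline{\lambda}})^{\mathrm{op}}=S_{\underline{h},\underline{n}}$, with $\psi^{\underline{u}}_{\underline{\lambda},\underline{\mu}}\mapsto\varphi^{\underline{u}}_{\underline{\lambda},\underline{\mu}}$ by construction. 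This proves (1).

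For (2), when $h_i\ge n_i$, the composition $(1^{n_i})$ occurs in $X(\underline{h},\underline{n})$, and $Z^{(1^{n_i})_i}=L^e(\underline{n})$. Hence $Z^{\underline{h},\underline{n}}$ contains the projective generator as a summand, and Morita theory gives $\alpha,\beta$ as quasi-inverse equivalences. Commutativity of the triangle reduces to $\varphi((1^{n_i})_i)\Hom(Z^{\underline{h},\underline{n}},X)=\Hom(L^e(\underline{n}),X)$, compatibly with the identification via $\kappa^\sharp$. I would check this on the generators $T_k$ by unwinding $s^{s_k}_{(1^n),(1^n)}=T_k^\sharp$.
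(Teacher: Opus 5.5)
Your plan rests on $L^e(\underline{n})$ being a projective \emph{generator} of $\gmod{\mathscr{S}_{\underline{n}}}$, so that $\gamma_{\underline{n}}$ is a Morita equivalence with $H_{\underline{n}}$-modules. That is false in general.

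\textbf{The projectivity half is fine; the generator half fails.} Every $\mathscr{S}_{\underline{n}}$-module is a subquotient of a sum of shifts of $L^e(\underline{n})$. Its restriction to $R(\delta)^{\otimes n}$ is therefore semisimple by Proposition~\ref{prop:imaginaryMackey}, so precomposing with $\Delta^e(\underline{n})\to L^e(\underline{n})$ identifies $\Hom(L^e(\underline{n}),-)$ with $\Hom(\mathscr{S}_{\underline{n}}\otimes_R\Delta^e(\underline{n}),-)$, and $L^e(\underline{n})$ is projective. But faithfulness only gives $\mathscr{S}_{\underline{n}}\hookrightarrow L^e(\underline{n})^{\oplus m}$. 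That puts every simple module among the composition factors of $L^e(\underline{n})$, not in its head.

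Take $N=1$, $\underline{n}=(2)$ and $t=-1$ (or $t=1$ with $\operatorname{char}\mathbf{k}=2$). Then $H_2\simeq\mathbf{k}[T]/(T+1)^2$ is local. A progenerator with endomorphism ring $H_2$ would force $\mathscr{S}_{(2)}$ to have one simple module, but it has two, $L^e((2))$ and $L^e((1,1))$. In general your claim would make $S_{\underline{h},\underline{n}}$ Morita equivalent to $H_{\underline{n}}$. This fails whenever the quantum characteristic of $t$ is at most some $n_i$, which is why the paper obtains that $\gamma_{\underline{n}}$ is an equivalence only when the quantum characteristic is infinite, and only as a consequence of the theorem.

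\textbf{Freeness also fails.} Take $N=1$, $\underline{n}=(2)$, $t=1$, $\operatorname{char}\mathbf{k}=0$. The $(-1)$-eigenspace of $\mathsf{T}$ on the $6$-dimensional module $L_1^e(\delta)^{\circ 2}$ contains the $5$-dimensional submodule $R(2\delta)\tau_2(v\boxtimes v)$, so the eigenspaces have dimensions $5$ and $1$. This is the analogue of $V^{\otimes n}$ being a sum of permutation modules rather than a free module. Consequently sign-invariants are not $L^e(\underline{n})\,y_{\underline{\lambda}}$, and $N^{\underline{\lambda}}$ is not projective over $H_{\underline{n}}$ anyway. So neither the projectivity of $Z^{\underline{\lambda}}$ nor the generator property in (2) follows; you derived the latter from the summand $Z^{(1^{n_i})_i}=L^e(\underline{n})$.

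\textbf{What is missing} is a handle on the sign-invariants from the $R$-side that does not pass through $H_{\underline{n}}$. The paper takes this from Kleshchev--Muth. Since convolution is exact, $Z^{\underline{\lambda}}$ is built from one-row sign-invariants, and these are analysed through a Gelfand--Graev type vector. Let $\nu\in C_i$, let $\nu^{(2)}$ be the word with each letter doubled, and let $u_0$ be the shortest permutation taking $(\nu,\nu)$ to $\nu^{(2)}$. The top-degree part of $e(\nu^{(2)})L_i^e(\delta)^{\circ 2}$ is one-dimensional, spanned by $\tau_{u_0}(v\boxtimes v)$. The paper supplies two $t$-dependent inputs:
\begin{itemize}
\item the identity $\tau_{u_0}\tau_{w[N+1,N+1]}(v\boxtimes v)=-(t+1)\tau_{u_0}(v\boxtimes v)$, proved from Lemma~\ref{lem:Tiform} and the quadratic relation, which replaces their Lemma 4.2.4;
\item the resulting sign-invariance $\tau_{u_0}(v\boxtimes v)T_1=-\tau_{u_0}(v\boxtimes v)$, which replaces the computation in their Proposition 5.1.3.
\end{itemize}
With these, their argument gives projectivity of $Z^{\underline{\lambda}}$ and $\End(Z^{\underline{h},\underline{n}})^{\mathrm{op}}\simeq S_{\underline{h},\underline{n}}$ with no semisimplicity hypothesis. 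For $h_i\ge n_i$, the generator property must come from the whole family $Z^{\underline{\lambda}}$, not from $L^e(\underline{n})$ alone. For example, the $\lvert X_+(\underline{n})\rvert$ nonisomorphic indecomposable summands of $Z^{\underline{h},\underline{n}}$ can be matched with the simple $\mathscr{S}_{\underline{n}}$-modules.
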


\begin{proof}
The proof is completely analogous to \cite{MR3589160} (and is identical if $t = 1$), 
with the Schur algebras appropriately replaced by the $t$-Schur algebras. 
Specifically, we need to modify the following two parts in type $A$. 

First, \cite[Lemma 4.2.4]{MR3589160} is replaced by the following assertion. 
Using the notation in the proof of Lemma \ref{lem:Oform}, 
let $i \in \mathring{I}, \nu \in C_i$, and fix a nonzero vector $v \in e(\nu)L_i^e(\delta)$. 
Let $u_0 \in \mathfrak{S}_{2(N+1)}$ be the element of minimal length satisfying 
\[
u_0(\nu,\nu) = \nu^{(2)} \coloneq (\nu_1,\nu_1,\nu_2,\nu_2,\ldots,\nu_{N+1},\nu_{N+1}). 
\]
Then, we have 
\begin{equation} \label{eq:modify1}
\tau_{u_0}\tau_{w[N+1,N+1]}(v\boxtimes v) = -(t+1) \tau_{u_0} (v\boxtimes v). 
\end{equation}
To see this, note that the highest degree component of $e(\nu^{(2)})(L_i^e(\delta)^{\circ 2})$ is one-dimensional, which is spanned by $\tau_{u_0}(v \boxtimes v)$. 
Since the left hand side of (\ref{eq:modify1}) belongs to this space, 
it coincides with $a \tau_{u_0} (v\boxtimes v)$ for some $a \in \mathbf{k}$. 
Let $u' \in \mathfrak{S}_{2(N+1)}$ be the element such that $u'u_0 = w[N+1,N+1]$. 
Then, we have $\tau_{u'} \tau_{u_0} = \tau_{w[N+1,N+1]}$, and 
\begin{equation} \label{eq:modify2}
\tau_{w[N+1,N+1]}^2(v \boxtimes v) = a \tau_{w[N+1,N+1]}(v\boxtimes v).
\end{equation}
On the other hand, Lemma \ref{lem:Tiform} shows that
\[
(v \boxtimes v)T_1 = (\tau_{w[N+1,N+1]}+t) (v \boxtimes v).
\]
Hence, the quadratic relation of $T_1$ implies
\begin{equation}\label{eq:modify3}
(\tau_{w[N+1,N+1]}+t+1)\tau_{w[N+1,N+1]}(v \boxtimes v) = 0. 
\end{equation}
Since $\tau_{w[N+1,N+1]} (v \boxtimes v)$ is nonzero, 
(\ref{eq:modify2}) and (\ref{eq:modify3}) show that $a =  -(t+1)$,
which proves equation (\ref{eq:modify1}). 

Second, we modify the computation at the end of the proof of \cite[Proposition 5.1.3]{MR3589160} as follows:
\begin{align*}
\tau_{u_0}(v \boxtimes v) T_1 &= \tau_{u_0} (\tau_{w[N+1,N+1]}+t)(v \boxtimes v) \quad \text{by Lemma \ref{lem:Tiform}} \\
&= (-t-1+t)\tau_{u_0}(v\boxtimes v) \quad \text{by (\ref{eq:modify1})} \\
&= -\tau_{u_0}(v \boxtimes v). 
\end{align*}
\end{proof}

The following theorem is also proved by an argument parallel to that in \cite[Chapter 6]{MR3589160} using Theorem \ref{thm:heckeaction} (3). 

\begin{theorem} \label{thm:monoidal}
Assume that $\mathbf{k}$ is a field. 
Let $\underline{h}', \underline{h}'', \underline{n}', \underline{n}'' \in \mathbb{Z}_{\geq 0}^{\mathring{I}}$, and put $\underline{h}' + \underline{h}'' = \underline{h}, \underline{n}' + \underline{n}'' = \underline{n}$. 
Assume $h'_i \geq n'_i, h''_i \geq n''_i$ for all $i \in \mathring{I}$. 
The following assertions hold: 

(1) The induction functor 
\[
\Ind_{\lvert \underline{n}' \rvert \delta, \lvert \underline{n}'' \rvert \delta} \colon \gmod{(R(\lvert \underline{n}'\rvert \delta) \otimes R(\lvert \underline{n}''\rvert \delta))} \to  \gmod{R(\lvert \underline{n}\rvert \delta)}
\]
restricts to a functor
\[
\gmod{(\mathscr{S}_{\underline{n}'}\otimes \mathscr{S}_{\underline{n}''})} \to \gmod{\mathscr{S}_{\underline{n}}}. 
\]

(2) Under the Morita equivalences 
\[
\gmod{\mathscr{S}_{\underline{n}}} \simeq \gmod{S_{\underline{h},\underline{n}}},\ \gmod{(\mathscr{S}_{\underline{n}'}\otimes \mathscr{S}_{\underline{n}''})} \simeq \gmod{(S_{\underline{h}',\underline{n}'} \otimes S_{\underline{h}'',\underline{n}''})}
\]
of Theorem \ref{thm:howe}, 
the following two functors are naturally isomorphic to each other: 
\begin{align*}
\Ind_{\lvert \underline{n}' \rvert \delta,  \lvert \underline{n}'' \rvert \delta} \colon & \gmod{(\mathscr{S}_{\underline{n}'}\otimes \mathscr{S}_{\underline{n}''})} \to \gmod{\mathscr{S}_{\underline{n}}}, \\
\Ind_{S_{(\underline{h}',\underline{h}''),\underline{n}}}^{S_{\underline{h},\underline{n}}} \Infl_{S_{\underline{h}',\underline{n}'}\otimes S_{\underline{h}'',\underline{n}''}}^{S_{(\underline{h}',\underline{h}''),\underline{n}}} &\colon \gmod{(S_{\underline{h}',\underline{n}'}\otimes S_{\underline{h}'',\underline{n}''})} \to \gmod{S_{\underline{h},\underline{n}}}. 
\end{align*}

(3) We have a natural isomorphism
\[
\gamma_{\underline{n}} (M' \circ M'') \simeq \Ind_{H_{\underline{n}'} \otimes H_{\underline{n}''}}^{H_{\underline{n}}} (\gamma_{\underline{n}'}(M') \otimes \gamma_{\underline{n}''}(M'')) 
\]
for $M' \in \gmod{\mathscr{S}_{\underline{n}'}}, M'' \in \gmod{\mathscr{S}_{\underline{n}''}}$. 
\end{theorem}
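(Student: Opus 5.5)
We plan to follow \cite[Chapter 6]{MR3589160}, replacing the symmetric group by $H_{\underline{n}}$ throughout and using Theorem \ref{thm:heckeaction} (3) as the only new input.

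\textbf{Part (1).} We first show that $L^e(\underline{n}')\circ L^e(\underline{n}'')\simeq L^e(\underline{n})$ by (\ref{eq:isom3}). Since $\Ann$ is computed on a module, it suffices to show that $M'\circ M''$ is annihilated by $\Ann(L^e(\underline{n}))$ for $M'\in\gmod{\mathscr{S}_{\underline{n}'}}$ and $M''\in\gmod{\mathscr{S}_{\underline{n}''}}$. By Theorem \ref{thm:howe} (2), $L^e(\underline{n}')$ is a faithful module containing every projective $\mathscr{S}_{\underline{n}'}$-module as a summand up to sums of the $Z^{\underline{\lambda}}$. Indeed, each $Z^{\underline{\lambda}}$ is a direct summand of some $L^e(\underline{n}')^{\oplus}$ via idempotents, at least after identifying $Z^{\underline{\lambda}}$ with $L^e(\underline{n}')\otimes_{H}(\text{sign-type module})$; if $t$ is not generic, we use instead that $Z^{\underline{\lambda}}\subset L^e(\underline{n}')$. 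Hence every $M'$ is a quotient of a submodule of $L^e(\underline{n}')^{\oplus m}$, and similarly for $M''$. Because $\circ$ is exact, $M'\circ M''$ is a subquotient of $(L^e(\underline{n}')\circ L^e(\underline{n}''))^{\oplus}\simeq L^e(\underline{n})^{\oplus}$, and is therefore an $\mathscr{S}_{\underline{n}}$-module.

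\textbf{Part (3).} We use the adjunction and Mackey theory. Write $\gamma_{\underline{n}}(M'\circ M'')=\Hom(L^e(\underline{n}'+\underline{n}''),M'\circ M'')$. We identify $L^e(\underline{n})$ with $L^e(\underline{n}')\circ L^e(\underline{n}'')\otimes_{H_{\underline{n}'}\otimes H_{\underline{n}''}}H_{\underline{n}}$-type induction; more precisely, we regard $L^e(\underline{n})$ as the induced bimodule. By Theorem \ref{thm:heckeaction} (3), the right $H_{\underline{n}}$-action restricts on $L^e(\underline{n}')\circ L^e(\underline{n}'')$ to $\iota_{\underline{n}',\underline{n}''}$. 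Using the Mackey filtration (Lemma \ref{lem:Mackey}, Proposition \ref{prop:imaginaryMackey}), $\Res_{|\underline{n}'|\delta,|\underline{n}''|\delta}L^e(\underline{n})$ decomposes as a direct sum, indexed by coset representatives of $\mathfrak{S}_{\underline{n}}/(\mathfrak{S}_{\underline{n}'}\times\mathfrak{S}_{\underline{n}''})$, of copies of $L^e(\underline{n}')\otimes L^e(\underline{n}'')$ (the splitting follows from Lemma \ref{lem:semisimplicity}). Compatibly with the Hecke action, $\Res L^e(\underline{n})\simeq (L^e(\underline{n}')\otimes L^e(\underline{n}''))\otimes_{H_{\underline{n}'}\otimes H_{\underline{n}''}}H_{\underline{n}}$ as bimodules. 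Here the leading-term formula $T_w(u)\equiv \tilde\tau_w(\cdots)$ modulo $F_{<w}$ identifies the pieces, and a dimension count gives the isomorphism.

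We then compute
\[
\Hom_R(L^e(\underline{n}),M'\circ M'')\simeq\Hom(\Res\, L^e(\underline{n}),M'\otimes M''),
\]
using the coinduction adjunction together with $\Coind\simeq$ the swapped induction, where $(\delta,\delta)=0$ means no shifts and the swap is harmless after applying $\Hom$ to $L$. This yields $\Hom_{H'}(H_{\underline{n}},\gamma'(M')\otimes\gamma''(M''))$, i.e.\ coinduction. Finally, coinduction and induction agree for Hecke algebras, since $H_{\underline{n}}$ is free and symmetric (Frobenius) over the parabolic subalgebra.

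\textbf{Part (2).} This follows from (3) and Theorem \ref{thm:howe}. Both functors are exact and send projectives to projectives: the first by Proposition \ref{prop:imaginaryind}, the second by the standard theory. Via the Schur functors $f$, both correspond to Hecke induction, and the Schur functor is fully faithful on projectives, since $h_i\ge n_i$ and $Z$ is a projective generator. So it suffices to check that the functors agree on projective generators and on morphisms; we compare $\Ind(Z^{\underline{\lambda}'}\otimes Z^{\underline{\lambda}''})\simeq Z^{(\underline{\lambda}',\underline{\lambda}'')}$, which again follows from (3) applied with sign-idempotents. We expect the main obstacle to be (3), namely verifying that the Mackey decomposition is an isomorphism of $H$-bimodules, not merely of $R$-modules.
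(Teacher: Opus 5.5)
Your part (1) is fine, though faithfulness alone suffices. $\mathscr{S}_{\underline{n}'}\otimes\mathscr{S}_{\underline{n}''}$ acts faithfully on $L^e(\underline{n}')\otimes L^e(\underline{n}'')$. So every object of $\gmod{(\mathscr{S}_{\underline{n}'}\otimes\mathscr{S}_{\underline{n}''})}$, not only those of the form $M'\otimes M''$, is a subquotient of a sum of shifts of it. Exactness of $\circ$ and $L^e(\underline{n}')\circ L^e(\underline{n}'')\simeq L^e(\underline{n})$ then finish, and the detour through $Z^{\underline{\lambda}}$ is unnecessary.

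The genuine gap is your part (3), on which your (2) then rests. Three of its steps fail.
(i) $\Res_{|\underline{n}'|\delta,|\underline{n}''|\delta}$ only sees heights, so with several colours it also has Mackey layers $L^e(\underline{m}')\otimes L^e(\underline{m}'')$ with $\underline{m}'\neq\underline{n}'$. For example, $\Res_{\delta,\delta}(L_1^e(\delta)\circ L_2^e(\delta))\simeq L_1^e(\delta)\otimes L_2^e(\delta)\oplus L_2^e(\delta)\otimes L_1^e(\delta)$, although $\mathfrak{S}_{\underline{n}}/(\mathfrak{S}_{\underline{n}'}\times\mathfrak{S}_{\underline{n}''})$ is a point.
(ii) Lemma \ref{lem:semisimplicity} only concerns extensions among the $L_i^e(\delta)$. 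It says nothing about a filtration with layers $L^e(\underline{n}')\otimes L^e(\underline{n}'')$, which are not even semisimple in general: for $t=-1$, $L_i^e(\delta)^{\circ 2}$ corresponds to the indecomposable length-three module $S_{2,2}\varphi((1,1))$.
(iii) The leading-term congruence is a statement inside $e(\delta,\ldots,\delta)L^e(\underline{n})$, which does not contain $L^e(\underline{n}')\boxtimes L^e(\underline{n}'')$. Worse, a simple $\mathscr{S}_{\underline{n}}$-module not killed by $e(\delta,\ldots,\delta)$ is, up to shift, a quotient of $\Delta^e(\underline{n})$. By Theorem \ref{thm:heckeaction}, the simple heads of $\Delta^e(\underline{n})$ are indexed by simple $H_{\underline{n}}$-modules, and there are fewer of these than multipartitions when $H_{\underline{n}}$ is not semisimple. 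So some composition factors are invisible to leading terms, and ``leading terms plus a dimension count'' does not give your bimodule isomorphism.

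Your part (2) does not repair this.
First, Proposition \ref{prop:imaginaryind}(2) is about projectivity in $\gMod{R^e}$, not over the finite-dimensional quotient $\mathscr{S}_{\underline{n}}$. For instance, $L_i^e(\delta)$ is projective over $\mathscr{S}_{\underline{n}}$ for $\underline{n}$ the unit vector at $i$, but its projective cover in $\gMod{R^e(\delta)}$ is the infinite-dimensional $\Delta_i^e(\delta)$.
Second, $Z^{\underline{\lambda}'}\circ Z^{\underline{\lambda}''}\simeq Z^{(\underline{\lambda}',\underline{\lambda}'')}$ cannot be read off from (3). Statement (3) only compares the two sides after applying $\gamma_{\underline{n}}$, which kills the simples just mentioned. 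It is fully faithful only on a class, such as projectives, to which you would first need to know $Z^{\underline{\lambda}'}\circ Z^{\underline{\lambda}''}$ belongs, so the argument is circular.

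The missing observation is that $Z^{\underline{\lambda}}$ is the intersection of the kernels of right multiplication by $\iota_i(T_s)+1$, for $s$ a simple reflection of $\mathfrak{S}_{\lambda^{(i)}}$. Since $\circ$ is exact, and by Theorem \ref{thm:heckeaction}(3) the canonical map $\grend_R(L^e(\underline{n}'))\otimes\grend_R(L^e(\underline{n}''))\to\grend_R(L^e(\underline{n}))$ is $\iota_{\underline{n}',\underline{n}''}$, you get directly $Z^{\underline{\lambda}'}\circ Z^{\underline{\lambda}''}=Z^{(\underline{\lambda}',\underline{\lambda}'')}$ inside $L^e(\underline{n}')\circ L^e(\underline{n}'')\simeq L^e(\underline{n})$, concatenating compositions colourwise. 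Moreover, $\psi'\otimes\psi''$ goes to the $\psi$ of the concatenated data, because $\iota_{\underline{n}',\underline{n}''}(s'\otimes s'')$ is the corresponding $s_{\underline{\lambda},\underline{\mu}}^{\underline{u}}$. Hence $\Ind(Z^{\underline{h}',\underline{n}'}\otimes Z^{\underline{h}'',\underline{n}''})$ is the summand $\bigoplus Z^{(\underline{\lambda}',\underline{\lambda}'')}$ of $Z^{\underline{h},\underline{n}}$, with $S_{\underline{h}',\underline{n}'}\otimes S_{\underline{h}'',\underline{n}''}$ acting through the Levi inclusion. This gives (1) and (2) formally. Part (3) then follows from (2), from $\gamma_{\underline{n}}\simeq f_{\underline{h},\underline{n}}\circ\alpha_{\underline{h},\underline{n}}$, and from the standard compatibility of the Schur functor with Levi induction.

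This is the kind of argument the paper defers to: \cite[Chapter 6]{MR3589160}, with Theorem \ref{thm:heckeaction}(3) as the only new input. It needs no Mackey analysis of $\Res L^e(\underline{n})$, and it proves (2) before (3), the reverse of your order.
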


It is known that simple $S_{\underline{h},\underline{n}}$-modules (concentrated in degree zero) are parametrized by the set $X_+(\underline{h},\underline{n})$ of multipartitions $\underline{\lambda} = (\lambda^{(i)}) \in X(\underline{h},\underline{n})$: 
\[
\lambda_1^{(i)} \geq \lambda_2^{(i)} \geq \cdots \geq \lambda_{h_i}^{(i)} \ (i \in \mathring{I}). 
\]
In fact, there exists a unique simple $S_{\underline{h},\underline{n}}$-module $L_{\underline{h}}(\underline{\lambda})$ of highest weight $\underline{\lambda}$, that is, 
$\varphi (\underline{\lambda}) L_{\underline{h}}(\underline{\lambda}) \neq 0$ and $\varphi (\underline{\mu}) L_{\underline{h}}(\underline{\lambda}) = 0$ for any $\underline{\mu} \in X(\underline{h},\underline{n})$ with $\underline{\mu} \leq \underline{\lambda}$ in the dominance order. 

For $\underline{n} \in \mathbb{Z}_{\geq 0}^{\mathring{I}}$, 
let $X_+(\underline{n})$ denote the set of all multipartitions $\underline{\lambda} = (\lambda^{(i)})_{i \in \mathring{I}}$ such that each $\lambda^{(i)}$ is a partition of $n^{(i)}$. 
Its elements are called multipartitions of $\underline{n}$. 
Taking $\underline{h} \in \mathbb{Z}_{\geq 0}^{\mathring{I}}$ satisfying $h_i \geq n_i \ (i \in \mathring{I})$, 
we may regard $\underline{\lambda}$ as an element of $X_+(\underline{h},\underline{n})$. 
As in \cite[Section 6.1]{MR3589160}, one can show that the $\mathscr{S}_{\underline{n}}$-module $\beta_{\underline{h},\underline{n}} (L_{\underline{h}}(\underline{\lambda}))$ is independent of the choice of $\underline{h}$. 
Let $L^e(\underline{\lambda})$ denote this simple module. 

Let $i \in \mathring{I}$ and let $\lambda$ be a partition. 
We write $L_i^e(\lambda) = L^e(\underline{\lambda})$, where $\underline{\lambda}$ is a multipartition given by 
\[
\lambda^{(i)} = \lambda, \ \lambda^{(j)} = \emptyset \ (j \neq i). 
\]

\begin{lemma} \label{lem:onecolor2}
Assume that $\mathbf{k}$ is a field. 
Let $\underline{\lambda}$ be a multipartition. 
Then, we have an isomorphism 
\[
L^e(\underline{\lambda}) \simeq L_1^e(\lambda^{(1)}) \circ \cdots \circ L_N^e(\lambda^{(N)}).  
\]
Moreover, each $L_i^e(\lambda^{(i)})$ belongs to $\Xi^i$.  
\end{lemma}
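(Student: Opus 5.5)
We prove the isomorphism using the monoidal compatibility of Theorem \ref{thm:monoidal}, and then read off the one-color property from how $L_i^e(\lambda)$ sits inside $L_i^e(\delta)^{\circ n_i}$.

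First we treat membership in $\Xi^i$. By construction, $L_i^e(\lambda) = \beta_{\underline{h},\underline{n}}(L_{\underline{h}}(\underline{\lambda}))$ with $\underline{n} = n_i\epsilon_i$ is a simple $\mathscr{S}_{\underline{n}}$-module. It is a quotient of a direct sum of copies of $Z^{\underline{h},\underline{n}} \subset L^e(\underline{n}) = L_i^e(\delta)^{\circ n_i}$, because $\beta$ is a tensor product with $Z^{\underline{h},\underline{n}}$. So it is a composition factor of $L_i^e(\delta)^{\circ n_i}$.

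Next we check the conditions of Lemma \ref{lem:onecolor} (1) for this composition factor. Being a module over $\mathscr{S}_{\underline{n}}$, which is a quotient of $R^e(n_i\delta)$, it is $e$-cuspidal by Proposition \ref{prop:imaginaryind} (1). For $j \neq i$ we need $e(n_i\delta-\alpha_j,\alpha_j)L_i^e(\delta)^{\circ n_i}=0$. Proposition \ref{prop:imaginaryMackey} shows that $\Res_{(n_i-1)\delta,\delta}$ of this module has factors of the form $\ast\otimes L_i^e(\delta)$. Also $e(\delta-\alpha_j,\alpha_j)L_i^e(\delta)=0$, since $L_i^e(\delta)\in\Xi_1^i$ by Lemma \ref{lem:minusculeonecolor}. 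Any idempotent $e(\nu)$ ending in $j$ that acts nonzero would factor through $e((n_i-1)\delta,\delta)$, because the last $\delta$-block is forced by cuspidality. Hence it is killed. Since the condition passes to subquotients, $L_i^e(\lambda)$ is self-dual up to shift and lies in $\Xi^i$.

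For the isomorphism, apply Theorem \ref{thm:monoidal} (2) iteratively, with $\underline{n}$ split into its color components and each $h_i\ge n_i$. This identifies $L_1^e(\lambda^{(1)})\circ\cdots\circ L_N^e(\lambda^{(N)})$ with $\beta_{\underline{h},\underline{n}}$ applied to the parabolically induced module $\Ind\Infl(L_{h_1}(\lambda^{(1)})\otimes\cdots\otimes L_{h_N}(\lambda^{(N)}))$. The multi-Schur algebra $S_{\underline{h},\underline{n}}=\bigotimes_i S_{h_i,n_i}$ is exactly the Levi subalgebra for the decomposition $\underline{h}=\sum h_i\epsilon_i$, $\underline{n}=\sum n_i\epsilon_i$, since different colors are already separated tensor factors. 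The induction is therefore trivial and the module is the outer tensor product $L_{\underline{h}}(\underline{\lambda})$. Applying $\beta$ gives $L^e(\underline{\lambda})$.

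The main obstacle is the bookkeeping that the ``Levi'' induction across colors is the identity. If this identification proves awkward, there is an alternative. By Theorem \ref{thm:onecolor} the convolution of the $L_i^e(\lambda^{(i)})\in\Xi^i$ is simple. It is also an $\mathscr{S}_{\underline{n}}$-module, by Theorem \ref{thm:monoidal} (1). One then identifies it among the simples $L^e(\underline{\mu})$ by computing $\gamma_{\underline{n}}$ via Theorem \ref{thm:monoidal} (3): it yields the outer tensor product of the $\gamma(L_i^e(\lambda^{(i)}))$, which determines $\underline{\lambda}$.
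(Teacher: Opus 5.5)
Your isomorphism argument is correct but takes a different route from the paper. The paper treats it as immediate from the definitions. By Theorem~\ref{thm:heckeaction}~(3), the right $H_{\underline{n}}$-action on $L^e(\underline{n})=L_1^e(\delta)^{\circ n_1}\circ\cdots\circ L_N^e(\delta)^{\circ n_N}$ is the convolution of the actions on the colour blocks. Exactness of induction then makes $Z^{\underline{h},\underline{n}}$ the convolution of the one-colour spaces built from the $L_i^e(\delta)^{\circ n_i}$. Finally, $Z^{\underline{h},\underline{n}}\otimes_{S_{\underline{h},\underline{n}}}(\cdot)$ distributes over this convolution because $L_{\underline{h}}(\underline{\lambda})=\bigotimes_i L_{h_i}(\lambda^{(i)})$. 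Your degenerate use of Theorem~\ref{thm:monoidal}~(2) is legitimate: the hypothesis $h'_i\geq n'_i$ allows zeros, and $S_{0,0}=\mathbf{k}$ with $S_{0,m}=0$ for $m\geq 1$ makes the Levi subalgebra all of $S_{\underline{h},\underline{n}}$. It does, however, invoke a much heavier theorem whose proof contains exactly this unwinding.

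The genuine flaw is in the $\Xi^i$ part. Your strategy is the paper's: $L_i^e(\lambda)$ is a composition factor of $L_i^e(\delta)^{\circ n}$, and the conditions of Lemma~\ref{lem:onecolor}~(1) pass to subquotients. But your reason for $e(n\delta-\alpha_j,\alpha_j)L_i^e(\delta)^{\circ n}=0$ is wrong. $e$-cuspidality does not force the last $\height\delta$ letters of a word in the support to have weight $\delta$. It only puts that weight in $\spn_{\mathbb{Z}_{\geq 0}}(\Phi_+^{\mathrm{min}}\cap p^{-1}(\mathring{\mathsf{Q}}_+))$, which contains other elements of the same height.

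Concretely, take type $A_1^{(1)}$ with $L_1^e(\delta)=\mathbf{k}v$ as in Lemma~\ref{lem:minusculestr}. Then $\tau_2(v\boxtimes v)$ is a nonzero vector of $e(0,0,1,1)L_1^e(\delta)^{\circ 2}$, since $s_2$ is a minimal coset representative. Its trailing block $(1,1)$ has weight $2\alpha_1\neq\delta$.

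The vanishing you need is still true, but for a different reason. Because $\alpha_j$ is simple, the Mackey filtration of $\Res_{n\delta-\alpha_j,\alpha_j}(L_i^e(\delta)^{\circ(n-1)}\circ L_i^e(\delta))$ has only two layers. They are built from $\Res_{(n-1)\delta-\alpha_j,\alpha_j}L_i^e(\delta)^{\circ(n-1)}$ and from $\Res_{\delta-\alpha_j,\alpha_j}L_i^e(\delta)$. Both vanish, by induction on $n$ and by $L_i^e(\delta)\in\Xi_1^i$ (Lemma~\ref{lem:minusculeonecolor}). Equivalently, characters of convolutions are shuffle products, so every word in the support ends with the last letter of a word of $L_i^e(\delta)$, and that letter is never $j$.

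Your fallback route would also fail outside the semisimple case. When $t$ has finite quantum characteristic, the Schur functor $f_{\underline{h},\underline{n}}$, and hence $\gamma_{\underline{n}}$, kills $L_{\underline{h}}(\underline{\mu})$ for non-restricted $\underline{\mu}$. For example, with $t=-1$ and $n=2$, $H_2$ has one simple module while $S_{2,2}$ has two. So computing $\gamma_{\underline{n}}$ does not determine $\underline{\lambda}$ in general. Since your main route works, you do not need this fallback.
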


\begin{proof}
The former assertion immediately follows from the definition. 
Since the module $L_i^e(\lambda^{(i)})$ is a subquotient of $L_i^e(\delta)^{\circ \lvert \lambda^{(i)}\rvert}$ and $L_i^e(\delta) \in \Xi_1^i$ by Lemma \ref{lem:minusculeonecolor}, 
the latter assertion follows. 
\end{proof}

\begin{corollary} 
Assume that $\mathbf{k}$ is a field. 
The set $\{L^e(\underline{\lambda}) \mid \underline{\lambda} \in X_+(\underline{n}) \}$ is a complete set of representatives of self-dual simple $\mathscr{S}_{\underline{n}}$-modules up to isomorphism. 
\end{corollary}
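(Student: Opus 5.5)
The plan is to deduce the statement from the Morita equivalence of Theorem \ref{thm:howe} (2), together with the known classification of simple modules over $t$-Schur algebras. We fix $\underline{h}$ with $h_i \geq n_i$ for all $i$. By Theorem \ref{thm:howe} (2), $\beta_{\underline{h},\underline{n}}$ is an equivalence $\gmod{S_{\underline{h},\underline{n}}} \to \gmod{\mathscr{S}_{\underline{n}}}$ of graded categories, compatible with the grading shift $q$. Since $S_{\underline{h},\underline{n}}$ is concentrated in degree zero, its graded simple modules are exactly the grading shifts $q^d L_{\underline{h}}(\underline{\lambda})$ with $\underline{\lambda} \in X_+(\underline{h},\underline{n})$ and $d \in \mathbb{Z}$, and these are pairwise non-isomorphic. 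Transporting through $\beta_{\underline{h},\underline{n}}$, we see that every simple graded $\mathscr{S}_{\underline{n}}$-module is isomorphic to $q^d L^e(\underline{\lambda})$ for a unique pair $(\underline{\lambda},d)$. Because $h_i \geq n_i$, the set $X_+(\underline{h},\underline{n})$ is identified with $X_+(\underline{n})$ by discarding trailing zeros.

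Next we determine which grading shift is self-dual. As in the general setting, each simple $\mathscr{S}_{\underline{n}}$-module is in particular a simple $R(n\delta)$-module, hence a grading shift of a unique self-dual one. It therefore suffices to show that $L^e(\underline{\lambda})$ itself, rather than some shift of it, is self-dual. For this we use the grading of $Z^{\underline{h},\underline{n}}$. It is a submodule of $L^e(\underline{n})$, whose graded pieces we control. The module $L^e(\underline{n})$ is self-dual: each $L_i^e(\delta)$ is self-dual, $(\delta,\delta)=0$ gives $D(M\circ N)\simeq DN\circ DM$ without shift, and Lemma \ref{lem:singlecolor} allows reordering of the factors.

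An alternative route avoids computing degrees. By Lemma \ref{lem:onecolor2}, $L^e(\underline{\lambda}) \simeq \circ_i L_i^e(\lambda^{(i)})$ with $L_i^e(\lambda^{(i)})\in\Xi^i$. Theorem \ref{thm:onecolor} then reduces self-duality to the one-color case, where it suffices to show that $L_i^e(\lambda)$ is self-dual. Since $L_i^e(\lambda)$ is a composition factor of $L_i^e(\delta)^{\circ n}$, and the latter has all composition factors in degree zero relative to their self-dual versions (by Lemma \ref{lem:tensorspacehom} (1), $\End$ is concentrated in degree zero), this should pin down the shift. Concretely, $L_i^e(\lambda)$ appears as the image of an element of $H_n$, namely a Young-type idempotent combination, acting on $L_i^e(\delta)^{\circ n}$. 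Since $H_n$ acts in degree zero and $L_i^e(\delta)^{\circ n}$ is self-dual, the head of such an image should be self-dual.

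The main obstacle is this last point: showing that the specific shift produced by $\beta_{\underline{h},\underline{n}}$ is the self-dual one. The first thing I would try is the following. The modules $L^e(\underline{\lambda})$ arise as subquotients of the self-dual module $L^e(\underline{n})$, which is semisimple-free of shifts because $\End$ is concentrated in degree zero. Any simple subquotient of $L^e(\underline{n})$ that is isomorphic to its dual up to shift must then occur in degree-symmetric position. Combined with the fact that $Z^{\underline{\lambda}}$ and the $S_{\underline{h},\underline{n}}$-action live in degree zero, this forces $d=0$. Completeness and pairwise non-isomorphism then follow directly from the first paragraph.
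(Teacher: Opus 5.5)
Your first paragraph is correct, and it is the deduction the paper relies on: the corollary is recorded without a separate proof, as a consequence of Theorem~\ref{thm:howe}\,(2) and the classification of simple $S_{\underline{h},\underline{n}}$-modules. Your argument that $L^e(\underline{n})$ is self-dual is also correct.

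\textbf{The gap.} It is the step you flag yourself: showing that $L^e(\underline{\lambda})=\beta_{\underline{h},\underline{n}}(L_{\underline{h}}(\underline{\lambda}))$ is self-dual, rather than a nonzero shift $q^{a}L_0$ of a self-dual module. None of the justifications you offer establishes this.
\begin{itemize}
\item That $\End_R(L^e(\underline{n}))$ is concentrated in degree zero (Lemma~\ref{lem:tensorspacehom}) would suffice if $L^e(\underline{n})$ were semisimple. It is not in general, and for a non-semisimple self-dual module there is no such implication: non-self-dual factors can occur as a pair $q^{\pm a}L_0$ in the middle of a module with simple head and socle, without producing endomorphisms of nonzero degree.
\item ``Degree-symmetric position'' only says such factors come in symmetric pairs, which is exactly what has to be ruled out.
\item Outside the semisimple case there are no Young idempotents. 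Moreover, neither the image of an element of $H_n$ acting on a self-dual module, nor the head of a self-dual module, need be self-dual.
\item $Z^{\underline{\lambda}}$ does not live in degree zero; only the $S_{\underline{h},\underline{n}}$-action does. Already $L_1^e(\delta)^{\circ 2}$ in type $A_1^{(1)}$ has nonzero components in degrees $\pm 2$.
\end{itemize}

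\textbf{The missing idea.} Apply the graded Morita equivalence to $L^e(\underline{n})$ itself.
\begin{itemize}
\item Since $S_{\underline{h},\underline{n}}$ is concentrated in degree zero, every object of $\gmod{S_{\underline{h},\underline{n}}}$ is the direct sum of its homogeneous components. So the modules concentrated in degree $0$ form a Serre subcategory $\mathcal{A}_0$, and $q^{d}\mathcal{A}_0$ and $\mathcal{A}_0$ share no nonzero object for $d\neq 0$. Put $\mathcal{C}_0=\beta_{\underline{h},\underline{n}}(\mathcal{A}_0)$.
\item For $\underline{\lambda}_0=((1,\dots,1,0,\dots,0))_{i}$ all parabolic subgroups are trivial. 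Hence $L^e(\underline{n})=Z^{\underline{\lambda}_0}$ is a direct summand of $Z^{\underline{h},\underline{n}}=\beta_{\underline{h},\underline{n}}(S_{\underline{h},\underline{n}})\in\mathcal{C}_0$. So every composition factor of $L^e(\underline{n})$ lies in $\mathcal{C}_0$, i.e.\ is an unshifted $L^e(\underline{\mu})$.
\item $L^e(\underline{\lambda})$ is one of these factors. Via $\alpha_{\underline{h},\underline{n}}$, $\Hom_{\mathscr{S}_{\underline{n}}}(Z^{\underline{\lambda}},L^e(\underline{\lambda}))$ is identified with $\varphi(\underline{\lambda})L_{\underline{h}}(\underline{\lambda})\neq 0$, which sits in degree zero. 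So $L^e(\underline{\lambda})$ is a degree-zero quotient of $Z^{\underline{\lambda}}\subset L^e(\underline{n})$.
\item Now suppose $L^e(\underline{\lambda})\simeq q^{a}L_0$ with $L_0$ self-dual. Then $DL^e(\underline{\lambda})\simeq q^{-2a}L^e(\underline{\lambda})$ is a composition factor of $DL^e(\underline{n})\simeq L^e(\underline{n})$, hence lies in $\mathcal{C}_0$. The uniqueness of the pair $(\underline{\mu},d)$ from your first paragraph then forces $a=0$.
\end{itemize}

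With this step supplied, the rest of your argument goes through. The detour through Lemma~\ref{lem:onecolor2} and Theorem~\ref{thm:onecolor} is valid but unnecessary.
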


\begin{corollary} \label{cor:imaginarysimple}
Assume that $\mathbf{k}$ is a field. 
Let $n \in \mathbb{Z}_{\geq 0}$. 
Then, the set 
\[
\{L^e(\underline{\lambda}) \mid \text{$\underline{\lambda}$ is a multipartition of $n$} \}
\]
is a complete set of representatives of self-dual simple $R^e(n\delta)$-modules up to isomorphism. 
\end{corollary}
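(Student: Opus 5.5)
We will show that every self-dual simple $R^e(n\delta)$-module is of the form $L^e(\underline{\lambda})$ for some multipartition $\underline{\lambda}$ of $n$, that these modules are pairwise non-isomorphic, and then compare cardinalities with Theorem \ref{thm:cuspdecomp}(2).

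The first step is to check that the $L^e(\underline{\lambda})$ are genuinely simple $R^e(n\delta)$-modules. Take $\underline{\lambda}$ with $\lvert\underline{\lambda}\rvert = n$, and let $\underline{n}$ be given by $n_i = \lvert \lambda^{(i)}\rvert$. The module $L^e(\underline{\lambda})$ is a simple $\mathscr{S}_{\underline{n}}$-module, and it is self-dual by the preceding corollary. Since $\mathscr{S}_{\underline{n}}$ is a quotient of $R(n\delta)$, inflation makes $L^e(\underline{\lambda})$ a simple $R(n\delta)$-module. It is annihilated by every generator of the ideal defining $R^e(n\delta)$, because it is a subquotient of $L^e(\underline{n})$, and $L^e(\underline{n})$ lies in $\gMod{R^e(n\delta)}$ by Proposition \ref{prop:imaginaryind}(1). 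Hence it is a self-dual simple $R^e(n\delta)$-module, that is, an element of $\Xi_n^e$.

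The second step is injectivity of $\underline{\lambda} \mapsto L^e(\underline{\lambda})$, and this is the crux. By Lemma \ref{lem:onecolor2}, $L^e(\underline{\lambda}) \simeq L_1^e(\lambda^{(1)})\circ\cdots\circ L_N^e(\lambda^{(N)})$ with each $L_i^e(\lambda^{(i)}) \in \Xi^i$. By Theorem \ref{thm:onecolor}, the tuple $(L_i^e(\lambda^{(i)}))_i$ is therefore determined by the isomorphism class of $L^e(\underline{\lambda})$. In particular the weights $n_i\delta$ of the factors are determined, so $\underline{n}$ is recovered. Within a fixed $\underline{n}$, the preceding corollary says that distinct multipartitions of $\underline{n}$ give non-isomorphic simple $\mathscr{S}_{\underline{n}}$-modules. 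Since inflation along the quotient $R(n\delta)\to\mathscr{S}_{\underline{n}}$ is fully faithful, they remain non-isomorphic as $R(n\delta)$-modules.

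Finally, Theorem \ref{thm:cuspdecomp}(2) says that $\lvert \Xi_n^e\rvert$ equals the number of multipartitions of $n$. An injective map between finite sets of equal cardinality is a bijection, so every self-dual simple $R^e(n\delta)$-module is some $L^e(\underline{\lambda})$, which proves the statement.
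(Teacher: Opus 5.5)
Your proof is correct and follows essentially the same route as the paper: Lemma~\ref{lem:onecolor2} together with the bijection of Theorem~\ref{thm:onecolor} gives injectivity of $\underline{\lambda}\mapsto L^e(\underline{\lambda})$, and counting via Theorem~\ref{thm:cuspdecomp}~(2) gives surjectivity. The differences are cosmetic. The paper gets injectivity factor by factor from $L_i^e(\lambda)\not\simeq L_i^e(\mu)$ for $\lambda\neq\mu$, whereas you use the recovered tuple only to determine $\underline{n}$ and then invoke the classification of simple $\mathscr{S}_{\underline{n}}$-modules; you also make explicit the check, implicit in the paper, that each $L^e(\underline{\lambda})$ is a self-dual simple $R^e(n\delta)$-module.
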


\begin{proof}
For each $i \in \mathring{I}$ and partitions $\lambda, \mu$, 
the modules $L_i^e(\lambda)$ and $L_i^e(\mu)$ are not isomorphic to each other if $\lambda \neq \mu$. 
Hence, Theorem \ref{thm:onecolor} and Lemma \ref{lem:onecolor2} show that two modules $L^e(\underline{\lambda})$ and $L^e(\underline{\mu})$ are isomorphic to each other only when $\underline{\lambda} = \underline{\mu}$. 
By counting the numbers based on Theorem \ref{thm:cuspdecomp} (2), we see that all the simple $R^e(n\delta)$-modules are obtained in this way. 
\end{proof}

\begin{corollary}  \label{cor:ff}
Assume that $\mathbf{k}$ is a field. 
Let $n \in \mathbb{Z}_{\geq 0}$. 
The functor 
\[
\bigoplus_{\underline{n} \in \mathbb{Z}_{\geq 0}^{\mathring{I}}, \lvert \underline{n} \rvert =n} \gmod{\mathscr{S}_{\underline{n}}} \to \gmod{R^e(n\delta)}
\]
induced by inflation is fully faithful. 
\end{corollary}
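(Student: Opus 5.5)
The functor in question is a direct sum of inflation functors $\gmod{\mathscr{S}_{\underline{n}}} \to \gmod{R^e(n\delta)}$. Inflation along a surjection $R(n\delta) \to \mathscr{S}_{\underline{n}}$ is always fully faithful on each summand. So the content of the statement is that there are no nonzero homomorphisms between inflations of modules from different summands. Concretely, I must show that for $\underline{n} \neq \underline{m}$ with $\lvert \underline{n} \rvert = \lvert \underline{m} \rvert = n$, every homomorphism $M \to M'$ of $R$-modules vanishes, where $M \in \gmod{\mathscr{S}_{\underline{n}}}$ and $M' \in \gmod{\mathscr{S}_{\underline{m}}}$.

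First I will check that each $\mathscr{S}_{\underline{n}}$ is a quotient of $R^e(n\delta)$. This holds because $L^e(\underline{n}) \in \gMod{R^e(n\delta)}$ by Proposition \ref{prop:imaginaryind}(1), so the kernel of $R(n\delta) \to R^e(n\delta)$ annihilates $L^e(\underline{n})$. Hence the functor does land in $\gmod{R^e(n\delta)}$.

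Next I reduce to composition factors. Any homomorphism $f \colon M \to M'$ has image $f(M)$, which is simultaneously a quotient of $M$ and a submodule of $M'$. Every composition factor of $f(M)$ is therefore a composition factor of both a $\mathscr{S}_{\underline{n}}$-module and a $\mathscr{S}_{\underline{m}}$-module. By the preceding corollary, the simple $\mathscr{S}_{\underline{n}}$-modules are, up to grading shift, exactly the $L^e(\underline{\lambda})$ with $\underline{\lambda} \in X_+(\underline{n})$. By Corollary \ref{cor:imaginarysimple}, the modules $L^e(\underline{\lambda})$ for distinct multipartitions $\underline{\lambda}$ of $n$ are pairwise non-isomorphic, even after grading shifts, since they are self-dual. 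The sets $X_+(\underline{n})$ and $X_+(\underline{m})$ are disjoint when $\underline{n} \neq \underline{m}$, because the sizes $\lvert \lambda^{(i)} \rvert = n_i$ are recorded by $\underline{\lambda}$. Therefore $f(M)$ has no composition factors, so $f(M) = 0$ and $f = 0$.

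Finally, the functor is fully faithful on each summand, and Hom spaces between distinct summands vanish both in the source category and in the target category. This gives full faithfulness of the direct sum. The only point that needs care is that the parametrization of simple modules really separates the summands, and this is exactly what Corollary \ref{cor:imaginarysimple} together with the preceding corollary provides. I do not expect any serious obstacle.
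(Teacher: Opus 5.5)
Your proof is correct and follows the paper's argument. Each $\gmod{\mathscr{S}_{\underline{n}}}$ is a full subcategory of $\gmod{R^e(n\delta)}$ closed under subquotients, and its simple objects (grading shifts of $L^e(\underline{\lambda})$ with $\underline{\lambda} \in X_+(\underline{n})$) are distinct from those of any other summand by Corollary \ref{cor:imaginarysimple}, so every homomorphism between different summands vanishes. Your extra check that $\mathscr{S}_{\underline{n}}$ is a quotient of $R^e(n\delta)$, via Proposition \ref{prop:imaginaryind}(1), is a detail the paper leaves implicit.
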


\begin{proof}
We may regard each $\gmod{\mathscr{S}_{\underline{n}}}$ as a full subcategory of $\gmod{R^e(n\delta)}$ closed under taking subquotients. 
The self-dual simple modules in $\gmod{\mathscr{S}_{\underline{n}}}$ are parametrized by multipartitions of $\underline{n}$. 
Hence, for $\underline{m} \neq \underline{n}$,
the two subcategories $\gmod{\mathscr{S}_{\underline{m}}}$ and $\gmod{\mathscr{S}_{\underline{n}}}$ have no simple modules in common. 
Therefore, the corollary follows. 
\end{proof}

\subsection{Weyl modules and characters}

Assume that $\mathbf{k}$ is a field. 
Let $\underline{n},\underline{h} \in \mathbb{Z}_{\geq 0}^{\mathring{I}}$ and assume that $h_i \geq n_i$ for any $i \in \mathring{I}$. 
The $t$-Schur algebra $S_{h,n}$ is a quasi-hereditary algebra with the poset $X_+(\underline{h},\underline{n})$ endowed with the dominance order. 
For each $\underline{\lambda} \in X_+(\underline{h},\underline{n})$, let $W_{\underline{h}}(\lambda)$ denote the standard module of highest weight $\underline{\lambda}$. 
We define 
\[
W^e(\underline{\lambda}) = \beta_{\underline{h},\underline{n}}W_{\underline{h}}(\underline{\lambda}). 
\]
Since $\beta_{\underline{h},\underline{n}}$ is an equivalence, 
we have the following proposition: 

\begin{proposition}\label{prop:quasihereditary}
Assume that $\mathbf{k}$ is a field. 
Then, the algebra $\mathscr{S}_{\underline{n}}$ is a quasi-hereditary algebra with poset $X_+(\underline{n})$ endowed with the dominance order. 
The standard modules are $W^e(\underline{\lambda}) \ (\underline{\lambda} \in X_+(\underline{n}))$. 
\end{proposition}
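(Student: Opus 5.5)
The plan is to transport the quasi-hereditary structure of $S_{\underline{h},\underline{n}}$ through the equivalence of Theorem \ref{thm:howe}(2). Fix $\underline{h}$ with $h_i \geq n_i$ for all $i$. First I would record that $S_{\underline{h},\underline{n}} = \bigotimes_i S_{h_i,n_i}$ is quasi-hereditary. Each factor $S_{h_i,n_i}$ is quasi-hereditary with respect to the dominance order on $X_+(h_i,n_i)$, with Weyl modules as standard objects; this is the standard fact from \cite{MR1804485}. A tensor product of quasi-hereditary algebras over a field is again quasi-hereditary for the product order, with standard modules the outer tensor products. The poset is then $X_+(\underline{h},\underline{n}) = X_+(\underline{n})$, since $h_i \geq n_i$ means every partition of $n_i$ has at most $h_i$ parts. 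I take the dominance order on multipartitions to be this componentwise order.

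Second, I would pass to graded modules. Because $S_{\underline{h},\underline{n}}$ sits in degree zero, $\gmod{S_{\underline{h},\underline{n}}}$ is a direct sum of grading shifts of the ungraded module category. Its simple objects up to shift are the $L_{\underline{h}}(\underline{\lambda})$, and the standard objects are the $W_{\underline{h}}(\underline{\lambda})$. Being quasi-hereditary is a Morita-invariant condition, stated purely in terms of the category: existence of projective covers $P(\underline{\lambda})$ with $\Delta$-filtrations, where the top is $W(\underline{\lambda})$ and the other subquotients are $W(\underline{\mu})$ with $\underline{\mu} > \underline{\lambda}$, together with $[W(\underline{\lambda}):L(\underline{\mu})] \neq 0 \Rightarrow \underline{\mu} \leq \underline{\lambda}$ and $\End(W(\underline{\lambda}))$ a division ring. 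The graded equivalence $\beta_{\underline{h},\underline{n}}$ of Theorem \ref{thm:howe}(2) commutes with $q$ and preserves all of these conditions. It therefore carries the structure to $\gmod{\mathscr{S}_{\underline{n}}}$, and $\mathscr{S}_{\underline{n}}$ is finite dimensional, which is what lets us speak of it as a quasi-hereditary algebra.

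Third, I would identify the labels. Here $\beta_{\underline{h},\underline{n}}(L_{\underline{h}}(\underline{\lambda})) = L^e(\underline{\lambda})$ by definition, and $W^e(\underline{\lambda}) = \beta_{\underline{h},\underline{n}} W_{\underline{h}}(\underline{\lambda})$ by definition. So the simple objects are indexed by $X_+(\underline{n})$, matching the corollary preceding Corollary \ref{cor:imaginarysimple}, and the standard modules are the $W^e(\underline{\lambda})$.

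The only genuine point to check is that the poset structure does not depend on the auxiliary choice of $\underline{h}$. This needs the $W^e(\underline{\lambda})$ to be independent of $\underline{h}$, just as $L^e(\underline{\lambda})$ is. I would handle it in one of two ways. The first is to observe that in a quasi-hereditary algebra the standard module $W(\underline{\lambda})$ is determined intrinsically: it is the largest quotient of the projective cover of $L(\underline{\lambda})$ all of whose composition factors are $L(\underline{\mu})$ with $\underline{\mu} \leq \underline{\lambda}$. Since $L^e(\underline{\lambda})$ is independent of $\underline{h}$, so is $W^e(\underline{\lambda})$. The second is to argue as in \cite[Section 6.1]{MR3589160}, using the idempotent truncation $S_{\underline{h}+\underline{1},\underline{n}} \to S_{\underline{h},\underline{n}}$, which sends Weyl modules to Weyl modules. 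I would present the first argument, since it is immediate.
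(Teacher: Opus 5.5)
Your proposal is correct and is essentially the paper's argument: the paper deduces the proposition in one line from the fact that $\beta_{\underline{h},\underline{n}}$ (for $h_i \geq n_i$) is an equivalence with modules over the quasi-hereditary algebra $S_{\underline{h},\underline{n}}$, which is what your first three steps do. The paper then records that $W^e(\underline{\lambda})$ is independent of $\underline{h}$ as an ``in particular'' consequence, which is your first argument: standard modules are determined intrinsically once the labelling $L^e(\underline{\lambda})$ is known to be independent of $\underline{h}$.
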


In particular, $W^e(\underline{\lambda})$ is independent of $\underline{h}$. 
 
\begin{theorem} \label{thm:Weylcharacter}
Assume that $\mathbf{k}$ is a field, and let $\underline{n} \in \mathbb{Z}_{\geq 0}^{\mathring{I}}$. 
Then, for any $\underline{\lambda} \in X_+(\underline{n})$, the character $\chi(W^e(\underline{\lambda})) \in U_q^-(\mathfrak{g})$ belongs to the dual canonical basis. 
\end{theorem}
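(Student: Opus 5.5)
We reduce the statement to the case of characteristic zero with generic parameter, where Weyl modules coincide with simples, and then identify those simples with dual canonical basis elements.

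\textbf{Step 1: independence of $\mathbf{k}$ and $t$.} First I would show that $\chi(W^e(\underline{\lambda}))$ does not depend on $\mathbf{k}$ or on $t$. Weyl modules of $t$-Schur algebras admit integral forms over $\mathcal{O}$: the $\mathcal{O}$-form of $S_{h,n}$ is defined via the bases $\varphi^u_{\lambda,\mu}$, and it comes with an $\mathcal{O}$-free Weyl module. Using the lattice $L_{\mathcal{O}}^e(\underline{n})$ of Subsection \ref{sub:lattices}, together with the $\mathcal{O}$-form of the Hecke action (the proposition there), I would define $Z^{\underline{h},\underline{n}}_{\mathcal{O}}$ as the intersection of the $(-1)$-eigenspaces. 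I would then set
\[
W^e_{\mathcal{O}}(\underline{\lambda}) = Z^{\underline{h},\underline{n}}_{\mathcal{O}} \otimes_{S_{\underline{h},\underline{n},\mathcal{O}}} W_{\underline{h},\mathcal{O}}(\underline{\lambda}).
\]
The point is to check that this tensor product commutes with base change to any field $E$. One way is to show that $Z_{\mathcal{O}}$ is $\mathcal{O}$-free and that its specialization is $Z_E$; the eigenspace for $y_\lambda$ is given by image formulas ($Z^{\lambda} = L\cdot y_\lambda$ type descriptions), and these are compatible with base change. Alternatively, since $W$ has a presentation as a quotient of $S\varphi(\lambda)$, $\beta(W)$ is a cokernel of a map between the modules $Z^{\mu}$, whose ranks can be controlled. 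Granting this, the graded formal character $\ch_q$ of $W^e_E(\underline{\lambda})$ is independent of $E$ (and of the specialization of $t$). Corollary \ref{cor:formalcharacter} then gives independence of $\chi$. I expect this base-change compatibility to be the main technical obstacle.

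\textbf{Step 2: the generic case.} Over $\mathcal{K}$, or over $\mathbb{C}$ with $t$ not a root of unity (and in other types with characteristic $0$), $S_{\underline{h},\underline{n}}$ is semisimple. Hence $W_{\underline{h}}(\underline{\lambda}) = L_{\underline{h}}(\underline{\lambda})$, and so $W^e(\underline{\lambda}) = L^e(\underline{\lambda})$ is a self-dual simple module, concentrated in degree zero and self-dual by Corollary \ref{cor:imaginarysimple}. It therefore remains to prove that in this situation the character of a self-dual simple module lies in the dual canonical basis.

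\textbf{Step 3: simples and the dual canonical basis.} For $t=1$ and characteristic $0$ in symmetric type this is the Varagnolo--Vasserot / Rouquier theorem. In general I would argue via the crystal and bar-invariance instead. The character $\chi(L^e(\underline{\lambda}))$ is $c$-invariant by Theorem \ref{thm:categorification} (2). I would then show it is unitriangular to the dual canonical basis modulo $q\mathbb{Z}[q]$, using the standard/PBW expansion of Theorem \ref{thm:stratification} restricted to the imaginary stratum. Alternatively, I would specialize $t \to 1$, taking $\mathcal{O}=F[z^{\pm1}]$ with $F$ of characteristic $0$, and use Step 1: at $t=1$ in characteristic zero the result is known by \cite{MR3694676} together with the geometric realization. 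By Step 1 the answer transfers to all $t$ and all $\mathbf{k}$, because $W^e$ has the same character in every specialization.

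So the cleanest route is the following. Show that $\chi(W^e(\underline{\lambda}))$ is constant in $(\mathbf{k},t)$, using integral forms over $F[z^{\pm1}]$ and $\mathbb{Z}$. Specialize to characteristic zero with $t=1$, where $W^e = L^e$ is simple. Conclude by the known fact that simple characters are dual canonical in characteristic $0$ with $t=1$; in nonsymmetric types one needs an additional argument there, such as the one in Step 3.
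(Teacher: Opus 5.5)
Your Steps 1 and 2 follow the paper's own reduction. An $\mathcal{O}$-free Weyl module (as in \cite[Theorem 6.4.3]{MR3589160}, adapted via Subsection~\ref{sub:lattices}) gives $\chi(W^e_{\mathbf{k}}(\underline{\lambda}))=\chi(W^e_{\mathcal{K}}(\underline{\lambda}))$, and over $\mathcal{K}$ one has $W^e=L^e$. One difference: the paper never leaves the characteristic of $\mathbf{k}$, because in $\mathcal{K}=F(z)$ the parameter $t=z$ already has infinite quantum characteristic. Your passage to characteristic $0$ with $t=1$ needs additional $\mathbb{Z}$-forms. That is workable, since at $t=1$ they exist by \cite{MR3589160}. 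In symmetric types your route through the Varagnolo--Vasserot/Rouquier theorem is then a legitimate alternative, though it depends on geometry.

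The genuine gap is Step 3 outside symmetric types. The statement covers $B_N^{(1)}$, $C_N^{(1)}$, $F_4^{(1)}$ and $G_2^{(1)}$, where there is no geometric theorem to quote, and your fallback is not an argument. $c$-invariance of $\chi(L^e(\underline{\lambda}))$ does not by itself place it in the dual canonical basis. Theorem~\ref{thm:stratification} gives no PBW expansion of the imaginary standard modules; it only says that $\{\chi(\Delta^{\preceq}(\omega))\}$ is some basis, $\mathrm{Ext}$-dual to the simples. What is missing is a link between the imaginary stratum and Lusztig's lattice.

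The paper supplies this link through Theorem~\ref{thm:characterformula}, $\chi(\Delta^e(\underline{\lambda}))=S^e_{\underline{\lambda}}$, which is proved without geometry using the $R$-matrix results of Subsection~\ref{sub:Rmatrix} and a comparison with symmetric functions. Then $\Extform{\Delta^e(\boldsymbol{c})}{L^e(\underline{\lambda})}=\delta_{\boldsymbol{c},(0,\underline{\lambda},0)}$ identifies $\chi(L^e(\underline{\lambda}))$ with a dual PBW vector. Unitriangularity of the PBW-to-canonical transition matrix \cite{MR2066942,MR3874704} finishes the proof.

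A lighter way to close your Step 3 is almost orthonormality. In the semisimple case, let $D$ be the matrix of graded multiplicities $[\Delta^e(\underline{\lambda}):L^e(\underline{\mu})]_q$. By Theorem~\ref{thm:heckeaction}(1) and Lemma~\ref{lem:tensorspacehom}(2), $D$ is the identity plus $q$ times a matrix over $\mathbb{Z}[[q]]$. Combining this with $\Extform{\Delta^e(\underline{\lambda})}{L^e(\underline{\mu})}=\delta_{\underline{\lambda},\underline{\mu}}$ and $c$-invariance shows that the Gram matrix of the $\chi(L^e(\underline{\lambda}))$ under $(\cdot,\cdot)$ is the transpose of $D^{-1}$, hence almost orthonormal. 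Lusztig's characterization of the canonical basis up to sign (bar-invariance, integrality, almost orthonormality), transferred to the dual basis, then puts $\chi(L^e(\underline{\lambda}))$ in the dual canonical basis up to a sign, and the sign is easily fixed. As written, your proposal contains neither of these arguments.
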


\begin{proof}
Take $\mathcal{O}$ as in Subsection \ref{sub:lattices}. 
Then, there exists an $\mathscr{S}_{\underline{n},\mathcal{O}}$-module $W_{\mathcal{O}}^e(\underline{\lambda})$ that is free of finite rank as an $\mathcal{O}$-module, 
and whose base change to any field $\mathbf{k}$ coincides with $W_{\mathbf{k}}(\underline{\lambda})$.
In fact, it is proved in \cite[Theorem 6.4.3]{MR3589160} except for type $A_N^{(1)}$, 
and analogous proof applies to type $A_N^{(1)}$ by virtue of the results in Subsection \ref{sub:lattices}. 
Therefore, we deduce that 
\[
\chi(W_{\mathbf{k}}^e(\underline{\lambda})) = \chi(W_{\mathcal{K}}^e(\underline{\lambda})), 
\]
where $\mathcal{K}$ is the fraction field of $\mathcal{O}$. 
Hence, it is enough to prove the theorem when the quantum characteristic of $t$ is infinite, 
which will be proved at the end of next subsection. 
\end{proof}

\begin{remark} \label{rem:character}
Assume that $\mathbf{k}$ is a field.
Let $\underline{n}, \underline{h} \in \mathbb{Z}_{\geq 0}^{\mathring{I}}$ and assume that $h_i \geq n_i$ for all $i \in \mathring{I}$. 
Since $\mathscr{S}_{\underline{n}}$ is graded Morita equivalent to $S_{\underline{h},\underline{n}}$ concentrated in degree zero, 
we have 
\[
[W^e(\underline{\lambda}):L^e(\underline{\mu})]_q = [W_{\underline{h}}(\underline{\lambda}):L^e(\underline{\mu})]
\]
for any $\underline{\lambda}, \underline{\mu} \in X_+(\underline{h},\underline{n})$. 
Note that the Laurent polynomial on the left is actually an integer. 
Hence, the characters of simple modules in the imaginary strata is computed from dual canonical basis and decomposition matrix of the $t$-Schur algebra.
In particular, if the quantum characteristic of $t$ is infinite, 
the decomposition matrix is the identity matrix. 
If the characteristic of $\mathbf{k}$ is zero and $t$ is a primitive $e$-th root of unity, 
the decomposition matrix is described in terms of coefficients of the canonical basis of the Fock space representation of $U_q(\hat{\mathfrak{sl}}_e)$ or certain parabolic Kazhdan-Lusztig polynomials of affine Symmetric groups \cite{MR1722955}. 
\end{remark}

\subsection{Semisimple case}

In this subsection, assume that $\mathbf{k}$ is a field and that the quantum characteristic of $t$ is infinite;
that is, $1 + t + \cdots + t^{e-1} \neq 0$ for any $e \in \mathbb{Z}_{\geq 2}$. 
Under this assumption, it is well known that $S_{\underline{n},\underline{n}}$ is semisimple and is Morita equivalent to $H_{\underline{n}}$ via the functor $f_{\underline{n},\underline{n}}$.  
Hence, Theorem \ref{thm:howe} yields the following result. 

\begin{theorem}
Assume that the quantum characteristic of $t$ is infinite. 
Then, the functor 
\[
\gamma_{\underline{n}} \colon \gmod{\mathscr{S}_{\underline{n}}} \to \gmod{H_{\underline{n}}}
\]
is an equivalence for $\underline{n} \in \mathbb{Z}_{\geq 0}^{\mathring{I}}$.
The quasi-inverse is given by 
\[
X \mapsto L^e(\underline{n}) \otimes_{H_{\underline{n}}} X \ (X \in \gmod{H_{\underline{n}}}). 
\]
\end{theorem}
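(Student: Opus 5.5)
The plan is to derive this directly from Theorem \ref{thm:howe} (2), combined with the semisimplicity of $S_{\underline{h},\underline{n}}$ and the equivalence $f_{\underline{n},\underline{n}}$. I take $\underline{h} = \underline{n}$, so that $h_i \geq n_i$ holds. Theorem \ref{thm:howe} (2) then gives the equivalence $\alpha_{\underline{n},\underline{n}}\colon \gmod{\mathscr{S}_{\underline{n}}} \to \gmod{S_{\underline{n},\underline{n}}}$ together with a natural isomorphism $\gamma_{\underline{n}} \simeq f_{\underline{n},\underline{n}} \circ \alpha_{\underline{n},\underline{n}}$.

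When the quantum characteristic of $t$ is infinite, $S_{\underline{n},\underline{n}}$ is semisimple, and $f_{\underline{n},\underline{n}}(X) = \varphi((1^{n_i})_i)X$ is an equivalence onto $\gmod{H_{\underline{n}}}$. Here $H_{\underline{n}}$ is identified with the corner algebra $\varphi((1^{n_i})_i) S_{\underline{n},\underline{n}} \varphi((1^{n_i})_i)$ via $\kappa^\sharp$. The key point is that every simple $S_{\underline{n},\underline{n}}$-module $L_{\underline{n}}(\underline{\lambda})$ has a nonzero $(1^{n_i})_i$-weight space, i.e.\ the idempotent $\varphi((1^{n_i})_i)$ is full. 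This holds because $(1^{n_i})_i$ lies below every multipartition of $\underline{n}$ in the dominance order, and in the semisimple case $L_{\underline{n}}(\underline{\lambda}) = W_{\underline{n}}(\underline{\lambda})$, whose weights are exactly those of the classical Schur functions. I would cite the standard literature (e.g.\ Donkin's book) for this fact rather than reprove it. Since all gradings are concentrated in degree zero, the graded statement follows from the ungraded one componentwise. Composing the two equivalences shows that $\gamma_{\underline{n}}$ is an equivalence.

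For the quasi-inverse, observe that $\gamma_{\underline{n}} = \Hom_{\mathscr{S}_{\underline{n}}}(L^e(\underline{n}),?)$ is a Hom functor out of the $(\mathscr{S}_{\underline{n}},H_{\underline{n}})$-bimodule $L^e(\underline{n})$. Its left adjoint is therefore $L^e(\underline{n}) \otimes_{H_{\underline{n}}} ?$, and the left adjoint of an equivalence is a quasi-inverse. One should check that the $H_{\underline{n}}$-action on $\Hom_{\mathscr{S}_{\underline{n}}}(L^e(\underline{n}),M)$ used in Theorem \ref{thm:howe} is exactly the one induced by the right action on $L^e(\underline{n})$, built from the anti-automorphism fixing the $T_k$. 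This holds by construction, and it makes the adjunction valid as stated.

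The only potentially delicate step is the compatibility of the identification $H_{\underline{n}} \simeq \varphi((1^{n_i})_i)S\varphi((1^{n_i})_i)$ via $\kappa^\sharp$ (rather than $\kappa$) with the bimodule structure. Since Theorem \ref{thm:howe} already packages this in its commutative diagram, I expect no real obstacle, and the proof should be short.
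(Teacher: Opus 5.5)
Your proposal is correct and takes the same route as the paper. The paper also applies Theorem \ref{thm:howe} (2) with $\underline{h}=\underline{n}$ and uses the well-known fact that, when the quantum characteristic of $t$ is infinite, $S_{\underline{n},\underline{n}}$ is semisimple and $f_{\underline{n},\underline{n}}$ is an equivalence; your fullness argument for $\varphi((1^{n_i})_i)$ and your identification of the quasi-inverse as the left adjoint $L^e(\underline{n})\otimes_{H_{\underline{n}}}?$ spell out steps the paper leaves implicit.
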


For multipartitions $\underline{\lambda}$ of $\underline{m}$, $\underline{\mu}$ of $\underline{n}$ and $\underline{\nu}$ of $\underline{m} + \underline{n}$, 
we define 
\[
c_{\underline{\lambda}, \underline{\mu}}^{\underline{\nu}} = \prod_{i \in \mathring{I}} c_{\lambda^{(i)}, \mu^{(i)}}^{\nu^{(i)}}, \index{$c_{\underline{\lambda}, \underline{\mu}}^{\underline{\nu}}$}
\]
where each $c_{\lambda^{(i)}, \mu^{(i)}}^{\nu^{(i)}}$ is the Littlewood-Richardson coefficient. 

\begin{corollary}[{cf.\ \cite[Theorem 19.6]{MR3694676}}] \label{cor:LRrule}
Assume that the quantum characteristic of $t$ is infinite. 
 For multipartitions $\underline{\lambda}$ of $\underline{m}$ and $\underline{\mu}$ of $\underline{n}$, we have a decomposition
 \[
 L^e(\underline{\lambda}) \circ L^e(\underline{\mu}) \simeq \bigoplus_{\text{$\underline{\nu}$: a multipartition of $\underline{m}+\underline{n}$}} L^e(\underline{\nu})^{\oplus c_{\underline{\lambda}, \underline{\mu}}^{\underline{\nu}}}.   
 \]
 For $m,n \in \mathbb{Z}_{\geq 0}$ and a multipartition $\underline{\lambda}$ of $m+n$, we have a decomposition 
 \[ 
 \Res_{m\delta,n\delta} L^e(\underline{\lambda}) \simeq \bigoplus_{\text{$\underline{\mu}$ (resp.\ $\underline{\nu}$): a multipartition of $m$ (resp.\ $n$)}} (L^e(\underline{\mu}) \otimes L^e(\underline{\nu}))^{\oplus c_{\underline{\mu}, \underline{\nu}}^{\underline{\lambda}}}. 
 \]
\end{corollary}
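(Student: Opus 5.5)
The plan is to transport both decompositions through the equivalence $\gamma_{\underline{n}}$ of the preceding theorem and reduce them to the classical branching rules for Hecke algebras in the semisimple case. Take $M' = L^e(\underline{\lambda}) \in \gmod{\mathscr{S}_{\underline{m}}}$ and $M'' = L^e(\underline{\mu}) \in \gmod{\mathscr{S}_{\underline{n}}}$. By Theorem \ref{thm:monoidal} (1), $L^e(\underline{\lambda}) \circ L^e(\underline{\mu})$ lies in $\gmod{\mathscr{S}_{\underline{m}+\underline{n}}}$. Theorem \ref{thm:monoidal} (3) then gives
\[
\gamma_{\underline{m}+\underline{n}}(L^e(\underline{\lambda}) \circ L^e(\underline{\mu})) \simeq \Ind_{H_{\underline{m}}\otimes H_{\underline{n}}}^{H_{\underline{m}+\underline{n}}}(\gamma_{\underline{m}}(L^e(\underline{\lambda})) \otimes \gamma_{\underline{n}}(L^e(\underline{\mu}))).
\]
Since $f_{\underline{n},\underline{n}}$ is an equivalence and the diagram of Theorem \ref{thm:howe} commutes, $\gamma$ sends $L^e(\underline{\lambda})$ to the Specht module $Sp(\underline{\lambda})$. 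The right-hand side is therefore a tensor product over $i\in\mathring{I}$ of $\Ind_{H_{m_i}\otimes H_{n_i}}^{H_{m_i+n_i}}(Sp(\lambda^{(i)})\otimes Sp(\mu^{(i)}))$.

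In the semisimple case, each such induced module decomposes by the Littlewood--Richardson rule, exactly as for the symmetric group. To justify this, I would use the Tits deformation argument, or the fact that generic Hecke algebra characters specialize to those of $\mathfrak{S}_n$. Concretely, characters of semisimple $H_n(t)$ are determined by the specialization from $\mathbb{Z}[t^{\pm1}]$, and induction commutes with this. Applying the quasi-inverse of $\gamma$ gives the first decomposition, and since everything is semisimple it is a direct sum. Gradings cause no trouble: the modules are concentrated in degree zero on the Hecke side, and the $L^e(\underline{\nu})$ are self-dual.

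For the restriction, the first step is to confirm that $\Res_{m\delta,n\delta}L^e(\underline{\lambda})$ lies in $\bigoplus_{\underline{m}',\underline{n}'}\gmod{(\mathscr{S}_{\underline{m}'}\otimes\mathscr{S}_{\underline{n}'})}$. By Proposition \ref{prop:imaginaryind} (3) it lies in $\gMod{(R^e(m\delta)\otimes R^e(n\delta))}$. Its composition factors are of the form $L^e(\underline{\mu})\otimes L^e(\underline{\nu})$ up to shift, by Corollary \ref{cor:imaginarysimple}. I expect the main obstacle to be semisimplicity of this restriction, together with the absence of grading shifts.

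For multiplicities I would use adjunction:
\[
\hom(L^e(\underline{\mu})\circ L^e(\underline{\nu}), L^e(\underline{\lambda})) \simeq \hom(L^e(\underline{\mu})\otimes L^e(\underline{\nu}), \Res L^e(\underline{\lambda})).
\]
By the first part, the left side has dimension $c^{\underline{\lambda}}_{\underline{\mu},\underline{\nu}}$, and in degree zero only. Running the same argument with the coinduction adjunction, using $\Coind \simeq$ the opposite convolution (with $(\delta,\delta)=0$, so no shift) and the first part for the swapped product, gives the same count for the socle.

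To get full semisimplicity, I would note that $\Res$ commutes with duality $D$ and $L^e(\underline{\lambda})$ is self-dual. I would then compare the graded characters, which are determined via the Mackey-type identity $[\Res_{m\delta,n\delta}(L^e(\underline{n}))]$ from Proposition \ref{prop:imaginaryMackey} and the equivalence. The total $[\Res L^e(\underline{\lambda})]$ should equal $\sum c\,[L^e(\underline{\mu})\otimes L^e(\underline{\nu})]$. Since the head already has that many copies, the head equals the whole module, so the restriction is semisimple. If the character comparison is awkward, the fallback is to realize $\Res$ on the Hecke side as restriction of $\Hom(L^e(\underline{n}),-)$ via the Mackey decomposition, and to use semisimplicity of $H_{\underline{m}'}\otimes H_{\underline{n}'}$.
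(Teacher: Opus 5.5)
Your argument for the induction decomposition matches how the paper obtains it. The corollary is stated without proof right after the semisimple equivalence $\gamma_{\underline n}$, as a consequence of it and Theorem \ref{thm:monoidal} (3). The steps are:
\begin{enumerate}
\item transport $\circ$ to Hecke induction by Theorem \ref{thm:monoidal} (3);
\item identify $\gamma(L^e(\underline\lambda))$ with $Sp(\underline\lambda)$ through the diagram of Theorem \ref{thm:howe};
\item apply the Littlewood--Richardson rule for semisimple Hecke algebras. A possible relabelling by conjugate partitions coming from $\sharp$ is harmless, since $c^{\nu'}_{\lambda',\mu'}=c^{\nu}_{\lambda,\mu}$.
\end{enumerate}
For the restriction, you correctly see that adjunction alone only controls socle and head. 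You have the labels swapped, which does not matter: the $\Ind$--$\Res$ adjunction counts the socle of $\Res_{m\delta,n\delta}L^e(\underline\lambda)$, and the coinduction adjunction counts the head.

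The step that is not yet an argument is the character comparison, where you only assert that $[\Res L^e(\underline\lambda)]$ ``should equal'' $\sum c\,[L^e(\underline\mu)\otimes L^e(\underline\nu)]$. There are two problems with it as written.
\begin{itemize}
\item The Mackey filtration only computes the class of $\Res_{m\delta,n\delta}L^e(\underline l)$, where $\underline l$ is the content of $\underline\lambda$. It does not give the class of an individual $\Res_{m\delta,n\delta}L^e(\underline\lambda)$.
\item Proposition \ref{prop:imaginaryMackey} concerns $\Res_{\delta^n}$. You need the analogous shuffle filtration for $\Res_{m\delta,n\delta}$, proved exactly as in Lemma \ref{lem:Mackey}. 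For each splitting $\underline l'+\underline l''=\underline l$, its subquotients are unshifted copies of $L^e(\underline l')\otimes L^e(\underline l'')$ with multiplicity $\prod_i\binom{l_i}{l'_i}$.
\end{itemize}

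You can close this with a squeeze.
\begin{enumerate}
\item Since $\gamma_{\underline l}(L^e(\underline l))\simeq H_{\underline l}$, you have $L^e(\underline l)\simeq\bigoplus_{\underline\lambda}L^e(\underline\lambda)^{\oplus\dim Sp(\underline\lambda)}$.
\item Your socle count shows that the constant term of $[\Res L^e(\underline\lambda):L^e(\underline\mu)\otimes L^e(\underline\nu)]_q$ is at least $c^{\underline\lambda}_{\underline\mu,\underline\nu}$.
\item Summing these lower bounds with weights $\dim Sp(\underline\lambda)$ already exhausts the Mackey total, because $\sum_{\underline\lambda}\dim Sp(\underline\lambda)\,c^{\underline\lambda}_{\underline\mu,\underline\nu}=\prod_i\binom{l_i}{l'_i}\dim Sp(\underline\mu)\dim Sp(\underline\nu)$. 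Both sides are the dimension of the induced Hecke module from your first part.
\item Hence every inequality is an equality, there are no shifted or extra composition factors, and the socle is the whole module.
\end{enumerate}

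Alternatively, you can get semisimplicity directly. All composition factors of $\Res L^e(\underline\lambda)$ are unshifted, being a summand of $\Res L^e(\underline l)$. Degree-zero $\ext^1$ between the modules $L^e(\underline\mu)\otimes L^e(\underline\nu)$ vanishes by the argument of Lemma \ref{lem:semisimplicity}. That argument uses $\Delta^e(\underline\mu)\otimes\Delta^e(\underline\nu)$ as projective cover, via Proposition \ref{prop:semisimpledelta} (which does not depend on this corollary), together with Lemma \ref{lem:tensorspacehom}.

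Finally, you announce that you will confirm the restriction is an $\mathscr{S}_{\underline l'}\otimes\mathscr{S}_{\underline l''}$-module, but you never show this. You also do not need it.
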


Recall that $\Delta^e(\underline{n})$ is a right $H_{\underline{n}}$-module. 
For a multipartition $\underline{\lambda}$ of $\underline{n}$, we define 
\[
\Delta^e(\underline{\lambda}) = \Delta^e(\underline{n}) \otimes_{H_{\underline{n}}} Sp(\underline{\lambda}), 
\]
where $Sp(\underline{\lambda}) = f_{\underline{n},\underline{n}}(L_{\underline{n}}(\underline{\lambda}))$ is the simple Specht module of $H_{\underline{n}}$. 

\begin{remark} \label{rem:specht}
Let $\underline{n} \in \mathbb{Z}_{\geq 0}^{\mathring{I}}$. 
If $\lambda^{(i)} = (n_i)$ for all $i \in \mathring{I}$, 
then $Sp(\underline{\lambda}) \simeq H_{\underline{n}}y_{\underline{n}}$, 
where 
\[
y_{\underline{n}} = y_{n_1} \otimes \cdots \otimes y_{n_N}.
\] 
If $\lambda^{(i)} = (1^{n_i})$ for all $i \in \mathring{I}$, 
then $Sp(\underline{\lambda}) \simeq H_{\underline{n}}x_{\underline{n}}$, 
where 
\[
x_{\underline{n}} = x_{n_1}\otimes \cdots \otimes x_{n_N}, \ x_{n_i} = \sum_{w \in \mathfrak{S}_{n_i}} T_w \in H_{\lambda^{(i)}}. 
\] 
\end{remark}

\begin{corollary} \label{cor:LRrule2}
  Assume that the quantum characteristic of $t$ is infinite. 
 For multipartitions $\underline{\lambda}$ of $\underline{m}$ and $\underline{\mu}$ of $\underline{n}$, we have a decomposition
 \[
 \Delta^e(\underline{\lambda}) \circ \Delta^e(\underline{\mu}) \simeq \bigoplus_{\text{$\underline{\nu}$: a multipartition of $\underline{m}+\underline{n}$}} \Delta^e(\underline{\nu})^{\oplus c_{\underline{\lambda}, \underline{\mu}}^{\underline{\nu}}}.   
 \]
 For $m,n \in \mathbb{Z}_{\geq 0}$ and a multipartition $\underline{\lambda}$ of $m+n$, we have a decomposition 
 \[ 
 \Res_{m\delta,n\delta} \Delta^e(\underline{\lambda}) \simeq \bigoplus_{\text{$\underline{\mu}$ (resp.\ $\underline{\nu}$): a multipartition of $m$ (resp.\ $n$)}} (\Delta^e(\underline{\mu}) \otimes \Delta^e(\underline{\nu}))^{\oplus c_{\underline{\mu}, \underline{\nu}}^{\underline{\lambda}}}. 
 \]
\end{corollary}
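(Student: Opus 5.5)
The plan is to deduce both decompositions from Corollary \ref{cor:LRrule}, using the projectivity of the $\Delta$'s together with the identification of their tops. Write $P(\underline{\lambda}) := \Delta^e(\underline{\lambda})$ and work inside $\gMod{R^e}$.

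\textbf{Step 1: $\Delta^e(\underline{\lambda})$ is the projective cover of $L^e(\underline{\lambda})$.} Since $t$ has infinite quantum characteristic, $H_{\underline{n}}$ is semisimple. Hence
\[
\Delta^e(\underline{n}) \simeq \bigoplus_{\underline{\lambda}} \Delta^e(\underline{\lambda}) \otimes Sp(\underline{\lambda})^*
\]
as a module over $R^e \otimes H_{\underline{n}}$, and each $\Delta^e(\underline{\lambda})$ is a direct summand of $\Delta^e(\underline{n})$. By Proposition \ref{prop:imaginaryind}(2), $\Delta^e(\underline{n})$ is projective in $\gMod{R^e(n\delta)}$, so each $\Delta^e(\underline{\lambda})$ is projective as well.

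Next we identify the head. Lemma \ref{lem:tensorspacehom}(3),(4) gives $\hom(\Delta^e(\underline{n}), L) = \hom(L^e(\underline{n}), L)$ for every simple $L$, compatibly with the Hecke actions. Therefore
\[
\hom(\Delta^e(\underline{\lambda}), L) \simeq \hom_{H}\bigl(Sp(\underline{\lambda}), \gamma_{\underline{n}}(L)\bigr)
\]
for $L$ in $\gmod{\mathscr{S}_{\underline{n}}}$, and this vanishes for the other simple $R^e(n\delta)$-modules. Since $\gamma_{\underline{n}}$ is an equivalence sending $L^e(\underline{\mu})$ to $Sp(\underline{\mu})$, the head is exactly $L^e(\underline{\lambda})$, appearing once, and $\Delta^e(\underline{\lambda})$ is indecomposable. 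Grading shifts do not arise, because $\End(\Delta^e(\underline{n}))$ has nothing in negative degree (Theorem \ref{thm:heckeaction}(1)) and the degree-zero part is $H_{\underline{n}}$.

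\textbf{Step 2: the induction statement.} By Proposition \ref{prop:imaginaryind}(2), $\Delta^e(\underline{\lambda})\circ\Delta^e(\underline{\mu})$ is projective in $\gMod{R^e}$, so it is a direct sum of shifted projective covers $\Delta^e(\underline{\nu})$. The multiplicity of $q^d\Delta^e(\underline{\nu})$ equals $\dim\hom(q^d\Delta^e(\underline{\lambda})\circ\Delta^e(\underline{\mu}), L^e(\underline{\nu}))$. By adjunction this is
\[
\dim\hom\bigl(q^d\Delta^e(\underline{\lambda})\otimes\Delta^e(\underline{\mu}), \Res_{m\delta,n\delta}L^e(\underline{\nu})\bigr).
\]
Corollary \ref{cor:LRrule} says this restriction is semisimple with multiplicities $c^{\underline{\nu}}_{\underline{\lambda},\underline{\mu}}$ in degree $0$. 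Combined with the head computation of Step 1, this gives exactly the claimed multiplicity with $d=0$. One point needs care here. A priori the restriction in Corollary \ref{cor:LRrule} involves all multipartitions of $m$ and $n$, while $\Delta^e(\underline{\lambda})\otimes\Delta^e(\underline{\mu})$ covers only the summand $L^e(\underline{\lambda})\otimes L^e(\underline{\mu})$. Once we restrict to the components indexed by $\underline{m},\underline{n}$, the coefficient $c$ forces $\underline{\nu}\vdash\underline{m}+\underline{n}$.

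\textbf{Step 3: the restriction statement.} By Proposition \ref{prop:imaginaryind}(4), $\Res_{m\delta,n\delta}\Delta^e(\underline{\lambda})$ is projective over $R^e(m\delta)\otimes R^e(n\delta)$. Its indecomposable projectives are $\Delta^e(\underline{\mu})\otimes\Delta^e(\underline{\nu})$; for these to be the projective covers we use Step 1 together with the fact that over a field a tensor product of projective covers of absolutely simple modules is again a projective cover. The multiplicity of $q^d\Delta^e(\underline{\mu})\otimes\Delta^e(\underline{\nu})$ is
\[
\dim\hom\bigl(q^d\Delta^e(\underline{\mu})\otimes\Delta^e(\underline{\nu}), \Res\,\Delta^e(\underline{\lambda})\bigr),
\]
or dually the multiplicity of $L^e(\underline{\mu})\otimes L^e(\underline{\nu})$ in the head. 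To compute it, I would use the coinduction adjunction $(\Res,\Coind)$ with $\Coind(X\otimes Y)\simeq Y\circ X$ (the shift $q^{(m\delta,n\delta)}$ is trivial):
\[
\hom(\Res\,\Delta^e(\underline{\lambda}), L^e(\underline{\mu})\otimes L^e(\underline{\nu})) = \hom(\Delta^e(\underline{\lambda}), L^e(\underline{\nu})\circ L^e(\underline{\mu})).
\]
This equals $[L^e(\underline{\nu})\circ L^e(\underline{\mu}) : L^e(\underline{\lambda})]$ in degree $0$, which is $c^{\underline{\lambda}}_{\underline{\nu},\underline{\mu}} = c^{\underline{\lambda}}_{\underline{\mu},\underline{\nu}}$ by Corollary \ref{cor:LRrule}, since Littlewood–Richardson coefficients are symmetric.

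The main obstacle is Step 1: checking rigorously that the summand $\Delta^e(\underline{n})\otimes_{H}Sp(\underline{\lambda})$ has simple head $L^e(\underline{\lambda})$ without extraneous grading shifts. This requires transporting the isomorphisms of Lemma \ref{lem:tensorspacehom}(3) equivariantly for the $H_{\underline{n}}$-actions, which is exactly how those actions were defined in Theorem \ref{thm:heckeaction}.
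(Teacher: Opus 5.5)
Your proof is correct, but it takes a different route from the one suggested by the paper's framework. The paper records this corollary without a written argument, immediately after defining $\Delta^e(\underline{\lambda})=\Delta^e(\underline{n})\otimes_{H_{\underline{n}}}Sp(\underline{\lambda})$ and before Proposition \ref{prop:semisimpledelta}.

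The natural derivation there is the $\Delta$-analogue of how Corollary \ref{cor:LRrule} follows from Theorem \ref{thm:monoidal}(3). Theorem \ref{thm:heckeaction}(3) identifies $\Delta^e(\underline{m})\circ\Delta^e(\underline{n})$ with $\Delta^e(\underline{m}+\underline{n})$ as right $H_{\underline{m}}\otimes H_{\underline{n}}$-modules via $\iota_{\underline{m},\underline{n}}$. Hence $\Delta^e(\underline{\lambda})\circ\Delta^e(\underline{\mu})\simeq\Delta^e(\underline{m}+\underline{n})\otimes_{H_{\underline{m}+\underline{n}}}\Ind_{H_{\underline{m}}\otimes H_{\underline{n}}}^{H_{\underline{m}+\underline{n}}}(Sp(\underline{\lambda})\otimes Sp(\underline{\mu}))$, and the Littlewood--Richardson rule for the semisimple Hecke algebra finishes.

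You instead first show that $\Delta^e(\underline{\lambda})$ is the projective cover of $L^e(\underline{\lambda})$. Your Step 1 is essentially the paper's proof of Proposition \ref{prop:semisimpledelta}, which does not use this corollary, so there is no circularity. You then read off both decompositions from heads, using Krull--Schmidt for finitely generated graded projectives, Frobenius reciprocity, Corollary \ref{cor:imaginarysimple} and Corollary \ref{cor:LRrule}.

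The two routes trade off as follows.
\begin{itemize}
\item Your route pays off mainly for the restriction formula. The $(\Res,\Coind)$ adjunction turns it into the induction formula for the simple modules. The Hecke-module route would instead need an $H_{\underline{k}}$-equivariant Mackey decomposition of $\Res_{m\delta,n\delta}\Delta^e(\underline{k})$, which the paper never writes down.
\item The Hecke route gives explicit, functorial isomorphisms. It needs neither Krull--Schmidt nor the identification of the indecomposable projectives of $R^e$.
\end{itemize}

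Two small repairs are needed.

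First, in Step 3 the number of summands isomorphic to $q^d\Delta^e(\underline{\mu})\otimes\Delta^e(\underline{\nu})$ is $\dim\hom(\Res_{m\delta,n\delta}\Delta^e(\underline{\lambda}),q^dL^e(\underline{\mu})\otimes L^e(\underline{\nu}))$. The other expression you wrote, $\dim\hom(q^d\Delta^e(\underline{\mu})\otimes\Delta^e(\underline{\nu}),\Res_{m\delta,n\delta}\Delta^e(\underline{\lambda}))$, counts composition factors and is nonzero for many $d>0$. Since the head version is the one you actually compute, nothing breaks.

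Second, in Step 1 the absence of grading shifts does not follow from Theorem \ref{thm:heckeaction}(1). What you need is that $\Hom_R(\Delta^e(\underline{n}),L^e(\underline{k}))$ is concentrated in degree zero for all $\underline{k}$, which follows from adjunction and Proposition \ref{prop:imaginaryMackey}. Also, to apply Lemma \ref{lem:tensorspacehom}(3),(4) to an arbitrary simple $L$, you should note that in the semisimple case every $L^e(\underline{\mu})$ is a direct summand of some $L^e(\underline{k})$.
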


\begin{proposition} \label{prop:semisimpledelta}
Assume that the quantum characteristic of $t$ is infinite. 
For a multipartition $\underline{\lambda}$ of $\underline{n}$, 
the module $\Delta^e(\underline{\lambda})$ is a projective cover of the simple module $L^e(\underline{\lambda})$ in $\gMod{R^e}$. 
\end{proposition}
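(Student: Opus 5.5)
The plan is to show three things in turn: $\Delta^e(\underline{\lambda})$ is projective in $\gMod{R^e}$, its head is $L^e(\underline{\lambda})$, and it is indecomposable with that simple head.

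\emph{Projectivity.} By Proposition \ref{prop:imaginaryind} (2), $\Delta^e(\underline{n})$ is a projective $R^e(n\delta)$-module, where $n=\lvert\underline{n}\rvert$. Under the quantum characteristic assumption, $H_{\underline{n}}$ is semisimple. Hence $Sp(\underline{\lambda})$ is a direct summand of $H_{\underline{n}}$, realized as $H_{\underline{n}}e$ for a primitive idempotent $e\in H_{\underline{n}}$. Then
\[
\Delta^e(\underline{\lambda})\simeq \Delta^e(\underline{n})\otimes_{H_{\underline{n}}}H_{\underline{n}}e\simeq \Delta^e(\underline{n})e
\]
is a direct summand of $\Delta^e(\underline{n})$, so it is projective in $\gMod{R^e}$.

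\emph{Head.} For a simple $L$ in $\gMod{R^e(n\delta)}$ I compute $\hom_R(\Delta^e(\underline{n})e,q^dL)$. By Lemma \ref{lem:tensorspacehom} (3), $\hom_R(\Delta^e(\underline{n}),M)$ equals $\hom_R(L^e(\underline{n}),M)$ for $M$ with composition factors in the relevant subcategory. More concretely, every morphism from the projective $\Delta^e(\underline{n})$ to a simple module factors through its head. I would establish that $\hd\Delta^e(\underline{n})\simeq L^e(\underline{n})$ (up to the Ext-vanishing subtlety), compatibly with the $H_{\underline{n}}$-actions of Theorem \ref{thm:heckeaction} (3). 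This requires that the surjection $\Delta^e(\underline{n})\to L^e(\underline{n})$ be $H_{\underline{n}}$-equivariant, which holds because the two $\grend$ identifications are compatible via Lemma \ref{lem:tensorspacehom} (4). Applying $-\otimes_{H_{\underline{n}}}Sp(\underline{\lambda})$ to this surjection gives a surjection $\Delta^e(\underline{\lambda})\to L^e(\underline{n})\otimes_{H_{\underline{n}}}Sp(\underline{\lambda})\simeq L^e(\underline{\lambda})$, using the preceding theorem for the last isomorphism.

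\emph{Radical and degree control.} The main obstacle is showing that the kernel of this surjection lies in the radical, i.e.\ that
\[
\dim\hom_R(\Delta^e(\underline{\lambda}),q^dL^e(\underline{\mu}))=\delta_{d,0}\delta_{\underline{\lambda},\underline{\mu}}.
\]
First, $\hom_R(\Delta^e(\underline{n})e,L^e(\underline{\mu}))$ vanishes unless $\underline{\mu}$ is a multipartition of $\underline{n}$, by Corollary \ref{cor:ff}. For $\underline{\mu}$ of $\underline{n}$, it is $e$ applied to $\hom_R(\Delta^e(\underline{n}),L^e(\underline{\mu}))$. Since $L^e(\underline{\mu})$ is a quotient of $L^e(\underline{n})$ in the semisimple category $\gmod{\mathscr S_{\underline{n}}}$, Lemma \ref{lem:tensorspacehom} (1) and (3) identify this space with $\Hom_{\mathscr S_{\underline{n}}}(L^e(\underline{n}),L^e(\underline{\mu}))=\gamma_{\underline{n}}(L^e(\underline{\mu}))\simeq Sp(\underline{\mu})$, concentrated in degree $0$. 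Applying $e$ yields a space of dimension $\delta_{\underline{\lambda},\underline{\mu}}$.

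Degree shifts $d\neq0$ are excluded because, by Lemma \ref{lem:tensorspacehom} (3), $\hom_R(q^d\Delta^e(\underline{n}),L^e(\underline{n}))$ vanishes for $d\ne 0$, since the graded Hom between the $L^e$'s is concentrated in degree zero. Hence $\Delta^e(\underline{\lambda})$ has simple head $L^e(\underline{\lambda})$. A projective module with simple head is the projective cover of that head, which gives the proposition.
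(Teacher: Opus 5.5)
Your proposal follows the paper's proof: $\Delta^e(\underline{\lambda})\simeq\Delta^e(\underline{n})e$ is a direct summand of the projective module $\Delta^e(\underline{n})$, and $\dim_q\Hom_R(\Delta^e(\underline{\lambda}),L^e(\underline{\mu}))=\delta_{\underline{\lambda},\underline{\mu}}$ for every multipartition $\underline{\mu}$ of $n$ follows from Lemma \ref{lem:tensorspacehom}, Theorem \ref{thm:heckeaction} and the semisimplicity of $H_{\underline{n}}$ (your ``Head'' paragraph is not needed). One justification needs fixing: Corollary \ref{cor:ff} does not apply to $\Delta^e(\underline{n})e$, since that module is not an $\mathscr{S}_{\underline{n}}$-module, so for $\underline{\mu}$ a multipartition of $\underline{n}'\neq\underline{n}$ argue as the paper does instead, using $\Hom_R(\Delta^e(\underline{n}),L^e(\underline{n}'))=0$ (Lemma \ref{lem:tensorspacehom}) and the fact that $L^e(\underline{\mu})$ is a direct summand of the semisimple module $L^e(\underline{n}')$, with Corollary \ref{cor:imaginarysimple} ensuring these $L^e(\underline{\mu})$ exhaust the simple modules.
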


\begin{proof}
By definition, $\Delta^e(\underline{\lambda})$ is a direct summand of $\Delta^e(\underline{n})$, 
which is a projective $R^e(\lvert \underline{n} \rvert)$-module by Proposition \ref{prop:imaginaryind} (2). 
Hence, $\Delta^e(\underline{\lambda})$ is a projective module. 

Put $\lvert \underline{n} \rvert = n$. 
By Lemma \ref{lem:tensorspacehom} and Theorem \ref{thm:heckeaction}, $\Hom_R(\Delta^e(\underline{n}),L^e(\underline{n}))$ is concentrated in degree zero and is isomorphic to $H_{\underline{n}}$. 
Moreover, for any $\underline{n'} \in \mathbb{Z}_{\geq 0}^{\mathring{I}}$ such that $\underline{n'} \neq \underline{n}$,  
$\Hom_R(\Delta^e(\underline{n}),L^e(\underline{n'}))$ is $0$. 
Hence, we have 
\[
\dim_q \Hom_R(\Delta^e(\underline{\lambda}),L^e(\underline{\mu})) = \delta_{\underline{\lambda},\underline{\mu}}
\]
for any multipartition $\underline{\mu}$ of $n$. 
Since  every self-dual simple $R^e(n\delta)$-module is of the form $L^e(\underline{\mu})$ by Corollary \ref{cor:imaginarysimple}, 
$\Delta^e(\underline{\lambda})$ is a projective cover of $L^e(\underline{\lambda})$. 
\end{proof}

Recall the imaginary root vector $S_{i,\lambda}^e$ from Subsection \ref{sub:affinePBW}. 
For a multipartition $\underline{\lambda}$, we write 
\[
S_{\underline{\lambda}}^e = \prod_{i \in \mathring{I}} S_{i,\lambda^{(i)}}^e. 
\]

The following theorem is a generalization of \cite[Theorem 23.4]{MR3694676}. 
Its proof is provided in Subsection \ref{sub:character}. 

\begin{theorem} \label{thm:characterformula}
Assume that the quantum characteristic of $t$ is infinite. 
For a multipartition $\underline{\lambda}$, we have 
\[
\chi(\Delta^e(\underline{\lambda})) = S_{\underline{\lambda}}^e. 
\]
\end{theorem}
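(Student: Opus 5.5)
We plan to follow McNamara's strategy \cite{MR3694676}, replacing his geometric input by the Hecke-algebra structure of Theorem \ref{thm:heckeaction} and by $R$-matrix arguments.

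\textbf{Reduction to a single colour.} For a multipartition $\underline{\lambda}$ we have $\Delta^e(\underline{\lambda})\simeq \Delta^e_1(\lambda^{(1)})\circ\cdots\circ\Delta^e_N(\lambda^{(N)})$. This follows from the factorization $H_{\underline{n}}=\bigotimes_i H_{n_i}$, from $Sp(\underline{\lambda})=\bigotimes_i Sp(\lambda^{(i)})$, and from the compatibility (\ref{eq:hom}) of convolution with $\iota_{\underline{m},\underline{n}}$. Since $\chi$ is multiplicative and $S^e_{\underline{\lambda}}=\prod_i S^e_{i,\lambda^{(i)}}$, it suffices to prove $\chi(\Delta^e_i(\lambda))=S^e_{i,\lambda}$ for a fixed $i$ and a partition $\lambda$ of $n$.

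\textbf{The row case.} Next we treat the one-row case $\lambda=(n)$ and aim to show $\chi(\Delta^e_i((n)))=P^e_{i,n}$. We will produce a module-level analogue of the recursion $[k]_iP_{i,k}=\sum_s q_i^{k-s}\psi_{i,s}P_{i,k-s}$. By Proposition \ref{prop:psimodule} and Theorem \ref{thm:stratification}, the class $\psi^e_{i,s}$ is the character of the cokernel $C_s$ of the injection $q_i^2R(\alpha_i)\circ\Delta^e(s\delta-\alpha_i)\to\Delta^e(s\delta-\alpha_i)\circ R(\alpha_i)$, and $C_s$ is a projective $R^e(s\delta)$-module. In the semisimple setting the projective $R^e(s\delta)$-modules are sums of the $\Delta^e(\underline{\mu})$ of Proposition \ref{prop:semisimpledelta}. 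So we will determine $C_s$ by computing $\dim_q\Hom(C_s,L^e(\underline{\mu}))$. Via the adjunction and the cuspidal structure of $\Res_{s\delta-\alpha_i,\alpha_i}L^e(\underline{\mu})$, we expect this to be nonzero only for hook partitions $\mu=(s-a,1^a)$ of colour $i$, each with multiplicity $q^{2a}$ or $(-q)^a$. This yields $\psi^e_{i,s}=\sum_a(\text{coefficient})\,\chi(\Delta^e_i((s-a,1^a)))$, the quantum analogue of the power-sum-in-hooks identity. Combined with Corollary \ref{cor:LRrule2} (the Pieri rule at the module level), an induction on $n$ then reproduces Newton's identity relating $P_{i,k}$ and $\psi_{i,s}$, and hence gives $\chi(\Delta^e_i((n)))=P^e_{i,n}$ together with the hook formulas.

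\textbf{General $\lambda$.} Once the row case is done, we argue on the representation-ring level. By Corollary \ref{cor:LRrule2}, the map $Sp(\lambda)\mapsto\chi(\Delta^e_i(\lambda))$ sends the induction product to multiplication in $U_q^-$, and the $\chi(\Delta^e_i(\lambda))$ commute (Theorem \ref{thm:onecolor}). So $\lambda\mapsto\chi(\Delta^e_i(\lambda))$ is a ring homomorphism from the ring of symmetric functions, with $s_\lambda\mapsto\chi(\Delta^e_i(\lambda))$, and it sends $h_n\mapsto P^e_{i,n}$ by the row case. Jacobi--Trudi, $s_\lambda=\det(h_{\lambda_a-a+b})$, then gives $\chi(\Delta^e_i(\lambda))=\det(P^e_{i,\lambda_a-a+b})=S^e_{i,\lambda}$.

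\textbf{Main obstacle.} The hard step is identifying $C_s$, i.e.\ computing the graded multiplicities $\dim_q\Hom(C_s,L^e_i(\mu))$. McNamara uses \cite[Lemma 7.5]{MR3694676} here. We would instead compute $\Res_{s\delta-\alpha_i,\alpha_i}L^e_i(\mu)$ using $R$-matrix theory \cite{MR3314831,MR3790066}: the degree of the renormalized $R$-matrix between $L(i)$ and $L^e(s\delta-\alpha_i)$, and the fact that the head of their convolution is simple, should pin down the grading shifts. Alternatively, we would first fix the ungraded multiplicities using the $t=1$ specialization via the $\mathcal{O}$-forms of Subsection \ref{sub:lattices}, and then recover the grading from the almost-orthogonality $\dim_q\End\in 1+q\mathbb{Q}[q]$ (Lemma \ref{lem:tensorspacehom}) together with bar-invariance properties of $\psi_{i,s}$.
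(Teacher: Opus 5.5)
Your reduction to one colour and your Jacobi--Trudi step are fine. The gap is in the row case, at the step you yourself flag as the main obstacle, and it is a real obstruction rather than a hard computation: the decomposition of $C_s$ you aim for does not exist.

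\textbf{Why the row case fails.} From the recursion defining $P^e_{i,k}$ and Jacobi--Trudi one gets the purely algebraic identity
\[
\psi^e_{i,s}=\sum_{a=0}^{s-1}(-1)^a q_i^{2a+1-s}\,S^e_{i,(s-a,1^a)} .
\]
This is a $q$-analogue of $p_s=\sum_a(-1)^a s_{(s-a,1^a)}$. So your row case must prove this expansion with $S^e_{i,\mu}$ replaced by $\chi(\Delta^e_i(\mu))$, and that expansion has negative coefficients for every $s\ge 2$. Such an expansion cannot be read off from $\dim_q\Hom_R(C_s,L^e(\underline{\mu}))$. Note also that a ``multiplicity'' $(-q)^a$ is impossible for direct summands. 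The signs are incompatible with $C_s$ being a direct sum of shifts of the $\Delta^e(\underline{\mu})$.

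You can check this without assuming the theorem. By Lemma \ref{lem:width1}, $\Res_{2\delta-\alpha_i,\alpha_i}L_i^e(1,1)\simeq (L_i^e(\delta)\mynabla L^e(\delta-\alpha_i))\otimes L(i)$. The first factor is a simple module with a label different from that of $L^e(2\delta-\alpha_i)$ (cf.\ the proof of Lemma \ref{lem:innerprodSES}). Since $\Delta^e(2\delta-\alpha_i)$ has simple head $L^e(2\delta-\alpha_i)$, adjunction gives $\Hom_R(C_2,L_i^e(1,1))=0$. On the other hand, Lemma \ref{lem:innerprod0} gives $\Extform{C_2}{L_i^e(1,1)}=(\overline{\psi^e_{i,2}},\chi(L_i^e(1,1)))=-q_i^{-1}$. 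For a direct sum of standard modules these two quantities would coincide, because $\Extform{\Delta^e(\underline{\lambda})}{L^e(\underline{\mu})}=\delta_{\underline{\lambda},\underline{\mu}}$.

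So $C_s$ has nonvanishing higher $\Ext$ against simples of its own stratum. The projectivity you take from Proposition \ref{prop:psimodule} cannot be used this way; the paper itself only needs that $\psi^e_{i,n}$ lies in $B'$. Neither the Hom computation nor your fallback via specialization and almost-orthogonality can produce the signs. A repaired version would have to compute the Euler pairings $(\overline{\psi^e_{i,s}},\chi(L^e(\underline{\mu})))$ for every hook and show they vanish for every other $\underline{\mu}$. That requires $\Res_{s\delta-\alpha_i,\alpha_i}$ of every hook simple, plus a way to match those simples with their Howe-duality labels; your proposal supplies neither.

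\textbf{How the paper avoids this.} The paper never identifies a module with character $\psi^e_{i,s}$; it pairs only against column simples. Lemma \ref{lem:choice}, which is where $t\neq -1$ enters, pins down which simple is $L_i^e(1,1)$. Lemmas \ref{lem:width1}, \ref{lem:innerprodSES} and \ref{lem:inductionSES}, obtained from $R$-matrix theory, give $\Res_{n\delta-\alpha_i,\alpha_i}L_i^e(1^n)$ and the class of $L_i^e(1^{n-1})\mynabla L^e(\delta-\alpha_i)$. From these it follows that $(\overline{P^e_{i,n}},\chi(L_j^e(1^n)))=\delta_{i,j}\delta_{n,1}$.

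The maps $s_{\underline{\lambda}}\mapsto\chi(L^e(\underline{\lambda}))$ and $h_{i,n}\mapsto\overline{P^e_{i,n}}$ are bialgebra isomorphisms, by Corollary \ref{cor:LRrule} and Beck--Nakajima's coproduct formula. Hence this single column computation identifies the two pairings (Lemma \ref{lem:bilinearforms}). Then $\Extform{\Delta^e(\underline{\lambda})}{L^e(\underline{\mu})}=\delta_{\underline{\lambda},\underline{\mu}}$ forces $\overline{\chi(\Delta^e(\underline{\lambda}))}=\eta'(s_{\underline{\lambda}})=\overline{S^e_{\underline{\lambda}}}$. Your row case needs to be replaced by a duality argument of this type, or at least by a computation of Euler pairings rather than Hom spaces.
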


Let $\preceq$ be a convex order on $\Phi_+^{\text{min}}$ of coarse type $e$. 
Recall the standard modules from Subsection \ref{sub:stratifications} and the affine PBW basis from Subsection \ref{sub:affinePBW}. 
By Corollary \ref{cor:imaginarysimple}, 
we may identify the set $\Omega^{\preceq}(\beta)$ with $\Upsilon^{\preceq}(\beta)$ for $\beta \in \mathsf{Q}_+$. 

\begin{corollary} \label{cor:characterformula}
Assume that the quantum characteristic of $t$ is infinite. 
Let $\preceq$ be a convex order of coarse type $e$. 
For $\beta \in \mathsf{Q}_+$ and $\boldsymbol{c} \in \Upsilon^{\preceq}(\beta)$, 
we have 
\[
\chi (\Delta^{\preceq}(\boldsymbol{c})) = f_{\boldsymbol{c}}^{\preceq}.
\]
\end{corollary}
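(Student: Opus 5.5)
The statement is Corollary \ref{cor:characterformula}, and the plan is to reduce it to Theorem \ref{thm:characterformula} together with the known characters of the real root standard modules. By definition (Subsection \ref{sub:stratifications}), $\Delta^{\preceq}(\boldsymbol{c})$ is a convolution product
\[
\Delta^{\preceq}(\boldsymbol{c}_+(\alpha_{n_+}^+)\alpha_{n_+}^+)\circ \cdots \circ \Delta^{\preceq}(\boldsymbol{c}_+(\alpha_{1}^+)\alpha_1^+) \circ \Delta^e(L) \circ \Delta^{\preceq}(\boldsymbol{c}_-(\alpha_{n_-}^-)\alpha_{n_-}^-) \circ \cdots \circ \Delta^{\preceq}(\boldsymbol{c}_-(\alpha_{1}^-)\alpha_1^-).
\]
Here $L\in\Xi^e$ corresponds to the multipartition $\boldsymbol{c}_\delta=\underline{\lambda}$ under the identification $\Omega^{\preceq}(\beta)\simeq\Upsilon^{\preceq}(\beta)$ given by Corollary \ref{cor:imaginarysimple}, so $L=L^e(\underline{\lambda})$. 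Since $\chi$ is an algebra homomorphism (Theorem \ref{thm:categorification}), $\chi(\Delta^{\preceq}(\boldsymbol{c}))$ is the corresponding product of the characters of the factors. It therefore suffices to identify each factor with the matching factor of $f_{\boldsymbol{c}}^{\preceq}$.

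For the real root factors, I would use Theorem \ref{thm:stratification}. It gives $\Delta^{\preceq}(\alpha)^{\circ n}\simeq\Delta^{\preceq}(n\alpha)^{\oplus[n]_\alpha!}$, which is interpreted in the Grothendieck group as $[n]_\alpha!$ times the class, as the paper itself does. Hence $\chi(\Delta^{\preceq}(n\alpha))=(f^{\preceq}_\alpha)^n/[n]_\alpha!=(f_\alpha^{\preceq})^{(n)}$, which is the remark already made after the definition of $f_\alpha^{\preceq}$.

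For the imaginary factor, I need to check that $\Delta^e(L)$, the projective cover of $L^e(\underline{\lambda})$ in $\gMod{R^e(n\delta)}$, is isomorphic to $\Delta^e(\underline{\lambda})$. This follows from Proposition \ref{prop:semisimpledelta} and uniqueness of projective covers, once the grading normalizations agree. Both modules have head $L^e(\underline{\lambda})$ in degree zero, since $\Hom_R(\Delta^e(\underline{\lambda}),L^e(\underline{\mu}))$ is concentrated in degree zero, as shown in the proof of that proposition. Then Theorem \ref{thm:characterformula} gives $\chi(\Delta^e(L))=S^e_{\underline{\lambda}}=\prod_i S^e_{i,\lambda^{(i)}}$.

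Multiplying in the same order as in the definition of $f_{\boldsymbol{c}}^{\preceq}$ gives the claim. The only point needing care, and the main obstacle I anticipate, is matching conventions: the order of the factors, and the fact that the coarse type is $e$, so that the imaginary part of $f^{\preceq}_{\boldsymbol{c}}$ is built from $S^e_{i,\lambda^{(i)}}$. The substantive input, Theorem \ref{thm:characterformula}, is assumed.
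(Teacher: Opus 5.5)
Your proposal is correct and follows the argument the paper leaves implicit, since it states the corollary without proof right after Theorem \ref{thm:characterformula}. The ingredients are the same: multiplicativity of $\chi$, the identity $\chi(\Delta^{\preceq}(n\alpha))=(f_\alpha^{\preceq})^{(n)}$ from Theorem \ref{thm:stratification}, and the identification $\Delta^e(L^e(\underline{\lambda}))\simeq\Delta^e(\underline{\lambda})$ via Proposition \ref{prop:semisimpledelta}, which is then fed into Theorem \ref{thm:characterformula}.
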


\begin{remark}
This result can be generalized to arbitrary convex order $\preceq$ of coarse type $w \in \mathring{W}$. 
In fact, we have 
\[
S_{\underline{\lambda}}^w = S_w(S_{\underline{\lambda}}^e)
\]
for any multipartition $\underline{\lambda}$ by \cite[Theorem 4.13]{MR3874704}. 
On the other hand, Proposition \ref{prop:reduction} shows that the reflection functor $\mathcal{S}_w$ induces a bijection $\Xi^e \to \Xi^w$. 
Hence, the set $\Xi^w$ is also parametrized by multipartitions.
When a simple module $L \in \Xi^w$ is parametrized by a multipartition $\underline{\lambda}$, we define
\[
\Delta^w(\underline{\lambda}) = \Delta^w(L). 
\]
Then, we have
\[
\chi (\Delta^w(\underline{\lambda})) = S_{\underline{\lambda}}^w. 
\]
\end{remark}

Now, we can complete the proof of Theorem \ref{thm:Weylcharacter}. 

\begin{proof}[End of the proof of Theorem \ref{thm:Weylcharacter}]
We have to prove, under the assumption that the quantum characteristic of $t$ is infinite,  
that $\chi(W^e(\underline{\lambda}))$ belongs to the dual canonical basis for any multipartition $\underline{\lambda}$. 
Since the $t$-Schur algebra is semisimple, we have $W^e(\underline{\lambda}) = L^e(\underline{\lambda})$. 
Let $\preceq$ be a convex order of coarse type $e$. 
For $\boldsymbol{c} = (\boldsymbol{c}_-, \mu, \boldsymbol{c}_+) \in \Upsilon^{\preceq}(\beta)$ for some $\beta \in \mathsf{Q}_+$, 
we have 
\begin{align*}
(\chi(\Delta^e(\boldsymbol{c})),\chi(L^e(\underline{\lambda}))) &= \overline{(\overline{\chi(\Delta^e(\boldsymbol{c}))}, c(\chi(L^e(\underline{\lambda}))))} \quad \text{by the definition of $c$ in Subsection \ref{sub:quantumgroups}} \\
&= \overline{(\overline{\chi(\Delta^e(\boldsymbol{c}))}, \chi(D(L^e(\underline{\lambda}))))} \quad \text{by Theorem \ref{thm:categorification}} \\
&= \overline{(\overline{\chi(\Delta^e(\boldsymbol{c}))}, \chi(L^e(\underline{\lambda})))} \quad \text{since $L^e(\underline{\lambda})$ is self-dual} \\
&= \overline{\Extform{\Delta^e(\boldsymbol{c})}{L^e(\underline{\lambda})}} \quad \text{by Theorem \ref{thm:categorification}} \\
&= \delta_{\boldsymbol{c},(0,\lambda,0)} \quad \text{by Theorem \ref{thm:stratification}}. 
\end{align*}
On the other hand, Corollary \ref{cor:characterformula} shows that $\chi(\Delta^e(\boldsymbol{c})) = f_{\boldsymbol{c}}^{\preceq}$. 
Hence, the assertion follows from the fact that the transition matrix between the canonical basis and PBW basis is unitriangular with respect to the bilexicographic order on $\mathcal{P}(\beta)$ \cite[Lemma 5.6]{MR2066942}, \cite[Theorem 4.28]{MR3874704}. 
\end{proof}

The rest of this paper is devoted to the proof of Theorem \ref{thm:characterformula}. 
While our proof largely follows \cite{MR3694676},
additional care is required because the original argument relies on a geometric result \cite[Lemma 7.5]{MR3694676}, 
which is not available in our general setting. 
To overcome this, we utilize several consequences from the theory of $R$-matrices developed by Kang, Kashiwara, Kim, Oh and Park \cite{MR3314831,MR3758148,MR3790066}, 
which we recall in the following subsection. 

\subsection{Consequences of the theory of $R$-matrices} \label{sub:Rmatrix}

In this subsection, we recall necessary results of the theory of $R$-matrices. 
Assume that $\mathbf{k}$ is a field. 

\begin{definition}[{\cite[Definition 2.1]{MR3790066}}] \label{def:affinization}
 Let $\beta \in \mathsf{Q}_+$ and $M \in \gmod{R(\beta)}$ be a simple module.

 (1) $M$ is called real if $M \circ M$ is also simple. 

 (2) An affine object of $M$ is a pair $(\widehat{M}, z = z_{\widehat{M}})$ of an $R(\beta)$-module $\widehat{M}$ and an injective endomorphism $z \in \End_{R(\beta)}(\widehat{M})$ of positive degree satisfying
 \begin{itemize}
  \item as a $\mathbf{k}[z]$-module, $\widehat{M}$ is free of finite rank, and
  \item there is an isomorphism of $R(\beta)$-modules $\widehat{M}/z\widehat{M} \simeq M$. 
 \end{itemize}

 (3) An affine object $(\widehat{M}, z)$ of $M$ is said to be an affinization if it additionally satisfies 
 \begin{itemize}
  \item $\mathfrak{p}_{i, \beta} \widehat{M} \neq 0$ for all $i \in I$, 
 \end{itemize}
 where $\mathfrak{p}_{i,\beta}$ is an element of the center of $R(\beta)$ defined by 
 \[
 \mathfrak{p}_{i, \beta} = \sum_{\nu \in I^{\beta}} \left( \prod_{1 \leq k \leq \height (\beta), \nu_k = i} x_k \right) e(\nu).  
 \] 
\end{definition}

\begin{theorem} \label{thm:affreal}
Let $\preceq$ be a convex order on $\Phi_+^{\mathrm{min}}$. 
For any real root $\beta \in \Phi_+^{\mathrm{re}}$, 
the cuspidal module $L^{\preceq}(\beta)$ is real,
and the endomorphism algebra $\End_R(\Delta^{\preceq}(\beta))$ is isomorphic to the polynomial ring $\mathbf{k}[z]$ with $\deg z = (\beta,\beta)$. 
(Note that the action of $z$ on $\Delta^{\preceq}(\beta)$ is unique up to a scalar multiple since $\mathbf{k}[z]_{(\beta,\beta)}$ is one-dimensional.)
Furthermore, the pair $(\Delta^{\preceq}(\beta),z)$ is an affinization of $L^{\preceq}(\beta)$. 
\end{theorem}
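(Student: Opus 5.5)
The proof has three parts: reality of $L^{\preceq}(\beta)$, the structure of $\End_R(\Delta^{\preceq}(\beta))$, and the affinization property of $(\Delta^{\preceq}(\beta),z)$.

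\emph{Reality.} By Theorem \ref{thm:cuspdecomp} (1), $L^{\preceq}(2\beta)\simeq q^{(\beta,\beta)/2}L^{\preceq}(\beta)^{\circ 2}$. In particular $L^{\preceq}(\beta)\circ L^{\preceq}(\beta)$ is a grading shift of a simple module, so $L^{\preceq}(\beta)$ is real by Definition \ref{def:affinization} (1).

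\emph{Endomorphism algebra.} Apply Theorem \ref{thm:stratification} with $n=1$. It gives $\End_R(\Delta^{\preceq}(\beta))\simeq \mathbf{k}[z_1]$ with $\deg z_1=(\beta,\beta)$. Let $z$ be the image of the generator.

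\emph{Affine object.} We must check three things.
\begin{enumerate}
\item $z$ is injective.
\item $\Delta^{\preceq}(\beta)$ is free of finite rank over $\mathbf{k}[z]$.
\item $\Delta^{\preceq}(\beta)/z\Delta^{\preceq}(\beta)\simeq L^{\preceq}(\beta)$.
\end{enumerate}

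Corollary \ref{cor:maximalext} gives $\chi(\Delta^{\preceq}(\beta))=(1-q_\beta^2)^{-1}\chi(L^{\preceq}(\beta))$. This character identity is the main tool.

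Recall that $\Delta^{\preceq}(\beta)$ is the projective cover of $L^{\preceq}(\beta)$ in the Serre subcategory of modules whose composition factors are shifts of $L^{\preceq}(\beta)$. In that subcategory the multiplicity $[M:L^{\preceq}(\beta)]_q$ equals $\dim_q\Hom(\Delta^{\preceq}(\beta),M)$. Hence $\dim_q \Hom(\Delta^{\preceq}(\beta),L^{\preceq}(\beta))=1$, so the head of $\Delta^{\preceq}(\beta)$ is exactly $L^{\preceq}(\beta)$.

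The image $z\Delta^{\preceq}(\beta)$ is a proper submodule, since it is concentrated in strictly higher degrees. Consider the quotient $\Delta^{\preceq}(\beta)\to L^{\preceq}(\beta)$ and its kernel $K$. Because $\Delta^{\preceq}(\beta)$ is projective in the subcategory and $K$ has only shifts of $L^{\preceq}(\beta)$ as composition factors, we can build a surjection $q^{(\beta,\beta)}\Delta^{\preceq}(\beta)\to K$. Comparing characters, $[K]=q_\beta^2[\Delta^{\preceq}(\beta)]$ by Corollary \ref{cor:maximalext}. So this surjection is an isomorphism for dimension reasons: the graded pieces are finite-dimensional and bounded below.

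Composing $\Delta^{\preceq}(\beta)\xrightarrow{\sim}K\hookrightarrow\Delta^{\preceq}(\beta)$ gives an injective endomorphism of degree $(\beta,\beta)$. Since $\mathbf{k}[z]_{(\beta,\beta)}=\mathbf{k}z$, this endomorphism is a nonzero scalar multiple of $z$. Therefore $z$ is injective and $z\Delta^{\preceq}(\beta)=K$, which gives (1) and (3).

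For (2), the character identity combined with injectivity of $z$ and (3) lets us lift a basis of $L^{\preceq}(\beta)$ by a graded Nakayama argument. The lifted basis is a free $\mathbf{k}[z]$-basis of $\Delta^{\preceq}(\beta)$, of rank $\dim L^{\preceq}(\beta)$.

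\emph{Affinization condition.} We need $\mathfrak{p}_{i,\beta}\Delta^{\preceq}(\beta)\neq 0$ for all $i\in I$. This is the main obstacle. The element $\mathfrak{p}_{i,\beta}$ is central, so it acts on $\Delta^{\preceq}(\beta)$ as an element of $\End\simeq\mathbf{k}[z]$ of degree $(\alpha_i,\alpha_i)\langle h_i,\beta\rangle$-type. A nonzero endomorphism is injective, because $\Delta^{\preceq}(\beta)$ is torsion-free over $\mathbf{k}[z]$.

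My first approach is to invoke the known fact [KKOP, MR3790066] that the projective cover of a cuspidal module attached to a real root is an affinization.

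If a self-contained argument is needed instead, I would proceed as follows.
\begin{itemize}
\item Reduce via the reflection functors (Theorem \ref{thm:reflectionfunctor}) to the case $\beta=\alpha_j$. There $\Delta^{\preceq}(\alpha_j)=R(\alpha_j)=\mathbf{k}[x_1]$, and $\mathfrak{p}_{i,\alpha_j}$ equals $x_1$ if $i=j$ and $1$ otherwise. Both act nonzero.
\item Check that $\mathcal{S}_i$ carries $\mathfrak{p}$-elements to nonzero multiples of monomials in $z$. This requires tracking the central elements through the construction in \cite{murata2025diagrammaticapproachreflectionfunctors}, and this is the step I expect to be hardest.
\end{itemize}
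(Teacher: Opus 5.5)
Your treatment of reality and of $\End_R(\Delta^{\preceq}(\beta))\simeq\mathbf{k}[z]$ is the paper's: it uses Theorem \ref{thm:cuspdecomp} (1) and Theorem \ref{thm:stratification}. Your derivation of the affine-object axioms from Corollary \ref{cor:maximalext} goes beyond what the paper writes, and it works, but one step is unjustified. Projectivity of $\Delta^{\preceq}(\beta)$ plus the composition factors of $K=\Ker(\Delta^{\preceq}(\beta)\to L^{\preceq}(\beta))$ do not give a surjection from a \emph{single} copy of $q^{(\beta,\beta)}\Delta^{\preceq}(\beta)$. A priori the head of $K$ could contain several shifts of $L^{\preceq}(\beta)$, and the character identity $[K]=q_\beta^2[\Delta^{\preceq}(\beta)]$ alone does not rule this out.

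The fix is to use $\End_R(\Delta^{\preceq}(\beta))=\mathbf{k}[z]$ at this point. Since $\Hom_R(\Delta^{\preceq}(\beta),-)$ is exact on the subcategory, $\Hom_R(\Delta^{\preceq}(\beta),K)$ is the kernel of $\mathbf{k}[z]\to\Hom_R(\Delta^{\preceq}(\beta),L^{\preceq}(\beta))=\mathbf{k}$, namely $z\mathbf{k}[z]$. So every map from a shift of $\Delta^{\preceq}(\beta)$ to $K$ factors through $z$. Since $K$ is generated by the images of such maps (lift its head, then apply graded Nakayama), $K=z\Delta^{\preceq}(\beta)$. Your dimension count then gives injectivity of $z$.

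The genuine gap is the last axiom, $\mathfrak{p}_{i,\beta}\Delta^{\preceq}(\beta)\neq 0$ for all $i\in I$, which you do not prove. Your first option invokes, as a ``known fact'' from \cite{MR3790066}, essentially the statement under proof, with no precise location. What is actually available is \cite[Theorem 3.26]{MR4359265}: affine determinantial modules are affinizations of determinantial modules. The missing idea is to bring your objects into that setting, and this is exactly the paper's proof. By \cite[Section 6.2]{murata2025affinehighestweightstructures}, $L^{\preceq}(\beta)$ is a determinantial module $M$, and $\Delta^{\preceq}(\beta)$ is the affine determinantial module $\widehat{M}$ over $\mathbf{k}[z]$ with $\deg z=(\beta,\beta)$. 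The entire affinization property, including the axioms you checked by hand, then follows at once.

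Your second option stops precisely at the crucial step. Theorem \ref{thm:reflectionfunctor} says nothing about how $\mathcal{S}_i$ transports the central elements $\mathfrak{p}_{j,\gamma}$, so the easy case $\beta=\alpha_j$ does not propagate. Your torsion-freeness remark only shows that $\mathfrak{p}_{i,\beta}$ acts either injectively or by zero; deciding which is the entire content of this axiom.
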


\begin{proof}
The former assertions follow from Theorem \ref{thm:cuspdecomp} (1) and Theorem \ref{thm:stratification}. 
In \cite[Section 6.2]{murata2025affinehighestweightstructures}, 
the cuspidal module $L^{\preceq}(\beta)$ (resp.\ the standard module $\Delta^{\preceq}(\beta)$) is identified with a certain determinantial module $M$ (resp.\ $\widehat{M}$, a determinantial module defined over $\mathbf{k}[z]$ with $\deg z = (\beta,\beta)$; see \cite[Section 3.6]{murata2025affinehighestweightstructures} for its precise definition). 
It is proved in \cite[Theorem 3.26]{MR4359265} that $(\widehat{M},z)$ is an affinization of $M$. 
\end{proof}

\begin{theorem} \label{thm:rmatrix} 
Let $M,N \in \gmod{R}$ be simple modules. 
Assume that $M$ is real and admits an affinization $(\widehat{M},z)$ with $\deg z = d > 0$.  
Then, the following assertions hold: 
 
(1) The graded vector space $\Hom_R(M\circ N, N \circ M)$ is one-dimensional. 
Let $\mathbf{r}_{M,N}$ be a spanning vector of it (unique up to a scalar multiple),
and let $\Lambda(M,N)$ be the degree of $\mathbf{r}_{M,N}$. 
Hence, $\mathbf{r}_{M,N}$ is a homomorphism in $\gmod{R}$ from $q^{\Lambda(M,N)} M\circ N$ to $N \circ M$. 

(2) The head $M \mynabla N$ of $M \circ N$ and the socle $N \myDelta M$ of $N \circ M$ are simple. 
Moreover, we have 
\[
q^{\Lambda(M,N)} M \mynabla N \simeq \Image \mathbf{r}_{M,N} = N \myDelta M. 
\]

(3) We have $\Lambda(M, M \mynabla N) = \Lambda(M,N)$ and $\Lambda(M,L) < \Lambda(M,N)$ for any composition factor $L$ of $\Ker (\mathbf{r}_{M,N})$. 
In particular, $[M\circ N: M \mynabla N]_q = 1$. 
Dually, we have $[N \circ M : N \myDelta M]_q = 1$. 

(4) The number $\tilde{\Lambda}(M,N) \coloneqq (\Lambda(M,N) + (\wt M, \wt N))/2$ is an integer. 
If both $M$ and $N$ are self-dual, then $q^{\tilde{\Lambda}(M,N)} M\mynabla N$ is also self-dual. 

\end{theorem}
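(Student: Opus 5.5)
This theorem collects standard results from the Kang--Kashiwara--Kim--Oh--Park theory of $R$-matrices. I would prove it by reducing each assertion to the corresponding statement in \cite{MR3314831,MR3758148,MR3790066}, after first checking that our setting satisfies their hypotheses.

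The first step is to build the renormalized $R$-matrix. The affinization $(\widehat{M},z)$ is free of finite rank over $\mathbf{k}[z]$ with $\widehat{M}/z\widehat{M}\simeq M$, and $z$ has positive degree. Because $\mathfrak{p}_{i,\beta}\widehat{M}\neq 0$ for all $i$, the intertwiner
\[
R_{\widehat{M},N}\colon \widehat{M}\circ N\to N\circ \widehat{M},\quad u\boxtimes v\mapsto \tau_{w[\height\gamma,\height\beta]}(v\boxtimes u),
\]
is a $\mathbf{k}[z]$-linear map that becomes an isomorphism after inverting $z$. Let $s$ be the largest integer such that $z^{-s}R_{\widehat{M},N}$ is still defined on $\widehat{M}\circ N$. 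Reducing $z^{-s}R_{\widehat{M},N}$ modulo $z$ then gives a nonzero homomorphism $\mathbf{r}_{M,N}\colon M\circ N\to N\circ M$, homogeneous of some degree $\Lambda(M,N)$.

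For (1), I would argue as follows. Since $M$ is real, $M\circ M$ is simple. By the standard argument, this forces $\End(M\circ N)$, and also $\Hom(M\circ N,N\circ M)$, to be at most one-dimensional. The concrete route is to use the adjunction to embed $\Hom(M\circ N,N\circ M)$ into $\Hom(M\circ M\circ N,\ M\circ N\circ M)$, and then apply the realness/affinization lemma \cite[Theorem 3.2]{MR3314831} or \cite[Section 2]{MR3790066}. Part (2) is the head/socle simplicity theorem for products with a real module that admits an affinization \cite[Theorem 3.2]{MR3314831}, \cite[Proposition 2.11]{MR3790066}. The identification $\Image\mathbf{r}_{M,N}=N\myDelta M$, with the correct grading shift, follows directly from the definition of $\Lambda(M,N)$.

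For (3) and (4), I would cite \cite[Theorem 4.12, Corollary 4.12]{MR3758148} together with \cite[Section 2]{MR3790066}. Those results give $\Lambda(M,M\mynabla N)=\Lambda(M,N)$, the strict inequality $\Lambda(M,L)<\Lambda(M,N)$ for every composition factor $L$ of the kernel, and the integrality $\tilde\Lambda\in\mathbb{Z}$. The multiplicity-one statement then follows from the strict inequality: $M\mynabla N$ itself has the value $\Lambda(M,N)$, so it cannot appear as a factor of $\Ker(\mathbf{r}_{M,N})$. The dual statement is obtained by applying $D$, using $D(M\circ N)\simeq q^{(\beta,\gamma)}DN\circ DM$. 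The self-duality claim in (4) comes from comparing $D(M\mynabla N)$ with $N\myDelta M$ through this same formula.

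The main obstacle is that the cited papers work with symmetric or general Cartan data but assume the affinization $z$ acts in a particular normalized way. I would need to check that our definition of affinization, including the $\mathfrak{p}_{i,\beta}$ condition and the freedom to choose arbitrary parameters $Q$, matches theirs. In \cite{MR3790066} these hypotheses are stated for an arbitrary quiver Hecke algebra, so the verification should be only notational.
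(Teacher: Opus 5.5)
Your route is the same as the paper's. The paper proves nothing here itself: it cites \cite{MR3790066} for (1)--(3) and \cite[Lemma 3.14]{MR3758148} for (4). Your deduction of multiplicity one from the strict inequality is fine, and so is the dual statement via $D$ (legitimate because $DM$ is a grading shift of $M$, hence still real with an affinization). Two points in your sketch need correcting.

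\textbf{The universal $R$-matrix.} The map $u\boxtimes v\mapsto\tau_{w[\height\gamma,\height\beta]}(v\boxtimes u)$ is not an $R$-module homomorphism in general, so it cannot serve as the universal $R$-matrix. A counterexample already occurs for $M=N=L(i)$, $\widehat M=R(\alpha_i)$ with $z=x_1$, and $v$ spanning $L(i)$.
\begin{itemize}
\item In $\widehat M\circ N$ we have $x_2(1\boxtimes v)=0$.
\item In $N\circ\widehat M$ we have $x_2\tau_1(v\boxtimes 1)=(\tau_1x_1+1)(v\boxtimes 1)=v\boxtimes 1\neq 0$.
\end{itemize}
The construction in \cite{MR3314831,MR3790066} instead uses the intertwiners $\varphi_k$. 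These are defined by $\varphi_ke(\nu)=(\tau_kx_k-x_k\tau_k)e(\nu)$ if $\nu_k=\nu_{k+1}$, and $\varphi_ke(\nu)=\tau_ke(\nu)$ otherwise. One then sets $R^{\mathrm{univ}}_{\widehat M,N}(u\boxtimes v)=\varphi_{w[\height\gamma,\height\beta]}(v\boxtimes u)$. Renormalizing by $z^{-s}$ and specializing at $z=0$ then proceed as you describe. Since you cite (1) anyway, this does not break the reduction, but the sketch as written is wrong.

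\textbf{Integrality of $\tilde\Lambda$.} This is not available as a direct citation in the generality you need. \cite[Lemma 3.12]{MR3758148} carries extra hypotheses, and the paper instead remarks that the proof of \cite[Lemma 3.14]{MR3758148} yields integrality in general. Your own duality comparison already gives it, as follows.
\begin{itemize}
\item $\Lambda(M,N)$ and $(\wt M,\wt N)=(\beta,\gamma)$ are unchanged by grading shifts of $M$ and $N$, so you may assume $M,N$ self-dual.
\item Put $X=M\mynabla N$. Then $DX$ is the socle of $D(M\circ N)\simeq q^{(\beta,\gamma)}N\circ M$, so $DX\simeq q^{(\beta,\gamma)+\Lambda(M,N)}X$ by (2).
\item Write $X\simeq q^bX_0$ with $X_0$ self-dual. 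Then $DX\simeq q^{-b}X_0$, which forces $\Lambda(M,N)+(\beta,\gamma)=-2b$.
\end{itemize}
Hence $\tilde\Lambda(M,N)=-b\in\mathbb{Z}$, and $q^{\tilde\Lambda(M,N)}X\simeq X_0$ is self-dual. This gives both halves of (4) at once.
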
 

\begin{proof}
(1) (2) and (3) are proved in \cite{MR3790066} for instance. 
(4) is proved in \cite[Lemma 3.14]{MR3758148}. 
Note that it is proved in \cite[Lemma 3.12]{MR3758148} that $\tilde{\Lambda}(M,N)$ is an integer under some additional assumptions. 
However, the proof of \cite[Lemma 3.14]{MR3758148} simultaneously shows that $\tilde{\Lambda}(M,N)$ is an integer without using these additional assumptions. 
\end{proof}

\begin{definition}
Let $M \in \gMod{R(\alpha)}$ and $N \in \gMod{R(\beta)}$ be simple modules. 
The pair $(M,N)$ is called unmixing if 
\[
\Res_{\alpha,\beta} (M \circ N) = M \otimes N. 
\]
\end{definition}

\begin{theorem}[{\cite[Section 2.1]{MR4655348}}] \label{thm:unmixing}
Let $M, N \in \gMod{R}$ be simple modules, and assume that $M$ is real and admits an affinization. 
Assume that the pair $(M,N)$ is unmixing. 
Then, we have $\Lambda(M,N) = -(\wt M, \wt N)$. 
Furthermore, if both $M$ and $N$ are self-dual, then $M \mynabla N$ is self-dual. 
\end{theorem}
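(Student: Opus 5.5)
The plan is to compute the graded space $\Hom_R(M\circ N, N\circ M)$ directly from the unmixing hypothesis, using the two adjunctions and the coinduction isomorphism recalled in Subsection \ref{sub:quiverHecke}. Its degree is then read off and compared with Theorem \ref{thm:rmatrix}. Put $M\in \gMod{R(\alpha)}$ and $N \in \gMod{R(\beta)}$, so that $(\wt M,\wt N) = (\alpha,\beta)$.

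First, rewrite the target $N\circ M$. By \cite[Theorem 2.2]{MR2822211} as recalled above, $\Coind_{\alpha,\beta}(M\otimes N)\simeq q^{(\alpha,\beta)} N\circ M$, so $N\circ M \simeq q^{-(\alpha,\beta)}\Coind_{\alpha,\beta}(M\otimes N)$. The $(\Res,\Coind)$ adjunction then gives an isomorphism of graded spaces
\[
\Hom_R(M\circ N, N\circ M)\simeq \Hom_{R(\alpha)\otimes R(\beta)}\bigl(\Res_{\alpha,\beta}(M\circ N),\, q^{-(\alpha,\beta)} M\otimes N\bigr).
\]
The unmixing hypothesis replaces $\Res_{\alpha,\beta}(M\circ N)$ by $M\otimes N$. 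Since $M\otimes N$ is simple and absolutely irreducible, the right-hand side is $\Hom(M\otimes N, q^{-(\alpha,\beta)}M\otimes N)$, which is one-dimensional and concentrated in a single degree.

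Next, identify that degree. With the convention $\Hom_A(X,Y)_d=\hom_A(q^dX,Y)$, the space $\Hom(X, q^{c}X)$ for a simple $X$ is concentrated in degree $d=c$. Here $c=-(\alpha,\beta)$, so the nonzero homomorphism, which by Theorem \ref{thm:rmatrix} (1) is $\mathbf{r}_{M,N}$ up to a scalar, has degree $-(\alpha,\beta)$. This gives $\Lambda(M,N)=-(\wt M,\wt N)$.

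For self-duality, the computation above gives $\tilde{\Lambda}(M,N)=(\Lambda(M,N)+(\wt M,\wt N))/2=0$. Theorem \ref{thm:rmatrix} (4) then says that $q^{0}M\mynabla N = M\mynabla N$ is self-dual whenever $M$ and $N$ are. The only delicate point I expect is the sign and grading bookkeeping: the direction of the shift in the coinduction isomorphism, and the degree convention for $\Hom$. I would double-check these against the identity $D(M\circ N)\simeq q^{(\beta,\gamma)}DN\circ DM$ stated earlier.
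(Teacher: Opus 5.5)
Your proof is correct, and the grading bookkeeping you flagged is consistent. With $\Coind_{\alpha,\beta}(M\otimes N)\simeq q^{(\alpha,\beta)}N\circ M$ and $\Hom_A(X,Y)_d=\hom_A(q^dX,Y)$, the space $\Hom_R(M\circ N,N\circ M)$ is one-dimensional and sits in degree $-(\alpha,\beta)$. A quick check confirms this: for $M=L(i)$, $N=L(j)$ with $i\neq j$, the generator of $M\circ N$ must land in $e(i,j)(N\circ M)$, which sits in degree $-(\alpha_i,\alpha_j)$. It also matches the value $\Lambda(L^e(\delta-\alpha_i),L(i))=(\alpha_i,\alpha_i)$ that the paper uses in Lemma \ref{lem:innerprodSES}.

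The paper gives no argument of its own here. It simply imports the statement from \cite[Section 2.1]{MR4655348}, where it belongs to the general R-matrix framework of Kashiwara--Kim--Oh--Park. Your route is a short self-contained derivation from ingredients the paper already recalls: the unmixing hypothesis, the $(\Res,\Coind)$ adjunction, and the isomorphism of \cite[Theorem 2.2]{MR2822211}. It actually shows slightly more: for any unmixing pair of simple modules, $\Hom_R(M\circ N,N\circ M)$ is one-dimensional in degree $-(\wt M,\wt N)$. Realness and the affinization are needed only so that $\mathbf{r}_{M,N}$ and $\Lambda(M,N)$ are defined.

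For self-duality you still rely on Theorem \ref{thm:rmatrix} (4), which is itself imported from \cite{MR3758148}. You could bypass it. For self-dual $M,N$,
$D(M\mynabla N)\simeq \soc D(M\circ N)\simeq q^{(\alpha,\beta)}\soc(N\circ M)\simeq q^{(\alpha,\beta)+\Lambda(M,N)}M\mynabla N$
by Theorem \ref{thm:rmatrix} (2). The exponent vanishes once $\Lambda(M,N)=-(\alpha,\beta)$.
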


\subsection{Proof of Theorem \ref{thm:characterformula}} \label{sub:character}

In this subsection, we prove Theorem \ref{thm:characterformula}.
In particular, we assume that $\mathbf{k}$ is a field and that the quantum characteristic of $t$ is infinite. 

\begin{lemma} \label{lem:choice}
For $i \in \mathring{I}$, we have  
\[
\Res_{2(\delta-\alpha_i),2\alpha_i} L_i^e(1,1) = 0. 
\]
\end{lemma}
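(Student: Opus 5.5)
The plan is to realize $L_i^e(1,1)$ inside $L_i^e(\delta)^{\circ 2}$ through the Hecke action of Theorem \ref{thm:heckeaction}, and then compute its restriction explicitly. Since the quantum characteristic of $t$ is infinite, $H_2$ is semisimple. By the equivalence $\gamma_{(2)}$ (applied with only the $i$-th entry of $\underline{n}$ equal to $2$), we get
\[
L_i^e(\delta)^{\circ 2} = L^e(\underline{n}) \simeq L^e(\underline{n}) \otimes_{H_2} H_2 = L_i^e(2) \oplus L_i^e(1,1).
\]
Here $L_i^e(1,1)$ is one of the two eigenspaces of $\mathsf{T}_i$, for the eigenvalues $-1$ and $t$. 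Remark \ref{rem:specht} and the sign conventions of Theorem \ref{thm:howe} should identify which one. I expect $L_i^e(1,1) = \Ker(\mathsf{T}_i - t) = \operatorname{Im}(\mathsf{T}_i + 1)$, but I would confirm this with the computation (\ref{eq:modify1}). The functor $\Res_{2(\delta-\alpha_i),2\alpha_i}$ is exact and commutes with the endomorphism $\mathsf{T}_i$. It therefore suffices to show that $\mathsf{T}_i + 1$ (or $\mathsf{T}_i - t$, whichever projects onto the $(1,1)$ summand) kills $e(2(\delta-\alpha_i),2\alpha_i)L_i^e(\delta)^{\circ 2}$.

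First I compute the space $e(2(\delta-\alpha_i),2\alpha_i)L_i^e(\delta)^{\circ 2}$ via the Mackey filtration, using $L_i^e(\delta) = \hd(L(\underline 0)\circ L(i))$ and the fact that $\alpha_i$ does not occur in $\delta-\alpha_i$. Only the shuffle that moves both $\alpha_i$-strands to the right survives, so
\[
\Res_{2(\delta-\alpha_i),2\alpha_i}L_i^e(\delta)^{\circ 2} \simeq \Ind(L(\underline0)\otimes L(\underline0))\otimes (L(i)\circ L(i))
\]
up to shift, and possibly a quotient of it. As an alternative, I would transport the problem through the monoidal functor $\mathcal{F}_i$. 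Lemma \ref{lem:GDelta} and Lemma \ref{lem:Tiform} reduce the question to type $A_1^{(1)}$. There the relevant idempotent is $e(\underline0,\underline0,\underline1,\underline1)$ acting on $L_1^e(\delta)^{\circ 2}$, which is spanned by vectors built from $v\boxtimes v$ by crossings. Concretely, using Lemma \ref{lem:minusculestr}, $e(0,0,1,1)L_1^e(\delta)^{\circ 2}$ is spanned by $\tau_2(v\boxtimes v)$. Since $x$ acts by zero on $v$, one expects this space to be one-dimensional and, by degree reasons, to lie in the top degree.

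The key step is a diagrammatic computation in the spirit of (\ref{eq:modify1}), with $u_0 = s_2$ in the $A_1^{(1)}$ picture, or the corresponding minimal shuffle in general. It shows
\[
\tau_{u_0}\,\tau_{w[N+1,N+1]}(v\boxtimes v) = -(t+1)\,\tau_{u_0}(v\boxtimes v),
\]
hence $\tau_{u_0}(v\boxtimes v)\mathsf{T}_i = -\tau_{u_0}(v\boxtimes v)$. So $\mathsf{T}_i$ acts by $-1$ on the whole weight space $e(2(\delta-\alpha_i),2\alpha_i)L_i^e(\delta)^{\circ 2}$, which is spanned by $\tau_{u_0}(v\boxtimes v)$ and its $R(2(\delta-\alpha_i))\otimes R(2\alpha_i)$-translates. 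Consequently this weight space lies entirely in the $(-1)$-eigenspace, which corresponds to $L_i^e(2)$. The projection onto $L_i^e(1,1)$ then annihilates it, giving $\Res_{2(\delta-\alpha_i),2\alpha_i}L_i^e(1,1)=0$.

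I expect the main obstacle to be twofold. One part is proving that the weight space is generated by $\tau_{u_0}(v\boxtimes v)$; for this I would rely on the one-dimensionality of the top-degree component together with simplicity and grading arguments. The other part is fixing the sign convention, namely which eigenspace is $(1,1)$ versus $(2)$. The latter I would settle via Remark \ref{rem:specht} and the right-action convention preceding Theorem \ref{thm:howe}, where $Z^{\underline\lambda}$ is defined by $T_w\mapsto(-1)^{\ell(w)}$. Under that convention the sign eigenvector is attached to $(2)$, consistent with the claim.
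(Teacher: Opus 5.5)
This is essentially the paper's argument. The one-step Mackey filtration gives $\Res_{2(\delta-\alpha_i),2\alpha_i}L_i^e(\delta)^{\circ 2}\simeq L^e(\delta-\alpha_i)^{\circ 2}\otimes L(i)^{\circ 2}$, which is simple by Theorem \ref{thm:cuspdecomp}. The vector $\tau_{u_0}(v\boxtimes v)$ is a nonzero element of this weight space with $\tau_{u_0}(v\boxtimes v)T_1=-\tau_{u_0}(v\boxtimes v)$ by (\ref{eq:modify1}), and Remark \ref{rem:specht} gives $L_i^e(1,1)=\Ker(T_1-t)$. With $t\neq -1$, the paper turns this into the contradiction $(t+1)v=0$, whereas you argue directly that the whole weight space lies in $L_i^e(2)$.

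One correction: the generation step has to come from that simplicity, not from a dimension count. In type $A_1^{(1)}$, $e(0,0,1,1)L_1^e(\delta)^{\circ 2}\simeq (L(0)\circ L(0))\otimes(L(1)\circ L(1))$ is $4$-dimensional, and only its top-degree part is a line. The $\mathcal{F}_i$ detour is also unnecessary, and would need facts about $\mathcal{F}_i$ on simples and restrictions that are not established in the paper.
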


\begin{proof}
Suppose that $\Res_{2(\delta-\alpha_i),2\alpha_i}L_i^e(1,1) \neq 0$. 
By \cite[Propositoin 6.16 (1)]{murata2025affinehighestweightstructures}, we have $\Res_{\delta-\alpha_i,\alpha_i} L_i^e(\delta) \simeq L^e(\delta-\alpha_i) \otimes L(i)$, 
and $\Res_{\delta-2\alpha_i,\alpha_i} L^e(\delta-\alpha_i) = 0$. 
Hence, we have the following one-step Mackey filtration (hence an isomorphism) ignoring grading shift: 
\[
\Res_{2(\delta-\alpha_i),2\alpha_i} L_i^e(\delta)^{\circ 2} \simeq L_i^e(\delta-\alpha_i)^{\circ 2} \otimes L(i)^{\circ 2}. 
\]
Note that this module is simple by Theorem \ref{thm:cuspdecomp}. 
Hence, its nonzero submodule $\Res_{2(\delta-\alpha_i),2\alpha_i}L_i^e(1,1)$ coincides with $\Res_{2(\delta-\alpha_i),2\alpha_i}L_i^e(\delta)^{\circ 2}$. 

Similarly to \cite[Proposition 5.1.3 (iii)]{MR3589160}, there exists a non-zero vector $v \in e(2(\delta-\alpha_i),2\alpha_i) L_i^e(\delta)^{\circ 2}$ such that $v (T_1+1) = 0$. 
On the other hand, since $L_i^e(1,1) = L_i^e(\delta)^{\circ 2}(T_1 + 1)$ by Remark \ref{rem:specht},
we have $v(T_1 -t) = 0$. 
Hence, we have $(t+1)v = 0$ and $t = -1$. 
This contradicts the assumption that the quantum characteristic of $t$ is infinite.  
\end{proof}

\begin{remark}
Lemma \ref{lem:choice} ensures that our labelling of simple $R^e$-modules by multipartitions is consistent with \cite[Page 277]{MR3694676}. 
\end{remark}

\begin{lemma} \label{lem:width1}
For $i \in \mathring{I}$ and $n \in \mathbb{Z}_{\geq 1}$, 
we have 
\[
\Res_{n\delta-\alpha_i,\alpha_i}L_i^e(1^n) \simeq (L_i^e(1^{n-1})\mynabla L^e(\delta-\alpha_i)) \otimes L(i).  
\]
\end{lemma}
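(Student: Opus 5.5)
The plan is to identify $\Res_{n\delta-\alpha_i,\alpha_i}L_i^e(1^n)$ as $X\otimes L(i)$ with $X$ simple, and then show that $X$ is the head of $L_i^e(1^{n-1})\circ L^e(\delta-\alpha_i)$. This head is simple because $L^e(\delta-\alpha_i)$ is real and admits an affinization by Theorem \ref{thm:affreal}. Theorem \ref{thm:rmatrix} (2) gives simplicity of the socle of $L_i^e(1^{n-1})\circ L^e(\delta-\alpha_i)$. Applying $D$ and $D(M\circ N)\simeq q^{(\beta,\gamma)}DN\circ DM$ to $L^e(\delta-\alpha_i)\circ L_i^e(1^{n-1})$ then gives simplicity of the head of $L_i^e(1^{n-1})\circ L^e(\delta-\alpha_i)$, which is what the notation $\mynabla$ denotes.

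\emph{Step 1: a nonzero map.} By the second part of Corollary \ref{cor:LRrule}, $\Res_{(n-1)\delta,\delta}L_i^e(1^n)\simeq L_i^e(1^{n-1})\otimes L_i^e(\delta)$, since the only Littlewood--Richardson constituent of $1^n$ with a one-box factor is $(1^{n-1},1)$ in colour $i$. Further restricting the second factor uses $\Res_{\delta-\alpha_i,\alpha_i}L_i^e(\delta)\simeq L^e(\delta-\alpha_i)\otimes L(i)$, quoted in the proof of Lemma \ref{lem:choice}. This shows that $L_i^e(1^{n-1})\otimes L^e(\delta-\alpha_i)\otimes L(i)$ embeds into $\Res_{(n-1)\delta,\delta-\alpha_i,\alpha_i}L_i^e(1^n)$. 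By transitivity of restriction and induction--restriction adjunction, we get a nonzero homomorphism
\[
\bigl(L_i^e(1^{n-1})\circ L^e(\delta-\alpha_i)\bigr)\otimes L(i)\to \Res_{n\delta-\alpha_i,\alpha_i}L_i^e(1^n).
\]

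\emph{Step 2: the restriction has the form $X\otimes L(i)$ with $X$ simple.} Here I would use the right-handed version of Theorem \ref{thm:crystal}, obtained by twisting with the anti-involution that reverses the order of strands. If the right $\varepsilon_i^*$ of $L_i^e(1^n)$ equals $1$, then $\Res_{n\delta-\alpha_i,\alpha_i}L_i^e(1^n)\simeq X\otimes L(i)$ with $X=\tilde e_i^{*}L_i^e(1^n)$ simple. So the key claim is
\[
\Res_{n\delta-2\alpha_i,2\alpha_i}L_i^e(1^n)=0.
\]
To prove it, I would run a Mackey analysis of $L_i^e(\delta)^{\circ n}$, of which $L_i^e(1^n)$ is a direct summand (semisimple case, Remark \ref{rem:specht}). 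Because each $L_i^e(\delta)$ is $e$-cuspidal, satisfies $\Res_{\delta-2\alpha_i,\alpha_i}L^e(\delta-\alpha_i)=0$, and has only $\alpha_i$ peelable on the right, any nonzero subquotient must take $\alpha_i$ from exactly two factors. Hence the $2\alpha_i$-part is detected after first restricting to $((n-2)\delta,2\delta)$. By Corollary \ref{cor:LRrule} this restriction is $L_i^e(1^{n-2})\otimes L_i^e(1,1)$, and Lemma \ref{lem:choice} kills its further restriction to $(2\delta-2\alpha_i,2\alpha_i)$. This step carries the real risk: I must check that every Mackey term for $(n\delta-2\alpha_i,2\alpha_i)$ genuinely factors through $\Res_{(n-2)\delta,2\delta}$ and commutes with the right Hecke action, so that the projection to the $Sp(1^n)$-component is compatible. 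This is the main obstacle.

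\emph{Step 3: conclusion and grading.} Combining Steps 1 and 2, the map of Step 1 becomes a nonzero map $L_i^e(1^{n-1})\circ L^e(\delta-\alpha_i)\to X$ with $X$ simple, hence it is surjective. Therefore $X$ is isomorphic, up to a grading shift, to the head $L_i^e(1^{n-1})\mynabla L^e(\delta-\alpha_i)$. To remove the shift, I would compare the degrees in which the map of Step 1 lands: it is homogeneous of degree $0$ by construction. Alternatively, both sides are self-dual: $X$ is self-dual because $L_i^e(1^n)$ and $L(i)$ are, and the head is self-dual after the correction of Theorem \ref{thm:rmatrix} (4). Here $(\wt,\wt)=0$ because $((n-1)\delta,\delta-\alpha_i)=0$, and I expect $\Lambda=0$ via Theorem \ref{thm:unmixing}, if the pair is unmixing.
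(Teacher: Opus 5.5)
\textbf{There is a genuine gap in Step 2.}

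Steps 1 and 3 are fine. Corollary \ref{cor:LRrule}, the restriction of $L_i^e(\delta)$ and adjunction give a nonzero degree-zero map $(L_i^e(1^{n-1})\circ L^e(\delta-\alpha_i))\otimes L(i)\to\Res_{n\delta-\alpha_i,\alpha_i}L_i^e(1^n)$. Once the target is $X\otimes L(i)$ with $X$ simple, this identifies $X$ with the head.

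However, the lemma itself forces $\varepsilon_i^*(L_i^e(1^n))=1$, since a restriction of the form $X\otimes L(i)$ with $X$ simple cannot occur when $\varepsilon_i^*\geq 2$. So Step 2 carries the entire content, and your argument for it fails for $n\geq 3$.

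Knowing $\Res_{(n-2)\delta,2\delta-2\alpha_i,2\alpha_i}L_i^e(1^n)=0$ does not imply $\Res_{n\delta-2\alpha_i,2\alpha_i}L_i^e(1^n)=0$. You do get the former from Corollary \ref{cor:LRrule} and Lemma \ref{lem:choice}. Now let $A\otimes L(i)^{\circ 2}$ be a composition factor of the latter. Cuspidality of $L_i^e(1^n)$ and Theorem \ref{thm:cuspdecomp} force $A$ to be, up to shift, $\hd(L'\circ L^e(k_1\delta-\alpha_i)\circ L^e(k_2\delta-\alpha_i))$ with $L'\in\Xi^e_m$, $k_1,k_2\geq 1$ and $m+k_1+k_2=n$.

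\begin{itemize}
\item \emph{Case $m\geq 1$.} Your Littlewood--Richardson argument, run inductively via $\Res_{m\delta,(n-m)\delta}L_i^e(1^n)\simeq L_i^e(1^m)\otimes L_i^e(1^{n-m})$, does exclude these factors.
\item \emph{Case $m=0$.} Convexity gives $\Res_{(n-2)\delta,2\delta-2\alpha_i}A=0$ for $n\geq 3$, so these factors are invisible to the restriction you propose.
\end{itemize}

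The case $m=0$ cannot be dismissed, because such factors really occur in the tensor space. For $n=3$, take the Mackey piece $L_i^e(\delta)\circ L^e(\delta-\alpha_i)^{\circ 2}$; this is even the piece induced from $(\delta,2\delta-2\alpha_i)$. By Lemma \ref{lem:innerprodSES} it contains $q_iL^e(2\delta-\alpha_i)\circ L^e(\delta-\alpha_i)$, whose head is killed by $\Res_{\delta,2\delta-2\alpha_i}$.

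So the claim that every Mackey term ``factors through'' $\Res_{(n-2)\delta,2\delta}$ is false: an induced module can have composition factors annihilated by restriction to the same parabolic. The Mackey filtration of $L_i^e(\delta)^{\circ n}$ does not tell you whether these factors lie in the $(1^n)$-summand.

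What is missing is an argument excluding $A=\hd(L^e(k_1\delta-\alpha_i)\circ L^e(k_2\delta-\alpha_i))$ from the $(1^n)$-component. This requires controlling how the Hecke action mixes different Mackey pieces. That mixing is governed by non-invertible maps such as $L_i^e(\delta)\circ L^e(\delta-\alpha_i)\to L^e(\delta-\alpha_i)\circ L_i^e(\delta)$ (cf.\ Lemma \ref{lem:innerprodSES}). The paper supplies this by following McNamara's proof of his Theorem 21.6, with Lemma \ref{lem:choice} fixing the labelling. As written, your proposal establishes the statement only for $n\leq 2$.
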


\begin{proof}
In view of Lemma \ref{lem:choice}, it is parallel to \cite[Theorem 21.6]{MR3694676}. 
\end{proof}

\begin{lemma}[{cf.\ \cite[Lemma 21.8]{MR3694676}}] \label{lem:innerprodSES}
  Let $i \in \mathring{I}$ and $n \in \mathbb{Z}_{\geq 1}$. 
  Then, $\Lambda(L^e(n\delta-\alpha_i), L_i^e(\delta)) = (\alpha_i, \alpha_i)$, and there is a short exact sequence
\[
 0 \to q_i L^e((n+1)\delta-\alpha_i) \to L_i^e(\delta) \circ L^e(n\delta-\alpha_i) \to L_i^e(\delta) \mynabla L^e(n\delta-\alpha_i) \to 0. 
 \]
\end{lemma}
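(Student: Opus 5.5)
We first compute $\Lambda(L^e(n\delta-\alpha_i),L_i^e(\delta))$, and then obtain the exact sequence from the $R$-matrix formalism of Theorem \ref{thm:rmatrix}.

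Set $M=L^e(n\delta-\alpha_i)$ and $N=L_i^e(\delta)$. By Theorem \ref{thm:affreal}, $M$ is real and $(\Delta^e(n\delta-\alpha_i),z)$ is an affinization with $\deg z=(\alpha_i,\alpha_i)$. Hence $\mathbf{r}_{M,N}\colon q^{\Lambda(M,N)}M\circ N\to N\circ M$ exists, and its image $N\myDelta M\simeq q^{\Lambda}M\mynabla N$ is the simple socle of $N\circ M$.

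\textbf{Computing $\Lambda$.} I would realize $\mathbf{r}_{M,N}$ concretely. The natural candidate comes from Proposition \ref{prop:psimodule}: the map $u\boxtimes v\mapsto \tau_1\cdots\tau_{\height(n\delta-\alpha_i)}(v\boxtimes u)$ from $q_i^2R(\alpha_i)\circ\Delta^e(n\delta-\alpha_i)$ to $\Delta^e(n\delta-\alpha_i)\circ R(\alpha_i)$ has degree $-(n\delta-\alpha_i,\alpha_i)=(\alpha_i,\alpha_i)$.
\begin{itemize}
\item Its cokernel is projective in $R^e((n+1)\delta)$. Using Proposition \ref{prop:imaginarystd} and cuspidality, this cokernel should have head $L^e((n+1)\delta-\alpha_i)\circ\cdots$; more precisely, a renormalized-$R$-matrix argument applies.
\item Concretely, I would compute $\Lambda$ via unmixing. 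The pair $(L(i),M)$ should be unmixing, because $\Res_{\alpha_i,\ast}M=0$ by $e$-cuspidality, as in the Mackey argument of Remark \ref{rem:unmixingr}. Theorem \ref{thm:unmixing} then gives $\Lambda(L(i),M)=-(\alpha_i,n\delta-\alpha_i)=(\alpha_i,\alpha_i)$.
\item Then I pass to $N=\hd(L^e(\delta-\alpha_i)\circ L(i))$ using additivity of $\Lambda$ on heads, $\Lambda(M,X\mynabla Y)\le\Lambda(M,X)+\Lambda(M,Y)$ with equality in favorable cases (KKOP). I also need $\Lambda(M,L^e(\delta-\alpha_i))=0$, which holds because the two real cuspidals lie on the same side of a convex order and commute up to degree zero. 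Together these yield $\Lambda(M,N)=(\alpha_i,\alpha_i)$.
\end{itemize}
Alternatively, I would pin down $\Lambda$ by degree bookkeeping. The kernel of $\mathbf{r}$ must have the correct weight $(n+1)\delta-\alpha_i$ and a self-dual grading, and Theorem \ref{thm:rmatrix}(4) forces $\tilde\Lambda=(\Lambda-(\alpha_i,\alpha_i))/2+\ldots$ to make $q^{\tilde\Lambda}M\mynabla N$ self-dual.

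\textbf{Exact sequence.} Applying duality $D$ turns $N\circ M$ into $M\circ N$ up to a shift $q^{(\beta,\gamma)}$, so it is enough to study $\Ker(\mathbf{r}_{M,N})\subset M\circ N$. Its character is determined in $U_q^-$: since $\chi(L^e(\cdot))$ of real root cuspidals is $(1-q^2_\alpha)f_\alpha$ by Lemma \ref{lem:realrootvector}, it equals
\[
\chi(M)\chi(N)-q^{-\Lambda}\chi(M\mynabla N).
\]
I would show that this kernel is $q_iL^e((n+1)\delta-\alpha_i)$ as follows. First, any composition factor $L$ of the kernel has $\Lambda(M,L)<\Lambda(M,N)$ by Theorem \ref{thm:rmatrix}(3). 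Second, the cuspidal module $L^e((n+1)\delta-\alpha_i)$ occurs in $N\circ M$: by the PBW/convexity argument of Theorem \ref{thm:cuspdecomp}, it appears exactly once and as a submodule, via the map to $\Delta^e(\ldots)$. Then I compare characters using the known relation from McNamara's argument,
\[
f^e_{(n+1)\delta-\alpha_i}\ \sim\ [\psi_{i,1},f^e_{n\delta-\alpha_i}]\text{-type commutation,}
\]
which is an identity in $U_q^-$ independent of $t$. This forces the kernel to be exactly that one cuspidal module with shift $q_i$.

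\textbf{Main obstacle.} I expect the hardest step to be showing that the kernel has no other composition factors. McNamara used geometric Lemma 7.5 for this. I would try to replace it by the $t$-independent character identity together with Theorem \ref{thm:rmatrix}(3) and the simplicity of the head.
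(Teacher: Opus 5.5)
Your proposal has two genuine gaps, one in each half.

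\textbf{The value of $\Lambda$.} Your unmixing computation is for the pair $(L(i),M)$, so it gives $\Lambda(L(i),M)=(\alpha_i,\alpha_i)$. But $\Lambda$ is not symmetric. What enters a bound for $\Lambda(M,L^e(\delta-\alpha_i)\mynabla L(i))$ is $\Lambda(M,L(i))$, and $(M,L(i))$ has no reason to be unmixing for $n\ge 2$. Your claim $\Lambda(M,L^e(\delta-\alpha_i))=0$ is false for $n\ge 2$: since $\delta-\alpha_i\prec n\delta-\alpha_i$, cuspidality and convexity make $(L^e(n\delta-\alpha_i),L^e(\delta-\alpha_i))$ an unmixing pair. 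Theorem \ref{thm:unmixing} then gives $\Lambda=-(n\delta-\alpha_i,\delta-\alpha_i)=-(\alpha_i,\alpha_i)$. Moreover, subadditivity of $\Lambda$ over $\mynabla$ only yields an upper bound, and you give no reason for equality. Your "degree bookkeeping'' alternative via Theorem \ref{thm:rmatrix}(4) only fixes the parity of $\Lambda$, since $(\wt M,\wt N)=0$.

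For $n=1$ your idea does work once corrected, and it is exactly the paper's base case: $\Lambda(M,M\mynabla L(i))=\Lambda(M,L(i))$ by Theorem \ref{thm:rmatrix}(3), and $(L^e(\delta-\alpha_i),L(i))$ is unmixing. What is missing for $n\ge 2$ is an induction on $n$, with $\gamma=n\delta-\alpha_i$:
\begin{enumerate}
\item The sequence for $n-1$ gives a surjection $q_iL^e(\gamma-\delta)\circ L_i^e(\delta)\to L^e(\gamma)$, hence an embedding $q_iL^e(\gamma-\delta)\otimes L_i^e(\delta)\hookrightarrow \Res_{\gamma-\delta,\delta}L^e(\gamma)$.
\item The same sequence gives $q_iL^e(\gamma)\hookrightarrow L_i^e(\delta)\circ L^e(\gamma-\delta)$. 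Combining this with step 1 and the Mackey filtration embeds $q_i^2L^e(\gamma)\otimes L_i^e(\delta)$ into $\Res_{\gamma,\delta}(L_i^e(\delta)\circ L^e(\gamma))$.
\item Adjunction then gives a nonzero map $q_i^2L^e(\gamma)\circ L_i^e(\delta)\to L_i^e(\delta)\circ L^e(\gamma)$. Since this Hom space is one-dimensional (Theorem \ref{thm:rmatrix}(1)), $\Lambda=(\alpha_i,\alpha_i)$.
\end{enumerate}

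\textbf{The exact sequence.} The step you flag as the main obstacle is elementary, and your proposed substitute does not work. $\psi_{i,1}^e$ and $f^e_{n\delta-\alpha_i}$ are characters of $\Delta_i^e(\delta)$ and $\Delta^e(n\delta-\alpha_i)$, not of $L_i^e(\delta)$ and $L^e(n\delta-\alpha_i)$. In general $\Delta_i^e(\delta)$ has composition factors $L_j^e(\delta)$ with $j\neq i$. Without independent control of $\chi(L_i^e(\delta)\mynabla L^e(\gamma))$, no identity in $U_q^-(\mathfrak{g})$ can pin down the kernel.

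The missing observation is the following. $L_i^e(\delta)\circ L^e(\gamma)=\overline{\Delta}(\boldsymbol{c})$ for the Lusztig datum $\boldsymbol{c}$ with $\boldsymbol{c}_-(\gamma)=1$ and $\boldsymbol{c}_\delta^{(i)}=(1)$. The only datum of weight $\gamma+\delta$ with bilexicographically smaller root partition is $\boldsymbol{c}'$ with $\boldsymbol{c}'_-(\gamma+\delta)=1$. To see this, use $p(\Phi^{\mathrm{min}}_{+,\prec\delta})\subset\mathring{\mathsf{Q}}_-$: the only admissible root is $(n+1)\delta-\alpha_i$. By the stratification (Theorem \ref{thm:stratification}), every composition factor other than the head is then a shift of $L^e(\gamma+\delta)$. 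The socle $N\myDelta M$ is simple and occurs exactly once (Theorem \ref{thm:rmatrix}(2),(3)), so $N\circ M$ has length two. The shift $q^{\Lambda/2}=q_i$ comes from self-duality of $q^{\tilde\Lambda}M\mynabla N$ with $\tilde\Lambda=\Lambda/2$ (Theorem \ref{thm:rmatrix}(4)). No replacement for McNamara's geometric lemma is needed at this point.
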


\begin{proof}
  Take a convex order $\preceq$ of carse type $e$. 
  Put $\gamma = n\delta - \alpha_i$ and let $\boldsymbol{c} = (\boldsymbol{c}_-,\boldsymbol{c}_{\delta},\boldsymbol{c}_+)$ be an element of $\Upsilon^{\preceq}(\gamma + \delta)$ defined by setting 
  \[
  \boldsymbol{c}_-(\gamma) = 1, \ \boldsymbol{c}_{\delta}^{(i)} = (1), 
  \]
  and taking all the other entries to be trivial. 
  Let $\boldsymbol{c}'$ be the element of $\Upsilon^{\preceq}(\gamma + \delta)$ defined by setting $\boldsymbol{c}'_-(\gamma+\delta) =1$ and taking all the other entries to be trivial. 
  Then, we see that only $\boldsymbol{c}' \in \Upsilon^{\preceq}(\gamma+\delta)$ satisfies $\rho(\boldsymbol{c}') < \rho(\boldsymbol{c})$ in the bilexicographic order. 
  Hence, all the composition factors of $\overline{\Delta}(\boldsymbol{c}) = L_i^e(\delta) \circ L^e(\gamma)$ other than the head $L_i^e(\delta) \mynabla L^e(\gamma)$ are grading shifts of $L^e(\gamma+\delta)$. 
  Since $L^e(\gamma)$ is real and admits an affinization by Theorem \ref{thm:affreal}, 
  we see using Theorem \ref{thm:rmatrix} that $L^e(\gamma+\delta)$ appears in the socle of $L_i^e(\delta) \circ L^e(\gamma)$ with multiplicity one up to grading shift.  
  More precisely, we have an isomorphism
  \[
  L^e(\gamma+\delta) \simeq q^{\tilde{\Lambda}(L^e(\gamma),L_i^e(\delta))} L^e(\gamma)\mynabla L_i^e(\delta) \simeq q^{\tilde{\Lambda}(L^e(\gamma),L_i^e(\delta)) - \Lambda(L^e(\gamma),L_i^e(\delta))} L_i^e(\delta)\myDelta L^e(\gamma), 
   \] 
  and a short exact sequence
  \[
  0 \to q^c L^e(\gamma + \delta) \to L_i^e(\delta) \circ L^e(\gamma) \to L_i^e(\delta)\mynabla L^w(\gamma) \to 0, 
  \]
  where $c = -(\tilde{\Lambda}(L^e(\gamma),L_i^e(\delta)) - \Lambda(L^e(\gamma),L_i^e(\delta))) = \Lambda (L^e(\gamma), L_i^e(\delta))/2$.
 
  To complete the proof, it suffices to prove that $\Lambda(L^e(\gamma), L_i^e(\delta)) = (\alpha_i,\alpha_i)$. 
  We proceed by an induction on $n$. 
  First, assume that $n = 1$, that is, $\gamma = \delta-\alpha_i$. 
 We compute
  \begin{align*}
  &\Lambda (L^e(\delta-\alpha_i), L_i^e(\delta))  \\
  &= \Lambda (L^e(\delta-\alpha_i), L^e(\delta-\alpha_i) \mynabla L(i)) \quad \text{by Definition \ref{def:minuscule}} \\
  &= \Lambda (L^e(\delta-\alpha_i), L(i)) \quad \text{by Theorem \ref{thm:rmatrix}} \\
  &= -(\delta-\alpha_i, \alpha_i) \quad \text{by Theorem \ref{thm:unmixing} and \cite[Proposition 6.16]{murata2025affinehighestweightstructures}} \\
  &= (\alpha_i, \alpha_i). 
  \end{align*}
  Hence, the proposition holds in this case. 

  Next, assume that $n>1$ and that the proposition is proved for smaller numbers. 
  By the induction hypothesis, we have a surjective homomorphism 
  \[
  q_i L^e(\gamma - \delta) \circ L_i^e(\delta) \to L^e(\gamma). 
  \]
  By induction-restriction adjunction, we obtain a nonzero homomorphism
  \[
  q_i L^e(\gamma-\delta) \otimes L_i^e(\delta) \to \Res_{\gamma-\delta, \delta} L^e(\gamma), 
  \]
  which is injective since $L^e(\gamma-\delta)\otimes L_i^e(\delta)$ is simple. 
  It yields a sequence of injective homomorphisms
  \begin{align*}
  q_i^2 L^e(\gamma) \otimes L_i^e(\delta) &\to q_i(L_i^e(\delta) \circ L^e(\gamma-\delta)) \otimes L_i^e(\delta) \quad \text{by the induction hypothesis} \\
  &= q_i (\Ind_{\delta, \gamma-\delta} \otimes \Id_{\delta}) (L_i^e(\delta) \otimes L^e(\gamma-\delta) \otimes L_i^e(\delta)) \\ 
  &\to (\Ind_{\delta, \gamma-\delta} \otimes \Id_{\delta})(L_i^e(\delta) \otimes \Res_{\gamma-\delta, \delta}L^e(\gamma))  \quad \text{by the discussion above} \\
  &\to \Res_{\gamma,\delta} (L_i^e(\delta) \circ L^e(\gamma)) \quad \text{by Mackey filtration}. 
  \end{align*}
  By induction-restriction adjunction, we obtain a nonzero homomorphism
  \[
  q_i^2 L^e(\gamma) \circ L_i^e(\delta) \to L_i^e(\delta) \circ L(\gamma), 
  \]
  which means $\Lambda(L^e(\gamma), L_i^e(\delta)) = (\alpha_i,\alpha_i)$. 
  The induction step is complete.  
\end{proof}

\begin{lemma}[{cf.\ \cite[Proposition 21.9]{MR3694676}}] \label{lem:inductionSES}
  Let $m,n \in \mathbb{Z}_{\geq 1}$ and $i \in \mathring{I}$. 
  Then, we have a short exact sequence 
  \[
  0 \to L_i^e(1^m) \mynabla L^e(n\delta-\alpha_i) \to L^e(n\delta-\alpha_i)\circ L_i^e(1^m) \to q_i^{-1} L^e(1^{m-1}) \mynabla  L_i^e((n+1)\delta-\alpha_i) \to 0. 
  \]
\end{lemma}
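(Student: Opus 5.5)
I will follow the proof of \cite[Proposition 21.9]{MR3694676}, replacing the geometric input by the $R$-matrix results of Subsection \ref{sub:Rmatrix}. (I read the right-hand term as $q_i^{-1}L_i^e(1^{m-1}) \mynabla L^e((n+1)\delta-\alpha_i)$.) Put $M = L^e(n\delta-\alpha_i)$, which is real and admits an affinization by Theorem \ref{thm:affreal}, and $N = L_i^e(1^m)$. By Theorem \ref{thm:rmatrix}, $M\circ N$ has simple socle $N \myDelta M \simeq q^{\Lambda(M,N)} M \mynabla N$, occurring with multiplicity one. I will identify this socle with $L_i^e(1^m)\mynabla L^e(n\delta-\alpha_i)$, the head of $N\circ M$; this uses $\Image \mathbf{r}_{M,N} = N\myDelta M$ together with the symmetric statement for the $R$-matrix $N\circ M \to M\circ N$. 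The grading has to be tracked through Theorem \ref{thm:rmatrix}(4).

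The first main step is to show that $M\circ N$ has exactly two composition factors, counted with multiplicity. I will argue with the stratification of Theorem \ref{thm:stratification}, as in the proof of Lemma \ref{lem:innerprodSES}. Fix a convex order $\preceq$ of coarse type $e$. Then $M\circ N$ is (up to the order of factors) a proper standard module $\overline{\Delta}(\boldsymbol{c})$ with $\boldsymbol{c}_-(n\delta-\alpha_i)=1$ and $\boldsymbol{c}_\delta=(1^m)$ in color $i$. Its composition factors are labelled by Lusztig data that are bilexicographically smaller.

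To control the multiplicities I will compute characters in the Grothendieck group, or equivalently the restriction $\Res_{(n+m)\delta-\alpha_i,\alpha_i}$. I will use Lemma \ref{lem:width1} for $\Res L_i^e(1^m)$, Lemma \ref{lem:innerprodSES}, and the Mackey filtration. Restricting $M\circ N$ to $(\,\cdot\,,\alpha_i)$ gives two pieces: one from $\Res_{\cdot,\alpha_i}$ of $N$, and one from $\Res_{\cdot,\alpha_i}$ of $M$. The second piece vanishes by cuspidality, since $p(\alpha_i)\in\mathring{\Phi}_+$ (Corollary \ref{cor:cuspidaltest}).

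So $\Res_{\cdot,\alpha_i}(M\circ N)$ is built from $M\circ (L_i^e(1^{m-1})\mynabla L^e(\delta-\alpha_i))\otimes L(i)$. Comparing with the exact sequence of Lemma \ref{lem:innerprodSES}, where $L_i^e(\delta)\circ M$ has socle $q_iL^e((n+1)\delta-\alpha_i)$, identifies the extra factor as $q_i^{-1}L_i^e(1^{m-1})\mynabla L^e((n+1)\delta-\alpha_i)$. This step would be proved by induction on $m$, with the case $m=1$ being Lemma \ref{lem:innerprodSES} itself, up to duality.

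The second step is to produce the surjection $M\circ N \to q_i^{-1}L_i^e(1^{m-1})\mynabla L^e((n+1)\delta-\alpha_i)$. I will build it by adjunction from the composite
\[
M\circ L_i^e(1^{m-1})\circ L_i^e(\delta) \to L_i^e(1^{m-1})\circ M\circ L_i^e(\delta) \to L_i^e(1^{m-1})\circ L^e((n+1)\delta-\alpha_i).
\]
Here the first map is an $R$-matrix and the second comes from Lemma \ref{lem:innerprodSES}. I will also use that $L_i^e(1^m)$ is a quotient of $L_i^e(1^{m-1})\circ L_i^e(\delta)$, which holds by Theorem \ref{thm:monoidal}(3) and semisimplicity. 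I must check that this composite is nonzero and factors through $M\circ N$. Once that is done, the length-two statement forces exactness, and the multiplicity-one statement in Theorem \ref{thm:rmatrix}(3) forces the submodule to be exactly the socle.

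I expect the main obstacle to be the grading bookkeeping, namely determining $\Lambda(M, L_i^e(1^m))$ and showing it is consistent with the shift $q_i^{-1}$. I will first try to compute $\Lambda$ additively using Theorem \ref{thm:unmixing} and the value $\Lambda(L^e(n\delta-\alpha_i),L_i^e(\delta))=(\alpha_i,\alpha_i)$ from Lemma \ref{lem:innerprodSES}. The second hardest point is proving that the quotient is simple rather than a longer module. For this I will use that the third term has no further $\Res_{\cdot,\alpha_i}$ room, by Lemma \ref{lem:width1}.
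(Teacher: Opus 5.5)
Your plan follows McNamara's argument, as the paper does. However, the step meant to show that $M\circ N$ has length two fails, and the remaining steps do not recover it.

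\textbf{The vanishing claim is false.} You assert that, in the Mackey filtration of $\Res_{(n+m)\delta-2\alpha_i,\alpha_i}(M\circ N)$, the term coming from $\Res_{n\delta-2\alpha_i,\alpha_i}M$ vanishes by Corollary \ref{cor:cuspidaltest}.
\begin{itemize}
\item That corollary concerns imaginary $R^e(k\delta)$-modules, not the real cuspidal module $M$.
\item Even for imaginary modules it only kills $\Res_{\alpha,\cdot}$ with $p(\alpha)\in\mathring{\Phi}_+$, or $\Res_{\cdot,\alpha}$ with $p(\alpha)\in\mathring{\Phi}_-$. Indeed $\Res_{\delta-\alpha_i,\alpha_i}L_i^e(\delta)\neq 0$.
\item For $M=L^e(n\delta-\alpha_i)$, cuspidality allows a final $\alpha_i$: we have $\alpha_i\succ\delta\succ n\delta-\alpha_i$, and $n\delta-2\alpha_i=(\delta-\alpha_i)+((n-1)\delta-\alpha_i)$ lies in $\spn_{\mathbb{Z}_{\geq 0}}\Phi^{\mathrm{min}}_{+,\preceq n\delta-\alpha_i}$.
\item Concretely, in type $A_1^{(1)}$ the module $L^e(2\delta-\alpha_1)$ is concentrated on the single word $(0,0,1)$, so $\Res_{2\alpha_0,\alpha_1}L^e(2\delta-\alpha_1)\neq 0$.
\item The vanishing does hold for $n=1$ (the case quoted in the proof of Lemma \ref{lem:choice}), but not in general.
\end{itemize}
So your description of $\Res_{\cdot,\alpha_i}(M\circ N)$ omits a nonzero Mackey term, and the length-two claim, which is the heart of the lemma, is unsupported. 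Separately, $N\myDelta M$ is the socle of $N\circ M$, not of $M\circ N$. The socle of $M\circ N$ comes from $\mathbf{r}_{N,M}$, which Theorem \ref{thm:rmatrix} as stated does not cover.

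\textbf{The identification step is underdetermined.} Even with the restriction corrected, ``comparing with Lemma \ref{lem:innerprodSES}'' does not identify the composition factors. Besides the socle, the stratification allows every $\hd(L^e(\underline{\mu})\circ L^e(k\delta-\alpha_i))$ with $n<k\leq n+m$ and $\underline{\mu}$ any multipartition of $n+m-k$, in any colors. You must exclude all but one of these and prove multiplicity one.

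A restriction that is genuinely one-step is $\Res_{j\delta,(n+m-j)\delta-\alpha_i}$. Cuspidality of both factors leaves only $L_i^e(1^j)\otimes(M\circ L_i^e(1^{m-j}))$, by Corollary \ref{cor:LRrule}. With induction on $m$, this rules out $n+1<k<n+m$, and for $k=n+1$ it forces $\underline{\mu}=(1^{m-1})$ in color $i$.

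This restriction is blind to $L^e((n+m)\delta-\alpha_i)$, and that module really is a quotient of $M\circ L_i^e(\delta)^{\circ m}$ (iterate the dual of Lemma \ref{lem:innerprodSES}). Excluding it from the summand $M\circ L_i^e(1^m)$ is exactly where the distinction between $L_i^e(1^m)$ and the other summands must enter, via Lemmas \ref{lem:choice} and \ref{lem:width1}.

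\textbf{Further gaps.}
\begin{itemize}
\item Your Step 2 has a related unchecked point. The composite must vanish on the complementary summand $M\circ L_i^e(2,1^{m-2})$. But $\Res_{(m-1)\delta,\cdot}$ of that summand also contains $L_i^e(1^{m-1})\otimes(M\circ L_i^e(\delta))$, so this vanishing is not automatic.
\item Theorem \ref{thm:unmixing} cannot be used for the grading bookkeeping, because the pair $(L^e(n\delta-\alpha_i),L_i^e(1^m))$ is not unmixing. For $n\geq 2$ this already follows from $\Res_{(n-1)\delta-\alpha_i,\delta}L^e(n\delta-\alpha_i)\neq 0$, as in the proof of Lemma \ref{lem:innerprodSES}.
\item $\Lambda$ is not additive, so it cannot be computed term by term.
\end{itemize}
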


\begin{proof}
It is proved exactly in the same way as \cite[Proposition 21.9]{MR3694676}, using Lemma \ref{lem:innerprodSES}.  
The apparent difference in the form of our short exact sequence is merely due to the application of $D$. 
\end{proof}

\begin{lemma}[{cf.\ \cite[Lemma 22.1]{MR3694676}}] \label{lem:innerprod0}
Let $i, j \in \mathring{I}$ and $n \in \mathbb{Z}_{\geq 1}$. 
Then, 
\[
(\overline{\psi_{i,n}^e}, \chi(L_j^e(1^n))) = \delta_{i,j} (-q_i)^{-n+1}
\]
\end{lemma}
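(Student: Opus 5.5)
We compute the pairing by expanding $\psi_{i,n}^e = f_{n\delta-\alpha_i}^e f_i - q_i^2 f_i f_{n\delta-\alpha_i}^e$ (recall $\widetilde{e\alpha_i} = \alpha_i$, so $f^e_{\alpha_i} = f_i$). We then use the defining property $(xx',y) = (x\otimes x', r(y))$ to reduce to restrictions of $L_j^e(1^n)$. Note first that $\overline{f_i} = f_i$, and that $(1-q_i^2)f^e_{n\delta-\alpha_i} = \chi(L^e(n\delta-\alpha_i))$ lies in the dual canonical basis. So, up to explicit bar-twisted scalars, $\overline{\psi_{i,n}^e}$ is a combination of $\overline{\chi(L^e(n\delta-\alpha_i))}\, f_i$ and $f_i\,\overline{\chi(L^e(n\delta-\alpha_i))}$. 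By Theorem \ref{thm:categorification} (3), the relevant quantity becomes an Euler-form pairing
\[
\langle \Delta^e(n\delta-\alpha_i)\circ R(\alpha_i), L_j^e(1^n)\rangle_{\mathrm{Ext}} - q_i^{\pm 2}\,\langle R(\alpha_i)\circ \Delta^e(n\delta-\alpha_i), L_j^e(1^n)\rangle_{\mathrm{Ext}},
\]
with the precise shifts read off from Corollary \ref{cor:maximalext}. Equivalently, it is the graded Euler characteristic of $\Res$ of $L_j^e(1^n)$ against standard modules.

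The second term vanishes. Since $L_j^e(1^n)$ is $e$-cuspidal and $p(\alpha_i)\in\mathring{\Phi}_+$, Corollary \ref{cor:cuspidaltest} gives $\Res_{\alpha_i,n\delta-\alpha_i}L_j^e(1^n)=0$.

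For the first term we use adjunction: it equals the Euler pairing of $\Delta^e(n\delta-\alpha_i)\otimes R(\alpha_i)$ with $\Res_{n\delta-\alpha_i,\alpha_i}L_j^e(1^n)$. Suppose $j\neq i$. Then $L_j^e(1^n)\in\Xi^j$ by Lemma \ref{lem:onecolor2}, so $e(n\delta-\alpha_i,\alpha_i)L_j^e(1^n)=0$ by Lemma \ref{lem:onecolor}, and the pairing is $0$. Suppose $j=i$. Lemma \ref{lem:width1} identifies the restriction with $(L_i^e(1^{n-1})\mynabla L^e(\delta-\alpha_i))\otimes L(i)$. The pairing against $\Delta^e(n\delta-\alpha_i)\otimes R(\alpha_i)$ then reduces to computing the graded multiplicity of $L^e(n\delta-\alpha_i)$ in $L_i^e(1^{n-1})\mynabla L^e(\delta-\alpha_i)$, together with the vanishing of higher Ext from the stratification in Theorem \ref{thm:stratification}. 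Here $\Delta^e(n\delta-\alpha_i)$ is a standard module, and since only the multiplicity of its head matters, higher Exts vanish.

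The main obstacle is to show that $L_i^e(1^{n-1})\mynabla L^e(\delta-\alpha_i)$ is exactly $L^e(n\delta-\alpha_i)$ up to the shift $q_i^{-n+1}$, i.e.\ a grading shift of the real cuspidal module. I would prove this by induction on $n$ using Lemma \ref{lem:inductionSES}. Applying $D$ and the real/affinization theory (Theorem \ref{thm:rmatrix}), the quotient $q_i^{-1}L^e(1^{m-1})\mynabla L^e((n+1)\delta-\alpha_i)$ with $m=n-1$ and cuspidal factor $\delta-\alpha_i$ should identify the head. The weight $n\delta-\alpha_i$ forces any simple in $\gMod{R}$ whose restriction data are cuspidal of type $\prec\delta$ to be $L^e(n\delta-\alpha_i)$; this uses convexity, with $\boldsymbol{c}_-(n\delta-\alpha_i)=1$ being the unique minimal root partition. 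The grading shift then accumulates one factor $q_i^{-1}$ per step, via $\Lambda(L^e(k\delta-\alpha_i),L_i^e(\delta))=(\alpha_i,\alpha_i)$ from Lemma \ref{lem:innerprodSES}. Finally, the sign $(-1)^{n-1}$ arises from the bar-conjugation of the scalar $1/(1-q_i^2)$ versus $1/(1-q_i^{-2})=-q_i^2/(1-q_i^2)$, exactly as in Lemma \ref{lem:almostorth1}. Combining these yields $\delta_{i,j}(-q_i)^{-n+1}$; the bookkeeping of signs and shifts is where I expect errors most likely.
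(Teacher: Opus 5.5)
Your reduction up to the case $j=i$ matches the paper. Expanding $\psi^e_{i,n}$ and using $\Extform{X}{Y}=(\overline{\chi(X)},\chi(Y))$ gives $\Extform{\Delta^e(n\delta-\alpha_i)\circ R(\alpha_i)}{L_j^e(1^n)}-q_i^{-2}\Extform{R(\alpha_i)\circ\Delta^e(n\delta-\alpha_i)}{L_j^e(1^n)}$ directly. No detour through dual canonical basis scalars is needed, since $\chi(\Delta^e(n\delta-\alpha_i))=f^e_{n\delta-\alpha_i}$ and $\chi(R(\alpha_i))=f_i$ exactly. The second term dies by cuspidality, and the first dies for $j\neq i$ by Lemma~\ref{lem:onecolor}. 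For $j=i$, Lemma~\ref{lem:width1} reduces everything to $\Extform{\Delta^e(n\delta-\alpha_i)}{M}$ with $M=L_i^e(1^{n-1})\mynabla L^e(\delta-\alpha_i)$.

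\textbf{The gap is in how you treat this pairing.}

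\emph{$M$ is not a shift of $L^e(n\delta-\alpha_i)$.} For $n\geq 2$, take a convex order of coarse type $e$, so $\delta-\alpha_i\prec\delta$. Then $L_i^e(1^{n-1})\circ L^e(\delta-\alpha_i)$ is exactly $\overline{\Delta}^{\preceq}(\omega)$ for the datum $\omega$ with $\boldsymbol{c}_-(\delta-\alpha_i)=1$ and imaginary part $L_i^e(1^{n-1})$. Its head $L^{\preceq}(\omega)$ differs from $L^{\preceq}(n\delta-\alpha_i)$ by the bijection of Theorem~\ref{thm:cuspdecomp}(3). For $n=2$, Lemma~\ref{lem:inductionSES} even exhibits $M$ as the kernel of $L^e(\delta-\alpha_i)\circ L_i^e(\delta)\twoheadrightarrow q_i^{-1}L^e(2\delta-\alpha_i)$. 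Hence $\Hom_R(\Delta^e(n\delta-\alpha_i),M)=0$, and the multiplicity you propose to compute is $0$, which gives the wrong answer.

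\emph{Higher Ext does not vanish.} The stratification gives $\Ext^{>0}$-vanishing for a standard module only against modules in strata $\leq$ its own. Here $\rho(\omega)$ is strictly larger than the root partition $n\delta-\alpha_i$ in the bilexicographic order. Dimension shifting along the sequences of Lemma~\ref{lem:inductionSES} shows the entire value sits in $\Ext^{n-1}$.

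\emph{The sign is not from bar-conjugation.} The sign $(-1)^{n-1}$ cannot come from the trick of Lemma~\ref{lem:almostorth1}, which yields a single factor independent of $n$.

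\textbf{The missing idea is to work in the Grothendieck group.} Iterating the short exact sequences of Lemma~\ref{lem:inductionSES} gives the telescoping identity
\[
\chi(M)=\sum_{s=1}^{n}(-q_i)^{-s+1}\,\chi\bigl(L^e(s\delta-\alpha_i)\circ L_i^e(1^{n-s})\bigr).
\]
Since the Euler form is additive, it suffices to pair $\Delta^e(n\delta-\alpha_i)$ with each product. Write $L^e(s\delta-\alpha_i)\circ L_i^e(1^{n-s})\simeq\Coind_{(n-s)\delta,\,s\delta-\alpha_i}(L_i^e(1^{n-s})\otimes L^e(s\delta-\alpha_i))$, with no shift because $(\delta,\cdot)=0$. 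By the $(\Res,\Coind)$ adjunction, the $s$-th term becomes $\Extform{\Res_{(n-s)\delta,\,s\delta-\alpha_i}\Delta^e(n\delta-\alpha_i)}{L_i^e(1^{n-s})\otimes L^e(s\delta-\alpha_i)}$. This vanishes for $s<n$, because every composition factor of $\Delta^e(n\delta-\alpha_i)$ is the cuspidal module $L^e(n\delta-\alpha_i)$ and $\delta\succ n\delta-\alpha_i$. Only $s=n$ survives, giving $(-q_i)^{-n+1}\Extform{\Delta^e(n\delta-\alpha_i)}{L^e(n\delta-\alpha_i)}=(-q_i)^{-n+1}$. The sign and the power of $q_i$ are just the coefficient of the last term of the alternating sum.
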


\begin{proof}
Recall $\psi_{i,n}^e = f_{n\delta-\alpha_i}^e f_i - q_i^2 f_i f_{n\delta-\alpha_i}^e$ and $\chi(\Delta^e(n\delta-\alpha_i)) = f_{n\delta-\alpha_i}^e, \chi(R(\alpha_i)) = f_i$. 
Hence, using Theorem \ref{thm:categorification}, we have 
\begin{align*}
&(\overline{\psi_{i,n}^e}, \chi(L_j^e(1^n))) \\
&= \Extform{\Delta^e(n\delta-\alpha_i) \circ R(\alpha_i)}{L_j^e(1^n)} - q_i^{-2} \Extform{R(\alpha_i) \circ \Delta^e(n\delta-\alpha_i)}{L_j^e(1^n)} \\ 
&= \Extform{\Delta^e(n\delta-\alpha_i) \otimes R(\alpha_i)}{\Res_{n\delta-\alpha_i,\alpha_i}L_j^e(1^n)} \\
& \quad - q_i^{-2}\Extform{R(\alpha_i)\otimes \Delta^e(n\delta-\alpha_i)}{\Res_{\alpha_i,n\delta-\alpha_i}L_j^e(1^n)}, 
\end{align*}
where the last equality follows from induction-restriction adjunction. 
Since $L_j^e(1^n)$ is a cuspidal module, the latter restriction is zero. 
Since $L_j^e(1^n)$ belongs to $\Xi^j$, the former restriction is also zero unless $j = i$ (Lemma \ref{lem:onecolor}). 
From now on, we assume $j = i$. 
By Lemma \ref{lem:width1}, the former restriction is $(L_i^e(1^{n-1})\mynabla L^e(\delta-\alpha_i)) \otimes L(i)$. 
Moreover, using Lemma \ref{lem:inductionSES} we see that 
\[
\chi(L_i^e(1^{n-1})\mynabla L^e(\delta-\alpha_i)) = \sum_{1\leq s \leq n} (-q_i)^{-s+1} \chi(L^e(s\delta-\alpha_i)\circ L_i^e(1^{n-s})). 
\]
Hence, 
\begin{align*}
&(\overline{\psi_{i,n}^e}, \chi(L_i^e(1^n))) \\
&= \sum_{1 \leq s \leq n} (-q_i)^{-s+1} \Extform{\Delta^e(n\delta-\alpha_i)}{L^e(s\delta-\alpha_i) \circ L_i^e(1^{n-s})} \Extform{R(\alpha_i)}{L(i)} \\
&= \sum_{1\leq s \leq n} (-q_i)^{-s+1} \Extform{\Res_{(n-s)\delta,s\delta-\alpha_i}\Delta^e(n\delta-\alpha_i)}{L_i^e(1^{n-s})\otimes L^e(s\delta-\alpha_i)} \\
&\quad \text{by restriction-coinduction adjunction}. 
\end{align*}
Since $\Delta^e(n\delta-\alpha_i)$ is a cuspidal module, the restriction is zero unless $s = n$. 
Therefore, we obtain
\begin{align*}
(\overline{\psi_{i,n}^e}, \chi(L_i^e(1^n))) &= (-q_i)^{-n+1} \Extform{\Delta^e(n\delta-\alpha_i)}{L^e(n\delta-\alpha_i)} \\
&= (-q_i)^{-n+1} \quad \text{by Theorem \ref{thm:stratification}}. 
\end{align*}
\end{proof}

\begin{lemma}[{cf.\ \cite[Lemma 22.2]{MR3694676}}] \label{lem:innerprod}
 Let $i,j \in \mathring{I}$ and $n \in \mathbb{Z}_{\geq 1}$. 
 Then, 
 \[
 (\overline{P_{i,n}^e},\chi(L_j^e(1^n))) = \begin{cases}
 1 & \text{if $i=j, n =1$}, \\
 0 & \text{otherwise}. 
 \end{cases}
 \]
\end{lemma}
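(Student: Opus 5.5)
Throughout, $L_j^e(1^n)$ denotes $L^e(\underline{\lambda})$ with $\lambda^{(j)}=(1^n)$ and all other components empty. The plan is to combine the recursion defining $P_{i,n}^e$ with Lemma \ref{lem:innerprod0}, using the coproduct $r$ to split inner products against products, in the style of \cite[Lemma 22.2]{MR3694676}. Unwinding the recursion gives
\[
P_{i,n}^e=\frac{1}{[n]_i}\sum_{s=1}^n q_i^{n-s}\psi_{i,s}^e P_{i,n-s}^e ,
\]
and bar-conjugation turns this into $\overline{P_{i,n}^e}=\frac{1}{[n]_i}\sum_{s}q_i^{-(n-s)}\overline{\psi_{i,s}^e}\,\overline{P_{i,n-s}^e}$, since $\overline{[n]_i}=[n]_i$. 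I would prove the statement by induction on $n$.

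For $n=1$, $P_{i,1}^e=\psi_{i,1}^e$, so Lemma \ref{lem:innerprod0} gives $\delta_{i,j}$ directly. For $n\ge2$, I would expand each term $(\overline{\psi_{i,s}^e}\,\overline{P_{i,n-s}^e},\chi(L_j^e(1^n)))$ via $(xy,z)=(x\otimes y,r(z))$. Here only the component $r_{s\delta,(n-s)\delta}$ contributes, and by Theorem \ref{thm:categorification} it equals $(\chi\otimes\chi)(\Res_{s\delta,(n-s)\delta}L_j^e(1^n))$. Since the quantum characteristic of $t$ is infinite, Corollary \ref{cor:LRrule} decomposes this restriction as a sum of $L^e(\underline{\mu})\otimes L^e(\underline{\nu})$ with Littlewood--Richardson multiplicities. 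For the column $(1^n)$ the only term is $L_j^e(1^s)\otimes L_j^e(1^{n-s})$, with multiplicity one and no grading shift, because restriction to $R(\delta)^{\otimes}$ pieces involves no shifts as $(\delta,\delta)=0$. The twisted multiplication \eqref{eq:twistedmultiplication} likewise introduces no $q$-powers, for the same reason.

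The inner product therefore factors as $(\overline{\psi_{i,s}^e},\chi(L_j^e(1^s)))\cdot(\overline{P_{i,n-s}^e},\chi(L_j^e(1^{n-s})))$. By induction, the second factor vanishes unless $n-s\in\{0,1\}$ and $i=j$. Only $s=n$ and $s=n-1$ survive, and the case $n-s=0$ is interpreted with the convention $P_{i,0}=1$ and the trivial module. By Lemma \ref{lem:innerprod0} these give
\[
[n]_i\,(\overline{P_{i,n}^e},\chi(L_i^e(1^n)))=(-q_i)^{-n+1}+q_i^{-1}(-q_i)^{-n+2}=0 .
\]
This yields $0$ for $n\ge2$, and the case $i\ne j$ is zero throughout.

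The main obstacle is the restriction step. One must ensure that $\Res_{s\delta,(n-s)\delta}L_j^e(1^n)$ really is $L_j^e(1^s)\otimes L_j^e(1^{n-s})$ on the nose, with no grading shift and no further summands from other colors. The latter follows from Lemma \ref{lem:colorres} together with the Littlewood--Richardson rule, which is where the infinite quantum characteristic and the labelling of Lemma \ref{lem:choice} enter. One must also check that the sign and $q$-power conventions in \eqref{eq:twistedmultiplication} produce exactly the cancellation above.
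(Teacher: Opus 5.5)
Your proposal is correct and follows the paper's proof essentially step for step. Both arguments use induction on $n$, expand $\overline{P_{i,n}^e}$ by the bar-conjugated recursion, split the pairing through $r_{s\delta,(n-s)\delta}$ using $\Res_{s\delta,(n-s)\delta}L_j^e(1^n)\simeq L_j^e(1^s)\otimes L_j^e(1^{n-s})$ from Corollary \ref{cor:LRrule}, and finish with the cancellation $(-q_i)^{-n+1}+q_i^{-1}(-q_i)^{-n+2}=0$.

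One small slip: for $s=n$ the second factor is $(1,1)=1$ for all $i,j$. It is the first factor, via Lemma \ref{lem:innerprod0}, that kills this term when $i\neq j$.
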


\begin{proof}
We proceed by induction on $n$. 
If $n = 1$, $P_{i,1}^e = \psi_{i,1}^e$ and it follows from Lemma \ref{lem:innerprod0}. 
Assume $n\geq 2$. 
By the definition of $P_{i,n}^e$, we have 
\begin{align*}
&(\overline{P_{i,n}^e},\chi(L_j^e(1^n))) \\
&= \frac{1}{[n]_i} \sum_{s=1}^n q_i^{-(n-s)} (\overline{\psi_{i,s}^e P_{i,n-s}^e}, \chi(L_j^e(1^n))) \\
&= \frac{1}{[n]_i} \sum_{s=1}^n q_i^{-(n-s)} (\overline{\psi_{i,s}^e} \otimes \overline{P_{i,n-s}^e}, r_{s\delta,(n-s)\delta}\chi(L_j^e(1^n))). 
\end{align*}
Here, we have 
\begin{align*}
r_{s\delta,(n-s)\delta} \chi(L_j^e(1^n)) &= \chi(\Res_{s\delta,(n-s)\delta}L_j^e(1^n)) \quad \text{by Theorem \ref{thm:categorification}} \\
&= \chi(L_j^e(1^s) \otimes L_j^e(1^{n-s})) \quad \text{by Corollary \ref{cor:LRrule}}. 
\end{align*}
Hence, 
\[
(\overline{P_{i,n}^e},\chi(L_j^e(1^n))) = \frac{1}{[n]_i} \sum_{s=1}^n q_i^{-(n-s)} (\overline{\psi_{i,s}^e}, \chi(L_j^e(1^s))) (\overline{P_{i,n-s}^e}, \chi(L_j^e(1^{n-s}))).
\]
By the induction hypothesis, this is zero unless $j = i$. 
If $j = i$, Lemma \ref{lem:innerprod0} shows that it is equal to 
\[
\frac{1}{[n]_i} (q_i^{-1} (-q_i)^{-n+2} + (-q_i)^{-n+1}) =0. 
\] 
The lemma is proved. 
\end{proof}

Let $\Lambda$ denote the ring of symmetric functions over $\mathbb{Q}(q)$. 
Let $e_n$ be the elementary symmetric function, $h_n$ the complete symmetric function, and $s_{\lambda}$ the Schur function.
Let $\langle \cdot, \cdot \rangle$ be the symmetric bilinear form on $\Lambda$, for which the Schur functions form an orthonormal basis. 
Let $\Delta$ be the coproduct on $\Lambda$ defined by 
\[
\Delta(h_n) = \sum_{k=0}^n h_k \otimes h_{n-k}. 
\]
We also have 
\[
\Delta(s_{\lambda}) = \sum_{\mu,\nu} c_{\mu,\nu}^{\lambda} s_{\mu} \otimes s_{\nu}. 
\]
For $f,g, h \in \Lambda$, we have 
\[
\langle fg, h \rangle = \langle f \otimes g, \Delta(h) \rangle. 
\]
For each $i \in \mathring{I}$, let $\Lambda_i$ be a copy of $\Lambda$. 
In $\Lambda_i$, we write $h_{i,n} = h_n, e_{i,n} = e_n, s_{i,\lambda} = s_{\lambda}$. 
We define
\[
\Lambda^{\otimes \mathring{I}} = \Lambda_1 \otimes \cdots \otimes \Lambda_N.
\]
For a multipartition $\underline{\lambda}$, we write 
\[
s_{\underline{\lambda}} = s_{1, \lambda^{(1)}} \otimes \cdots \otimes s_{N,\lambda^{(N)}}.
\] 

Let $U_q^-(\mathfrak{g})_{\mathrm{im}} = \bigoplus_{n \in \mathbb{Z}_{\geq 0}} U_q^-(\mathfrak{g})_{-n\delta}$, 
and let $d \colon U_q^-(\mathfrak{g})_{\mathrm{im}} \to U_q^-(\mathfrak{g})_{\mathrm{im}} \otimes_{\mathbb{Q}(q)} U_q^-(\mathfrak{g})_{\mathrm{im}}$ be the direct sum of all $r_{m\delta,n\delta} \ (m,n \in \mathbb{Z}_{\geq 0})$. 
Note that the multiplication on $U_q^-(\mathfrak{g})_{\mathrm{im}} \otimes_{\mathbb{Q}(q)} U_q^-(\mathfrak{g})_{\mathrm{im}}$ induced from (\ref{eq:twistedmultiplication}) coincides with the usual one since $(\delta,\delta) = 0$. 
Hence, $U_q^-(\mathfrak{g})_{\mathrm{im}}$ is a bialgebra. 

Let $B$ (resp.\ $B'$) be the $\mathbb{Q}(q)$-subspace of $U_q^-(\mathfrak{g})_{\mathrm{im}}$ spanned by $\chi(L^e(\underline{\lambda}))$ (resp.\ $\chi(\Delta^e(\underline{\lambda}))$) for all multipartitions $\underline{\lambda}$. 
By Theorem \ref{thm:categorification}, Corollary \ref{cor:LRrule} and Corollary \ref{cor:LRrule2}, 
both $B$ and $B'$ are sub-bialgebras of $U_q^-(\mathfrak{g})_{\mathrm{im}}$. 

Let $\eta \colon \Lambda^{\otimes \mathring{I}} \to B$ be the $\mathbb{Q}(q)$-linear isomorphism given by 
\[
\eta(s_{\underline{\lambda}}) = \chi(L^e(\underline{\lambda})). 
\]
By Corollary \ref{cor:LRrule}, it is an isomorphism of bialgebras. 

Recall $\psi_{i,n}^e, P_{i,n}^e$ and $S_{\underline{\lambda}}^e$ from Subsection \ref{sub:affinePBW}. 
Proposition \ref{prop:psimodule} shows that $\psi_{i,n}^e$ belongs to $B'$. 
Hence, $P_{i,n}^e$ and $S_{\underline{\lambda}}^e$ also belong to $B'$. 
Let $\eta' \colon \Lambda^{\otimes \mathring{I}} \to B'$ be the $\mathbb{Q}(q)$-algebra homomorphism defined by 
\[
\eta'(h_{i,n}) = \overline{P_{i,n}^e} \ (i \in \mathring{I}, n \in \mathbb{Z}_{\geq 1}). 
\]
By Jacobi-Trudi identity, we have 
 \[
  \eta'(s_{\underline{\lambda}}) = \overline{S_{\underline{\lambda}}^e}
  \]
  for any multipartition $\underline{\lambda}$.
  In particular, $\eta'$ is injective. 

\begin{lemma}
$\eta'$ is an isomorphism of bialgebras. 
\end{lemma}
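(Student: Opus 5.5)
Since $\eta'$ is by definition a $\mathbb{Q}(q)$-algebra homomorphism and is injective, and $B' \supset$ the image, the claims to establish are: (i) $\eta'$ is surjective onto $B'$, and (ii) $\eta'$ intertwines the coproducts, i.e.\ $d(\overline{P_{i,n}^e}) = \sum_{k=0}^n \overline{P_{i,k}^e}\otimes \overline{P_{i,n-k}^e}$. I would prove (i) by a dimension count. The weight $-n\delta$ part of $\Lambda^{\otimes \mathring{I}}$ (grading by total degree) has dimension equal to the number of multipartitions of $n$, and so does $B'_{-n\delta}$: $B'$ is spanned by the characters $\chi(\Delta^e(\underline{\lambda}))$, $\underline{\lambda}$ a multipartition of $n$. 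These characters are linearly independent, since by Proposition \ref{prop:semisimpledelta} they are projective covers of the pairwise non-isomorphic simples $L^e(\underline{\lambda})$ (Corollary \ref{cor:imaginarysimple}), so $\Extform{\Delta^e(\underline{\lambda})}{L^e(\underline{\mu})}=\delta_{\underline{\lambda},\underline{\mu}}$ and Theorem \ref{thm:categorification} (3) gives a nondegenerate pairing. Because $\eta'$ is injective and degree preserving, it is then a linear isomorphism $\Lambda^{\otimes\mathring{I}}\to B'$.

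For (ii), I would pass to the bilinear pairing. The form $(\cdot,\cdot)$ restricted to $B'\times B$ satisfies $(xy,z)=(x\otimes y, d(z))$, with the twist in (\ref{eq:twistedmultiplication}) trivial on imaginary weights. The isomorphism $\eta$ transports $d$ on $B$ to $\Delta$ on $\Lambda^{\otimes\mathring{I}}$ (Corollary \ref{cor:LRrule}). Moreover the pairing $B'\times B\to\mathbb{Q}(q)$, $(\overline{x},y)$, is perfect. The key claim is then
\[
(\overline{\eta'(f)},\eta(g)) = \langle f,g\rangle \quad (f,g\in\Lambda^{\otimes\mathring{I}}),
\]
i.e.\ $\overline{\eta'}$ is the adjoint of $\eta$ under the Hall inner product. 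Granting this, the coproduct on $B'$ (which is dual to the product on $B$, as $B$ is a sub-bialgebra and the pairing is perfect) matches $\Delta$ under $\eta'$. Here I use that $\Lambda^{\otimes \mathring{I}}$ is self-dual under $\langle\cdot,\cdot\rangle$, with product dual to coproduct.

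To prove the adjointness, both sides are compatible with products in the first variable: the left side because $(\overline{xy},z)=(\overline{x}\otimes\overline{y},d(z))$ and $\eta$ is a coalgebra map, the right side because $\langle fg,h\rangle=\langle f\otimes g,\Delta h\rangle$. Since the $h_{i,n}$ generate $\Lambda^{\otimes\mathring{I}}$, an induction on the number of factors reduces the claim to $(P_{i,n}^e, \eta(g)) = \langle h_{i,n}, g\rangle$ for all $g$. Because $\eta$ preserves the coproduct, the left side is a linear functional that is primitive-compatible, and I would check it on a convenient basis. The natural choice is to check against $g=e_{j,n}=s_{j,(1^n)}$, where Lemma \ref{lem:innerprod} gives $\delta_{i,j}\delta_{n,1}$, matching $\langle h_{i,n},e_{j,n}\rangle$.

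This is the main obstacle: the values on $e_{j,n}$ alone do not determine a functional on all of $\Lambda^{\otimes\mathring{I}}_n$. I would resolve it by working with the power sums / primitive elements instead. Define $\pi_{i,n}$ by the Newton-type recursion $\overline{P^e}$ satisfies; the recursion defining $P_{i,k}^e$ from $\psi_{i,s}^e$ mirrors $nh_n=\sum_s p_s h_{n-s}$ up to $q$-factors, so the $\overline{\psi_{i,s}^e}$ play the role of primitive elements. One then shows $\overline{\psi_{i,n}^e}$ pairs to zero with all decomposable elements of $B$ (products of lower-degree elements), since a primitive pairs trivially with decomposables. Primitivity of $\overline{\psi_{i,n}^e}$ should follow from $d(\chi(\Delta^e\text{-cokernel}))$ via the Mackey filtration of the module in Proposition \ref{prop:psimodule}, whose restrictions to $m\delta,(n-m)\delta$ vanish by cuspidality of $\Delta^e(n\delta-\alpha_i)$ as in the proof of Lemma \ref{lem:innerprod0}. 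Indecomposables of $\Lambda_n$ modulo decomposables are spanned by the image of $e_{j,n}$, so Lemma \ref{lem:innerprod0} pins down the functional $(\overline{\psi_{i,n}^e},-)$ completely. Feeding this into the recursion for $P_{i,n}^e$ then gives the adjointness, and hence that $\eta'$ is a bialgebra isomorphism.
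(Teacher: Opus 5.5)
\textbf{Verdict: there is a genuine gap in part (ii).} Your part (i), the dimension count together with linear independence of the $\chi(\Delta^e(\underline{\lambda}))$, is the paper's argument. Your remark is also sound that adjointness for all $f,g$, together with perfectness of the pairing $B'\times B\to\mathbb{Q}(q)$, would force $\eta'$ to intertwine the coproducts.

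The gap is in proving that adjointness. Your fix is to claim that $\overline{\psi_{i,n}^e}$ is primitive for $d$, and so pairs to zero with decomposables of $B$. This is false for $n\geq 2$. The $q$-Newton recursion does not make the $\psi_{i,k}^e$ primitive. It says $1-(q_i-q_i^{-1})\sum_{k\geq1}\psi_{i,k}^e u^k = H(q_i^{-1}u)H(q_iu)^{-1}$ with $H(u)=\sum_k P_{i,k}^e u^k$. So when $H$ is group-like, this series is group-like, and its non-constant part is then not primitive. Concretely, $\psi_{i,2}^e=[2]_iP_{i,2}^e-q_i(P_{i,1}^e)^2$, and the coproduct formula for the $P_{i,k}^e$ gives
\[
d(\psi_{i,2}^e)=\psi_{i,2}^e\otimes 1+1\otimes\psi_{i,2}^e-(q_i-q_i^{-1})\,\psi_{i,1}^e\otimes\psi_{i,1}^e .
\]
Hence $(\overline{\psi_{i,2}^e},\chi(L_i^e(\delta))^2)=q_i-q_i^{-1}\neq0$, using Lemma \ref{lem:innerprod0} for $n=1$.

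Your module-theoretic justification fails at the same place. Consider the Mackey filtrations of $\Res_{m\delta,(n-m)\delta}$ applied to $\Delta^e(n\delta-\alpha_i)\circ R(\alpha_i)$ and to $R(\alpha_i)\circ\Delta^e(n\delta-\alpha_i)$. Cuspidality kills only the pieces built from $\Res_{m\delta,(n-m)\delta-\alpha_i}\Delta^e(n\delta-\alpha_i)$. That is the restriction with the imaginary weight on the left, which is the one used in Lemma \ref{lem:innerprod0}. The pieces built from $\Res_{m\delta-\alpha_i,(n-m)\delta}\Delta^e(n\delta-\alpha_i)$ are nonzero. For example, $L^e(2\delta-\alpha_i)$ is, up to shift, the head of $L^e(\delta-\alpha_i)\circ L_i^e(\delta)$, as in the proof of Lemma \ref{lem:innerprodSES}. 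These pieces do not cancel.

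Without primitivity, Lemma \ref{lem:innerprod} only gives the values of $(\overline{P_{i,n}^e},\eta(-))$ on the $e_{j,n}$. Evaluating it on products needs $d(\overline{P_{i,n}^e})$, which is the statement you are trying to prove, so the argument is circular. The missing ingredient is an independent computation of that coproduct. The paper imports it from \cite[Proposition 3.22]{MR2066942}: $r(P_{i,n}^e)\equiv\sum_k P_{i,k}^e\otimes P_{i,n-k}^e$ modulo terms whose tensor factors have weights with nonzero finite part. This gives $d(P_{i,n}^e)=\sum_kP_{i,k}^e\otimes P_{i,n-k}^e$, which transfers to $\overline{P_{i,n}^e}$ by \cite[Lemma 1.2.11]{MR2759715} and $(\delta,\delta)=0$. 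Only afterwards does the paper derive the adjointness of Lemma \ref{lem:bilinearforms} from Lemma \ref{lem:innerprod}, using that both $\eta$ and $\eta'$ are bialgebra maps. Your proposal reverses this order, and the reversed step cannot be completed with the tools you cite.

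A smaller slip: $(\overline{\eta'(f)},\eta(g))$ is antilinear in $f$, so it cannot equal $\langle f,g\rangle$. The correct normalization is $(\eta'(f),\eta(g))=\langle f,g\rangle$, which matches Lemma \ref{lem:innerprod}; that lemma pairs $\overline{P_{i,n}^e}$, not $P_{i,n}^e$.
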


\begin{proof}
Note that Theorem \ref{thm:stratification} shows that the vectors $\chi(\Delta^e(\underline{\lambda}))$ ($\underline{\lambda}$ is a multipartition) are linearly independent and form a basis of $B'$. 
By comparing the dimensions of each weight spaces, 
we see that $\eta'$ is an isomorphism. 
To prove that $\eta'$ respects coproducts, it suffices to show that $d(\overline{P_{i,n}^e}) = \sum_{k=0}^n \overline{P_{i,k}^e} \otimes \overline{P_{i,n-k}^e}$.
By the relation between $r$ and $\overline{(\cdot)}$ \cite[Lemma 1.2.11]{MR2759715} and the fact $(\delta,\delta) = 0$, 
this is equivalent to $d(P_{i,n}^e) = \sum_{k=0}^n P_{i,k}^e \otimes P_{i,n-k}^e$. 
By \cite[Proposition 3.22]{MR2066942} (note that $\wt f_i = - \alpha_i$), we have 
\[
r (P_{i,n}^e) \equiv \sum_{k=0}^n P_{i,k}^e \otimes P_{i,n-k}^e \mod \quantum{-}_{p^{-1}(\mathring{\mathsf{Q}}_+ \setminus \{0\})} \otimes \quantum{-}_{p^{-1}(\mathring{\mathsf{Q}}_- \setminus \{0\})}. 
\]
Hence, the assertion follows. 
\end{proof}

\begin{lemma}[{cf.\ \cite[Lemma 23.2]{MR3694676}}] \label{lem:bilinearforms}
For $f,g \in \Lambda^{\otimes \mathring{I}}$, we have 
\[
(\eta'(f), \eta(g)) = \langle f,g \rangle. 
\]
\end{lemma}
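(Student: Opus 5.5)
We plan to reduce the statement to generators and then propagate using the bialgebra structures. Both sides of $(\eta'(f),\eta(g)) = \langle f,g\rangle$ are $\mathbb{Q}(q)$-bilinear, so it suffices to treat $f$ running over monomials in the $h_{i,n}$ and $g$ running over all $s_{\underline{\lambda}}$. Weight considerations reduce further to homogeneous pieces: $\eta'(f)$ for a monomial of total degree $m$ lies in $U_q^-(\mathfrak{g})_{-m\delta}$, and $(\cdot,\cdot)$ pairs distinct weight spaces to zero. On the symmetric-function side, $\langle\cdot,\cdot\rangle$ also vanishes between different degrees. Hence we may assume degrees match.

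The key compatibility comes from the defining property $(xx',y) = (x\otimes x', r(y))$ of the Lusztig form. Restricted to the imaginary part, this reads $(xx',y) = (x\otimes x', d(y))$ for $x,x'\in U_q^-(\mathfrak{g})_{\mathrm{im}}$, since the relevant components of $r(y)$ are exactly $r_{m\delta,n\delta}$. Because $\eta'$ is an algebra map and $\eta$ is a coalgebra map (both are bialgebra isomorphisms by the preceding lemma and Corollary \ref{cor:LRrule}), for $f=f_1f_2$ we get
\[
(\eta'(f_1f_2),\eta(g)) = (\eta'(f_1)\otimes\eta'(f_2), (\eta\otimes\eta)\Delta(g)).
\]
By induction on the number of factors, this matches $\langle f_1f_2,g\rangle=\langle f_1\otimes f_2,\Delta(g)\rangle$. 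One point to check is that the pairing on tensor products factorizes as a product with no twist; this holds because all weights are multiples of $\delta$ and $(\delta,\delta)=0$.

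This reduces the statement to the single-generator case $f=h_{i,n}$ and $g=s_{\underline{\lambda}}$ with $\lvert\underline{\lambda}\rvert=n$. Here $\langle h_{i,n}, s_{\underline{\lambda}}\rangle = 1$ if $\underline{\lambda}$ has $\lambda^{(i)}=(n)$ and all other components empty, and $0$ otherwise. We therefore need $(\overline{P_{i,n}^e},\chi(L^e(\underline{\lambda})))$ for all multipartitions $\underline{\lambda}$ of $n$, which is the main obstacle. Lemma \ref{lem:innerprod} only handles the columns $L_j^e(1^n)$. To bypass this, we would run the induction with the generators $e_{i,n}$ instead: by the duality with $\eta$, it suffices to know $(\overline{P_{i,n}^e},\chi(L_j^e(1^m)))$-type data together with the coproduct.

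Concretely, we would first prove the identity for $g$ a monomial in the $e_{j,m}$, whose images $\eta(e_{j,m})=\chi(L_j^e(1^m))$ span multiplicatively. The argument uses the coproduct of $\overline{P_{i,n}^e}$ (the same as for $h_{i,n}$, by the proof of the previous lemma) and proceeds by induction on the number of $e$-factors. The dual form of the reduction above gives $(x, yy') = (d(x), y\otimes y')$, where we must check that the relevant component of $r$ applied to $\overline{P}$'s agrees with $d$. For a single factor, Lemma \ref{lem:innerprod} supplies $(\overline{P_{i,n}^e},\chi(L_j^e(1^m)))=\delta_{i,j}\delta_{n,1}\delta_{m,1}$, which matches $\langle h_{i,n},e_{j,m}\rangle$. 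Since the $e$-monomials span $\Lambda^{\otimes\mathring{I}}$ and $\eta$ is an algebra map, this settles $f=h_{i,n}$ and arbitrary $g$. The multiplicative induction on $f$ from the previous paragraph then finishes the proof.
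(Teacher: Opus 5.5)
Your proof is correct and is essentially the paper's argument made explicit: the paper also uses that $\eta,\eta'$ are bialgebra maps and that $(\cdot,\cdot)$ and $\langle\cdot,\cdot\rangle$ are Hopf pairings (via $(xx',y)=(x\otimes x',d(y))$ and its mirror form obtained from symmetry). This reduces the identity to the generator pairs $(h_{i,n},e_{j,m})$, which are exactly what Lemma \ref{lem:innerprod} computes. Your double induction — first on the number of $h$-factors in $f$, then on the number of $e$-factors in $g$ using $d(\overline{P_{i,n}^e})=\sum_k\overline{P_{i,k}^e}\otimes\overline{P_{i,n-k}^e}$ — is precisely the detail the paper leaves implicit.
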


\begin{proof}
Since both $\eta$ and $\eta'$ are homomorphisms of bialgebras, 
the lemma is reduced to Lemma \ref{lem:innerprod} using
\[
(xx',y) = (x\otimes x',d(y)), \ \langle ff',g\rangle = \langle f\otimes f', \Delta(g) \rangle, 
\]
for $x,x',y \in U_q^-(\mathfrak{g})_{\mathrm{im}}$ and $f,f',g \in \Lambda^{\otimes \mathring{I}}$. 
\end{proof}

\begin{proof}[Proof of Theorem \ref{thm:characterformula}]
By Theorem \ref{thm:stratification} and Proposition \ref{prop:semisimpledelta},
we have 
\[
\Extform{\Delta^e(\underline{\lambda})}{L^e(\underline{\mu})} = \delta_{\underline{\lambda},\underline{\mu}}, 
\]
for multipartitions $\underline{\lambda}$ and $\underline{\mu}$. 
the left hand side coincides with 
\begin{align*}
(\overline{\chi(\Delta^e(\underline{\lambda}))}, \chi(L^e(\underline{\mu}))) &= (\overline{\chi(\Delta^e(\underline{\lambda}))}, \eta(s_{\underline{\mu}})) \quad \text{by Theorem \ref{thm:categorification}} \\
&= \langle \eta'^{-1}(\overline{\chi(\Delta^e(\underline{\lambda}))}), s_{\underline{\mu}} \rangle \quad \text{by Lemma \ref{lem:bilinearforms}}.
\end{align*}
This implies $\eta'^{-1}(\overline{\chi(\Delta^e(\underline{\lambda}))}) = s_{\underline{\lambda}}$ for any multipartition $\underline{\lambda}$. 
Hence, 
\[
\chi(\Delta^e(\underline{\lambda})) = \overline{\eta'(s_{\underline{\lambda}})} = \overline{\overline{S_{\underline{\lambda}}^e}} = S_{\underline{\lambda}}^e. 
\]
\end{proof}

\appendix
\section{Conventions for quantum groups}

Conventions for quantum groups often vary across different references. 
We summarize in a table below how to translate other references into our setup. 
To translate one reference into another, simply replace each symbol with the corresponding symbol appearing in the same row. 

\begin{table}[h]
\centering
\begin{tabular}{cccc} \toprule
This paper & \cite{MR2066942, MR3874704} & \cite{MR2759715} \\ \midrule 
$q$ & $q^{-1}$ & $v^{-1}$ \\
$e_i$ & $F_i$  & $F_i$ \\
$f_i$ & $E_i$ & $E_i$ \\
$q^h$ & $q^h$  & $K_h$ \\
$\quantum{}$ & $\quantum{}$  & $\mathbf{U}$ \\
$T_i$ & $T_i$ & $T''_{i,1}$\\
$T_i^{-1}$ & $T_i^{-1}$  & $T'_{i,-1}$ \\
convex order & opposite & \\ \bottomrule
\end{tabular}
\end{table}

\bibliographystyle{amsalpha}
\bibliography{library.bib}

\end{document}